\documentclass{amsart}

\pdfoutput=1

\usepackage{amsmath, amsthm, amssymb, amsfonts}
\usepackage{enumerate}
\usepackage{latexsym}
\usepackage{mathrsfs}
\usepackage{amscd}
\usepackage{hyperref}
\usepackage{enumitem}
\usepackage{braket}
\usepackage{xy}
\usepackage{graphicx}
\usepackage{thmtools}
\usepackage{thm-restate}
\usepackage{hyperref}
\usepackage{cleveref}

\usepackage{mathtools}

\usepackage{tikz}
\usepackage{tikz-cd}
\usepackage{color}

\newtheorem{theorem}{Theorem}[section]
\newtheorem{definition}[theorem]{Definition}
\newtheorem{lemma}[theorem]{Lemma}
\newtheorem{proposition}[theorem]{Proposition}
\newtheorem{corollary}[theorem]{Corollary}
\newtheorem{remark}[theorem]{Remark}
\newtheorem{example}[theorem]{Example}
\newtheorem{assumption}[theorem]{Assumption}
\newtheorem{conjecture}[theorem]{Conjecture}

\numberwithin{equation}{section}
\numberwithin{figure}{section}

\title[Functors of wrapped Fukaya categories]{Functors of wrapped Fukaya categories from Lagrangian correspondences}

\author[Yuan Gao]{Yuan Gao\textsuperscript{1}}

\address{\textsuperscript{1}Department of Mathematics, Stony Brook University, Stony Brook NY, 11794, USA}

\email{ygao@math.stonybrook.edu}

\begin{document}

\begin{abstract}
	We study wrapped Floer theory on product Liouville manifolds and prove that the wrapped Fukaya categories defined with respect to two different kinds of natural Hamiltonians and almost complex structures are equivalent. The implication is we can do quilted version of wrapped Floer theory, based on which we then construct functors between wrapped Fukaya categories of Liouville manifolds from certain classes of Lagrangian correspondences, by enlarging the wrapped Fukaya categories appropriately, allowing exact cylindrical Lagrangian immersions. For applications, we present a general K\"{u}nneth formula, and also identify the Viterbo restriction functor with the functor associated to the completed graph of embedding of a Liouville sub-domain.
\end{abstract}

\maketitle

\section{Introduction}

	This paper pushes further the discussion in \cite{Gao1} studying Lagrangian correspondences between Liouville manifolds. The goal is to understand the natural functors between wrapped Fukaya categories arising from Lagrangian correspondences, with a motivation in investigating the functoriality properties of wrapped Fukaya categories, as well as the relation to the well-established functoriality properties of the devired categories of coherent sheaves via homological mirror symmetry. \par

\subsection{Floer theory on product manifolds}
	As Lagrangian correspondences are simply Lagrangian submanifolds of the product manifold, it is natural to study first of all the wrapped Fukaya category of a product Liouville manifold and relate it to the wrapped Fukaya categories of both factors, $\mathcal{W}(M)$ and $\mathcal{W}(N)$. To understand such a relation, it is natural to setup up wrapped Floer theory using the split Hamiltonian, which is the sum of admissible Hamiltonians on both factors, i.e. Hamiltonian of the form $\pi_{M}^{*}H_{M} + \pi_{N}^{*}H_{N}$. This together with the product almost complex structure defines a version of wrapped Fukaya category of $M \times N$, which we call the split model of wrapped Fukaya category and denote by $\mathcal{W}^{s}(M \times N)$. \par
	However, as already addressed in \cite{Gao1}, there is a technical issue: the split Hamiltonian is a priori not admissible for wrapped Floer theory, meaning that it might have defined a different category compared to the quadratic Hamiltonian on the product $M \times N$ with respect to a natural cylindrical end. This issue was resolved on the cohomology level there. In this paper, we carry out a chain-level discussion, confirming that the wrapped Fukaya category defined with respect to the split Hamiltonians is quasi-isomorphic to the one defined with respect to a quadratic Hamiltonian. Thus up to canonical quasi-equivalence, there is no ambiguity in mentioning the wrapped Fukaya category of the product manifold. This would simplify many functoriality arguments in wrapped Floer theory, and expectantly in the study of homological mirror symmetry. \par

\begin{theorem}
\label{two models of wrapped Fukaya categories of product manifolds are equivalent}
	The split model $\mathcal{W}^{s}(M \times N)$ is quasi-equivalent to the ordinary wrapped Fukaya category $\mathcal{W}(M \times N)$. \par
	To be more precise, let $\mathbb{L}$ be a countable collection of admissible Lagrangian submanifolds of $M \times N$ and $\mathcal{W}^{s}(\mathbb{L})$ (resp. $\mathcal{W}(\mathbb{L})$) be the full subcategory consisting of objects in $\mathbb{L}$. Then there is a natural quasi-equivalence
\begin{equation}
R: \mathcal{W}^{s}(\mathbb{L}) \to \mathcal{W}(\mathbb{L}).
\end{equation} 
\end{theorem}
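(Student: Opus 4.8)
The plan is to present both $\mathcal{W}^{s}(\mathbb{L})$ and $\mathcal{W}(\mathbb{L})$ as homotopy colimits (mapping telescopes) over one-parameter directed systems of Floer $A_{\infty}$-pre-categories, and to construct $R$ as a continuation-type $A_{\infty}$-functor between these telescopes, the identity on objects, which exhibits the split system and the ordinary system as mutually cofinal and is therefore a quasi-equivalence. For the split model one uses the cofinal family of split Hamiltonians $H^{s}_{w}=\pi_{M}^{*}H^{M}_{w}+\pi_{N}^{*}H^{N}_{w}$ with slopes tending to infinity, together with product almost complex structures; for the ordinary model one fixes a Hamiltonian on $M\times N$ which is quadratic (or linear with slope $\to\infty$) with respect to a cylindrical end of $M\times N$ adapted to the product Liouville structure, together with almost complex structures of contact type near infinity. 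A preliminary step is to arrange these data so that every Lagrangian in $\mathbb{L}$ --- cylindrical for the product structure near infinity --- is simultaneously admissible for both models; for this I would build the cylindrical end of $M\times N$ from a smoothing of the boundary corner of $\overline{M}\times\overline{N}$ and choose the quadratic Hamiltonian to agree with a split Hamiltonian outside a large compact subset, so that near infinity one retains precise control on both sides.

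The core of the argument is a comparison of the two directed systems. For each slope parameter $w$ I would construct a monotone homotopy of Hamiltonians from $H^{s}_{w}$ to a quadratic Hamiltonian of slope $w'=w'(w)$, and, in the opposite direction, a monotone homotopy from a quadratic Hamiltonian of slope $w$ to $H^{s}_{w''}$, together with interpolating families of almost complex structures which are of product type on the parts of the end where this is needed. The associated continuation maps then interleave the two systems, so that for every pair of objects the induced map on the colimit of the morphism complexes is a quasi-isomorphism; the two possible composites of consecutive continuation maps are chain homotopic to the telescope structure maps by a homotopy-of-homotopies. Assembling the continuation maps and their higher coherences over the relevant compactified moduli spaces of discs with boundary punctures, some of whose strip-like ends carry continuation data, produces the $A_{\infty}$-functor $R$; that it induces an isomorphism on all morphism cohomologies follows from this cofinality and matches the cohomology-level comparison already obtained in \cite{Gao1}. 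The extra content beyond \cite{Gao1} is the chain-level coherence: one must choose Floer and perturbation data consistently and compatibly across all these moduli spaces, and verify that each admissible choice still supports the maximum principle that guarantees compactness.

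I expect this maximum principle for the comparison moduli spaces to be the main obstacle, specifically in the corner region of the end of $M\times N$ where the $M$-radial and $N$-radial coordinates are simultaneously large. There the split Hamiltonian and the quadratic Hamiltonian have genuinely different asymptotics, and there is no single radial coordinate for which the interpolating Hamiltonians are of standard admissible form; instead one must exploit the product form of the almost complex structure to run the no-escape estimate separately in the $M$- and $N$-directions, i.e. a two-variable maximum principle, and arrange the monotone homotopies to be split (a sum of a homotopy on $M$ and one on $N$) on this corner. The tension is that the homotopies must at once be monotone, be split in the corner, and interpolate to the quadratic Hamiltonian, which is not split there; reconciling these requires a careful partition of the end of $M\times N$ into the two cylindrical pieces and the corner, with matching estimates on the overlaps. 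Once compactness of the comparison moduli spaces is established, Gromov compactness, strip-breaking and the codimension-one boundary analysis proceed as usual, and the identification of $R$ as a quasi-equivalence follows.
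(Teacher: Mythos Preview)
Your approach is plausible but genuinely different from what the paper does, and the difference matters precisely at the point you flag as the main obstacle.

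The paper does \emph{not} present either category as a telescope over a cofinal family of linear Hamiltonians, and it does not interleave two directed systems via continuation maps in both directions. Instead it works throughout with a \emph{single} quadratic Hamiltonian on each side: the split Hamiltonian $H_{M,N}=\pi_M^*H_M+\pi_N^*H_N$ and a single admissible Hamiltonian $K$ quadratic in the radial coordinate of the smoothed end $\Sigma\times[1,\infty)$. The functor $R$ is built as an \emph{action-restriction} functor: for each action window $(-b,a]$ one produces a modified Hamiltonian $K_b$ which agrees with $H_{M,N}$ on a large compact set $\{r\le A(b)\}$ and is quadratic in $r$ outside a slightly larger set. The key calculation (carried out in the predecessor paper \cite{Gao1} and recalled here) is that every extra $K_b$-chord created in the transition annulus has action strictly above $a$, so that on the truncated complexes the tautological inclusion of generators is a chain \emph{isomorphism}, not merely a quasi-isomorphism obtained by sandwiching. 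One then rescales by the Liouville flow to compare $K_b$ with the fixed $K$, checks homotopy-compatibility under enlarging $b$, and passes to the limit. Higher-order terms are organised by the multiplihedra $\bar{\mathcal N}_{k+1}$ with consistent ``action-restriction data'', and an $A_\infty$ homotopy-direct-limit argument (over finite subcollections of $\mathbb L$) assembles the $R_d$ into $R$.

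The practical payoff of the paper's route is that it only ever runs continuation in one direction, from split to cylindrical-quadratic; the argument that $R$ is a quasi-equivalence comes from the action-counting isomorphism on truncations, not from a reverse continuation map. This sidesteps exactly the difficulty you identify: a monotone homotopy from a cylindrical-type Hamiltonian back to a split one must pass through Hamiltonians which are neither of contact type for $\Sigma$ nor split, and for those your proposed two-variable maximum principle (projecting to $M$ and $N$ separately) does not obviously apply, since it needs the Hamiltonian term to decouple. Your outline does not explain how compactness would be obtained for these reverse comparison moduli spaces beyond saying the homotopy should be ``split in the corner'', which is in tension with interpolating to $K$. The paper's action-filtration argument never needs such a homotopy: the $C^0$-bound for $(H_{M,N},J_{M,N})$-curves connecting chords of bounded action is obtained by showing they coincide with $(K_b,J_b)$-curves for $b$ large enough, to which the ordinary maximum principle applies. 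If you want to pursue your interleaving approach, this reverse-direction compactness is the genuine gap to close; alternatively, you could drop the reverse map and instead prove directly that the forward continuation induces an isomorphism on cohomology, which brings you closer to what the paper actually does.
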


	As this $A_{\infty}$-functor is constructed using an action-filtration argument, it will be called the action-restriction functor. \par

\subsection{Functors associated to Lagrangian correspondences}
	Next, we investigate a specific class of Lagrangian correspondences, which are either products or cylindrical with respect to a natural choice of cylindrical end of the product $M^{-} \times N$. Associated to every such Lagrangian correspondence $\mathcal{L} \subset M^{-} \times N$, we would like to construct an $A_{\infty}$-functor from the wrapped Fukaya category of $M$ to that of $N$. Technically, this is not always possible. To overcome this, we include wider class of objects in the wrapped Fukaya category, which are to be introduced in section \ref{A-infinity functors associated to Lagrangian correspondence}. Details are to be presented later, but let us first illustrate the main spirit below. \par
	Using a quilted version wrapped Floer cohomology, we first construct an $A_{\infty}$-functor 
\begin{equation*}
\mathcal{W}(M^{-} \times N) \to (\mathcal{W}(M), \mathcal{W}(N))^{bimod}
\end{equation*}
from the wrapped Fukaya category of the product to the $A_{\infty}$-category of $(\mathcal{W}(M), \mathcal{W}(N))$-bimodules. By purely algebraic considerations, this gives rise to an $A_{\infty}$-functor
\begin{equation*}
\mathcal{W}(M^{-} \times N) \to func(\mathcal{W}(M), \mathcal{W}(N)^{l-mod})
\end{equation*}
by applying the Yoneda embedding on the second factor $\mathcal{W}(N)$. Concretely, to each admissible Lagrangian correspondence $\mathcal{L} \subset M^{-} \times N$, we associate an $A_{\infty}$-bimodule $P_{\mathcal{L}}$ over $(\mathcal{W}(M), \mathcal{W}(N))$ whose value on a pair $(L, L')$ is the quilted wrapped Floer cochain complex $CW^{*}(L, \mathcal{L}, L')$. Regarding $L' \subset N$ as a testing object, with small amount of homological algebra argument we then immediately get an $A_{\infty}$-functor
\begin{equation}
\Phi_{\mathcal{L}}: \mathcal{W}(M) \to \mathcal{W}(N)^{l-mod}.
\end{equation} \par
	Then we study the geometric composition of Lagrangian correspondences, which is in general a Lagrangian immersion. In order to be able to include these immersed Lagrangian submanifolds as objects of the wrapped Fukaya category, in section \ref{the immersed wrapped Fukaya category} we define the immersed wrapped Fukaya category of $M$, denoted by $\mathcal{W}_{im}(M)$, whose objects are unobstructed proper exact cylindrical Lagrangian immersions of $M$ (with transverse self-intersections), together with bounding cochains for them. Floer theory for such Lagrangian immersions turns out to work over $\mathbb{Z}$, so that $\mathcal{W}_{im}(M)$ is an $A_{\infty}$-category over $\mathbb{Z}$ in the usual sense. We also extend this theory to cylindrical Lagrangian immersions with clean self-intersections in section \ref{section: immersed wrapped Floer theory in the case of clean intersections}. \par
	The ordinary wrapped Fukaya category $\mathcal{W}(M)$ can be embedded into $\mathcal{W}_{im}(M)$ as a full sub-category. Also, the $A_{\infty}$-functors $\Phi_{\mathcal{L}}$ and $\Phi$ can be extended to the category of modules over the immersed wrapped Fukaya category
\begin{equation}
\Phi_{\mathcal{L}}: \mathcal{W}(M) \to \mathcal{W}_{im}(N)^{l-mod},
\end{equation}
and
\begin{equation}
\Phi: \mathcal{W}(M^{-} \times N) \to func(\mathcal{W}(M), \mathcal{W}_{im}(N)^{l-mod}).
\end{equation} \par

\begin{theorem}\label{functor associated to a Lagrangian correspondence}
	Let $\mathcal{L} \subset M^{-} \times N$ be an admissible Lagrangian correspondence between Liouville manifolds $M$ and $N$, such that the projection $\mathcal{L} \to N$ is proper. Then under some further generic geometric conditions, namely Assumption \ref{assumption on the geometric composition}, we have:
\begin{enumerate}[label=(\roman*)]

\item For every object $L \in Ob \mathcal{W}(M)$, there is a curved $A_{\infty}$-algebra associated to the geometric composition $L \circ \mathcal{L}$, defined in terms of wrapped Floer theory for Lagrangian immersions.

\item The geometric composition $L \circ \mathcal{L}$ is always unobstructed, with a canonical choice of bounding cochain $b$ for it. Thus $(L \circ \mathcal{L}, b)$ becomes an object of $\mathcal{W}_{im}(N)$. This $b$ is unique such that the next condition is satisfied.

\item There is a natural $A_{\infty}$-functor
\begin{equation}
\Theta_{\mathcal{L}}: \mathcal{W}(M) \to \mathcal{W}_{im}(N),
\end{equation}
which represents $\Phi_{\mathcal{L}}$. On the level of objects, it sends any Lagrangian submanifold $L \in Ob \mathcal{W}(M)$ to the pair $(L \circ \mathcal{L}, b) \in Ob \mathcal{W}_{im}(N)$.

\end{enumerate}

\end{theorem}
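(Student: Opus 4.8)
The plan is to build $\Theta_{\mathcal{L}}$ by representing the module-valued functor $\Phi_{\mathcal{L}} \colon \mathcal{W}(M) \to \mathcal{W}_{im}(N)^{l\text{-}mod}$ Lagrangian-object by Lagrangian-object, using the geometric composition, and then to assemble the higher $A_\infty$-structure maps from moduli spaces of quilted strips whose seam degenerates onto $L \circ \mathcal{L}$. First I would fix $L \in Ob\,\mathcal{W}(M)$ and set up the curved $A_\infty$-algebra of part (i): its underlying complex is the wrapped Floer complex of the immersion $L \circ \mathcal{L} \looparrowright N$ (well-defined once Assumption \ref{assumption on the geometric composition} guarantees the composition is an honest cylindrical Lagrangian immersion with transverse, or at worst clean, self-intersections whose projection to $N$ is proper), and its operations $\mu^d$ count the appropriate pseudoholomorphic polygons with boundary on $L \circ \mathcal{L}$ and with extra negative punctures weighted by the self-intersection points — exactly the immersed wrapped Floer theory developed in the sections on $\mathcal{W}_{im}(M)$. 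The curvature term $\mu^0$ is the output of the constant/disk-bubble contributions at the self-intersection points.

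Next, for part (ii), the strategy is a \emph{neck-stretching / continuation} argument identifying the curved $A_\infty$-algebra of $L \circ \mathcal{L}$ with a quilted model. Concretely, I would compare the quilted wrapped Floer complex $CW^*(L, \mathcal{L}, L')$ — which computes $\Phi_{\mathcal{L}}(L)$ evaluated at a test object $L' \subset N$ — against $CW^*(L \circ \mathcal{L}, L')$ in $\mathcal{W}_{im}(N)$, by degenerating the width of the middle quilt strip to zero. This is the familiar ``strip-shrinking'' comparison (Wehrheim--Woodward in the compact case, adapted here to the Liouville/wrapped setting using the action estimates from Theorem \ref{two models of wrapped Fukaya categories of product manifolds are equivalent} to control energy and exclude escape to the cylindrical end). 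The bounding cochain $b$ is produced as the count of the boundary-degenerate quilted configurations that appear in this limit: it is precisely the obstruction cocycle whose presence makes $(L \circ \mathcal{L}, b)$ unobstructed, i.e. $\mu^0_b := \sum_k \mu^k(b,\dots,b) = 0$. Unobstructedness then follows because this sum is identified, under strip-shrinking, with a count of quilted disks with \emph{no} inputs, which vanishes by the standard boundary analysis of the one-dimensional quilted moduli spaces (every rigid configuration cancels in pairs, or is excluded by the properness of $\mathcal{L} \to N$ and exactness). Uniqueness of $b$ among bounding cochains making (iii) hold is forced by the representability statement: any two such would give isomorphic representing objects via a closed degree-zero element, and tracing through the quilted model shows this element must be the identity.

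Finally, for part (iii), I would define $\Theta_{\mathcal{L}}$ on morphisms and higher products by counting quilted strips with several incoming ends on the $M$-side, boundary on $L_i$ and $L_{i+1}$ along the $M$-boundary, seam condition $\mathcal{L}$, and outgoing end on the $N$-side with boundary conditions $(L_i \circ \mathcal{L}, b)$ and $(L_{i+1} \circ \mathcal{L}, b)$ — the bounding cochains being inserted along the relevant boundary components in the usual way. The $A_\infty$-functor equations for $\{\Theta_{\mathcal{L}}^d\}$ come from the codimension-one boundary strata of these moduli spaces (bubbling off a polygon in $\mathcal{W}(M)$ at an incoming end, a polygon in $\mathcal{W}_{im}(N)$ at the outgoing end, or degeneration of the quilt), and the fact that $\Theta_{\mathcal{L}}$ represents $\Phi_{\mathcal{L}}$ is immediate because evaluating the represented functor $\mathcal{Y}(\Theta_{\mathcal{L}}(L)) = \hom_{\mathcal{W}_{im}(N)}((L\circ\mathcal{L},b), -)$ on a test object recovers $CW^*(L,\mathcal{L},L') = \Phi_{\mathcal{L}}(L)(L')$ by the strip-shrinking quasi-isomorphism of part (ii), naturally in $L'$ and compatibly with the $\mathcal{W}(M)$-action.

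The main obstacle I expect is the strip-shrinking comparison in the non-compact, wrapped setting: one must show that as the middle strip width goes to zero no energy is lost to the cylindrical end and no unexpected (boundary) disk bubbling occurs, which requires a careful $C^0$-estimate and monotonicity/isoperimetric argument for quilted maps into $M^- \times N$ with its product (or rescaled split) geometry — precisely the point where Theorem \ref{two models of wrapped Fukaya categories of product manifolds are equivalent} and the action-restriction functor $R$ do the essential work, converting the a priori ill-behaved split Hamiltonian dynamics into the controlled quadratic model. Establishing transversality for the resulting quilted moduli spaces over $\mathbb{Z}$ (using the immersed perturbation framework, so no virtual techniques are needed) is the other technically delicate point, handled by the same domain-dependent perturbation scheme used to define $\mathcal{W}_{im}(N)$.
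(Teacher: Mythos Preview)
Your approach diverges from the paper's at the two substantive points, and at one of them there is a real gap.

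For part (ii), the paper does \emph{not} produce the bounding cochain $b$ by strip-shrinking. Instead it equips the quilted complex $CW^{*}(L,\mathcal{L},L\circ\mathcal{L})$ with the structure of a \emph{curved} left $A_\infty$-module over the curved $A_\infty$-algebra $CW^{*}(L\circ\mathcal{L})$, observes that the action functional gives compatible discrete filtrations bounded above on both, and then exhibits a \emph{cyclic element}: the generator $e_{L\circ\mathcal{L}}$ corresponding under the tautological bijection of generators to the homotopy unit of $CW^{*}(L\circ\mathcal{L})$. An abstract algebraic lemma (Lemma~\ref{cyclic element and bounding cochain}, adapted from Fukaya's filtered theory) then solves $d^{b}(e_{L\circ\mathcal{L}})=0$ inductively along the filtration, producing a unique nilpotent $b$ in strictly positive filtration and showing it satisfies the Maurer--Cartan equation. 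No degeneration analysis enters; the argument is purely filtration-algebraic once the quilted module structure is set up. Your strip-shrinking proposal, by contrast, runs directly into figure-eight bubbling: in the wrapped setting there is no known exclusion mechanism for such bubbles when the middle width goes to zero, and Theorem~\ref{two models of wrapped Fukaya categories of product manifolds are equivalent} (which compares split vs.\ admissible Hamiltonians on the product, not widths of quilts) does not address this. The paper invokes strip-shrinking only in the very restricted situation of Proposition~\ref{vanishing of the bounding cochain}, and there only to argue non-existence of certain configurations, not to establish a bijection of moduli spaces.

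For part (iii), the paper again takes a different route: rather than defining $\Theta_{\mathcal{L}}^{d}$ directly by counting quilted polygons, it builds an explicit $A_\infty$-module homomorphism $gc\colon \Phi_{\mathcal{L}}(L)\to \mathfrak{y}_{l}((L\circ\mathcal{L},b))$ from quilted maps with one quilted end asymptotic to the cyclic element $e$, shows $gc$ is a homotopy equivalence (the first-order map is upper-triangular with identity diagonal with respect to the natural basis), verifies this is functorial in $L\in\mathcal{W}(M)$, and then obtains $\Theta_{\mathcal{L}}$ by composing with a homotopy inverse of the Yoneda embedding. Your direct geometric construction of $\Theta_{\mathcal{L}}^{d}$ is essentially what the paper carries out in subsection~\ref{section: a different geometric realization of the correspondence functor} for the \emph{linear} term only (the map $\Pi_{\mathcal{L}}$), and the paper explicitly notes technical difficulties in extending that construction to higher order.

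Finally, your claim that domain-dependent perturbations suffice and ``no virtual techniques are needed'' is incorrect: because $L\circ\mathcal{L}$ is immersed and bounds homogeneous pseudoholomorphic disks through its self-intersections, the paper uses Kuranishi structures and single-valued multisections throughout (sections~\ref{section: Kuranishi structure on moduli spaces of stable pearly tree maps} and onward) to define the curved $A_\infty$-structure and the module maps.
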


	We expect that the above $A_{\infty}$-functor extends over $\mathcal{W}_{im}(M)$ in a canonical and unique way. However, as that is not one of the main subjects of this paper and involves quite a lot of technicality, we will not try to give a proof. \par
	In a more functorial form, the assignment of $A_{\infty}$-functors to Lagrangian correspondences is functorial in the wrapped Fukaya category of the product manifold $M^{-} \times N$: \par

\begin{theorem}
\label{functoriality for Lagrangian correspondences}
	Let $\mathcal{A}(M^{-} \times N)$ be the full $A_{\infty}$-subcategory whose objects are Lagrangian correspondences $\mathcal{L}$ from $M$ to $N$ such that the projection $\mathcal{L} \to N$ is proper, which further satisfies Assumption \ref{assumption on the geometric composition}. Then there is a canonical $A_{\infty}$-functor
\begin{equation} \label{A-infinity functor from product to functor category}
\Theta: \mathcal{A}(M^{-} \times N) \to func(\mathcal{W}(M), \mathcal{W}_{im}(N)),
\end{equation}
such that
\begin{enumerate}[label=(\roman*)]

\item $\Theta$ represents $\Phi$;

\item $\Theta(\mathcal{L}) = \Theta_{\mathcal{L}}$ for every $\mathcal{L} \in Ob \mathcal{A}(M^{-} \times N)$.

\end{enumerate}

\end{theorem}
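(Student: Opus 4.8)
The plan is to realize $\Theta$ as the canonical lift of $\Phi$, restricted to $\mathcal{A}(M^{-} \times N)$, through the Yoneda embedding applied to the target category. Write $\mathcal{Y}_{N}\colon \mathcal{W}_{im}(N) \to \mathcal{W}_{im}(N)^{l-mod}$ for the $A_{\infty}$-Yoneda embedding; by the $A_{\infty}$-Yoneda lemma it is cohomologically full and faithful, and identifies $\mathcal{W}_{im}(N)$ with the full subcategory $\mathrm{Rep} \subset \mathcal{W}_{im}(N)^{l-mod}$ of quasi-representable modules. Post-composition with $\mathcal{Y}_{N}$ induces an $A_{\infty}$-functor
\[
(\mathcal{Y}_{N})_{*}\colon func(\mathcal{W}(M), \mathcal{W}_{im}(N)) \longrightarrow func(\mathcal{W}(M), \mathcal{W}_{im}(N)^{l-mod}),
\]
and I would first record, as a purely homological-algebraic lemma (valid over $\mathbb{Z}$, since all categories in sight are cohomologically unital), that $(\mathcal{Y}_{N})_{*}$ is again cohomologically full and faithful, with essential image exactly those functors $\mathcal{W}(M) \to \mathcal{W}_{im}(N)^{l-mod}$ that are objectwise quasi-representable. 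This is the fibrewise version of the previous sentence: it follows from the fact that $func(\mathcal{W}(M), -)$ carries quasi-equivalences to quasi-equivalences, applied to $\mathcal{Y}_{N}\colon \mathcal{W}_{im}(N) \xrightarrow{\sim} \mathrm{Rep}$, together with the fullness and quasi-isomorphism-closure of $\mathrm{Rep}$ inside $\mathcal{W}_{im}(N)^{l-mod}$.

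Next I would check that $\Phi|_{\mathcal{A}(M^{-} \times N)}$ lands, up to quasi-isomorphism, in this essential image. By construction $\Phi(\mathcal{L})$ is the functor $L \mapsto CW^{*}(L, \mathcal{L}, -)$, and Theorem \ref{functor associated to a Lagrangian correspondence}(iii) provides, naturally in $L$, a quasi-isomorphism of this module with the Yoneda module of $(L \circ \mathcal{L}, b) = \Theta_{\mathcal{L}}(L)$; in other words $(\mathcal{Y}_{N})_{*} \circ \Theta_{\mathcal{L}} \simeq \Phi(\mathcal{L})$ inside $func(\mathcal{W}(M), \mathcal{W}_{im}(N)^{l-mod})$. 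Fixing such a quasi-isomorphism $\tau_{\mathcal{L}}$ for every $\mathcal{L} \in Ob\,\mathcal{A}(M^{-} \times N)$, one is in position to apply the lifting lemma for $A_{\infty}$-functors: if $F$ is cohomologically full and faithful and $G$ is an $A_{\infty}$-functor each of whose objects lies in the essential image of $F$, with preimages and quasi-isomorphisms prescribed, then there is an $A_{\infty}$-functor $\widetilde{G}$, unique up to homotopy, with $\widetilde{G}$ equal to the prescribed preimages on objects and $F \widetilde{G} \simeq G$; equivalently, $\widetilde{G}$ is obtained by composing $G$ with a homotopy quasi-inverse of $F$ on its essential image, normalized on objects. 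Applying this with $F = (\mathcal{Y}_{N})_{*}$, $G = \Phi|_{\mathcal{A}(M^{-} \times N)}$, and prescribed preimages $\mathcal{L} \mapsto \Theta_{\mathcal{L}}$ produces the desired
\[
\Theta\colon \mathcal{A}(M^{-} \times N) \longrightarrow func(\mathcal{W}(M), \mathcal{W}_{im}(N)).
\]
By construction $\Theta(\mathcal{L}) = \Theta_{\mathcal{L}}$, which is (ii), and $(\mathcal{Y}_{N})_{*} \circ \Theta \simeq \Phi|_{\mathcal{A}(M^{-} \times N)}$, which is precisely the statement that $\Theta$ represents $\Phi$, i.e. (i). The uniqueness clause of the lifting lemma, together with the uniqueness of the bounding cochain $b$ in Theorem \ref{functor associated to a Lagrangian correspondence}(ii), is what justifies calling $\Theta$ canonical.

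The only genuine work is the lifting lemma itself, which I would prove by the familiar order-by-order obstruction argument. Having constructed the components $\Theta^{1}, \dots, \Theta^{d-1}$ together with a homotopy realizing $(\mathcal{Y}_{N})_{*} \circ \Theta \simeq \Phi$ through order $d-1$, the obstruction to extending to order $d$ is a cocycle in a morphism complex of $func(\mathcal{W}(M), \mathcal{W}_{im}(N))$; its image under the cohomology isomorphism induced by $(\mathcal{Y}_{N})_{*}$ is the corresponding obstruction on the module side, which vanishes because $\Phi$ is an honest $A_{\infty}$-functor, so the original obstruction is a coboundary and one chooses $\Theta^{d}$ accordingly, simultaneously correcting the homotopy. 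This is parallel to the standard promotion of a cohomological equivalence to an $A_{\infty}$-quasi-equivalence and to the construction of quasi-inverses, and I expect it to be the most technical — though routine — ingredient; all of the Floer-theoretic content has already been packaged into Theorem \ref{functor associated to a Lagrangian correspondence} and into the cohomological full-faithfulness of the Yoneda embedding for $\mathcal{W}_{im}(N)$.
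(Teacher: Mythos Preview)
Your proposal is correct and follows essentially the same route as the paper. The paper's proof converts $\Phi$ to a functor into $func(\mathcal{W}(M), \mathcal{W}_{im}(N)^{rep\text{-}l\text{-}mod})$ via the representability established for each $\Phi_{\mathcal{L}}$, then post-composes with a chosen homotopy inverse $\lambda_{l}$ of the Yoneda embedding $\mathfrak{y}_{l}\colon \mathcal{W}_{im}(N) \to \mathcal{W}_{im}(N)^{rep\text{-}l\text{-}mod}$; your lifting lemma is precisely a more explicit obstruction-theoretic packaging of this same step, and you yourself note the equivalence (``equivalently, $\widetilde{G}$ is obtained by composing $G$ with a homotopy quasi-inverse of $F$ on its essential image'').
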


\subsection{Some applications}

	As a particular application of the construction of functors, we present a well-known and expected (but not fully established) K\"{u}nneth formula for wrapped Fukaya categories, which relate $\mathcal{W}(M \times N)$ to the $A_{\infty}$-tensor product $\mathcal{W}(M) \otimes \mathcal{W}(N)$ in an appropriate sense.
We also show that under the condition that both $\mathcal{W}(M)$ and $\mathcal{W}(N)$ have finite collections of split-generators, there is a quasi-equivalence between $\mathcal{W}(M \times N)$ and $\mathcal{W}(M) \otimes \mathcal{W}(N)$. These results will be discussed in a more formal way in subsection \ref{section: Kunneth formula}. \par

	An especially important instance of Lagrangian correspondence comes from Liouville sub-domains $U_{0} \subset M_{0}$: the graph of the natural inclusion can be completed to a Lagrangian correspondence between $U$ and $M$. This is called the graph correspondence, denoted by $\Gamma$. As an admissible Lagrangian correspondence from $M$ to $U$, it satisfies the hypothesis of Theorem \ref{functor associated to a Lagrangian correspondence}.
Thus we obtain an $A_{\infty}$-functor 
\begin{equation}
\Theta_{\Gamma}: \mathcal{W}(M) \to \mathcal{W}_{im}(U).
\end{equation}
There is a full sub-category on which this functor takes a simpler form.
First, there is the full sub-category $\mathcal{B}(M)$ consisting of exact cylindrical Lagrangian submanifolds $L$ whose primitive $f$ is locally constant near both $\partial M$ and $\partial U$ (Assumption \ref{strong exactness condition}). Then, we may further consider a full sub-category $\mathcal{B}_{0}(M)$ of $\mathcal{B}(M)$ whose objects satisfy an additional geometric condition (Assumption \ref{invariance assumption}). Restricted to this sub-category, the functor $\Theta_{\Gamma}$ induces a functor
\begin{equation*}
\Theta_{\Gamma}: \mathcal{B}_{0}(M) \to \mathcal{W}(N),
\end{equation*}
whose image lies in the ordinary wrapped Fukaya category consisting of properly embedded exact cylindrical Lagrangian submanifolds, with zero bounding cochains. On the other hand, recall that the Viterbo restriction functor is defined on this sub-category $\mathcal{B}(M)$. More detailed definitions will be given in section \ref{section: sub-domains and the restriction functors}. We shall see that their linear terms agree, when the Viterbo restriction functor is further restricted to $\mathcal{B}_{0}(M)$. \par

\begin{theorem}\label{Viterbo functor as a correspondence functor}
	The $A_{\infty}$-functor $\Theta_{\Gamma}$ associated to the graph correspondence $\Gamma \subset M^{-} \times U$ of the Liouville sub-domain restricts to an $A_{\infty}$-functor on the full sub-category $\mathcal{B}_{0}(M)$
\begin{equation}
\Theta_{\Gamma}: \mathcal{B}_{0}(M) \to \mathcal{W}(U).
\end{equation}
The linear term $\Theta_{\Gamma}^{1}$ is chain homotopic to the linear term $r^{1}$ of the Viterbo restriction functor $r$. \par
\end{theorem}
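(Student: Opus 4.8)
The plan is to first pin down the geometry of the objects in the image of $\Theta_{\Gamma}$, and then compare the two linear maps by realizing both as counts of (essentially) the same moduli space, after a folding argument and a homotopy of Floer data.

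First I would analyze the geometric composition $L \circ \Gamma$ for an object $L$ of $\mathcal{B}_{0}(M)$. Since $\Gamma \subset M^{-} \times U$ is the completion of the graph of the Liouville embedding $U \hookrightarrow M$, away from the cylindrical region the fiber product $L \times_{M} \Gamma$ is the diagonal copy of $L \cap U$, and its projection to $U$ is simply the inclusion $L \cap U \hookrightarrow U$. Assumption \ref{strong exactness condition} (the primitive being locally constant near $\partial M$ and $\partial U$) guarantees that this composition is exact, that its completion is cylindrical, and that the fiber product is cut out cleanly; Assumption \ref{invariance assumption} then additionally forces $L \circ \Gamma$ to be \emph{embedded} (no self-intersections survive the completion) and rules out nonconstant holomorphic teardrops, so that the canonical bounding cochain furnished by Theorem \ref{functor associated to a Lagrangian correspondence}(ii) is $b = 0$. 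Hence on $\mathcal{B}_{0}(M)$ the functor $\Theta_{\Gamma}$ factors through the subcategory of $\mathcal{W}_{im}(U)$ consisting of embedded Lagrangians with zero bounding cochain, i.e.\ through $\mathcal{W}(U) \subset \mathcal{W}_{im}(U)$, which is the first assertion. I would also record that on objects $\Theta_{\Gamma} L = L|_{U}$, which is exactly the image of $L$ under $r$.

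Next I would compute the linear term $\Theta_{\Gamma}^{1} : CW^{*}_{M}(L_{0}, L_{1}) \to CW^{*}_{U}(L_{0}|_{U}, L_{1}|_{U})$. By Theorem \ref{functor associated to a Lagrangian correspondence}(iii) this map is assembled from the quilted structure maps of the bimodule $P_{\Gamma}$, whose underlying complex $CW^{*}(L_{0}, \Gamma, L_{1})$ is cut out by quilted strips with one patch mapping to $M^{-}$, one patch mapping to $U$, and a seam on $\Gamma$. Because $\Gamma$ is the graph of an \emph{embedding}, the seam condition forces the $U$-patch to equal the restriction of the $M$-patch; applying the folding/strip-shrinking procedure (in the form already exploited in this paper to pass between the split and quadratic models, cf.\ the action-restriction functor of Theorem \ref{two models of wrapped Fukaya categories of product manifolds are equivalent}, together with the quilt-unfolding techniques of Wehrheim--Woodward and Lekili--Lipyanskiy) converts the relevant moduli space into one of honest pseudoholomorphic strips in $M$ with boundary on $L_{0}$ and $L_{1}$, asymptotic at one end to a chord for an $M$-adapted Hamiltonian and at the other end to a chord for a $U$-adapted Hamiltonian, with the ``$U$-side'' of the domain constrained to map into $U$. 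In other words, $\Theta_{\Gamma}^{1}$ is realized by a continuation-type count that pushes chords of $M$ into $U$.

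Finally I would compare this with the Viterbo restriction functor $r$ recalled in section \ref{section: sub-domains and the restriction functors}. Its linear term $r^{1}$ is, by construction, the map induced by the subquotient of $CW^{*}_{M}(L_{0}, L_{1})$ generated by chords contained in $U$ once the Hamiltonian is taken to be linear of very large slope outside $U$ — equivalently, a continuation map into $U$. The two constructions therefore differ only in auxiliary data: the profile of the interpolating Hamiltonian and almost complex structure, the source of the ``$U$-constraint'' (a seam on $\Gamma$ versus a large slope), and the length of the neck along $\partial U$. I would build a one-parameter family of Floer data interpolating between the data defining $\Theta_{\Gamma}^{1}$ and the data defining $r^{1}$, including a neck-stretch along $\partial U$, and count the resulting parametrized moduli space to produce the desired chain homotopy $\Theta_{\Gamma}^{1} \simeq r^{1}$. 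The point that makes this family admissible is, once more, Assumption \ref{invariance assumption}, which controls the actions of the chords on $\mathcal{B}_{0}(M)$ and prevents energy from escaping across $\partial U$ or into broken configurations during the homotopy.

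I expect the main obstacle to be precisely this last analytic comparison: carrying out the folding and the subsequent neck-stretching in the noncompact wrapped setting while retaining compactness and transversality in the presence of the $U$-constraint, and verifying that Assumptions \ref{strong exactness condition} and \ref{invariance assumption} are exactly what rule out the dangerous degenerations — discs bubbling off the seam, chords sliding across $\partial U$, and loss of energy in long necks — that would otherwise obstruct both the well-definedness of $\Theta_{\Gamma}^{1}$ on $\mathcal{B}_{0}(M)$ and the existence of the chain homotopy to $r^{1}$.
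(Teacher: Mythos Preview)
Your overall strategy---identify $L \circ \Gamma$ with $L|_{U}$ on $\mathcal{B}_{0}(M)$, fold the quilted strip along the graph seam into an ordinary strip in $M$, then interpolate Floer data to reach $r^{1}$---matches the paper's. A small attribution error: the vanishing of the bounding cochain follows from Assumption~\ref{strong exactness condition} (locally constant primitive) via Proposition~\ref{vanishing of the bounding cochain}, while Assumption~\ref{invariance assumption} is what forces $L \circ \Gamma = L'$ on the nose rather than merely isotopic to it.

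Where the paper's execution differs from your sketch is in three technical points you should be aware of. First, the paper does not compare $\Theta_{\Gamma}^{1}$ directly: it introduces an explicit geometric cochain map $\Pi_{\Gamma}$ (subsection~\ref{section: a different geometric realization of the correspondence functor}), defined by a quilted moduli space with cyclic-element asymptotics at the two quilted ends, proves $\Pi_{\Gamma} \simeq \Theta_{\Gamma}^{1}$ by gluing, and then compares $\Pi_{\Gamma}$ to $r^{1}$. Second, the folding is not literal: after gluing $u$ and $i \circ v$ along the seam, the paper rescales the $U$-side by the Liouville flow $\psi_{U}^{\rho_{A}(s)}$ to match the Hamiltonian profile of a climbing strip, and the resulting map fails to solve the inhomogeneous Cauchy--Riemann equation because the $s$-dependent rescaling introduces an error term; a perturbation lemma (Lemma~\ref{perturbing the glued map to a climbing strip}) is needed to correct this to an honest solution. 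Third, the final comparison is not by neck-stretching but by a cobordism through a two-parameter family of exact cylindrical Lagrangian boundary conditions (Lemma~\ref{cobordism of moduli spaces}), interpolating between the boundary conditions produced by the fold-and-rescale and those of a genuine climbing strip. Your neck-stretch proposal is plausible in spirit but would require a separate compactness analysis that the paper avoids.
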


	It is expected that $\Theta_{\Gamma}$ and $r$ are in fact homotopic as $A_{\infty}$-functors, when restricted to the full sub-category $\mathcal{B}_{0}(M)$. Our current method of proof needs improvement in order to prove this more general statement. Moreover, they should be indeed homotopic on the bigger sub-category $\mathcal{B}(M)$ in appropriate sense, if we can manage to show that the geometric composition with the graph correspondence is Floer-theoretically equivalent to the actual restriction of Lagrangian submanifolds. Such points are to be discussed in \cite{Gao2}. \par
	Going back to the definition of the Viterbo restriction functor, we recall that it is only defined for those Lagrangian submanifolds which satisfy Assumption \ref{strong exactness condition}. However, the restriction functor $\Theta_{\Gamma}$ is defined on the whole wrapped Fukaya category, though the image of a Lagrangian submanifold is not necessarily simply its restriction to the sub-domain. However, it is conjectured that the Viterbo restriction functor can be extended to the whole wrapped Fukaya category via some kind of deformation theory, as stated in Conjecture \ref{conjecture on extension of the Viterbo restriction functor}. It is not completely known but a very interesting question to ask in what cases the extension agrees with the functor $\Theta_{\Gamma}$. This is a question for future research. \par

	The structure of this paper is as follows. Sections 2 to 5 provide fundamental materials for the theory to work: section 2 reviews basic homological algebra regarding $A_{\infty}$-modules and presents a version of $A_{\infty}$-direct limit; section 3 recalls the definition of the wrapped Fukaya category and introduces a new class of objects necessary to establish representability of the functors; section 4 defines the immersed wrapped Fukaya category and section 5 generalizes the story to Lagrangian immersions with clean self-intersections. Section 6 studies Floer theory on product manifolds and proves the two models of wrapped Fukaya categories are equivalent. Section 7 formally sets up the framework of functoriality in Lagrangian correspondences and proves representability, with a discussion on the K\"{u}nneth formula at the end. Section 8 concerns sub-domains and how the Viterbo restriction functor is related to our framework. \par
	
\paragraph{\textit{Acknowledgment.}} This research is part of a project on studying functorial properties of wrapped Fukaya categories, which the author carries out in the graduate school at Stony Brook University. The author is grateful to his advisor Kenji Fukaya for his inspiration on this project, as well as for numerous helpful suggestions and ideas that author learned from him. The author would also like to express thanks to Mohammed Abouzaid for his insight that the theory should work over the integers, as well as to Mark McLean for his explanation and discussion on various constructions and structures related to wrapped Floer cohomology and the Viterbo functor.

\section{Homological algebra preliminaries}

\subsection{$A_{\infty}$-modules and bimodules}
	Most of the results about $A_{\infty}$-categories in this section are well-known, which are included for the purpose of fixing notations and conventions for $A_{\infty}$-categories.
	Let $k$ be a field. We shall work with $A_{\infty}$-categories that are linear over $k$. Let $Ch$ be the dg-category of chain complexes over $k$, regarded as an $A_{\infty}$-category whose higher order operations $m^{k}, k \ge 3$ are all zero. Here chain complexes are not necessarily bounded. \par
	By definition, a non-unital left $A_{\infty}$-module over $\mathcal{A}$ is a non-unital $A_{\infty}$-functor $\mathcal{A} \to Ch^{op}$, where $Ch^{op}$ is the opposite category of $Ch$. All left $A_{\infty}$-modules over $\mathcal{A}$ also form an $A_{\infty}$-category (in fact a dg-category), $\mathcal{A}^{nu-l-mod} = nu-func(\mathcal{A}, Ch^{op})$. \par
	There are also non-unital right $A_{\infty}$-modules over $\mathcal{A}$, which are functors from $\mathcal{A}^{op}$ to $Ch$. These also form a dg-category, $\mathcal{A}^{nu-r-mod} = func(\mathcal{A}^{op}, Ch)$. \par
	When $\mathcal{A}$ comes with strict units, homotopy units or cohomological units, there are also unital versions of left and right $A_{\infty}$-modules:
\begin{equation*}
\mathcal{A}^{l-mod} = func(\mathcal{A}, Ch^{op}),
\end{equation*}
\begin{equation*}
\mathcal{A}^{r-mod} = func(\mathcal{A}^{op}, Ch),
\end{equation*}
as unital $A_{\infty}$-functors. \par

\subsection{Representable modules}
	First, we recall the non-unital version of the Yoneda embedding. The left Yoneda functor is a non-unital $A_{\infty}$-functor
\begin{equation}
\mathfrak{y}_{l}: \mathcal{A} \to \mathcal{A}^{nu-mod}
\end{equation}
which sends an object $Y \in Ob\mathcal{A}$ to its left Yoneda module $\mathcal{Y}^{l} \in \mathcal{A}^{nu-mod}$, which is defined as follows. For objects $X \in Ob\mathcal{A}$,
\begin{equation}
\mathcal{Y}^{l}(X) = hom_{\mathcal{A}}(Y, X),
\end{equation}
and the module structure is given by the $A_{\infty}$-structure maps of $\mathcal{A}$:
\begin{equation}
n_{\mathcal{Y}^{l}}^{d}: \hom_{\mathcal{A}}(X_{d-2}, X_{d-1}) \otimes \cdots \otimes \hom_{\mathcal{A}}(X_{0}, X_{1}) \otimes \mathcal{Y}^{r}(X_{0}) \to \mathcal{Y}^{r}(X_{d-1}),
\end{equation}
\begin{equation}
n_{\mathcal{Y}^{l}}^{d}(a_{d-1}, \cdots, a_{1}, b) = m^{d+1}_{\mathcal{A}}(a_{d-1}, \cdots, a_{1}, b).
\end{equation}
On the level of morphisms, $\mathfrak{y}_{l}^{1}$ assigns to a morphism $c \in hom_{\mathcal{A}}(Y_{0}, Y_{1})$ a pre-module homomorphism:
\begin{equation*}
(\mathfrak{y}_{l}^{1}(c))^{d}: \hom_{\mathcal{A}}(X_{d-2}, X_{d-1}) \otimes \cdots \otimes \hom_{\mathcal{A}}(X_{0}, X_{1}) \otimes \mathcal{Y}_{1}^{l}(X_{0}) \to \mathcal{Y}_{0}^{l}(X_{d-1}),
\end{equation*}
\begin{equation}
(\mathfrak{y}_{l}^{1}(c))^{d}(a_{d-1}, \cdots, a_{1}, b) = m^{d+1}_{\mathcal{A}}(a_{d-1}, \cdots, a_{1}, b, c).
\end{equation} 
Higher order terms $\mathfrak{y}_{l}^{k}$ are defined in by analogous formulas:
\begin{equation*}
(\mathfrak{y}_{l}^{k}(c_{k}, \cdots, c_{1}))^{d}: \hom_{\mathcal{A}}(X_{d-2}, X_{d-1}) \otimes \cdots \otimes \hom_{\mathcal{A}}(X_{0}, X_{1}) \otimes \mathcal{Y}_{k}^{l}(X_{0}) \to \mathcal{Y}_{0}^{l}(X_{d-1}),
\end{equation*}
\begin{equation}
(\mathfrak{y}_{l}^{k}(c_{k}, \cdots, c_{1}))^{d}(a_{d-1}, \cdots, a_{1}, b) = m^{d+k}_{\mathcal{A}}(a_{d-1}, \cdots, a_{1}, b, c_{k}, \cdots, c_{1}).
\end{equation}
	In case $\mathcal{A}$ is cohomologically unital, the image of the Yoneda functor lies in the $A_{\infty}$-subcategory of $\mathcal{A}^{nu-l-mod}$ consisting of c-unital right $\mathcal{A}_{\infty}$-modules over $\mathcal{A}$. We denote this $A_{\infty}$-subcategory by $\mathcal{A}^{l-mod}$. \par
	An important notion is the representability of an $A_{\infty}$-functor, in the sense of \cite{Fukaya1}. We briefly recall it here. \par

\begin{definition}
	A left $A_{\infty}$-module over $\mathcal{A}$, namely an $A_{\infty}$-functor $\mathcal{M}: \mathcal{A} \to Ch^{op}$ is said to be representable, if it is homotopic (or equivalently quasi-isomorphic) to a left Yoneda module $\mathcal{Y}^{l} = \mathfrak{y}_{l}(Y)$ for some object $Y$ of $\mathcal{A}$, as $A_{\infty}$-functors $\mathcal{A} \to Ch^{op}$.
\end{definition}

	There is also a right Yoneda functor
\begin{equation}
\mathfrak{y}_{r}: \mathcal{A} \to \mathcal{A}^{nu-r-mod},
\end{equation}
whose c-unital version becomes
\begin{equation}
\mathfrak{y}_{r}: \mathcal{A} \to \mathcal{A}^{r-mod}.
\end{equation}
Similarly, a right $A_{\infty}$-module over $\mathcal{A}$ is said to be representable, if it is homotopic to a right Yoneda module $\mathcal{Y}^{r} = \mathfrak{y}_{r}(Y)$. \par
	We will see in the following subsection that the notion of representability does not have ambiguity, up to quasi-isomorphism. That is, the representative is unique up to quasi-isomorphism. \par

\subsection{Yoneda lemma}
	The key point related to Yoneda embedding we want to emphasize here is that whenever $\mathcal{A}$ is c-unital, the Yoneda embedding is cohomologically fully faithful, and therefore is an $A_{\infty}$-homotopy equivalence to its image. This was first proved in \cite{Fukaya1} under the assumption that $\mathcal{A}$ is strictly unital, with a new proof given in \cite{Seidel} in case $\mathcal{A}$ is only cohomologically unital. To see this, let $\mathcal{M}$ be any c-unital left $A_{\infty}$-module over $\mathcal{A}$ and consider the following cochain map
\begin{equation}
\lambda: \mathcal{M}(Y) \to hom_{\mathcal{A}^{mod}}(\mathcal{Y}^{l}, \mathcal{M})
\end{equation}
\begin{equation}
\lambda(c)^{d}(a_{d-1}, \cdots, a_{1}, b) = n_{\mathcal{M}}^{d+1}(a_{d-1}, \cdots, a_{1}, b, c).
\end{equation}
Note in fact this definition also makes sense for general non-unital $\mathcal{A}_{\infty}$-modules, but we will only emphasize its importance in the c-unital case. \par

\begin{lemma}
	Then the above cochain map $\lambda$ is a quasi-isomorphism, for any object $Y$ of $\mathcal{A}$.
\end{lemma}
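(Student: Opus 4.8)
The plan is to prove the (Fukaya--)Yoneda lemma by constructing an explicit homotopy inverse to $\lambda$ and checking the homotopies on cohomology, exploiting the c-unitality of $\mathcal{A}$. Write $\mathcal{Y}^{l} = \mathfrak{y}_{l}(Y)$. First I would define the ``evaluation'' map $\mu: hom_{\mathcal{A}^{mod}}(\mathcal{Y}^{l}, \mathcal{M}) \to \mathcal{M}(Y)$ in the opposite direction by $\mu(t) = t^{1}(e_{Y})$, where $e_{Y} \in hom_{\mathcal{A}}(Y,Y)$ is a cocycle representing the cohomological unit, and $t^{1}$ is the leading (order-$1$) component of the pre-module homomorphism $t$. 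A direct computation using the $A_{\infty}$-module relations and the fact that $m^{1}_{\mathcal{A}}(e_{Y})=0$ shows $\mu$ is a chain map. The composite $\mu \circ \lambda$ sends $c \mapsto n_{\mathcal{M}}^{2}(e_{Y}, c)$, which by c-unitality of $\mathcal{M}$ (as a c-unital module over the c-unital $\mathcal{A}$) is chain-homotopic to $\pm c$; this gives one of the two homotopy identities essentially for free.

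The harder direction is $\lambda \circ \mu \simeq \mathrm{id}$ on $hom_{\mathcal{A}^{mod}}(\mathcal{Y}^{l}, \mathcal{M})$. For this I would write down an explicit chain homotopy $h$ on the morphism complex whose components $h(t)^{d}(a_{d-1},\dots,a_{1},b)$ are built by inserting the unit cocycle $e_{Y}$ into all possible slots of the higher module-map relations — concretely, sums of terms of the form $t^{j}(a_{d-1},\dots, m^{i}_{\mathcal{A}}(\cdots, e_{Y}), \dots, b)$ and $n^{j}_{\mathcal{M}}(\dots, t^{i}(\dots, e_{Y},\dots), \dots)$ — and then verify that $\partial h + h \partial = \mathrm{id} - \lambda\circ\mu$ by a telescoping argument in which consecutive terms cancel in pairs, with the leftover boundary terms exactly the two ends of the telescope. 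This is the standard ``bar resolution / contracting homotopy'' computation; the main obstacle is purely bookkeeping: keeping track of Koszul signs and of the precise indexing of which $e_{Y}$ goes into which tensor slot, so that the cancellation is manifest. An alternative, cleaner route I might take instead is to reduce to the case where $\mathcal{A}$ is \emph{strictly} unital: replace $\mathcal{A}$ by a strictly unital $A_{\infty}$-category quasi-isomorphic to it (using that c-unitality implies homotopy unitality, hence one can strictify), observe that $\lambda$ is natural enough to be compatible with such a quasi-equivalence and with pullback of modules, and invoke the strictly unital Yoneda lemma of \cite{Fukaya1}; then c-unitality of the original statement follows by transport of structure.

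A second, more conceptual approach worth recording: it suffices to check that $\lambda$ is a quasi-isomorphism after passing to cohomology categories, i.e. that $H^{*}(\lambda): H^{*}(\mathcal{M}(Y)) \to H^{*}\big(hom_{\mathcal{A}^{mod}}(\mathcal{Y}^{l},\mathcal{M})\big)$ is an isomorphism. One can filter the morphism complex $hom_{\mathcal{A}^{mod}}(\mathcal{Y}^{l},\mathcal{M})$ by the word-length $d$ in the bar construction; the associated graded differential is the internal differential $m^1$ together with the ``bar'' differential, and the resulting spectral sequence has $E_1$-page computing $Hom$ over the cohomology category $H^{*}(\mathcal{A})$ of the cohomological Yoneda module into $H^{*}(\mathcal{M})$. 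The classical Yoneda lemma for ordinary (graded) categories then identifies this with $H^{*}(\mathcal{M})(Y)$, and a degeneration/comparison argument finishes. I would probably present the first approach (explicit homotopy inverse via insertion of $e_Y$) as the main proof, since it is the one that most directly mirrors \cite{Seidel}, and simply cite \cite{Fukaya1, Seidel} for the verification of the homotopy identities, noting that the only input特ne needs beyond those references is that $\mathcal{M}$ and $\mathcal{Y}^{l}$ are c-unital modules over a c-unital $\mathcal{A}$, which holds by hypothesis.
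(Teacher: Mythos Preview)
Your proposal is correct, and in fact your ``second, more conceptual approach'' is essentially the route the paper takes, while your main approach is a valid alternative. The paper works with the mapping cone of $\lambda$, filters it by length in the bar complex, and shows the $E_1$-page of the associated spectral sequence is acyclic by writing down the contracting homotopy $\kappa(t)(a_{r-1},\dots,a_1,b) = t(a_{r-1},\dots,a_1,b,e_Y)$; this is exactly your ``insert $e_Y$'' operator, but applied at the level of the associated graded (where the $A_\infty$-structure has collapsed to the ordinary category $H(\mathcal{A})$) rather than on the full morphism complex. The advantage of the paper's packaging is that all the sign and telescoping bookkeeping you flag in the $\lambda\circ\mu$ direction disappears: one only has to check that $\kappa$ contracts the classical bar resolution over $H(\mathcal{A})$, which is a one-line computation. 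Your explicit-inverse approach buys you an actual homotopy inverse $\mu$ on the nose, which can be useful downstream, at the cost of that bookkeeping. One small correction: the proof in \cite{Seidel} that you cite as the model for your first approach is in fact the filtration/spectral sequence argument the paper reproduces, not the explicit-inverse construction.
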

\begin{proof}
	The mapping cone of the cochain map $\lambda$ is the following cochain complex:
\begin{equation} \label{mapping cone of Yoneda embedding}
(\mathcal{M}(Y) \oplus \hom_{\mathcal{A}^{l-mod}}(\mathcal{Y}^{l}, \mathcal{M})[-1], \begin{pmatrix}
n^{1}_{\mathcal{M}} & 0\\
\lambda & -m^{1}_{\mathcal{A}^{l-mod}}
\end{pmatrix}
)[1].
\end{equation}
Define a filtration on this cochain complex by first taking the subcomplex $\hom_{\mathcal{A}^{l-mod}}(\mathcal{Y}^{l}, \mathcal{M})[-1]$, then filtering it by its natural length filtration. Denote $A = H(\mathcal{A}), M = H(\mathcal{M})$. Associated to this filtration there is a spectral sequence whose $E_{1}$-page is
\begin{equation}
E_{1}^{rs} =
\begin{cases}
M^{s}(Y), \text{ if } r=0,\\
\begin{split}
\prod_{X_{0}, \cdots, X_{r-1}} & Hom^{s}_{R}(Hom_{A}(X_{r-2}, X_{r-1}) \otimes \cdots \otimes Hom_{A}(X_{0}, X_{1})\\
&\otimes Hom_{A}(Y, X_{0}), M(X_{r-1})), \text{ if } r>0.
\end{split}
\end{cases}
\end{equation}
The differential $d = d_{1}^{rs}: E_{1}^{rs} \to E_{1}^{r+1, s}$ is given by $d(c)(b) = (-1)^{|b|}(c)$ if $r=0$, and
\begin{equation}
\begin{split}
d(t)(a_{r}, \cdots, a_{1}, b) &= (-1)^{|b|+ *_{r}}t(a_{r}, \cdots, a_{1})b + (-1)^{|b|}t(a_{r}, a_{r-1}, \cdots, a_{1}b)\\
&+ \sum_{n} (-1)^{\Delta}t(a_{r}, \cdots, a_{n+2}a_{n+1}, \cdots, a_{1}, b),
\end{split}
\end{equation}
where $*_{r} = |a_{1}| + \cdots + |a_{r}| - r$, and $\Delta = |a_{n+2}| + \cdots + |a_{r}| + |b| + n + 1 - r$. In the above expression, $a_{i}a_{j}$ is the induced composition in the cohomology category $A$, which is associative, and $ab$ is the induced $A$-module structure on $M$ from the structure of left $A_{\infty}$-module on $\mathcal{M}$. This is the standard bar resolution of the cochain complex \eqref{mapping cone of Yoneda embedding}, which in the presence of cohomological unit of $\mathcal{A}$, admits a contracting homotopy
\begin{equation}
\kappa: E_{1}^{r+1, s} \to E_{1}^{rs}
\end{equation}
\begin{equation}
\kappa(t)(a_{r-1}, \cdots, a_{1}, b) = t(a_{r-1}, \cdots, a_{1}, b, e_{Y}),
\end{equation}
where $e_{Y}$ is a cochain in $\hom_{\mathcal{A}}(Y, Y)$ representing the identity morphism in $H(\hom_{\mathcal{A}}(Y, Y))$. This shows that the spectral sequence degenerates at the $E_{1}$-page, which imples that the cochain complex \eqref{mapping cone of Yoneda embedding} is acyclic, and therefore the cochain map $\lambda$ is a quasi-isomorphism.
\end{proof}

\begin{corollary}
	If $\mathcal{A}$ is c-unital, then the (c-unital) left Yoneda functor
\begin{equation*}
\mathfrak{y}_{l}: \mathcal{A} \to \mathcal{A}^{l-mod}
\end{equation*}
is cohomologically fully faithful.
\end{corollary}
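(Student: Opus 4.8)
The plan is to deduce the corollary directly from the Lemma just proved. The linear term $\mathfrak{y}_l^1$ of the Yoneda functor, evaluated on a morphism $c \in \hom_{\mathcal{A}}(Y_0, Y_1)$, is precisely $\lambda(c)$ where $\lambda$ is the cochain map of the Lemma applied to $\mathcal{M} = \mathcal{Y}_1^l = \mathfrak{y}_l(Y_1)$, the Yoneda module of $Y_1$; this is visible from comparing the formula $(\mathfrak{y}_l^1(c))^d(a_{d-1},\dots,a_1,b) = m_{\mathcal{A}}^{d+1}(a_{d-1},\dots,a_1,b,c)$ with $\lambda(c)^d(a_{d-1},\dots,a_1,b) = n_{\mathcal{M}}^{d+1}(a_{d-1},\dots,a_1,b,c)$ and recalling that $n_{\mathcal{Y}_1^l}^{d+1} = m_{\mathcal{A}}^{d+2}$ by definition of the Yoneda module structure. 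Hence $\mathfrak{y}_l^1 \colon \hom_{\mathcal{A}}(Y_0, Y_1) \to \hom_{\mathcal{A}^{l-mod}}(\mathcal{Y}_0^l, \mathcal{Y}_1^l)$ coincides, up to the identification above, with $\lambda$ for the module $\mathcal{Y}_1^l$.

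First I would spell out this identification carefully: fix objects $Y_0, Y_1 \in Ob\,\mathcal{A}$, set $\mathcal{M} = \mathcal{Y}_1^l = \mathfrak{y}_l(Y_1)$, and observe that $\mathcal{M}$ is a c-unital left $A_\infty$-module over $\mathcal{A}$ since $\mathcal{A}$ is c-unital (this is the statement, recorded earlier in the excerpt, that the Yoneda functor lands in $\mathcal{A}^{l-mod}$). Then $\mathcal{M}(Y_0) = \hom_{\mathcal{A}}(Y_1, Y_0)$. Wait --- I should be careful about the variance: the Yoneda module $\mathcal{Y}_1^l$ has $\mathcal{Y}_1^l(X) = \hom_{\mathcal{A}}(Y_1, X)$, so $\mathcal{M}(Y_0) = \hom_{\mathcal{A}}(Y_1, Y_0)$, and the natural domain of $\lambda$ is $\mathcal{M}(Y_0)$, which records morphisms \emph{out of} $Y_1$. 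Meanwhile $\mathfrak{y}_l^1$ acts on $c \in \hom_{\mathcal{A}}(Y_0, Y_1)$. These match once one tracks the opposite-category conventions in the definition of left modules as functors to $Ch^{op}$, so I would present the diagram
\begin{equation*}
\begin{CD}
\hom_{\mathcal{A}}(Y_0, Y_1) @>{\mathfrak{y}_l^1}>> \hom_{\mathcal{A}^{l-mod}}(\mathcal{Y}_0^l, \mathcal{Y}_1^l)\\
@| @|\\
\mathcal{M}(Y_0) @>{\lambda}>> \hom_{\mathcal{A}^{l-mod}}(\mathcal{Y}_0^l, \mathcal{M})
\end{CD}
\end{equation*}
and note it commutes on the nose by the matching of formulas.

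Next, the Lemma says $\lambda$ is a quasi-isomorphism for every object, in particular for $Y = Y_0$ and the module $\mathcal{M} = \mathcal{Y}_1^l$. Therefore $\mathfrak{y}_l^1 \colon \hom_{\mathcal{A}}(Y_0, Y_1) \to \hom_{\mathcal{A}^{l-mod}}(\mathcal{Y}_0^l, \mathcal{Y}_1^l)$ is a quasi-isomorphism, i.e.\ it induces an isomorphism on cohomology. Since $Y_0, Y_1$ were arbitrary, this says exactly that $\mathfrak{y}_l$ is cohomologically fully faithful, which is the assertion of the corollary.

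I do not expect a genuine obstacle here --- the corollary is a formal consequence of the Lemma --- so the only thing requiring care is bookkeeping: making sure the sign conventions and the $Ch$ versus $Ch^{op}$ variance in the definition of left $A_\infty$-modules are consistent, so that the identification $\mathfrak{y}_l^1 = \lambda$ (for $\mathcal{M} = \mathcal{Y}_1^l$) is literally correct and not merely correct up to a sign or a transpose. Once that is pinned down, the proof is one line: apply the Lemma.
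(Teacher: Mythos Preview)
Your approach is exactly the intended one: the paper gives no separate proof of the corollary, so it is meant to follow immediately from the Lemma by specializing $\mathcal{M}$ to a Yoneda module and recognizing $\lambda$ as $\mathfrak{y}_l^1$.

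There is, however, a bookkeeping slip in your identification. With your choice $\mathcal{M} = \mathcal{Y}_1^l$ and $Y = Y_0$, the domain of $\lambda$ is $\mathcal{M}(Y_0) = \hom_{\mathcal{A}}(Y_1, Y_0)$, not $\hom_{\mathcal{A}}(Y_0, Y_1)$; the left vertical ``identity'' in your diagram is not one, and no amount of $Ch^{op}$ unwinding will fix it. The correct specialization is the swap: take $Y = Y_1$ and $\mathcal{M} = \mathcal{Y}_0^l$. Then $\mathcal{M}(Y_1) = \mathcal{Y}_0^l(Y_1) = \hom_{\mathcal{A}}(Y_0, Y_1)$, and for $c \in \hom_{\mathcal{A}}(Y_0, Y_1)$, $b \in \mathcal{Y}_1^l(X_0) = \hom_{\mathcal{A}}(Y_1, X_0)$ one has
\[
\lambda(c)^d(a_{d-1},\dots,a_1,b) = n_{\mathcal{Y}_0^l}^{d+1}(a_{d-1},\dots,a_1,b,c) = m_{\mathcal{A}}^{d+1}(a_{d-1},\dots,a_1,b,c) = (\mathfrak{y}_l^1(c))^d(a_{d-1},\dots,a_1,b),
\]
matching the paper's displayed formula for $\mathfrak{y}_l^1$ on the nose (note that in the paper's convention $\mathfrak{y}_l^1(c)$ is a pre-module homomorphism $\mathcal{Y}_1^l \to \mathcal{Y}_0^l$, consistent with left modules being functors to $Ch^{op}$). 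With this correction your one-line deduction from the Lemma goes through.
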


	There is a parallel discussion for right Yoneda functors. That is, the right Yoneda functor
\begin{equation*}
\mathfrak{y}_{r}: \mathcal{A} \to \mathcal{A}^{r-mod}
\end{equation*}
is cohomologically fully faithful. \par

	For this reason, we also call the Yoneda functor the Yoneda embedding. And there is no ambiguity for the notion of representability. This allows us to make the following definition. \par

\begin{definition}
	The $A_{\infty}$-category $\mathcal{A}^{rep-l-mod}$ (resp. $\mathcal{A}^{rep-r-mod}$) of representable left (resp. right) $A_{\infty}$-modules over $\mathcal{A}$, is the full $A_{\infty}$-subcategory of $\mathcal{A}^{l-mod}$ whose objects are representable left (resp. right) $A_{\infty}$-modules. Equivalently, it is the image of the left (resp. right) Yoneda embedding $\mathfrak{y}_{l}$ (resp. $\mathfrak{y}_{r}$) inside $\mathcal{A}^{l-mod}$ (resp. $\mathcal{A}^{r-mod}$).
\end{definition}

\subsection{Bimodules and functors}\label{section: bimodules and functors}
	Let us relate the story of representable $A_{\infty}$-modules to that of $A_{\infty}$-functors. We shall be considering $A_{\infty}$-functors of the following kind
\begin{equation}
\mathcal{F}_{m}: \mathcal{A} \to \mathcal{B}^{l-mod}.
\end{equation}
Such an $A_{\infty}$-functor is called a module-valued functor, which says that it suffices to verify representability on objects. \par

\begin{definition}
	A module-valued functor
\begin{equation*}
\mathcal{F}_{m}: \mathcal{A} \to \mathcal{B}^{l-mod}
\end{equation*}
is said to be representable, if there exists an $A_{\infty}$-functor
\begin{equation}
\mathcal{F}: \mathcal{A} \to \mathcal{B},
\end{equation}
such that $\mathfrak{y}_{l} \circ \mathcal{F}$ is homotopic to $\mathcal{F}_{m}$ as $A_{\infty}$-functors.
\end{definition}

	We have the following concrete criterion for representability of a module-valued functor. \par 

\begin{lemma}\label{criterion for representability in terms of objects}
	A module-valued functor
\begin{equation*}
\mathcal{F}_{m}: \mathcal{A} \to \mathcal{B}^{l-mod}
\end{equation*}
is representable, if and only if $\mathcal{F}_{m}(X)$ is a representable left $\mathcal{B}$-module of any object $X \in Ob\mathcal{A}$.
\end{lemma}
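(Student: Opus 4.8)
The plan is to reduce the statement to the Yoneda lemma proved above. The nontrivial direction is ``objectwise representability $\Rightarrow$ representability of the functor''; the converse is immediate, since if $\mathfrak{y}_{l} \circ \mathcal{F}$ is homotopic to $\mathcal{F}_{m}$ for some $A_{\infty}$-functor $\mathcal{F}: \mathcal{A} \to \mathcal{B}$, then $\mathcal{F}_{m}(X)$ is homotopic to the left Yoneda module $\mathfrak{y}_{l}(\mathcal{F}(X))$ and hence representable for every $X \in Ob \mathcal{A}$.

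So assume each $\mathcal{F}_{m}(X)$ is a representable left $\mathcal{B}$-module. Since ``representable'' means ``quasi-isomorphic to a left Yoneda module'', every object $\mathcal{F}_{m}(X)$ already lies in the full $A_{\infty}$-subcategory $\mathcal{B}^{rep-l-mod} \subset \mathcal{B}^{l-mod}$. Because $\mathcal{B}^{rep-l-mod}$ is a \emph{full} subcategory, its morphism complexes on these objects agree with those of $\mathcal{B}^{l-mod}$, so all the structure maps of $\mathcal{F}_{m}$ automatically take values in $\mathcal{B}^{rep-l-mod}$. The first step, then, is to corestrict $\mathcal{F}_{m}$ to an $A_{\infty}$-functor $\mathcal{A} \to \mathcal{B}^{rep-l-mod}$, which I keep denoting by $\mathcal{F}_{m}$.

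Next I would invoke the Yoneda lemma and its corollary: the left Yoneda embedding is cohomologically fully faithful, and by definition $\mathcal{B}^{rep-l-mod}$ is its essential image inside $\mathcal{B}^{l-mod}$. Hence the corestricted embedding $\mathfrak{y}_{l}: \mathcal{B} \to \mathcal{B}^{rep-l-mod}$ is cohomologically fully faithful and cohomologically essentially surjective, i.e. a quasi-equivalence of (c-unital) $A_{\infty}$-categories. Using the standard fact that such a quasi-equivalence admits a quasi-inverse $A_{\infty}$-functor $\mathfrak{g}: \mathcal{B}^{rep-l-mod} \to \mathcal{B}$ with $\mathfrak{y}_{l} \circ \mathfrak{g}$ homotopic to the identity, I would set $\mathcal{F} := \mathfrak{g} \circ \mathcal{F}_{m}: \mathcal{A} \to \mathcal{B}$. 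Then $\mathfrak{y}_{l} \circ \mathcal{F} = \mathfrak{y}_{l} \circ \mathfrak{g} \circ \mathcal{F}_{m}$ is homotopic to $\mathcal{F}_{m}$ as $A_{\infty}$-functors $\mathcal{A} \to \mathcal{B}^{rep-l-mod}$, hence also as functors into $\mathcal{B}^{l-mod}$, which exhibits $\mathcal{F}_{m}$ as representable, represented by $\mathcal{F}$.

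The step I expect to be the main obstacle is the production of the quasi-inverse $\mathfrak{g}$, i.e. the passage from ``cohomologically full, faithful and essentially surjective'' to ``$A_{\infty}$-homotopy equivalence'' for c-unital $A_{\infty}$-categories (as in \cite{Seidel}). If one prefers to keep the argument self-contained rather than quoting this, the alternative is an obstruction-theoretic construction of $\mathcal{F}$ directly: choose $\mathcal{F}(X) \in Ob \mathcal{B}$ together with quasi-isomorphisms $\mathfrak{y}_{l}(\mathcal{F}(X)) \to \mathcal{F}_{m}(X)$, and then build the higher-order components $\mathcal{F}^{d}$ (and an accompanying homotopy) one stage at a time, where the obstruction to extending across each stage is a cohomology class in a complex of the shape $\hom_{\mathcal{B}^{l-mod}}(\mathfrak{y}_{l}(\mathcal{F}(X_{0})), \mathcal{F}_{m}(X_{d-1}))$; by the Yoneda lemma the cochain map $\lambda$ identifies this, up to quasi-isomorphism, with $\mathcal{F}_{m}(X_{d-1})(\mathcal{F}(X_{0}))$, and tracking the induced differential shows the relevant classes vanish so that the induction proceeds. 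I would present the homotopy-inverse route as the main argument and mention the obstruction-theoretic one only as a remark.
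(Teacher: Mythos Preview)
Your proposal is correct and follows essentially the same strategy as the paper: both arguments reduce to choosing a homotopy inverse of the Yoneda embedding $\mathfrak{y}_{l}: \mathcal{B} \to \mathcal{B}^{rep-l-mod}$ (the paper calls it $\lambda_{\mathcal{B}}$, you call it $\mathfrak{g}$) and then composing. The only cosmetic difference is that the paper, rather than corestricting $\mathcal{F}_{m}$ directly, first picks for each $X$ an explicit quasi-isomorphism $T_{X}: \mathcal{F}_{m}(X) \to \mathfrak{y}_{l}(Y(X))$ with homotopy inverse $K_{X}$, conjugates each $\mathcal{F}_{m}^{k}$ by these to land in morphisms between genuine Yoneda modules, and then applies $\lambda_{\mathcal{B}}$; your route packages the same content more cleanly by treating $\mathcal{F}_{m}(X)$ as already an object of $\mathcal{B}^{rep-l-mod}$ and invoking quasi-invertibility at once.
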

\begin{proof}
	The "only if" part is obvious by definition. \par	
	Now consider the "if" part. Since for every $X \in Ob\mathcal{A}$, the $A_{\infty}$-module $\mathcal{F}_{m}(X)$ over $\mathcal{B}$ is representable, there exists an object $Y = Y(X)$ and an $A_{\infty}$-module homomorphism
\begin{equation}
T_{X}: \mathcal{F}_{m}(X) \to \mathcal{Y}^{l},
\end{equation}
which is a quasi-isomorphism of $A_{\infty}$-modules. We make a choice of a homotopy inverse $K_{X}$ of $T_{X}$ for each $X$. \par

	Let us recall what it means for $\mathcal{F}_{m}$ to be representable. There should exist an $A_{\infty}$-functor
\begin{equation*}
\mathcal{F}: \mathcal{A} \to \mathcal{B},
\end{equation*}
as well as an $A_{\infty}$-natural transformation of degree $1$
\begin{equation*}
T \in \hom_{func(\mathcal{A}, \mathcal{B}^{l-mod})}(\mathcal{F}_{m}, \mathfrak{y}_{l} \circ \mathcal{F}),
\end{equation*}
such that $T$ is a homotopy between $\mathcal{F}_{m}$ and $\mathfrak{y}_{l} \circ \mathcal{F}$. \par
	To define these, we need to pick a homotopy inverse of the Yoneda embedding
\begin{equation*}
\mathfrak{y}_{l}: \mathcal{B} \to \mathcal{B}^{l-mod},
\end{equation*}
when restricted to the image. That is, if we regard the Yoneda embedding as an $A_{\infty}$-functor
\begin{equation*}
\mathfrak{y}_{l}: \mathcal{B} \to \mathcal{B}^{rep-l-mod},
\end{equation*}
it is a quasi-isomorphism, so that we can choose a homotopy inverse,
\begin{equation}
\lambda_{\mathcal{B}}: \mathcal{B}^{rep-l-mod} \to \mathcal{B}.
\end{equation} \par
	We define $\mathcal{F}$ as follows. On objects, $\mathcal{F}(X) = Y = Y(X)$. For $X_{0}, X_{1} \in Ob\mathcal{A}$, we define
\begin{equation}
\mathcal{F}^{1}: \hom_{\mathcal{A}}(X_{0}, X_{1}) \to \hom_{\mathcal{B}}(Y_{0}, Y_{1})
\end{equation}
as follows. For each $a \in \hom_{\mathcal{A}}(X_{0}, X_{1})$, the morphism 
\begin{equation*}
\mathcal{F}_{m}^{1}(a) \in \hom_{\mathcal{B}^{l-mod}}(\mathcal{F}_{m}(X_{0}), \mathcal{F}_{m}(X_{1}))
\end{equation*}
is a $A_{\infty}$-pre-module homomorphism, which can be composed with $K_{X_{0}}$ and $T_{X_{1}}$ in the dg-category $\mathcal{B}^{l-mod}$ by the structure map $m^{2}_{\mathcal{B}^{l-mod}}$ to get an $A_{\infty}$-pre-module homomorphism
\begin{equation}
\mathcal{G}_{m}(a) = m^{2}_{\mathcal{B}^{l-mod}}(T_{X_{1}}, m^{2}_{\mathcal{B}^{l-mod}}(\mathcal{F}_{m}(a), K_{X_{0}})) \in \hom_{\mathcal{B}^{l-mod}}(\mathcal{Y}_{0}^{l}, \mathcal{Y}_{1}^{l}).
\end{equation}
Since $m^{2}_{\mathcal{B}^{l-mod}}$ is associative, there is no ambiguity of this composition, and therefore this is well-defined in a unique way, once we fix a choice of a homotopy inverse $K_{X}$ of $T_{X}$ for every $X \in Ob\mathcal{A}$, as this homotopy inverse is indepedent of $a$. Note that
\begin{equation*}
\mathcal{G}_{m}(a) \in \hom_{\mathcal{B}^{l-mod}}(\mathcal{Y}_{0}^{l}, \mathcal{Y}_{1}^{l})
\end{equation*}
is an $A_{\infty}$-pre-module homomorphism between left Yoneda modules, which lie in the sub-category $\mathcal{B}^{rep-l-mod}$. Thus we can apply $\lambda_{\mathcal{B}}$ to $\mathcal{G}_{m}(a)$ to obtain
\begin{equation}
\mathcal{F}^{1}(a) = \lambda_{\mathcal{B}}^{1}(\mathcal{G}_{m}(a)).
\end{equation}
This defines $\mathcal{F}$ on morphisms. \par
	For higher order terms $\mathcal{F}^{k}$, we again follow the same strategy. That is, we define
\begin{equation}
\mathcal{F}^{k}: \hom_{\mathcal{A}}(X_{k-1}, X_{k}) \otimes \cdots \otimes \hom_{\mathcal{A}}(X_{0}, X_{1}) \to \hom_{\mathcal{B}}(Y_{0}, Y_{d})
\end{equation}
to be the image of the composition of $\mathcal{F}_{m}^{k}$ with $K_{X_{0}}$ and $T_{X_{k}}$ under $\lambda_{\mathcal{B}}$. That is, for $a_{i} \in \hom_{\mathcal{A}}(X_{i-1}, X_{i})$, we define
\begin{equation}
\mathcal{F}^{k}(a_{k}, \cdots, a_{1}) = \lambda_{\mathcal{B}}^{1}(m^{2}_{\mathcal{B}^{l-mod}}(T_{X_{k}}, m^{2}_{\mathcal{B}^{l-mod}}(\mathcal{F}_{m}^{k}(a_{k}, \cdots, a_{1}), K_{X_{0}}))).
\end{equation} \par
	It is a straightforward computation to check that $\mathcal{F} = \{\mathcal{F}^{k}\}_{k=1}^{\infty}$ satisfies the $A_{\infty}$-functor equations. \par
	Then we need to define the homotopy between $\mathfrak{y}_{l} \circ \mathcal{F}$ and $\mathcal{F}_{m}$. But this is clear: we simply take

\end{proof}

	A natural source of module-valued functors is given by $A_{\infty}$-bimodules. Let $\mathcal{P}$ be a left-$\mathcal{B}$ right-$\mathcal{A}$ $A_{\infty}$-bimodule, or simply called an $A_{\infty}$-bimodule over $(\mathcal{A}, \mathcal{B})$. This notation is slightly misleading as we write $\mathcal{A}$ on the left and $\mathcal{B}$ on the right, but we shall keep this convention as it is fitted into Floer theory which will be discussed later on. From $\mathcal{P}$ we can define an $A_{\infty}$-functor
\begin{equation}
\mathcal{F}_{\mathcal{P}}: \mathcal{A} \to \mathcal{B}^{l-mod}
\end{equation}
as follows. For each object $X \in Ob\mathcal{A}$, we set
\begin{equation*}
\mathcal{F}_{\mathcal{P}}(X) = \mathcal{P}(X, \cdot).
\end{equation*}
That is, $\mathcal{F}_{\mathcal{P}}(X)$ is the left-$\mathcal{B}$ module that takes value $\mathcal{P}(X, Y)$ for each object $Y \in Ob\mathcal{B}$, and has module structure maps
\begin{equation}
n^{k}_{\mathcal{F}_{\mathcal{P}}(X)}: \hom_{\mathcal{B}}(Y_{k-1}, Y_{k}) \otimes \hom_{\mathcal{B}}(Y_{0}, Y_{1}) \otimes \mathcal{P}(X, Y_{0}) \to \mathcal{P}(X, Y_{k}),
\end{equation}
\begin{equation}
n^{k}_{\mathcal{F}_{\mathcal{P}}(X)}(b_{k}, \cdots, b_{1}, p) = n^{k, 0}_{\mathcal{P}}(b_{k}, \cdots, b_{1}, p),
\end{equation}
where $n^{k, l}_{\mathcal{P}}$ are the $A_{\infty}$-bimodule structure maps of $\mathcal{P}$. Next we define the action of $\mathcal{F}_{\mathcal{P}}$ on morphism spaces
\begin{equation}
\mathcal{F}_{\mathcal{P}}^{l}: \hom_{\mathcal{A}}(X_{l-1}, X_{l}) \otimes \cdots \otimes \hom_{\mathcal{A}}(X_{0}, X_{1}) \to \hom_{\mathcal{B}^{l-mod}}(\mathcal{F}_{\mathcal{P}}(X_{l}), \mathcal{F}_{\mathcal{P}}(X_{0}))
\end{equation}
by the formula
\begin{equation}
(\mathcal{F}_{\mathcal{P}}^{l}(a_{l}, \cdots, a_{1}))^{k}(b_{k}, \cdots, b_{1}, p) = n^{k, l}_{\mathcal{P}}(b_{k}, \cdots, b_{1}, p, a_{l}, \cdots, a_{1}).
\end{equation}
By the $A_{\infty}$-bimodule equations for $n^{k, l}_{\mathcal{P}}$, it is straightforward to verify: \par

\begin{lemma}
	The multilinear maps $\{\mathcal{F}_{\mathcal{P}}^{l}\}_{l=1}^{\infty}$ form an $A_{\infty}$-functor
\begin{equation*}
\mathcal{F}_{\mathcal{P}}: \mathcal{A} \to \mathcal{B}^{l-mod}.
\end{equation*}
\end{lemma}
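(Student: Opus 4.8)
The plan is to reduce the $A_\infty$-functor equations for the collection $\{\mathcal{F}_{\mathcal{P}}^{l}\}_{l=1}^{\infty}$ to the $A_\infty$-bimodule equations for $\mathcal{P}$ by a term-by-term comparison. Since the target $\mathcal{B}^{l-mod}$ is a dg-category, only its differential $m^1_{\mathcal{B}^{l-mod}}$ and its composition $m^2_{\mathcal{B}^{l-mod}}$ appear in these equations, together with the higher products $m^j_{\mathcal{A}}$ of the source; concretely, for each $l \ge 1$ and each tuple $a_i \in \hom_{\mathcal{A}}(X_{i-1}, X_i)$ one must verify
\begin{equation*}
\begin{aligned}
&m^1_{\mathcal{B}^{l-mod}}\big(\mathcal{F}_{\mathcal{P}}^{l}(a_l, \dots, a_1)\big) + \sum_{0 < i < l} \pm\, m^2_{\mathcal{B}^{l-mod}}\big(\mathcal{F}_{\mathcal{P}}^{l-i}(a_l, \dots, a_{i+1}),\, \mathcal{F}_{\mathcal{P}}^{i}(a_i, \dots, a_1)\big) \\
&\qquad = \sum_{j, s} \pm\, \mathcal{F}_{\mathcal{P}}^{l-j+1}\big(a_l, \dots, a_{s+j+1}, m^j_{\mathcal{A}}(a_{s+j}, \dots, a_{s+1}), a_s, \dots, a_1\big).
\end{aligned}
\end{equation*}
Before this I would dispose of two routine preliminaries: that each $\mathcal{F}_{\mathcal{P}}^{l}(a_l, \dots, a_1)$ has the right degree to be an $A_\infty$-pre-module homomorphism from $\mathcal{F}_{\mathcal{P}}(X_l)$ to $\mathcal{F}_{\mathcal{P}}(X_0)$, and that $\mathcal{F}_{\mathcal{P}}(X)$ is genuinely a left $\mathcal{B}$-module, the latter being nothing but the $A_\infty$-bimodule relation for $\mathcal{P}$ specialized to zero inputs from $\mathcal{A}$.

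The core step is then to evaluate the displayed identity on an arbitrary test element $b_k \otimes \cdots \otimes b_1 \otimes p$ with $b_j \in \hom_{\mathcal{B}}(Y_{j-1}, Y_j)$ and $p \in \mathcal{P}(X_l, Y_0)$, and substitute the defining formulas $(\mathcal{F}_{\mathcal{P}}^{l}(a_l, \dots, a_1))^{k}(b_k, \dots, b_1, p) = n^{k, l}_{\mathcal{P}}(b_k, \dots, b_1, p, a_l, \dots, a_1)$ and $n^{k}_{\mathcal{F}_{\mathcal{P}}(X)} = n^{k, 0}_{\mathcal{P}}$, together with the explicit formulas for the differential and the composition of pre-module homomorphisms in $\mathcal{B}^{l-mod}$. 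After the substitution every summand becomes a (possibly nested) application of the maps $n^{\bullet, \bullet}_{\mathcal{P}}$, $m^{\bullet}_{\mathcal{B}}$ and $m^{\bullet}_{\mathcal{A}}$ to the string $(b_k, \dots, b_1, p, a_l, \dots, a_1)$, and one checks that the summands reorganize exactly into the left-hand side of the $A_\infty$-bimodule relation for $\mathcal{P}$ on that string: the $m^2_{\mathcal{B}^{l-mod}}$ terms together with the two pieces of $m^1_{\mathcal{B}^{l-mod}}$ that compose an outer, resp. inner, module structure map with $\mathcal{F}_{\mathcal{P}}^{l}$ produce the full family of quadratic-in-$n_{\mathcal{P}}$ terms (the two pieces of $m^1_{\mathcal{B}^{l-mod}}$ supplying the degenerate splittings where all, resp. none, of the $a$'s sit inside the inner $n_{\mathcal{P}}$); the remaining "insert $m_{\mathcal{B}}$" piece of $m^1_{\mathcal{B}^{l-mod}}$ produces the terms with an interior $m^{\bullet}_{\mathcal{B}}$; and the right-hand side produces the terms with an interior $m^{\bullet}_{\mathcal{A}}$. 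Since the bimodule relation asserts that this total sum vanishes, the functor equation holds on every test element, hence identically.

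The only genuine obstacle is the sign bookkeeping: one must check that the Koszul signs prescribed by the $A_\infty$-functor convention for $\mathcal{F}_{\mathcal{P}}$, those built into the dg-structure of the category $\mathcal{B}^{l-mod}$ of left $A_\infty$-modules, and those in the $A_\infty$-bimodule equations for $\mathcal{P}$ all line up under the dictionary above. I would handle this by fixing once and for all the reduced-degree conventions of Section 2 (so that every structure map has degree $+1$ on the shifted complexes), rewriting each of the three sign rules in those terms, and then verifying sign-compatibility of the bijection of summands described in the previous paragraph. This is laborious but entirely mechanical, and no new idea is required; everything else in the argument is formal.
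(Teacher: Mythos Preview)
Your proposal is correct and follows exactly the approach the paper indicates: the paper states only that the lemma is ``straightforward to verify'' from the $A_\infty$-bimodule equations for $n^{k,l}_{\mathcal{P}}$, and you have supplied precisely that verification by unwinding the dg-structure of $\mathcal{B}^{l-mod}$ and matching terms against the bimodule relation. There is nothing to add.
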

	
	All $A_{\infty}$-bimodules over $(\mathcal{A}, \mathcal{B})$ form an $A_{\infty}$-category, denoted by $(\mathcal{A}, \mathcal{B})^{bimod}$. The above construction of a module-valued functor $\mathcal{F}_{\mathcal{P}}$ associated to an $A_{\infty}$-bimodule $\mathcal{P}$ can be made on the level of the whole category $(\mathcal{A}, \mathcal{B})^{bimod}$. \par

\begin{proposition}
	There is a canonical $A_{\infty}$-functor
\begin{equation}
\mathcal{F}: (\mathcal{A}, \mathcal{B})^{bimod} \to func(\mathcal{A}, \mathcal{B}^{l-mod}),
\end{equation}
such that $\mathcal{F}(\mathcal{P}) = \mathcal{F}_{\mathcal{P}}$ for every object $\mathcal{P} \in Ob(\mathcal{A}, \mathcal{B})^{bimod}$.
\end{proposition}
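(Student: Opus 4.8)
The plan is to upgrade the object-level construction $\mathcal{P} \mapsto \mathcal{F}_{\mathcal{P}}$ to a map on morphism spaces and higher-order multilinear maps, and to verify that the result satisfies the $A_\infty$-functor equations. Recall that a morphism in $(\mathcal{A},\mathcal{B})^{bimod}$ from $\mathcal{P}$ to $\mathcal{Q}$ is a pre-bimodule homomorphism, i.e. a collection of multilinear maps $t^{k,l}:\hom_{\mathcal{B}}(Y_{k-1},Y_k)\otimes\cdots\otimes\hom_{\mathcal{B}}(Y_0,Y_1)\otimes\mathcal{P}(X_0,Y_0)\otimes\hom_{\mathcal{A}}(X_1,X_0)\otimes\cdots\otimes\hom_{\mathcal{A}}(X_l,X_{l-1})\to\mathcal{Q}(X_l,Y_k)$, and the differential $m^1$ on $(\mathcal{A},\mathcal{B})^{bimod}$ and composition $m^2$ are given by the usual bar-type formulas. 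Likewise a morphism in $func(\mathcal{A},\mathcal{B}^{l-mod})$ from $\mathcal{F}_{\mathcal{P}}$ to $\mathcal{F}_{\mathcal{Q}}$ is an $A_\infty$-pre-natural transformation, which at each arity is a pre-module homomorphism. So the first step is to define $\mathcal{F}^1$ by the evident formula: to a pre-bimodule homomorphism $t=\{t^{k,l}\}$ assign the pre-natural transformation whose component at $(a_l,\dots,a_1)$ is the pre-module homomorphism $(b_k,\dots,b_1,p)\mapsto t^{k,l}(b_k,\dots,b_1,p,a_l,\dots,a_1)$, exactly mirroring the formula already used for $\mathcal{F}_{\mathcal{P}}^l$ on morphisms of $\mathcal{A}$. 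The higher terms $\mathcal{F}^d$ for $d\ge 2$ will take $d$ pre-bimodule homomorphisms $t_d,\dots,t_1$ to the pre-natural transformation whose component inserts the $t_i$ in sequence along the $\mathcal{A}$-inputs, again by a single bar-type formula with appropriate Koszul signs.

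Next I would record the bookkeeping that makes this work. The key structural observation is that both sides, $(\mathcal{A},\mathcal{B})^{bimod}$ and $func(\mathcal{A},\mathcal{B}^{l-mod})$, are built from the same combinatorial data — the free modules generated by bar-type tensor expressions — and the only difference is whether one reads a block of $\mathcal{B}$-morphisms together with the $\mathcal{P}$-entry as a single "module input" or separately. Thus there is a natural identification, at the level of graded vector spaces, between $\hom_{(\mathcal{A},\mathcal{B})^{bimod}}(\mathcal{P},\mathcal{Q})$ and a subspace of $\hom_{func(\mathcal{A},\mathcal{B}^{l-mod})}(\mathcal{F}_{\mathcal{P}},\mathcal{F}_{\mathcal{Q}})$, and under this identification the differential and composition on the bimodule side are carried precisely to those on the functor side. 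I would make this precise by comparing the two sets of structure equations term by term: the $A_\infty$-bimodule equation for $\mathcal{P}$ reorganizes into the left-module equation for $\mathcal{F}_{\mathcal{P}}(X)$ together with the pre-natural-transformation closedness of $\mathcal{F}_{\mathcal{P}}^l$, and similarly for morphisms, which is the content already implicit in the two preceding lemmas. Carrying this reorganization through for compositions of several morphisms then yields that $\mathcal{F}$, defined as above, satisfies $\sum (\text{functor equation terms}) = 0$ because the corresponding sum of bimodule-equation terms vanishes.

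The main obstacle I expect is purely notational: getting the signs to match. The Koszul sign conventions in the bimodule bar complex and in the functor category differ by reindexing (the $\mathcal{A}$-inputs appear reversed, the module input sits in the middle rather than at one end, and degree shifts $[1]$ are distributed differently between $\hom$-spaces and the module/bimodule underlying complexes), so a careful fixed choice of conventions — matching those already set up in Section 2 and in the two lemmas above — is needed to ensure that the naive formula for $\mathcal{F}^d$ is literally correct rather than off by a sign depending on arities. I would handle this by checking $\mathcal{F}^1$ is a chain map and $\mathcal{F}^2$ satisfies the leading $A_\infty$-functor relation explicitly, then invoking the general principle that once the low-arity signs are fixed the higher relations follow formally from the associativity of the bar differential; alternatively, one can sidestep signs entirely by presenting $\mathcal{F}$ as the composite of the identification of $(\mathcal{A},\mathcal{B})^{bimod}$ with a category of $A_\infty$-modules over $\mathcal{A}\otimes\mathcal{B}^{op}$ (or the relevant tensor-type category) followed by the standard currying equivalence, which is manifestly an $A_\infty$-functor. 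Finally I would note that $\mathcal{F}(\mathcal{P})=\mathcal{F}_{\mathcal{P}}$ on objects holds by construction, and that $\mathcal{F}$ is canonical since no auxiliary choices enter its definition, completing the proof.
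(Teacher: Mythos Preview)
The paper states this proposition without proof, treating it as a standard algebraic fact that follows directly from unwinding the definitions, so there is no detailed argument to compare against. Your overall approach---define $\mathcal{F}^{1}$ by the evident regrouping formula and verify compatibility with the dg-structures---is the correct one and is what the paper implicitly has in mind.

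One point in your proposal is off, however. Both $(\mathcal{A},\mathcal{B})^{bimod}$ and $func(\mathcal{A},\mathcal{B}^{l\text{-}mod})$ are dg-categories (their $m^{k}$ vanish for $k\geq 3$), and the functor $\mathcal{F}$ is in fact a \emph{strict} dg-functor: your $\mathcal{F}^{1}$ already intertwines $m^{1}$ and $m^{2}$ on the nose, and the higher terms $\mathcal{F}^{d}$ for $d\geq 2$ are zero. Your description of $\mathcal{F}^{d}$ as ``inserting the $t_{i}$ in sequence along the $\mathcal{A}$-inputs'' does not make sense as stated---a pre-natural transformation has only one ``bimodule slot,'' so there is no room to place several $t_{i}$'s side by side; any such insertion would amount to pre-composing them via $m^{2}$ in the source, which is not what a higher functor term does. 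The cleanest way to phrase the proof is that the data of a pre-bimodule homomorphism $t=\{t^{k,l}\}$ is \emph{literally the same} as the data of a pre-natural transformation $\mathcal{F}_{\mathcal{P}}\to\mathcal{F}_{\mathcal{Q}}$ (just regroup the inputs), and under this identification the differentials and compositions agree; hence $\mathcal{F}$ is a dg-functor, indeed a full dg-embedding. Your sign worries are legitimate but are the only real content; once conventions are fixed the verification is a direct bookkeeping exercise, as you indicate.
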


	In general, it is not possible to expect $\mathcal{F}(X)$ is representable for every $X \in Ob\mathcal{A}$. But we might say that it is representable for some of the objects. This leads to the following definition. \par

\begin{definition}\label{definition of representability over a subcategory}
	Let $\mathcal{A}_{0}$ be a full $A_{\infty}$-subcategory of $\mathcal{A}$. We say that 
\begin{equation*}
\mathcal{F}: (\mathcal{A}, \mathcal{B})^{bimod} \to func(\mathcal{A}, \mathcal{B}^{l-mod})
\end{equation*}
is representable on $\mathcal{A}_{0}$, if $\mathcal{F}(X)$ is a representable left $\mathcal{B}$-module for every $X \in Ob\mathcal{A}_{0}$.
\end{definition}

	By the criterion stated in Lemma \ref{criterion for representability in terms of objects}, this definition makes sense. \par

\subsection{Cyclic element and bounding cochain}\label{section: cyclic element and bounding cochain}
	In this subsection, we introduce the notion of a cyclic element, which is key to proving the existence and uniqueness of a bounding cochain under certain assumptions. One origianl formulation due to Fukaya (Proposition 3.5 of \cite{Fukaya2}), works with filtered $A_{\infty}$-algebras and filtered $A_{\infty}$-modules over the Novikov ring. In our case, we consider curved $A_{\infty}$-algebras and $A_{\infty}$-modules over $\mathbb{Z}$. There is in general no intrinsic way of making the method work for arbitrary curved $A_{\infty}$-algebras and $A_{\infty}$-modules, so additional structures are required, to be explained below. \par
	Let $(C, m^{k})$ be a curved $A_{\infty}$-algebra over $\mathbb{Z}$ ($m^{0} \neq 0$), where $C$ is a free $\mathbb{Z}$-module (of finite or infinite rank), and $(D, n^{k})$ a left $A_{\infty}$-module over $(C, m^{k})$, where $D$ is also a free $\mathbb{Z}$-module. We need an additional condition, similar to the gappedness condition for filtered $A_{\infty}$-algebras and filtered $A_{\infty}$-modules. Although we work with usual $A_{\infty}$-algebras and $A_{\infty}$-modules, we still want to have some inductive structures, similar to those for filtered $A_{\infty}$-algebras and filtered $A_{\infty}$-modules. For us, the notion we need is a filtration which satisfies certain analogous conditions. \par

\begin{definition}
\begin{enumerate}[label=(\roman*)]

\item A filtration $F_{C}$ on $(C, m^{k})$ is an $\mathbb{R}$-filtration such that
\begin{equation*}
F_{C}^{\lambda'}C \subset F_{C}^{\lambda}C, \text{ if } \lambda < \lambda',
\end{equation*}
and furthermore,
\begin{equation}
m^{k}(F_{C}^{\lambda_{k}}C \otimes \cdots F_{C}^{\lambda_{1}}C) \subset F_{C}^{\sum_{j = 1}^{k} \lambda_{j}}C.
\end{equation}

\item Given a filtration $F_{C}$ on $(C, m^{k})$, a compatible filtration $F_{D}$ on $(D, n^{k})$ is an $\mathbb{R}$-filtration such that
\begin{equation*}
F_{D}^{\lambda'}D \subset F_{D}^{\lambda}D, \text{ if } \lambda < \lambda',
\end{equation*}
and furthermore,
\begin{equation}
n^{k}(F_{C}^{\lambda_{k}}C \otimes F_{C}^{\lambda_{1}}C \otimes F_{D}^{\lambda'}) \subset F_{D}^{\lambda' + \sum_{j=1}^{k} \lambda_{j}}D.
\end{equation}

\end{enumerate}

\end{definition}
	
\begin{definition}
	A filtration $F_{C}$ is said to be discrete, if there is a discrete subset $\Lambda_{C}$ of $\mathbb{R}$ such that
\begin{equation}
F_{C}^{\lambda} C = F_{C}^{\lambda'} C, \text{ if } [\lambda, \lambda'] \cap \Lambda_{C} = \varnothing,
\end{equation}
for any $\lambda < \lambda'$. A similar definition applies to a compatible filtration $F_{D}$. \par
	A discrete compatible filtration $F_{D}$ is said to be strictly compatible, if $\Lambda_{C} = \Lambda_{D}$.
\end{definition}

	In order to deform a curved $A_{\infty}$-algebra to a non-curved $A_{\infty}$-algebra, we need the notion of a bounding cochain. \par

\begin{definition}
	A bounding cochain for the curved $A_{\infty}$-algebra $(C, m^{k})$ is an element $b$, such that the inhomogeneous Maurer-Cartan equation is satisfied
\begin{equation}\label{MC equation for curved A-infinity algebra}
\sum_{k=0}^{\infty} m^{k}(b, \cdots, b) = 0,
\end{equation}
where the sum stops at a finite stage. That is, there exists $K$ such that for all $k > 0$, $m^{k}(b, \cdots, b) = 0$ and
\begin{equation}\label{MC equation for a nilpotent element}
\sum_{k=0}^{K} m^{k}(b, \cdots, b) = 0.
\end{equation}
In other words, $b$ is assumed to be nilpotent.
\end{definition}
	
	There is a very important class of examples for which nilpotent elements naturally exist. For example, if $C$ as a $\mathbb{Z}$-module has finitely many generators which lie in $F_{C}^{0}$, then any such generator in the strictly positive part of the filtration is nilpotent, because the $A_{\infty}$-structure maps increase the filtration. For geometric applications, we shall see that wrapped Floer cochain spaces have this property. Thus it does not harm to give a name for such a filtration. \par

\begin{definition}
	We say that the filtration $F_{C}$ is bounded above, if $\Lambda_{C}$ is bounded above. Equivalently, there exists $\lambda_{+}$ such that
\begin{equation*}
F_{C}^{\lambda_{+}}C = 0. 
\end{equation*}
\end{definition}

	Suppose that we have chosen a discrete filtration $F_{C}$ on $(C, m^{k})$ as well as a discrete strictly compatible filtration $F_{D}$ on $(D, n^{k})$. For simplicity, we assume that $0 \in \Lambda_{D}$. Now we are going to introduce the key notion. \par

\begin{definition}
	We say that $u \in D$ is a cyclic element, if the following properties are satisfied:
\begin{enumerate}[label=(\roman*)]

\item the map $C \to D$ sending $x$ to $n^{1}(x; u)$ is a filtration-preserving isomorphism of $\mathbb{Z}$-modules, with its inverse also filtration-preserving;

\item $u \in F_{D}^{0}D$, but $n^{0}(u) \notin F_{D}^{0}D$.

\end{enumerate}

\end{definition}

	Note that $n^{0}(F_{D}^{\lambda}D) \subset F_{D}^{\lambda}D$ for every $\lambda$. Thus the second condition means when $n^{0}$ acts on $u$, it strictly increases the filtration. \par

	The following result is proved in \cite{Fukaya2} in the case of filtered $A_{\infty}$-algebras and filtered $A_{\infty}$-modules (Proposition 3.5 of \cite{Fukaya2}) over the Novikov ring. We have a similar result in our case, which yields bounding cochains of "finite type". \par

\begin{lemma} \label{cyclic element and bounding cochain}
	Let $(C, m^{k})$ be a curved $A_{\infty}$-algebra, and $(D, n^{k})$ a left $A_{\infty}$-module over $(C, m^{k})$. Suppose that the filtrations $F_{C}$ and $F_{D}$ are bounded above. Suppose $u \in D$ is a cyclic element such that $u \in F_{D}^{0}$. Then there exists a unique nilpotent bounding cochain $b$ of $(C, m^{k})$ such that
\begin{equation*}
b \in F_{C}^{\lambda_{1}} C
\end{equation*}
for some $\lambda_{1} > 0$, and
\begin{equation}\label{cyclic element is closed under the deformed differential}
d^{b}(u) = 0.
\end{equation}
Here $d^{b}: D \to D$ is defined by
\begin{equation*}
d^{b}(y) = \sum_{k=0}^{\infty} n^{k}(b, \cdots, b; y).
\end{equation*}
\end{lemma}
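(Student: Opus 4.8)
### Proof Plan for Lemma \ref{cyclic element and bounding cochain}

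The plan is to construct $b$ by induction on the filtration level, mirroring the Novikov-ring argument of Proposition 3.5 of \cite{Fukaya2} but replacing convergence in the $T$-adic topology with the boundedness-above hypothesis on $\Lambda_C = \Lambda_D$, which guarantees that the induction terminates after finitely many steps and hence produces a genuinely nilpotent $b$. Write $\Lambda_C \cap (0, \infty) = \{0 < \lambda_1 < \lambda_2 < \cdots < \lambda_m\}$ (finite, since the filtration is bounded above and discrete), and look for $b$ of the form $b = b_1 + b_2 + \cdots + b_m$ with $b_i \in F_C^{\lambda_i} C$, to be determined successively. The key bookkeeping device is the filtration-preserving isomorphism $n^1(-\,; u) : C \to D$ from the definition of a cyclic element: via this isomorphism the deformed differential $d^b$ on $D$ is transported to an operator on $C$, and solving $d^b(u) = 0$ becomes solving a sequence of equations in $C$ that can be inverted one filtration step at a time.

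Here is the order of steps I would carry out. First, expand $d^b(u) = \sum_k n^k(b,\dots,b; u)$ and sort the terms by filtration level using the compatibility inequality $n^k(F_C^{\mu_k} \otimes \cdots \otimes F_C^{\mu_1} \otimes F_D^{\nu}) \subset F_D^{\nu + \sum \mu_j}$. The lowest-order obstruction is $n^0(u)$, which by the cyclic-element condition (ii) lies in $F_D^{\lambda_1} D$ but not in $F_D^0 D$; so modulo $F_D^{\lambda_2}D$ the equation $d^b(u) \equiv 0$ reads $n^0(u) + n^1(b_1; u) \equiv 0$, and since $n^1(-\,;u)$ is a filtration-preserving isomorphism with filtration-preserving inverse, there is a \emph{unique} $b_1 \in F_C^{\lambda_1} C$ solving it, namely $b_1 = -\,(n^1(-\,;u))^{-1}(n^0(u))$ projected appropriately. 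Second, assume inductively that $b_1, \dots, b_{j-1}$ have been uniquely determined so that $d^{b_1 + \cdots + b_{j-1}}(u) \in F_D^{\lambda_j} D$; collecting all terms of filtration exactly $\lambda_j$ in $d^{b_1+\cdots+b_j}(u)$, one gets $n^1(b_j; u) + (\text{a correction term }\,c_j\text{ built from }b_1,\dots,b_{j-1}\text{ and the }m^k, n^k) \equiv 0 \pmod{F_D^{\lambda_{j+1}}D}$, and again $b_j = -\,(n^1(-\,;u))^{-1}(c_j)$ is the unique element of $F_C^{\lambda_j}C$ doing the job. Third, after $m$ steps one has $d^b(u) \in F_D^{\lambda_{m+1}} D$; but $\lambda_{m+1}$ exceeds $\sup \Lambda_D$, so $F_D^{\lambda_{m+1}} D = 0$ and $d^b(u) = 0$ exactly. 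Fourth, check that this $b = \sum b_i$ is a bounding cochain: one shows $\sum_k m^k(b,\dots,b) = 0$ either by an analogous filtration induction (the Maurer--Cartan obstruction lives in $F_C^{\lambda_i}C$ at stage $i$ and vanishes because $d^b \circ d^b = 0$ forces it to, again using injectivity of $n^1(-\,;u)$), or by directly transporting the identity $(d^b)^2 = n^2(\sum m^k(b,\dots,b), -\,; -) + \cdots$ through the cyclic isomorphism. Nilpotency of $b$ is automatic: $m^k(b,\dots,b) \in F_C^{k\lambda_1}C = 0$ once $k\lambda_1 > \sup\Lambda_C$, so only finitely many terms of the Maurer--Cartan sum are nonzero and $b$ itself is nilpotent in the required sense.

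For uniqueness, suppose $b'$ is another nilpotent bounding cochain with $b' \in F_C^{\lambda'_1}C$ for some $\lambda'_1 > 0$ and $d^{b'}(u) = 0$. Filter $b'$ as $b' = b'_1 + b'_2 + \cdots$ with $b'_i$ the component in $F_C^{\lambda_i}C / F_C^{\lambda_{i+1}}C$ (nonzero components occur only at levels in $\Lambda_C \cap (0,\infty)$ because the filtration is discrete), and run the same filtration-by-filtration comparison of $d^{b'}(u) = 0$: at level $\lambda_1$ one is forced to have $b'_1 = b_1$ by the injectivity of $n^1(-\,;u)$, and inductively $b'_j = b_j$ at every level since the correction term $c_j$ depends only on the already-determined lower components. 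Hence $b' = b$.

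The main obstacle I anticipate is purely organizational rather than conceptual: tracking exactly which terms of $d^b(u) = \sum_k n^k(b,\dots,b;u)$ land in a given filtration stratum once $b$ has components spread across several levels $\lambda_{i_1} + \cdots + \lambda_{i_k}$, and verifying that at stage $j$ the coefficient of the "new" unknown $b_j$ is precisely $n^1(-\,; u)$ with everything else a function of $b_1,\dots,b_{j-1}$ — this is what makes the one-step inversion possible. A secondary subtlety is that, unlike the Novikov-ring setting where the grading by energy is automatically gapped, here one must be careful that the \emph{strict} compatibility $\Lambda_C = \Lambda_D$ is genuinely used: it is what ensures the increments produced by applying $m^k$'s and $n^k$'s land back on the same discrete lattice, so the induction doesn't generate an accumulation of filtration values and does terminate at $\lambda_m$. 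Once that bookkeeping is set up cleanly, every individual step is a routine application of the cyclic-element isomorphism, and the proof closes.
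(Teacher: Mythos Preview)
Your proposal is correct and follows essentially the same route as the paper: both solve $d^b(u)=0$ by induction on the discrete, bounded-above filtration, inverting $n^1(-;u)$ at each level to pin down the next piece of $b$, and then verify the Maurer--Cartan equation separately via the $A_\infty$-module relations together with injectivity of $n^1(-;u)$. The only cosmetic difference is that the paper works with explicit free generators $x_j\in C$, $y_j\in D$ and integer coefficients (so that invertibility over $\mathbb{Z}$ is visibly the statement $u_0=\pm 1$), whereas you phrase everything in terms of filtration strata; the underlying argument is identical.
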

\begin{proof}
	Because the filtration $F_{D}$ is strictly compatible with $F_{C}$, it is possible to find free generators $y_{j}$ of $D$ and $x_{j}$ of $C$, such that $y_{j} \in F_{D}^{\lambda_{j}}D$ and $x_{j} \in F_{C}^{\lambda_{j}}C$, where $\lambda_{j} \in \Lambda_{D} = \Lambda_{C}$. \par
	We write $u = u_{0} y_{0} + \cdots + u_{l} y_{l}$ in a unique way, where $u_{j} \in \mathbb{Z}$ and $y_{j} \in D$ are free generators of the free $\mathbb{Z}$-module $D$, such that $y_{0} \in F_{D}^{0}D$, and $y_{j} \in F_{D}^{\lambda_{j}}D$ but $y_{j} \notin F_{D}^{\lambda}D$ if $\lambda > \lambda_{j}$, where $0 < \lambda_{1} < \cdots < \lambda_{l}$. Because of condition (ii) for a cyclic element, we have that $n^{0}(u_{0}) = 0$, so that $n^{0}(u) = \sum_{j = 1}^{l} u_{j} n^{0}(y_{j}) \in F_{D}^{\lambda_{1}}$. \par
	Also, $b$ has a unique expression $b = b_{1} x_{1} + \cdots b_{l} x_{l}$, where $b_{j} \in \mathbb{Z}$ and $x_{j} \in C$ are free generators of the free $\mathbb{Z}$-module $C$, such that $x_{j} \in F_{C}^{\lambda_{j}}C$ but $x_{j} \notin F_{C}^{\lambda}C$ if $\lambda > \lambda_{j}$, where $\lambda_{1} < \cdots < \lambda_{m}$. \par
	Let us try to solve the equation \eqref{cyclic element is closed under the deformed differential} for such an element $b$. This breaks down to a system of equations
\begin{equation}
\sum_{k} \sum_{i_{1}, \cdots, i_{k}} \sum_{j} b_{i_{1}} \cdots b_{i_{k}} u_{j} n^{k}(x_{i_{1}}, \cdots, x_{i_{k}}; y_{j}) = 0.
\end{equation}
Here the term $n^{k}(x_{i_{1}}, \cdots, x_{i_{k}}; y_{j}) \in F_{D}^{\lambda_{i_{1}} + \cdots + \lambda_{i_{k}} + \lambda_{j}}D$, but not in a $F_{D}^{\lambda'}D$ for any smaller $\lambda'$. Because $F_{D}$ is bounded above, there can be at most finitely many such terms which are non-zero. Thus this system can be separated to finitely many equations, according to the filtration. We order these equations in an increasing order in terms of filtration. Rewrite every equation as the form
\begin{equation}
b_{i} u_{0} n^{1}(x_{i}; y_{0}) + \sum b_{i_{1}} \cdots b_{i_{k}} u_{j} n^{k}(x_{i_{1}}, \cdots, x_{i_{k}}; y_{j}) = 0
\end{equation}
where the second sum is taken over all possible indices so that $\lambda_{i_{1}} + \cdots + \lambda_{i_{k}} + \lambda_{j} \le \lambda_{i}$. In particular, all $i_{j} < i$. When $i = 1$, there are no $b$'s in the second sum, and the only possibly nonzero terms are of the form $u_{j} n^{0}(u_{j})$ such that $\lambda_{j} \le \lambda_{i} + \lambda_{0}$. Because $n^{1}(\cdot; u)$ is a filtration-preserving isomorphism of $\mathbb{Z}$-modules, the coefficient of the first term $u_{0}$ is invertible: $u_{0} = \pm 1$. Thus we can solve for a unique $b_{1}$. Then we consider $i = 2$ and argue in the same way to solve for $b_{2}$. We repeat this process until we solve for all $b_{i}, i = 1, \cdots, l$. \par
	It remains to prove that this solution is also a solution to the inhomogeneous Maurer-Cartan equation \eqref{MC equation for curved A-infinity algebra}. Since $b \in F_{C}^{\lambda_{1}}C$ for $\lambda_{1} > 0$, and $F_{C}$ is bounded above, $b$ is automatically nilpotent, and the equation \eqref{MC equation for curved A-infinity algebra} can be reduced to \eqref{MC equation for a nilpotent element}. The strategy is to prove that
\begin{equation}\label{MC equation modulo lambda}
\sum_{k} m^{k}(b, \cdots, b) \in F_{C}^{\lambda'}C
\end{equation}
for every $\lambda' > 0$, which implies that it must vanish because $F_{C}$ is bounded above: for $\lambda'$ large, $F_{C}^{\lambda'}C = 0$.
From the $A_{\infty}$-equations for $n^{k}$ and $m^{k}$ we get the following equation
\begin{equation}
\sum n^{k_{1}}(b, \cdots, b, n^{k_{2}}(b, \cdots, b; u) + \sum n^{k_{1}}(b, \cdots, m^{k_{2}}(b, \cdots, b), \cdots, b); u) = 0.
\end{equation}
The first sum vanishes because $\sum n^{k}(b, \cdots, b; u) = 0$. Following the same kind of argument as before, we rewrite the above equation as the following:
\begin{equation}\label{MC equation written in terms of basis}
\sum b_{i_{1}} \cdots b_{i_{k}} u_{j} n^{k_{1}}(x_{i_{1}}, \cdots, x_{i_{s}}, m^{k_{2}}(x_{i_{s+1}}, \cdots, x_{i_{s+k_{2}}}), \cdots, x_{i_{k}}; y_{j}) = 0.
\end{equation}
We may further write this equation as a system of equations, ordered by the filtration.
Let us try to prove \eqref{MC equation modulo lambda} by induction on $\lambda$, namely we take a discrete sequence $\lambda'_{n} \to +\infty$ and prove \eqref{MC equation modulo lambda} for $\lambda'_{n}$. Recall that we have
\begin{equation}
b_{i} u_{0} n^{1}(x_{i}; y_{0}) + \sum b_{i_{1}} \cdots b_{i_{k}} u_{j} n^{k}(x_{i_{1}}, \cdots, x_{i_{k}}; y_{j}) = 0
\end{equation}
If we assume \eqref{MC equation modulo lambda} holds for any $\lambda'$ smaller than $\lambda_{i}$, it follows that the terms on the left hand side of \eqref{MC equation written in terms of basis} cancel with each other except that the following terms are left over:
\begin{equation}
b_{i} u_{0} n^{0}(m^{1}(x_{i}); y_{0}) + \sum b_{i_{1}} \cdots b_{i_{k}} u_{0} n^{k}(m^{k}(x_{i_{1}}, \cdots, x_{i_{k}}); y_{0})
\end{equation}
where the second sum is taken over all possible indices such that $\lambda_{i_{1}} + \cdots + \lambda_{i_{k}} \le \lambda_{i}$. As the map $n^{0}(\cdot; u)$ is a filtration-preserving isomorphism, and $y_{0}$ is the summand of $u$ in lowest filtration, this implies that
\begin{equation*}
b_{i} m^{1}(x_{i}) + \sum b_{i_{1}} \cdots b_{i_{k}} m^{k}(x_{i_{1}}, \cdots, x_{i_{k}}) = 0.
\end{equation*}
Since $m^{1}$ does not decrease the filtration, this implies that \eqref{MC equation modulo lambda} holds for $\lambda_{i}$. \par
\end{proof}

\subsection{Homotopy direct limit} \label{A-infinity homotopy direct limit}
	In this subsection, we discuss an $A_{\infty}$-analogue of homotopy direct limit, which is necessary for extending the action-restriction functor to the whole wrapped Fukaya categories, which is the main concern of section \ref{section: product manifolds}. The basic idea based on the existing $A_{\infty}$-functors \eqref{action-restriction functor for finitely many Lagrangians} is to take some kind of limit over all admissible Lagrangian submanifolds in $M \times N$. \par
	We temporarily depart from geometry and formalize the idea in a general situation of $A_{\infty}$-categories and functors, not limited to Fukaya-type categories. Let $\mathcal{A}, \mathcal{B}$ be $A_{\infty}$-categories consisting of countably many objects and let
\begin{equation*}
i_{d}: \mathcal{A}_{d} \to \mathcal{A},
\end{equation*}
\begin{equation*}
j_{d}: \mathcal{B}_{d} \to \mathcal{B}
\end{equation*} be full $A_{\infty}$-subcategories of $\mathcal{A}$ and $\mathcal{B}$ respectively, both of which consist of $d$ objects: $X_{1}, \cdots, X_{d}$ for $\mathcal{A}_{d}$ and $Y_{1}, \cdots, Y_{d}$ for $\mathcal{B}_{d}$. We shall make the following assumption:
\begin{assumption} \label{direct limit assumption on subcategories}
For $d < d'$ the subcategory $\mathcal{A}_{d}$ is also a full $A_{\infty}$-subcategory of $\mathcal{A}_{d'}$ via embedding
\begin{equation*}
i_{d, d'}: \mathcal{A}_{d} \to \mathcal{A}_{d'}
\end{equation*}
and that the sequence of $A_{\infty}$-subcategories
\begin{equation}
\mathcal{A}_{1} \subset \cdots \subset \mathcal{A}_{d} \subset \cdots
\end{equation}
form a directed system such that
\begin{equation}
\mathcal{A} = \lim\limits_{\substack{\longrightarrow\\ d \to \infty}} \mathcal{A}_{d}.
\end{equation}
The same assumption should hold for $\mathcal{B}$.
\end{assumption}

	Now suppose that there exists an $A_{\infty}$-functor
\begin{equation}
\mathcal{F}_{d}: \mathcal{A}_{d} \to \mathcal{B}_{d}
\end{equation}
sending $X_{i}$ to $Y_{i}$, and induces an isomorphism on cohomology categories. To be able to take the limit over $d$ to obtain an $A_{\infty}$-functor
\begin{equation} \label{homotopy direct limit functor}
\mathcal{F}: \mathcal{A} \to \mathcal{B},
\end{equation}
we shall also need certain compatibility condition on the sequence of $A_{\infty}$-functors $\mathcal{F}_{d}$:
\begin{assumption} \label{direct limit assumption on functors}
For $d < d'$ the $A_{\infty}$-functor $\mathcal{F}_{d'}$ maps the subcategory $\mathcal{A}_{d}$ of $\mathcal{A}_{d'}$ to the subcategory $j_{d, d'}(\mathcal{B}_{d})$ of $\mathcal{B}_{d'}$. Thus we may define
\begin{equation}
j^{-1}_{d, d'} \circ \mathcal{F}_{d'} \circ i_{d, d'}: \mathcal{A}_{d} \to \mathcal{B}_{d}.
\end{equation}
This should be homotopic to $\mathcal{F}_{d}$ as $A_{\infty}$-functors from $\mathcal{A}_{d}$ to $\mathcal{B}_{d}$.
\end{assumption}
	Under Assumptions \ref{direct limit assumption on subcategories} and \ref{direct limit assumption on functors}, it is almost straightforward to obtain the $A_{\infty}$-functor \eqref{homotopy direct limit functor} by taking the homotopy direct limit, in the sense we describe below. The following lemma is a straightforward consequence of these assumptions, which can be proved by using homological perturbation lemma. \par

\begin{lemma} \label{auto-equivalences correcting homotopy commutativity to strict commutativity}
	For each $d$ and $d'$, there exists an auto-equivalence $\mathcal{H}_{d, d'}$ of $\mathcal{B}_{d}$ such that the composition $j_{d, d'} \circ \mathcal{H}_{d, d'} \circ \mathcal{F}_{d}$ strictly coincides with $\mathcal{F}_{d'} \circ i_{d, d'}$. 
\end{lemma}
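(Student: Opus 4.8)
The plan is to reduce the assertion to a strictification problem for $A_\infty$-functors and then settle that problem by an inductive application of the homological perturbation lemma, as in transfer-of-structure arguments.

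First I would introduce $G := j_{d,d'}^{-1}\circ\mathcal{F}_{d'}\circ i_{d,d'}\colon \mathcal{A}_d\to\mathcal{B}_d$, which is well defined and homotopic to $\mathcal{F}_d$ by Assumption \ref{direct limit assumption on functors}. Since $\mathcal{F}_d$ and $\mathcal{F}_{d'}$ are the identity on objects ($X_i\mapsto Y_i$) and induce isomorphisms of cohomology categories, both $\mathcal{F}_d$ and $G$ are quasi-isomorphisms of $A_\infty$-categories that are bijective on objects. As $j_{d,d'}$ is a full embedding, hence injective on objects and faithful, the sought identity $j_{d,d'}\circ\mathcal{H}_{d,d'}\circ\mathcal{F}_d=\mathcal{F}_{d'}\circ i_{d,d'}$ is equivalent to $\mathcal{H}_{d,d'}\circ\mathcal{F}_d=G$. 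So it is enough to build an auto-equivalence $\mathcal{H}$ of $\mathcal{B}_d$ with $\mathcal{H}\circ\mathcal{F}_d=G$ on the nose.

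To set up the induction, I would pick a homotopy inverse $\mathcal{G}\colon\mathcal{B}_d\to\mathcal{A}_d$ of $\mathcal{F}_d$ (bijective on objects) together with a homotopy $\mathcal{G}\circ\mathcal{F}_d\simeq\mathrm{id}_{\mathcal{A}_d}$, and set $\mathcal{H}_{(0)}:=G\circ\mathcal{G}$; this is an auto-equivalence of $\mathcal{B}_d$ with $\mathcal{H}_{(0)}\circ\mathcal{F}_d\simeq G$, so $[\mathcal{H}_{(0)}\circ\mathcal{F}_d]=[G]$ in the homotopy category of functors and the obstructions to deforming $\mathcal{H}_{(0)}$ into the desired $\mathcal{H}$ vanish. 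Working over the field $k$, the linear terms $\mathcal{F}_d^1$ on morphism complexes are chain homotopy equivalences, so I would fix once and for all a retract datum (a homotopy inverse of each $\mathcal{F}_d^1$ and the corresponding homotopies). Writing $\mathcal{H}$ as a perturbation of $\mathcal{H}_{(0)}$ and filtering $A_\infty$-functor components by word length, I would solve the combined system consisting of the $A_\infty$-functor equations for $\mathcal{H}$ and the equalities $(\mathcal{H}\circ\mathcal{F}_d)^n=G^n$ inductively: at order one, $\mathcal{H}^1\circ\mathcal{F}_d^1=G^1$ is solvable because $G^1$ is chain homotopic to $(\mathcal{H}_{(0)}\circ\mathcal{F}_d)^1$ and $\mathcal{F}_d^1$ has a homotopy inverse, which lets the homotopy be absorbed through the retract datum while $\mathcal{H}^1$ stays a quasi-isomorphism; at order $n$ the requirement on $\mathcal{H}^n$ is a linear equation on the image of $\mathcal{F}_d$ with an inhomogeneous term assembled from the already-constructed $\mathcal{H}^{<n}$, which the homological perturbation lemma solves using the retract datum, the consistency of the recursion being forced by the $A_\infty$-relations for $G$ and $\mathcal{F}_d$. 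The output $\mathcal{H}$ is then an $A_\infty$-functor, homotopic to $\mathcal{H}_{(0)}$ and hence an auto-equivalence, with $\mathcal{H}\circ\mathcal{F}_d=G$.

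The main obstacle is precisely this last step. The equation $\mathcal{H}\circ\mathcal{F}_d=G$ determines $\mathcal{H}$ only on the images of the (non-surjective) maps $\mathcal{F}_d^k$, so one must simultaneously extend $\mathcal{H}$ off those images in a way compatible with the $A_\infty$-functor relations and with being an auto-equivalence; the contracting homotopies supplied by the quasi-isomorphism $\mathcal{F}_d$ are what make this possible, and the bulk of a complete proof is the routine but lengthy bookkeeping of signs and of the compatibility of the order-$n$ choices with all lower orders — which is why it is only sketched here.
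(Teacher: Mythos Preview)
Your approach is correct and matches the paper's: the paper does not give a detailed argument, only stating that the lemma ``is a straightforward consequence of these assumptions, which can be proved by using homological perturbation lemma.'' Your reduction to $\mathcal{H}\circ\mathcal{F}_d=G$ and the order-by-order construction via the retract datum for the quasi-isomorphisms $\mathcal{F}_d^1$ (valid over the field $k$ of the preliminaries section) is exactly the kind of perturbation argument the paper has in mind, just fleshed out.
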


	Moreover, we make the following observation: \par
\begin{lemma}
	$j_{d+1, d+2} \circ \mathcal{H}_{d+1, d+2} \circ j_{d, d+1} \circ \mathcal{H}_{d, d+1}$ is homotopic to $j_{d, d+2}$.
\end{lemma}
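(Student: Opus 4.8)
The plan is to reduce the statement to the single claim that each auto-equivalence $\mathcal{H}_{d, d'}$ produced by Lemma \ref{auto-equivalences correcting homotopy commutativity to strict commutativity} is homotopic to the identity functor $\mathrm{id}_{\mathcal{B}_{d}}$. Granting this, the conclusion is immediate: the inclusions of full $A_{\infty}$-subcategories compose on the nose, $j_{d+1, d+2} \circ j_{d, d+1} = j_{d, d+2}$, and homotopy of $A_{\infty}$-functors is preserved under pre- and post-composition with a fixed $A_{\infty}$-functor, so substituting $\mathcal{H}_{d, d+1} \simeq \mathrm{id}_{\mathcal{B}_{d}}$ and $\mathcal{H}_{d+1, d+2} \simeq \mathrm{id}_{\mathcal{B}_{d+1}}$ into the composite gives
\[
j_{d+1, d+2} \circ \mathcal{H}_{d+1, d+2} \circ j_{d, d+1} \circ \mathcal{H}_{d, d+1} \simeq j_{d+1, d+2} \circ j_{d, d+1} = j_{d, d+2}.
\]

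To prove $\mathcal{H}_{d, d'} \simeq \mathrm{id}_{\mathcal{B}_{d}}$ I would combine the strict identity furnished by Lemma \ref{auto-equivalences correcting homotopy commutativity to strict commutativity}, namely $j_{d, d'} \circ \mathcal{H}_{d, d'} \circ \mathcal{F}_{d} = \mathcal{F}_{d'} \circ i_{d, d'}$, with the homotopy commutativity built into Assumption \ref{direct limit assumption on functors}. Since $\mathcal{F}_{d'} \circ i_{d, d'}$ has image in the full subcategory $j_{d, d'}(\mathcal{B}_{d})$, Assumption \ref{direct limit assumption on functors} reads $j_{d, d'}^{-1} \circ \mathcal{F}_{d'} \circ i_{d, d'} \simeq \mathcal{F}_{d}$, and post-composing this homotopy with $j_{d, d'}$ yields $\mathcal{F}_{d'} \circ i_{d, d'} \simeq j_{d, d'} \circ \mathcal{F}_{d}$; hence $j_{d, d'} \circ (\mathcal{H}_{d, d'} \circ \mathcal{F}_{d}) \simeq j_{d, d'} \circ \mathcal{F}_{d}$. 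Because $j_{d, d'}$ is the inclusion of a full $A_{\infty}$-subcategory, a homotopy between two $A_{\infty}$-functors $\mathcal{A}_{d} \to \mathcal{B}_{d'}$ whose images lie in $\mathcal{B}_{d}$ is the same datum as a homotopy of functors $\mathcal{A}_{d} \to \mathcal{B}_{d}$; this gives $\mathcal{H}_{d, d'} \circ \mathcal{F}_{d} \simeq \mathcal{F}_{d}$. Finally $\mathcal{F}_{d}$ is a quasi-equivalence (it is a bijection on objects and induces an isomorphism on cohomology categories), so picking a quasi-inverse $\mathcal{G}_{d}$ and whiskering gives $\mathcal{H}_{d, d'} \simeq \mathcal{H}_{d, d'} \circ \mathcal{F}_{d} \circ \mathcal{G}_{d} \simeq \mathcal{F}_{d} \circ \mathcal{G}_{d} \simeq \mathrm{id}_{\mathcal{B}_{d}}$.

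I expect the only genuinely delicate point to be these cancellation steps, in particular keeping straight that $j_{d, d'}$ is \emph{not} a quasi-equivalence (being a proper full inclusion it is not essentially surjective) yet still reflects homotopies between $A_{\infty}$-functors landing in $\mathcal{B}_{d}$, whereas $\mathcal{F}_{d}$ \emph{is} a quasi-equivalence and so may be cancelled from either side after a choice of quasi-inverse. An alternative route, which avoids isolating the statement $\mathcal{H}_{d, d'} \simeq \mathrm{id}$, is to chain the strict commutativities of Lemma \ref{auto-equivalences correcting homotopy commutativity to strict commutativity} directly: applying it to $(d, d+1)$ and then to $(d+1, d+2)$, and using $i_{d+1, d+2} \circ i_{d, d+1} = i_{d, d+2}$, one obtains $j_{d+1, d+2} \circ \mathcal{H}_{d+1, d+2} \circ j_{d, d+1} \circ \mathcal{H}_{d, d+1} \circ \mathcal{F}_{d} = \mathcal{F}_{d+2} \circ i_{d, d+2} = j_{d, d+2} \circ \mathcal{H}_{d, d+2} \circ \mathcal{F}_{d}$, then cancels $\mathcal{F}_{d}$ on the right and invokes $\mathcal{H}_{d, d+2} \simeq \mathrm{id}$ at the end; either way the core input is the homotopy-commutative square of Assumption \ref{direct limit assumption on functors} together with full faithfulness of the inclusions, and the remaining manipulations with homotopies of $A_{\infty}$-functors under whiskering are routine.
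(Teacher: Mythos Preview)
Your proposal is correct and follows essentially the same approach as the paper: reduce to showing each $\mathcal{H}_{d,d'}$ is homotopic to the identity, and deduce this from the fact that $\mathcal{H}_{d,d'}$ was introduced precisely to strictify the homotopy $j_{d,d'}\circ\mathcal{F}_{d}\simeq\mathcal{F}_{d'}\circ i_{d,d'}$ of Assumption~\ref{direct limit assumption on functors}. The paper's proof is considerably more terse and does not spell out the cancellation steps (reflecting homotopies along the full inclusion $j_{d,d'}$, then cancelling the quasi-equivalence $\mathcal{F}_{d}$), which you handle carefully and correctly.
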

\begin{proof}
	We claim that the auto-equivalence $\mathcal{H}_{d, d'}$ in Lemma \ref{auto-equivalences correcting homotopy commutativity to strict commutativity} is actually homotopic to the identity functor, and that thus the lemma follows. The claim follows from the fact that $j_{d, d'} \circ \mathcal{F}_{d}$ is homotopic to $\mathcal{F}_{d'} \circ i_{d, d'}$, and the fact that the auto-equivalence $\mathcal{H}_{d, d'}$ of $\mathcal{B}_{d}$ is introduced to make them strictly agree.
\end{proof}

	 We are thus led to the situation where we have homotopy directed systems of $A_{\infty}$-categories and diagrams of $A_{\infty}$-functors
\begin{equation}
\begin{CD}
\mathcal{A}_{1} @>i_{1, 2}>> & \mathcal{A}_{2} @>i_{2, 3}>> & \mathcal{A}_{3} @>i_{3, 4}>> & \cdots \\
@VV\mathcal{F}_{1}V & @VV\mathcal{F}_{2}V & @VV\mathcal{F}_{3}V\\
\mathcal{B}_{1} @>j_{1, 2} \circ \mathcal{H}_{1, 2}>> & \mathcal{B}_{2} @>j_{2, 3} \circ \mathcal{H}_{2, 3}>> & \mathcal{B}_{3} @>j_{3, 4} \circ \mathcal{H}_{3, 4}>> & \cdots
\end{CD}
\end{equation}
such that each square is strictly commutative. Then we can take the direct limits:
\begin{equation*}
\lim\limits_{\substack{\longrightarrow \\ d \to \infty}} (\mathcal{A}_{d}, i_{d, d+1}) = \mathcal{A},
\end{equation*}
\begin{equation*}
\lim\limits_{\substack{\longrightarrow \\ d \to \infty}} (\mathcal{B}_{d}, j_{d, d+1} \circ \mathcal{H}_{d, d+1}) = \tilde{\mathcal{B}},
\end{equation*}
where $\tilde{\mathcal{B}}$ is quasi-equivalent to $\mathcal{B}$, as well as
\begin{equation}
\lim\limits_{\substack{\longrightarrow \\ d \to \infty}} \mathcal{F}_{d} = \mathcal{F}: \mathcal{A} \to \tilde{\mathcal{B}}.
\end{equation}
This is the limit $A_{\infty}$-functor obtained from the sequence $\mathcal{F}_{d}$, with target different from but quasi-equivalent to $\mathcal{B}$. \par

\section{The wrapped Fukaya category: revisited} \label{wrapped Fukaya category}

\subsection{Overview}
	There are several ways of defining the wrapped Fukaya category of a Liouville manifold. The basic approaches grow out of the definition of wrapped Floer cohomology, which either uses a cofinal family of Hamiltonians linear at infinity \cite{Abouzaid-Seidel}, or a single Hamiltonian quadratic at infinity \cite{Abouzaid1}, or more generally a single Hamiltonian whose growth at infinity is faster than linear. A more functorial approach is via categorical colimits and localization, without having to specify a particular choice of Hamiltonian as introduced in \cite{Ganatra-Pardon-Shende}. Of course, all these approaches give quasi-equivalent $A_{\infty}$-categories. \par
	At first, we set up the wrapped Fukaya category using a single Hamiltonian quadratic at infinity. Although this is not the best definition as it relies on working with a specific choice of Hamiltonian, it is convenient for us to compare two versions of wrapped Fukaya categories of the product manifold, as we just have to work with some chosen Hamiltonians, so that the difficulty in dealing with complicated triple colimits of functors is bypassed. \par
	Also, this setup simplifies the construction of functors from Lagrangian correspondences, as they can be constructed in a single step, without having to be defined as a colimit of functors. However, regarding applications and calculation, it is better to give a definition in the framework of wrapped Fukaya category defined with respect to linear Hamiltonians. This definition, due to \cite{Abouzaid-Seidel} will be reviewed in subsection \ref{section: linear Hamiltonians}. \par

\subsection{Basic geometric setup}
	The geometric setup considered here is exactly the same as that in \cite{Gao1}. We briefly mention it here mainly for the purpose of fixing notations. Consider a Liouville manifold $M$ which is the completion of a Liouville domain $M_{0}$ with boundary $\partial M$, which has a collar neighborhood $\partial M \times (\epsilon, 1]$ so that the Liouville vector field is equal to $\frac{\partial}{\partial r}$ in that neighborhood. We assume that $M$ is symplectically Calabi-Yau, namely $2c_{1}(M) = 0 \in H^{2}(M; \mathbb{Z})$. \par
	The admissible Lagrangian submanifolds are either closed exact Lagrangian submanifolds in the interior $M_{0}$, or cylindrical Lagrangian submanifolds of the form $L = L_{0} \cup \partial L \times [1, +\infty)$ where $\partial L \subset \partial M$ is a Legendrian submanifold with respect to the contact structure induced from the Liouville one-form. To be more specific, for the latter kind of Lagrangian submanifold $L_{0}$ of $M_{0}$, there should be a function $f$ on it so that $df = \lambda |_{L_{0}}$, where $\lambda$ is the Liouville form. Moreover, we require that $f$ has an extension to a neighborhood of $L$ in $M$ such that it is locally constant near $\partial L \times (\epsilon, +\infty)$. In addition, we shall make the assumption that
\begin{equation}
2c_{1}(M, L) = 0 \in H^{2}(M, L; \mathbb{Z}),
\end{equation}
which ensures the existence of gradings and spin structures on $L$. We will fix a choice of grading and spin structure for every admissible Lagrangian submanifold. These conditions will ensure that the wrapped Fukaya category of $M$ is defined over $\mathbb{Z}$, and carries $\mathbb{Z}$-gradings. \par

\subsection{Floer data and consistency}
	The moduli space of surfaces controlling the algebraic operations and relations in the Fukaya categories are the moduli spaces of stable marked nodal disks, which was studied in \cite{FOOO1} and proved to be cellular isomorphic to the moduli spaces of stable metric ribbon trees introduced by Stasheff [Stasheff], which is known to be the operad controlling $A_{\infty}$-algebras. \par
	Let $\bar{\mathcal{M}}_{k+1}$ be the compactified moduli space of stable $(k+1)$-marked disks. It is proved in \cite{FOOO1} that $\bar{\mathcal{M}}_{k+1}$ is a compact smooth manifold with corners and a neighborhood of the stratum $\mathcal{M}_{T}$ with combinatorial type modeled on a stable ribbon tree $T$ with one root and $k$-leaves is covered by the image of the gluing map:
\begin{equation} \label{tree gluing}
(-1, 0]^{e(T)} \times \mathcal{M}_{T} \supset U_{T} \to \bar{\mathcal{M}}_{k+1},
\end{equation}
which is smooth and a diffeomorphism onto the image by shrinking $U_{T}$ if necessary. \par

	To define $A_{\infty}$-operations on Floer cochain spaces, we need to study moduli spaces of (perturbed) pseudoholomorphic maps from nodal disks to $M$ with boundary in Lagrangian submanifolds. For this purpose, we also need to include the case where the domain is unstable, or contains unstable components. The unstable curve involved here is the infinite strip $Z$, whose automorphism is the additive group $\mathbb{R}$. To write down inhomogeneous Cauchy-Riemann equations and achieve transversality of the moduli spaces of solutions, we need several auxilliary data. We briefly recall the notion here. \par

\begin{definition}
	Given a semistable $(k+1)$-marked nodal disk $S \in \bar{\mathcal{M}_{k}}$, a Floer datum $P_{S}$ for $S$ consists of
\begin{enumerate}[label=(\roman*)]

\item A collection of positive integers $w_{0}, \cdots, w_{k}$.

\item A time-shifting function $\rho_{S}: \partial S \to [1, +\infty)$, which takes the value $w_{j}$ over the $j$-th strip-like end $\epsilon_{j}$.

\item A basic one form $\alpha_{S}$, whose restriction to every smooth component of $S$ is closed, and whose pullback by $\epsilon_{j}$ agrees with $w_{j}dt$.

\item A Hamiltonian perturbation $H_{S}: S \to \mathcal{H}(M)$, whose pullback by $\epsilon_{j}$ agrees with $\frac{H \circ \phi^{w_{j}}}{w_{j}^{2}}$.

\item A domain-dependent perturbation of almost complex structures $J_{S}: S \to \mathcal{J}(M)$, whose pullback by $\epsilon_{j}$ agrees with $(\phi^{w_{j}})^{*}J_{t}$.

\end{enumerate}
such that over unstable components of $S$, i.e. strips, all the three data restricts to translation-invariant data.
\end{definition}

	In order to ensure that the various operations constructed from moduli spaces of marked inhomogeneous pseudoholomorphic disks satisfy the $A_{\infty}$-equations, we need to make sure that the Floer data chosen for the underlying semistable marked nodal disks are compatible with respect to gluing maps \eqref{tree gluing}. Therefore the following notion is useful: for $k \ge 3$, a universal and consistent choice of Floer data is a choice of Floer data for all $S \in \bar{\mathcal{M}}_{k+1}$ that varies smoothly with respect to $S$ in the compactified moduli space. The notion of universal and consistent choice of Floer data is extended also to the strip $Z$, as follows: when we glue in a strip at a boundary marked point of a stable marked nodal disk $S$, we require that the (translation-invariant) Floer datum chosen on $Z$ agree with that on the strip-like end for $S$ near that marked point. The space of choices of Floer data is convex, therefore by induction on the strata of the moduli space of stable marked nodal disks, we can construct universal and consistent choices of Floer data. More detailed explanation is given in \cite{Seidel}. \par

\subsection{Inhomogeneous pseudoholomorphic disks}
	To define the $A_{\infty}$-operations on the wrapped Fukaya category, we need to study the moduli spaces of inhomogeneous pseudoholomorphic disks with boundary mapped to several Lagrangian submanifolds. Make universal and consistent choices of Floer data $P$ for all semistable marked nodal disks. Denote by $S$ an element in the smooth part of the moduli space $\mathcal{M}_{k+1}$. That is, $S$ is a smooth disk with boundary marked points $(z_{0}, \cdots, z_{k})$ that are cyclically ordered on the boundary. Given admissible Lagrangian submanifolds $L_{0}, \cdots, L_{k}$, consider the following inhomogeneous Cauchy-Riemann equation, for both $S$ and $u$ as variables:
\begin{equation}
\begin{cases}
(du - \alpha_{S} \otimes X_{H_{S}})^{0, 1} = 0;\\
u(z) \in \phi^{\rho_{S}(z)}L_{j}, \text{ if $z$ lies in between $z_{j}$ and $z_{j+1}$};\\
\lim\limits_{s \to -\infty} u \circ \epsilon_{0}(s, \cdot) = \phi^{w_{0}}x_{0}(\cdot) \in \mathcal{X}(\phi^{w_{0}}L_{0}, \phi^{w_{0}}L_{k});\\
\lim\limits_{s \to +\infty} u \circ \epsilon_{j}(s, \cdot) = \phi^{w_{j}}x_{j}(\cdot) \in \mathcal{X}(\phi^{w_{j}}L_{j-1}, \phi^{w_{j}}L_{j}), & j = 1, \cdots, k.
\end{cases}
\end{equation} 
The solutions will sometimes also be called Floer's disks. \par
	Suppose for the moment $k \ge 2$. Let $\mathcal{M}_{k+1}(L_{0}, \cdots, L_{k}; x_{0}, \cdots, x_{k}; P)$ be the moduli space of solutions $(S, u)$ to the above equation with respect to the chosen Floer data $P$, and let $\bar{\mathcal{M}}_{k+1}(L_{0}, \cdots, L_{k}; x_{0}, \cdots, x_{k}; P)$ be its stable map compactification. It is proved in \cite{Abouzaid1} that for a generic choice of Floer data $P$, the zero-dimensional and one-dimensional components of $\bar{\mathcal{M}}_{k+1}(L_{0}, \cdots, L_{k}; x_{0}, \cdots, x_{k}; P)$ are compact smooth manifolds with corners of dimension 
\begin{equation*}	
2 - k + \deg(x_{0}) - \deg(x_{1}) - \cdots - \deg(x_{k}).
\end{equation*} \par
	In the unstable case $k=1$, there is no moduli of $S$, so we consider the set of solutions $u$ to the above equation. Since the Floer datum $P_{Z}$ on the strip is chosen to be translation-invariant, we can quotient the parametrized moduli space by this automorphism group. We denote the quotient moduli space by $\mathcal{M}_{2}(L_{0}, L_{1}; x_{0}, x_{1}; P_{Z})$ as well, and the corresponding Gromov bordification by $\bar{\mathcal{M}}_{2}(L_{0}, L_{1}; x_{0}, x_{1}; P_{Z})$. \par

\subsection{Identification of Floer cochain spaces with different weights}
	The "count" of rigid elements in the moduli spaces $\bar{\mathcal{M}}_{k+1}(L_{0}, \cdots, L_{k}; x_{0}, \cdots, x_{k}; P)$ defines operations of the following kind
\begin{equation}
\begin{split}
&CW^{*}(\phi^{w_{k}}L_{k-1}, \phi^{w_{k}}L_{k}; \frac{H \circ \phi^{w_{k}}}{w_{k}}) \otimes \cdots \otimes CW^{*}(\phi^{w_{1}}L_{0}, \phi^{w_{1}}L_{1}; \frac{H \circ \phi^{w_{1}}}{w_{1}})\\
\to &CW^{*}(\phi^{w_{0}}L_{0}, \phi^{w_{0}}L_{k}; \frac{H \circ \phi^{w_{0}}}{w_{0}}).
\end{split}
\end{equation}
	In order define an honest $A_{\infty}$-category with morphism spaces being $CW^{*}(L, L'; H)$ between the two objects $L$ and $L'$ so that the operations happen on these morphism spaces, we have to identify $CW^{*}(\phi^{w}L, \phi^{w}L'; \frac{H \circ \phi^{w}}{w})$ with $CW^{*}(L, L'; H)$, in a canonical way. Because the Hamiltonian $H$ is quadratic in the radial coordinate of the cylindrical end, such an identification is easily achieved by noting that $\frac{H \circ \phi^{w}}{w^{2}}$ behaves the same as $H$ in the cylindrical end where Reeb dynamics occur. Technically, the rescaled Hamiltonian differs from $H$ by a small amount that is supported in the compact part of $M$, and this can be taken care of by using a compactly supported deformation of Hamiltonian functions, which gives rise to continuation maps that form an $A_{\infty}$-quasi-isomorphism (of $A_{\infty}$-bimodule structures on the wrapped Floer cochain spaces). \par
	To summarize, these arguments in previous subsections together imply that the wrapped Fukaya category $\mathcal{W}(M)$ is well-defined, up to quasi-isomorphism. \par

\subsection{Winding Lagrangian submanifolds}
	Let us introduce a new class of Lagrangian submanifolds in the wrapped Fukaya category. These Lagrangian submanifolds come from geometric compositions of Lagrangian correspondences to be discussed in detail in section \ref{geometric composition admissible for wrapped Floer theory}. This class includes in particular $H$-perturbed cylindrical Lagrangian submanifolds in $M$, i.e. $\phi_{H}^{1}(L)$ for some cylindrical Lagrangian submanifold $L = L_{0} \cup \partial L \times [1, +\infty)$, where the Hamiltonian perturbation is the same as the one used to defined $\mathcal{W}(M)$ - these $H$-perturbed cylindrical Lagrangian submanifolds are geometric compositions with the diagonal. The picture of such a Lagrangian submanifold is one that winds around in the cylindrical end of $M$. The point is, we want to include both cylindrical Lagrangian submanifolds and these perturbed cylindrical Lagrangian submanifolds as object in our wrapped Fukaya category, with respect to the same Liouville structure, although one can easily show that the perturbed ones are cylindrical with respect to a different Liouville structure. \par
	Now let $\phi_{H}(L)$ be an $H$-perturbed Lagrangian submanifold. Geometrically, it looks like winding around a cylindrical Lagrangian submanifold in the cylindrical end in accelerating speeds with respect to the radial coordinate. The first task is to define its self wrapped Floer cohomology. On the level of the underlying cochain groups, this is fairly straightforward: the self wrapped Floer complex of $\phi_{H}(L)$ can be thought of as the Lagrangian intersection Floer cochain complex of $\phi_{H}^{2}(L)$ with $\phi_{H}(L)$. As the underlying cochain group, this is canonically isomorphic to $CF^{*}(\phi_{H}(L), L)$ by applying the exact symplectomorphism $\phi_{H}^{-1}$. However, the differential is a bit tricky to define. Recall that we have defined the differential on $CW^{*}(L)$ using inhomogeneous $J$-holomorphic strips with boundary on $L$ that converge to $H$-chords at infinity. If one somehow wants to "transport" the differential of $CW^{*}(L)$ to one of $CW^{*}(\phi_{H}(L))$, geometrically that will be to consider moduli space of inhomogeneous $(\phi_{H})^{*}J$-holomorphic strips, instead of $J$-holomorphic strips. On possible way to resolve this is to refer to the Lagrangian intersection setup, where one defines $CW^{*}(\phi_{H}(L))$ as the Lagrangian intersection Floer complex of $\phi_{H}^{2}(L)$ with $\phi_{H}(L)$. Thus the differential will be to use homogeneous $J$-holomorphic strips with boundary on $\phi_{H}^{2}(L)$ and $\phi_{H}(L)$ which converge to the corresponding intersection points. An appropriate version of maximum principle in this setting is necessary. To establish that, we compare $\phi_{H}^{2}(L)$ with $\phi_{2H}(L)$ and note they could only possibly differ by a compact-supported Hamiltonian isotopy. Since $2H$ is also admissible, the maximum principle applies to such $J$-holomorphic strips. \par
	Alternatively, we could directly try to prove the compactness results of the moduli spaces of inhomogeneous pseudoholomorphic disks with boundary on (appropriate rescalings of) $\phi_{H}(L)$ by appealing to the action-energy equality. The most important fact is the following equality computing the new primitive function for $\phi_{H}(L)$ in terms of that of for $L$. \par

\begin{lemma}\label{changing primitive under Hamiltonian isotopy}
Let $f$ be a primitive for $L$. Then the following function
\begin{equation}
f + \iota_{X}\lambda
\end{equation}
is a primitive for $\phi_{H}(L)$. Here $X$ is the Hamiltonian vector field of $H$.
\end{lemma}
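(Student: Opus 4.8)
The plan is to verify directly that $d(f + \iota_X\lambda) = \lambda|_{\phi_H(L)}$ along the deformed Lagrangian, by tracking how the Liouville form pulls back under the time-one Hamiltonian flow. First I would recall the standard homotopy formula for how $\lambda$ transforms along the flow $\phi_H^t$: writing $\omega = d\lambda$ and using Cartan's magic formula, $\frac{d}{dt}(\phi_H^t)^*\lambda = (\phi_H^t)^*\mathcal{L}_{X_t}\lambda = (\phi_H^t)^*(d\,\iota_{X_t}\lambda + \iota_{X_t}d\lambda) = (\phi_H^t)^*(d\,\iota_{X_t}\lambda - dH_t)$, since $\iota_{X_t}\omega = -dH_t$ by definition of the Hamiltonian vector field. (Here, since $H$ is autonomous, $X_t = X$ for all $t$ and $H_t = H$.) Integrating from $0$ to $1$ gives $(\phi_H^1)^*\lambda - \lambda = d\!\left(\int_0^1 (\phi_H^t)^*(\iota_X\lambda - H)\,dt\right)$, so that $(\phi_H^1)^*\lambda = \lambda + dG$ for the generating primitive $G = \int_0^1 (\phi_H^t)^*(\iota_X\lambda - H)\,dt$.

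Next I would restrict this identity to $L$. Since $f$ is a primitive for $L$, meaning $\lambda|_L = df$, pulling back by $\phi_H^1$ we get $(\phi_H^1)^*(\lambda|_{\phi_H(L)}) = \lambda|_L + dG|_L = d(f + G|_L)$, which says precisely that $f\circ(\phi_H^1)^{-1} + G\circ(\phi_H^1)^{-1}$ (viewed as a function on $\phi_H(L)$) is a primitive for $\phi_H(L)$. The remaining task is to identify this primitive, up to an irrelevant additive constant and possibly a compactly-supported correction, with the stated expression $f + \iota_X\lambda$. The natural move is to work in the cylindrical end where $H$ is quadratic in $r$ and $X$ points in the Reeb direction scaled by $r$ or so; there the integrand $(\phi_H^t)^*(\iota_X\lambda - H)$ should simplify because the Liouville form is preserved in a strong sense along the Reeb-type flow, collapsing the time integral. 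More carefully: one checks that $\iota_X\lambda - H$ is itself invariant under $\phi_H^t$ (this holds whenever $\{H, \iota_X\lambda\} = \{H,H\} = 0$, i.e. when $\iota_X\lambda - H$ Poisson-commutes with $H$, which is automatic since both terms do — $\iota_X\lambda - 2H$ is, up to sign, the moment map of the Liouville/contact scaling relation and is $X$-invariant in the conical region), so $G = \iota_X\lambda - H$ as a function, and then $f + G$ differs from $f + \iota_X\lambda$ only by $-H$, which contributes a genuine correction unless one is more careful about the convention for what ``primitive of $\phi_H(L)$'' means — in the exact setting one typically allows primitives that differ by functions pulled back appropriately, or the author's convention absorbs the $H$ term into the definition of the Floer action. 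I would reconcile this by checking the paper's precise normalization of primitives (the condition $df = \lambda|_{L_0}$ together with the local-constancy requirement near the Legendrian boundary), which should pin down the additive ambiguity and show the $-H$ term either vanishes in the relevant region or is exactly the expected shift.

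The main obstacle I anticipate is this last bookkeeping step: carefully reconciling the generating-function primitive $G = \int_0^1(\phi_H^t)^*(\iota_X\lambda - H)\,dt$ with the clean closed-form answer $\iota_X\lambda$, including correctly handling the $-H$ contribution and verifying $X$-invariance of $\iota_X\lambda - H$ in the conical end (equivalently, that $H$ and $\iota_X\lambda$ Poisson-commute there). A secondary technical point is confirming that the extension of the primitive $f$ to a neighborhood of $L$ — required to be locally constant near $\partial L \times (\epsilon, +\infty)$ in the paper's setup — behaves compatibly under $\phi_H$, so that $f + \iota_X\lambda$ genuinely satisfies the same boundary normalization and hence qualifies $\phi_H(L)$ as an admissible (exact cylindrical) Lagrangian for the enlarged wrapped Fukaya category. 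Everything else is a routine application of Cartan's formula and the defining equation $\iota_X\omega = -dH$.
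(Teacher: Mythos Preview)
Your approach via Cartan's formula, integrating $\frac{d}{dt}(\phi_H^t)^*\lambda$, is exactly the paper's; the paper gives a two-line sketch citing the same well-known fact and then uses time-independence of $H$ to collapse $\int_0^1 \iota_{X_t}\lambda\,dt$ to $\iota_X\lambda$. You are in fact more careful than the paper: its stated formula omits both the pullback $(\phi_H^t)^*$ inside the integral and the $-H$ term you correctly isolate, so the discrepancy you flag is real rather than a misreading of conventions. It is harmless downstream because the only use of this lemma (the action estimate immediately following) needs merely that the correction to the primitive be constant on each level hypersurface $\partial M\times\{r\}$, and both $\iota_X\lambda = 2r^2$ and $\iota_X\lambda - H = r^2$ have this property.
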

\begin{proof}
	The proof is a straightforward calculation, based on the well-known fact that a Hamiltonian symplectomorphism is exact and adds to the primitive of the symplectic form the following:
\begin{equation}
d\int_{0}^{1} \iota_{X_{t}}\lambda dt.
\end{equation}
Now since our Hamiltonian is time-independent, this is simply equal to
\begin{equation}
d \iota_{X}\lambda.
\end{equation}
Thus a primitive for $\phi_{H}(L)$ can be taken to be
\begin{equation}
f + \iota_{X}\lambda
\end{equation}
\end{proof}

	The following estimate on the new primitive is the crucial step in proving compactness result for the relevant moduli spaces in wrapped Floer theory. \par

\begin{lemma}
$\iota_{X}\lambda$ equals $2r^{2}$ in the cylindrical end $\partial M \times [1, +\infty)$. In particular, it is constant on any level hypersurface $\partial M \times \{r\}$.
\end{lemma}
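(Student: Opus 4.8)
The plan is to reduce everything to an explicit computation in the cylindrical end $\partial M \times [1,+\infty)$, where the geometry is completely standard. Recall that there $\lambda = r\alpha$ with $\alpha$ the (pulled-back) contact form on $\partial M$, so $\omega = d\lambda = dr \wedge \alpha + r\, d\alpha$, the Liouville vector field is $Z = r\,\partial_r$ (characterized by $\iota_Z \omega = \lambda$), and the quadratic Hamiltonian $H$ defining $\mathcal{W}(M)$ equals $r^2$ on this region; the Hamiltonian vector field $X = X_H$ is normalized by $\iota_X \omega = -dH$.

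The main step is the general identity $\iota_X \lambda = \iota_X \iota_Z \omega = \omega(Z, X) = -\omega(X, Z) = dH(Z) = Z(H)$, which holds on all of $M$ and expresses $\iota_X\lambda$ as the derivative of $H$ along the Liouville flow. Restricting to the cylindrical end and using $H = r^2$, $Z = r\,\partial_r$ gives $Z(H) = r\,\partial_r(r^2) = 2r^2$, which is the claimed formula. Since $2r^2$ is a function of $r$ alone, it is manifestly constant on each level hypersurface $\partial M \times \{r\}$, which is the second assertion.

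As an independent check one can compute $X_H$ by hand: because $H$ depends only on $r$ in the cylindrical end, $X_H = H'(r)\,R$, where $R$ is the Reeb vector field of $\alpha$ (so $\alpha(R) = 1$, $\iota_R d\alpha = 0$); indeed $\iota_{H'(r) R}(dr\wedge\alpha + r\, d\alpha) = H'(r)\big(dr(R)\,\alpha - \alpha(R)\,dr\big) = -H'(r)\,dr = -dH$. Then $\iota_X\lambda = (r\alpha)(H'(r)R) = r\,H'(r) = 2r^2$. There is no serious obstacle in this lemma; the only point requiring care is bookkeeping the sign and normalization conventions (the choice $\iota_X\omega = -dH$ together with "$H$ quadratic" meaning $H = r^2$ at infinity is precisely what produces the constant $2$), after which the statement is immediate.
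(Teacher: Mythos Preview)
Your proof is correct and carries out exactly the ``straightforward elementary calculation'' that the paper invokes without detail; in particular, the key identity $\iota_X\lambda = \iota_X\iota_Z\omega = Z(H)$ together with $Z = r\,\partial_r$ and $H = r^2$ on the cylindrical end immediately gives $2r^2$. The independent check via $X_H = H'(r)R$ is a nice confirmation of the sign conventions, which is the only place one could slip.
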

\begin{proof}
This is a straightforward elementary calculation.
\end{proof}

	Looking back at the action-energy equality (the action is computed with respect to the original primitive),
\begin{equation*}
\mathcal{A}_{H, \phi_{H}(L)}(x) = \int_{0}^{1} -x^{*}\lambda + H(x(t))dt + f_{\phi_{H}(L)}(x(1)) - f_{\phi_{H}(L)}(x(0)),
\end{equation*}
we find that for an $H$-chord from $\phi_{H}(L)$ to itself which is contained in a level hypersurface, the last two terms contribute zero because the extra term $\iota_{X}\lambda$ in the primitive is constant on this hypersurface. Thus, the same estimates apply as if we were in the case of a cylindrical Lagrangian submanifold, and consequently the action-energy equality implies compactness results for the moduli spaces of inhomogeneous pseudoholomorphic strips with boundary on $\phi_{H}(L)$. Therefore, the self wrapped Floer cohomology of $\phi_{H}(L)$ is well-defined. \par
	The second task is to define the wrapped Floer cohomology $HW^{*}(\phi_{H}(L), L')$, for any pair $(\phi_{H}(L), L')$ where $L'$ is either a closed exact Lagrangian submanifold in the interior $M_{0}$, or a cylindrical Lagrangian submanifold. Reasoning in the same way as above, we know that this can be defined as the Lagrangian intersection Floer cohomology of $\phi_{H}^{2}(L)$ with $L'$. On the other hand, by a rescaling argument introduced in the previous subsection, that group is isomorphic to $HW^{*}(L, L')$. There is also an alternative construction by direct analysis on the compactness of the moduli spaces of inhomogeneous pseudoholomorphic strips with boundary on the pair $(\phi_{H}(L), L')$ using estimates on the primitives, which can be done in a similar way to that for a single $H$-perturbed Lagrangian submanifold. \par
	The third task is to define the wrapped Floer cohomology $HW^{*}(L', \phi_{H}(L))$. The story is parallel to the previous case, but there is an asymmetry. This time, $HW^{*}(L', \phi_{H}(L))$ should be regarded as the Lagrangian intersection Floer cohomology of $\phi_{H}(L')$ with $\phi_{H}(L)$. In a generic situation where $L$ and $L'$ intersect transversely and the Legendrian boundaries of $L'$ and $L$ do not intersect, there is in fact a canonical chain-level isomorphism between $CF^{*}(\phi_{H}(L'), \phi_{H}(L))$ and $CF^{*}(L', L)$, where both are defined with respect to $J$. This is because there are only finitely many intersections which are contained in the compact domain $M_{0}$, so distinguishing $J$ and $\phi_{H}^{*}J$ is irrelevant. In the special case where $L = L'$, further discussion is needed. We may define a Morse-Bott Floer complex by introducing a chain model for $L$, say a Morse complex of a Morse function $f: L \to \mathbb{R}$ without any critical point in the cylindrical end $\partial L \times [1, +\infty)$. For example, we can take $f$ to be the restriction of $H$ to $L$, provided $H$ is generic. The differential counts $(J, H)$-pseudoholomorphic strips, which become gradient flow lines provided $H$ is $C^{2}$-small. Thus the cohomology $HW^{*}(L, \phi_{H}(L))$ is isomorphic to the ordinary cohomology of $L$. \par
	The next task is to introduce $A_{\infty}$-structures on the cochain groups underlying the above-mentioned wrapped Floer cohomology groups. There are several cases:
\begin{enumerate}[label=(\roman*)]

\item The self wrapped Floer cochain complex $CW^{*}(\phi_{H}(L))$ should be equipped with a structure of an $A_{\infty}$-algebra. Because of our previous definition of this cochain complex as the Lagrangian intersection Floer cochain complex of $\phi_{H}^{2}(L)$ with $L$, it is inappropriate to use inhomogeneous pseudoholomorphic disks with boundary on $L$.

\item Regarding the wrapped Floer cochain complex $CW^{*}(\phi_{H}(L), L')$ for a pair, it is equipped with an $A_{\infty}$-bimodule structure over $(CW^{*}(\phi_{H}(L)), CW^{*}(L'))$, defined by the moduli spaces of $(J, H)$-holomorphic strips with punctures on both boundary components that are mapped to $(\phi_{H}(L), L')$, together with $(J, H)$-holomorphic disks with punctures with boundary on $\phi_{H}(L)$ or $L'$, which are joint to the strips at their boundary punctures.

\item As for $CW^{*}(L', \phi_{H}(L))$, there is a small difference. To equip this with a natural left-$CW^{*}(L')$ and right-$CW^{*}(\phi_{H}(L))$ $A_{\infty}$-bimodule structure, we need a small modification of the setup of Floer complex for the pair $(L', \phi_{H}(L))$, by defining the differential using the original setup for wrapped Floer cohomology, instead of $J$-holomorphic strips with boundary on $(L', L)$. We consider $CW^{*}(L', \phi_{H}(L))$ as being generated by time-one $H$-chords from $L'$ to $\phi_{H}(L)$, which are in one-to-one correspondence with intersection points $L \cap L'$, hence finite. The differential counts rigid $(J, H)$-holomorphic strips with boundary on $(L', \phi_{H}(L))$, which is well-defined because there are only finitely many time-one $H$-chords. Now the moduli space of $(J, H)$-holomorphic strips with boundary on $(L', \phi_{H}(L))$ is compatible with the moduli spaces of $(J, H)$-holomorphic disks with boundary on $L'$ or $\phi_{H}(L)$, meaning that the relevant pseudoholomorphic curves can be glued together in a coherent way. Therefore we obtain the desired $A_{\infty}$-bimodule structure.

\end{enumerate}

\section{Wrapped Floer theory for Lagrangian immersions}\label{the immersed wrapped Fukaya category}

\subsection{Overview of immersed Lagrangian Floer theory}
	In this section, we extend wrapped Floer theory to certain classes of Lagrangian immersions. The main purpose of such an extension is to prove representability of functors associated to Lagrangian correspondences in general (to be discussed in section \ref{A-infinity functors associated to Lagrangian correspondence}), though in many concrete and interesting cases, it is sufficient to study embedded Lagrangian submanifolds. \par
	In order for the Lagrangian immersions in consideration to have well-behaved Floer theory, we must impose some conditions: they should satisfy a condition similar to being exact, be embedded in the cylindrical end of $M$, and possibly have transverse self-intersections in the interior part of $M$. Without loss of generality, we assume these self-intersections are at most double points. In general, there will be pseudoholomorphic disks bounded by the image of such a Lagrangian immersion, and these disks will interact with inhomogeneous pseudoholomorphic disks (solutions to Floer's equation). Therefore, we should pick a good model for the compactifications of the relevant moduli spaces of disks. Fortunately, this can be done fairly directly, as the Lagrangian immersions we are going to consider still satisfy an "exactness" condition, which will be introduced in the next subsection. \par
	Wrapped Floer theory should contain information about Reeb dynamics on the boundary contact manifold, in addition to the cohomological generators of the Lagrangian submanifolds. The construction of the $A_{\infty}$-structure maps involves both inhomogeneous pseudoholomorphic disks and homogeneous pseudoholomorphic disks. The entire picture would be an analogue of the setup of holomorphic curves in relative symplectic field theory (see \cite{BEHWZ}). Fortunately, there is a purely Floer-theoretic formulation, where we can construct moduli spaces of maps which satisfy certain Floer's equation, and the virtual techniques used in proving transversality does not go beyond the theory of Kuranishi structures, because the pseudoholomorphic curves that we are dealing with are all of genus zero with connected boundary, considered by Fukaya-Oh-Ohta-Ono \cite{FOOO1}, \cite{FOOO2}. \par
	Wrapped Floer theory assigns to such a Lagrangian immersion $\iota: L \to M$ a curved $A_{\infty}$-algebra $(CW^{*}(L, \iota; H), m^{k})$ over $\mathbb{Z}$ (compare to the case of general compact Lagrangian immersions studied by Akaho and Joyce \cite{Akaho-Joyce}). To define the immersed wrapped Fukaya category, we shall consider unobstructed Lagrangian immersions, i.e. those for which the curved $A_{\infty}$-algebra $CW^{*}(L, \iota; H)$ has a bounding cochain. \par

\subsection{Gradings and spin structures}
	For Floer theory to carry an absolute $\mathbb{Z}$-grading as well as to have coefficients in $\mathbb{Z}$, we need to introduce the notions of gradings and spin structures. \par

\begin{definition}\label{definition of graded Lagrangian immersion}
	Say that the Lagrangian immersion $\iota: L \to M$ is graded, if the square phase function $\alpha_{L}: L \to S^{1}$ has a lift $\tilde{\alpha}_{L}$ to $\mathbb{R}$. Here the square phase function is defined by sending $x \in L$ to $(d\iota)_{x}(T_{x}L)$, an element in the Lagrangian Grassmannian $\mathcal{LAG}(TM)$, then mapping that to $S^{1}$ by pairing any orthonormal basis for the Lagrangian plane $(d\iota)_{x}(T_{x}L)$ with the quadratic volume form, which is independent of the choice of an orthonormal basis. Such a lift is called a grading for this Lagrangian immersion $\iota: L \to M$. 
\end{definition}

	From now on we shall make the following assumption. \par

\begin{assumption}\label{graded and spin Lagrangian immersions}
	The manifold $L$ is spin with a chosen spin structure $v$. Also, the Lagrangian immersion is graded in the sense of Definition \ref{definition of graded Lagrangian immersion} with a chosen grading $\tilde{\alpha}_{L}$.
\end{assumption}
	
	A grading for the Lagrangian immersion $\iota: L \to M$ defines an absolute Maslov index for each generator $c$ (note that if $c$ is a critical point of $f$ together with a capping half-disk $w$, the disk Maslov index of $w$ agrees with the Morse index of $f$ at $p$), which endows with the wrapped Floer cochain space a $\mathbb{Z}$-grading. A spin structure $v$ determines orientations on the moduli spaces we are going to introduce in subsection \ref{section: moduli space of disks bounded by immersed Lagrangian submanifolds}. \par

\begin{remark}
	The condition that the immersion $\iota: L \to M$ be graded implies that the Maslov class of $\iota: L \to M$ is zero. However, it does not prohibit the existence of holomorphic disks with boundary on $\iota(L)$ of non-zero Maslov indices.
\end{remark}

\subsection{The wrapped Floer cochain space for a cylindrical Lagrangian immersion}\label{section: wrapped Floer cochain space for a cylindrical Lagrangian immersion}
	Let $\iota: L \to M$ be a Lagrangian immersion. To develop wrapped Floer theory for it, we shall make some geometric assumptions. \par

\begin{definition}
	A Lagrangian immersion $\iota: L \to M$ is said to have transverse self-intersections, if the self fiber product $L \times_{\iota} L$ is a smooth submanifold of $L \times L$, and has a decomposition
\begin{equation}
L \times_{\iota} L  = \Delta_{L} \coprod \coprod_{a} L_{a}
\end{equation}
where $\Delta_{L} \cong L$ is the diagonal, and the disjoint union is a union of isolated points. \par	
	Let $\iota: L \to M$ be a Lagrangian immersion with transverse self-intersections which are at most double points. It is said to be cylindrical, if over the cylindrical end $\partial M \times [1, +\infty)$, $\iota$ is an embedding, whose image is of the form $l \times [1, +\infty)$ for some Legendrian submanifold $l \subset \partial M$. More generally speaking, being cylindrical means the image is invariant under the Liouville flow outside of a compact set. \par
	The Lagrangian immersion $\iota: L \to M$ is said to be exact, if there exists a function $f: L \to \mathbb{R}$ such that $df = \iota^{*} \lambda_{M}$. \par
\end{definition}

	Let $\iota: L \to M$ be an exact cylindrical Lagrangian immersion with transverse self-intersections. From now on we assume that all self-intersections are at most double points, in which case the decomposition of the self fiber product takes the form
\begin{equation} \label{decomposition of the self fiber product of Lagrangian immersion with transverse self-intersections}
L \times_{\iota} L = \Delta_{L} \coprod \coprod_{x} \{(p_{-}, p_{+}): p_{-} \neq p_{+}, \iota(p_{-}) = \iota(p_{+}) = x\}.
\end{equation}
\par
	The more classical version of Floer theory gives rise to a deformation of the singular chain complex of the Lagrangian submanifold. In addition to that, Reeb chords on the boundary contact manifold and the Legendrian boundary of the Lagrangian submanifolds should also be included as generators in wrapped Floer theory. In the case of a Lagrangian immersion, the self-intersection points bring in extra generators. \par
	Given the above general idea, in order to choose a suitable chain model for the wrapped Floer cochain space for an exact cylindrical Lagrangian immersion, we consider the following geometric setup. We take a smooth Hamiltonian $H: M \to \mathbb{R}$ which is zero in the interior part $M_{0}$ of the Liouville manifold $M$. This in particular implies that all the Hamiltonian chords from $\iota(L)$ to itself which are contained in the interior part of $M$ are constant. Over the cylindrical end $\partial M \times [1, +\infty)$, more precisely on a smaller subset $\partial M \times [1 + \epsilon, +\infty)$ for some small $\epsilon > 0$, the Hamiltonian $H$ is quadratic, i.e. of the form $r^{2}$. \par
	Let $\mathcal{P}(M; \iota(L))$ be the space of paths in $M$ from $\iota(L)$ to itself. Fix a choice of a basepoint $x_{*}$ in every connected component of $\mathcal{P}(M; \iota(L))$. Let $x$ be a time-one $H$-chord from $\iota(L)$ to itself (either constant or non-constant), which lies in a connected component of $\mathcal{P}(M; \iota(L))$ where $x_{*}$ is located. A capping half-disk of $w$ with reference to $x_{*}$ is a map
\begin{equation}
w: [0, 1] \times [0, 1] \to M,
\end{equation}
such that $w(s, i) \in \iota(L)$ for $i = 0, 1$, and $w(0, t) = x_{*}(t), w(1, t) = x(t)$. Now if $x$ is a non-constant time-one $H$-chord contained in the cylindrical end $\partial M \times [1, +\infty)$, there is a unique homotopy class of capping half-disk for $x$, since $\iota$ is an embedding there and is exact in the usual sense. For such a Hamiltonian chord, it is not necessary to specify the homotopy class of capping half-disks, just as in the setup for an embedded exact Lagrangian submanifold. \par
	Because the Hamiltonian $H$ is constantly zero in the interior part, it is natural to introduce a Morse-Bott setup for wrapped Floer theory. There are many approaches, depending on the chain model for $M$. If we use the singular chain model for the complex which computes the cohomology of $M$, the transversality argument as in \cite{FOOO2} involves quite complicate process of choosing countably generated sub-complexes. To minimize the amount the work, we shall use the Morse complex for computing the cohomology of $M$, so that there are finitely many generators of the complex which computes the cohomology of $M$. Correspondingly, we define the wrapped Floer cochain space $CW^{*}(L, \iota; H)$, which is a graded $\mathbb{Z}$-module, as follows. \par

\begin{definition}\label{definition of wrapped Floer cochain space for a single Lagrangian immersion}
	Denote by $S(L, \iota)$ the set of pairs $(p_{-}, p_{+})$ so that $\iota(p_{-}) = \iota(p_{+})$ but $p_{-} \neq p_{+}$. Let $f$ be an auxiliary $C^{2}$-small Morse function on the fiber product $L \times_{\iota} L = \Delta_{L} \coprod S(L, \iota)$. \par
	We define the wrapped Floer cochain space $CW^{*}(L, \iota; H)$ to be the free $\mathbb{Z}$-module generated by the following two kinds of generators:
\begin{enumerate}[label=(\roman*)]

\item pairs $(p, w)$ where $p \in Crit(f)$ is a critical point of $f$, and $w$ is a $\Gamma$-equivalence class of capping half-disk for $p$, i.e. a map from a one-punctured disk to $M$ with boundary on $\iota(L)$ which converges to $p$ at the puncture (see \cite{FOOO1} Chapter 2 for the definition of $\Gamma$-equivalence);

\item non-constant time-one $H$-chords $x$ from $\iota(L)$ to itself contained in the cylindrical end $\partial M \times [1, +\infty)$. The set of these non-constant Hamiltonian chords is denoted by $\mathcal{X}_{+}(L, \iota; H)$.

\end{enumerate}

	For simplicity, we shall denote by $c$ any kind of generator.
\end{definition}

	The following lemma explains why this is a reasonable and meaningful definition. \par

\begin{lemma}\label{interior chords corresponds to Morse critical points on the fiber product}
	Let $K$ be a $C^{2}$-small generic perturbation of $H$ supported in the interior part $M_{0}$ of $M$, which is also assumed to be non-degenerate. Then all the $K$-chords starting from $\iota(L)$ to itself which are contained in the interior part of $M$ are constant and non-degenerate (hence isolated). Moreover, there is a natural one-to-one correspondence between these constant chords, and the critical points of the lift of $K$ to the fiber product $L \times_{\iota} L = \Delta_{L} \coprod S(L, \iota)$.
\end{lemma}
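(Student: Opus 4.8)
The plan is to choose the perturbation $K$ in a form adapted to Weinstein neighborhoods of the branches of $\iota(L)$, so that the chord equation over $M_0$ becomes a Morse equation on the fiber product, and then to treat the diagonal component $\Delta_L$ and the isolated self-intersection points separately. Since $H$ vanishes identically on $M_0$, all of its chords there are constant and form the Morse--Bott ``manifold'' $L\times_\iota L=\Delta_L\coprod S(L,\iota)$ --- a lifted constant chord is precisely a pair $(p_-,p_+)$ with $\iota(p_-)=\iota(p_+)$ --- and the content of the lemma is that a suitable non-degenerate perturbation retains this feature while resolving the degeneracy along $\Delta_L$. Concretely, I would fix a Weinstein neighborhood chart for each local branch of $\iota$ over $M_0$: at a point $p$ where $\iota$ is an embedding this identifies a neighborhood of $\iota(p)$ in $M$ with a neighborhood of the zero section in $T^*V$ for a small $V\ni p$, while at a transverse double point $x_0=\iota(p_-)=\iota(p_+)$ there are two such charts whose zero sections meet transversally at $x_0$. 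I then take $K-H$, which is supported in $M_0$ and $C^2$-small, to be, near $\iota(L)$, the pullback of the auxiliary Morse function $f$ on $L\times_\iota L$: near $\Delta_L\cong L$ this means $K=\pi^*(f|_{\Delta_L})$ for the cotangent projection $\pi$ (a ``potential''), and near each of the finitely many self-intersection points $K$ is taken locally constant, in particular $dK=0$ there. The genericity making $K$ non-degenerate is exactly that $f|_{\Delta_L}$ is Morse, together with the standing hypothesis that the self-intersections are transverse double points.

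Near the diagonal, in a chart $T^*V$ with $K=g\circ\pi$, $g=f|_{\Delta_L}$, Hamilton's equations read $\dot q=\partial_p K=0$ and $\dot p=-\partial_q K=-dg(q)$, so a chord with $p(0)=p(1)=0$ has $q\equiv q_0$ with $dg(q_0)=0$ and $p\equiv 0$; it is the constant chord at a critical point of $g$, its linearization is governed by the Hessian of $g$ at $q_0$, and hence non-degeneracy of the chord is equivalent to $q_0$ being a non-degenerate critical point of $g$. Near a self-intersection point $x_0$ the Hamiltonian $K$ is locally constant, so $X_K\equiv 0$ there and any chord lying in that neighborhood is constant; since $x_0$ is an isolated transverse double point, the only lifted constant chords supported there are the two with lifts $(p_-,p_+)$ and $(p_+,p_-)$, and for each the linearized operator is $\dot\xi=0$ with $\xi(0)=\xi(1)$ forced into $(d\iota)_{p_-}(T_{p_-}L)\cap(d\iota)_{p_+}(T_{p_+}L)=\{0\}$ by transversality, so these chords are non-degenerate as well.

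It remains to exclude any other $K$-chords in $M_0$ and to assemble the correspondence. Because $K-H$ is $C^2$-small and $H\equiv 0$ on $M_0$, the vector field $X_K$ is $C^1$-small on the compact domain $M_0$, so every $K$-chord contained in $M_0$ has length at most a small constant; consequently its endpoints $\iota(p_-),\iota(p_+)$ are close, the lift $(p_-,p_+)$ lies in a prescribed small neighborhood of $\Delta_L\cup S(L,\iota)$, and the chord must be one of those found above. Collecting these, the constant $K$-chords in the interior are in natural bijection with $Crit(f|_{\Delta_L})\sqcup S(L,\iota)$, which is exactly the critical set of the lift of $K$ to $L\times_\iota L$ --- under our choice this is $f$ itself, and in general it is the function $(p_-,p_+)\mapsto K(\iota(p_-))$, whose critical points are the critical points of $\iota^*K$ along $\Delta_L$ together with every point of the zero-dimensional component $S(L,\iota)$. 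The bijection is induced by the chosen Weinstein charts and is therefore canonical.

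The main obstacle is the self-intersection locus. For a genuinely arbitrary $C^2$-small perturbation the chords near a transverse double point are \emph{not} constant --- they are short paths joining two distinct nearby points on the two branches --- so the statement really does constrain $K$ to be locally constant (equivalently $dK=0$) near every self-intersection point, and it is precisely there that transversality of the self-intersections is used to get non-degeneracy. One must also take the perturbation small relative to the sizes of the Weinstein charts, the mutual separation of the self-intersection points, and the injectivity radius, so that the short-chord argument is valid; compactness of $M_0$ and the at-most-double-point assumption are what make this possible.
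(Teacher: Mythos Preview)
Your approach via Weinstein charts and the explicit cotangent computation $\dot q=0$, $\dot p=-dg(q)$ for $K=\pi^*g$ is more careful than the paper's proof, which simply asserts that $C^2$-smallness forces all interior chords to be constant and then reads off the bijection from the description of the lift of $K$ to $\Delta_L\coprod S(L,\iota)$. You have correctly located the real content of the lemma: the self-intersection locus is where the word ``constant'' becomes delicate, and the paper's argument does not engage with this.

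That said, your proposed fix at the double points has a side effect that breaks the non-degeneracy claim. If $K$ is locally constant on a neighborhood $V$ of $x_0$, then $X_K\equiv 0$ on $V$ and every point of $V\cap\iota(L)$ supports a constant chord; those at $q\neq x_0$ lying on a single branch lift to $\Delta_L$, and since $\iota^*K$ is then constant near $p_\pm$ you produce an entire arc of degenerate diagonal chords (equivalently, $f|_{\Delta_L}=\iota^*K$ fails to be Morse near $p_\pm$). The bijection with critical points of the lifted $K$ still holds set-theoretically---both sides acquire the same arc---but the ``non-degenerate, hence isolated'' conclusion fails. Two clean repairs: either keep $K=\pi^*g$ throughout with $g$ Morse and having no critical points near $p_\pm$, and accept that the two $S(L,\iota)$-type chords near $x_0$ are short but not literally constant (transversality of the branches together with $C^1$-smallness of $\phi_K^1-\mathrm{id}$ gives exactly one such chord per ordered pair, non-degenerate for the reason you gave); or make $x_0$ an isolated non-degenerate critical point of $K$ on $M$, in which case the constant chord at $x_0$ has four lifts matching the four critical points $(p_-,p_-),(p_+,p_+),(p_-,p_+),(p_+,p_-)$ of the lift. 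The first option is what the paper's Morse--Bott framework is really designed to absorb, and ``constant'' in the lemma should be read accordingly.
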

\begin{proof}
	Since $K$ is $C^{2}$-small in the interior part, all time-one $K$-chords from $\iota(L)$ to itself which are contained in that region are constant. Non-degeneracy follows from the assumption, which then implies that all such $K$-chords are isolated. \par
	Note that as in \eqref{decomposition of the self fiber product of Lagrangian immersion with transverse self-intersections}, the fiber product is the union of the diagonal $\Delta_{L} \cong L$ and the set of pairs $(p_{-}, p_{+})$, with $p_{-} \neq p_{+}$, each of which corresponds to a self-intersection $p$ of the immersion $\iota$. The lift of $K$ to this fiber product is simply the pullback of $K$ by $\iota$ on the main component $\Delta_{L}$, and constants on each isolated point $(p_{-}, p_{+})$ with value equal to $K(p)$ where $p = \iota(p_{-}) = \iota(p_{+}) \in M$. Since all the interior $K$-chords from $\iota(L)$ to itself are constant, every such constant chord $x$ has image being a point $p$ on $\iota(L)$. If $p$ is not a self-intersection point of $\iota$, then it satisfies the condition that $dK_{\iota(L)}(p)=0$, which implies that it is a critical point of the lift of $K$ to the fiber product $L \times_{\iota} L$.
\end{proof}

	On the other hand, non-constant time-one $H$-chords are located in the cylindrical end $\partial M \times [1, +\infty)$, and correspond to Reeb chords on the contact boundary $\partial M$, from the Legendrian submanifold $\iota(L) \cap \partial M$ to itself. Thus Definition \ref{definition of wrapped Floer cochain space for a single Lagrangian immersion} is a good definition. \par

\subsection{Pearly trees}
	The moduli spaces we use to set up wrapped Floer theory for a cylindrical Lagrangian immersion $\iota: L \to M$ combine two geometric configurations: both pseudoholomorphic disks and gradient flow trees. Moreover, there are two different kinds of disks - both inhomogeneous pseudoholomorphic disks and homogeneous pseudoholomorphic disks. To describe elements in the moduli spaces, we introduce the following objects as the underlying domains of "pseudoholomorphic maps" to $M$. \par

\begin{definition}
	A colored rooted tree with $k$-leaves ($k \ge 0$) consists of the following data:
\begin{enumerate}[label=(\roman*)]

\item a planar oriented metric ribbon tree $(T, V, E, r)$ with $k+1$ ends, where $V$ is the set of vertices, $E$ is the set of edges, and $r: E \to \mathbb{R}_{\ge 0} \cup \{\infty\}$ is a length function;

\item a decomposition of the set $E$ of edges into the set of exterior edges $E_{ext}$ and the set of interior edges $E_{int}$, such that $E_{ext}$ consists of $k+1$ semi-infinite edges corresponding to the $k+1$ ends: one is called the root $e_{0}$ and the other $k$ are called the leaves $e_{1}, \cdots, e_{k}$, while the interior edges are finite;

\item a coloring $c: V \to \{0, 1\}$ and a coloring $d: E_{ext} \to \{0, 1\}$, which satisfy the property that if $d(e) = 1$ for an enterior edge $e$, then its endpoint must have color $1$.

\end{enumerate}
\end{definition}

	Note that we should also allow one exceptional case: $T$ has no vertices and only one edge $e$ which is infinite in both directions, with color $d(e) = 0$. This does not quite fit into the definition of a colored rooted tree, but we shall still call it one. This infinite edge $e$ should be thought of as joining the root and one leaf together, so that it also comes with a preferred orientation. \par
	The orientation on $T$ induces an orientation on every interior edge $e \in E_{int}$, so the two endpoints of each edge can be naturally distinguished - one is called the source, denoted by $s(e)$, the other is called the target, denoted by $t(e)$. On the other hand, exterior edge are semi-infinite, and the root $e_{0}$ has only the target $t(e_{0})$ as its endpoint, while each $e_{i}$ has only the source $s(e_{i})$. \par

\begin{remark}
	Note that in our definition, we do not require the valency $val(v)$ of a vertex $v$ to be greater than or equal to $2$. In fact, the presence of vertices $v$ with $val(v) = 1$ will be important in the story.
\end{remark}

\begin{definition}
	A colored rooted tree $T$ is called admissible, if it is obtained from a colored rooted tree $T_{0}$ whose vertices all have color $1$ by attachment of colored rooted trees $T_{j}$ whose vertices all have color $0$. These $T_{j}$'s are attached to $T_{0}$ by edges $e_{j}$ (not the leaves) whose color are $0$. Moreover, $T_{0}$ and $T_{j}$'s are subtrees of $T$.
\end{definition}

	For our purpose of setting up wrapped Floer theory, we shall consider only admissible colored rooted trees, and simply call them colored rooted trees by abuse of name. \par
	Given a colored rooted tree with $k$-leaves as above, we can construct from it a topological space $S_{T}$ canonically in the following way. \par
	To every vertex $v$, we assign a punctured disk $S_{v} = D \setminus \{z_{v, 0}, \cdots, z_{v, val(v)-1}\}$, where each puncture $z_{v, j}$ corresponds to an edge adjacent to $v$. \par
	To every interior edge $e$, we assign a finite interval $I_{e}$ of length $r(e)$, joining the two disks (possibly with punctures) associated to $s(e), t(e)$ at the punctures on $S_{s(e)}$ and $S_{t(e)}$ which correspond to $e$. The length $r(e)$ is allowed to be zero, in which case $I_{e}$ topologically becomes a point, but we still think of $e$ as an edge combinatorially. \par
	To every exterior edge $e$ with $d(e) = 0$, we assign a semi-infinite interval $I_{e} = (-\infty, 0]$ if $e$ is the root $e_{0}$, or $I_{e} = [0, +\infty)$ if $e$ is any of the leaves $e_{i}, i = 1, \cdots, k$. The semi-infinite interval $I_{e}$ is attached at $\{0\}$ to the corresponding puncture on $S_{s(e)}$ or $S_{t(e)}$.  \par
	Finally, to every exterior edge $e$ with $d(e) = 1$, we assign a semi-infinite strip $Z_{e} = (-\infty, 0] \times [0, 1]$ if $e$ is the root $e_{0}$, or $Z_{e} = [0, +\infty) \times [0, 1]$ if $e$ is any of the leaves $e_{i}$. This semi-infinite strip should be identified with a strip-like end near the corresponding puncture on $S_{s(e)}$ or $S_{t(e)}$. \par
	The topological space $S_{T}$ is the union of all the above configurations, which are glued together according to the combinatorial data of the tree $T$. \par
	One special case is when $T$ has only one vertex $v$ one root $e_{0}$ and one leaf $e_{1}$ with colors $d(e_{0}) = d(e_{1}) = 1$. In this case, $S_{T}$ should be a disk with two boundary punctures, which is identified with $Z = \mathbb{R} \times [0, 1]$. \par
	In the exceptional case, i.e. when $T$ has no vertices and only one edge $e$ which is infinite in both directions, we assign $S_{T} = I_{e} = \mathbb{R}$. \par
	In order for such geometric objects $S_{T}$ to have a reasonable moduli problem, we should then equip $S_{T}$ with an additional structure - a complex structure $j_{v}$ on each disk component $D_{v}$. We briefly denote that by $j$. We call $(S_{T}, j)$ or simply $S_{T}$ a (rooted) pearly tree with $k$-leaves. However, these pearly trees do not have a moduli space because we do not impose stability conditions at the moment. \par
	Pearly trees will be the underlying domains of the pseudoholomorphic maps in wrapped Floer theory of cylindrical Lagrangian immersions. However, they are not enough, as positive-dimensional families of pseudoholomorphic maps can degenerate to broken pseudoholomorphic maps. To describe those, we introduce broken colored rooted trees as well as broken pearly trees. \par
	We need some terminology when talking about degeneration of colored rooted trees. When the length of an interior edge $e$ of a colored rooted tree $T$ tends to infinity, we obtain a pair of colored rooted trees $T_{0}, T_{1}$, so that $e$ breaks into a new leaf $e_{0, new}$ of $T_{0}$ and the root $e_{1, 0}$ of $T_{1}$. In such a picture, we say that the pair of root and leaf $e_{1, 0}, e_{0, new}$ is connected at infinity. \par
	Now we formalize the definition of a broken colored rooted tree. \par

\begin{definition}
	A broken colored rooted tree is a tuple $(T_{0}, \cdots, T_{m})$, where each $T_{i}$ is an admissible colored rooted tree, such that it satisfies the following conditions:
\begin{enumerate}[label=(\roman*)]

\item (rooting) The root $e_{0, 0}$ of $T_{0}$ is not connected at infinity to any leaf of any $T_{i}, i \neq 0$.

\item (ordering) For each $j \neq 0$, there is a unique $l(j)$ (which can be $0$) such that the root $e_{j, 0}$ of $T_{j}$ is connected at infinity to some (unique) leaf $e_{l(j), p(j)}$ of $T_{l(j)}$.

\item (compatible coloring) The root $e_{j, 0}$ of $T_{j}$ and the leaf $e_{l(j), p(j)}$ of $T_{l(j)}$ which are connected at infinity should have the same coloring, i.e. $d(e_{j, 0}) = d(e_{l(j), p(j)})$.

\end{enumerate}
\end{definition}

	Given a broken colored rooted tree $(T_{0}, \cdots, T_{m})$ as above, as well as $m$ positive real numbers $\rho_{1}, \cdots, \rho_{m}$, we may perform a gluing construction as follows. For each $T_{j}, j \neq 0$ and the corresponding $l(j)$, recall that we have identifications
\begin{equation}
\begin{split}
&e_{j, 0} \cong (-\infty, 0],\\
&e_{l(j), p(j)} \cong [0, +\infty),
\end{split}
\end{equation}
cut off $(-\infty, -\rho_{j}/2]$ from $e_{j, 0}$ and $[\rho_{j}/2, +\infty)$ from $e_{l(j), p(j)}$, and glue the remaining intervals at the endpoints $\{-\rho_{j}/2\} \sim \{\rho_{j}/2\}$. We may suitably reparametrize the interval so it has a nicer form, but that is not important; the only important information is that the resulting edge has length $\rho_{j}$. After doing this process for all $j = 1, \cdots, m$, we obtain a colored rooted tree
\begin{equation}
T = \sharp_{\rho_{1}, \cdots, \rho_{m}}(T_{0}, \cdots, T_{m}).
\end{equation}
If the resulting colored rooted tree $T$ is admissible, we call this an admissible gluing, and call $(T_{0}, \cdots, T_{m})$ an admissible broken colored rooted tree. From now on we shall only consider admissible broken colored rooted trees, and call them broken colored rooted trees for simplicity. Partial gluings are also allowed, which again give us broken colored rooted trees. It can be defined in a similar way, but the gluing process is only done for a sub-collection of edges connected at infinity. Let $J \subset \{1, \cdots, m\}$ index such a sub-collection, and we denote the result of partial gluing by
\begin{equation}
\sharp_{\rho_{j}: j \in J}(T_{0}, \cdots, T_{m}).
\end{equation}\par
	Similar to the case of a colored rooted tree, we can assign to a broken colored rooted tree a topological space as follows. \par

\begin{definition}
	A broken pearly tree $(S_{T_{0}}, \cdots, S_{T_{m}})$ associated to a broken colored rooted tree $(T_{0}, \cdots, T_{m})$ is simply the union of pearly trees $S_{T_{i}}$ associated to each component.
\end{definition}

	As the underlying broken colored rooted tree $(T_{0}, \cdots, T_{m})$ can be glued root-to-leaf in an admissible way, we can also glue the associated broken pearly tree to get a pearly tree. There are two cases. If $d(e_{j, 0}) = d(e_{l(j), p(j)}) = 0$, the $0$-th end $\epsilon_{j, 0}$ of $S_{T_{j}}$ is the negative half-ray $(-\infty, 0]$ and the $p(j)$-th end $\epsilon_{l(j), p(j)}$-th is the positive half-ray $[0, +\infty)$. In this case perform the gluing in the same way as we have done for the underlying trees. If $d(e_{j, 0}) = d(e_{l(j), p(j)}) = 1$, the $0$-th end $\epsilon_{j, 0}$ of $S_{T_{j}}$ is the negative infinite half-strip $(-\infty, 0] \times [0, 1]$ and the $p(j)$-th end $\epsilon_{l(j), p(j)}$-th is the positive infinite half-strip $[0, +\infty) \times [0, 1]$. We cut off $(-\infty, \rho_{j}/2] \times [0, 1]$ from $\epsilon_{j, 0}$ and $[\rho_{j}/2, +\infty) \times [0, 1]$ from $\epsilon_{l(j), p(j)}$, then glue the resulting finite strips along the boundary intervals $\{-\rho_{j}/2\} \times [0, 1] \sim \{\rho_{j}/2\} \times [0, 1]$. Doing this process for all $j$, we obtain a pearly tree denoted by
\begin{equation}
S_{T} = \sharp_{\rho_{1}, \cdots, \rho_{m}}(S_{T_{0}}, \cdots, S_{T_{m}}).
\end{equation}
Also, we can perform partial gluing in a similar way as we have done for broken colored rooted trees. \par

\subsection{Moduli spaces of stable pearly trees} \label{section: moduli space of disks bounded by immersed Lagrangian submanifolds}
	The moduli spaces involved in wrapped Floer theory for the Lagrangian immersion $\iota: L \to M$ are analogues and modifications of those used by \cite{Bourgeois-Oancea} to set up linearized contact homology in Hamiltonian formulation, without circle action and symmetry in our case. In addition, the inhomogeneous pseudoholomorphic disks in the interior part which have asymptotic limits being the constant chords are also in consideration. \par
	Two issues bring up complication in the construction of the moduli spaces. Unlike the case of an embedded exact Lagrangian submanifold, the image $\iota(L)$ in general bounds $J$-holomorphic disks, and limits of inhomogeneous pseudoholomorphic disks might bubble off homogeneous pseudoholomorphic disks with boundary on $\iota(L)$. These should be suitably packaged into the moduli spaces. On the other hand, the $A_{\infty}$-structure maps are typically defined by appropriate counts of inhomogeneous pseudoholomorphic disks; in particular, we expect the zeroth order map $m^{0}$ of the curved $A_{\infty}$-algebra structure to be defined by inhomogeneous pseudoholomorphic disks with one puncture. This causes some potential problems, in particular in the verification of $A_{\infty}$-relations, as the elements in some boundary strata do not satisfy Floer's equations because they arise from disk bubbling. \par
	In order to treat homogeneous and inhomogeneous pseudoholomorphic disks with one puncture in a uniform way, we have chosen our Hamiltonian $H$ to be constantly zero in the interior part of $M$, so that inhomogeneous pseudoholomorphic disks which are contained in the interior part automatically satisfy the homogeneous Cauchy-Riemann equation. Because of the chain model we pick for the wrapped Floer cochain space, we shall construct a version of Morse-Bott moduli spaces combining pseudoholomorphic disks and gradient flow trees. \par
	The issues can be resolved using the exactness condition for the Lagrangian immersion $\iota: L \to M$. The key properties about the behavior of pseudoholomorphic disks are given by the following lemmas. \par

\begin{lemma}\label{finiteness of pseudoholomorphic disks passing through a self-intersection point}
	For any compatible almost complex structure $J$ of contact type near the boundary $\partial M$, all $J$-holomorphic disks with boundary on $\iota(L)$ are contained in the interior part of $M$, and have to pass through a self-intersection point of $\iota: L \to M$. \par
	For any self-intersection point $(p_{-}, p_{+})$ ($p_{-} \neq p_{+}$) such that $\iota(p_{-}) = \iota(p_{+}) = p$, there are finitely many relative homology classes $\beta$ of $J$-holomorphic disks bounded by $\iota(L)$, with one boundary marked point mapped to the point $p$.
\end{lemma}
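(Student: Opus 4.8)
The plan is to prove the two assertions in turn, both relying on the exactness of the Lagrangian immersion $\iota: L \to M$ together with the maximum principle at the cylindrical end. First, I would establish that any $J$-holomorphic disk $u: (D, \partial D) \to (M, \iota(L))$ with $J$ of contact type near $\partial M$ must be contained in the interior part $M_0$. This follows from the usual maximum principle argument: on the cylindrical end $\partial M \times [1, +\infty)$ the image $\iota(L)$ is of the form $l \times [1,+\infty)$ for a Legendrian $l \subset \partial M$, so the radial coordinate $r$ composed with $u$ is a subharmonic function (for $J$ of contact type) which on the boundary portion mapped into the cylindrical end has vanishing normal derivative in the appropriate sense; hence $r \circ u$ cannot attain an interior maximum in the cylindrical region unless $u$ is entirely contained there, and an exact cylindrical Lagrangian cannot bound a nonconstant disk in the cylindrical end by Stokes. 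Therefore $u$ lies in $M_0$.

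Next, to see that such a disk must pass through a self-intersection point: if the image of $u$ avoided all self-intersection points of $\iota$, then $u$ would lift to a disk with boundary on an embedded exact Lagrangian (the relevant sheet of $\iota(L)$), and by Stokes' theorem applied to the primitive $f$ with $df = \iota^*\lambda_M$, the symplectic area $\int_D u^*\omega = \int_{\partial D} u^*\lambda = \int_{\partial D} d(f \circ \partial u) = 0$, forcing $u$ to be constant. More precisely, since $\iota(L) \cap M_0$ has only finitely many self-intersection points and they are isolated double points, a nonconstant disk whose boundary misses all of them would have its boundary lying in a single embedded exact sheet, contradicting positivity of area. So the boundary of any nonconstant $J$-holomorphic disk must pass through (at least) one self-intersection point.

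For the finiteness statement, fix a self-intersection point $p = \iota(p_-) = \iota(p_+)$ and consider $J$-holomorphic disks with one boundary marked point mapped to $p$. Since all such disks lie in the compact region $M_0$, Gromov compactness gives a uniform energy bound on any sequence in a fixed-dimensional component; but the key point is to bound the possible relative homology classes $\beta \in H_2(M, \iota(L))$. Because $\iota$ is exact, the symplectic area of a disk in class $\beta$ is determined by the topological class $\beta$ via the primitive $f$ and the values of $f$ at the lifts of the self-intersection points — concretely, the area is a sum of differences $f(p_+^{(i)}) - f(p_-^{(i)})$ over the self-intersection points the boundary passes through, which takes values in a finite set since there are finitely many self-intersection points. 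Hence only finitely many values of symplectic area are realized, and since the area is positive for nonconstant disks and bounded, an a priori energy bound follows; combined with the fact that $\pi_2$ contributions are controlled (there are no sphere bubbles soaking up area beyond this, again by exactness/compactness of the region), only finitely many relative classes $\beta$ can be represented.

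\medskip

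The main obstacle I anticipate is the last step: making precise the claim that "finitely many realized symplectic areas" implies "finitely many relative homology classes." This requires knowing that the map $\beta \mapsto \omega(\beta)$ (equivalently the action functional on $H_2(M,\iota(L))$) has finite fibers on the set of classes represented by $J$-holomorphic disks — in general two distinct relative classes could have the same area. The resolution should invoke the grading/Maslov hypothesis (Assumption \ref{graded and spin Lagrangian immersions}, which forces the Maslov class to vanish) together with the index formula, so that classes of bounded area and bounded Maslov index (the latter bounded because we restrict to a fixed-dimensional component of the moduli space) form a finite set; alternatively one argues directly that $H_2(M, \iota(L))$ is finitely generated and the area functional is proper on the cone of effective classes. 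This is the place where one must be careful about what ambient topological finiteness hypotheses on $M$ are being used, and I would spell out the argument using the Morse-theoretic chain model already fixed for $M$ so that the relevant homology is manifestly finitely generated.
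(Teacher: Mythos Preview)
Your argument for the first assertion is correct and essentially matches the paper: containment in $M_0$ via the maximum principle, and necessity of a branch switch via Stokes and exactness.

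For the second assertion, the paper's route is shorter than yours. A disk ``with one boundary marked point mapped to $p$'' is, in context, a disk whose boundary lift to $L$ is defined on $\partial D \setminus \{z_0\}$ and switches branches only at $z_0$. Then Stokes gives $E(u) = \int_{\partial D} u^*\lambda_M = f(p_-) - f(p_+)$ exactly, not a sum over several self-intersections. Your more elaborate sum over all branch switches is not needed here, though it is harmless.

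You are right to flag the inference ``finitely many area values $\Rightarrow$ finitely many relative homology classes'' as the delicate step; the paper also glosses over it. But your proposed fixes do not work. The vanishing of the Maslov class coming from the grading does not separate two relative classes with the same area and index --- it only says the Maslov homomorphism is zero, which gives no new constraint. And ``$H_2(M,\iota(L))$ finitely generated with the area functional proper on the effective cone'' assumes what you are trying to prove: properness of $\omega$ on the effective cone is precisely the finiteness statement in question, since $\omega$ can have an infinite kernel on $H_2(M,\iota(L);\mathbb{Z})$. The clean argument is Gromov compactness: the disks under consideration all lie in the compact set $M_0$, have energy equal to the fixed value $f(p_-)-f(p_+)$, and have boundary on the compact set $\iota(L)\cap M_0$; hence the moduli space of such stable disks is compact, and since the relative class $\beta$ is locally constant on it, only finitely many $\beta$ occur.
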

\begin{proof}
	The first statement follows from the assumption that $\iota$ is an embedding over the cylindrical end $\partial M \times [1, +\infty)$ of an exact Lagrangian submanifold of the form $l \times [1, +\infty)$, using a standard argument by the maximum principle. \par
	The second statement follows from the exactness condition. For any such a $J$-holomorphic disk $u$, the exactness condition $df = \iota^{*} \lambda_{M}$ implies that its energy is fixed:
\begin{equation*}
E(u) = f(p_{-}) - f(p_{+}),
\end{equation*}
by integration by parts. On the other hand, the energy is also equal to $\omega(\beta)$, which implies that there can only be finitely many such homology classes $\beta$. \par
\end{proof}

	The next three lemmas all follow from maximum principle. \par

\begin{lemma}
	Let $u: D \to M$ be a $J$-holomorphic disk with boundary on $\iota(L)$. Then $u(D)$ cannot be entirely contained in the cylindrical end $\partial M \times [1, +\infty)$.
\end{lemma}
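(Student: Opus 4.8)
The plan is to use the integrated maximum principle --- Stokes' theorem together with the exactness of $\omega = d\lambda_M$ --- which in this cylindrical setting reduces the statement to the elementary fact that the Liouville form vanishes identically on a Legendrian cylinder. I would argue by contradiction, assuming $u(D) \subset \partial M \times [1, +\infty)$. On this region $\iota$ is an embedding onto $l \times [1,+\infty)$ for a Legendrian $l \subset \partial M$, the Liouville form is $\lambda_M = r\alpha$ with $\alpha$ the pulled-back contact form and $r$ the radial coordinate, and $J$ satisfies the contact-type relation. Since $u$ is $J$-holomorphic, $u^*\omega \ge 0$ pointwise, with equality exactly where $du = 0$; hence $E(u) = \int_D u^*\omega \ge 0$, and $E(u) = 0$ forces $u$ to be constant.

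Next I would compute $E(u)$ by Stokes: $E(u) = \int_D u^* d\lambda_M = \int_{\partial D} u^*\lambda_M$. Then I would show the boundary integrand vanishes identically. For $z \in \partial D$ we have $u(z) \in \iota(L) \cap (\partial M \times [1,+\infty)) = l \times [1,+\infty)$, and for $\zeta$ tangent to $\partial D$ the vector $du(\zeta)$ lies in $T(l \times [1,+\infty)) = Tl \oplus \mathbb{R}\,\partial_r$; since $\alpha$ annihilates $\partial_r$ and annihilates $Tl$ (the latter because $l$ is Legendrian), $(u^*\lambda_M)(\zeta) = r\,\alpha(du(\zeta)) = 0$. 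Therefore $E(u) = 0$, so $u$ is constant, contradicting that $u$ is a nonconstant holomorphic disk. As with the first assertion of Lemma \ref{finiteness of pseudoholomorphic disks passing through a self-intersection point}, the disks under consideration are understood to be nonconstant; note moreover that a disk entirely contained in the cylindrical end has no corners at self-intersection points, since $\iota$ is embedded there, so no boundary defect terms $f(p_+) - f(p_-)$ enter the Stokes computation.

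For completeness --- and because the paper phrases these lemmas as consequences of the maximum principle --- I would also record the pointwise version. A standard computation using that $J$ is of contact type shows $r \circ u$ is subharmonic, so it attains its maximum on $\partial D$; at a boundary maximum $z_0$ the contact-type relation and $J$-holomorphicity give $d(r\circ u)(n)|_{z_0} = \lambda_M(du(\tau))|_{z_0}$, where $n$ is the outward normal and $\tau = jn$ the positively oriented boundary tangent, and this vanishes by the same Legendrian-cylindrical computation as above; hence Hopf's lemma forces $r\circ u$ to be constant. A $J$-holomorphic disk mapping into a single level hypersurface $\partial M \times \{c\}$ then has $u^*dr = 0$ and $u^*\lambda_M = 0$, so $u^*\omega = 0$ and $u$ is constant. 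The only delicate point is bookkeeping of conventions for the contact-type condition and the boundary orientation; there is no genuine obstacle here, the argument being precisely the standard one for exact (immersed) Lagrangians, specialized to the region where $\iota$ is an embedding.
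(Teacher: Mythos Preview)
Your argument is correct, and it is essentially what the paper has in mind: the paper records this lemma (together with the two that follow it) simply as a consequence of the maximum principle, without writing out any details. Your Stokes/energy computation is the cleanest way to package the argument here, and your observation that no branch-switching defect terms appear --- because $\iota$ is an embedding over the cylindrical end --- is exactly the point that makes the immersed case no harder than the embedded one for this particular statement. The pointwise subharmonicity version you also record is the literal ``maximum principle'' the paper invokes; both routes are standard and equivalent in this setting, so there is no substantive difference from the paper's (unwritten) proof.
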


\begin{lemma}
	Let $u: S \to M$ be a $J$-holomorphic curve with boundary on $\iota(L)$, for any smooth Riemann surface $S$ with boundary $\partial S \neq \varnothing$. If $u(\partial D)$ is contained in a compact subset of the interior part of $M$ away from $\partial M$, then the image of the entire disk $u(D)$ must be contained in the interior part of $M$ away from $\partial M$.
\end{lemma}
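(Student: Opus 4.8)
The plan is to reduce this to the interior maximum principle for the radial coordinate on the cylindrical end, using crucially that $u$ solves the \emph{homogeneous} Cauchy--Riemann equation, so that there is no Hamiltonian term to control, together with the hypothesis that $u(\partial S)$ stays away from $\partial M$, which removes the Lagrangian boundary condition from the discussion entirely. Throughout I extend the radial coordinate $r$ slightly into the collar $\partial M \times (\epsilon, 1]$ and use that all our almost complex structures, $J$ included, are of contact type on a neighbourhood $\{r \ge 1 - \delta_{0}\}$ of $\partial M$; I also take $S$ compact, which is the only case arising in the applications (a disk, possibly with interior or boundary marked points, none of which affects the argument).

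First I would choose a level at which to cut the surface. Since $u(\partial S)$ lies in a fixed compact subset $K$ of the interior part of $M$ disjoint from $\partial M$, the supremum of $r$ over $K \cap \{r \ge 1 - \delta_{0}\}$ is some $c_{*} < 1$; by Sard's theorem I may pick a regular value $c_{0} \in (\max(c_{*}, 1 - \delta_{0}), 1)$ of the smooth function $r \circ u$. Put $\Omega := u^{-1}(\{r \ge c_{0}\})$. Because $u(\partial S) \subset K \subset \{r < c_{0}\}$, the boundary $\partial \Omega$ is disjoint from $\partial S$; since $c_{0}$ is regular, $\partial \Omega$ is a smooth $1$-manifold lying in the interior of $S$ on which $r \circ u \equiv c_{0}$. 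Thus on $\Omega$ there is no Lagrangian boundary to worry about: it is a compact surface mapped into the contact-type region with its boundary on the single level hypersurface $\partial M \times \{c_{0}\}$.

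Then I would run the maximum principle. On $\Omega$ the function $\rho := r \circ u$ is subharmonic, $\Delta \rho \ge 0$; this is the standard computation for a $J$-holomorphic curve with $J$ of contact type, and here it uses precisely that $u$ satisfies $(du)^{0,1} = 0$ rather than an inhomogeneous Floer equation. Suppose $\Omega \ne \varnothing$. Then $\rho$ attains its maximum $c_{\max} \ge c_{0}$ at some $z_{0}$, necessarily an interior point of $S$ (the only boundary of $\Omega$ is $\partial \Omega$, where $\rho = c_{0}$, so if $c_{\max} = c_{0}$ we may take $z_{0}$ interior to $\Omega$). Let $V$ be the connected component of the open set $u^{-1}(\{r > c_{0} - \eta\}) \subset \operatorname{int}(S)$ — for small $\eta > 0$, still disjoint from $\partial S$ and inside the contact-type region — that contains $z_{0}$. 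Since $\rho$ is subharmonic on $V$ and attains the value $c_{\max} = \sup_{V} \rho$ at the interior point $z_{0}$, the strong maximum principle forces $\rho \equiv c_{\max}$ on $V$; but $V$ properly contains the connected component of $\Omega$ through $z_{0}$, hence contains points with $\rho < c_{0} \le c_{\max}$ — a contradiction. Therefore $\Omega = \varnothing$, i.e. $u(S) \subset \{r < c_{0}\}$, which lies in the interior part of $M$ and is bounded away from $\partial M$ by a fixed amount.

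The only point requiring care is the bookkeeping: selecting a level $c_{0}$ that is simultaneously a regular value of $r \circ u$, lies inside the contact-type region, and dominates $r$ on $u(\partial S)$, and then phrasing the maximum principle intrinsically on a general surface rather than through strip coordinates. The analytic core — subharmonicity of $r \circ u$ for contact-type $J$, together with the strong maximum principle — is entirely standard, and no new compactness or energy input is needed beyond what is already available, so this is indeed, as advertised, a consequence of the maximum principle.
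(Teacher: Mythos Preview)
Your argument is correct and follows precisely the approach the paper indicates: the paper does not give a proof at all, merely stating that this lemma (together with the two neighbouring ones) ``follows from maximum principle.'' Your write-up is a careful unpacking of that one-line invocation, and the bookkeeping you flag --- choosing a regular level $c_{0}$ above $u(\partial S)$ but inside the contact-type region, then applying the strong maximum principle on the resulting boundaryless piece $\Omega$ --- is exactly what is needed to make the claim rigorous. The detour through the auxiliary open set $V$ is slightly more elaborate than necessary (one can argue directly on $\Omega$: subharmonicity plus $\rho|_{\partial\Omega}=c_{0}$ forces $\rho\le c_{0}$, hence $\rho\equiv c_{0}$, contradicting regularity of $c_{0}$ unless $\Omega=\varnothing$), but it is not wrong.
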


\begin{lemma}
	Let $u: S \to M$ be a smooth map from a one-punctured disk to $M$ with boundary on $\iota(L)$, which satisfies the Floer's equation
\begin{equation}
(du - \gamma \otimes X_{H})^{0, 1} = 0.
\end{equation}
If $u$ asymptotically converges to either a constant $H$-chord, or a self-intersection point of $\iota$ at the puncture, then $u(S)$ must be entirely contained in the interior part of $M$, where the Hamiltonian $H$ vanishes. In particular, $u$ has a natural smooth extension to the closed disk $D$, and satisfies the $J$-holomorphic curve equation
\begin{equation}
(du)^{0, 1} = 0
\end{equation}
in the interior of $D$.
\end{lemma}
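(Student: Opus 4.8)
\emph{Proof proposal.} The plan is a (two–step) maximum principle argument followed by removal of singularities at the puncture. Write $r$ for the radial coordinate on the cylindrical end $\partial M \times [1,+\infty)$, extended by $r \equiv 1$ over the interior part $M_{0}$, and recall that we are free to fix $H = h(r)$ with $h$ monotone and weakly convex, $h \equiv 0$ near $\{r \le 1\}$ and $h(r) = r^{2}$ for $r \ge 1 + \epsilon$. First I would localize: the asymptotic limit of $u$ at the puncture is either a constant $H$-chord or a self-intersection point of $\iota$, hence in either case a point of $M_{0}$, which lies strictly inside $\{H \equiv 0\}$. By the standard exponential convergence of finite-energy Floer half-strips to nondegenerate asymptotics (as in \cite{FOOO1}), there is a neighborhood $U$ of the puncture in $S$ with $u(U) \subset M_{0}$. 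Hence $\Omega := \{z \in S : r(u(z)) > 1\}$ lies in the compact set $S \setminus U$ (a closed disk with an open half-disk neighborhood of the boundary puncture removed), so $r \circ u$ attains its maximum over $S$; if that maximum is $\le 1$ we are done with the first step.

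Next I would use the maximum principle to exclude a maximal value $> 1$. On $\Omega$ the function $\rho = r \circ u$ satisfies, by the usual computation from $(du - \gamma \otimes X_{H})^{0,1} = 0$ together with the contact-type condition on $J$, the convexity of $h$, and $d\gamma \le 0$, a subharmonicity inequality; hence $\rho$ has no interior local maximum on $\Omega$ unless it is locally constant. At a point $z \in \partial S \cap \Omega$ the boundary of $u$ lies on $\iota(L)$, which over the end equals the Liouville-invariant Lagrangian $l \times [1,+\infty)$, so $T(l \times [1,+\infty)) = \mathbb{R}\partial_{r} \oplus Tl$ with $Tl$ in the contact distribution; using $du \circ j = J \circ du$, that $J\partial_{r}$ is a positive multiple of the Reeb field (which lies in $\ker dr$), and that $J$ preserves the contact distribution, one finds that the outward normal derivative of $\rho$ along $\partial S$ vanishes. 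By the Hopf boundary point lemma $\rho$ then cannot attain a local maximum at a boundary point of $S$ either, unless locally constant. Since $\overline{\Omega}$ is compact and $\rho \equiv 1$ on $\partial\Omega \cap \{r = 1\}$, it follows that $\Omega = \varnothing$, i.e. $u(S) \subset \{r \le 1\} \subset \{H \equiv 0\}$.

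Finally, since $u(S) \subset \{H \equiv 0\}$, the Hamiltonian vector field $X_{H}$ vanishes identically along $u$, so Floer's equation reduces to $(du)^{0,1} = 0$ on the interior of $S$. Near the puncture $u$ is then a $J$-holomorphic half-strip of finite energy (finiteness follows from exactness of $\iota$: $E(u)$ equals a difference of values of the primitive $f$) converging to a point, namely the constant chord, resp. the self-intersection point. Boundary removal of singularities for $J$-holomorphic maps with Lagrangian boundary then gives the desired smooth extension $\overline{u}: D \to M$, with $\overline{u}$ of the boundary marked point equal to that point and, in the self-intersection case, the two halves of $\partial D$ near the marked point mapping to the two local sheets of $\iota(L)$, in the usual immersed-disk picture (compare \cite{Akaho-Joyce}).

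The main obstacle is the boundary part of the second step: checking carefully that, for the cylindrical Lagrangian $l \times [1,+\infty)$ and \emph{with} the inhomogeneous term $\gamma \otimes X_{H}$ present, the Hopf lemma cannot be violated, i.e. that the outward normal derivative of $r \circ u$ along $\partial S$ is $\le 0$. This is precisely where the contact-type hypothesis on $J$, the Liouville invariance of the boundary Lagrangian, and the admissible (monotone, weakly convex) shape of $H$ — including on the transition region $[1,1+\epsilon]$ — all have to be used together; the interior estimate and the removal of singularities are routine by comparison.
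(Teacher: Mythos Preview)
Your proposal is correct and is precisely a fleshed-out version of the paper's own argument: the paper states only that this lemma (together with the two preceding ones) ``follows from the maximum principle'' and gives no further detail, so your three-step outline---localize near the puncture, apply the interior/Hopf maximum principle to $r\circ u$ on the cylindrical end, then remove the singularity---is exactly the intended proof.

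One small point worth tightening: in Step~1 you invoke exponential convergence to \emph{nondegenerate} asymptotics, but in this setup $H\equiv 0$ on $M_{0}$, so the constant chords are degenerate (Morse--Bott). Fortunately you do not actually need any rate of convergence, only that the asymptotic limit is a point of $M_{0}$ and hence $u$ lands in $M_{0}$ near the puncture; this follows directly from the hypothesis ``asymptotically converges''. Everything else---the subharmonicity of $r\circ u$ from the contact-type condition on $J$ and the convex shape of $h$, the vanishing of the normal derivative along $\partial S$ via $dr(X_{H})=0$ and $dr(J\,T(l\times[1,\infty)))=0$, and the boundary removal of singularities for the resulting finite-energy $J$-holomorphic half-strip---is standard and correctly assembled.
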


	Now let us proceed to describe elements in the moduli spaces. To write down the inhomogeneous Cauchy-Riemann equations, we shall need certain geometric data, e.g. Morse functions, Hamiltonians, and almost complex structures. Fix the original Hamiltonian $H$, and let $J = J_{L, \iota}$ be a compatible almost complex structure of contact type. The various geometric data needed for writing down the relevant equations are packaged in the following way. \par

\begin{definition}
	A Floer datum on a pearly tree $S_{T}$ consists of the following data:
\begin{enumerate}[label=(\roman*)]

\item A time-shifting function $\rho_{S_{T}}: \partial S_{T} \to [1, +\infty)$, where $\partial S_{T}$ denotes the union of boundary components of the disk components of $S_{T}$, as well as the intervals. $\rho_{S_{T}}$ should be equal to a constant over the strip-like end near each puncture.

\item For each vertex $v$ with $c(v) = 0$, a constant family of almost complex structures $J_{S_{v}} = J$.

\item For each vertex $v$ with $c(v) = 1$ and $val(v) = 1$, a constant family of Hamiltonians $H_{S_{v}} = H$ and a constant family of almost complex structures $J_{S_{v}} = J$.

\item For each vertex $v$ with $c(v) = 1$ and $val(v) = 2$, a time-dependent family of almost complex structures $J_{v, t}$, rescaled by weight $w_{v, 0} = w_{v, 1}$. This is a time-dependent perturbation of $J$ in the class of almost complex structures of contact type. Moreover, we require that the choices $J_{v, t} = J_{t}$ be the same for all such vertices $v$, but possibly rescaled by differente weights according to the values of $\rho_{S_{T}}$.

\item For each vertex $v$ with $c(v) = 1$ and $val(v) \ge 3$, a domain-dependent family of Hamiltonians $H_{S_{v}}$ and a domain-dependent family of almost complex structures $J_{S_{v}}$, such that $H_{S_{v}}$ and $J_{S_{v}}$ asymptotically agrees with $H$ and respectively $J_{t}$ rescaled by weight $w_{v, j}$, over the strip-like end near each puncture $z_{v, j}$. Moreover, we require that the family of Hamiltonians $H_{S_{v}}$ be a compactly-supported domain-dependent perturbation of $H$, i.e. $H_{S_{v}} = H$ in a neighborhood of the boundary $\partial S_{v}$.

\item For each interior edge $e$, an $s$-dependent family of Morse functions $f_{e, s}$ on $I_{e}$.

\item For each exterior edge $e$ with $d(e) = 0$, a family of Morse functions $f_{e, s}$ on $L$ parametrized by $s \in I_{e}$, which agrees with $f$ for $|s| \gg 0$.

\item For each exterior edge $e$ with $d(e) = 1$, a family of time-dependent Hamiltonians $H_{e, s, t} = H_{e, t}$, and a family of time-dependent almost complex structures $J_{e, s, t} = J_{e, t}$ which agree with $H$ and respectively $J$ for $|s| \gg 0$. Moreover, as the semi-infinite strips are glued to the punctured disk $S_{s(e)}$ or $S_{t(e)}$, we require that these data extend smoothly over the glued domain.

\end{enumerate}

	In the exceptional case where $T$ does not have vertices and has a single infinite edge $e$ of color $d(e) = 0$, so that $I_{e} = \mathbb{R}$, we require $f_{e, s} = f$ for all $s$.

\end{definition}

	In order to define the $A_{\infty}$-structures, a necessary condition is to make sure that the Floer data chosen for various on various pearly trees satisfy certain consistency conditions. \par
	The simplest consistency condition to state is when two colored rooted trees $T_{1}, T_{2}$ are glued together root-to-leaf in a way that the resulting colored rooted tree $T = T_{1} \sharp_{0, i, \rho} T_{2}$ is still admissible. The consistency condition means that the Floer datum on $S_{T}$ is obtained from gluing the Floer data on $S_{T_{1}}$ and $S_{T_{2}}$. There are higher consistency conditions, which we refer the reader to \cite{Seidel} for a detailed description. \par


\begin{remark}
	Such consistency conditions make sense because the Floer datum is designed so that perturbations are not put on the punctured disks $S_{v}$ for vertices $v$ with $c(v) = 0$, that is, the family of almost complex structure $J_{S_{v}}$ on such a component is constant equal to $J$. \par
	Furthermore, a consistent choice of Floer data is not for the purpose of achieving transversality, even the choices are "generic" for all the components that allow non-constant families of perturbations of Hamiltonians and almost complex structures. Such a consistent choice is made mainly to ensure that the moduli spaces of pearly trees have good compactifications so that the boundary strata of the compactifications are products of moduli spaces of the same type, as we shall see below.
\end{remark}

	Having made a consistent choice of Floer data as above, we now describe elements in the moduli spaces. \par

\begin{definition}\label{stable pearly tree map}
	Let $I \subset \{0, \cdots, k\}$ be a subset. Let $\alpha: I \to S(L, \iota)$ be a map, labeling those marked points which are mapped to some self-intersection point of $\iota$. A stable pearly tree map is a triple $(S_{T}, u, l)$ satisfying the following conditions:

\begin{enumerate}[label = (\roman*)]

\item $S_{T}$ is a pearly tree modeled on a colored rooted tree $T$ with $k$-leaves.

\item $u: S_{T} \to M$ is a continuous map.

\item For a vertex $v$ with $c(v) = 0$, let $u_{v}$ be the restriction of $u$ to the punctured disk $S_{v}$ associated to $v$. Then $u_{v}$ satisfies the homogeneous Cauchy-Riemann equation
\begin{equation} \label{homogeneous Cauchy-Riemann equation on the disk components}
(du_{v})^{0, 1} = 0,
\end{equation}
with respect to the family $J_{S_{v}}$ of almost complex structures. In case $val(v) = 2$, $J_{S_{v}} = J_{t}$.

\item For a vertex $v$ with $c(v) = 1$, let $u_{v}$ be the restriction of $u$ to the punctured disk $S_{v}$ associated to $v$. Then $u_{v}$ satisfies the inhomogeneous Cauchy-Riemann equation
\begin{equation} \label{inhomogeneous Cauchy-Riemann equation on disk components}
(du_{v} - \gamma_{v} \otimes X_{H_{S_{v}}})^{0, 1} = 0.
\end{equation}

\item For an interior edge $e$, let $u_{e}$ be the restriction of $u$ to the interval $I_{e}$ associated to $e$. Then $u_{e}$ comes with a preferred lift $\tilde{u}_{e}: I_{e} \to L$ and satisfies the gradient flow equation:
\begin{equation} \label{gradient flow equation on interval components}
\frac{d\tilde{u}_{e}}{ds} + \nabla f_{e, s}(\tilde{u}_{e}) = 0.
\end{equation}
In the exceptional case where $S_{T} = \mathbb{R}$, this gradient flow equation takes the form
\begin{equation}
\frac{d\tilde{u}_{e}}{ds}+ \nabla f(\tilde{u}_{e}) = 0,
\end{equation}
by our choice of Floer datum.

\item $u(z) \in \phi_{M}^{\rho_{S_{T}}(z)} \iota(L)$, for $z \in \partial S_{T}$.

\item $l: \partial S_{T} \to L$ is a continuous map, which specifies the boundary lifting condition, i.e. $\iota \circ l = u|_{\partial S_{T}}$.

\item If $e$ is an exterior edge such that $I_{e}$ is either half-infinite or $\mathbb{R}$, then $\lim\limits_{s \to \pm \infty} \tilde{u}_{e}(s, \cdot) = p$ for some critical point $p$ of $f$.

\item If $e_{i}$ is the $i$-th exterior edge with color $d(e_{i}) = 1$, $\lim\limits_{s \to \pm \infty} u_{v(e_{i})} \circ \epsilon_{i}(s, \cdot) = \phi_{M}^{w_{i}}x_{i}(\cdot)$, where $v(e_{i})$ is the endpoint of $e$, and $\epsilon_{i}$ is the strip-like end associated to $e_{i}$. Here $x_{i}$ is some non-constant time-one $H$-chord from $\iota(L)$ to itself.

\item $(\lim\limits_{\theta \uparrow 0} l(e^{\sqrt{-1}\theta}\zeta_{i}), \lim\limits_{\theta \downarrow 0} l(e^{\sqrt{-1}\theta}\zeta_{i})) = \alpha(i) \in S(L, \iota)$ for $i \in I$. In addition, $\iota(\alpha(i)) = \tilde{u}_{e_{i}}(z_{v(e_{i}), j(v(e_{i})})$. Here $\tilde{u}_{e_{i}}(z_{v(e_{i}), j(v(e_{i}))})$ means the asymptotic value of $\tilde{u}_{e_{i}}$ at the end of the half-infinite ray associated to the $i$-th exterior edge $e_{i}$, where $z_{v(e_{i}), j(v(e_{i}))}$ is the puncture on the punctured disk $S_{v(e_{i})}$ that corresponds to the exterior edge $e_{i}$.

\item If $d(e) = 0$, then the preferred lift $\tilde{u}_{e}$ of $u_{e}$ should be compatible with the restriction of $l$ to $I_{e}$, such that the above condition hold. That is, if $\tilde{u_{e}}$ is a constant map to a component of $S(L, \iota)$, i.e. a discrete point, then the restriction of $l$ to $I_{e}$ should satisfy the same limiting condition in $(x)$.

\item The homology class of $u$ together with its asymptotic non-constant Hamiltonian chords oppositely oriented, is $\beta \in H_{2}(M, \iota(L); \mathbb{Z})$.

\end{enumerate}
\end{definition}

	The last condition in the definition of a stable pearly tree map needs some explanation. Since some of the asymptotic convergence conditions are non-constant Hamiltonian chords, which might be non-contractible relative $\iota(L)$, the map $u$ does not define a homology class in $H_{2}(M, \iota(L); \mathbb{Z})$. However, if we compactify $S_{T}$ at infinity by adding $\{\pm \infty\} \times [0, 1]$ to all exterior strip-like ends, and glue the oppositely oriented Hamiltonian chords to the map $u$, then the resulting map defines a homology class in $H_{2}(M, \iota(L); \mathbb{Z})$.
\par
	There is an obvious notion of two triples $(S_{T}, u, l)$ and $(S'_{T'}, u', l')$ being isomorphic. First of all, there should be an isomorphism of rooted colored trees $\bar{\phi}: T \to T'$, an isomorphism $\phi: S_{T} \to S'_{T'}$ compatible with $\bar{\phi}$, such that $u' \circ \phi = u$, and the pullback of the Floer datum chosen for $S'_{T'}$ by $\phi$ agrees with that for $S_{T}$. \par
	Denote by $c$ either a pair $(p, w)$, where $p$ is a critical point of $f$ and $w$ is a homotopy class of capping half-disk, or a pair $(x, w)$, where $x$ is a non-constant $H$-chord, and $w$ is a homotopy class of capping half-disk with reference to some chosen based point in the connected component of $x$ in the space $\mathcal{P}(M; \iota(L))$ of paths in $M$ from $\iota(L)$ to itself. Let $\mathcal{M}_{k+1}(\alpha, \beta; J, H; c_{0}, \cdots, c_{k})$ be the moduli space of the above triples $(S_{T}, u, l)$ which satisfy the convergence conditions to $C_{0}, \cdots, c_{k}$ at the root $e_{0}$ and leaves $e_{1}, \cdots, e_{k}$. Clearly, there is a decomposition according to the combinatorial type of $T$:
\begin{equation}
\mathcal{M}_{k+1}(\alpha, \beta; J, H; c_{0}, \cdots, c_{k}) = \coprod_{T} \mathcal{M}_{T}(\alpha, \beta; J, H; c_{0}, \cdots, c_{k}).
\end{equation} 
This description is only for the purpose of visualizing elements in the moduli space, but not for the purpose of proving transversality results by induction on combinatorial types. \par

\subsection{Compactification: stable broken pearly trees}
	The moduli space of stable pearly trees $\mathcal{M}_{k+1}(\alpha, \beta; J, H; c_{0}, \cdots, c_{k})$ is generally non-compact, from which one cannot expect to extract invariants. In order to define the desired curved $A_{\infty}$-algebra whose structure constants are given by appropriate counts of elements in the moduli space, we need to compactify it. \par

\begin{lemma}\label{compactness and Hausdorffness}
	The moduli space $\mathcal{M}_{k+1}(\alpha, \beta; J, H; c_{0}, \cdots, c_{k})$ has a natural compactification $\bar{\mathcal{M}}_{k+1}(\alpha, \beta; J, H; c_{0}, \cdots, c_{k})$, which has a natural topology being compact and Hausdorff.
\end{lemma}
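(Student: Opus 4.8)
The plan is to prove this by the standard strategy for moduli of mixed ``pearly'' objects: combine Gromov compactness for the pseudoholomorphic disk components with Morse-theoretic compactness for the gradient-flow interval components, organize all degenerations by the combinatorics of colored rooted trees, and then verify that the limit objects are exactly the stable broken pearly tree maps of the previous subsection; Hausdorffness will follow from uniqueness of limits.

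First I would establish a uniform energy bound. Fixing the asymptotic generators $c_{0}, \dots, c_{k}$ and the homology class $\beta$, the exactness condition $df = \iota^{*}\lambda_{M}$ together with exactness of $\omega = d\lambda$ on $M$ yields an action--energy identity: the geometric energy of $u$ on the disk components is a topological quantity determined by $\beta$, by the values of the primitive $f$ at the relevant critical points and self-intersection points, and by the actions of the asymptotic $H$-chords, exactly as in Lemma~\ref{finiteness of pseudoholomorphic disks passing through a self-intersection point}; each gradient trajectory segment contributes energy bounded by the oscillation of the relevant Morse function. Hence the total energy is bounded uniformly over the moduli space with these data, and in particular holomorphic sphere bubbles are excluded. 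Next I would invoke $C^{0}$-confinement: the three maximum-principle lemmas following Lemma~\ref{finiteness of pseudoholomorphic disks passing through a self-intersection point} force every disk component with constant or self-intersection asymptotics into a fixed compact subset of $M_{0}$, while the inhomogeneous components carrying non-constant $H$-chords stay in the region where $H = r^{2}$, where the quadratic-Hamiltonian maximum principle of \cite{Abouzaid1} applies; thus all sequences of pearly tree maps have images in a fixed compact set, with no escape to infinity in the cylindrical end.

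With these a priori bounds in place, I would analyze degeneration of a sequence $(S_{T^{(n)}}, u^{(n)}, l^{(n)})$. After passing to a subsequence the combinatorial type $T^{(n)}$ is constant, so the domain can only degenerate through interior edge lengths $r(e) \to \infty$ (producing a new root--leaf pair connected at infinity, i.e.\ breaking of a gradient trajectory or a Floer strip), through $r(e) \to 0$ (an edge contracting, still recorded combinatorially), and through conformal degeneration of the disk components $S_{v}$. On each disk component I would apply Gromov compactness for (in)homogeneous pseudoholomorphic disks with immersed Lagrangian boundary in the style of \cite{FOOO1}, \cite{FOOO2}, \cite{Akaho-Joyce}: bubbles are disks whose boundary passes through a self-intersection point of $\iota$, and by Lemma~\ref{finiteness of pseudoholomorphic disks passing through a self-intersection point} only finitely many homology classes of such bubbles occur, so the bubbling terminates, the total homology class is conserved, and each bubble is attached by a color-$0$ edge compatibly with the admissibility condition for colored rooted trees. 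On each interval component I would use compactness for negative gradient flow lines, whose limits are broken flow lines; the boundary-lift data $l^{(n)}$ converge because the Floer data equal $f, J, H$ outside a compact parameter range. Assembling the pieces, the limit is a stable broken pearly tree map, stability being forced by the energy bound and the usual no-ghost-component argument. Finally I would define $\bar{\mathcal{M}}_{k+1}(\alpha, \beta; J, H; c_{0}, \dots, c_{k})$ as the set of isomorphism classes of such stable broken pearly tree maps, topologized by this Gromov-type convergence (equivalently glued via the maps $\sharp_{\rho_{1}, \dots, \rho_{m}}$); it is then sequentially compact by the above, hence compact since it is metrizable, and Hausdorff because unique continuation for pseudoholomorphic curves together with uniqueness of limits of gradient flow lines and of the glued combinatorial data pins down the limit uniquely.

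The hard part will be the simultaneous bookkeeping in the degeneration analysis: reconciling conformal degeneration of disk components, edge-length degeneration, and Floer-strip breaking all at once, and in particular showing that a disk bubble forming at a self-intersection point fits consistently into the pearly tree combinatorics---attached by a color-$0$ edge to a vertex of the allowed colors, matching the admissibility condition---and that the boundary lift $l$ passes to the limit coherently across the nodes. I expect the bulk of the technical work to lie in matching the various local models and verifying that the glued-and-broken limit is again of the prescribed form; the compactness and Hausdorffness assertions themselves are then formal consequences.
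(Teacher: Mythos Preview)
Your proposal is correct and follows essentially the same approach as the paper: a uniform energy bound from the fixed data $(\alpha,\beta,c_{0},\dots,c_{k})$ via exactness, $C^{0}$-confinement via the maximum principle, and then Gromov compactness combined with Morse-trajectory compactness to identify limits as stable broken pearly tree maps; Hausdorffness from stability/uniqueness of limits. The paper's own proof is in fact a brief sketch that simply invokes these standard ingredients and refers to \cite{FOOO2} for Hausdorffness, so your outline is, if anything, more detailed than what appears there.
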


\begin{lemma}\label{boundary strata in the moduli space}
	The compactification $\bar{\mathcal{M}}_{k+1}(\alpha, \beta; J, H; c_{0}, \cdots, c_{k})$ consists of products of moduli spaces $\bar{\mathcal{M}}_{k_{i}+1}(\alpha_{i}, \beta_{i}; J, H; c'_{i, 0}, \cdots, c'_{i, k_{i}})$ of the same type. More precisely, the codimension-$m$ strata 
\begin{equation*}
	S_{m}\bar{\mathcal{M}}_{k+1}(\alpha, \beta; J, H; c_{0}, \cdots, c_{k}),
\end{equation*}
i.e. those points of codimension exactly $m$ consist of the union of fiber products of the form 
\begin{equation}\label{boundary strata of moduli space of pearly trees in the form of fiber products}
\bar{\mathcal{M}}_{k_{0}+1}(\alpha_{0}, \beta_{0}; J, H; c_{0, 0}, \cdots, c_{0, k_{0}}) \times \cdots \times \bar{\mathcal{M}}_{k_{m}+1}(\alpha_{m}, \beta_{m}; J, H; c_{m, 0}, \cdots, c_{m, k_{m}}),
\end{equation}
where $c_{j} = c_{i_{j}, k_{j}}$ for some $i_{j}, k_{j}$, for every $j = 0, \cdots, k$. Here fiber products are taken over a discrete set of generators, so they are written as products.
In fact, by the inductive construction, for any element in the codimension-$m$ strata, the underlying domain is a broken pearly tree $(S_{T_{0}}, \cdots, S_{T_{m}})$ consisting of exactly $m+1$ pearly trees. Denote by $u_{j}$ the restriction of the stable map on $S_{T_{j}}$. For each $j \neq 0$, there is a unique $l(j)$ for which $u_{j}$ converges over the $0$-th end $\epsilon_{j, 0}$ (corresponding to the root) of $S_{T_{j}}$ to the same generator as $u_{l(j)}$ converges over the end $\epsilon_{l(j), p(j)}$ (corresponding to some leaf) of $S_{T_{l(j)}}$. That is to say, $c_{j, 0} = c_{l(j), p(j)}$ for some $p(j)$.
\end{lemma}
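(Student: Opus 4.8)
The plan is to combine the compactness statement of Lemma~\ref{compactness and Hausdorffness} with a classification of the possible Gromov-type limits, and then to run a gluing analysis that identifies a neighborhood of each putative boundary stratum. The uniform energy bound needed to make Gromov compactness effective is supplied by the exactness of $\iota\colon L\to M$: integrating $df=\iota^{*}\lambda_{M}$ over a stable pearly tree map, together with the action identity for the non-constant $H$-chords at the exterior strip-like ends, shows that the energy of an element of $\mathcal{M}_{k+1}(\alpha,\beta;J,H;c_{0},\dots,c_{k})$ is determined by $\beta$ and the generators $c_{i}$, hence is constant on the moduli space. Since $[\omega]$ is exact there is no sphere bubbling, and by the maximum principle (as in Lemma~\ref{finiteness of pseudoholomorphic disks passing through a self-intersection point} and the three lemmas following it) no disk component or breaking can run off into the cylindrical end $\partial M\times[1,+\infty)$; every bubbled-off $J$-holomorphic disk lies in the interior and must pass through a self-intersection point of $\iota$.

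With these confinements in place I would enumerate the degeneration phenomena occurring in a Gromov limit, each of which is codimension one: (a) conformal degeneration of a punctured disk component $S_{v}$, governed by the Deligne--Mumford-type gluing map \eqref{tree gluing} of \cite{FOOO1} and recorded combinatorially as an interior edge of length zero between two disk components; (b) breaking of a gradient flow segment solving \eqref{gradient flow equation on interval components}, where after reparametrization a portion of $\tilde u_{e}$ converges to a broken Morse trajectory through a critical point $p'$ of $f$, equivalently $r(e)\to\infty$, producing a pair of a new leaf and a new root connected at infinity; (c) Floer breaking at a strip-like end of a vertex with $c(v)=1$, along which a non-trivial solution of \eqref{inhomogeneous Cauchy-Riemann equation on disk components} splits off; and (d) bubbling of a homogeneous $J$-holomorphic disk solving \eqref{homogeneous Cauchy-Riemann equation on the disk components} at a boundary point that is forced to map to a self-intersection point, attached to a vertex of color $0$, with $\beta=\beta'+\beta''$. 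In cases (b), (c) and (d) the limit domain acquires a new break and the limit map restricts to a stable pearly tree map on each side. Iterating, an element of the codimension-$m$ stratum is obtained by $m$ such degenerations, so its domain is a broken pearly tree $(S_{T_{0}},\dots,S_{T_{m}})$ with exactly $m+1$ pearly-tree components, and the restriction $u_{j}$ to $S_{T_{j}}$ lies in some $\bar{\mathcal{M}}_{k_{j}+1}(\alpha_{j},\beta_{j};J,H;c_{j,0},\dots,c_{j,k_{j}})$. The matching condition $c_{j,0}=c_{l(j),p(j)}$ simply records that $u_{j}$ and $u_{l(j)}$ converge to the same generator along the edge joining them at infinity; since the set of generators of $CW^{*}(L,\iota;H)$ is discrete, the fiber product over these evaluation maps is a disjoint union over matching generators of honest products, which is the content of \eqref{boundary strata of moduli space of pearly trees in the form of fiber products}.

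Conversely, to see that each such fiber product genuinely constitutes a boundary stratum and that the compactification is stratified by these pieces, I would run the three corresponding gluing constructions, all by now standard: gluing of Floer strips at strip-like ends (as in \cite{FOOO1}, \cite{Seidel}); gluing along long gradient trajectories in the Morse--Bott setting, in the spirit of \cite{Bourgeois-Oancea}; and gluing of a homogeneous disk bubble onto a pearly tree at an immersed point, which is the immersed-Lagrangian gluing analysis of \cite{FOOO1}, \cite{FOOO2} and \cite{Akaho-Joyce}. Combined with the conformal gluing \eqref{tree gluing}, a pre-gluing followed by a Newton iteration (within the Kuranishi framework, since all components are genus zero with connected boundary) produces, for each element of a codimension-$m$ fiber product and each choice of $m$ gluing parameters in $(0,\epsilon)$, an element of $\mathcal{M}_{k+1}(\alpha,\beta;J,H;c_{0},\dots,c_{k})$; this exhibits a neighborhood of the stratum as its product with a corner $[0,\epsilon)^{m}$, and together with the uniqueness of Gromov limits underlying the Hausdorff topology of Lemma~\ref{compactness and Hausdorffness} shows that the strata are exactly as claimed.

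The main obstacle I anticipate is the disk-bubbling analysis of case (d) together with its gluing: one must control bubbling at a self-intersection point, where two local branches of $\iota(L)$ cross, and in particular handle the mixed degenerations in which a disk bubble forms at infinity along a simultaneously breaking gradient segment, which requires an iterated rescaling argument to extract the full broken configuration and a correspondingly iterated gluing. A secondary but essential point is to verify that no degenerations outside the above list occur --- no sphere bubbles, no disks or breakings escaping into the cylindrical end, and no accumulation of non-constant $H$-chords --- but this is precisely what exactness and the maximum principle lemmas established earlier provide.
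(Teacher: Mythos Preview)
Your proposal is correct and rests on the same ingredients as the paper's argument: a uniform energy bound coming from exactness and the fixed class $\beta$, the maximum principle to confine maps away from the cylindrical end, absence of sphere bubbles, and Gromov compactness together with gluing. The paper, however, organizes the argument differently and much more tersely: rather than enumerating degeneration types (your (a)--(d)) and deriving the fiber-product form from an analysis of Gromov limits, it takes the fiber-product description \eqref{boundary strata of moduli space of pearly trees in the form of fiber products} as the \emph{definition} of the compactification, observes that this inductive definition is well-posed because the factors are strictly smaller in the lexicographic order on $(E,k)$ (energy, number of leaves), and then appeals once to Gromov compactness plus the maximum principle to confirm that the resulting space is compact and Hausdorff. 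Your approach has the advantage of making explicit which geometric phenomena account for each break (Morse breaking, Floer breaking, disk bubbling at immersed points), and of isolating the gluing step needed to show that the fiber products genuinely occur as boundary strata; the paper's approach is more economical but leaves those details to the standard references. Either route is acceptable, and your anticipated obstacle---disk bubbling at self-intersection points and its gluing---is indeed the point where the immersed setting departs from the embedded one; the paper defers this to the subsequent Kuranishi-structure discussion rather than addressing it in the present lemma.
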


\begin{proof}[Sketch of proof of Lemma \ref{compactness and Hausdorffness} and Lemma \ref{boundary strata in the moduli space}]
	The definition of the compactification is given in the statement of Lemma \ref{boundary strata in the moduli space}, which is valid because of the Gromov compactness theorem. This inductive fiber product structure makes sense, because the various factors satisfy an induction condition with respect to $(E, k)$ in lexicographic order, where $E$ the energy of the stable maps from broken pearly trees, and $k$ is the number of leaves. The fact that $\bar{\mathcal{M}}_{k+1}(\alpha, \beta; J, H; c_{0}, \cdots, c_{k})$ is compact follows from the standard maximum principle and Gromov compactness theorem, as explained below. \par
	The proof of the rest is basically standard, so we shall simply explain the crucial reason. Fixing the homology class $\beta$ provides a priori energy bound for all elements in the moduli space $\bar{\mathcal{M}}_{k+1}(\alpha, \beta; J, H; c_{0}, \cdots, c_{k})$, so that there are only finitely many types of product moduli spaces that appear in the boundary strata of the compactfication. Thus by the maximum principle, there is a compact subset $C$ of $M$ depending on $k, \alpha, \beta, J, H$ and the asymptotic convergence conditions $c_{0}, \cdots, c_{k}$ but not on individual maps in the moduli space, so that any element in $\bar{\mathcal{M}}_{k+1}(\alpha, \beta; J, H; c_{0}, \cdots, c_{k})$ is represented by a stable map whose image is contained in $C$. We may take $C$ to be some sub-level set $\{r \le A\}$ by possibly enlarging the subset. Then Gromov compactness theorem applies. \par
	Hausdorff-ness follows from the stability condition. For a detailed proof of Hausdorff-ness, we refer the reader to \cite{FOOO2}. \par
\end{proof}

	Note that $\bar{\mathcal{M}}_{k+1}(\alpha, \beta; J, H; c_{0}, \cdots, c_{k})$ naturally compactifies the Gromov bordification of the moduli space of inhomogeneous pseudoholomorphic disks with boundary on $\iota(L)$, encoding additional data that specify the boundary lifts to the preimage of the immersion, as introduced in \cite{Akaho-Joyce}. Here by the Gromov bordification, we mean the moduli space of broken stable inhomogeneous pseudoholomorphic disks as those in \cite{Abouzaid1} (see also \cite{Seidel} in the unwrapped setting), which in our case is not compact because the limit of a sequence of a broken stable inhomogeneous pseudoholomorphic disk can bubble off homogeneous pseudoholomorphic disks with one marked points (such disks necessarily pass through some self-intersection point of $\iota: L \to M$. In words, the above compactification is obtained by adding all such disks. \par

\subsection{Kuranishi structures on the moduli spaces of stable pearly tree maps} \label{section: Kuranishi structure on moduli spaces of stable pearly tree maps}

	Appropriate perturbation framework is required to prove transversality results for the moduli spaces 
\begin{equation*}
\bar{\mathcal{M}}_{k+1}(\alpha, \beta; J, H; c_{0}, \cdots, c_{k}),
\end{equation*}
so that we can extract algebraic structures from these moduli spaces. Because of the presence of disk bubbles, in particular those disks with one marked point, unless we impose further geometric assumptions, it seems difficult to use traditional transversality methods, which use generic choices of Floer data which depend underlying Riemann surfaces in the underlying moduli spaces of domains. However, the domains of those pseudoholomorphic disks with one marked point are unstable, which do not form a good moduli space. The usual inductive argument for a universal and consistent choice of Floer data would fail, as gluing in a disk with one marked point will decrease the number of marked points.
To overcome such difficulty, our strategy is to appropriately adapt the theory of Kuranishi structures \cite{Fukaya-Ono} \cite{FOOO1}, \cite{FOOO2}, \cite{FOOO3}, \cite{FOOO4} developed by Fukaya-Ono, Fukaya-Oh-Ohta-Ono. In our paper, nothing new about the theory of Kuranishi structures is invented; we shall only use their results to construct virtual fundamental chains of the relevant moduli spaces, so the argument is sketchy leaving technical details these monumental works. \par

\begin{proposition}\label{Kuranishi structure on the moduli space of pearly trees}
	The moduli space $\bar{\mathcal{M}}_{k+1}(\alpha, \beta; J, H; c_{0}, \cdots, c_{k})$ has an oriented Kuranishi structure with boundary and corners. Moreover, the induced Kuranishi structure on the codimension-$m$ stratum $S_{m}\bar{\mathcal{M}}_{k+1}(\alpha, \beta; J, H; c_{0}, \cdots, c_{k})$ agrees with the fiber product Kuranishi structures on the fiber products \eqref{boundary strata of moduli space of pearly trees in the form of fiber products}.
\end{proposition}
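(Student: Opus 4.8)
The plan is to construct the Kuranishi structure chart-by-chart along the lines of Fukaya--Oh--Ohta--Ono \cite{FOOO1}, \cite{FOOO2}, specializing the general machinery to the pearly tree moduli spaces, and then to organize the charts into a global structure with corners by a downward induction on complexity. First I would build the local models. Fix a stable pearly tree map $(S_{T}, u, l)$. Its infinitesimal deformation theory is controlled by a Fredholm operator assembled from: the linearized (in)homogeneous Cauchy--Riemann operators $D_{u_{v}}$ on the disk components $S_{v}$, taken with weighted Sobolev norms near the punctures corresponding to the non-constant $H$-chords and with the totally real boundary condition determined by the lift $l$; the linearizations of the gradient flow equations \eqref{gradient flow equation on interval components} on the interval components; the deformation parameters of the domain, namely the moduli of complex structures on disk components with $\geq 3$ punctures together with the edge-length parameters $r(e) \in [0, \infty)$; and the matching conditions at the nodes joining intervals to disks. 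I would choose a finite-dimensional space $E_{(S_{T}, u, l)}$ of smooth sections of $u^{*}TM$ supported in the interiors of the components and away from all nodes, surjecting onto the cokernel of this operator; for a component $S_{v}$ with $c(v) = 0$ (a homogeneous disk, possibly with a single marked point) the sections are in addition chosen supported in a small neighborhood of the self-intersection point of $\iota$ through which $u_{v}$ passes. The Kuranishi neighborhood of $(S_{T}, u, l)$ is then the zero set of the nonlinear operator composed with the projection modulo $E_{(S_{T}, u, l)}$, quotiented by the (finite) automorphism group; the obstruction bundle is $E_{(S_{T}, u, l)}$ and the Kuranishi map is the $E_{(S_{T}, u, l)}$-component of the equation. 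Orientations on these charts come from trivializing the determinant line of the linearized operator by the chosen spin structure and grading (Assumption \ref{graded and spin Lagrangian immersions}) together with the canonical orientations on the spaces of $H$-chords and on the Morse trajectory spaces, exactly as in \cite[Chapter 8]{FOOO1}.

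Second, I would patch these charts into a global Kuranishi structure with corners. The coordinate changes and the boundary/corner structure are governed by the gluing analysis for pearly trees (breaking of interior edges, disk and sphere bubbling), which by Lemma \ref{compactness and Hausdorffness} and Lemma \ref{boundary strata in the moduli space} is compact with the stated stratification; the codimension-$m$ stratum corresponds to $m$ simultaneous degenerations, so the total space acquires corners of codimension $\leq m$. The compatibility statement with fiber-product Kuranishi structures is then built in: proceeding by downward induction on $(E, k)$ in lexicographic order --- $E$ the energy, $k$ the number of leaves, exactly the order used in the proof of Lemma \ref{boundary strata in the moduli space} --- one first has, by the inductive hypothesis, compatible Kuranishi structures on all the lower-complexity factors appearing in \eqref{boundary strata of moduli space of pearly trees in the form of fiber products}, and one chooses the obstruction spaces near a broken configuration to be the direct sum of the obstruction spaces of the pieces, extended into the interior of the moduli space by a collar and partition-of-unity argument. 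This forces a neighborhood of the codimension-$m$ stratum to be, as a space with Kuranishi structure, the corresponding fiber product, which is the assertion of the proposition.

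The main obstacle is the treatment of the homogeneous disks with a single marked point --- the configurations that define the curved term $m^{0}$. Their domains are unstable, so the usual inductive scheme of fixing Floer data on stable domains does not directly apply: gluing in such a disk decreases rather than increases the number of punctures, and one cannot appeal to a universal and consistent choice on a moduli space of domains. The resolution, and the technical crux, is precisely that the obstruction spaces for these components must be taken to depend only on the self-intersection point of $\iota$ (equivalently, on a small neighborhood of the image of the marked point), and Lemma \ref{finiteness of pseudoholomorphic disks passing through a self-intersection point} guarantees that only finitely many relative homology classes pass through a given self-intersection point, so such a choice is finite and can be made automorphism-invariant and compatible with every gluing in which these disks participate. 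Verifying that this yields a genuine Kuranishi structure --- rather than merely an incompatible collection of charts --- and that orientations remain coherent across the disk-bubbling strata is where the bulk of the work lies; since nothing new about the theory of Kuranishi structures is needed here, I would carry out the construction following \cite{FOOO1}, \cite{FOOO2} and the immersed case of \cite{Akaho-Joyce}, and suppress the routine verifications.
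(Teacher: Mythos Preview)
Your proposal follows the same FOOO framework as the paper and is correct in outline, but the two arguments diverge in how they globalize the charts and, more substantively, in how they handle the unstable one-punctured disk components.

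For globalization, the paper does not run a direct induction on $(E,k)$ for the Kuranishi structure itself. Instead it passes to a finite cover: choose finitely many points $\sigma_1,\ldots,\sigma_K$ whose Kuranishi neighborhoods cover the moduli space, and redefine the obstruction space at any $\sigma$ as $E'_\sigma = \bigoplus_{i:\,\sigma\in U_{\sigma_i}} E_{\sigma_i}$, transported via parallel transport. The compatibility with the fiber-product structure on the boundary strata then comes from the fact that the obstruction spaces at the $\sigma_i$ lying in a stratum were already direct sums of pieces. Your collar-and-partition-of-unity scheme is an alternative route to the same conclusion, but the paper's version avoids the need to build the structure stratum-by-stratum.

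The more interesting difference is the unstable components. You propose choosing the obstruction spaces for one-punctured homogeneous disks to be supported near the relevant self-intersection point of $\iota$ and to be automorphism-invariant; this, if carried out, makes the perturbed equation well-defined without reference to a choice of domain parametrization. The paper instead keeps the obstruction spaces as ordinary (not automorphism-invariant) ones and removes the ambiguity by the ``canonical gauge fixing'' of Fukaya--Ono: for each unstable $\sigma_i$ one minimizes a convex functional $F_i(\xi,\phi)$ over the automorphism group to select a preferred $(\xi,\phi)$, and uses that to make the equation $\bar\partial u' \equiv 0 \bmod E'_\sigma$ unambiguous. Both approaches are standard and both work here; your route requires checking that automorphism-invariant obstruction spaces with the needed support actually exist and surject onto cokernels (which is fine since the unstable disks are nonconstant by Lemma~\ref{finiteness of pseudoholomorphic disks passing through a self-intersection point}), while the paper's route trades that for the convexity estimate underlying the gauge-fixing.
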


	Sometimes we shall call a Kuranishi structure with boundary and corners briefly a Kuranishi structure for short. The proof of this proposition is based on minor modification of the momentous machinery developed in \cite{FOOO2}, as the basic idea is the same despite the fact that the form of the moduli space presented is slightly different. As this is not the main focus of the current paper, we shall only outline the main steps. We expect to provide a more detailed treatment in the work on analytic setup of the currently used "Morse-Bott" type setup of wrapped Floer theory. \par
	The construction of a Kuranishi structure on $\bar{\mathcal{M}}_{k+1}(\alpha, \beta; J, H; c_{0}, \cdots, c_{k})$
is of an inductive manner, for which we outline the main steps as follows. For more detailed argument, in particular various estimates to make the gluing construction work, see Chapter 7 of \cite{FOOO2}. \par
	Given the geometric data $\alpha, \beta, J, H, c_{0}, \cdots, c_{k}$, there is a uniform upper bound on the energy of all elements in the moduli space, say bounded by $E$. The stable maps in 
\begin{displaymath}
\bar{\mathcal{M}}_{k'+1}(\alpha', \beta'; J, H; c'_{0}, \cdots, c'_{k'})
\end{displaymath}
which appears as a factor of the fiber product \eqref{boundary strata of moduli space of pearly trees in the form of fiber products} either have less energy, of fewer leaves. This fact allows one to build the Kuranishi structure by induction on lexicographic order of energy and number of leaves. \par
	To build Kuranishi charts, we first look at a subset 
\begin{equation}
	\bar{\mathcal{M}}_{k+1}^{reg}(\alpha, \beta; J, H; c_{0}, \cdots, c_{k}) \subset \bar{\mathcal{M}}_{k+1}(\alpha, \beta; J, H; c_{0}, \cdots, c_{k})
\end{equation}
consisting of elements whose underlying domain is a single pearly tree (these are called smooth points in the moduli space). \par

\begin{lemma}
	There exists a Kuranishi chart $(U(S_{T}, u, l), E_{(S_{T}, u, l)}, s_{(S_{T}, u, l)}, \Gamma_{(S_{T}, u, l)})$ at each point $[S_{T}, u, l] \in \bar{\mathcal{M}}_{k+1}^{reg}(\alpha, \beta; J, H; c_{0}, \cdots, c_{k})$, where the isotropy group $\Gamma_{(S_{T}, u, l)}$ is trivial.
\end{lemma}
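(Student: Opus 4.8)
The plan is to follow the construction of Kuranishi charts for moduli spaces of genus-zero bordered stable maps in Chapter 7 of \cite{FOOO2}, adapted to the mixed domains (disk components glued to gradient-flow intervals) occurring here and to the immersed boundary condition recorded by $l$. Fix a smooth point $[S_{T}, u, l]$, so the underlying domain is a single pearly tree $S_{T}$ modelled on a colored rooted tree $T$, with disk components $S_{v}$, interior edges of lengths $r(e)$, and strip-like ends at the colored exterior edges. The three ingredients to produce are the space of deformations of the domain together with the thickened equation ($U$), the obstruction space ($E$), and the Kuranishi map ($s$); triviality of the isotropy group $\Gamma$ will come from stabilizing the unstable components by adding auxiliary interior marked points with divisor constraints.

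First I would set up the linearized theory. Linearizing the pearly-tree equations \eqref{homogeneous Cauchy-Riemann equation on the disk components}, \eqref{inhomogeneous Cauchy-Riemann equation on disk components}, \eqref{gradient flow equation on interval components} at $(S_{T}, u, l)$ gives, on each disk component, the linearization $D_{u_{v}}$ of the corresponding Cauchy--Riemann operator acting on sections of $u_{v}^{*}TM$ whose boundary values lie in $l^{*}TL$ along $\partial S_{v}$ (this is where the lift enters: the linearized immersed boundary condition is taken in $l^{*}TL$, not naively in $TM$); along each interior edge the linearization of the gradient-flow equation; and matching conditions at the nodes between disk and interval components. Since the asymptotic Hamiltonian chords are non-degenerate and the Morse functions $f$, $f_{e,s}$ are Morse, the correct functional-analytic model uses exponentially weighted Sobolev (or H\"older) spaces on the strip-like ends and on the semi-infinite interval ends; with these weights the full linearized operator is Fredholm of index equal to the virtual dimension computed from the grading. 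This is the analytic core, and I expect it to be the main obstacle: the Morse--Bott-type matching between holomorphic disks and gradient trajectories, and the interaction of the weighted analysis with the immersed boundary condition and with the self-intersection constraint (x) of Definition \ref{stable pearly tree map}, are not entirely standard, though everything here is a modification of arguments already present in \cite{FOOO2} and \cite{Bourgeois-Oancea}.

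Next I would choose the obstruction space. As $D_{u}$ need not be surjective, pick a finite-dimensional subspace $E = E_{(S_{T}, u, l)}$ of smooth $u^{*}TM$-valued $(0,1)$-forms, each supported in a compact subset of the interior of some disk component $S_{v}$, disjoint from all nodes, punctures, boundary and (below) auxiliary marked points, such that $\operatorname{im} D_{u} + E$ spans the relevant space of $(0,1)$-forms; such $E$ exists by unique continuation together with elliptic regularity, exactly as in \cite{FOOO2}. To stabilize the domain, on every unstable component (the strips, vertices $v$ of color $1$ with $\mathrm{val}(v)\le 2$, and color-$0$ disks carrying non-constant holomorphic maps) add finitely many interior marked points $z^{+}_{i}$ and require $u(z^{+}_{i})$ to lie on a local codimension-two submanifold $N_{i}\subset M$ through $u(z^{+}_{i})$, transverse to the image of $du$ and chosen to avoid the self-intersection locus of $\iota$. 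With enough such marked points the stabilized domain is stable, and transversality of the $N_{i}$ forces every automorphism of the stabilized map to be the identity; after also passing to the quotient by reparametrization of the unstable interval components, the isotropy group $\Gamma_{(S_{T}, u, l)}$ is trivial, so no equivariance constraint on $E$ is needed.

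Finally, I would define $U(S_{T}, u, l)$ to be the set of tuples consisting of a nearby deformed domain (deformations of the complex structures $j_{v}$, resolutions of zero-length interior edges, variation of the lengths $r(e)$ and of the gluing parameters at the strip-like ends), a map $u'$ with the boundary lift $l'$ satisfying the pearly-tree equation modulo $E$, the matching and asymptotic conditions, and the marked-point constraints $u'(z^{+}_{i})\in N_{i}$. By the implicit function theorem in the weighted spaces above, $U$ is a smooth manifold of dimension $\dim E$ plus the virtual dimension, $E_{(S_{T}, u, l)}$ is the evident product bundle over $U$, and the Kuranishi map $s_{(S_{T}, u, l)}$ sending a tuple to the $E$-component of its equation takes values in it, with $s^{-1}(0)$ equal to the part of $\bar{\mathcal{M}}_{k+1}^{reg}(\alpha,\beta;J,H;c_{0},\dots,c_{k})$ lying in this chart. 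This gives the desired chart $(U(S_{T}, u, l), E_{(S_{T}, u, l)}, s_{(S_{T}, u, l)}, \Gamma_{(S_{T}, u, l)})$ with $\Gamma_{(S_{T}, u, l)}$ trivial; compatibility of these charts across smooth points and with the corner and fiber-product structure of the compactification is then taken up separately in the proof of Proposition \ref{Kuranishi structure on the moduli space of pearly trees}.
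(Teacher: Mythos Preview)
Your proposal is correct and follows the same Fredholm--obstruction space framework as the paper, but it handles the triviality of the isotropy group by a genuinely different route. The paper does not stabilize by adding auxiliary interior marked points with divisor constraints. Instead, it simply takes $\Gamma_{(S_{T},u,l)}=\{1\}$ and justifies this by a one-line lemma immediately following: the automorphism group of a semistable pearly tree is torsion-free. Since the pearly tree maps in the moduli space are stable (so their automorphism group is finite), a finite torsion-free group must be trivial. The remaining issue of continuous symmetries on unstable domain components (strips, one-punctured disks) is not resolved at this stage; the paper defers it to the globalization step, where it invokes the ``canonical gauge fixing'' method of \cite{Fukaya-Ono} rather than marked-point stabilization. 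Your approach front-loads that work and is entirely self-contained at the level of a single chart, which is cleaner in that respect; the paper's approach is shorter here but relies on the later gauge-fixing argument to make the charts mesh correctly when unstable components are present. Both are standard and both work; the paper's is closer to the original \cite{Fukaya-Ono} treatment, yours to the later chapters of \cite{FOOO2}.
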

\begin{proof}
	For each point $[S_{T}, u, l] \in \bar{\mathcal{M}}_{k+1}^{reg}(\alpha, \beta; J, H; c_{0}, \cdots, c_{k})$, the choice of Floer datum for $S_{T}$ defines a non-degenerate Fredholm problem, which is a combination (direct sum) of inhomogeneous Cauchy-Riemann operators and the linearized operators of the gradient flow equations. Let $\mathcal{B}$ be the Banach manifold of consisting maps $(S_{T}, u, l)$ (without satisfying the equations \eqref{inhomogeneous Cauchy-Riemann equation on disk components} and \eqref{gradient flow equation on interval components}) of suitable weighted Sobolev class $W^{1, p}$, and $\mathcal{E} \to \mathcal{B}$ be the Banach bundle whose fiber over $(S_{T}, u, l)$ is the space of one-forms on $S_{T}$ with values in $u^{*}TM$, which are of class $W^{0, p}_{loc}$. The equations \eqref{inhomogeneous Cauchy-Riemann equation on disk components} and \eqref{gradient flow equation on interval components} combine to define a section of this bundle, briefly denoted by $\bar{\partial}$. The linearization at $(S_{T}, u, l)$ is the previously mentioned Fredholm operator
\begin{equation}
D_{(S_{T}, u, l)}\bar{\partial}: T_{(S_{T}, u, l)}\mathcal{B} \to \mathcal{E}_{(S_{T}, u, l)}.
\end{equation}
Choose a finite-dimensional subspace $E_{(S_{T}, u, l)}$ of $\mathcal{E}_{(S_{T}, u, l)}$ containing the cokernel, so that
\begin{equation}
D_{(S_{T}, u, l)}\bar{\partial}: T_{(S_{T}, u, l)}\mathcal{B} \to \mathcal{E}_{(S_{T}, u, l)}/E_{(S_{T}, u, l)}
\end{equation}
is surjective. A careful choice of $E_{(S_{T}, u, l)}$, based on the unique continuation theorem of solutions to elliptic equations, can be made that every vector in $E_{(S_{T}, u, l)}$ is smooth and has compact support in the underlying domain $S_{T}$, meaning that the support of every vector in $E_{(S_{T}, u, l)}$ is contained in the union of the interiors of punctured disk components and the interiors of compact interval components. In fact, we may take $E_{(S_{T}, u, l)}$ so that every vector vanishes on the interval components, by choosing the families of Morse functions generic enough. However, we will not use this fact in our construction. \par
	Define $U((S_{T}, u, l))$ to be the space of nearby elements $(S'_{T'}, u', l') \in \mathcal{B}$ such that
\begin{equation}
\bar{\partial}((S'_{T'}, u', l')) \in E_{(S_{T}, u, l)},
\end{equation}
where $E_{(S_{T}, u, l)}$ is identified with a subspace of $\mathcal{E}_{(S'_{T'}, u', l')}$ by parallel transport along minimal geodesics. This will be a Kuranishi neighborhood at $[S_{T}, u, l]$, over which the obstruction space is $E_{(S_{T}, u, l)}$, so that the Kuranishi map is defined by $\bar{\partial}$. Regarding the isotropy group, we may take $\Gamma_{(S_{T}, u, l)} = \{1\}$ to be the trivial group. This choice is fine because of the following lemma. \par
\end{proof}

\begin{lemma}
	The automorphism group of a semistable pearly tree is torsion-free.
\end{lemma}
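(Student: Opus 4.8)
The plan is to split an automorphism of a semistable pearly tree $S_{T}$ into a combinatorial part, an automorphism of the underlying colored rooted metric ribbon tree $T$, and a geometric part acting componentwise on the disks and intervals out of which $S_{T}$ is built; the first part turns out to be automatically trivial, and the second lands in a product of torsion-free groups, from which the conclusion is immediate.

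First I would observe that an automorphism $\phi$ of $S_{T}$ induces an automorphism $\bar\phi$ of $T$ preserving the root, the ribbon structure, the metric, and the colorings $c,d$. Since $T$ is \emph{rooted}, $\bar\phi$ fixes the root edge and hence its endpoint; since $T$ is \emph{planar} (ribbon), the edges at each vertex are linearly ordered once the edge towards the root is distinguished, so descending the tree one sees that $\bar\phi$ fixes every vertex and every edge, i.e. $\bar\phi=\mathrm{id}$. (This is where rootedness and planarity are genuinely needed: for an abstract tree $\mathrm{Aut}(T)$ typically has torsion, coming from exchanges of isomorphic subtrees.) Hence every automorphism of $S_{T}$ covers $\mathrm{id}_{T}$, so it is a tuple $((\phi_{v})_{v\in V},(\phi_{e})_{e\in E_{int}},\dots)$ where $\phi_{v}$ is a biholomorphism of the punctured disk $S_{v}$ fixing each of its punctures, $\phi_{e}$ is a direction-preserving isometry of the metric interval $I_{e}$ fixing its endpoints, and analogously on the exterior rays, all these compatible at the nodes.

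The interval and ray pieces contribute nothing: a direction-preserving self-isometry of a compact interval of fixed length (resp. of a ray $[0,+\infty)$ or $(-\infty,0]$ with distinguished endpoint) is the identity, and its value on a strip-like end is determined by continuity by the adjacent disk. So restriction to the disk components embeds
\begin{equation*}
\mathrm{Aut}(S_{T}) \hookrightarrow \prod_{v\in V}\mathrm{Aut}\big(S_{v};\, z_{v,0},\dots,z_{v,\mathrm{val}(v)-1}\big),
\end{equation*}
except in the lone exceptional case $S_{T}=\mathbb{R}$, where $\mathrm{Aut}(S_{T})=\mathbb{R}$ acts by translations, which is torsion-free. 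For a disk $D$ with $m$ distinct boundary punctures, $\mathrm{Aut}(D;\text{punctures})$ is the stabilizer of $m$ points in $PSL(2,\mathbb{R})$: it is trivial for $m\ge 3$, is $\cong\mathbb{R}$ (translations, after uniformizing to a strip) for $m=2$, and is the affine group $\mathbb{R}_{>0}\ltimes\mathbb{R}$ (uniformizing to the upper half-plane with marked point at $\infty$) for $m=1$; the case $m=0$ does not arise, since in a rooted tree with an edge every vertex meets an edge.

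Finally, each of $\{1\}$, $\mathbb{R}$ and $\mathbb{R}_{>0}\ltimes\mathbb{R}$ is torsion-free: an affine map $s\mapsto as+b$ with $a>0$ of finite order $n$ must have $a^{n}=1$, hence $a=1$, hence it is a translation, hence $b=0$. A subgroup of a finite product of torsion-free groups is again torsion-free, because an element whose $n$-th power is trivial has each coordinate of order dividing $n$, hence each coordinate trivial. Applying this to the displayed embedding gives the lemma. The only genuinely fussy part is the bookkeeping in the third step — checking that the matching and coloring conventions at the nodes really do make the restriction-to-disks map injective, and that a valency-$0$ disk (whose automorphism group $PSL(2,\mathbb{R})$ does contain torsion) never occurs — but both follow directly from the definition of a pearly tree given above.
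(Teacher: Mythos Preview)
The paper does not actually prove this lemma; it is stated without proof, immediately after the construction of Kuranishi charts on the regular locus, as the justification for taking all isotropy groups to be trivial. Your argument supplies a complete and correct proof where the paper leaves a gap. The decomposition into the combinatorial automorphism of the underlying rooted ribbon tree (forced to be the identity by planarity and the distinguished root) followed by the componentwise geometric automorphisms, together with the explicit identification of the stabilizers of $1$, $2$, and $\ge 3$ boundary punctures in $PSL(2,\mathbb{R})$, is exactly the right way to see this. Your check that valency-$0$ disks cannot occur (since every colored rooted tree has at least the root edge) is the one place where torsion could have snuck in via the full $PSL(2,\mathbb{R})$, and you correctly rule it out.
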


	Next, we shall extend the construction to other strata in the compactification 
\begin{displaymath}
\bar{\mathcal{M}}_{k+1}(\alpha, \beta; J, H; c_{0}, \cdots, c_{k}).
\end{displaymath}
We need to construct a Kuranishi chart at each point in 
\begin{displaymath}
\bar{\mathcal{M}}_{k+1}(\alpha, \beta; J, H; c_{0}, \cdots, c_{k}) \setminus \bar{\mathcal{M}}^{reg}_{k+1}(\alpha, \beta; J, H; c_{0}, \cdots, c_{k}).
\end{displaymath}\par

	This is standard application of gluing theorems, as explained in \cite{FOOO2}, which we briefly recall here. For simplicity, we consider the case of a pair of pearly tree maps $[(S_{T_{0}}, u_{0}, l_{0}), (S_{T_{1}}, u_{1}, l_{1})]$, which is an element in the codimension-one strata, such that the root $e_{1, 0}$ of $S_{T_{1}}$ can be glued to some leaf $e_{0, j}$ of $S_{T_{0}}$ in an admissible way, and the asymptotic convergence conditions of $u_{0}$ at $e_{0, j}$ and of $u_{1}$ at $e_{1, 0}$ agree. \par

\begin{lemma}
	At every such point $[(S_{T_{0}}, u_{0}, l_{0}), (S_{T_{1}}, u_{1}, l_{1})]$ in the moduli space $\bar{\mathcal{M}}_{k+1}(\alpha, \beta; J, H; c_{0}, \cdots, c_{k})$, there exists a Kuranishi chart 
\begin{equation*}
\begin{split}
&(U((S_{T_{0}}, u_{0}, l_{0}), (S_{T_{1}}, u_{1}, l_{1})), E_{[(S_{T_{0}}, u_{0}, l_{0}), (S_{T_{1}}, u_{1}, l_{1})]},\\
&s_{[(S_{T_{0}}, u_{0}, l_{0}), (S_{T_{1}}, u_{1}, l_{1})]}, \Gamma_{[(S_{T_{0}}, u_{0}, l_{0}), (S_{T_{1}}, u_{1}, l_{1})]}),
\end{split}
\end{equation*}
such that $\Gamma_{[(S_{T_{0}}, u_{0}, l_{0}), (S_{T_{1}}, u_{1}, l_{1})]}$ is trivial.
\end{lemma}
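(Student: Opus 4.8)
The plan is to build the Kuranishi chart at the broken point by gluing the charts already available on the two factors, following the standard gluing procedure of \cite{FOOO2} adapted to pearly trees. By the inductive hypothesis (induction on $(E,k)$ in lexicographic order, as in Proposition \ref{Kuranishi structure on the moduli space of pearly trees}), we already have Kuranishi charts $(U_i, E_i, s_i, \Gamma_i)$ with $\Gamma_i$ trivial around $[(S_{T_i}, u_i, l_i)]$ for $i = 0,1$, whose obstruction spaces $E_i \subset \mathcal{E}_{(S_{T_i}, u_i, l_i)}$ consist of smooth sections compactly supported in the interiors of the disk and interval components, away from all punctures, strip-like ends, and the $0$-th end of $S_{T_1}$ (resp.\ the $j$-th leaf of $S_{T_0}$). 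I would take the obstruction space at the broken point to be $E_{[(S_{T_0},u_0,l_0),(S_{T_1},u_1,l_1)]} := E_0 \oplus E_1$, transported to the glued domain $S_T = S_{T_0}\sharp_\rho S_{T_1}$ by the identification which is the identity away from the gluing neck; this transport is unambiguous precisely because the $E_i$ are supported away from the neck.

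The analytic core is pregluing followed by a Newton iteration. For each large gluing length $\rho$ one forms an approximate solution $u_\rho$ on $S_T$ by interpolating $u_0$ and $u_1$ across the neck with cut-off functions; since $u_0$ and $u_1$ converge exponentially to the common asymptotic generator $c_{0,j}=c_{1,0}$, the error $\bar\partial u_\rho$ is exponentially small in $\rho$ in the relevant weighted Sobolev norm. One then checks that the linearized operator $D_{u_\rho}\bar\partial$, viewed modulo $E_0\oplus E_1$, has a right inverse with norm bounded uniformly in $\rho$, obtained by gluing the right inverses on the two pieces and using surjectivity of each $D_{u_i}\bar\partial$ modulo $E_i$ together with the neck estimates. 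A contraction-mapping argument produces, for each $\rho\gg 0$ and each nearby gluing data, a unique genuine map $u'$ near $u_\rho$ with $\bar\partial u'\in E_0\oplus E_1$. The Kuranishi neighborhood $U$ is the space of all such configurations $(S'_{T'},u',l')$ together with the unglued nearby configurations coming from $U_0\times U_1$; the Kuranishi map is $\bar\partial$, and the obstruction bundle with fiber $E_0\oplus E_1$ is trivialized over $U$ by parallel transport, exactly as in the smooth-point case.

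The isotropy group is trivial for the same reason as at a smooth point: $\mathrm{Aut}(S_{T_0},S_{T_1})$ is a subgroup of $\mathrm{Aut}(S_{T_0})\times\mathrm{Aut}(S_{T_1})$, hence torsion-free by the preceding lemma, and any residual translation on an unstable strip or half-line component is eliminated once the adjacent stable disk component (or the endpoint condition of a gradient trajectory) is fixed together with the asymptotic markings $c_0,\dots,c_k$ and the boundary lift $l$. Hence $\Gamma_{[(S_{T_0},u_0,l_0),(S_{T_1},u_1,l_1)]}=\{1\}$. Orientations and the fact that the induced Kuranishi structure on the codimension-one stratum agrees with the fiber-product structure on \eqref{boundary strata of moduli space of pearly trees in the form of fiber products} follow immediately from the direct-sum choice $E_0\oplus E_1$, which is what Proposition \ref{Kuranishi structure on the moduli space of pearly trees} needs downstream.

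The hard part will be the uniform gluing estimates: the $\rho$-independent bound on the right inverse of $D_{u_\rho}\bar\partial$ and the quadratic estimate for the nonlinear term, in the presence of the hybrid domain in which pseudoholomorphic disk pieces are joined to gradient-flow intervals, and in which homogeneous one-marked-point disk bubbles with unstable domains may occur. These estimates are the genuinely analytic content; rather than reproduce them, I would point to the parallel arguments in Chapter 7 of \cite{FOOO2}, verifying only that the weight functions can be chosen so that the gradient-flow segments and the contact-type behavior near $\partial M$ do not spoil the Fredholm theory, and that the compact supports of the obstruction vectors survive the gluing unchanged.
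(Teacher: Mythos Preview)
Your proposal is correct and follows essentially the same approach as the paper: both take the obstruction space to be the direct sum $E_0\oplus E_1$ transported to the pre-glued domain via the compact supports, solve $\bar\partial u'\equiv 0$ modulo this subspace by pregluing and an implicit function theorem argument, and set the isotropy group to be trivial. Your account is slightly more explicit about the analytic ingredients (uniform right inverse, Newton iteration) and about why the isotropy is trivial, whereas the paper simply declares $\Gamma=\{1\}$ after citing the torsion-free lemma; but these are presentational differences rather than a different route.
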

\begin{proof}
	 Consider the two Banach bundles $\mathcal{E}_{i} \to \mathcal{B}_{i}$, together with choices of obstruction spaces $E_{(S_{T_{i}}, u_{i}, l_{j})}$ making the linearized section surjective onto the quotient space $(\mathcal{E}_{i})_{(S_{T_{i}}, u_{i}, l_{j})}/(E_{i})_{(S_{T_{i}}, u_{i}, l_{j})}$ at the point $(S_{T_{i}}, u_{i}, l_{i})$. \par
	Consider the direct sum linearized operator
\begin{equation}
D_{(S_{T_{0}}, u_{0}, l_{0})}\bar{\partial}_{0} \oplus D_{(S_{T_{1}}, u_{1}, l_{1})}\bar{\partial}_{1}: T_{(S_{T_{0}}, u_{0}, l_{0})}\mathcal{B}_{0} \oplus T_{(S_{T_{1}}, u_{1}, l_{1})}\mathcal{B}_{1} \to (\mathcal{E}_{0})_{(S_{T_{0}}, u_{0}, l_{0})} \oplus (\mathcal{E}_{1})_{(S_{T_{1}}, u_{1}, l_{1})}.
\end{equation}
After projecting to the quotient spaces, we obtain a surjective Fredholm operator
\begin{equation}
\begin{split}
&D_{(S_{T_{0}}, u_{0}, l_{0}), (S_{T_{1}}, u_{1}, l_{1})}\bar{\partial}:
T_{(S_{T_{0}}, u_{0}, l_{0})}\mathcal{B}_{0} \oplus T_{(S_{T_{1}}, u_{1}, l_{1})}\mathcal{B}_{1}\\
&\to (\mathcal{E}_{0})_{(S_{T_{0}}, u_{0}, l_{0})}/(E_{0})_{(S_{T_{0}}, u_{0}, l_{0})} \oplus (\mathcal{E}_{1})_{(S_{T_{1}}, u_{1}, l_{1})}/(E_{1})_{(S_{T_{1}}, u_{1}, l_{1})}.
\end{split}
\end{equation}\par
	To make sure that the Kuranishi chart at the point $[(S_{T_{0}}, u_{0}, l_{0}), (S_{T_{1}}, u_{1}, l_{1})]$ is compatible with the Kuranishi charts at nearby points that we have previously constructed, we need the following argument. Let $(S_{T}, u', l)$ be the pre-gluing of $(S_{T_{0}}, u_{0}, l_{0})$ and $(S_{T_{1}}, u_{1}, l_{1})$ for some small gluing parameter $\rho$. Here by pre-gluing, we mean gluing the two maps together on the glued pearly tree using cut-off functions to obtain a continuous map, which might not satisfy the desired equations on the glued region. For the underlying pearly trees, the process of gluing is easily described. Regarding the map $u'$, it is the result of pre-gluing gradient flow lines if the common asymptote of $u_{0}$ and $u_{1}$ is a critical point of $f$, or the result of pre-gluing solutions to Floer's equation if the common asymptote of $u_{0}$ and $u_{1}$ is a time-one Hamiltonian chord for $H$. \par
	Using the cut-off functions in the above gluing process, we may define maps
\begin{equation}
I_{i}: \mathcal{E}_{(S_{T_{i}}, u_{i}, l_{i})} \to \mathcal{E}_{(S_{T}, u', l)},
\end{equation}
by cutting off the domain of the one-forms on $S_{T_{i}}$ and including the resulting domain to $S_{T}$. Recall that we have chosen the obstruction spaces $(E_{i})_{(S_{T_{i}}, u_{i}, l_{i})}$ so that every vector has compact support. Therefore we may choose $E_{(S_{T}, u', l)}$ carefully so that the restriction of sum of $I_{0}$ and $I_{1}$:
\begin{equation}
I_{0} + I_{1}: (E_{0})_{(S_{T_{0}}, u_{0}, l_{0})} \oplus (E_{1})_{(S_{T_{1}}, u_{1}, l_{1})} \to E_{(S_{T}, u', l)}
\end{equation}
is injective. In this way we may regard
\begin{equation}\label{direct sum of obstruction spaces}
(E_{0})_{(S_{T_{0}}, u_{0}, l_{0})} \oplus (E_{1})_{(S_{T_{1}}, u_{1}, l_{1})} \subset E_{(S_{T}, u', l)}.
\end{equation}
Consider maps $u: S_{T} \to M$ in a $C^{0}$-neighborhood of $u'$ of class $W^{1, p}$, which satisfy the following condition
\begin{equation}\label{Cauchy-Riemann equation with direct sum obstruction space}
\bar{\partial}u \equiv 0 \mod (E_{0})_{(S_{T_{0}}, u_{0}, l_{0})} \oplus (E_{1})_{(S_{T_{1}}, u_{1}, l_{1})},
\end{equation}
i.e. it satisfies gradient flow equation on interval components, $J$-holomorphic curve equation on disk components of color $0$, and Floer's equation on disk components of color $1$, modulo the obstruction subspace \eqref{direct sum of obstruction spaces}. \par
	Then the strategy for solving this equation \eqref{Cauchy-Riemann equation with direct sum obstruction space} is standard in the theory of pseudoholomorphic curves (perhaps in more general setup of Fredholm problems). First, we may find an approximate solution $\tilde{u}$ by using a cut-off function to interpolate any given two tangent vectors $V_{i} \in T_{(S_{T_{i}}, u_{i}, l_{i})}\mathcal{B}_{i}, i = 0, 1$, and use exponential map to get an element in $\mathcal{B}_{(S_{T}, u', l)}$. Then a standard implicit theorem argument (finding a right inverse to the linearized operator), together with exponential decay estimates as explained in detail in \cite{FOOO2}, \cite{FOOO3} provides an exact solution $u$ to the equation \eqref{Cauchy-Riemann equation with direct sum obstruction space}. This part is a combination of the standard proofs of gluing theorems in various setups: for gradient flow equations, for $J$-holomorphic curve equations, and for Floer's equations. For complete details see section 7.1 of \cite{FOOO2}, though the conditions are slightly different because we have fixed a set of generators as asymptotic convergence conditions for the stable pearly tree maps in consideration. \par
	Now we define
\begin{equation}
U((S_{T_{0}}, u_{0}, l_{0}), (S_{T_{1}}, u_{1}, l_{1}))
\end{equation}
to be the space of $(S_{T}, u, l)$ where $u$ is an exact solution of \eqref{Cauchy-Riemann equation with direct sum obstruction space} obtained from perturbing the approximate solution $\tilde{u}$, which is close to the original pre-gluing $u'$. The Kuranishi map $s_{[(S_{T_{0}}, u_{0}, l_{0}), (S_{T_{1}}, u_{1}, l_{1})]}$ is simply the restriction of $\bar{\partial}$ to $U((S_{T_{0}}, u_{0}, l_{0}), (S_{T_{1}}, u_{1}, l_{1}))$. Again, the isotropy group is taken to be the trivial group, $\Gamma_{[(S_{T_{0}}, u_{0}, l_{0}), (S_{T_{1}}, u_{1}, l_{1})]} = \{1\}$. \par
	 This completes the construction of a Kuranishi chart at a point in the codimension-one strata of $\bar{\mathcal{M}}_{k+1}(\alpha, \beta; J, H; c_{0}, \cdots, c_{k})$. For elements in strata of higher codimension, the construction can be done in the same way. \par
\end{proof}

	Now we shall gather all these Kuranishi charts and globalize the construction to a Kuranishi structure on the moduli space $\bar{\mathcal{M}}_{k+1}(\alpha, \beta; J, H; c_{0}, \cdots, c_{k})$. Briefly denote by $\sigma$ a point in the moduli space $\bar{\mathcal{M}}_{k+1}(\alpha, \beta; J, H; c_{0}, \cdots, c_{k})$, i.e. a stable broken pearly tree map. Since the moduli space is compact, we may choose finitely many elements, $\sigma_{1}, \cdots, \sigma_{K}$, so that their Kuranishi neighborhoods $U_{\sigma}$ cover the moduli space
\begin{equation}
\cup_{i} Int(U_{\sigma}) \supset \bar{\mathcal{M}}_{k+1}(\alpha, \beta; J, H; c_{0}, \cdots, c_{k}),
\end{equation}
where we take the Kuranishi neighborhoods $U_{\sigma}$ to be closed neighborhoods of $\sigma_{i}$ inside the Banach manifold $\mathcal{B}$. Now given any point $\sigma \in \bar{\mathcal{M}}_{k+1}(\alpha, \beta; J, H; c_{0}, \cdots, c_{k})$, we modify the previously chosen Kuranishi neighborhood $U(\sigma)$ and obstruction space $E_{\sigma}$ so that the new ones fit well into a global structure. \par
	First consider the case where the underlying broken pearly tree $\Sigma = (S_{T_{0}}, \cdots, S_{T_{m}})$ of $\sigma$ is stable. We define a new obstruction space
\begin{equation}\label{new obstruction space from a finite covering}
E'_{\sigma} = \oplus_{i: \sigma \in U_{\sigma_{i}}} E_{\sigma_{i}}.
\end{equation}
Also, we modify the previously chosen $U(\sigma)$ to a new Kuranishi neighborhood $U'_{\sigma}$ to be the set of isomorphism classes of $(S'_{T'}, u', l')$, where $S'_{T'}$ is close to $S_{T}$ in the moduli space of stable pearly trees (which is a smooth manifold with corners), and $u'$ satisfies
\begin{equation}\label{Cauchy-Riemann equation with the new obstruction space}
\bar{\partial}u' \equiv 0 \mod E'_{\sigma},
\end{equation}
while $l'$ is still simply a lifting condition compatible with the restriction of $u'$ to $\partial S'_{T'}$. \par
	We shall choose these $U'_{\sigma}$ carefully such that the following condition is satisfied:
\begin{equation}
\sigma' \in U'_{\sigma} \implies E'_{\sigma'} \xhookrightarrow{} E'_{\sigma'}.
\end{equation}
Here the inclusion map is defined using parallel transport with respect to the metric on the moduli space of underlying pearly trees, as well as the metric on $M$ determined by $\omega$ and $J$. For such choices, we may assume that whenever $\sigma' \in U'_{\sigma}$, there is a neighborhood $U'_{\sigma', \sigma}$ of $\sigma'$ in $U'_{\sigma'}$, together with a natural inclusion map $U'_{\sigma', \sigma} \xhookrightarrow{} U'_{\sigma}$. Such 
\begin{equation}
(U'_{\sigma}, E'_{\sigma}, s_{\sigma}, \Gamma_{\sigma} = \{1\})
\end{equation}
will be the desired Kuranishi chart, so that there are natural coordinate changes whenever $\sigma' \in U'_{\sigma}$ described as above. The compatibility of these coordinate changes with the Kuranishi maps $s_{\sigma}, s_{\sigma'}$ holds automatically, because the Kuranishi map is simply the restriction of the section $\bar{\partial}$ on $U'_{\sigma}$, as a subspace of a Banach manifold or (finite) direct products of Banach manifolds. \par

	Now consider the case where the underlying broken pearly tree $\Sigma = (S_{T_{0}}, \cdots, S_{T_{m}})$ of $\sigma$ is unstable. In this case, "nearby" pearly trees are not unique - they occur in positive dimensional families parametrized by
\begin{equation}\label{positive-dimensional symmetries parametrized by neighborhood in the Lie algebra}
\prod_{i} Lie(Aut(\Sigma_{i}))_{0},
\end{equation}
where $Lie(Aut(\Sigma_{i}))_{0}$ is a neighborhood of $0$ in the Lie algebra of the group of automorphisms of the underlying broken pearly trees $\Sigma_{i}$ of $\sigma_{i}$, where the product is taken over all unstable underlying pearly trees $\Sigma_{i}$ of $\sigma_{i}$, among the chosen $\sigma_{1}, \cdots, \sigma_{K}$. However, the equation \eqref{Cauchy-Riemann equation with the new obstruction space} is not invariant under the pullback by the automorphisms given by (exponentiating) elements in \eqref{positive-dimensional symmetries parametrized by neighborhood in the Lie algebra}. \par
	There are several ways of resolving this issue, among which we choose the method of "canonical gauge fixing", introduced in \cite{Fukaya-Ono}. That method has the advantage that we only have to deal with the fixed $\sigma_{i}$'s, instead of performing some construction for each element $\sigma$, e.g. stabilizing the domain $\Sigma$ of $\sigma$. \par
	The argument is as follows. For $i = 1, \cdots, K$, let $\Sigma_{i, un}$ be the union of unstable components of $\Sigma_{i}$. Note that $\Sigma_{i, un}$ is not a broken pearly tree and might not be connected. Let $\Sigma_{i, un}^{0}$ be $\Sigma_{i, un}$ with intervals $[\rho/2, +\infty), (-\infty, -\rho/2]$ or half-strips $[\rho/2, +\infty) \times [0, 1], (-\infty, -\rho/2] \times [0, 1]$ cut off from some exterior edges, so that we can identify $\Sigma_{i, un}^{0}$ as a subset of $\Sigma$ whenever $\sigma$ is close to $\sigma_{i}$, so that $\Sigma$ is obtained from partial gluing of the broken pearly tree $\Sigma_{i}$, with suitable deformation of the almost complex structure. \par
	Suppose $\sigma$ is close to $\sigma_{i}$ so that there is some gluing and deformation parameter $\xi = (\vec{\rho}, \lambda)$ and an isomorphism $\phi: \Sigma \to \Sigma_{i, \xi}$, where $\Sigma_{i, \xi}$ is obtained from $\Sigma$ by partial gluing with respect to $\vec{\rho}$ and deformation of complex structure by $\lambda$. Let $d^{2}: M \times M \to [0, \infty)$ be the Riemannian distance square function. Define a function on the space of such $(\xi, \phi)$ by
\begin{equation}
F_{i}(\xi, \phi) = \int_{x \in \Sigma_{i, un}^{0}} d^{2}(u_{\xi}(x), u \circ \phi^{-1}(x))dx,
\end{equation}
where $u_{\xi}: \Sigma_{i, \xi} \to M$ is a map on $\Sigma_{i, \xi}$ obtained from deforming $u$ to an exact solution of \eqref{Cauchy-Riemann equation with the new obstruction space}. Using this $F_{i}$, we define a function on the neighborhood in the Lie algebra, $Lie(Aut(\Sigma_{i}))_{0}$, by
\begin{equation}
F_{i}(v \xi, \phi), v \in Lie(Aut(\Sigma_{i}))_{0}.
\end{equation}
Using the convexity of the Riemannian distance square, we can prove that $F_{i}$ is convex with respect to $v \in Lie(Aut(\Sigma_{i}))_{0}$, i.e. the Hessian matrix with respect to $v$ is positive definite. Moreover, there is a uniform constant $c > 0$ such that the Hessian matrix is bigger than $cI$ uniformly, where $I$ is the identity matrix. Now consider $v \in \partial Lie(Aut(\Sigma_{i}))_{0}$ on the boundary of this neighborhood. In that case it is not difficult to find a biholomorphic map $\phi$ such that
\begin{equation}
F_{i}(v \xi, \phi) - F_{i}(\xi, \phi) > c, \forall v \in \partial Lie(Aut(\Sigma_{i}))_{0}.
\end{equation}
Thus we may choose $U'_{\sigma_{i}}$ small enough such that there is a unique $v \in Lie(Aut(\Sigma_{i}))_{0}$ such that $F_{i}(v \xi, \phi)$ has a unique local minimum with respect to $\phi$. Thus the gauge fixing condition can be chosen as follows:
\begin{equation}
F_{i}(v \xi, \phi) \ge F_{i}(\xi, \phi), \forall v \in Lie(Aut(\Sigma_{i}))_{0}.
\end{equation}
Requiring this condition uniquely determines $(\xi, \phi)$. \par
	After imposing this condition, for any such $\sigma$, the equation \eqref{Cauchy-Riemann equation with the new obstruction space} has no ambiguity for elements close to $\sigma$, so that we can use it to define the Kuranishi neighborhood $U'_{\sigma}$. This completes the construction of the desired Kuranishi chart $(U'_{\sigma}, E'_{\sigma}, s_{\sigma}, \Gamma_{\sigma} = \{1\})$, which satisfies the desired properties as argued in the case where $\Sigma$ is stable. \par
	The statement that the induced Kuranishi structure on $S_{m}\bar{\mathcal{M}}_{k+1}(\alpha, \beta; J, H; c_{0}, \cdots, c_{k})$ agrees with the fiber product Kuranishi structures on the fiber products \eqref{boundary strata of moduli space of pearly trees in the form of fiber products} follows immediately from the way in which the Kuranishi charts are constructed. \par

\subsection{Perturbations by multisections}
	An important geometric assumption for us is that the ambient symplectic manifold $M$ is exact, which allows us to take a single-valued multisection to the obstruction bundle (system) of the Kuranishi space $\bar{\mathcal{M}}_{k+1}(\alpha, \beta; J, H; c_{0}, \cdots, c_{k})$ as in this case the isotropy group of every Kuranishi chart is trivial. Also see \cite{FOOO4}, in which notions of stacks and piecewise smooth global sections are introduced in order to deal with the more general case of spherically positive manifolds. \par
	Going back to the construction of Kuranishi structures in the previous subsection, recall that we demand that the isotropy group $\Gamma$ for any Kuranishi chart on the moduli space $\bar{\mathcal{M}}_{k+1}(\alpha, \beta; J, H; c_{0}, \cdots, c_{k})$ be trivial. \par
	The way of achieving transversality in the theory of Kuranishi structures is to perturb the Kuranishi map by multisections locally transverse to zero, then glue (the zero loci of) these multisections together according to the compatibility conditions with respect to the coordinate changes. In this way, we can obtain a virtual fundamental chain on the moduli space, as some kind of C\v{e}ch-type chain. Precisely it should be a dual of C\v{e}ch cochain, such discussions will not be made or used in this paper. \par
	The following proposition can be essentially derived from \cite{FOOO2}. Moreover, we remark that both the statement and the proof here are much simpler and indeed simplified, because in \cite{FOOO2} they consider singular chains on the Lagrangian submanifolds, in whose intersection theory there are serious issues with transversality at the diagonal, forcing one to introduce a hierarchy of perturbations and choices of appropriate geometric chains to make the whole chain-level argument work. However, in our case we have chosen a different chain model involving a discrete set of generators satisfying certain non-degeneracy conditions (at the price of making the construction less canonical), and we have constructed Kuranishi structures depending on these generators as inputs and outputs, therefore there is not much trouble in arranging the perturbations by multisections to achieve transversality. \par

\begin{proposition} \label{compatible system of single-valued multisections}
	For all $k, \alpha, \beta$ and for each $c_{0}, \cdots, c_{k}$, such that the moduli spaces $\bar{\mathcal{M}}_{k+1}(\alpha, \beta; J, H; c_{0}, \cdots, c_{k})$ are non-empty as topological spaces (not after perturbation), there exist (systems of) single-valued multisections $s_{\alpha, \beta; J, H; c_{0}, \cdots, c_{k}}$ for the given Kuranishi structure on $\bar{\mathcal{M}}_{k+1}(\alpha, \beta; J, H; c_{0}, \cdots, c_{k})$, such that they are transverse to zero. \par
	In addition, these multisections are compatible with the fiber product multisections on the fiber product \eqref{boundary strata of moduli space of pearly trees in the form of fiber products}. This means that there are isomorphisms of multisections
\begin{equation}\label{fiber product multisections}
s_{\alpha, \beta; J, H; c_{0}, \cdots, c_{k}} \cong
s_{\alpha_{0}, \beta_{0}; J, H; c_{0, 0}, \cdots, c_{0, k_{0}}} \times \cdots \times
s_{\alpha_{m}, \beta_{m}; J, H; c_{m, 0}, \cdots, c_{m, k_{m}}},
\end{equation}
when restricting the multisection $s_{\alpha, \beta; J, H; c_{0}, \cdots, c_{k}}$ to that codimension-$m$ stratum \eqref{boundary strata of moduli space of pearly trees in the form of fiber products}.
\end{proposition}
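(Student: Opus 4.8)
The plan is to build the multisections by an induction on the pair $(E, k)$, ordered lexicographically, where $E = E(\alpha, \beta, c_{0}, \cdots, c_{k})$ is the energy pinned down by the homology class and the asymptotic data, and $k$ is the number of leaves. This is the same induction scheme used in Proposition \ref{Kuranishi structure on the moduli space of pearly trees} to construct the Kuranishi structures, and it works for the same reason: by Lemma \ref{boundary strata in the moduli space}, every moduli space $\bar{\mathcal{M}}_{k_{i}+1}(\alpha_{i}, \beta_{i}; J, H; c'_{i,0}, \cdots, c'_{i,k_{i}})$ appearing as a factor in a boundary fiber product \eqref{boundary strata of moduli space of pearly trees in the form of fiber products} of $\bar{\mathcal{M}}_{k+1}(\alpha, \beta; J, H; c_{0}, \cdots, c_{k})$ has strictly smaller $(E, k)$. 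For the minimal values of $(E, k)$ the moduli space has empty boundary, so one simply invokes the standard existence of a transverse single-valued multisection for a boundaryless Kuranishi space (\cite{FOOO2}); single-valuedness is legitimate because all isotropy groups are trivial, which was arranged in Proposition \ref{Kuranishi structure on the moduli space of pearly trees} and is available precisely because $M$ is exact.

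For the inductive step, assume transverse single-valued multisections $s_{\alpha', \beta'; J, H; c'_{0}, \cdots, c'_{k'}}$ have been chosen for all $(E', k')$ strictly below $(E, k)$, and assume they are pairwise compatible under the fiber-product operation. First I would assemble from these a multisection $s^{\partial}$ on the boundary $\partial \bar{\mathcal{M}}_{k+1}(\alpha, \beta; J, H; c_{0}, \cdots, c_{k})$. By Lemma \ref{boundary strata in the moduli space} together with the last assertion of Proposition \ref{Kuranishi structure on the moduli space of pearly trees}, this boundary, with its induced Kuranishi structure, is the union of the codimension-one fiber products, and on each piece the induced Kuranishi structure is the fiber-product Kuranishi structure; so pulling back the inductively-chosen multisections on the factors and taking their fiber products defines $s^{\partial}$ on each codimension-one piece. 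One then checks that these definitions agree on the overlaps: a point of codimension $\ge 2$ lies in several codimension-one pieces at once and carries a multi-fold fiber-product description, and the a priori different iterated fiber-product multisections coincide there by associativity of the fiber-product construction and the inductive compatibility hypothesis. Transversality of $s^{\partial}$ is automatic, since over a discrete set of generators the fiber product of the factor Kuranishi spaces is literally a direct product, so a product of transverse multisections is transverse.

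The remaining step is to extend $s^{\partial}$ from the boundary to a transverse single-valued multisection $s = s_{\alpha, \beta; J, H; c_{0}, \cdots, c_{k}}$ on the whole Kuranishi space $\bar{\mathcal{M}}_{k+1}(\alpha, \beta; J, H; c_{0}, \cdots, c_{k})$. This is the relative transversality theorem for Kuranishi structures with boundary and corners (\cite{FOOO2}): using the finite cover by the Kuranishi charts $(U'_{\sigma_{i}}, E'_{\sigma_{i}}, s_{\sigma_{i}}, \{1\})$ fixed in subsection \ref{section: Kuranishi structure on moduli spaces of stable pearly tree maps} with obstruction spaces as in \eqref{new obstruction space from a finite covering}, a partition of unity subordinate to this cover, and the Sard--Smale theorem applied chart by chart, one first extends $s^{\partial}$ over a collar of the boundary without changing its values there, and then perturbs it generically over the interior by a compactly supported multisection-valued perturbation, compatibly with the coordinate changes. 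Genericity gives transversality over the interior, the collar keeps $s|_{\partial} = s^{\partial}$, and the prescribed fiber-product form \eqref{fiber product multisections} then propagates to all higher-codimension strata because those strata sit inside the boundary, where $s$ equals $s^{\partial}$ by construction.

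I expect the main obstacle to be the bookkeeping in the inductive step: ensuring that the perturbation at stage $(E, k)$ is compatible not only with the individual root-to-leaf gluings but with all iterated gluings of broken pearly trees simultaneously, so that the resulting system of multisections is genuinely consistent across the entire stratification. That this can be arranged hinges on the Kuranishi charts, obstruction spaces, and coordinate changes having been set up to be compatible with the fiber-product structure on \emph{all} strata at once, which is exactly the content of the final assertion of Proposition \ref{Kuranishi structure on the moduli space of pearly trees}. Once that is granted, the argument is the standard one of \cite{FOOO2}; the simplification in our situation, already noted before the statement, is that the discreteness of the generator sets turns every fiber product into an honest direct product and removes the transversality-at-the-diagonal issues that in the singular-chain model force a whole hierarchy of perturbations.
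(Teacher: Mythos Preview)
Your proposal is correct and takes essentially the same approach as the paper: the paper does not give a detailed proof but explicitly defers to the abstract extension theorem for multisections in \cite{FOOO2}, noting exactly the two simplifications you identify (trivial isotropy groups permit single-valued sections, and the discrete generator set turns fiber products into direct products so that no hierarchy of perturbations is needed). Your inductive scheme on $(E,k)$ and the boundary-then-interior extension argument is precisely the content of that reference specialized to this setting.
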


	Note that the choices of these multisections depend on the inputs and outputs $c_{0}, \cdots, c_{k}$. Therefore, it is not necessary to deal with transversality issues for various evaluation maps as in the setup using singular chains as in \cite{FOOO2}, so the procedure described here is much simpler. In fact, the construction only uses abstract theory of Kuranishi structures, once we have constructed the Kuranishi structures on the moduli spaces $\bar{\mathcal{M}}_{k+1}(\alpha, \beta; J, H; c_{0}, \cdots, c_{k})$ so that the Kuranishi structures are compatible at the boundary with the fiber product Kuranishi structures on \eqref{boundary strata of moduli space of pearly trees in the form of fiber products}. \par

\subsection{The curved $A_{\infty}$-algebra associated to an exact cylindrical Lagrangian immersion} \label{curved A-infinity algebra associated to exact cylindrical Lagrangian immersions}
	Based on the discussion presented above, we may extract from the moduli spaces $\bar{\mathcal{M}}_{k+1}(\alpha, \beta; J, H; c_{0}, \cdots, c_{k})$ a structure of a curve $A_{\infty}$-algebra on the underlying wrapped Floer cochain space $CW^{*}(L, \iota; H)$. \par

\begin{proposition}
	A coherent choice of single-valued multisections on $\bar{\mathcal{M}}_{k+1}(\alpha, \beta; J, H; c_{0}, \cdots, c_{k})$ defines a structure of a curved $A_{\infty}$-algebra on $CW^{*}(L, \iota; H)$. Moreover, this curved $A_{\infty}$-algebra is independent of the choice up to homotopy. 
\end{proposition}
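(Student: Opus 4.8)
\emph{Construction of the operations.} The plan is to define $m^{k}$ by a signed virtual count of rigid perturbed solutions. Fix a coherent system of Floer data, Morse functions, and the almost complex structure $J$, together with the compatible single-valued multisections provided by Proposition~\ref{compatible system of single-valued multisections}; the hypothesis that $M$ is exact and that every Kuranishi chart has trivial isotropy makes the counts $\mathbb{Z}$-valued. Writing $\bar{\mathcal{M}}_{k+1}(\alpha,\beta;J,H;c_{0},\dots,c_{k})^{(d)}$ for the part of virtual dimension $d$ of the perturbed zero locus (the label $\alpha$ being forced by which of the $c_{i}$ are self-intersection generators), one sets
\[
m^{k}(c_{k},\dots,c_{1}) \;=\; \sum_{\beta}\ \sum_{c_{0}}\ \#\,\bar{\mathcal{M}}_{k+1}(\alpha,\beta;J,H;c_{0},\dots,c_{k})^{(0)}\cdot c_{0},
\]
the signs coming from the chosen orientation of the Kuranishi structure (Proposition~\ref{Kuranishi structure on the moduli space of pearly trees}), which is determined by the spin structure. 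The curvature term $m^{0}$ arises from the one-punctured domains, and by the maximum-principle lemmas of \S\ref{section: wrapped Floer cochain space for a cylindrical Lagrangian immersion} these are precisely the $J$-holomorphic disks passing through self-intersection points of $\iota$, so $m^{0}$ is genuinely nonzero. Finiteness of all sums is where exactness enters: for fixed asymptotics the topological action is fixed, hence $\omega(\beta)$ is bounded, so only finitely many $\beta$ contribute (Lemma~\ref{finiteness of pseudoholomorphic disks passing through a self-intersection point}) and only finitely many outputs $c_{0}$ occur; the same estimate shows the $m^{k}$ are compatible with the action filtration of \S\ref{section: cyclic element and bounding cochain}, so $(CW^{*}(L,\iota;H),m^{k})$ is a gapped curved $A_{\infty}$-algebra over $\mathbb{Z}$.

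\emph{The $A_{\infty}$-relations.} These are read off the codimension-one boundary of the one-dimensional perturbed moduli spaces. For each choice of $c_{0},\dots,c_{k}$ with $\bar{\mathcal{M}}_{k+1}(\alpha,\beta;J,H;c_{0},\dots,c_{k})^{(1)}$ of virtual dimension one, this is a compact oriented $1$-manifold with boundary, and by Lemma~\ref{boundary strata in the moduli space} together with the compatibility of multisections with the fiber-product multisections \eqref{fiber product multisections}, its boundary is the disjoint union of products
\[
\bar{\mathcal{M}}_{k_{1}+1}(\dots)^{(0)}\times\bar{\mathcal{M}}_{k_{2}+1}(\dots)^{(0)}
\]
over all ways of splitting the inputs, including the splittings in which one factor is one-punctured and contributes $m^{0}$. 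The signed count of boundary points being zero, and the identification of the boundary orientation with the product orientation being guaranteed by Proposition~\ref{Kuranishi structure on the moduli space of pearly trees}, the Koszul sign bookkeeping then yields exactly the curved $A_{\infty}$-relations $\sum \pm\, m^{k-j+1}(c_{k},\dots,m^{j}(c_{i+j},\dots,c_{i+1}),\dots,c_{1}) = 0$.

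\emph{Independence up to homotopy.} Given two coherent systems of data and compatible multisections, choose a path of such choices interpolating between them, generic in the one-parameter sense, and form the parametrized moduli spaces over $[0,1]$ of pearly tree maps solving the $s$-dependent equations. The rigid elements of these parametrized spaces assemble into a curved $A_{\infty}$-homomorphism $\mathfrak{f}=(\mathfrak{f}^{0},\mathfrak{f}^{1},\dots)$ between the two curved $A_{\infty}$-algebras, the homomorphism equations following from the same codimension-one boundary analysis over the family. Since both algebras have the same underlying graded group and $\mathfrak{f}^{1}$ is the identity modulo higher filtration, $\mathfrak{f}$ is a homotopy equivalence; here one invokes the obstruction-theoretic argument for gapped curved $A_{\infty}$-algebras as in \cite{Fukaya2}, using the bounded action filtration of \S\ref{section: cyclic element and bounding cochain} in place of the Novikov valuation, and transitivity of homotopies is then formal. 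The main obstacle is precisely this last point: standard $A_{\infty}$-homotopy theory is formulated for uncurved or Novikov-filtered algebras, so one must verify that the finite-type gapped filtration carried here is enough both to construct the homotopy inductively and to control the wall-crossing contributions as the path of data crosses non-generic values; granting this, the curved $A_{\infty}$-algebra $CW^{*}(L,\iota;H)$ is well-defined up to homotopy.
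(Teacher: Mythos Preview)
Your proposal is correct and follows essentially the same approach as the paper: define $m^{k}$ by the virtual count of rigid perturbed pearly tree maps, establish finiteness via exactness and the action-energy identity, read off the curved $A_{\infty}$-relations from the codimension-one boundary decomposition together with the compatibility of multisections in \eqref{fiber product multisections}, and prove independence via parametrized moduli spaces over a path of auxiliary data. Your treatment of the independence step is somewhat more explicit than the paper's sketch (which simply invokes general Kuranishi theory for the multisection dependence and parametrized moduli spaces with time-allocation for the $J$-dependence), but the underlying argument is the same.
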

\begin{proof}[Sketch of proof]
	We sketch the main steps of the proof while referring the reader to techniques developed in \cite{FOOO1}, \cite{FOOO2}. The discussion in the previous subsections provides coherent system of Kuranishi structures on all the moduli spaces $\bar{\mathcal{M}}_{k+1}(\alpha, \beta; J, H; c_{0}, \cdots, c_{k})$. Also, by Proposition \ref{compatible system of single-valued multisections}, there exist single-valued multisections, which are compatible at the boundary strata with fiber product multisections. Let us fix such a coherent choice of single-valued multisections. \par
	To define the structure maps of the curved $A_{\infty}$-algebra, we shall consider only the rigid cases, which means that we only consider generators $c_{i}$ of appropriate degrees such that the virtual dimension of the moduli space $\bar{\mathcal{M}}_{k+1}(\alpha, \beta; J, H; c_{0}, \cdots, c_{k})$ is zero. The zero-sets of the chosen single-valued multisections define an integral virtual fundamental chain, which therefore gives us an integer number
\begin{equation*}
a_{\alpha, \beta; J, H; c_{0}, \cdots, c_{k}} \in \mathbb{Z}.
\end{equation*}
Then we set
\begin{equation}\label{A-infinity operations}
m^{k}(c_{k}, \cdots, c_{1}) = \sum_{\substack{\alpha, \beta, c_{0}\\ \deg(c_{0}) - \deg(c_{1}) - \cdots - \deg(c_{k}) = 2 - k}} a_{\alpha, \beta; J, H; c_{0}, \cdots, c_{k}} c_{0}.
\end{equation}
To make sense of the formula \eqref{A-infinity operations}, we must show that this sum is in fact finite.
If $k = 0$, the zeroth order map
\begin{equation*}
m^{0}(1) = \sum_{\substack{\alpha, beta, c_{0}\\ \deg(c) = 2}} a_{\alpha, \beta; J, H; c_{0}} c_{0}
\end{equation*}
"counts" inhomogeneous pseudoholomorphic disks with one marked point, which necessarily pass through some self-intersection point. That is, $c_{0}$ is of the form $(p, w)$. Then, Lemma \ref{finiteness of pseudoholomorphic disks passing through a self-intersection point} implies that there can only be finitely many such homology classes $\beta$ appearing so that the moduli space $\bar{\mathcal{M}}_{1}(\alpha, \beta; J, H; c_{0})$ is non-empty.
For $k \ge 1$, we can again use the action-energy identity, which holds because the Lagrangian immersions $\iota: L \to M$ is exact, to show that if the inputs $c_{1}, \cdots, c_{k}$ are fixed, there are only finitely many possible $c_{0}$ and finitely many possible homology classes $\beta$, for which the moduli space $\bar{\mathcal{M}}_{k+1}(\alpha, \beta; J, H; c_{0}, \cdots, c_{k})$ is non-empty. This implies that the sum \eqref{A-infinity operations} is a finite sum. Thus, it gives rise to a well-defined multilinear map
\begin{equation*}
m^{k}: CW^{*}(L, \iota; H)^{\otimes k} \to CW^{*}(L, \iota; H).
\end{equation*} \par
	To verify that these maps satisfy the $A_{\infty}$-equations, we need to study boundary strata of moduli spaces $\bar{\mathcal{M}}_{k+1}(\alpha, \beta; J, H; c_{0}, \cdots, c_{k})$ of virtual dimension one. Recall that the multisection $s_{\alpha, \beta; J, H; c_{0}, \cdots, c_{k}}$ is isomorphic to the fiber product multisections \eqref{fiber product multisections} on \eqref{boundary strata of moduli space of pearly trees in the form of fiber products}. In particular, if the virtual dimension of $\bar{\mathcal{M}}_{k+1}(\alpha, \beta; J, H; c_{0}, \cdots, c_{k})$ is one, the only boundary strata that we have to consider are of codimension one, and have the form
\begin{equation}
\bar{\mathcal{M}}_{k_{0}+1}(\alpha_{0}, \beta_{0}; J, H; c_{0, 0}, \cdots, c_{0, k_{0}}) \times \bar{\mathcal{M}}_{k_{1}+1}(\alpha_{1}, \beta_{1}; J, H; c_{1, 0}, \cdots, c_{1, k_{1}}).
\end{equation}
The numbering of these generators is as follows. The root $e_{1, 0}$ of a stable pearly tree map in $\bar{\mathcal{M}}_{k_{1}+1}(\alpha_{1}, \beta_{1}; J, H; c_{1, 0}, \cdots, c_{1, k_{1}})$ is connected to some leaf $e_{0, l(1)}$ of a stable pearly tree map in $\bar{\mathcal{M}}_{k_{0}+1}(\alpha_{0}, \beta_{0}; J, H; c_{0, 0}, \cdots, c_{0, k_{0}})$. Thus $c_{1, 0} = c_{0, l(1)} = c_{new}$, while the other generators agree with the original ones:
\begin{align}
&c_{0, i} = c_{i}, i = 0, \cdots, l(1),\\
&c_{1, j} = c_{j + l(1) - 1}, j = 1, \cdots, k_{1},\\
&c_{0, i} = c_{i + k_{1} - 1}, i = l(1) + 1, \cdots, k_{0}.
\end{align}
So we rewrite the fiber product as
\begin{equation}
\begin{split}
&\bar{\mathcal{M}}_{k_{0}+1}(\alpha_{0}, \beta_{0}; J, H; c_{0}, \cdots, c_{l(1)-1}, c_{new}, c_{l(1) + k_{1}}, \cdots, c_{k_{0} + k_{1} -1})\\
&\times \bar{\mathcal{M}}_{k_{1}+1}(\alpha_{1}, \beta_{1}; J, H; c_{new}, c_{l(1)}, \cdots, c_{l(1) + k_{1} - 1}).
\end{split}
\end{equation}
Thus we have isomorphisms of multisections:
\begin{equation}
\begin{split}
&s_{\alpha, \beta; J, H; c_{0}, \cdots, c_{k}}\\
&\cong s_{\alpha_{0}, \beta_{0}; J, H; c_{0}, \cdots, c_{l(1)-1}, c_{new}, c_{l(1) + k_{1}}, \cdots, c_{k_{0} + k_{1} -1}} \times s_{\alpha_{1}, \beta_{1}; J, H; c_{new}, c_{l(1)}, \cdots, c_{l(1) + k_{1} - 1}},
\end{split}
\end{equation}
when restricting the multisection $s_{\alpha, \beta; J, H; c_{0}, \cdots, c_{k}}$ to the boundary strata. This implies that the operations $m^{k}$ defined in \eqref{A-infinity operations} satisfy the $A_{\infty}$-equations. \par
	The independence of choices of multisections up to homotopy is a consequence of general theory of Kuranishi structures. To prove that the curved $A_{\infty}$-algebra is independent of choice of almost complex structure $J$ up to homotopy, we may introduce parametrized moduli spaces of pearly tree maps with time-allocation, associated to such a homotopy $\{J_{t}\}_{t \in [0, 1]}$, and construct Kuranishi structures on these moduli spaces in a well-arranged way so that they define cobordisms between the Kuranishi structures on the moduli spaces of pearly tree maps $\bar{\mathcal{M}}_{k+1}(\alpha, \beta; J_{i}, H; c_{0}, \cdots, c_{k}), i = 0, 1$ with respect to different almost complex structures. Here recall that the generators $c_{0}, \cdots, c_{k}$ themselves do not depend on the almost complex structures used to define pearly tree maps, so both kinds of moduli spaces make sense. \par
\end{proof}

\subsection{Bounding cochains}
	Given a cylindrical Lagrangian immersion $\iota: L \to M$, we have constructed a curved $A_{\infty}$-algebra structure $(CW^{*}(L, \iota; H), m^{k})$ on the wrapped Floer cochain space. In general $m^{1}$ does not square to zero because of the curvature term $m^{0} \neq 0$. In order to get a differential, we want to deform the operations $m^{k}$ algebraically such that the deformed differential squares to zero, and moreover, the deformed structure maps of all orders satisfy the $A_{\infty}$-equations without curvature. This idea was introduced into Lagrangian Floer theory by Fukaya-Oh-Ohta-Ono \cite{FOOO1}, by finding solutions to the inhomogeneous Maurer-Cartan equation:
\begin{equation} \label{inhomogeneous Maurer-Cartan equation}
\sum_{k = 0}^{\infty} m^{k}(b, \cdots, b) = 0.
\end{equation}
In order for this equation to make sense, we shall further impose the condition that $b$ is nilpotent, so that the sum stops at a final stage: there exists $K$ such that $m^{k}(b, \cdots, b) = 0$ for all $k > K$ and 
\begin{equation*}
\sum_{k = 0}^{K} m^{k}(b, \cdots, b) = 0.
\end{equation*} \par
	Because of the presence of the inhomogeneous term $m^{0}(1)$, this equation might not have any solution in general. But if it does have one solution $b \in CW^{*}(L, \iota; H)$ which is nilpotent, we call $b$ a bounding cochain for $(L, \iota)$, and say $(L, \iota)$ is unobstructed in the sense of wrapped Floer theory. In this case, we can deform the curved $A_{\infty}$-algebra to an $A_{\infty}$-algebra with vanishing curvature:
\begin{equation} \label{deform the A-infinity algebra by MC elements}
m^{k; b}(c_{k}, \cdots, c_{1}) = \sum_{\substack{i \ge 0\\ i_{0} + \cdots + i_{k} = i}} m^{k+i}(\underbrace{b, \cdots, b}_{\text{$i_{k}$ times}}, c_{k}, \underbrace{b, \cdots, b}_{\text{$i_{k-1}$ times}}, \cdots, c_{1}, \underbrace{b, \cdots, b}_{\text{$i_{0}$ times}}).
\end{equation}
In particular, $m^{1; b}$ squares to zero, and we can define a cohomology group of the graded complex $CW^{*}(L, \iota; H)$ with respect to the differential $m^{1; b}$, which we denote by $HW^{*}(L, \iota, b; H)$. We call this the wrapped Floer cohomology group of the Lagrangian immersion $\iota: L \to M$ with respect to the bounding cochain $b$. \par

\subsection{Wrapped Floer cochain space of a pair of Lagrangian immersions}
	Given a pair of exact cylindrical Lagrangian immersions with transverse self-intersections, we want to construct a curved $A_{\infty}$-bimodule $CW^{*}((L_{0}, \iota_{0}), (L_{1}, \iota_{1}); H)$ over the curved $A_{\infty}$-algebras $CW^{*}(L_{i}, \iota_{i}; H)$ associated to the two Lagrangian immersions $\iota_{i}: L_{i} \to M, i = 0, 1$. Assume that these two Lagrangian immersions are not the same (the case where they are the same can be treated using the construction for a single exact cylindrical Lagrangian immersion). In a generic situation, we may assume that the constant $H$-chords from $\iota_{0}(L_{0})$ to $\iota_{1}(L_{1})$, which are identical to intersection points of $\iota_{0}(L_{0})$ with $\iota_{1}(L_{1})$, do not coincide with the self-intersection points of $\iota_{0}$ or those of $\iota_{1}$. Moreover, we may assume that this pair intersects transversely, in the following sense. \par

\begin{definition}
	Let $(L_{0}, \iota_{0}), (L_{1},  \iota_{1}))$ be a pair of exact cylindrical Lagrangian immersions with transverse self-intersections. They are said to intersect transversely, if the following conditions are satisfied:
\begin{enumerate}[label=(\roman*)]

\item the intersection points of $\iota_{0}(L_{0}) \cap \iota_{1}(L_{1})$ are different from the self-intersections of $\iota_{0}: L_{0} \to M$ and those of $\iota_{1}: L_{1} \to M$;

\item the images $\iota_{0}(L_{0})$ and $\iota_{1}(L_{1})$ intersect transversely;

\item they do not intersection over the cylindrical end $\partial M \times [1, +\infty)$.

\end{enumerate}
\end{definition}

	Now let us define the wrapped Floer cochain space for such a pair $(L_{0}, \iota_{0}), (L_{1},  \iota_{1}))$. \par
	
\begin{definition}\label{definition of wrapped Floer cochain space for a pair}
	We define the wrapped Floer cochain space $CW^{*}((L_{0}, \iota_{0}), (L_{1}, \iota_{1}); H)$ to be the free $\mathbb{Z}$-module generated by the following two kinds of generators
\begin{enumerate}[label=(\roman*)]

\item an intersection point $p$ of $\iota_{0}(L_{0})$ and $\iota_{1}(L_{1})$ which is contained in the interior part $M_{0}$, or equivalently a constant $H$-chord $x = x_{p}$ at $p$ from $\iota_{0}(L_{0})$ to $\iota_{1}(L_{1})$ (recall $H$ is constantly zero in the interior part);

\item non-constant time one $H$-chords $x$ from $\iota_{0}(L_{0})$ to $\iota_{1}(L_{1})$ which is contained in the cylindrical end, where both $\iota_{0}, \iota_{1}$ are assumed to be embeddings.

\end{enumerate}

\end{definition}

	Since every interior intersection point of $\iota_{0}(L_{0})$ and $\iota_{1}(L_{1})$ is assumed to be different from any self-intersections of $\iota_{0}$ or $\iota_{1}$, and since both Lagrangian immersions are exact, the above definition makes sense, without having to include capping half-disks for these intersection points. From now on we shall use the letter $c$ to denote any kind of generator. \par


\subsection{Moduli spaces of Floer trajectories}
	To study wrapped Floer theory for a pair of exact cylindrical Lagrangian immersions, we shall introduce moduli spaces of stable broken Floer trajectories. In our situation, because the Lagrangian submanifolds are immersed, we also need to add some refinement of the data to the moduli spaces, which are similar to those for a single Lagrangian immersion. \par
	Choose for each Lagrangian immersion $\iota_{i}: L_{i} \to M$ an admissible almost complex structure $J_{i}, i = 0, 1$ of contact type, using which the curved $A_{\infty}$-algebras $(CW^{*}(L_{i}, \iota_{i}; H), m^{k})$ are defined. Also, choose a path of admissible almost complex structures $J_{t}$ of contact type connecting $J_{0}$ and $J_{1}$. We shall consider the moduli spaces of the following kinds of maps. \par
	Let $I_{0} \subset \{1, \cdots, k\}$ and $I_{1} \subset \{1, \cdots, l\}$, and $\alpha_{i}: I_{i} \to S(L_{i}, \iota_{i})$ be maps, labeling those marked points which are mapped to some self-intersection points of $\iota_{0}$ and $\iota_{1}$ respectively. Also, let $\beta \in H_{2}(M, \iota_{0}(L_{0}) \cup \iota_{1}(L_{1}))$ be a relative homology class. \par

\begin{definition}
	A $(k, l)$-marked Floer trajectory is a quadruple $(\Sigma, \vec{s}, u, \vec{l})$ satisfying the following conditions:
\begin{enumerate}[label=(\roman*)]

\item $\Sigma = \mathbb{R} \times [0, 1]$ is the infinite strip.

\item $\vec{s} = (\vec{s}^{0}, \vec{s}^{1})$ with $\vec{s}^{0} = (s^{0}_{1}, \cdots, s^{0}_{k})$ and $\vec{s}^{1} = (s^{1}_{1}, \cdots, s^{1}_{l})$ are collections of real numbers, such that 
\begin{equation*}
s^{0}_{j} > s^{0}_{j+1}, j = 1, \cdots, k-1,
\end{equation*}
\begin{equation*}
s^{1}_{j} < s^{1}_{j+1}, j = 1, \cdots, l-1.
\end{equation*}

\item $u: \Sigma \to M$ is a continuous map.

\item $u$ satisfies the inhomogeneous Cauchy-Riemann equation (Floer's equation)
\begin{equation*}
\frac{\partial u}{\partial s} + J_{t}(\frac{\partial u}{\partial t} - X_{H}(u)) = 0.
\end{equation*}

\item $u(s, 0) \in \iota_{0}(L_{0}), u(s, 1) \in \iota_{1}(L_{1})$, for all $s \in \mathbb{R}$.

\item $u$ asymptotically converges to time-one $H$-chords $x_{-}, x_{+}$ from $\iota_{0}(L_{0})$ to $\iota_{1}(L_{1})$ as $s \to - \infty$ and respectively $+\infty$, where these $H$-chords might be constant or non-constant.

\item The homology class of $u$ is $\beta \in H_{2}(M, \iota_{0}(L_{0}) \cup \iota_{1}(L_{1}))$.

\item $\vec{l} = (l_{0}, l_{1})$ is a pair of smooth maps $l_{i}:  \mathbb{R} \times \{i\} \to L_{i} \times_{\iota_{i}} L_{i}$, such that $u(s, i) = \iota_{i} \circ l_{i}(s)$, for $s \in \mathbb{R} \times \{i\} \setminus \{s^{i}_{j}| j \in I_{i}\}$. $\vec{l}$ is the boundary lifting condition of $u$.

\item $(\lim\limits_{s \uparrow s^{i}_{j}} l_{i}(s), \lim\limits_{s \downarrow s^{i}_{j}} l_{i}(s)) = \alpha_{i}(j)$, for every $j \in I_{i}$.

\item $(\Sigma, \vec{s}, u, l)$ is stable.

\end{enumerate} \par

\end{definition}

	There is an obvious $\mathbb{R}$-action by translations on the set of all marked Floer trajectories $(\Sigma, \vec{s}, u, l)$. We denote by
\begin{equation*}
\mathcal{N}_{k, l}((L_{0}, \iota_{0}), (L_{1}, \iota_{1}); \vec{\alpha}, \beta; \{J_{t}\}_{t}, H; c_{-}, c_{+})
\end{equation*}
the set of equivalence classes of such $(\Sigma, \vec{s}, u, l)$. For convenience, we sometimes omit the notations $(L_{0}, \iota_{0}), (L_{1}, \iota_{1})$ in case no confusion can occur. \par
	The above moduli space has a natural compactification, called the moduli space of stable broken Floer trajectories, to be described in two steps: first, we need to add trees of disk bubbles to each boundary of a Floer trajectory; second, we need to add broken Floer trajectories. \par
	First consider stable unbroken Floer trajectories, which are marked Floer trajectories with stable pearly trees attached to the two boundary components of the strip $\mathbb{R} \times [0, 1]$. \par
	In order to describe these elements in detail, we add more data to marked Floer trajectories. Let $(\Sigma, \vec{s}, u, l)$ be a marked Floer trajectory. \par

\begin{definition}
	A decoration for $(\Sigma, \vec{s}, u, l)$ is an assignment of coloring $c: \vec{s} \to \{0, 1\}$. \par
\end{definition}

	For each $s^{i}_{j}$ with color $c(s^{i}_{j}) = 0$, we attach a half-infinite ray $I_{s^{i}_{j}} = [0, +\infty)$ to $\Sigma$ at $s^{i}_{j}$. For each $s^{i}_{j}$ with color $c(s^{i}_{j}) = 1$, we remove $s^{i}_{j}$ and add a half-infinite strip $Z_{s^{i}_{j}}^{+} = [0, +\infty) \times [0, 1]$ as a strip-like end near the puncture $s^{i}_{j}$. \par
	Now we extend the map $u$ to the decorated domain $\tilde{\Sigma}$. On each newly-added half-infinite ray $I_{s^{i}_{j}}$, we extend $u$ by a map $u_{s^{i}_{j}}: I_{s^{i}_{j}} \to L_{i}$, which comes with a preferred lift $\tilde{u_{s^{i}_{j}}}: I_{s^{i}_{j}} \to L_{i} \times_{\iota_{i}} L_{i}$, which satisfies the gradient flow equation:
\begin{equation}
\frac{du_{s^{i}_{j}}}{dt} + \nabla f_{i, s}(u_{s^{i}_{j}}) = 0,
\end{equation}
and asymptotically converges to a critical point of $f_{i}$, where $f_{i, s}$ a family of perturbations of $f_{i}$ parametrized by $s \in I_{s^{i}_{j}}$, so that $f_{i, s} = f_{i}$ for $s \gg 0$. On each newly-added half-infinite strip $Z_{s^{i}_{j}}^{+}$, we extend $u$ by a map $u_{s^{i}_{j}}: Z_{s^{i}_{j}}^{+} \to M$, such that $u_{s^{i}_{j}}$ maps the boundary to $\iota_{i}(L_{i})$, and satisfies the inhomogeneous Cauchy-Riemann equation:
\begin{equation}
\partial_{s}u_{s^{i}_{j}} + J_{i}(\partial_{t}u_{s^{i}_{j}} - X_{H}(u_{s^{i}_{j}})) = 0,
\end{equation}
and asymptotically converges to some time-one $H$-chord $x$. \par
	These critical points of $f_{i}$ or non-constant $H$-chords from $\iota_{i}(L_{i})$ to itself should come equipped with choices of capping half-disks, which make them into generators $c_{i}^{j}$ of the wrapped Floer cochain space $CW^{*}((L_{0}, \iota_{0}), (L_{1}, \iota_{1}); H)$. \par
	We also extend the maps $\vec{l} = (l_{0}, l_{1})$ to these newly-added components, so that the extended map $\tilde{l}_{i}$ satisfies $\iota_{i} \circ \tilde{l}_{i} = \tilde{u}$. In particular, the extension to a half-infinite ray $I_{s^{i}_{j}}$ is precisely the preferred lift $\tilde{u}_{s^{i}_{j}}$. \par

\begin{definition}
	We call the extended map $(\tilde{\Sigma}, \vec{s}, \tilde{u}, \vec{\tilde{l}})$ a decorated Floer trajectory. Denote by 
\begin{equation}
\mathcal{N}_{k, l}^{dec}(\vec{\alpha}, \beta; \{J_{t}\}_{t}, H; c_{-}, c_{+}; c^{0}_{1}, \cdots, c^{0}_{k}, c^{1}_{1}, \cdots, c^{1}_{l})
\end{equation}
the moduli space of decorated Floer trajectories.
\end{definition}

	For convenience, denote by $k = k_{0}$ and $l = k_{1}$. Let $n_{0}, n_{1}, m_{0}, m_{1}$ and $m_{0, 1}, \cdots, m_{0, n_{0}}$, and $m_{1, 1}, \cdots, m_{1, n_{1}}$ be non-negative integers such that
\begin{equation*}
k_{i} = m_{i} + \sum_{a=1}^{n_{i}}m_{i, a}.
\end{equation*}
Let $A_{i} \subset \{1, \cdots, m_{i}+n_{i}\}$ be a subset of $n_{i}$ elements and put
\begin{equation}
A_{i} = \{\sigma_{i}(1), \cdots, \sigma_{i}(n_{i})\},
\end{equation}
where $\sigma_{i}: \{1, \cdots, n_{i}\} \to \{1, \cdots, m_{i} + n_{i}\}$ is an injective map satisfying $\sigma_{i}(a) < \sigma_{i}(a+1)$.
Let $\pi_{i, 1}: (Crit(f_{i}) \coprod \mathcal{X}_{+}(\iota_{i}(L_{i}), H))^{k_{i}+n_{i}} \to (Crit(f_{i}) \coprod \mathcal{X}_{+}(\iota_{i}(L_{i}), H))^{n_{i}}$ be the projection
\begin{equation*}
\pi_{i, 1}(x_{1}, \cdots, x_{k_{i}+n_{i}}) = (x_{\sigma_{i}(1)}, \cdots, x_{\sigma_{i}(n_{i})}).
\end{equation*}
Let $\pi_{i, 2}: (Crit(f_{i}) \coprod \mathcal{X}_{+}(\iota_{i}(L_{i}), H))^{k_{i}+n_{i}} \to (Crit(f_{i}) \coprod \mathcal{X}_{+}(\iota_{i}(L_{i}), H))^{k_{i}}$ be the projection to the other factors. \par
	For simplicity, denote by $A$ the collection of data $n_{0}, n_{1}, m_{0}, m_{1}$, $m_{0, 1}, \cdots, m_{0, n_{0}}$, and $m_{1, 1}, \cdots, m_{1, n_{1}}$ as well as $A_{0}, A_{1}$. We put
\begin{equation} \label{moduli spaces of unbroken stable Floer trajectories}
\begin{split}
&\mathcal{N}^{A}_{k_{0}, k_{1}}(\vec{\alpha}, \beta; \{J_{t}\}_{t}, H; c_{-}, c_{+}; \{c^{0}_{a}, c^{0}_{a, 1}, \cdots, c^{0}_{a, m_{0, a}}\}_{a=1}^{n_{0}}, \{c^{1}_{b}, c^{1}_{b, 1}, \cdots, c^{1}_{b, m_{1, b}}\}_{b=1}^{n_{1}})\\
&= \bigcup_{\beta' \sharp \sum \beta_{0, a} \sharp \sum \beta_{1, b} = \beta} \coprod_{\substack{\alpha'_{0} \cup \bigcup_{a} \alpha_{0, a} = \alpha_{0}\\ \alpha'_{1} \cup \bigcup_{b} \alpha_{1, b} = \alpha_{1}}} \mathcal{N}_{n_{0}+m_{0}, n_{1}+m_{1}}^{dec}(\vec{\alpha}', \beta'; \{J_{t}\}_{t}, H; x_{-}, x_{+};\\
&c^{0}_{1}, \cdots, c^{0}_{n_{0}+m_{0}}, c^{1}_{1}, \cdots, c^{1}_{n_{1}+m_{1}})\\
& \times_{(\pi_{0, 1}, \pi_{1, 1}) \circ ev, \vec{ev}_{0}}
(\prod_{a=1}^{n_{0}} \bar{\mathcal{M}}_{m_{0, a}+1}(\alpha_{0, a}, \beta_{0, a}; J_{0}, H; c^{0}_{a}, c^{0}_{a, 1}, \cdots, c^{0}_{a, m_{0, a}})\\
&\times \prod_{b=1}^{n_{1}} \bar{\mathcal{M}}_{m_{1, b}+1}(\alpha_{1, b}, \beta_{1,b}; J_{1}, H; c^{1}_{b}, c^{1}_{b, 1}, \cdots, c^{1}_{b, m_{1, b}})),
\end{split}
\end{equation}
where $\vec{ev}^{0} = (ev^{0}, \cdots, ev^{0})$ are the evaluation maps at the $0$-th marked point, and the moduli spaces 
\begin{equation*}
\bar{\mathcal{M}}_{m_{0, a}+1}(\alpha_{0, a}, \beta_{0, a}; J_{0}, H; c^{0}_{a}, c^{0}_{a, 1}, \cdots, c^{0}_{a, m_{0, a}})
\end{equation*}
and
\begin{equation*}
\bar{\mathcal{M}}_{m_{1, b}+1}(\alpha_{1, b}, \beta_{1,b}; J_{1}, H; c^{1}_{b}, c^{1}_{b, 1}, \cdots, c^{1}_{b, m_{1, b}})
\end{equation*}
are moduli spaces of stable pearly tree maps. \par
	To simplify the notations, we denote by $\vec{c}^{0}_{A}$ the collection $\{c^{0}_{a}, c^{0}_{a, 1}, \cdots, c^{0}_{a, m_{0, a}}\}_{a=1}^{n_{0}}$ of generators and similarly $\vec{c}^{1}_{A}$ for the other collection. Also, denote by $\vec{c}^{0}, \vec{c}^{1}$ for any possible collections among $c^{0}_{A}, c^{1}_{A}$ for all type $A$. \par

\begin{definition}
	We put
\begin{equation}
\mathcal{N}^{unbr}_{k_{0}, k_{1}}(\alpha_{0}, \alpha_{1}; \beta; c_{-}, c_{+}; \vec{c}^{0}, \vec{c}^{1})
= \bigcup_{A} \mathcal{N}^{A}_{k_{0}, k_{1}}(\alpha_{0}, \alpha_{1}; \beta; c_{-}, c_{+}; \vec{c}^{0}_{A}, \vec{c}^{1}_{A}),
\end{equation}
and call it the moduli space of stable unbroken Floer trajectories.
\end{definition}

	In words, the elements in this moduli space are Floer trajectories with trees of disks attached to each boundary of $\mathbb{R} \times [0, 1]$. We do not have to include sphere bubbles because the ambient symplectic manifold $M$ is exact. \par


\subsection{Compactification: stable broken Floer trajectories}
	Next, as the limit of a sequence of stable unbroken Floer trajectories can break into several Floer trajectories, we introduce the moduli space of stable broken Floer trajectories. \par
	
\begin{definition}
	A stable broken Floer trajectory is a tuple
\begin{equation*}
((\tilde{\Sigma}(1), \vec{s}(1), \tilde{u}(1), \vec{\tilde{l}}(1)), \cdots, (\tilde{\Sigma}(K), \vec{s}(K), \tilde{u}(K), \vec{\tilde{l}}(K)),
\end{equation*}
where each $(\tilde{\Sigma}(a), \vec{s}(a), \tilde{u}(a), \vec{\tilde{l}}(a))$ is a stable unbroken Floer trajectory, such that the asymptotic convergence conditions match for successive stable unbroken Floer trajectories:
\begin{equation}
c_{+}(a) = c_{-}(a+1).
\end{equation} \par
	The moduli space of stable broken Floer trajectories is denoted by
\begin{equation}
\begin{split}
&\bar{\mathcal{N}}_{k_{0}, k_{1}}(\vec{\alpha}, \beta; \{J_{t}\}_{t}, H; c_{-}, c_{+}; \vec{c}^{0}, \vec{c}^{1})\\
&= \coprod_{K} \coprod_{\substack{\alpha_{0}(1), \cdots, \alpha_{0}(K)\\\alpha_{0}(1) \cup \cdots \cup \alpha_{0}(K) = \alpha_{0}}} \coprod_{\substack{\alpha_{1}(1), \cdots, \alpha_{1}(K)\\\alpha_{1}(1) \cup \cdots \cup \alpha_{1}(K) = \alpha_{0}}} \coprod_{\beta(1) \sharp \cdots \sharp \beta(K) = \beta}\\
&\coprod_{\substack{k_{0, 1}, \cdots, k_{0, K}\\k_{0, 1} + \cdots + k_{0, K} = k_{0}}} \coprod_{\substack{k_{1, 1}, \cdots, k_{1, K}\\k_{1, 1} + \cdots + k_{1, K} = k_{1}}} \prod_{a=1}^{K-1}\\
&\mathcal{N}^{unbr}_{k_{0, a}, k_{1, a}}(\alpha_{0}(a), \alpha_{1}(a), \beta(a); \{J_{t}\}_{t}, H; c_{-}(a), c_{+}(a); \vec{c}^{0}(a), \vec{c}^{1}(a)),
\end{split}
\end{equation}
where $c(1) = c_{-}, c(K) = c_{+}$. Here the disjoint union is taken over all $K, \vec{\alpha}, \vec{c}, k_{0, a}, k_{1, a}$. 

\end{definition}

	Now it is standard to prove: \par

\begin{lemma}
	The moduli space of stable broken Floer trajectories
\begin{equation*}
\bar{\mathcal{N}}_{k_{0}, k_{1}}(\vec{\alpha}, \beta; \{J_{t}\}_{t}, H; c_{-}, c_{+}; \vec{c}^{0}, \vec{c}^{1})
\end{equation*}
is compact.
\end{lemma}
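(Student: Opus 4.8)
The plan is to run the standard Gromov-type compactness argument, carefully bookkeeping the extra combinatorial data (colorings, boundary lifts, gradient segments) that distinguish broken Floer trajectories from ordinary broken Floer strips, exactly as in the proof of Lemma \ref{compactness and Hausdorffness}. First I would fix the a priori energy bound: since the relative homology class $\beta$ is prescribed, and since both $\iota_{0}, \iota_{1}$ are exact with primitives $f_{0}, f_{1}$, the action--energy identity bounds the geometric energy of every component of every element by a constant depending only on $\beta$, $H$, $c_{-}$, $c_{+}$. In particular only finitely many decompositions $\beta = \beta' \sharp \sum \beta_{i,a}$ into classes realized by non-empty moduli spaces occur, so the disjoint unions defining $\bar{\mathcal{N}}_{k_{0},k_{1}}$ are effectively finite.

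Next I would establish a uniform $C^{0}$-bound. Because $H$ is quadratic on $\partial M \times [1+\epsilon, +\infty)$ and each $\iota_{i}$ is an embedding of the cylindrical Lagrangian $l_{i} \times [1, +\infty)$ there, the maximum principle of Abouzaid type applies to the inhomogeneous Cauchy--Riemann components, and the contact-type maximum principle applies to the homogeneous disk components; the gradient segments are confined since the $f_{i}$ have no critical points in the cylindrical end. As in Lemma \ref{compactness and Hausdorffness} this confines the image of every stable broken Floer trajectory with the prescribed asymptotics to a fixed sublevel set $\{r \le A\} \subset M$, with $A$ independent of the individual map.

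With uniform energy and $C^{0}$-bounds in hand, I would invoke Gromov compactness for pseudoholomorphic curves with Lagrangian boundary together with the compactness theory for negative gradient flow lines. Sphere bubbles are excluded by exactness of $M$; interior disk bubbles off the two boundary strata are precisely the homogeneous disks through self-intersection points of $\iota_{0}$ or $\iota_{1}$, which by Lemma \ref{finiteness of pseudoholomorphic disks passing through a self-intersection point} come in finitely many homology classes and, being stable, attach as the color-$0$ parts of pearly trees; breaking of the strip along Hamiltonian or constant chords produces the successive components $(\tilde{\Sigma}(a), \vec{s}(a), \tilde{u}(a), \vec{\tilde{l}}(a))$; and breaking of gradient segments is absorbed into the interior edges of the pearly trees. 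One subtlety, already flagged in the construction, is that $c_{\pm}$ may be non-contractible relative $\iota_{0}(L_{0}) \cup \iota_{1}(L_{1})$; to speak of convergence of homology classes one first compactifies each strip at $\pm\infty$ by gluing in the oppositely oriented chord, and the limit again carries a well-defined total class $\beta$. I would then verify that every limiting configuration satisfies the stability condition, which is exactly what identifies the limit with a point of $\bar{\mathcal{N}}_{k_{0},k_{1}}$, and that the topology is first-countable, so sequential compactness yields compactness.

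The main obstacle I expect is not the a priori estimates --- those are routine given exactness and the maximum principle --- but the combinatorial bookkeeping: showing that the degeneration of a stable unbroken Floer trajectory with pearly trees attached again lands in the list defining $\bar{\mathcal{N}}_{k_{0},k_{1}}$, i.e.\ that disk bubbling, the sliding-together of two marked points into a self-intersection label, the breaking of a gradient edge, and the breaking of the main strip all fit into the prescribed fibre-product description, with matching colorings $d(e_{j,0}) = d(e_{l(j),p(j)})$ and matching boundary-lift data $\vec{l}$. This requires checking case by case that the possible limits of the lift maps $l_{i} : \mathbb{R}\times\{i\} \to L_{i} \times_{\iota_{i}} L_{i}$ are exactly the allowed labelings $\alpha_{i}$, so that no spurious stratum appears and none is missing; the energy quantization coming from exactness is again what guarantees the process terminates after finitely many steps.
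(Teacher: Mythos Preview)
Your proposal is correct and follows exactly the standard argument the paper has in mind: the paper states this lemma without proof, prefacing it only with ``Now it is standard to prove,'' and your outline (a priori energy bound from exactness and the fixed class $\beta$, $C^{0}$-bound via the maximum principle on the cylindrical end, then Gromov compactness plus gradient-flow compactness, with the combinatorial bookkeeping of lifts and colorings handled as in Lemma~\ref{compactness and Hausdorffness}) is precisely that standard argument.
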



\subsection{Kuranishi structures and single-valued multisections}

	The following propositions are essential for extracting algebraic structures from the moduli spaces of stable broken Floer trajectories. The proofs are similar to the case of moduli spaces of stable pearly tree maps in the previous section \ref{section: Kuranishi structure on moduli spaces of stable pearly tree maps}. \par

\begin{proposition}\label{Kuranishi structures on the moduli space of stable broken Floer trajectories}
	The moduli space of stable broken Floer trajectories
\begin{equation*}
\bar{\mathcal{N}}_{k_{0}, k_{1}}(\alpha_{0}, \alpha_{1}, \beta; \{J_{t}\}_{t}, H; c_{-}, c_{+}; \vec{c}^{0}, \vec{c}^{1})
\end{equation*}
has an oriented Kuranishi structure which is compatible with the fiber product Kuranishi structures at its codimension one boundary strata:
\begin{equation}\label{boundary stratum of the moduli space of broken stable Floer trajectories}
\begin{split}
&\partial \bar{\mathcal{N}}_{k_{0}, k_{1}}(\vec{\alpha}, \beta; \{J_{t}\}_{t}, H; c_{-}, c_{+}; \vec{c}^{0}, \vec{c}^{1})\\
&\cong \coprod_{\substack{k'_{0}+k''_{0}=k_{0}\\ 1 \le i \le k'_{0}\\ \alpha_{0}}} \coprod_{\substack{(\vec{c}'^{0}, \vec{c}'''^{0}, \vec{c}''^{0}) = \vec{c}^{0}\\ \text{$\vec{c}'^{0}$ is an $i$-tuple}}}
\bar{\mathcal{N}}_{k'_{0}+1, k_{1}}(\alpha'_{0}, \alpha_{1}, \beta'; \{J_{t}\}_{t}, H; c_{-}, c_{+}; (\vec{c}'^{0}, \vec{c}''^{0}), \vec{c}^{1})\\
&\times_{ev_{0}^{i}, ev^{0}} \bar{\mathcal{M}}_{k''_{0}+1}(\alpha''_{0}, \beta_{0}; J_{0}, H; \vec{c}'''^{0})\\
&\cup \coprod_{\substack{k'_{1}+k''_{1}=k_{1}\\ 1 \le i \le k'_{1}\\ \alpha_{1}}} \coprod_{\substack{(\vec{c}'^{1}, \vec{c}'''^{1}, \vec{c}''^{1}) = \vec{c}^{1}\\ \text{$\vec{c}'^{1}$ is an $i$-tuple}}}
\bar{\mathcal{N}}_{k_{0}, k'_{1}+1}(\alpha_{0}, \alpha'_{1}, \beta; \{J_{t}\}_{t}, H; c_{-}, c_{+}; \vec{c}^{0}, \vec{c}'^{1}, \vec{c}''^{1})\\
&\times_{ev_{1}^{i}, ev^{0}} \bar{\mathcal{M}}_{k''_{1}+1}(\alpha''_{1}, \beta_{1}; J_{1}, H; \vec{c}'''^{1})\\
&\cup \coprod_{c'_{+} = c'_{-}} \bar{\mathcal{N}}_{k'_{0}, k'_{1}}(\alpha'_{0}, \alpha'_{1}, \beta'; \{J_{t}\}_{t}, H; c_{-}, c'_{+}; \vec{c}'^{0}, \vec{c}'^{1})\\
&\times \bar{\mathcal{N}}_{k''_{0}, k''_{1}}(\alpha''_{0}, \alpha''_{1}, \beta''; \{J_{t}\}_{t}, H; c'_{-}, c_{+}; \vec{c}''^{0}, \vec{c}''^{1})
\end{split}
\end{equation}
\end{proposition}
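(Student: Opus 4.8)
The plan is to follow the same inductive scheme used in the proof of Proposition \ref{Kuranishi structure on the moduli space of pearly trees}, now adapted to the strip geometry. First I would set up the Fredholm theory: fixing a decoration type, consider the Banach manifold $\mathcal{B}$ of maps $(\tilde{\Sigma}, \vec{s}, \tilde{u}, \vec{\tilde{l}})$ of class $W^{1,p}$ on a decorated strip (with the $\mathbb{R}$-translation acting on the underlying $\Sigma = \mathbb{R}\times[0,1]$) together with the Banach bundle $\mathcal{E}\to\mathcal{B}$ whose fiber is the space of $(0,1)$-forms of class $W^{0,p}_{loc}$ with values in $u^{*}TM$. The combined operator $\bar\partial$ --- Floer's equation on the main strip, the homogeneous/inhomogeneous Cauchy--Riemann equations on the disk components according to the coloring, and the gradient flow equations on the interval components --- defines a section whose linearization $D\bar\partial$ is Fredholm once the translation in the domain direction has been divided out (or, in the exceptional unmarked unbroken case, after passing to the $\mathbb{R}$-quotient). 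At a smooth point $\sigma$ I would choose a finite-dimensional obstruction space $E_{\sigma}\subset\mathcal{E}_{\sigma}$ of smooth sections with compact support in the interiors of the disk and interval components making $D\bar\partial$ surjective modulo $E_{\sigma}$, and define the Kuranishi neighborhood as the set of nearby solutions of $\bar\partial u\equiv 0 \bmod E_{\sigma}$; the isotropy group is trivial because $M$ is exact and the automorphism group of a decorated strip is torsion-free.

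Second, I would extend these charts to the degenerate strata by gluing. The new feature relative to Proposition \ref{Kuranishi structure on the moduli space of pearly trees} is that a stable broken Floer trajectory exhibits three kinds of degeneration, matching the three families in \eqref{boundary stratum of the moduli space of broken stable Floer trajectories}: disk trees bubbling off a boundary point, i.e. fiber products with moduli spaces $\bar{\mathcal{M}}_{k+1}(\alpha,\beta;J_{i},H;\cdots)$ of stable pearly tree maps whose Kuranishi structures are already available from Proposition \ref{Kuranishi structure on the moduli space of pearly trees}; and breaking of the strip into two broken Floer trajectories along a common asymptote. In each case one invokes the appropriate gluing theorem --- pseudoholomorphic disk gluing, Floer trajectory gluing, and gradient flow line gluing --- exactly as in Chapter 7 of \cite{FOOO2}. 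The obstruction space at a broken configuration is taken to be the direct sum of the obstruction spaces of the pieces, transplanted to the pre-glued domain via the cut-off maps as in the pearly-tree case; compact support keeps the sum injective. A priori energy bounds from fixing $\beta$, together with Lemma \ref{finiteness of pseudoholomorphic disks passing through a self-intersection point} controlling the disk classes through self-intersection points, ensure only finitely many strata types occur, so the induction on the lexicographic order of $(E, k_{0}+k_{1})$ terminates.

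Third, I would globalize exactly as in the pearly-tree argument: cover the compact moduli space by finitely many neighborhoods $U_{\sigma_{1}},\dots,U_{\sigma_{K}}$, set $E'_{\sigma}=\bigoplus_{i:\sigma\in U_{\sigma_{i}}}E_{\sigma_{i}}$, and shrink so that $\sigma'\in U'_{\sigma}$ yields an inclusion $E'_{\sigma'}\hookrightarrow E'_{\sigma}$ by parallel transport, hence coordinate changes compatible with the Kuranishi maps and with the corner structure. For configurations whose underlying decorated strip is unstable --- e.g. an unmarked Floer trajectory, where the residual automorphism is the $\mathbb{R}$-translation --- I would impose the canonical gauge fixing condition of \cite{Fukaya-Ono}, minimizing $\int d^{2}(u_{\xi}, u\circ\phi^{-1})$ over the relevant neighborhood in the Lie algebra, as for unstable pearly trees; convexity of the Riemannian distance square gives a unique minimizer and pins down the gauge. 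Orientations are induced from the chosen gradings and spin structures on $L_{0}$ and $L_{1}$ and are compatible with the fiber-product orientations by the standard linear gluing sign analysis.

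The main obstacle I expect is the compatibility assertion at the codimension-one boundary \eqref{boundary stratum of the moduli space of broken stable Floer trajectories}: this boundary mixes fiber products with the already-constructed pearly-tree moduli $\bar{\mathcal{M}}_{k+1}$ and products of lower broken Floer-trajectory moduli, so one must arrange the obstruction-space choices and gluing profiles to restrict coherently to \emph{both} types of factor simultaneously --- in particular the obstruction bundles on the pearly-tree factors must be literally the ones fixed in Proposition \ref{Kuranishi structure on the moduli space of pearly trees}. Coordinating this across the induction while keeping the gauge fixing for the strip's translation symmetry consistent with the gluing parameters is the delicate bookkeeping; everything else is a routine adaptation of \cite{FOOO1}, \cite{FOOO2}.
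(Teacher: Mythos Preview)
Your proposal is correct and follows essentially the same approach as the paper: the paper does not give a detailed proof of this proposition, instead stating that ``the proofs are similar to the case of moduli spaces of stable pearly tree maps in the previous section \ref{section: Kuranishi structure on moduli spaces of stable pearly tree maps}.'' Your outline---setting up the Fredholm problem at smooth points, gluing at the three types of boundary degenerations using the already-constructed pearly-tree Kuranishi structures, globalizing via a finite cover with direct-sum obstruction spaces, and handling unstable domains by canonical gauge fixing---is precisely the adaptation the paper has in mind, and your identification of the main bookkeeping issue (coherence of obstruction choices across the mixed boundary types) is apt.
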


\begin{proposition} \label{existence of single-valued multisections on the moduli space of stable broken Floer trajectories}
	On the moduli space of stable broken Floer trajectories
\begin{equation*}
\bar{\mathcal{N}}_{k_{0}, k_{1}}(\vec{\alpha}, \beta; \{J_{t}\}_{t}, H; c_{-}, c_{+}; \vec{c}^{0}, \vec{c}^{1}),
\end{equation*}
equipped with a Kuranishi structure as in Proposition \ref{Kuranishi structures on the moduli space of stable broken Floer trajectories},
there exists a single-valued multisection
\begin{equation*}
s_{k_{0}, k_{1}; \vec{\alpha}, \beta; c_{-}, c_{+}; \{J_{t}\}_{t}, H; \vec{c}^{0}, \vec{c}^{1}},
\end{equation*}
which is transverse to zero, and compatible with the fiber-product single-valued multisections on the Kuranishi spaces \eqref{boundary stratum of the moduli space of broken stable Floer trajectories} at the boundary with the fiber-product multisections on \eqref{boundary stratum of the moduli space of broken stable Floer trajectories}.
\end{proposition}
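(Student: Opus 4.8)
The plan is to deduce the statement from the abstract theory of Kuranishi structures, in exactly the way Proposition \ref{compatible system of single-valued multisections} is deduced for moduli spaces of stable pearly tree maps. The two geometric inputs that make this work are already available: by Proposition \ref{Kuranishi structures on the moduli space of stable broken Floer trajectories} the moduli spaces $\bar{\mathcal{N}}_{k_{0}, k_{1}}(\vec{\alpha}, \beta; \{J_{t}\}_{t}, H; c_{-}, c_{+}; \vec{c}^{0}, \vec{c}^{1})$ carry oriented Kuranishi structures whose restriction to the codimension-one boundary \eqref{boundary stratum of the moduli space of broken stable Floer trajectories} agrees with the fiber-product Kuranishi structures on the factors; and since $M$ is exact, every Kuranishi chart has trivial isotropy, so single-valued perturbations are permitted. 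First I would fix $c_{-}$ and $c_{+}$; the action--energy identity (valid because $\iota_{0}, \iota_{1}$ are exact) bounds the energy $E$ of all stable broken Floer trajectories with these asymptotics, so only finitely many $\beta$ and finitely many tuples of intermediate generators occur. This sets up an induction on the pair $(E, k_{0}+k_{1})$ in lexicographic order, $k_{0}+k_{1}$ being the total number of marked points: every factor $\bar{\mathcal{N}}$ or $\bar{\mathcal{M}}$ in \eqref{boundary stratum of the moduli space of broken stable Floer trajectories} has either strictly smaller energy or strictly fewer marked points, so has already been equipped with a transverse single-valued multisection --- the pearly-tree factors by Proposition \ref{compatible system of single-valued multisections}, the other factors by the inductive hypothesis.

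The base case is a moduli space with empty codimension-one boundary, where a generic single-valued multisection is automatically transverse to zero. In the inductive step, the fiber-product compatibility of the Kuranishi structures forces the multisection on each codimension-one face to be the fiber product of the already-chosen multisections on the factors; one checks these prescriptions patch consistently along the codimension-two corners, obtaining a transverse single-valued multisection on all of $\partial \bar{\mathcal{N}}_{k_{0}, k_{1}}(\vec{\alpha}, \beta; \{J_{t}\}_{t}, H; c_{-}, c_{+}; \vec{c}^{0}, \vec{c}^{1})$, and then extends it inward by a collar-and-partition-of-unity argument, taking the extension generic enough for transversality in the interior. The fiber products here are taken over discrete sets of generators, hence are honest products, which eliminates the transversality problems at the diagonal responsible for the hierarchy of perturbations in \cite{FOOO2}; and because the multisections may depend on the fixed inputs $c_{-}, c_{+}$ and the intermediate generators, no compatibility of evaluation maps needs to be arranged. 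This is precisely why, as remarked after Proposition \ref{compatible system of single-valued multisections}, the construction here is considerably simpler than in the singular-chain framework of \cite{FOOO2}.

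The hard part will be the corner compatibility: a codimension-two stratum of $\bar{\mathcal{N}}_{k_{0}, k_{1}}$ can be written as a fiber product in more than one way --- as a face of each of two distinct codimension-one faces --- and one must check that the fiber-product multisection it inherits is independent of the description. This is bookkeeping rather than a new analytic difficulty; it reduces to the matching of the fiber-product Kuranishi structures along such strata, which is built into the inductive construction of the Kuranishi structures in section \ref{section: Kuranishi structure on moduli spaces of stable pearly tree maps} and its analogue for broken Floer trajectories. The remaining routine ingredients --- orientations, smoothness of pre-gluing and gluing, the exponential decay estimates, and the canonical gauge fixing when the underlying broken domain is unstable --- are identical to those already handled in the pearly-tree case and in \cite{FOOO2}, \cite{FOOO3}, so I would simply refer to them.
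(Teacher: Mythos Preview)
Your proposal is correct and follows exactly the approach the paper indicates: the paper does not give a separate proof of this proposition but simply states that ``the proofs are similar to the case of moduli spaces of stable pearly tree maps in the previous section,'' i.e.\ to Proposition~\ref{compatible system of single-valued multisections}, and your writeup spells out precisely that reduction (trivial isotropy from exactness, induction on energy and number of marked points, fiber products over discrete generators, extension from the boundary inward). If anything, your account is more detailed than what the paper provides.
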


\subsection{The curved $A_{\infty}$-bimodule associated to a pair of exact cylindrical Lagrangian immersions with transverse self-intersections}

	For a pair of exact cylindrical Lagrangian immersions as above, we shall construct a curved $A_{\infty}$-bimodule structure on the wrapped Floer cochain space $CW^{*}((L_{0}, \iota_{0}), (L_{1}, \iota_{1}); H)$. The main result is stated as follows. \par

\begin{proposition}\label{prop: the curved A-infinity bimodule for a pair}
	There is a natural curved $A_{\infty}$-bimodule structure on the wrapped Floer cochain space $CW^{*}((L_{0}, \iota_{0}), (L_{1}, \iota_{1}); H)$ over the curved $A_{\infty}$-algebras for $\iota_{i}: L_{i} \to M$, 
\begin{equation*}
(CW^{*}(L_{0}, \iota_{0}; H), CW^{*}(L_{1}, \iota_{1}; H)).
\end{equation*}
The structure maps $n^{k, l}$ are defined by appropriate counts of broken stable Floer trajectories. \par
	Suppose both $\iota_{i}: L_{i} \to M$ are unobstructed with choices of bounding cochains $b_{i} \in CW^{*}(L_{i}, \iota_{i}; H)$. Then the $(b_{0}, b_{1})$-deformation $n^{k, l; b_{0}, b_{1}}$ defines a non-curved $A_{\infty}$-bimodule over the deformed $A_{\infty}$-algebras
\begin{equation*}
((CW^{*}((L_{0}, \iota_{0}; H), m^{k; b_{0}}), (CW^{*}((L_{1}, \iota_{1}; H), m^{k; b_{1}})).
\end{equation*}
\end{proposition}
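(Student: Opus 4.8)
The plan is to follow the same three-part strategy used for the curved $A_\infty$-algebra of a single Lagrangian immersion, now applied to the bimodule: first construct the structure maps $n^{k,l}$ from counts of rigid elements in the moduli spaces of stable broken Floer trajectories, then verify the curved $A_\infty$-bimodule equations by analyzing codimension-one boundary strata, and finally check that the $(b_0,b_1)$-deformation is non-curved. For the first part, one fixes a coherent system of single-valued multisections on all the moduli spaces $\bar{\mathcal{N}}_{k_0,k_1}(\vec{\alpha},\beta;\{J_t\}_t,H;c_-,c_+;\vec{c}^0,\vec{c}^1)$ as in Proposition~\ref{existence of single-valued multisections on the moduli space of stable broken Floer trajectories}, restricts attention to the rigid (virtual dimension zero) components, and sets
\begin{equation*}
n^{k,l}(b_k,\cdots,b_1,p,a_l,\cdots,a_1)=\sum_{\substack{\vec{\alpha},\beta,q\\ \text{rigid}}} \#\, \bar{\mathcal{N}}^{0}_{l,k}(\vec{\alpha},\beta;\cdots;q,p;a_l,\cdots,a_1,b_k,\cdots,b_1)\, q,
\end{equation*}
where the count is the signed number of zeros of the chosen multisection. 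Finiteness of this sum for fixed inputs follows, exactly as in the algebra case, from the exactness of $\iota_0,\iota_1$ and $M$: the action-energy identity bounds the relevant homology classes $\beta$, and Lemma~\ref{finiteness of pseudoholomorphic disks passing through a self-intersection point} controls the disk bubbles that can appear on either boundary component.

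The verification of the curved $A_\infty$-bimodule equations is the technical heart and, I expect, the main obstacle. One considers moduli spaces $\bar{\mathcal{N}}_{k_0,k_1}$ of virtual dimension one; by Proposition~\ref{Kuranishi structures on the moduli space of stable broken Floer trajectories} their codimension-one boundary decomposes exactly as in \eqref{boundary stratum of the moduli space of broken stable Floer trajectories} into three families of fiber products: (i) a Floer-trajectory moduli space with a pearly-tree disk bubbled off the $\iota_0$-boundary, (ii) the analogous configuration on the $\iota_1$-boundary, and (iii) a genuine breaking into two Floer trajectories. Because the multisections are compatible with fiber-product multisections at the boundary, counting the boundary points of the one-dimensional perturbed moduli space and invoking $\partial\partial=\varnothing$ yields precisely the curved $A_\infty$-bimodule relations: the first two families produce the terms involving the algebra operations $m^{k;0}$ of $CW^*(L_0,\iota_0;H)$ and $CW^*(L_1,\iota_1;H)$ (including the curvature terms $m^0$), and the third produces the terms quadratic in $n^{\bullet,\bullet}$. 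The subtlety here, exactly as in the single-immersion case, is that some boundary strata arise from disk bubbling rather than from Floer breaking and hence do not a priori satisfy Floer's equation on the bubbled component; this is why $H$ was chosen to vanish in the interior, so that inhomogeneous disks there are automatically honest $J$-holomorphic and fit into the pearly-tree moduli spaces of Definition~\ref{stable pearly tree map}. One also must track orientations carefully through the fiber products to get the signs in the $A_\infty$-bimodule equations right; I would cite \cite{FOOO2} for the orientation bookkeeping rather than redo it.

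Granting the curved bimodule equations, the final assertion is a purely algebraic consequence. Given bounding cochains $b_i$ satisfying $\sum_k m^k(b_i,\cdots,b_i)=0$, define $n^{k,l;b_0,b_1}$ by inserting $b_1$'s among the left inputs, $b_0$'s among the right inputs, in the usual way:
\begin{equation*}
n^{k,l;b_0,b_1}(\vec{b},p,\vec{a})=\sum_{i,j\ge 0} n^{k+i,l+j}(\underbrace{b_1,\cdots}_{},\vec{b},\underbrace{\cdots,b_1}_{},p,\underbrace{b_0,\cdots}_{},\vec{a},\underbrace{\cdots,b_0}_{}).
\end{equation*}
Substituting into the curved $A_\infty$-bimodule equations, the curvature contributions collect into $\sum_k m^k(b_1,\cdots,b_1)$ and $\sum_k m^k(b_0,\cdots,b_0)$, which vanish by the Maurer--Cartan equations; nilpotence of $b_0,b_1$ (automatic here since the wrapped Floer cochain spaces have finitely many generators lying in $F^0$ and the $A_\infty$-operations strictly increase a bounded-above filtration, as in Lemma~\ref{cyclic element and bounding cochain}) ensures all the sums are finite. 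Hence $n^{\bullet,\bullet;b_0,b_1}$ satisfies the ordinary (uncurved) $A_\infty$-bimodule equations over the deformed algebras $(CW^*(L_0,\iota_0;H),m^{k;b_0})$ and $(CW^*(L_1,\iota_1;H),m^{k;b_1})$, which is the claim.
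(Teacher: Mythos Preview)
Your proposal is correct and follows essentially the same approach as the paper. The paper's proof is considerably terser: it simply defines the counts $a_{k,l;\vec{\alpha},\beta;\cdots}$ as zero-sets of the chosen multisections, declares $n^{k,l}$ by the obvious formula, invokes the finiteness argument from the algebra case verbatim, and states that the curved $A_\infty$-bimodule equations follow from Proposition~\ref{existence of single-valued multisections on the moduli space of stable broken Floer trajectories} (compatibility of multisections at the boundary). Your elaboration of the boundary-strata analysis and the explicit algebraic verification of the $(b_0,b_1)$-deformation are not in the paper's proof at all---the latter is treated as immediate from the Maurer--Cartan equations and not argued further---so you have in fact written out more than the paper does.
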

\begin{proof}
	Consider moduli spaces of stable broken Floer trajectories which are of virtual dimension zero. Then the virtual fundamental chains associated to the chosen single-valued multisections $s_{k_{0}, k_{1}; \vec{\alpha}, \beta; c_{-}, c_{+}; \{J_{t}\}_{t}, H; \vec{c}^{0}, \vec{c}^{1}}$ gives rise to an integer number
\begin{equation}
a_{k, l; \vec{\alpha}, \beta; c_{-}, c_{+}; \{J_{t}\}_{t}, H; \vec{c}^{0}, \vec{c}^{1}} = (s_{k, l; \vec{\alpha}, \beta; c_{-}, c_{+}; \{J_{t}\}_{t}, H; \vec{c}^{0}, \vec{c}^{1}})^{-1}(0) \in \mathbb{Z}.
\end{equation}
We define multilinear maps
\begin{equation}
n^{k, l}: CW^{*}(L_{0}, \iota_{0})^{\otimes k} \otimes CW^{*}((L_{0}, \iota_{0}), (L_{1}, \iota_{1})) \otimes CW^{*}(L_{1}, \iota_{1})^{\otimes l} \to CW^{*}((L_{0}, \iota_{0}), (L_{1}, \iota_{1})),
\end{equation}
for $k = k_{0}, l = k_{1} \ge 0$, by the formula:
\begin{equation}
\begin{split}
&n^{k, l}(c^{0}_{k}, \cdots, c^{0}_{1}, c_{+}, c^{1}_{1}, \cdots, c^{1}_{l})\\
&= \sum_{\substack{\alpha, \beta, c_{-}\\ \dim \bar{\mathcal{N}}_{k_{0}, k_{1}}(\vec{\alpha}, \beta; c_{-}, c_{+}; \{J_{t}\}_{t}, H; \vec{c}^{0}, \vec{c}^{1}) = 0}}
a_{k, l; \vec{\alpha}, \beta; c_{-}, c_{+}; \{J_{t}\}_{t}, H; \vec{c}^{0}, \vec{c}^{1}} c_{-}.
\end{split}
\end{equation}
Again, this is a finite sum by the same argument as that for \eqref{A-infinity operations}, so that the multilinear map $n^{k, l}$ is well-defined. \par
	These maps satisfy the $A_{\infty}$-equations for a curved $A_{\infty}$-bimodule, by Proposition \eqref{existence of single-valued multisections on the moduli space of stable broken Floer trajectories}. \par
\end{proof}

	In general, $n^{0, 0}$ does not square to zero because of the contribution of $m^{0}$ from each $(L_{i}, \iota_{i})$. To obtain a differential, we need to assume that both $\iota_{0}: L_{0} \to M$ and $\iota_{1}: L_{1} \to M$ are unobstructed, in which case we can deform the curved $A_{\infty}$-bimodule structure by the chosen bounding cochains for $\iota_{0}: L_{0} \to M$ and $\iota_{1}: L_{1} \to M$ respectively:
\begin{equation}
\begin{split}
&n^{k, l; b_{0}, b_{1}}(c^{0}_{k}, \cdots, c^{0}_{1}, c_{+}, c^{1}_{1}, \cdots, c^{1}_{l})\\
&= \sum_{\substack{i \ge 0, j \ge 0\\ i_{0} + \cdots + i_{k} = i\\ j_{0} + \cdots + j_{l} = j}} n^{k+i, l+j}(\underbrace{b_{0}, \cdots, b_{0}}_{\text{$i_{k}$ times}}, c^{0}_{k}, \underbrace{b_{0}, \cdots, b_{0}}_{\text{$i_{k-1}$ times}}, \cdots, c^{0}_{1}, \underbrace{b_{0}, \cdots, b_{0}}_{\text{$i_{0}$ times}},\\
&c_{+}, \underbrace{b_{1}, \cdots, b_{1}}_{\text{$j_{0}$ times}}, c^{1}_{1}, \underbrace{b_{1}, \cdots, b_{1}}_{\text{$j_{1}$ times}}, \cdots, c^{1}_{l}, \underbrace{b_{1}, \cdots, b_{1}}_{\text{$j_{l}$ times}}).
\end{split}
\end{equation}
Because of the Maurer-Cartan equations that the bounding cochains satisfy, $n^{0, 0; b_{0}, b_{1}}$ squares to zero, and thus defines a differential on $CW^{*}((L_{0}, \iota_{0}), (L_{1}, \iota_{1}); H)$. We call the resulting cohomology group the wrapped Floer cohomology group of the pair of the exact cylindrical Lagrangian immersions $\iota_{0}: L_{0} \to M$ and $\iota_{1}: L_{1} \to M$, with respect to the bounding cochains $b_{0}$ and $b_{1}$, and denote it by $HW^{*}((L_{0}, \iota_{0}, b_{0}), (L_{1}, \iota_{1}, b_{1}); H)$. \par

\section{Clean intersections}\label{section: immersed wrapped Floer theory in the case of clean intersections}

\subsection{Clean self-intersections}

	This section extends wrapped Floer theory to a wider class of immersed Lagrangian submanifolds: exact cylindrical Lagrangian immersions with clean self-intersections. The construction of the moduli spaces follows exactly the same pattern, but there are additional data to be taken into account, which we introduce below. \par

\begin{definition}
	Let $\iota: L \to M$ be a Lagrangian immersion. We say that it has clean self-intersections, if the following conditions are satisfied:
\begin{enumerate} [label=(\roman*)]

\item The fiber product $L \times_{\iota} L = \{(p, q) \in L \times L| \iota(p) = \iota(q)\}$ is a smooth submanifold of $L \times L$;

\item At each point $(p, q) \in L \times_{\iota} L$, we have that
\begin{equation}
T_{(p, q)}(L \times_{\iota} L) = \{(V, W) \in T_{p}L \times T_{q}L| d_{p}\iota(V) = d_{q}\iota(V)\}
\end{equation}

\end{enumerate}
\end{definition}

	For such $\iota: L \to M$, we can decompose the fiber product $L \times_{\iota} L$ into connected components:
\begin{equation}
L \times_{\iota} L = \coprod_{a \in A} L_{a} = \Delta_{L} \cup \coprod_{a \in A \setminus \{0\}} L_{a},
\end{equation}
indexed by an indexing set $A$ containing a special element $0$. Here $\Delta_{L}$ is the diagonal component, which is identified with $L$ and labeled by $0 \in A$, and $L_{a}$'s are some other components which are smooth manifolds (possibly of different dimensions). These $L_{a}$'s are called switching components. \par
	In order to make wrapped Floer theory well behaved for such Lagrangian immersions, we must impose further conditions: being exact and cylindrical. Exactness is the same as before: there exists $f: L \to \mathbb{R}$ such that $df = \iota^{*}\lambda_{M}$, while being cylindrical is slightly different. \par

\begin{definition}
	Let $\iota: L \to M$ be an exact Lagrangian immersion with clean self-intersections. We say that it is exact, if there exists $f: L \to \mathbb{R}$ such that $df = \iota^{*}\lambda_{M}$. \par
	We say that $\iota: L \to M$ is cylindrical, if there exists an embedded closed Legendrian submanifold $l$ of $\partial M$ such that the geometric image $\iota(L)$ satisfies
\begin{equation}
\iota(L) \cap (\partial M \times [1, +\infty)) = l \times [1, +\infty),
\end{equation}
and moreover, the preimage $\iota^{-1}(l \times [1, +\infty)$ is a union of copies of $l \times [1, +\infty)$, so that the restriction of $\iota$ is a trivial discrete covering map.
\end{definition}

	For an exact cylindrical Lagrangian immersion $\iota: L \to M$ with clean self-intersections, we have a finer description of the decomposition of the fiber product $L \times_{\iota} L$ into its connected components. That is, we shall specify those connected components which are mapped to the cylindrical end $l \times [1, +\infty)$ of the image. We denote
\begin{equation}
L \times_{\iota} L = \coprod L_{a} \cup \coprod L_{b},
\end{equation}
where the $L_{b}$'s are the connected components part of which are mapped to the cylindrical end $l \times [1, +\infty)$, including the diagonal component $\Delta_{L} \cong L$, and the $L_{a}$'s are the ones which are not, and correspond to those self-intersections only contained in the interior part of $M$. \par
	Here is a more refined description of the components $L_{b}$. Suppose that $\iota^{-1}(l \times [1, +\infty)$ is a union of copies of $l \times [1, +\infty)$, labeled by $i \in I$ for some index set $I$, which can be infinite. Then the labels $b$ correspond to pairs $(i, j)$ for $i, j \in I$, distinguishing the copies $L_{b}$ in the fiber product. \par
	For wrapped Floer theory to be well-defined and to give desired $A_{\infty}$-structures, we shall from now on assume that the immersion $\iota: L \to M$ be proper. In particular, the covering of the cylindrical end $l \times [1, +\infty)$ has at most finite sheets, say $d$-fold. \par

\subsection{Local systems}
	Similar to the setup of Morse homology of a Morse-Bott function, we need to take into account certain local systems on the components $L_{a}$ and $L_{b}$ of the fiber product $L \times_{\iota} L$, in order to obtain canonical orientations of the moduli spaces of pseudoholomorphic disks considered in wrapped Floer theory. Most of the definitions follow from Chapter 8 of \cite{FOOO2}, so we just list the essential definitions that we need to fix notations, provide part of the proofs while leaving the full details to the reference. \par
	Let $L_{a}$ be any connected component different from the diagonal component $\Delta_{L} \cong L$. Since $\iota$ is a Lagrangian immersion, for each $x \in L_{a}$ we get two Lagrangian subspaces
\begin{align}\label{the left and right tangent subspaces at a self-intersection}
\lambda_{a, x, l} = d\iota_{x}(T_{pr_{1}(x)}L),\\
\lambda_{a, x, r} = d\iota_{x}(T_{pr_{2}(x)}L)
\end{align}
of $T_{\iota(x)}M$, where $pr_{1}, pr_{2}: L_{a} \subset L \times_{\iota} L \to L$ are induced by the projections to the two factors. \par
	Let $\mathcal{P}_{a, x}$ be the set of all smooth maps $\lambda_{a, x}: [0, 1] \to \mathcal{LAG}(T_{\iota(x)}M)$, such that $\lambda_{a, x}(0) = \lambda_{a, x, l}, \lambda_{a, x}(1) = \lambda_{a, x, r}$. Associated to each $\lambda_{a, x} \in \mathcal{P}_{a, x}$, there are two Cauchy-Riemann operators
\begin{equation}
\bar{\partial}_{\lambda_{a, x}}^{-}: L^{p}_{1, \delta}(Z_{-}; T_{\iota(x)}M, \lambda_{a, x}) \to L^{p}_{\delta}(Z_{-}; T_{\iota(x)}M)
\end{equation}
and
\begin{equation}
\bar{\partial}_{\lambda_{a, x}}^{+}: L^{p}_{1, \delta}(Z_{+}; T_{\iota(x)}M, \lambda_{a, x}) \to L^{p}_{\delta}(Z_{+}; T_{\iota(x)}M)
\end{equation}
on the weighted Sobolev spaces. Here
\begin{equation}
Z_{-} = \{z \in \mathbb{C}| Re(z) \le 0, |z| \le 1\} \cup \{z \in \mathbb{C}| Re(z) \ge 0, -1 \le Im(z) \le 1\},
\end{equation}
and
\begin{equation}
Z_{+} = \{z \in \mathbb{C}| Re(z) \ge 0, |z| \le 1\} \cup \{z \in \mathbb{C}| Re(z) \le 0, -1 \le Im(z) \le 1\}.
\end{equation}
The weighted Sobolev space $L^{p}_{1, \delta}(Z_{-}; T_{\iota(x)}M, \lambda_{a, x})$ is the $L^{p}_{1, \delta}$-completion of the space of smooth maps $u: Z_{-} \to T_{\iota(x)}M$ satisfying the following conditions:
\begin{enumerate}[label=(\roman*)]

\item $u(s+\sqrt{-1}) \in \lambda_{a, x, l}$, for all $s \ge 0$;

\item $u(s-\sqrt{-1}) \in \lambda_{a, x, r}$, for all $s \ge 0$;

\item $u(e^{\sqrt{-1}(\frac{\pi}{2}+\pi t)}) \in \lambda_{a, x}(t)$, for $t \in [0, 1]$;

\item $\int_{Z_{-}} e^{\delta |Re(z)|} ||\nabla u||^{p} dzd\bar{z} < \infty$.

\end{enumerate}
The other weighted Sobolev spaces are defined in similar fashion. \par
	These operators $\bar{\partial}_{\lambda_{a, x}}^{-}$ and $\bar{\partial}_{\lambda_{a, x}}^{+}$ are Fredholm. Consider their determinant lines:
\begin{equation}\label{determinant lines of the orientation operators}
\Theta_{\lambda_{a, x}}^{\pm} = \det(\bar{\partial}_{\lambda_{a, x}}^{\pm}).
\end{equation}
We wish to move $x$ as well as $\lambda_{a, x}$, so that $\bar{\partial}_{\lambda_{a, x}}^{\pm}$ form a family index bundle $Ind(D^{\pm})$, and the associated determinant line bundle $\underline{\det} Ind(D^{\pm})$ has fiber being \eqref{determinant lines of the orientation operators}. To make sense of this discussion, we shall first define the space over which the family index bundle is defined. \par

\begin{definition}
	Define a fiber bundle $\mathcal{I}_{a, x} \to \mathcal{P}_{a, x}$ in the following five steps:
\begin{enumerate}[label=(\roman*)]

\item First define $(I_{a, x})_{\lambda_{a, x}}$ to be the space of all smooth maps $\sigma_{a, x}: [0, 1] \times \mathbb{R}^{n} \to TM$ such that for each $t \in [0, 1]$, the map $\sigma_{a, x}(t, \cdot)$ is a linear isometry between $\mathbb{R}^{n}$ and $\lambda_{a, x}(t)$. That is, $(I_{a, x})_{\lambda_{a, x}}$ is the space of trivializations along the path $\lambda_{a, x}$ of Lagrangian subspaces in $T_{\iota(x)}M$.

\item Let $P_{SO}(L)$ be the principal $SO(n)$-bundle associated to the tangent bundle of $L$. For the given points $p_{\pm}$, let $P_{Spin}(L)_{p_{\pm}}$ be the double cover of the fiber $P_{SO}(L)_{p_{\pm}}$ of $P_{SO}(L)$ at $p_{\pm}$. If $x = (p_{-}, p_{+}) \in L_{a}$, we set
\begin{equation}\label{the spin fibers}
P_{x} = (P_{Spin}(L)_{p_{-}} \times P_{Spin}(L)_{p_{+}})/\{\pm 1\}.
\end{equation}

\item Define a map $(I_{a, x})_{\lambda_{a, x}} \to P_{SO}(L)_{p_{-}} \times P_{SO}(L)_{p_{+}}$ as follows. For each $\sigma_{a, x}$, consider its restriction to the endpoints $t = 0, 1$. By definition, $\sigma(0, \cdot)$ is an isometry between $\mathbb{R}^{n}$ and $d\iota_{x}(T_{p_{-}}L)$, hence is canonically identified as an element in the fiber $P_{SO}(L)_{p_{-}}$. A parallel argument applies to $t = 1$.

\item Then define $(\mathcal{I}_{a, x})_{\lambda_{a, x}}$ to be the fiber product
\begin{equation}
(\mathcal{I}_{a, x})_{\lambda_{a, x}} = (I_{a, x})_{\lambda_{a, x}} \times_{P_{SO}(L)_{p_{-}} \times P_{SO}(L)_{p_{+}}} P_{x}.
\end{equation}

\item Finally, we consider the union over all paths $\lambda_{a, x}$:
\begin{equation}
\mathcal{I}_{a, x} = \cup_{\lambda_{a, x} \in \mathcal{P}_{a, x}} (\mathcal{I}_{a, x})_{\lambda_{a, x}}.
\end{equation}
This is the desired fiber bundle $\mathcal{I}_{a, x} \to \mathcal{P}_{a, x}$.

\end{enumerate}
\end{definition}

\begin{lemma}
	Suppose $L$ is spin with a chosen spin structure. Then the union $\cup_{x \in L_{a}} \mathcal{I}_{a, x}$ restricts to a fiber bundle $\mathcal{I}_{a}$ over the $3$-skeleton of $L_{a}$.
\end{lemma}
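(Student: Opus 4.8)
The plan is to construct the fiber bundle $\mathcal{I}_a$ by obstruction theory, showing that the structure group of the bundle $\cup_{x \in L_a}\mathcal{I}_{a,x}$ reduces appropriately and that the obstruction to a coherent choice of trivialization vanishes over the $3$-skeleton. First I would note that the fiber $(\mathcal{I}_{a,x})_{\lambda_{a,x}}$ is constructed in five steps, the last three of which involve the principal $Spin(n)$-data $P_x$ defined in \eqref{the spin fibers}; the point is that over a fixed $x$, the space $(I_{a,x})_{\lambda_{a,x}}$ of trivializations along a path of Lagrangians is contractible (being a torsor over the path space of $SO(n)$ with fixed endpoints, which is connected, together with the contractibility of the space $\mathcal{P}_{a,x}$ of such paths of Lagrangian subspaces in a fixed tangent space $T_{\iota(x)}M \cong \mathbb{R}^{2n}$—this path space is the space of paths in $\mathcal{LAG}(n)$ with fixed endpoints, which is connected but not simply connected, so the non-triviality is detected exactly by $\pi_1$). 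So the fiber $\mathcal{I}_{a,x}$ over a single point $x$ is a $\mathbb{Z}/2$-torsor up to homotopy, recording the spin ambiguity.

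The key steps, in order, are: (1) observe that as $x$ ranges over $L_a$, the data $\lambda_{a,x,l}, \lambda_{a,x,r}$ of \eqref{the left and right tangent subspaces at a self-intersection} vary smoothly, since $pr_1, pr_2 : L_a \to L$ are smooth and $d\iota$ is smooth; hence $\cup_{x}\mathcal{P}_{a,x}$ forms a fiber bundle over $L_a$ with connected fiber; (2) the trivialization spaces $(I_{a,x})_{\lambda_{a,x}}$ assemble into a principal bundle whose structure group, after the fiber product with $P_x$ in step (iv) of the definition, is governed by the two principal $SO(n)$-bundles $pr_1^* P_{SO}(L)$ and $pr_2^* P_{SO}(L)$ over $L_a$ together with their chosen spin structures $pr_1^* v$, $pr_2^* v$; (3) the obstruction to globally assembling $\cup_x \mathcal{I}_{a,x}$ into a fiber bundle with the required fiber is a class in $H^2(L_a; \pi_1(\text{fiber})) = H^2(L_a; \mathbb{Z}/2)$, arising from the second Stiefel–Whitney classes $pr_1^* w_2(TL) + pr_2^* w_2(TL)$; (4) since $L$ is spin, $w_2(TL) = 0$, so both pullbacks vanish and the obstruction class is zero, giving a section of the classifying obstruction over the $3$-skeleton (one only needs vanishing through dimension $3$ because the relevant determinant-line structure—the eventual use of $\mathcal{I}_a$ to orient index bundles—only interacts with cells up to dimension $3$, matching the appearance of capping half-disks and the Maslov/spin bookkeeping); (5) conclude that the restriction of $\cup_x \mathcal{I}_{a,x}$ to the $3$-skeleton of $L_a$ is a genuine fiber bundle $\mathcal{I}_a$. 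I would cite Chapter~8 of \cite{FOOO2} for the precise form of this obstruction-theoretic argument, since the construction of $\mathcal{I}_{a,x}$ is copied from there.

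The main obstacle I expect is bookkeeping the interaction between the two spin structures on the two legs $pr_1^*(TL), pr_2^*(TL)$ over $L_a$ and the quotient by $\{\pm 1\}$ in \eqref{the spin fibers}: one must check that the $\{\pm 1\}$-quotient is exactly what makes the assembled object a well-defined $\mathbb{Z}/2$-bundle (rather than a $\mathbb{Z}/2 \times \mathbb{Z}/2$-bundle) and that the resulting characteristic class is $pr_1^* w_2 + pr_2^* w_2$ rather than something finer. Because $L$ carries a single chosen spin structure, the pullbacks $pr_1^* v$ and $pr_2^* v$ are both determined, and their "difference" is measured by the pullback under $(pr_1, pr_2): L_a \to L \times L$ of a class which restricts trivially once each factor's $w_2$ vanishes; verifying this compatibility carefully, together with the claim that the relevant obstruction genuinely lives in degree $2$ (so that a $3$-skeleton suffices, with the degree-$3$ margin absorbing the choice of section), is where the real work lies. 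The remaining verifications—smoothness of the bundle projection, local triviality, and the independence of auxiliary choices—are routine and follow the template of \cite{FOOO2}.
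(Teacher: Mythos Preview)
Your proposal overcomplicates what is, in the paper, a two-line observation, and it contains a factual error about the fiber. You claim that $\mathcal{I}_{a,x}$ is a $\mathbb{Z}/2$-torsor up to homotopy and that $\mathcal{P}_{a,x}$ is contractible. Neither is true: $\mathcal{P}_{a,x}$ is the space of paths in $\mathcal{LAG}(T_{\iota(x)}M)$ between two fixed Lagrangian subspaces, so it is a torsor over $\Omega\mathcal{LAG}(n)$ and has $\pi_0 \cong \pi_1(\mathcal{LAG}(n)) \cong \mathbb{Z}$; the paper establishes exactly this in the proof of the very next lemma, where it notes that $I_{a,x}$ is homotopy equivalent to the free loop space of $\mathcal{LAG}(n)$ and hence has $\mathbb{Z}$-many components. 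The fiber $\mathcal{I}_{a,x}$ is therefore an infinite-dimensional space with nontrivial $\pi_0$, not a $\mathbb{Z}/2$-torsor, and your obstruction computation in $H^2(L_a;\pi_1(\text{fiber}))$ does not parse as written.

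The paper's actual argument is far more direct and bypasses obstruction theory entirely. Look at the five-step construction of $\mathcal{I}_{a,x}$: steps (i), (iii), (iv), (v) are manifestly natural in $x$ and automatically assemble into fiber bundles over $L_a$ once the ingredients do. The only step that is not a priori global is step (ii), the definition of $P_x = (P_{Spin}(L)_{p_-} \times P_{Spin}(L)_{p_+})/\{\pm 1\}$, because it requires choosing spin lifts of the frame bundle at the two points $p_\pm$. The whole content of the lemma is that once $L$ carries a chosen spin structure, the bundle $P_{Spin}(L)$ exists globally over $L$ (at least over its $3$-skeleton, following the FOOO convention), so pulling back via $pr_1$ and $pr_2$ to $L_a$ gives a globally defined $P$ over $(L_a)_{[3]}$, and the fiber product in step (iv) then globalizes immediately. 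That is the entire proof. Your instinct that the relevant obstruction is $pr_1^* w_2(TL) + pr_2^* w_2(TL)$ is morally aligned with this, but the paper never needs to name or compute an obstruction class: the spin structure directly furnishes the missing global object.
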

\begin{proof}
	Since $L$ is spin, there is a globally defined fiberwise double cover $P_{Spin}(L)$ of $P_{SO}(L)$ over the $3$-skeleton $(L_{a})_{[3]}$ of $L_{a}$, determined by the spin structure. Thus when defining $\mathcal{I}_{a, x}$, the definition of $P_{x}$ as in \eqref{the spin fibers} can be made globally over $(L_{a})_{[3]}$.
\end{proof}

	From now on we shall always assume $L$ to be spin, with a chosen spin structure $v$. Returning to the concern about family index bundles, by moving $x$ and $\lambda_{a, x}$, the operators $\bar{\partial}_{\lambda_{a, x}}^{\pm}$ form a family index bundle $Ind(D^{\pm})$ over $\mathcal{I}_{a}$, whose determinant line bundle $\underline{\det} Ind(D^{\pm})$ is a real line bundle with fiber $\Theta_{\lambda_{a, x}}^{\pm}$. \par
	First observe that: \par

\begin{lemma}\label{triviality of determinant line bundle on the fiber}
	On each fiber $\mathcal{I}_{a, x}$ of the fiber bundle $\mathcal{I}_{a} \to (L_{a})_{[3]}$, the pullback of the determinant line bundle $\underline{\det} Ind(D^{\pm})$ is trivial.
\end{lemma}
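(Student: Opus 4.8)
The plan is to show that the restriction of the family of Cauchy–Riemann operators $\bar\partial^{\pm}_{\lambda_{a,x}}$ to a single fiber $\mathcal{I}_{a,x}$ — i.e.\ with $x\in L_a$ held fixed and only the path $\lambda_{a,x}\in\mathcal{P}_{a,x}$ of Lagrangian subspaces of $T_{\iota(x)}M$ (together with its lift to $\mathcal{I}_{a,x}$) allowed to vary — has trivial determinant line bundle. Since everything happens inside the single vector space $V := T_{\iota(x)}M$ with its fixed complex structure and compatible metric, this is a purely linear-algebra/elliptic statement: the index bundle is built from $\bar\partial$-operators on the fixed domains $Z_{\mp}$ twisted by a varying boundary Lagrangian path with fixed endpoints $\lambda_{a,x,l},\lambda_{a,x,r}$. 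First I would recall (from \cite{FOOO2}, Chapter 8, or \cite{Seidel}) that the determinant line of $\bar\partial^{\pm}_{\lambda_{a,x}}$ depends only on the homotopy class of $\lambda_{a,x}$ rel endpoints, and that $\mathcal{P}_{a,x}$ is the space of such paths; its connected components are a torsor over $\pi_1(\mathcal{LAG}(V),\text{basepoint})\cong\mathbb{Z}$ (the Maslov index), and each component is simply connected up through dimension $3$ since $\mathcal{LAG}(V)$ has the homotopy type of $U(n)/O(n)$. So it suffices to trivialize the determinant line bundle over each path-component of the relevant piece of $\mathcal{I}_{a,x}$.

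The key steps, in order, would be: (1) Reduce to a statement on $\mathcal{P}_{a,x}$ by noting that $\mathcal{I}_{a,x}\to\mathcal{P}_{a,x}$ has fibers $(\mathcal{I}_{a,x})_{\lambda_{a,x}}$ which are torsors over (a $\mathbb{Z}/2$-quotient of) $\mathrm{Spin}$-frame data and are themselves connected — indeed the frame bundle factor is a $\mathrm{SO}(n)$-torsor, its $\mathrm{Spin}$ double cover is connected — so $\pi_0$ and $\pi_1$ of $\mathcal{I}_{a,x}$ over the $3$-skeleton map isomorphically (up to the relevant range) onto those of $\mathcal{P}_{a,x}$; hence triviality of a real line bundle, which is detected by $w_1$, i.e.\ by a homomorphism $\pi_1\to\mathbb{Z}/2$, can be checked on $\mathcal{P}_{a,x}$. (2) On a fixed path-component of $\mathcal{P}_{a,x}$, use the standard linear-gluing/homotopy invariance: deforming $\lambda_{a,x}$ within its homotopy class rel endpoints produces a canonically trivialized family of Fredholm operators, so $\underline{\det}\,Ind(D^{\pm})$ is pulled back from a point and is trivial. (3) The only potential monodromy comes from loops in $\mathcal{P}_{a,x}$ that are non-contractible in $\mathcal{LAG}(V)$ after closing up — but the fiber $\mathcal{I}_{a,x}$ sits over a \emph{fixed} point $x$, so the endpoints $\lambda_{a,x,l},\lambda_{a,x,r}$ are constant and the relevant loops lie in $\Omega(\mathcal{LAG}(V);\lambda_l,\lambda_r)$; a generator of $\pi_1$ of this path space corresponds to appending a loop in $\mathcal{LAG}(V)$, and the resulting change of determinant line is computed by the well-known index-difference formula, which in this "same operator, extra full loop" situation gives an \emph{even} Maslov contribution under the $\mathrm{Spin}$-lift — this is exactly where the spin structure $v$ and the $\{\pm1\}$-quotient in \eqref{the spin fibers} enter, killing the $w_1$. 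I would invoke \cite{FOOO2} Chapter 8 for the precise computation rather than redo it.

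The main obstacle, and the step requiring care, is (3): showing that the orientation operator's determinant line has \emph{no} $\mathbb{Z}/2$-monodromy around the loop generating $\pi_1$ of the path space of Lagrangian paths with fixed endpoints. Naively $\bar\partial$-operators on a disk-like domain with a Lagrangian boundary loop of Maslov number $\mu$ contribute a sign twist governed by the parity of $\mu$, so one must check that in the combined $Z_-$-and-$Z_+$ configuration (or, in the packaging of $\mathcal{I}_a$, once the $\mathrm{Spin}$-frame data are included) the relevant parity is always even. This is precisely the role of passing to the $\mathrm{Spin}$ double cover and of the diagonal-quotient $P_x=(P_{\mathrm{Spin}}(L)_{p_-}\times P_{\mathrm{Spin}}(L)_{p_+})/\{\pm1\}$: the loop in $\mathcal{P}_{a,x}$ lifts to a \emph{closed} loop in $\mathcal{I}_{a,x}$ only after a $\mathbb{Z}/2$-fold increase, and along that doubled loop the determinant line returns to itself with a $+$ sign. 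I would phrase this as: the spin structure provides a consistent trivialization of the ambiguity, exactly as in \cite{FOOO2}, and cite that reference for the detailed sign bookkeeping, since reproducing it here would be lengthy and adds nothing new.
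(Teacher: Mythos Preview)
Your approach has the right spirit --- the spin data should kill the potential $\mathbb{Z}/2$ monodromy --- but the execution contains real errors, and the paper's route is both different and considerably simpler.

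First, the errors. In step (2) you claim each path-component of $\mathcal{P}_{a,x}$ is simply connected; this is false. The path space $\mathcal{P}_{a,x} \simeq \Omega(\mathcal{LAG}(V); \lambda_l, \lambda_r)$ has $\pi_1 \cong \pi_2(\mathcal{LAG}(V)) \cong \mathbb{Z}/2$ (for $n \ge 3$), so there \emph{is} genuine potential monodromy on each component, and step (2) does not dispose of it. In step (3) you describe the generator of $\pi_1$ of the path space as ``appending a loop in $\mathcal{LAG}(V)$'', but that operation changes the Maslov index and hence the path-component, i.e.\ it is a $\pi_0$ phenomenon; the actual generator of $\pi_1(\mathcal{P}_{a,x})$ comes from $\pi_2(\mathcal{LAG}(V))$, and your parity-of-Maslov discussion does not address it. So as written the argument has a gap precisely at the point you flag as the main obstacle.

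The paper avoids all of this homotopy bookkeeping via a clean gluing trick. Fix a single reference point $[\lambda_{a,x}, \sigma_{a,x}, s_1, s_2] \in \mathcal{I}_{a,x}$ and regard $\bar\partial^{+}_{\lambda_{a,x}}$ on $Z_+$ as a \emph{constant} family over $\mathcal{I}_{a,x}$. Glue this constant family to the varying family $D'^{-} = \{\bar\partial^{-}_{\lambda'_{a,x}}\}$ on $Z_-$: the result is a family of Dolbeault operators on the closed disk $D^2$ with Lagrangian boundary loop given by concatenating $\lambda_{a,x}$ and $\lambda'_{a,x}$. The $\mathcal{I}_{a,x}$-data equip this family of boundary loops with a consistent spin structure, and the standard fact (as in \cite{FOOO2} or \cite{Seidel}) then says the determinant line bundle of the disk family is trivial. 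Since under gluing the determinant of the glued family is canonically the tensor product of $\underline{\det}\,Ind(D'^{-})$ with the fixed line $\det(\bar\partial^{+}_{\lambda_{a,x}})$, triviality of the former follows. This is a two-line argument once you know the disk statement, and it makes the role of the spin lift transparent without ever computing $\pi_1$.
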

\begin{proof}
	Fix a reference point $[\lambda_{a, x}, \sigma_{a, x}, s_{1}, s_{2}] \in \mathcal{I}_{a, x}$. Consider the family of operators $D'^{-} = \{\bar{\partial}_{\lambda'_{a, x}, Z_{-}}\}$ parametrized by $[\lambda'_{a, x}, \sigma'_{a, x}, s'_{1}, s'_{2}] \in \mathcal{I}_{a, x}$. By gluing $D'^{-}$ with $\bar{\partial}_{\lambda_{a, x}, Z_{+}}$, where the latter is a single operator regarded as a constant family, we obtain a family of Dolbeault operators with domain $D^{2}$, with boundary conditions given by the family of real sub-bundles parametrized by $S^{1} = \partial D^{2}$, specified by the union of the paths $\lambda_{a, x}$ and $\lambda'_{a, x}$. \par
	Since $[\lambda_{a, x}, \sigma_{a, x}, s_{1}, s_{2}]$ and $[\lambda'_{a, x}, \sigma'_{a, x}, s'_{1}, s'_{2}]$ determine the spin structures on the family of real sub-bundles consistently, the determinant line bundle of the family of Dolbeault operators is trivial. By definition, this family of Dolbeault operators is obtained from gluing $D'^{-}$ with a constant family of operators, so the determinant line bundle of the family $D'^{-}$ is also trivial, which completes the proof. \par
\end{proof}

\begin{lemma}
	There exist local systems $\Theta_{a}^{\pm}$ on $L_{a}$, such that their pullbacks to $\mathcal{I}_{a}$ are isomorphic to $Ind(D^{\pm})$. Moreover, there is an isomorphism
\begin{equation}
\Theta_{a}^{-} \otimes \Theta_{a}^{+} \cong \det TL_{a}.
\end{equation}
\end{lemma}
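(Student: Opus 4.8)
The plan is to prove the two assertions in turn: first that $\underline{\det}\,Ind(D^{\pm})$ descends from $\mathcal{I}_{a}$ to a rank-one local system on $L_{a}$, and then that the tensor product of the two descended systems is isomorphic to the orientation local system $\det TL_{a}$. Throughout I would follow the determinant-line calculus of Chapter 8 of \cite{FOOO2}, adapted to the present cylindrical setting.

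\emph{Descent to $L_{a}$.} By the earlier lemma establishing that $\mathcal{I}_{a}\to (L_{a})_{[3]}$ is a fiber bundle (with connected base, since $(L_{a})_{[3]}$ contains the connected $1$-skeleton of $L_{a}$), and by Lemma \ref{triviality of determinant line bundle on the fiber}, the restriction of $\underline{\det}\,Ind(D^{\pm})$ to each fiber $\mathcal{I}_{a,x}$ is trivial. Hence $w_{1}(\underline{\det}\,Ind(D^{\pm}))\in H^{1}(\mathcal{I}_{a};\mathbb{Z}/2)$ restricts to zero on every fiber, so by the five-term exact sequence of the $\mathbb{Z}/2$-coefficient Serre spectral sequence of this fibration it lies in the image of $\pi^{*}\colon H^{1}((L_{a})_{[3]};\mathbb{Z}/2)\to H^{1}(\mathcal{I}_{a};\mathbb{Z}/2)$. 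Therefore $\underline{\det}\,Ind(D^{\pm})\cong \pi^{*}\Theta_{a}^{\pm}$ for a real line bundle $\Theta_{a}^{\pm}$ on $(L_{a})_{[3]}$, which we regard as a $\mathbb{Z}/2$-local system. Since cellular cohomology in degree one depends only on the $2$-skeleton, the restriction $H^{1}(L_{a};\mathbb{Z}/2)\to H^{1}((L_{a})_{[3]};\mathbb{Z}/2)$ is an isomorphism, so $\Theta_{a}^{\pm}$ extends uniquely to a local system on $L_{a}$, still written $\Theta_{a}^{\pm}$, whose pullback to $\mathcal{I}_{a}$ is the determinant line bundle of $Ind(D^{\pm})$ as required.

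\emph{The tensor product.} Fix $x=(p_{-},p_{+})\in L_{a}$ and a path $\lambda_{a,x}\in\mathcal{P}_{a,x}$. The operators $\bar{\partial}_{\lambda_{a,x}}^{-}$ and $\bar{\partial}_{\lambda_{a,x}}^{+}$ live on the capped half-strips $Z_{-}$ and $Z_{+}$, which glue along their strip-like ends to a closed disk $Z_{-}\#_{\rho}Z_{+}\cong D^{2}$, the resulting boundary condition being the concatenation of $\lambda_{a,x}$ with its reverse. By the gluing formula for determinant lines of Fredholm operators (Appendix of \cite{FOOO2}, see also \cite{FOOO1}) there is a canonical isomorphism $\det(\bar{\partial}_{\lambda_{a,x}}^{-})\otimes\det(\bar{\partial}_{\lambda_{a,x}}^{+})\cong\det(\bar{\partial}_{\lambda_{a,x}}^{-}\#_{\rho}\bar{\partial}_{\lambda_{a,x}}^{+})$, independent of $\rho$ up to homotopy and hence canonical. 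The exponential weights $\delta$ imposed in the definition of $\bar{\partial}_{\lambda_{a,x}}^{\pm}$ are exactly what makes the glued operator see the clean intersection $\lambda_{a,x,l}\cap\lambda_{a,x,r}\cong T_{x}L_{a}$: after a homotopy through Fredholm operators carrying this asymptotic data it becomes a standard $\bar{\partial}$-operator on $D^{2}$ with kernel canonically $T_{x}L_{a}$ and vanishing cokernel, whose determinant line is therefore canonically $\det T_{x}L_{a}$. Carrying this out in families over $(L_{a})_{[3]}$, compatibly with the trivializations of the spin fibers $P_{x}$ built into $\mathcal{I}_{a}$, yields the isomorphism $\Theta_{a}^{-}\otimes\Theta_{a}^{+}\cong\det TL_{a}$ over $(L_{a})_{[3]}$, which then extends over $L_{a}$ by uniqueness of extensions of degree-one data.

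\emph{Main obstacle.} The serious point is the last step: one must verify that the weights on $Z_{-}$ and $Z_{+}$ are arranged so that the glued operator has kernel exactly $T_{x}L_{a}$ and vanishing cokernel with the orientation conventions matching, and that every identification is natural in the spin data parametrizing $\mathcal{I}_{a}$, so that the pointwise isomorphisms assemble to an isomorphism of local systems. This index and orientation bookkeeping is precisely the content of the determinant-line computations of Chapter 8 of \cite{FOOO2} (compare \cite{Akaho-Joyce} in the compact case), which I would import rather than reprove; no ideas beyond those references are needed.
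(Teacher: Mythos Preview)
Your approach is essentially the same as the paper's for the descent step---both use fiberwise triviality (the earlier lemma) to conclude that the determinant bundle is pulled back from the base---but you phrase it cohomologically via the Serre five-term sequence whereas the paper argues directly that $\pi_{1}(\mathcal{I}_{a;k})\to\pi_{1}((L_{a})_{[3]})$ is surjective and then factors the monodromy. One point needs care: the fiber $\mathcal{I}_{a,x}$ is not connected---it has $\mathbb{Z}$-many components indexed by the Maslov index of $\lambda_{a,x}$---so the five-term exact sequence with constant $\mathbb{Z}/2$ coefficients does not literally apply; the $E_{2}^{1,0}$ term is $H^{1}(B;\mathcal{H}^{0}(F))$ with $\mathcal{H}^{0}(F)$ an infinite-rank local system, and landing there is weaker than being in the image of $\pi^{*}$. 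The paper fixes this by first observing that $\mathcal{I}_{a}=\coprod_{k}\mathcal{I}_{a;k}$ with each $\mathcal{I}_{a;k}\to(L_{a})_{[3]}$ having connected fiber, and then running the argument on a single $\mathcal{I}_{a;k}$. Your argument goes through verbatim once you make this restriction.

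For the tensor product $\Theta_{a}^{-}\otimes\Theta_{a}^{+}\cong\det TL_{a}$, you actually supply more than the paper, which states the isomorphism but does not prove it in this lemma. Your gluing argument is the standard one from Chapter~8 of \cite{FOOO2}, and your identification of the kernel of the glued operator with $T_{x}L_{a}$ via the weighted-Sobolev asymptotics at the clean intersection is correct; deferring the orientation bookkeeping to \cite{FOOO2} is appropriate here.
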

\begin{proof}
	Recall that $\mathcal{P}_{a, x}$ is the space of smooth paths in $\mathcal{LAG}(T_{\iota(x)}M)$ connecting the Lagrangian subspaces $\lambda_{a, x, l}$ and $\lambda_{a, x, r}$ in \eqref{the left and right tangent subspaces at a self-intersection}. Hence it is homotopy equivalent to the based loop-space of the Lagrangian Grassmannian $\mathcal{LAG}(n)$ of linear Lagrangian subspaces of $\mathbb{R}^{2n}$. Set $I_{a, x} = \cup_{\lambda_{a, x}} (I_{a, x})_{\lambda_{a, x}}$. The fiber bundle $I_{a, x} \to \mathcal{P}_{a, x}$ is homotopy equivalent to the free loop-space of the Lagrangian Grassmannian $\mathcal{LAG}(n)$.  Therefore, $\pi_{0}(I_{a, x}) = \mathbb{Z}$ so that $I_{a, x}$ has $\mathbb{Z}$-worth of connected components, labeled by $I_{a, x; k}$. \par
	Also recall that $\mathcal{I}_{a, x}$ is a double cover of $I_{a, x}$, which is non-trivial. Let $\mathcal{I}_{a, x; k}$ be the pullback of $I_{a, x; k}$ to $\mathcal{I}_{a, x}$, which is therefore connected as $I_{a, x; k}$ is connected. Let $\mathcal{I}_{a; k}$ be the union of $\mathcal{I}_{a, x; k}$ over $x \in (L_{a})_{[3]}$. From the fibration 
\begin{equation*}
\mathcal{I}_{a, x; k} \xhookrightarrow{} \mathcal{I}_{a; k} \to (L_{a})_{[3]},
\end{equation*}
we obtain a long exact sequence of homotopy groups:
\begin{equation}
\pi_{1}(\mathcal{I}_{a, x; k}) \to \pi_{1}(\mathcal{I}_{a; k}) \to \pi_{1}((L_{a})_{[3]}) \to \{1\}.
\end{equation}
This implies that $\pi_{1}(\mathcal{I}_{a; k}) \to \pi_{1}((L_{a})_{[3]})$ is surjective. Now we have a real line bundle $\Theta_{a}^{\pm}$ on $(L_{a})_{[3]}$, which is classified by a homomorphism $\pi_{1}((L_{a})_{[3]}) \to \mathbb{Z}/2$, whose pullback to the homomorphism $\pi_{1}(\mathcal{I}_{a; k}) \to \mathbb{Z}/2$ defined by the real line bundle $\underline{\det} Ind(D^{\pm})$. The homomorphism $\pi_{1}(\mathcal{I}_{a; k}) \to \mathbb{Z}/2$ is well-defined because of Lemma \ref{triviality of determinant line bundle on the fiber}. \par
	Since any line bundle on the $3$-skeleton has a unique extension to the whole space, we obtain the desired $\Theta_{a}^{\pm}$ on $L_{a}$.
\end{proof}

\begin{definition}
	The local system $\Theta_{a}^{-}$ is called the orientation local system for the cleanly self-intersecting component $L_{a}$.
\end{definition}

	The next lemma explains how the local systems change under different choices of spin structures. \par

\begin{lemma}
	Let $v_{1}, v_{2}$ be two spin structures on $L$. Let $\Theta_{a}^{-}(v_{1}), \Theta_{a}^{-}(v_{2})$ be the orientation local systems defined by $v_{1}$ and $v_{2}$, respectively. Then the local system
\begin{equation*}
\Theta_{a}^{-}(v_{1}) \otimes \Theta_{a}^{-}(v_{2})
\end{equation*}
is classified by the $\mathbb{Z}/2$-cohomology class
\begin{equation}
pr_{1}^{*}(v_{1} - v_{2}) - pr_{2}^{*}(v_{1} - v_{2}) \in H^{1}(L_{a}, \mathbb{Z}/2).
\end{equation}
\end{lemma}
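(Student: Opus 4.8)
The plan is to reduce the statement to a comparison of monodromy homomorphisms $\pi_1(L_a)\to\mathbb Z/2$ and to isolate the way the spin structure enters the construction of $\Theta_a^-$. Recall that the spin structures on $L$ form a torsor over $H^1(L;\mathbb Z/2)$, so $\delta:=v_1-v_2\in H^1(L;\mathbb Z/2)$ is well defined; concretely $P_{Spin}(L;v_1)$ is obtained from $P_{Spin}(L;v_2)$ by twisting the fiberwise double cover of $P_{SO}(L)$ by the class $\delta$. Since in the construction of $\Theta_a^-$ the spin structure enters only through the fibers $P_x=(P_{Spin}(L)_{p_-}\times P_{Spin}(L)_{p_+})/\{\pm1\}$ in the definition of the fiber bundle $\mathcal I_{a,x}\to\mathcal P_{a,x}$, hence of $\mathcal I_a\to (L_a)_{[3]}$, replacing $v_2$ by $v_1$ replaces $\mathcal I_a(v_2)$ by the cover twisted, fiberwise over $L_a$, by $pr_1^*\delta$ at the $p_-$-factor and by $pr_2^*\delta$ at the $p_+$-factor. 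The determinant line bundle $\underline{\det}\,Ind(D^-)$ descends to the local system $\Theta_a^-$ on $(L_a)_{[3]}$ by the two preceding lemmas (triviality on each fiber $\mathcal I_{a,x}$, and factorization of the monodromy through $\pi_1((L_a)_{[3]})=\pi_1(L_a)$), so it suffices to compare the monodromy homomorphisms $\rho^-_{a,v_i}:\pi_1(L_a)\to\mathbb Z/2$ of $\Theta_a^-(v_1)$ and $\Theta_a^-(v_2)$, and then invoke once more the uniqueness of extension of a line bundle from the $3$-skeleton.

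Next I would compute $\rho^-_{a,v}(\gamma)$ for a smooth loop $\gamma:S^1\to(L_a)_{[3]}$ by the same gluing trick already used above: writing $\gamma_-=pr_1\circ\gamma$ and $\gamma_+=pr_2\circ\gamma$ for the two loops in $L$, one chooses a continuous family of connecting paths $\lambda_{a,\gamma(s)}\in\mathcal P_{a,\gamma(s)}$ (possible after lifting $\gamma$ to $\mathcal I_{a; k}$), forms the family $\{\bar\partial^-_{\lambda_{a,\gamma(s)}}\}$ on $Z_-$, and glues it to the constant family $\bar\partial^+_{\lambda_{a,\gamma(0)}}$ on $Z_+$, obtaining a family of Dolbeault operators on $D^2$ with a loop of Lagrangian boundary conditions in $\mathbb C^n$ built from the trivialization data carried by $\mathcal I_a$. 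Then $\rho^-_{a,v}(\gamma)$ is the obstruction to orienting this family, and since the boundary of $Z_-$ consists of one ray asymptotic to $\lambda_{a,x,l}=d\iota(T_{p_-}L)$ and one ray asymptotic to $\lambda_{a,x,r}=d\iota(T_{p_+}L)$, this obstruction splits into a contribution coming from the trivialization of $TL$ along $\gamma_-$ and one from the trivialization along $\gamma_+$. Replacing $v_2$ by $v_1$ alters exactly these two pieces of capping data by $pr_1^*\delta$ and $pr_2^*\delta$ respectively, and the effect on the monodromy of the determinant line is, by the FOOO-type computation of the dependence of orientations of Cauchy--Riemann operators on spin structures (Chapter 8 of \cite{FOOO2}), the addition of $\langle pr_1^*\delta,\gamma\rangle$ from the first ray and $\langle pr_2^*\delta,\gamma\rangle$ from the second; the opposite roles of the two rays of $Z_-$ in the gluing, together with the isomorphism $\Theta_a^-\otimes\Theta_a^+\cong\det TL_a$ and the quotient by the diagonal $\{\pm1\}$ in $P_x$, account for the relative sign, so that
\begin{equation}
\rho^-_{a,v_1}(\gamma)-\rho^-_{a,v_2}(\gamma)=\langle pr_1^*(v_1-v_2)-pr_2^*(v_1-v_2),\gamma\rangle .
\end{equation}
As this holds for every $\gamma\in\pi_1(L_a)$ and local systems on $(L_a)_{[3]}$ are classified by $H^1((L_a)_{[3]};\mathbb Z/2)=H^1(L_a;\mathbb Z/2)$, the local system $\Theta_a^-(v_1)\otimes\Theta_a^-(v_2)$ is classified by $pr_1^*(v_1-v_2)-pr_2^*(v_1-v_2)$, and its extension to all of $L_a$ is unique.

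The routine parts of this argument are the torsor description of spin structures, the descent of $\Theta_a^-$ (already established), and the identification $H^1((L_a)_{[3]};\mathbb Z/2)=H^1(L_a;\mathbb Z/2)$. The main obstacle is the precise, signed comparison in the middle step: showing that the change of the capping trivialization at the $p_-$-ray contributes $+\langle pr_1^*\delta,\gamma\rangle$ while the change at the $p_+$-ray contributes $-\langle pr_2^*\delta,\gamma\rangle$, with the sign traced through the gluing of $\bar\partial^-$ on $Z_-$ with $\bar\partial^+$ on $Z_+$ and through the identification $\Theta_a^-\otimes\Theta_a^+\cong\det TL_a$. Over $\mathbb Z/2$ this sign is invisible in the classifying class, but the computation should be organized so that it reproduces the conventions used later when these local systems orient the moduli spaces; this is where the relevant arguments of Chapter 8 of \cite{FOOO2} have to be imported and adapted to the cylindrical, cleanly self-intersecting setting at hand.
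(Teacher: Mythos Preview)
Your proposal is correct and follows the same route the paper indicates: the paper does not actually give a proof but simply states that it ``follows from an argument by gluing families of elliptic operators'' and refers to Chapter~8 of \cite{FOOO2}. Your sketch is precisely a fleshed-out version of that argument --- isolating the spin structure's role in $P_x$, lifting a loop in $L_a$ to $\mathcal{I}_{a;k}$, gluing the family $\bar\partial^-$ on $Z_-$ with a constant family $\bar\partial^+$ on $Z_+$, and reading off the change in monodromy from the change in the capping trivializations along the two rays --- so the approaches coincide.

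One small remark: your own observation that ``over $\mathbb{Z}/2$ this sign is invisible in the classifying class'' is exactly right, and in fact dissolves what you flag as the main obstacle. Since $pr_1^*\delta - pr_2^*\delta = pr_1^*\delta + pr_2^*\delta$ in $H^1(L_a;\mathbb{Z}/2)$, the statement of the lemma does not actually depend on which ray carries the plus sign; the signed version you are worried about is a finer orientation convention than what the lemma asserts, and tracking it is only needed later when one wants the induced orientations on moduli spaces to be compatible with fiber products. For the lemma as stated, the unsigned contribution $pr_1^*\delta + pr_2^*\delta$ is all that is required, and your argument already delivers that.
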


	The proof of this lemma also follows from an argument by gluing families of elliptic operators. As we shall not quite use it, we refer the reader to Chapter 8 of \cite{FOOO2} for the detailed proof. \par

\subsection{The wrapped Floer cochain space in the presence of clean self-intersections}\label{section: wrapped Floer cochain space in case of clean self-intersections}
	Let $\iota: L \to M$ be a cylindrical proper Lagrangian immersion with clean self-intersections, for which we have fixed a grading and a spin structure for it. As in the case of a cylindrical Lagrangian immersion with transverse self-intersections, the wrapped Floer cochain space for $(L, \iota)$ should capture both the topology of the fiber product $L \times_{\iota} L$ as well as the non-constant time-one Hamiltonian chords in the cylindrical end of $M$ starting and ending on $\iota(L)$. And because $\iota$ is a covering to its cylindrical end, we should keep track of this information when studying the non-constant Hamiltonian chords. \par
	To pick a chain model for the wrapped Floer cochain space, we choose an auxiliary Morse function $f_{a}$ on each connected component $L_{a}$ of the fiber product $L \times_{\iota} L$. Let $p_{a, j}$ be the critical points of $f_{a}$. The corresponding Morse complex computes the cohomology of the fiber product $L \times_{\iota} L$. Let $\mathcal{X}_{+}(\iota(L); H)$ be the set of all non-constant time-one $H$-chords from $\iota(L)$ (the geometric image) to itself which are contained in the cylindrical end $M \times [1, +\infty)$. These non-constant time-one $H$-chords naturally correspond to Reeb chords of all times from $l$ to itself on the contact boundary $\partial M$. \par

\begin{definition}\label{definition of wrapped Floer cochain space for a cylindrical Lagrangian immersion with clean self-intersections}
	The wrapped Floer cochain space $CW^{*}(L, \iota; H)$ is defined in a similar way to Definition \ref{definition of wrapped Floer cochain space for a single Lagrangian immersion} as the free $\mathbb{Z}$-module generated by the following two kinds of generators
\begin{enumerate}[label=(\roman*)]

\item $(p, w) \otimes \theta_{p}$, where $p \in Crit(f_{a})$, $w$ is a $\Gamma$-equivalence class of capping half-disks for $p$, and $\theta_{p} \in (\Theta_{a}^{-})_{p}$;

\item $(x, b)$, where $x$ is a non-constant time-one $H$-chord from $\iota(L)$ to itself which is contained in the cylindrical end $\partial M \times [1, +\infty)$, and $b$ is a lifting index, corresponding to a pair $(i, j)$ where $i, j$ label the copies of the preimage of the covering $\iota$ when restricted to the cylindrical end.

\end{enumerate}
That is, 
\begin{equation}
CW^{*}(L, \iota; H) = (\bigoplus_{a} CM^{*}(L_{a}, f_{a}; \Theta_{a}^{-}) \oplus \bigoplus_{b = (i, j) = (1, 1)}^{(d, d)} \mathbb{Z}\mathcal{X}_{+}(\iota(L); H).
\end{equation}
Here $\bigoplus_{b} \mathbb{Z}\mathcal{X}_{+}(\iota(L); H)$ means the direct sum of several copies of $\mathbb{Z}\mathcal{X}_{+}(\iota(L); H)$, one for each index $b = (i, j)$.
\end{definition}

	In the definition of the wrapped Floer cochain space, we include several copies of the free module generated by non-constant $H$-chords, in order to keep track of which component of the preimage the boundary map is lifted to. \par
	In particular, when $\iota: L \to M$ is a proper embedding, we see that this cochain space is isomorphic to the usual Morse-Bott wrapped Floer cochain space. \par

\subsection{Moduli space of stable pearly trees and Kuranishi structures}
	To associate a curved $A_{\infty}$-algebra to an exact cylindrical Lagrangian immersion with clean self-intersections, we use similar kind of moduli spaces of stable pearly tree maps. \par
	Choosing a Morse cochain complex to compute the cohomology of the fiber product $L \times_{\iota} L$ means we have implicitly perturb the Morse-Bott submanifolds $L_{a}$ to isolated critical points. This implies that we can define stable pearly tree maps in a similar way to those in the case of a cylindrical Lagrangian immersion with transverse self-intersections, while there are two differences, which are to be discussed below. \par
	First, in Definition \ref{stable pearly tree map}, we need to change the definition of the map $\alpha: I \to S(L, \iota)$, as in the clean self-intersection case there is no longer a discrete set $S(L, \iota)$ of preimages of self-intersection points. We simply change it to a set-valued map
\begin{equation}\label{set-valued switching label}
\alpha: I \to \{L_{a}: a \in A \setminus \{0\} \},
\end{equation}
among all the self-intersection components different from the diagonal component $\Delta_{L} \cong L$. \par
	Second, the condition $(x)$ in Definition \ref{stable pearly tree map} should also be modified to the following condition:
\begin{equation}\label{modified switching condition}
\begin{split}
(x)': &(\lim\limits_{\theta \uparrow 0} l(e^{\sqrt{-1}\theta}\zeta_{i}), \lim\limits_{\theta \downarrow 0} l(e^{\sqrt{-1}\theta}\zeta_{i})) \in \alpha(i) \text{ for } i \in I.\\
& \text{In addition }, \tilde{u}_{e_{i}}(z_{v(e_{i})}) = p_{a, j(i)} \in \iota(\alpha(i)), \text{ for some critical point } p_{a, j(i)} \text{ on } \alpha(i).
\end{split}
\end{equation}\par

	Third, we have the orientation local systems $\Theta_{a}^{-}$ on the Morse-Bott submanifolds $L_{a}$ that needs to be taken into account. We shall explain the role of the orientation local systems in the rest of this subsection. \par
	Let $\bar{\mathcal{M}}_{k+1}(\alpha, \beta; J, H; c_{0}, \cdots, c_{k})$ be the moduli space of stable broken pearly tree maps defined in a similar way as that in section \ref{section: moduli space of disks bounded by immersed Lagrangian submanifolds}, with the map $\alpha$ replaced by \eqref{set-valued switching label}, and the condition $(x)$ replaced by $(x)'$ in \eqref{modified switching condition}. As in the case of a cylindrical Lagrangian immersion with transverse self-intersections, we have the following structure results on the moduli space. \par

\begin{proposition}
	The moduli space $\bar{\mathcal{M}}_{k+1}(\alpha, \beta; J, H; c_{0}, \cdots, c_{k})$ has a Kuranishi structure with corners, which is compatible at the boundary with the fiber product Kuranishi structures on
\begin{equation*}
\bar{\mathcal{M}}_{k_{0}+1}(\alpha_{0}, \beta_{0}; J, H; c_{0, 0}, \cdots, c_{0, k_{0}}) \times \cdots \times \bar{\mathcal{M}}_{k_{m}+1}(\alpha_{m}, \beta_{m}; J, H; c_{m, 0}, \cdots, c_{m, k_{m}})
\end{equation*}
as in \eqref{boundary strata of moduli space of pearly trees in the form of fiber products}.
\end{proposition}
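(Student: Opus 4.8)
The plan is to follow the inductive construction of Proposition \ref{Kuranishi structure on the moduli space of pearly trees} carried out in section \ref{section: Kuranishi structure on moduli spaces of stable pearly tree maps}, modifying only the points where the clean (Morse--Bott) nature of the self-intersection components and the orientation local systems $\Theta_{a}^{-}$ enter. As before, fixing $\beta$ gives a uniform energy bound $E$ on all elements of $\bar{\mathcal{M}}_{k+1}(\alpha, \beta; J, H; c_{0}, \cdots, c_{k})$, and every factor appearing in the boundary strata \eqref{boundary strata of moduli space of pearly trees in the form of fiber products} has strictly smaller energy or fewer leaves, which licenses an induction on the lexicographic order of $(E, k)$. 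First I would build Kuranishi charts at the smooth points, i.e.\ at $[S_{T}, u, l]$ whose domain is a single pearly tree. The Fredholm problem is again the direct sum of the inhomogeneous Cauchy--Riemann operators on the disk components, Floer's operators on the color-$1$ ends, and the linearized gradient-flow operators on the interval components; the only change is that an exterior edge carrying a label $\alpha(i) = L_{a}$ now contributes a weighted Sobolev setup modelled on the operators $\bar{\partial}_{\lambda_{a, x}}^{\pm}$ on the pieces $Z_{\pm}$, with a small exponential weight $\delta > 0$ chosen below the spectral gap of the asymptotic operator at the clean intersection, exactly as in the definition of $\Theta_{\lambda_{a, x}}^{\pm}$. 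Because each $L_{a}$ has already been perturbed to the isolated critical points of the auxiliary Morse function $f_{a}$, this operator is Fredholm, and one chooses a finite-dimensional obstruction space $E_{(S_{T}, u, l)}$ of smooth, compactly supported sections making the projected linearization surjective, verbatim as in the transverse case; the isotropy group is again trivial since the automorphism group of a semistable pearly tree is torsion-free.

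Next I would extend the charts over the lower strata by the same gluing argument: the common asymptote of two pearly tree maps is either a critical point of some $f_{a}$, in which case one pre-glues gradient flow lines (possibly through a change of switching component), or a non-constant $H$-chord with a lifting index $b = (i, j)$, in which case one pre-glues solutions of Floer's equation respecting the sheet labels; when a disk bubble attaches along a switching component one pre-glues a disk to an interval along the clean locus. A right inverse together with the exponential-decay estimates of \cite{FOOO2}, \cite{FOOO3} upgrade the pre-gluing to an exact solution modulo the injected direct-sum obstruction space, yielding the Kuranishi chart at the broken point, while unstable one-pointed disk components are handled by the canonical gauge fixing of \cite{Fukaya-Ono} exactly as before. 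Globalizing is then formal: choose a finite cover $\{U_{\sigma_{i}}\}$, set $E'_{\sigma} = \bigoplus_{i: \sigma \in U_{\sigma_{i}}} E_{\sigma_{i}}$, and check the coordinate changes are compatible with the Kuranishi maps, which holds automatically since each map is a restriction of the section $\bar{\partial}$.

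The genuinely new point, and the main obstacle, is orientations. The determinant line of the linearized operator at a generator $(p, w) \otimes \theta_{p}$ is no longer canonically trivial; gluing the orientation operators shows it is isomorphic to $(\Theta_{a}^{-})_{p}$, which is precisely why the wrapped Floer cochain space of Definition \ref{definition of wrapped Floer cochain space for a cylindrical Lagrangian immersion with clean self-intersections} is built with $\Theta_{a}^{-}$-coefficients. I would therefore orient the Kuranishi charts by combining the chosen spin structure of $L$, which trivializes the disk-part determinant lines over the $3$-skeleton by the argument of Lemma \ref{triviality of determinant line bundle on the fiber}, with the data $\theta_{p}$ at each switching end, and then verify that the boundary orientation induced on a codimension-one stratum agrees with the fiber-product orientation. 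The key algebraic input is the isomorphism $\Theta_{a}^{-} \otimes \Theta_{a}^{+} \cong \det TL_{a}$: when an edge breaks at a critical point of $f_{a}$, the two halves of the orientation operator reassemble into $\det TL_{a}$ together with the Morse-theoretic orientation line of $f_{a}$ at $p$, and a sign-bookkeeping argument in the style of Chapter 8 of \cite{FOOO2} identifies the resulting determinant lines with the fiber-product one. Once this compatibility of oriented Kuranishi structures at the boundary is established, the assertion that the induced structure on the codimension-$m$ stratum equals the fiber-product structure on \eqref{boundary strata of moduli space of pearly trees in the form of fiber products} follows, exactly as in the transverse case, from the way the charts were constructed.
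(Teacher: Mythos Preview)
Your proposal is correct and follows essentially the same approach as the paper: reduce to the transverse-intersection argument of section \ref{section: Kuranishi structure on moduli spaces of stable pearly tree maps}, modifying the Fredholm setup to account for the Morse--Bott switching components via weighted Sobolev spaces, and otherwise run the induction, gluing, gauge-fixing, and globalization verbatim.

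One organizational difference worth noting: the proposition as stated, and as the paper treats it, concerns only the \emph{unoriented} Kuranishi structure with corners---the remark immediately following it makes this explicit, and orientations are deferred to a separate proposition that establishes the isomorphism \eqref{orientation on the moduli space of pearly tree maps in the case of clean self-intersection}. Your final paragraph on orientations and the role of $\Theta_{a}^{-} \otimes \Theta_{a}^{+} \cong \det TL_{a}$ is therefore not needed for this proposition per se; it belongs to (and correctly anticipates) the proof of the next one. The separation is natural because the existence of the Kuranishi structure is a straightforward adaptation of the transverse case, while the orientation argument genuinely requires the local-system machinery and the gluing of determinant lines, which the paper isolates as the new content in the clean-intersection setting.
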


\begin{remark}
	In the above proposition, we did not mention the word "oriented", so for the moment the statement holds for unoriented Kuranishi structures. In the rest of this subsection we shall describe the orientations on these Kuranishi structures.
\end{remark}

	Next, we shall orient $\bar{\mathcal{M}}_{k+1}(\alpha, \beta; J, H; c_{0}, \cdots, c_{k})$ using the orientation local systems $\Theta_{a}^{-}$ on $L_{a}$. First we introduce some notations. \par

\begin{definition}
	The type function $\tau$ is define by
\begin{equation}
\tau(i) =
\begin{cases}
a, &\text{if } c_{i} \text{ corresponds to a critical point of } f_{a} \text{ on } L_{a};\\
\infty, &\text{if } c_{i} \text{ corresponds to a non-constant $H$-chord, appearing in the $b$-th copy}.
\end{cases}
\end{equation}
\end{definition}

	By convention, we set $\Theta_{\infty}^{-} = \underline{\mathbb{R}}$ the trivial line bundle, and also $\det T^{*}L_{\infty} = \underline{\mathbb{R}}$. \par

\begin{proposition}
	The above Kuranishi structure is oriented. At each point 
\begin{equation*}
\sigma \in \bar{\mathcal{M}}_{k+1}(\alpha, \beta; J, H; c_{0}, \cdots, c_{k}),
\end{equation*}
if $(U_{\sigma}, E_{\sigma}, s_{\sigma}, \Gamma_{\sigma} = \{1\})$ is the Kuranishi chart, then we have a canonical isomorphism of the orientation line bundles
\begin{equation}\label{orientation on the moduli space of pearly tree maps in the case of clean self-intersection}
\begin{split}
&\det TU_{\sigma} \otimes \det E_{\sigma}^{*}\\ 
&\cong
o_{c_{0}} \otimes \Theta_{\tau(0)}^{-} \otimes (o_{c_{1}} \otimes \Theta_{\tau(1)}^{-} \otimes \det T^{*}L_{\tau(1)})^{-1} \otimes \cdots \otimes (o_{c_{k}} \otimes \Theta_{\tau(k)}^{-} \otimes \det T^{*}L_{\tau(k)})^{-1},
\end{split}
\end{equation}
where $o_{c_{i}}$ is the orientation line of the generator $c_{i}$, which is the orientation line for a non-degenerate time-one $H$-chord, if $c_{i}$ corresponds to a non-constant $H$-chord, or the orientation line for a critical point of $f_{a}$, if $c_{i}$ corresponds to a critical point of $f_{a}$, defined by a choice of coherent orientations on the unstable manifolds of all the critical points, twisted by the orientation local system $\Theta_{a}^{-}$.
\end{proposition}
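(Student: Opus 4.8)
The plan is to reduce the statement to a continuous trivialization, along the moduli space, of the determinant line of the linearized Fredholm problem, following the orientation machinery of Chapter 8 of \cite{FOOO2}. Recall from the construction of the Kuranishi charts that at a point $\sigma \in \bar{\mathcal{M}}_{k+1}(\alpha,\beta;J,H;c_0,\cdots,c_k)$ the linearization $D_\sigma\bar\partial$ is a direct sum, over the components of the underlying (broken) pearly tree, of inhomogeneous Cauchy--Riemann operators on the disk components and linearized gradient-flow operators on the interval components, coupled by the matching conditions at the nodes, while $E_\sigma$ is a fixed finite-dimensional complement; hence $\det(D_\sigma\bar\partial)\cong\det TU_\sigma\otimes\det E_\sigma^{*}$ canonically. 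It therefore suffices to identify $\det(D_\sigma\bar\partial)$ with the right-hand side of \eqref{orientation on the moduli space of pearly tree maps in the case of clean self-intersection}, continuously in $\sigma$, and to check compatibility with the fiber-product structure on the boundary strata.

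First I would analyze a single disk component of color $1$ with boundary on $\iota(L)$, carrying switching punctures and non-constant Hamiltonian chord punctures. By the standard linear-gluing (excision) argument its determinant line decomposes as the tensor product of a canonically oriented ``trivial'' operator (using the chosen spin structure $v$ and grading, exactly as for embedded exact Lagrangians) with local contributions at the punctures: at a Hamiltonian chord puncture this is the orientation line $o_c$ of that chord, as in \cite{Abouzaid1}, \cite{FOOO2}, while at a switching puncture on $L_a$ the capping half-disk picks out a path $\lambda_{a,x}$ of Lagrangian subspaces from $\lambda_{a,x,l}$ to $\lambda_{a,x,r}$ and the local model is one of $\bar\partial^{\pm}_{\lambda_{a,x}}$ of \eqref{determinant lines of the orientation operators}, whose determinant lines are the fibers of $\Theta_a^{\mp}$ by the preceding lemmas; with the convention that the root puncture contributes $\Theta_a^{-}$ and each leaf puncture contributes $\Theta_a^{+}$. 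Color-$0$ disk components are treated the same way (by Lemma \ref{finiteness of pseudoholomorphic disks passing through a self-intersection point} they too pass through a switching point, so their boundary condition is again governed by these local systems). I would then use $\Theta_a^{-}\otimes\Theta_a^{+}\cong\det TL_a$, together with the fact that the $\Theta_a^{\pm}$ are order-two local systems, to rewrite each leaf contribution in the normal form $(\Theta_a^{-}\otimes\det T^{*}L_a)^{-1}$ demanded by the statement.

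Next I would handle the interval components: the linearized gradient-flow operator on $I_e$ asymptotic to critical points of $f_a$ has determinant line expressed in terms of the orientations of the unstable manifolds, and after taking the fiber products over the discrete sets of generators at the interior edges this produces, for each input generator $c_i$ with $i\ge 1$, exactly the factor $o_{c_i}$ (the unstable-manifold orientation twisted by $\Theta_a^{-}$) together with the extra $\det T^{*}L_{\tau(i)}$ coming from the matching condition at the critical point; this is the source of the asymmetry between the root $c_0$ and the leaves. Assembling the disk and interval contributions along the tree, using associativity of linear gluing of determinant lines, yields the claimed canonical isomorphism \eqref{orientation on the moduli space of pearly tree maps in the case of clean self-intersection}. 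Finally, compatibility with the fiber-product Kuranishi structures on the codimension-$m$ strata \eqref{boundary strata of moduli space of pearly trees in the form of fiber products} follows because those charts were built by gluing, so the determinant-line identification on a boundary point is the gluing of the identifications on the factors.

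The main obstacle I anticipate is the sign bookkeeping: one must fix, once and for all, consistent conventions for the ordering of tensor factors, for the orientation of the trivial operators, for the coherent orientations of the unstable manifolds, and for the interaction of $\Theta_a^{-}\otimes\Theta_a^{+}\cong\det TL_a$ with the capping-half-disk data and with root-versus-leaf gluing, so that all the identifications above hold simultaneously and the resulting operations $m^{k}$ satisfy the signed $A_\infty$-relations. This is a delicate but routine adaptation of Chapter 8 of \cite{FOOO2}, which I would carry out in the same inductive order --- lexicographic in energy and number of leaves --- used to construct the Kuranishi structures.
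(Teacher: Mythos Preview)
Your approach is correct and rests on the same ingredients as the paper's proof --- the local orientation operators $\bar\partial^{\pm}_{\lambda_{a,x}}$ with determinant lines $\Theta_a^{\pm}$, the relation $\Theta_a^{-}\otimes\Theta_a^{+}\cong\det TL_a$, and linear gluing of determinant lines following Chapter~8 of \cite{FOOO2} --- but it is organized differently. The paper does not decompose $D_\sigma\bar\partial$ over the components of the pearly tree; instead it glues the capping operators $D_{c_0}^{-},D_{c_1}^{+},\dots,D_{c_k}^{+}$ directly to the full linearized operator $D_\sigma$ at the external ends, observes that the resulting operator on a closed disk deforms through Fredholm operators to the standard Dolbeault operator with canonically trivial determinant, and then reads off the formula from $\det D_{c_0}^{-}=o_{c_0}\otimes\Theta_{\tau(0)}^{-}$, $\det D_{c_i}^{+}=o_{c_i}^{-1}\otimes\Theta_{\tau(i)}^{+}$, and the $\Theta^{-}\otimes\Theta^{+}\cong\det TL$ identity. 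In particular, the root/leaf asymmetry and the $\det T^{*}L_{\tau(i)}$ factors arise in the paper purely from the conversion $\Theta_{\tau(i)}^{+}\cong(\Theta_{\tau(i)}^{-})^{-1}\otimes\det TL_{\tau(i)}$, not from a separate analysis of the interval components; your attribution of the $\det T^{*}L_{\tau(i)}$ to the Morse matching condition is a different (though consistent) bookkeeping. The paper's route is shorter and sidesteps the internal-node and interval bookkeeping you outline; your route is more explicit about how each tree component contributes, which may be useful if one later needs finer control over signs.
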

\begin{proof}
	The proof is an application of the standard gluing theorem for Cauchy-Riemann operators. We glue the operation operators $D_{c_{0}}^{-}, D_{c_{1}}^{+}, \cdots, D_{c_{k}}^{+}$ to the Fredholm operator
\begin{equation*}
D_{\sigma}: W^{1, p} \to L^{p}
\end{equation*}
and obtain a Cauchy-Riemann operator $D$ with Lagrangian boundary conditions. The operator $D$ can be deformed through Fredholm operators to the standard Dolbeault operator $\bar{\partial}$, whose determinant line is canonically trivialized. On the one hand, we have by definition:
\begin{align*}
&\det D_{c_{0}}^{-} = o_{c_{0}} \otimes \Theta_{\tau(0)}^{-},\\
&\det D_{c_{i}}^{+} = o_{c_{0}}^{-1} \otimes \Theta_{\tau(i)}^{+}, i = 1, \cdots, k.
\end{align*}
On the other hand, at the point $\sigma$, there is a canonical isomorphism
\begin{equation*}
\det TU_{\sigma} \otimes \det E_{\sigma}^{*} \cong \det D_{\sigma},
\end{equation*}
because $TU_{\sigma} \to E_{\sigma}$ is a finite-dimensional reduction of the Fredholm complex for $D_{\sigma}$. At last, recall that $\Theta_{a}^{-} \otimes \Theta_{a}^{+} \cong \det TL_{a}$. Thus we deduce that
\begin{equation*}
\det D_{\sigma} \otimes (o_{c_{0}} \otimes \Theta_{\tau(0)}^{-}) \otimes (o_{c_{1}} \otimes \Theta_{\tau(1)}^{-} \otimes \det T^{*}L_{\tau(1)})^{-1} \otimes \cdots \otimes (o_{c_{k}} \otimes \Theta_{\tau(k)}^{-} \otimes \det T^{*}L_{\tau(k)})^{-1}
\end{equation*}
is canonically trivialized. This implies the desired isomorphism \eqref{orientation on the moduli space of pearly tree maps in the case of clean self-intersection}. \par
\end{proof}

\begin{corollary}
	The Kuranishi structure on $\bar{\mathcal{M}}_{k+1}(\alpha, \beta; J, H; c_{0}, \cdots, c_{k})$ is oriented. Moreover, the induced orientation at the boundary agrees with the fiber product orientation.
\end{corollary}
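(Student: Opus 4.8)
The plan is to read the corollary off the canonical isomorphism \eqref{orientation on the moduli space of pearly tree maps in the case of clean self-intersection} established in the preceding proposition. The first point is that the right-hand side of that isomorphism,
\[
o_{c_{0}} \otimes \Theta_{\tau(0)}^{-} \otimes \bigotimes_{i=1}^{k} \left( o_{c_{i}} \otimes \Theta_{\tau(i)}^{-} \otimes \det T^{*}L_{\tau(i)} \right)^{-1},
\]
is a fixed one-dimensional real vector space attached to the tuple of asymptotic generators $(c_{0}, \dots, c_{k})$ alone: each generator $c_{i}$ already carries its orientation line $o_{c_{i}}$ (built from coherent orientations of the unstable manifolds of $f_{a}$ twisted by $\Theta_{a}^{-}$) together with the datum of the component $L_{a}$ and the critical point it sits at, so the fibers of $\Theta_{\tau(i)}^{-}$ and of $\det T^{*}L_{\tau(i)}$ are determined. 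Having once and for all fixed those coherent orientations and the spin structure, I orient this line; then \eqref{orientation on the moduli space of pearly tree maps in the case of clean self-intersection} trivializes $\det TU_{\sigma}\otimes\det E_{\sigma}^{*}$ consistently across each Kuranishi chart. That the trivializations agree on overlaps follows because the coordinate changes of the Kuranishi structure are finite-dimensional reductions of the continuously varying family of Fredholm operators $D_{\sigma}$, whose determinant lines vary in a locally trivial way and whose identifications are canonical; hence the local orientations glue to a global orientation of the Kuranishi structure.

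For the boundary statement, I would restrict to the codimension-$m$ stratum \eqref{boundary strata of moduli space of pearly trees in the form of fiber products}, where a point is a broken configuration $(\sigma_{0}, \dots, \sigma_{m})$ and, after splitting off the gluing parameters, $D_{\sigma}$ degenerates to the direct sum $D_{\sigma_{0}} \oplus \cdots \oplus D_{\sigma_{m}}$. Additivity of the index together with the standard isomorphism $\det\bigl(\bigoplus_{j} D_{\sigma_{j}}\bigr) \cong \bigotimes_{j}\det D_{\sigma_{j}}$ lets me factor the trivialization of $\det D_{\sigma}$ through those of the $\det D_{\sigma_{j}}$. Applying \eqref{orientation on the moduli space of pearly tree maps in the case of clean self-intersection} to each $\sigma_{j}$ and multiplying, every internal generator $c_{\mathrm{new}}$ at which two components are glued contributes a factor $o_{c_{\mathrm{new}}} \otimes \Theta_{\tau(\mathrm{new})}^{-}$ with a $-$ determinant line on one side and a factor $(o_{c_{\mathrm{new}}} \otimes \Theta_{\tau(\mathrm{new})}^{-} \otimes \det T^{*}L_{\tau(\mathrm{new})})^{-1}$ with a $+$ determinant line on the other; these cancel in pairs precisely by the isomorphism $\Theta_{a}^{-}\otimes\Theta_{a}^{+} \cong \det TL_{a}$. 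Matching the surviving factors on the two sides then identifies the orientation induced on the boundary with the fiber-product orientation, up to the sign dictated by the Koszul and grading-shift conventions.

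The main obstacle is the sign bookkeeping: reconciling the order of the tensor factors in \eqref{orientation on the moduli space of pearly tree maps in the case of clean self-intersection}, the $\mathbb{Z}$-grading shifts hidden in the definition of the $o_{c_{i}}$ and of the fiber-product orientation, and the signs appearing in the determinant-line gluing isomorphism. This is exactly the combinatorial sign analysis of Chapter 8 of \cite{FOOO2}, so my approach would be to import those conventions (the orientation operators $D_{c}^{\pm}$, the gluing signs, the definition of fiber-product orientation) verbatim and merely verify that our generators and capping half-disks plug into their framework, so that no new sign computation is required beyond this matching. Once the conventions are pinned down, both the existence of the global orientation and the boundary compatibility are formal consequences of the associativity and gradedness of the determinant-line gluing isomorphism.
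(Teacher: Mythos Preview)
The paper offers no proof for this corollary, treating it as an immediate consequence of the preceding proposition; your proposal correctly unpacks that implicit argument by trivializing $\det TU_{\sigma}\otimes\det E_{\sigma}^{*}$ via \eqref{orientation on the moduli space of pearly tree maps in the case of clean self-intersection} and checking boundary compatibility through the gluing of determinant lines, with the sign bookkeeping deferred to Chapter~8 of \cite{FOOO2} as the paper itself does throughout.

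One small sharpening: when you tensor the output factor $o_{c_{\mathrm{new}}}\otimes\Theta_{\tau(\mathrm{new})}^{-}$ with the input factor $(o_{c_{\mathrm{new}}}\otimes\Theta_{\tau(\mathrm{new})}^{-}\otimes\det T^{*}L_{\tau(\mathrm{new})})^{-1}$, the result is not canonically trivial but rather $\det TL_{\tau(\mathrm{new})}$. This is exactly as it should be---it is the determinant of the glued capping operator $D_{c_{\mathrm{new}}}^{-}\sharp D_{c_{\mathrm{new}}}^{+}$ on the closed disk, i.e.\ the normal factor that enters the fiber-product orientation---but the phrase ``cancel in pairs'' slightly understates what is happening. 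Since in the present setup the fiber products are over discrete sets of generators (critical points of the auxiliary Morse functions), this residual factor is absorbed precisely as you indicate, via $\Theta_{a}^{-}\otimes\Theta_{a}^{+}\cong\det TL_{a}$ and the spin trivialization.
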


	From this point, the remaining part of the construction of a curved $A_{\infty}$-algebra associated to $(L, \iota)$ is the same as that in the case of a cylindrical Lagrangian immersion with transverse self-intersections. \par

\subsection{A cleanly-intersecting pair}\label{section: wrapped Floer cochain space for a pair with clean intersections}
	Now consider a pair $(\iota_{0}: L_{0} \to M, \iota_{1}: L_{1} \to M)$ of cylindrical proper Lagrangian immersions with clean self-intersections. The goal of this subsection is to define wrapped Floer cohomology for such a pair in the case where the two Lagrangian immersions intersect cleanly in the following sense. \par

\begin{definition}
	The pair $(\iota_{0}: L_{0} \to M, \iota_{1}: L_{1} \to M)$ is said to have clean intersections, if the following conditions are satisfied:
\begin{enumerate}[label=(\roman*)]

\item the fiber product
\begin{equation*}
L_{0} \times_{\iota_{0}, \iota_{1}} L_{1}
\end{equation*}
is a smooth manifold, possibly disconnected with different components having different dimensions,
\begin{equation}
L_{0} \times_{\iota_{0}, \iota_{1}} L_{1} = \coprod_{a} C_{a}.
\end{equation}

\item the tangent space of the fiber product at each point is given by
\begin{equation}
T_{(p_{0}, p_{1})}(L_{0} \times_{\iota_{0}, \iota_{1}} L_{1}) = \{(V_{0}, V_{1}) \in T_{p_{0}}L_{0} \times T_{p_{1}}L_{1}: d_{p_{0}}\iota_{0}(V_{0}) = d_{p_{1}}\iota_{1}(V_{1})\}.
\end{equation}

\end{enumerate}
\end{definition}

	In wrapped Floer theory, it is important to keep track of the geometry of the cylindrical ends. Recall that for $i = 0, 1$, $\iota_{i}: L_{i} \to M$ is assumed to be a discrete trivial covering of a cylindrical end $l_{i} \times [1, +\infty)$ over the cylindrical end $\partial M \times [1, +\infty)$ modeled on some Legendrian submanifold $l_{i}$ of $\partial M$. And by the properness assumption, the covering is finitely-sheeted, say $d_{i}$-fold. We shall make the following assumption on how this pair intersects, distinguishing from the case of a single Lagrangian immersion with clean self-intersections. \par

\begin{assumption}
	$\iota_{0}: L_{0} \to M$ and $\iota_{1}: L_{1} \to M$ do not intersect in the cylindrical end $\partial M \times [1, +\infty)$. That is, the Legendrian boundaries $l_{0}, l_{1}$ do not intersect in the contact boundary $\partial M$.
\end{assumption}

	This is essentially reduced to an assumption on Legendrian submanifolds of $\partial M$, which can be achieved generically, in case $l_{0} \neq l_{1}$. The case $l_{0} = l_{1}$ but $\iota_{0} \neq \iota_{1}$ is different and slightly more involved, which can be studied in a similar way but will not be discussed in this paper. \par
	To set up wrapped Floer theory for such a pair of Lagrangian immersions, we need to choose a chain model for the wrapped Floer cochain space. We shall for each component $C_{a}$ an auxiliary Morse function $f_{a}: C_{a} \to \mathbb{R}$, which is $C^{2}$-small and satisfies the Morse-Smale condition. Let $Crit(f_{a})$ be the set of critical points of $f_{a}$. The specific choice will be made below. \par

\begin{lemma}
	There exists a $C^{2}$-small generic perturbation $K$ of $H$ so that all the time-one $K$-chords that are contained in the interior part of $M$ are non-degenerate and constant. Moreover, these $K$-chords correspond bijectively to the critical points of the lift of $K$ to $C_{a}$.
\end{lemma}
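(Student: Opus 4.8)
The plan is to mimic the proof of Lemma \ref{interior chords corresponds to Morse critical points on the fiber product}, adapting it to the clean-intersection setting for a pair. First I would recall that $H$ is chosen to vanish identically on the interior part $M_0$ of $M$, so that any time-one $H$-chord from $\iota_0(L_0)$ to $\iota_1(L_1)$ contained in $M_0$ is automatically constant. A constant time-one chord from $\iota_0(L_0)$ to $\iota_1(L_1)$ is precisely a point $p \in \iota_0(L_0) \cap \iota_1(L_1)$, equivalently a point of the fiber product $L_0 \times_{\iota_0, \iota_1} L_1 = \coprod_a C_a$. After perturbing $H$ to $K = H + h$ where $h$ is a $C^2$-small function supported in the interior $M_0$, the chords are no longer literally the intersection points but solutions of the corresponding Hamiltonian equation; since $h$ is $C^2$-small and the unperturbed chords all lie in $M_0$ where the maximum principle confines the perturbed ones as well, all time-one $K$-chords in the interior part remain constant.

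Next I would set up the correspondence with critical points. The lift of $K$ to $C_a$ is the function $C_a \to \mathbb{R}$, $(p_0, p_1) \mapsto K(\iota_0(p_0)) = K(\iota_1(p_1))$ (these agree since $\iota_0(p_0) = \iota_1(p_1)$ at a point of the fiber product); this is well-defined and smooth because $K$ is smooth on $M$. A point $p$ in the fiber product is a time-one $K$-chord from $\iota_0(L_0)$ to $\iota_1(L_1)$ exactly when the Hamiltonian vector field $X_K$ at $\iota(p)$ is tangent to neither obstruction — more precisely, non-degeneracy of the constant chord is equivalent to the differential $dK$ at $\iota(p)$ annihilating the sum $d\iota_0(T_{p_0}L_0) + d\iota_1(T_{p_1}L_1)$ only in the transverse complement. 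The key computation is that, using the clean-intersection condition $T_{(p_0,p_1)}(L_0 \times_{\iota_0,\iota_1} L_1) = \{(V_0,V_1): d\iota_0(V_0) = d\iota_1(V_1)\}$, the restriction of $dK$ to $C_a$ vanishes at $p$ precisely when the constant chord at $\iota(p)$ is a critical point of the lift. This mirrors the standard fact that in Morse–Bott Floer theory for cleanly intersecting Lagrangians, Hamiltonian chords localize to critical points of a Morse function on the intersection locus, as in the Bourgeois–Oancea-type setup referenced earlier.

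The genericity statement — that $K$ can be chosen so all interior $K$-chords are non-degenerate and so the lift of $K$ to each $C_a$ is Morse–Smale — is a standard transversality argument: the space of $C^2$-small perturbations $h$ supported in a compact subset of $M_0$ is large enough that, for a residual set of such $h$, the induced functions on the finitely many compact components $C_a$ that meet $M_0$ are Morse with distinct critical values and satisfy Morse–Smale for a generic auxiliary metric. Here I would invoke that properness of $\iota_0$ and $\iota_1$ forces each $C_a$ to be compact (or at least that only finitely many components meet the interior, the rest being contained in the cylindrical ends where, by the Assumption, $\iota_0(L_0)$ and $\iota_1(L_1)$ are disjoint), so the Sard–Smale argument applies on finitely many finite-dimensional manifolds.

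The main obstacle I expect is the non-degeneracy bookkeeping at points where $C_a$ is positive-dimensional: one must be careful that the Hessian of the $K$-action functional at a constant chord $p \in C_a$ splits as (Hessian of the lift of $K$ on $C_a$) $\oplus$ (a non-degenerate form on the normal directions coming from the clean intersection), so that non-degeneracy of the chord as a critical point of the lift is genuinely equivalent to non-degeneracy after the Morse–Bott perturbation. Making this splitting precise requires unwinding the linearized flow of $X_K$ along the constant chord and identifying its fixed subspace with $T_{(p_0,p_1)}C_a$ under the clean-intersection hypothesis; once that identification is in hand, the rest follows from the standard Morse–Bott perturbation scheme, and I would refer to Chapter 8 of \cite{FOOO2} for the details rather than reproduce them.
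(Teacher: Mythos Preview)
Your proposal is correct and follows essentially the same approach as the paper, which gives no explicit proof for this lemma but clearly intends it as the clean-intersection analogue of Lemma \ref{interior chords corresponds to Morse critical points on the fiber product}. Your identification of the key adaptation---that the Hessian of the action functional at a constant chord splits as the Hessian of the lift of $K$ on $C_a$ plus a non-degenerate normal piece coming from the clean-intersection hypothesis---is exactly the point that distinguishes this case from the transverse one, and your deferral to Chapter 8 of \cite{FOOO2} for the Morse--Bott perturbation details is consistent with how the paper handles analogous points elsewhere in Section \ref{section: immersed wrapped Floer theory in the case of clean intersections}.
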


	Thus it is natural to choose $f_{a}$ to be the lift of $K$ to $C_{a}$. \par
	As in the case of a single cylindrical Lagrangian immersion with clean self-intersections, we have rank-one $\mathbb{Z}/2$-local systems $\Theta_{a}^{\pm}$ on $C_{a}$. They satisfy $\Theta_{a}^{-} \otimes \Theta_{a}^{+} \cong TC_{a}$. We call $\Theta_{a}^{-}$ the orientation local system on $C_{a}$. \par
	Let $\mathcal{X}_{+}(\iota_{0}(L_{0}), \iota_{1}(L_{1}); H)$ be the set of non-constant time-one $H$-chords from $\iota_{0}(L_{0})$ to $\iota_{1}(L_{1})$. These $H$-chords are contained in the cylindrical end $\partial M \times [1, +\infty)$ and naturally correspond to Reeb chords on the contact manifold $\partial M$ from $l_{0}$ to $l_{1}$ of all lengths. \par
	The wrapped Floer cochain space $CW^{*}((L_{0}, \iota_{0}), (L_{1}, \iota_{1}); H)$ for a pair of exact cylindrical Lagrangian immersions with clean intersections is defined in a way similar to Definition  \ref{definition of wrapped Floer cochain space for a pair}. \par

\begin{definition}
	The wrapped Floer cochain space $CW^{*}((L_{0}, \iota_{0}), (L_{1}, \iota_{1}); H)$ for a pair of exact cylindrical Lagrangian immersions with clean intersections is the free $\mathbb{Z}$-module generated by the following two kinds of generators:
\begin{enumerate}

\item $(p, w) \otimes \theta_{p}$, where $p \in Crit(f_{a})$, $w$ is a $\Gamma$-equivalence class of capping half-disks for $p$, and $\theta_{p} \in (\Theta_{a}^{-})_{p}$;

\item $(x, b)$, where $x$ is a non-constant time-one $H$-chord from $\iota_{0}(L_{0})$ to $\iota_{1}(L_{1})$ which is contained in the cylindrical end $\partial M \times [1, +\infty)$, and $b$ is a lifting index, corresponding to a pair $(i, j)$ where $i$ labels the copy of the preimage of the covering $\iota_{0}$, and $j$ labels the copy of the preimage of the cover $\iota_{1}$, when restricted to the cylindrical end.

\end{enumerate} \par
That is,
\begin{equation}
CW^{*}((L_{0}, \iota_{0}), (L_{1}, \iota_{1}); H) = (\bigoplus_{a}CM^{*}(C_{a}, f_{a}; \Theta_{a}^{-}) \oplus \bigoplus_{b = (i, j) = (1, 1)}^{(d_{0}, d_{1})} \mathbb{Z}\mathcal{X}_{+}(\iota_{0}(L_{0}), \iota_{1}(L_{1}); H).
\end{equation}
\end{definition}

\subsection{Floer trajectories between a cleanly-intersecting pair}
	The moduli spaces of Floer trajectories are similar to those in the case of a transversely intersecting pair of cylindrical Lagrangian immersions with transverse self-intersections. So we just outline the definitions emphasizing the differences. \par
	Choosing a Morse cochain complex to compute the cohomology of the fiber product $L_{0} \times_{\iota_{0}, \iota_{1}} L_{1}$ implicitly perturbs the Morse-Bott submanifolds $C_{a}$ to isolated critical points. Thus the only difference is that in the clean intersecting case we have the orientation local systems $\Theta_{a}^{-}$ on the components $C_{a}$ of the fiber product $L_{0} \times_{\iota_{0}, \iota_{1}} L_{1}$. The discussion on the role of these orientation local systems is parallel to that in the case of a single cylindrical Lagrangian immersion with clean self-intersections. We simply state the results as the proofs are completely similar. \par

\begin{proposition}
	There exists a Kuranishi structure on the moduli space of stable broken Floer trajectories
\begin{equation*}
\bar{\mathcal{N}}_{k, l}(\vec{\alpha}, \beta; \{J_{t}\}_{t}, H; c_{-}, c_{+}; \vec{c}^{0}, \vec{c}^{1}),
\end{equation*}
which is compatible with the fiber product Kuranishi structure on the boundary \eqref{boundary stratum of the moduli space of broken stable Floer trajectories}.
\end{proposition}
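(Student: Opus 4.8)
The plan is to adapt the proof of Proposition~\ref{Kuranishi structures on the moduli space of stable broken Floer trajectories} almost verbatim, inserting the modifications that are already in force for a single cylindrical Lagrangian immersion with clean self-intersections. First I would observe that fixing the relative homology class $\beta$ gives a uniform energy bound on all elements of $\bar{\mathcal{N}}_{k, l}(\vec{\alpha}, \beta; \{J_{t}\}_{t}, H; c_{-}, c_{+}; \vec{c}^{0}, \vec{c}^{1})$, so that by the maximum principle their images lie in a fixed sublevel set $\{r \le A\}$ and Gromov compactness makes the moduli space compact with boundary strata of the form \eqref{boundary stratum of the moduli space of broken stable Floer trajectories}. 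Since the factors appearing there --- moduli of stable broken Floer trajectories with fewer strip marked points, together with moduli of stable pearly tree maps for $(L_{0},\iota_{0})$ or $(L_{1},\iota_{1})$ --- either have strictly smaller energy or fewer marked points, the Kuranishi structure can be constructed by induction on the lexicographic order of $(E,\,k+l)$, which is exactly the inductive scheme already used.

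Next I would build the Kuranishi charts. On the smooth stratum, where the domain is a single decorated Floer trajectory with stable pearly trees attached, the relevant Fredholm problem is the direct sum of the Floer operator on $\mathbb{R}\times[0,1]$, the inhomogeneous Cauchy--Riemann operators on the color-$1$ disk components, the $\bar\partial_{J}$-operators on the color-$0$ disk components, and the linearized gradient-flow operators on the interval components; one chooses finite-dimensional obstruction spaces in the target supported in the interiors of the disk and interval components, as in the proof of Proposition~\ref{Kuranishi structure on the moduli space of pearly trees}. I would then extend to the broken strata by the gluing theorems of \cite{FOOO2}, \cite{FOOO3}, taking for the obstruction space at a nearby point the sum \eqref{new obstruction space from a finite covering} of the obstruction spaces chosen at the centers of a finite cover of the compact moduli space, and handling the unstable pearly-tree bubbles attached at strip marked points by the canonical gauge fixing of \cite{Fukaya-Ono} (using convexity of the Riemannian distance-squared function), so that the equation $\bar\partial u'\equiv 0 \bmod E'_{\sigma}$ is unambiguous near each center. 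Compatibility of the induced Kuranishi structures with the fiber product structures at every codimension-$m$ stratum then follows automatically from the construction, just as in the transverse case.

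The genuinely new point is the orientations, and this I would handle exactly as for a single cylindrical Lagrangian immersion with clean self-intersections. Each clean self-intersection component of $\iota_{i}\colon L_{i}\to M$ and each component $C_{a}$ of the fiber product $L_{0}\times_{\iota_{0},\iota_{1}}L_{1}$ carries an orientation local system $\Theta_{a}^{-}$ with $\Theta_{a}^{-}\otimes\Theta_{a}^{+}\cong\det TC_{a}$, and the wrapped Floer cochain space was built so that its generators are twisted by these systems. Gluing the orientation operators $D_{c_{0}}^{-},D_{c_{1}}^{+},\dots$ to the linearized operator at a point $\sigma$ of the moduli space and deforming through Fredholm operators to the standard Dolbeault operator --- whose determinant line is canonically trivialized once spin structures and local systems are matched, by the mechanism behind Lemma~\ref{triviality of determinant line bundle on the fiber} --- gives a canonical isomorphism of the form \eqref{orientation on the moduli space of pearly tree maps in the case of clean self-intersection}, hence an orientation of the Kuranishi structure whose restriction to the boundary is the fiber product orientation. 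I expect the main obstacle to be purely organizational: arranging the obstruction spaces, the canonical gauge fixing, and the trivializations of the orientation determinant lines coherently so that all the fiber product identifications hold simultaneously at every corner, in the presence of both the clean self-intersection loci and the clean intersection loci $C_{a}$. Since this is the same machinery of \cite{FOOO2}, I would only sketch it and refer there for the estimates.
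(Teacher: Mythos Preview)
Your proposal is correct and follows essentially the same approach as the paper: the paper explicitly states that the proofs are parallel to those for the moduli spaces of stable pearly tree maps in section~\ref{section: Kuranishi structure on moduli spaces of stable pearly tree maps} and omits the details, so your proposal is in fact a faithful expansion of what the paper leaves implicit. One small organizational remark: the stated proposition asserts only the existence of a Kuranishi structure compatible with the fiber product structure at the boundary, while the paper defers the orientation argument (via the local systems $\Theta_{a}^{-}$ and the gluing of orientation operators) to the \emph{next} proposition; your third paragraph on orientations is therefore not needed for this statement, though it correctly anticipates what follows.
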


\begin{proposition}
	The Kuranishi structure on the moduli space
\begin{equation*}
\bar{\mathcal{N}}_{k, l}(\vec{\alpha}, \beta; \{J_{t}\}_{t}, H; c_{-}, c_{+}; \vec{c}^{0}, \vec{c}^{1})
\end{equation*}
is oriented. The orientation is defined as follows. If $\sigma$ is any point in the moduli space with Kuranishi chart $(U_{\sigma}, E_{\sigma}, s_{\sigma}, \Gamma_{\sigma} = \{1\})$, then there is an isomorphism
\begin{equation}
\begin{split}
\det TU_{\sigma} \otimes \det E_{\sigma}^{*} \cong &o_{c_{-}} \otimes \Theta_{\tau(-)}^{-} \otimes (o_{c_{+}} \otimes \Theta_{\tau(+)}^{-} \otimes \det T^{*}L_{\tau(+)})^{-1}\\
&\otimes \bigotimes_{i=1}^{k} (o_{c^{0}_{i}} \otimes \Theta_{\tau(0, i)}^{-} \otimes \det T^{*}L_{\tau(0, i)})^{-1}\\
&\otimes \bigotimes_{j=1}^{l} (o_{c^{1}_{j}} \otimes \Theta_{\tau(1, j)}^{-} \otimes \det T^{*}L_{\tau(1, j)})^{-1}.
\end{split}
\end{equation}
\end{proposition}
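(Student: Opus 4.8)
The plan is to establish this proposition by the same linear gluing argument used to prove the analogous statement for moduli spaces of stable pearly tree maps in the clean self-intersection case. Indeed, since the strip $\Sigma = \mathbb{R} \times [0,1]$ underlying a Floer trajectory is conformally a disk with two boundary punctures, the orientation formalism is structurally identical to the pearly-tree situation, with the two infinite strip-like ends playing the roles of one ``root'' output ($c_-$) and one distinguished ``leaf'' ($c_+$), while the pearly-tree attachment points on the two boundary components $\mathbb{R} \times \{0\}$ and $\mathbb{R} \times \{1\}$ contribute the remaining inputs $c^0_i$ and $c^1_j$. First I would fix a point $\sigma \in \bar{\mathcal{N}}_{k,l}(\vec{\alpha}, \beta; \{J_t\}_t, H; c_-, c_+; \vec{c}^0, \vec{c}^1)$ with Kuranishi chart $(U_\sigma, E_\sigma, s_\sigma, \Gamma_\sigma = \{1\})$, and recall that the associated linearized Fredholm operator $D_\sigma : W^{1,p} \to L^p$ admits the finite-dimensional reduction $TU_\sigma \to E_\sigma$, so that $\det TU_\sigma \otimes \det E_\sigma^* \cong \det D_\sigma$ canonically.

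Next I would glue to $D_\sigma$ the orientation operators $D_{c_-}^-$ (of type $Z_-$) at the incoming end and $D_{c_+}^+$, $D_{c^0_i}^+$, $D_{c^1_j}^+$ (of type $Z_+$) at the outgoing end and at each pearly-tree attachment point, obtaining a Cauchy-Riemann operator $D$ with Lagrangian boundary conditions over a disk. Since the gradings and spin structures of $L_0$, $L_1$, the orientation data on the components $C_a$ of $L_0 \times_{\iota_0, \iota_1} L_1$, and the orientation local systems on the self-intersection components of $L_0$ and $L_1$ are all chosen consistently, $D$ can be deformed through Fredholm operators to the standard Dolbeault operator $\bar{\partial}$, whose determinant line is canonically trivialized. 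By the definitions of the orientation operators one has $\det D_{c_-}^- \cong o_{c_-} \otimes \Theta_{\tau(-)}^-$, $\det D_{c_+}^+ \cong o_{c_+}^{-1} \otimes \Theta_{\tau(+)}^+$, $\det D_{c^0_i}^+ \cong o_{c^0_i}^{-1} \otimes \Theta_{\tau(0,i)}^+$, and $\det D_{c^1_j}^+ \cong o_{c^1_j}^{-1} \otimes \Theta_{\tau(1,j)}^+$, where for generators corresponding to non-constant $H$-chords we use the conventions $\Theta_\infty^\pm = \underline{\mathbb{R}}$ and $\det T^* L_\infty = \underline{\mathbb{R}}$. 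Feeding these into the gluing isomorphism for determinant lines, and rewriting each $\Theta^+$ factor via $\Theta_a^- \otimes \Theta_a^+ \cong \det TC_a$ (and its counterparts for the self-intersection components) as $(\Theta^- \otimes \det T^*L)^{-1}$ up to the orientation lines, yields precisely the claimed isomorphism for $\det TU_\sigma \otimes \det E_\sigma^*$.

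Finally, compatibility of this orientation with the fiber-product orientations at the codimension-one boundary strata follows from the functoriality and associativity of the linear gluing construction: breaking $\Sigma$ into a Floer trajectory glued to a pearly tree, or into two shorter Floer trajectories, corresponds to performing the gluing of orientation operators in two stages, and the induced isomorphism on determinant lines is the same either way up to a sign absorbed into the standard conventions. The main obstacle is not conceptual but a matter of bookkeeping: one must fix once and for all the order in which the orientation operators are glued (say, the strip ends before the boundary insertions, and the $L_0$-insertions before the $L_1$-insertions) and check that the Koszul signs produced in the fiber-product comparison agree with those already fixed in the transverse case, so that the curved $A_\infty$-bimodule equations hold over $\mathbb{Z}$ with the correct signs. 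I would carry this out by invoking the sign computations of Chapter 8 of \cite{FOOO2}, which apply verbatim once the chain model has been set up as in the preceding subsections.
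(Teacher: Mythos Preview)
Your proposal is correct and follows essentially the same approach as the paper. The paper itself does not give a detailed proof of this proposition, stating only that ``the proofs are completely similar'' to the pearly-tree case; your argument is precisely that parallel, gluing the orientation operators $D_{c_-}^-$, $D_{c_+}^+$, $D_{c^0_i}^+$, $D_{c^1_j}^+$ to $D_\sigma$, deforming to the standard Dolbeault operator, and converting each $\Theta^+$ factor via $\Theta_a^- \otimes \Theta_a^+ \cong \det TL_a$ (or $\det TC_a$), which matches the paper's proof of the corresponding statement for $\bar{\mathcal{M}}_{k+1}$.
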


\subsection{On the category level}
	Now let us summarize the results above and extend them to a categorical level. This yields to the definition of the (unobstructed) immersed wrapped Fukaya category $\mathcal{W}_{im}(M)$. The objects are pairs $((\iota: L \to M), b)$ (or sometimes denoted by $(L, \iota, b)$), where $\iota: L \to M$ is an exact cylindrical Lagrangian immersion with transverse or clean self-intersections, and $b$ is a bounding cochain for it. The morphism space between a pair of objects is the wrapped Floer cochain space
\begin{equation*}
\hom_{\mathcal{W}_{im}(M)}((L_{0}, \iota_{0}, b_{0}), (L_{1}, \iota_{1}, b_{1})) = CW^{*}((L_{0}, \iota_{0}), (L_{1}, \iota_{1}); H),
\end{equation*}
which is independent of the bounding cochains as a $\mathbb{Z}$-module. However, we sometimes also write the morphism space as $CW^{*}((L_{0}, \iota_{0}, b_{0}), (L_{1}, \iota_{1}, b_{1}))$ to remember the choices of bounding cochains. The first order structure map $m^{1}_{\mathcal{W}_{im}(M)}$ for the pair $((L_{0}, \iota_{0}, b_{0}), (L_{1}, \iota_{1}, b_{1}))$ is given by the $(b_{0}, b_{1})$-deformed differential
\begin{equation*}
n^{0, 0; b_{0}, b_{1}}: CW^{*}((L_{0}, \iota_{0}), (L_{1}, \iota_{1}); H) \to CW^{*}((L_{0}, \iota_{0}), (L_{1}, \iota_{1}); H)[1],
\end{equation*}
where $[1]$ means the map is of degree $+1$.
Higher order structure maps
\begin{equation}
\begin{split}
m^{k}_{\mathcal{W}_{im}(M)}&: CW^{*}((L_{k-1}, \iota_{k-1}, b_{k-1}), (L_{k}, \iota_{k}, b_{k}); H) \otimes \cdots \otimes CW^{*}((L_{0}, \iota_{0}, b_{0}), (L_{1}, \iota_{1}, b_{1}); H)\\
& \to CW^{*}((L_{0}, \iota_{0}, b_{0}), (L_{k}, \iota_{k}, b_{k}); H)
\end{split}
\end{equation}
are defined by appropriate counts of inhomogeneous pseudoholomorphic disks, with insertions of bounding cochains $b_{i}$ on the boundary components of the disks. These disks are similar to those used in the definition of the ordinary wrapped Fukaya category, but now they are modeled as stable pearly tree maps as in subsection \ref{section: moduli space of disks bounded by immersed Lagrangian submanifolds}, with boundary conditions replaced by multiple Lagrangian immersions $\iota_{j}: L_{j} \to M, j = 0, \cdots, k$. \par

\begin{proposition}
	With the bounding cochains $b_{i}$'s taken into account, the structure maps $m^{k}_{\mathcal{W}_{im}(M)}$ satisfy the equations for a non-curved $A_{\infty}$-category over $\mathbb{Z}$, i.e. $m^{0}_{\mathcal{W}_{im}(M)} = 0$.
\end{proposition}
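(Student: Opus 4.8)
The plan is to first produce, for any tuple $(\iota_{0}: L_{0} \to M, \dots, \iota_{k}: L_{k} \to M)$ of exact cylindrical Lagrangian immersions (with transverse or clean self-intersections, in generic position), moduli spaces $\bar{\mathcal{M}}_{k+1}(\vec{\alpha}, \beta; J, H; c_{0}, \dots, c_{k})$ of stable pearly tree maps with boundary on the $\iota_{j}(L_{j})$, generalizing simultaneously the single-immersion construction of subsection \ref{section: moduli space of disks bounded by immersed Lagrangian submanifolds} and the pair construction behind Proposition \ref{prop: the curved A-infinity bimodule for a pair}. These carry oriented Kuranishi structures with corners whose codimension-$m$ strata are the fiber-product Kuranishi structures on products of moduli spaces of the same type, and (using exactness of $M$, which makes every isotropy group trivial, together with the orientation local systems $\Theta_{a}^{-}$ in the clean case) admit compatible systems of single-valued $\mathbb{Z}$-multisections, exactly as in the preceding sections. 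Counting rigid zeros then defines, for each $k$, multilinear maps $m^{k}$ on the wrapped Floer cochain spaces; finiteness of these sums follows from Lemma \ref{finiteness of pseudoholomorphic disks passing through a self-intersection point} when $k=0$ and from the action-energy identity for $k \ge 1$, as in subsection \ref{curved A-infinity algebra associated to exact cylindrical Lagrangian immersions}. By construction, $m^{k}$ restricted to arguments drawn from a single $CW^{*}(L_{i},\iota_{i};H)$ is precisely the $m^{k}$ of the curved $A_{\infty}$-algebra of $(L_{i},\iota_{i})$, and restricted to two consecutive immersions it recovers the curved bimodule maps $n^{k,l}$ of Proposition \ref{prop: the curved A-infinity bimodule for a pair}.

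Next I would read off the curved $A_{\infty}$-relations for the undeformed $\{m^{k}\}$: the one-dimensional components of $\bar{\mathcal{M}}_{k+1}(\vec{\alpha},\beta;J,H;c_{0},\dots,c_{k})$ have codimension-one boundary consisting of fiber products of two moduli spaces of the same type, over which the chosen multisection is isomorphic to the product of the corresponding multisections. Hence the signed count of these boundary points vanishes, yielding the curved associativity identities $\sum m^{\bullet}(\cdots, m^{\bullet}(\cdots), \cdots) = 0$, including the curvature terms $m^{1}m^{1}+m^{2}(m^{0},-)+m^{2}(-,m^{0})+\cdots$ arising in the $k=0,1$ cases.

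Then I would define the category-level operations $m^{k}_{\mathcal{W}_{im}(M)}$ by the deformation formula \eqref{deform the A-infinity algebra by MC elements}: on each boundary arc mapped to $\iota_{j}(L_{j})$ insert the bounding cochain $b_{j}$ an arbitrary (finite) number of times. Each $b_{j}$ is nilpotent — the relevant filtration on $CW^{*}(L_{j},\iota_{j};H)$ is bounded above, cf. Lemma \ref{cyclic element and bounding cochain} and the surrounding discussion — so these are finite sums, and the boundary arcs carrying distinct $b_{j}$'s are disjoint, so the insertion is unambiguous. The $A_{\infty}$-relations for $\{m^{k}_{\mathcal{W}_{im}(M)}\}$ are then a purely formal consequence of the curved relations for $\{m^{k}\}$ together with the inhomogeneous Maurer-Cartan equations $\sum_{k} m^{k}(b_{j}, \dots, b_{j}) = 0$ (satisfied by each $b_{j}$ by the unobstructedness hypothesis, in the form \eqref{inhomogeneous Maurer-Cartan equation}): it is the same bookkeeping as in the standard proof that a Maurer-Cartan deformation of a curved $A_{\infty}$-algebra is a genuine $A_{\infty}$-algebra, carried out simultaneously on all $k+1$ boundary arcs. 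In particular, the only contributions to $m^{0}_{\mathcal{W}_{im}(M)}$ come from pearly tree maps whose boundary lies on a single $\iota_{j}(L_{j})$ with $b_{j}$ inserted everywhere, and their total signed count is exactly $\sum_{k} m^{k}(b_{j}, \dots, b_{j}) = 0$. Therefore $m^{0}_{\mathcal{W}_{im}(M)} = 0$, and $\mathcal{W}_{im}(M)$ is a non-curved $A_{\infty}$-category over $\mathbb{Z}$.

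The main obstacle I anticipate is the bookkeeping of Kuranishi structures, multisections, and above all signs in this many-boundary generalization: establishing the fiber-product compatibility of the multisections over every codimension-one stratum (disk bubbling at self-intersections, strip-breaking at both interior constant chords and cylindrical Hamiltonian chords, and Morse gradient-tree breaking), and checking that the orientation isomorphisms involving the local systems $\Theta_{a}^{-}$ are compatible with gluing — this is precisely where the curved $A_{\infty}$-relations with the correct $\mathbb{Z}$-coefficient signs originate, and everything downstream is formal once it is in place. A secondary point to verify is that the nilpotency of each $b_{j}$ genuinely renders all deformed sums finite, which again reduces to the filtration being bounded above.
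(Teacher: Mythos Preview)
Your proposal is correct and follows essentially the same approach as the paper. In fact, the paper does not give a detailed proof of this proposition: it only states in the surrounding paragraphs that the structure maps are defined by ``appropriate counts of inhomogeneous pseudoholomorphic disks, with insertions of bounding cochains $b_{i}$ on the boundary components,'' that these disks are ``modeled as stable pearly tree maps \ldots\ with boundary conditions replaced by multiple Lagrangian immersions,'' and that formally one obtains ``a curved $A_{\infty}$-category $\mathcal{W}_{ob, im}(M)$ whose unobstructed deformation gives $\mathcal{W}_{im}(M)$.'' Your write-up fills in precisely this outline---the many-boundary Kuranishi construction, the compatible single-valued multisections, the resulting curved $A_{\infty}$-relations, and the algebraic removal of curvature via the Maurer--Cartan equations for the $b_{j}$'s---so you have effectively written out the proof the paper only sketches.
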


	The more formal definitions are based on the construction of Kuranishi structures on the moduli spaces of these stable pearly tree maps, choices of single-valued multisections of the Kuranishi charts, for each tuple of Lagrangian immersions involved in the structure maps. Technically, we should work with a countable collection of Lagrangian immersions in order to be able to make consistent choices of (abstract) perturbations. In that way, we should obtain a curved $A_{\infty}$-category $\mathcal{W}_{ob, im}(M)$ whose unobstructed deformation gives $\mathcal{W}_{im}(M)$. Moreover, this curved $A_{\infty}$-category should allow us to define the weakly unobstructed wrapped Fukaya category with bulk deformations. We will not discuss that construction in detail as it is not needed in this paper. However, the perspective of deformation theory will be of interest and addressed in an upcoming work on generalizing the Viterbo functor \cite{Gao2}. \par

\subsection{A quasi-embedding}

	A basic but important property of the immersed wrapped Fukaya category is that the ordinary wrapped Fukaya category, consisting of properly embedded exact cylindrical Lagrangian submanifolds, embeds into the immersed wrapped Fukaya category as a full sub-category. \par

\begin{proposition}\label{prop: the ordinary wrapped Fukaya category quasi-embeds into the immersed wrapped Fukaya category}
	The wrapped Fukaya category $\mathcal{W}(M)$ quasi-embeds into the immersed wrapped Fukaya category $\mathcal{W}_{im}(M)$. That is, there is a canonical cohomologically fully faithful $A_{\infty}$-functor
\begin{equation}\label{quasi-embedding of the ordinary wrapped Fukaya category into the immersed wrapped Fukaya category}
j: \mathcal{W}(M) \to \mathcal{W}_{im}(M).
\end{equation}
\end{proposition}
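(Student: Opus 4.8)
The plan is to factor $j$ as
\begin{equation*}
\mathcal{W}(M) \xrightarrow{\ \Psi\ } \mathcal{W}_{im}^{emb}(M) \xhookrightarrow{\ \mathfrak{i}\ } \mathcal{W}_{im}(M),
\end{equation*}
where $\mathcal{W}_{im}^{emb}(M)$ denotes the full $A_{\infty}$-subcategory of $\mathcal{W}_{im}(M)$ whose objects are the pairs $(L, \iota_{L}, 0)$ with $\iota_{L}\colon L \hookrightarrow M$ a properly embedded exact cylindrical Lagrangian submanifold (regarded as an immersion with no switching components) and $0$ the zero bounding cochain, $\mathfrak{i}$ is the tautological inclusion of this full subcategory, and $\Psi$ is a comparison $A_{\infty}$-functor built from continuation maps. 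Cohomological full faithfulness of $j$ will then follow because $\mathfrak{i}$ is fully faithful on the nose and $\Psi$ is a quasi-equivalence; in fact for the statement we only need $\Psi$ to be cohomologically fully faithful.

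\textbf{The inclusion $\mathfrak{i}$.} First I would check that $(L,\iota_{L},0)$ is a legitimate object: by Lemma \ref{finiteness of pseudoholomorphic disks passing through a self-intersection point} every $J$-holomorphic disk with boundary on $\iota_{L}(L)$ must pass through a self-intersection point of $\iota_{L}$, of which there are none, so the moduli spaces defining the curvature term $m^{0}$ of the curved $A_{\infty}$-algebra $CW^{*}(L,\iota_{L};H)$ are empty, $m^{0}=0$, and $b=0$ solves the inhomogeneous Maurer--Cartan equation. Next, for a pair of such objects the morphism space $CW^{*}((L_{0},\iota_{0}),(L_{1},\iota_{1});H)$ of Definition \ref{definition of wrapped Floer cochain space for a pair} (and its clean-intersection analogue in Section \ref{section: immersed wrapped Floer theory in the case of clean intersections}) is, as already noted in the excerpt, precisely the Morse--Bott wrapped Floer complex of $(L_{0},L_{1})$; moreover every stable pearly-tree map contributing to the structure maps $m^{k}_{\mathcal{W}_{im}(M)}$ has no color-$0$ vertices (there are no self-intersection points to route a homogeneous disk through), so it is simply a configuration of inhomogeneous disks joined by gradient flow lines of the auxiliary Morse functions, and with $b=0$ there are no bounding-cochain insertions. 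Hence $\mathcal{W}_{im}^{emb}(M)$ is just the Morse--Bott model of the wrapped Fukaya category built from embedded Lagrangians, and $\mathfrak{i}$ is cohomologically fully faithful trivially.

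\textbf{The comparison $\Psi$.} It remains to produce a quasi-equivalence $\Psi$ from $\mathcal{W}(M)$ — defined in Section \ref{wrapped Fukaya category} with a non-degenerate Hamiltonian quadratic at infinity — to $\mathcal{W}_{im}^{emb}(M)$, which uses the Morse--Bott Hamiltonian vanishing on $M_{0}$. I would build $\Psi$ by the standard continuation-functor construction: over each associahedron $\bar{\mathcal{M}}_{d+1}$ choose a universal and consistent family of Floer data whose incoming strip-like ends carry the $\mathcal{W}(M)$-data and whose outgoing end carries the Morse--Bott data of Section \ref{the immersed wrapped Fukaya category}, interpolating over the interior; on objects $\Psi(L)=(L,\iota_{L},0)$, and $\Psi^{d}$ counts rigid elements of the resulting parametrized moduli spaces (a Morse--Bott/pearly-tree count near the output, so the Kuranishi package of Section \ref{the immersed wrapped Fukaya category} is needed there). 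The $A_{\infty}$-functor equations come from the codimension-one boundary, which consists of $\mathcal{W}(M)$-compositions near the inputs, $\mathcal{W}_{im}^{emb}(M)$-compositions near the output, and Gromov/edge degenerations of the interpolating surface. That $\Psi^{1}$ is a quasi-isomorphism on each morphism space — equivalently that the non-degenerate and Morse--Bott chain models compute the same $HW^{*}(L_{0},L_{1})$ — is the well-known comparison of the two definitions of wrapped Floer cohomology; it can be obtained by an action-filtration argument of the kind used in the proof of the Yoneda Lemma above, or by identifying both sides with the telescope definition over linear Hamiltonians of Section \ref{section: linear Hamiltonians}. Composing, $j=\mathfrak{i}\circ\Psi$ is the desired $A_{\infty}$-functor; it is canonical up to homotopy since the continuation construction is.

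\textbf{Main obstacle.} The only genuinely technical point is setting up $\Psi$ so that its parametrized moduli spaces degenerate correctly to \emph{both} frameworks simultaneously: the $\mathcal{W}(M)$-end uses ordinary domain-dependent perturbations of a non-degenerate Hamiltonian and transversely cut-out moduli, whereas the $\mathcal{W}_{im}^{emb}(M)$-end requires pearly-tree moduli with their Kuranishi structures and single-valued multisections. One must therefore construct a compatible system of Floer data and Kuranishi structures on the interpolating moduli spaces whose boundary restrictions match, on one side, the moduli of Section \ref{wrapped Fukaya category} and, on the other, the Kuranishi-perturbed pearly-tree moduli of Section \ref{the immersed wrapped Fukaya category}; this is routine within the Fukaya--Oh--Ohta--Ono framework but is where essentially all the work lies. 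Everything else — unobstructedness of embedded objects, the identification of morphism spaces and structure maps under $\mathfrak{i}$, and cohomological full faithfulness of $j$ — is formal given the material already developed.
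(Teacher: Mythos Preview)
Your proposal is correct and follows essentially the same approach as the paper. The paper likewise distinguishes the two models of $\mathcal{W}(M)$: with the Morse--Bott Hamiltonian the inclusion $L\mapsto(L,0)$ is literally a full subcategory inclusion (your $\mathfrak{i}$), while with a non-degenerate Hamiltonian one builds $j$ by counting interpolating inhomogeneous pseudoholomorphic maps (your continuation functor $\Psi$); the paper even writes down the relevant moduli spaces $\mathcal{M}(p,x)$ and $\mathcal{M}_{C_a}((\xi_a)_{p_a},x)$ for $j^{1}$ and then declares that higher-order terms are defined similarly, so your factorization $j=\mathfrak{i}\circ\Psi$ is a slightly cleaner packaging of the same argument. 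One small slip: the domains for a continuation-type $A_{\infty}$-functor are parametrized by the multiplihedra $\bar{\mathcal{N}}_{d+1}$ rather than the associahedra $\bar{\mathcal{M}}_{d+1}$, as the paper uses elsewhere in Section~\ref{section: product manifolds}.
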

\begin{proof}
	There are two definitions for the ordinary wrapped Fukaya category $\mathcal{W}(M)$: one is using a Morse-Bott Hamiltonian as in the case of immersed wrapped Fukaya category, the other is using a non-degenerate Hamiltonian as usual. \par
	If we use the first definition of $\mathcal{W}(M)$, this is obvious, as $\mathcal{W}(M)$ is naturally an $A_{\infty}$-sub-category of $\mathcal{W}_{im}(M)$, thus we may simply take $j$ to be the natural inclusion. \par
	Let us consider the second definition. Let $K$ be a non-degenerate time-independent Hamiltonian, which is used to define the wrapped Fukaya category $\mathcal{W}(M)$. We shall define an $A_{\infty}$-functor
\begin{equation*}
j: \mathcal{W}(M) \to \mathcal{W}_{im}(M),
\end{equation*}
which on the level of objects acts by $L \mapsto (L, 0)$. The zeroth order term vanishes, $j^{0} = 0$, and the first order term
\begin{equation*}
j^{1}: CW^{*}(L_{i}, L_{j}; K) \to CW^{*}((L_{i}, 0), (L_{j}, 0); H)
\end{equation*}
is defined as follows. On the one hand, $CW^{*}(L_{i}, L_{j}; K)$ is generated by time-one $K$-chords from $L_{i}$ to $L_{j}$: these are either Hamiltonian chords that are contained in the interior part $M_{0}$, or those that are contained in the cylindrical end $\partial M \times [1, +\infty)$. On the other hand, $CW^{*}((L_{i}, 0), (L_{j}, 0); H)$ is generated by two kinds of generators: critical points of auxiliary Morse functions on the components of the intersection $L_{i} \cap L_{j}$, as well as time-one $K$-chord that are contained in the cylindrical end $\partial M \times [1, +\infty)$. Note that $H$ and $K$ agree in the cylindrical end, so that the corresponding Hamiltonian chords which are contained in the cylindrical end agree. \par
	If $L_{i}$ and $L_{j}$ intersect transversely in the interior part $M_{0}$, these critical points are in one-to-one correspondence with the isolated intersection points of $L_{i} \cap L_{j}$. In this case, we define the map $j^{1}$ as follows. $j^{1}$ is identity for Hamiltonian chords that are contained in the cylindrical end. For time -one $K$-chords that are contained in the interior part $M_{0}$, we define the map $j^{1}$ by counting rigid elements in the following parametrized moduli space $\mathcal{M}(p, x)$ of inhomogeneous pseudoholomorphic maps
\begin{equation*}
w: S \to M,
\end{equation*}
where $S$ is a disk with one positive boundary puncture $z_{+}$ and one boundary marked point $z_{0}$, satisfying the equation:
\begin{equation*}
(dw - dt \otimes X_{H_{S}})^{0, 1} = 0,
\end{equation*}
for a domain-dependent family of Hamiltonians $H_{S}$ which is equal to $K$ near $z_{+}$ and $H$ near $z_{-}$ (identifying $S$ with an infinite strip by removing $z_{-}$, we have a natural coordinate $t \in [0, 1]$), as well as the condition:
\begin{equation*}
w(z_{0}) = p, \lim\limits_{s \to +\infty} w \circ \epsilon_{+}(s, \cdot) = x(\cdot),
\end{equation*}
where $p$ is an intersection point of $L_{i} \cap L_{j}$, and $x$ is a time-one $K$-chord from $L_{i}$ to $L_{j}$ which is contained in the interior part $M_{0}$. \par
	If $L_{i}$ and $L_{j}$ intersect cleanly in the interior part $M_{0}$, there is a decomposition
\begin{equation*}
L_{i} \cap L_{j} = \coprod_{a} C_{a}
\end{equation*}
into connected components, so that the wrapped Floer cochain space is generated by critical points of $f_{a}$ for some chosen Morse functions $f_{a}$ on $C_{a}$. There is an orientation local system $\Theta_{a}^{-}$ so that the $\Theta_{a}^{-}$-twisted Morse complex of $f_{a}$ contributes to the wrapped Floer cochain space $CW^{*}((L_{i}, 0), (L_{j}, 0); H)$. In this case, we define the map $j^{1}$ on generators of $CW^{*}(L_{i}, L_{j}; K)$ that come from interior time-one $K$-chords using slightly modified moduli spaces $\mathcal{M}_{C_{a}}((\xi_{a})_{p_{a}}, x)$, defined as follows. Let $\bar{S}$ be union of a disk $S$ with one positive boundary puncture $z_{+}$ and one boundary marked point $z_{0}$ and a half-infinite ray $T \cong (-\infty, 0]$ joint to $S$ at the point $z_{0}$. Let $w: \bar{S} \to M$ be a continuous map which is the union of two maps $w_{S}: S \to M$ and $w_{T}: T \to C_{a} \subset M$, such that $w_{S}$ is an inhomogeneous pseudoholomorphic map with two boundary components lying on $L_{i}$ and $L_{j}$ respectively, and $w_{T}$ is a gradient flow line for $f_{a}$ on $C_{a}$. Moreover, the map $w$ satisfies the following conditions:
\begin{align*}
\lim\limits_{s \to -\infty} w_{T}(s) = p_{a},\\
w_{T}(0) = w_{S}(z_{0}),\\
\lim\limits_{s \to +\infty} w_{S} \circ \epsilon_{+}(s, \cdot) = x(\cdot).
\end{align*}
In addition, we impose the following monodromy condition: the monodromy of $w$ around the boundary of $\bar{S}$ (starting from $-\infty$ of the infinite-half ray $l$ to $0$, going around $\partial S$, then returning back to $-\infty$) is $\xi_{a} \in \Theta_{a}^{-}$. Counting rigid elements in such a moduli space defines the map $j^{1}$ on generators of $CW^{*}(L_{i}, L_{j}; K)$ coming from interior $K$-chords. \par
	Higher order terms can be defined in a similar way.
\end{proof}

\begin{remark}
	In the above proof we have constructed several moduli spaces $\mathcal{M}(p, x)$ and $\mathcal{M}_{C_{a}}(p_{a}, x)$ using which we define the map $j^{1}$. Since the Lagrangian submanifolds $L_{i}, L_{j}$ are embedded and exact, we can use classical transversality argument to prove that these moduli spaces are smooth manifolds of expected dimension for generic perturbation, so that we can actually count rigid elements. 
\end{remark}
	
\section{Wrapped Floer theory in the product manifold} \label{section: product manifolds}

\subsection{Overview}
	In view of Lagrangian correspondences as Lagrangian submanifolds in the product symplectic manifold, we need to study wrapped Floer theory of the product manifold in order to understand the construction of functors between the wrapped Fukaya categories. Then a somewhat technical but essential problem arises: there are two natural models for the wrapped Fukaya category of the product manifold, which are obviously equivalent. This causes some trouble in understanding wrapped Floer theory of the product manifold. \par
	Denote by $H_{M, N} = \pi_{M}^{*}H_{M} + \pi_{N}^{*}H_{N}$ the split Hamiltonian and $J_{M, N} = J_{M} \times J_{N}$ the product almost complex structure. Using $H_{M, N}$ and $J_{M, N}$, we can define a version of the wrapped Fukaya category of the product Liouville manifold $M \times N$, which we call the split model of wrapped Fukaya category of $M \times N$ and denote it by $\mathcal{W}^{s}(M \times N)$. On the other hand, there is a natural choice of the cylindrical end $\Sigma \times [1, +\infty)$ for $M \times N$ (see \cite{Gao1}), which allows us to define Hamiltonian functions $K$ that are quadratic at infinity. Therefore the ordinary wrapped Fukaya category of $M \times N$ can also be defined as usual. \par
	However one important thing is not mentioned explicitly. That is, the classes of Lagrangian submanifolds for which we can define wrapped Floer cohomology and $A_{\infty}$-algebras are a priori different with respect to the two kinds of Hamiltonians and almost complex structures. For the the split Hamiltonian $H_{M, N}$ and product almost complex structure $J_{M, N}$, the natural class of Lagrangian submanifolds are products of objects in $\mathcal{W}(M)$ and those in $\mathcal{W}(N)$. For the quadratic Hamiltonian $K$ and almost complex structure $J$, the natural class of Lagrangian submanifolds are cylindrical Lagrangian submanifolds with respect to the cylindrical end $\Sigma \times [1, +\infty)$. If we want to identify these two versions of wrapped Floer theory, we must ensure that both classes of objects can be included in each wrapped Fukaya category. One of the main results in \cite{Gao1} confirms that this is possible, which we will review in subsection \ref{section: well-definedness of wrapped Floer theory in the product}. \par
	This section is devoted to proving Theorem \ref{two models of wrapped Fukaya categories of product manifolds are equivalent}, which says $\mathcal{W}^{s}(M \times N)$ is quasi-equivalent to $\mathcal{W}(M \times N)$. Thus the ambiguity in differentiating these two models is removed, which allows us to better understand the functoriality properties of wrapped Fukaya categories with respect to Lagrangian correspondences. \par

\begin{remark}
	In this section all the wrapped Fukaya categories are to consist of embedded Lagrangian submanifolds, but the result should continue to hold in the immersed case, following similar lines of argument. Nonetheless, note that the action filtration no longer exists, but instead we need to use energy filtration instead.
\end{remark}

	The strategy of proof goes as follows. In \cite{Gao1}, the action-restriction map is constructed. We would like to extend it to an $A_{\infty}$-functor, including it as the first-order term. \par
	Fix an ordering on the collection $\mathbb{L} = \{\mathcal{L}_{i}\}_{i=1}^{\infty}$ and consider the finite collection $\mathcal{L}_{1}, \cdots, \mathcal{L}_{d}$. In the next subsection we will introduce certain geometric data which we call action-restriction data, which are used to define a $A_{\infty}$-functor $R_{d}$ from the $A_{\infty}$-subcategory $\mathcal{W}^{s}_{d}$ of $\mathcal{W}^{s}(\mathbb{L})$ consisting of $\mathcal{L}_{1}, \cdots, \mathcal{L}_{d}$ as objects to the $A_{\infty}$-subcategory $\mathcal{W}_{d}$ of $\mathcal{W}(\mathbb{L})$ with the same objects, which acts by identity on the level of objects, and induces quasi-isomorphisms on all morphism spaces. Therefore this $A_{\infty}$-functor $R_{d}$ is a quasi-isomorphism $\mathcal{W}^{s}_{d} \to \mathcal{W}_{d}$. Moreover, the $A_{\infty}$-functor $R_{d}$ will be cohomologically unital. \par
	This would finish the proof of a weak version of Theorem \ref{two models of wrapped Fukaya categories of product manifolds are equivalent} when $M \times N$ is non-degenerate, which means that the two models of wrapped Fukaya categories are both split-generated by finitely many Lagrangian submanifolds.  \par
	To deal with the general case, we need an additional argument with the help of homological algebra discussed in subsection \ref{A-infinity homotopy direct limit}. We will return to this point later. \par
	
\begin{remark}
	Of course, the quasi-isomorphism will depend on choices of action-restriction data. But for the purpose of comparing the two versions of wrapped Fukaya categories of the product, this does not matter, since the Fukaya category itself is only well-defined up to quasi-isomorphism, because of the flexibility in the choice of auxilliary data in Floer theory. The choices themselves form a contractible space, so we might work harder to show that different functors constructed in this way are homotopic, though we will not attempt to do so in this paper.
\end{remark}

\subsection{Well-definedness}\label{section: well-definedness of wrapped Floer theory in the product}
	One thing that needs explanation is why the same class of objects in $\mathcal{W}(M \times N)$ can be included in $\mathcal{W}^{s}(M \times N)$, and vice versa. This is discussed in detail in \cite{Gao1}. We recall some relevant notions. \par

\begin{definition}\label{definition of admissible Lagrangian submanifolds in the product}
	A Lagrangian submanifold $\mathcal{L} \subset M \times N$ is called admissible, if it is exact and cylindrical, and moreover satisfies $c_{1}(M \times N, \mathcal{L}) = 0$. More specifically, $\mathcal{L}$ can be one of the following kinds:
\begin{enumerate}[label=(\roman*)]

\item a product $L \times L'$, where $L$ is an exact cylindrical Lagrangian submanifold of $M$, and $L'$ is one of $N$;

\item a cylindrical Lagrangian submanifold with respect to the natural cylindrical end $\Sigma \times [1, +\infty)$.

\end{enumerate}
\end{definition}

\begin{remark}
	Note that the notion of being cylindrical only depends on the Liouville structure, but not a particular choice of cylindrical end. In particular, class (i) belongs to class (ii). However, we distinguish them as it will be convenient when we discuss well-definedness of wrapped Floer theory.
\end{remark}

	One of the main results in \cite{Gao1} is about the well-definedness of the two versions of wrapped Floer cohomology for an admissible Lagrangian submanifold $\mathcal{L} \subset M \times N$. The argument can easily be generalized, using the same methods, to prove the following result: \par

\begin{proposition}
	Suppose $\mathcal{L} \subset M \times N$ is an admissible Lagrangian submanifold. Then wrapped Floer $A_{\infty}$-algebra associated to $\mathcal{L}$ is well-defined, with respect to either the split Hamiltonian $H_{M, N}$ and product almost complex structure $J_{M, N}$, or the quadratic Hamiltonian $K$ and almost complex structure $J$.
\end{proposition}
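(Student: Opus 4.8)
The plan is to reduce the statement to a compactness assertion for the relevant moduli spaces of inhomogeneous pseudoholomorphic disks, and then to import the appropriate maximum principle separately in the two models. To say that the wrapped Floer $A_{\infty}$-algebra of $\mathcal{L}$ is well-defined means that, for a universal and consistent choice of Floer data $P$ adapted to the chosen Hamiltonian and almost complex structure, the moduli spaces $\bar{\mathcal{M}}_{k+1}(\mathcal{L},\cdots,\mathcal{L};x_{0},\cdots,x_{k};P)$ are compact, and that for a generic such choice they are manifolds with corners of the expected dimension whose codimension-one boundary strata are products of lower-arity moduli spaces. First I would observe that, granting compactness, all the remaining ingredients — transversality by genericity of the Floer data (the space of choices being convex), the product structure of the boundary, orientations, and the identification of the cochain spaces carrying the rescaled Hamiltonians $\frac{H\circ\phi^{w}}{w^{2}}$ with the fixed one — are insensitive to the ambient manifold being a product and go through exactly as in \cite{Abouzaid1}, \cite{Seidel}. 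So the real work is the a priori $C^{0}$-estimate, which is where the two models behave differently.

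For the quadratic Hamiltonian $K$ and almost complex structure $J$, I would note that the natural cylindrical end $\Sigma\times[1,+\infty)$ of $M\times N$ makes $K$ quadratic in the radial coordinate at infinity and makes every admissible $\mathcal{L}$ — of product type (i) or cylindrical type (ii) — cylindrical with respect to this end; the situation is then literally that of \cite{Abouzaid1}, and choosing the Floer data of contact type near infinity, its maximum principle produces a compact subset of $M\times N$ containing the image of every solution, uniformly over the moduli of domains. Gromov compactness then applies, and the $A_{\infty}$-algebra is extracted as usual.

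For the split Hamiltonian $H_{M,N}=\pi_{M}^{*}H_{M}+\pi_{N}^{*}H_{N}$ and product almost complex structure $J_{M,N}=J_{M}\times J_{N}$, I would use that $X_{H_{M,N}}=X_{H_{M}}\oplus X_{H_{N}}$, so that a solution $u=(u_{M},u_{N})$ projects to solutions of the corresponding perturbed equations in $M$ and in $N$. When $\mathcal{L}=L\times L'$ is a product, the boundary conditions decouple as well, and since $H_{M}$ and $H_{N}$ are admissible one applies the maximum principle factor by factor, bounds $u_{M}$ and $u_{N}$ separately, and hence bounds $u$; Gromov compactness in $M\times N$ follows. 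When $\mathcal{L}$ is merely cylindrical with respect to the natural end, the projections need not be cylindrical and this factorwise argument fails at the boundary; here I would invoke the maximum principle established in \cite{Gao1} on $M\times N$ directly, which exploits that $H_{M,N}$ is quadratic along the diagonal directions of the corner region of $\Sigma\times[1,+\infty)$ and remains controlled in the mixed regions. Since that argument is local in the target and, on the strip-like ends, the chosen Floer data are translation-invariant and of the standard asymptotic form, it is uniform over the moduli of domains and applies to all arities $k$ without change.

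The hard part is exactly this last point — making the maximum principle for a cylindrical (non-product) $\mathcal{L}$ under the split Hamiltonian uniform over the moduli of disk domains, rather than merely for strips as in the cohomological discussion of \cite{Gao1}. My expectation is that this requires no new idea: the cohomology-level estimates there are carried out region by region on the natural cylindrical end and depend only on the local form of $H_{M,N}$, the contact-type condition near infinity, and the cylindrical boundary condition, all of which are already encoded in the Floer data for every domain, so the generalization is a matter of bookkeeping. Once the $C^{0}$-bound is in place in both models, the compact moduli spaces yield the operations $m^{k}$ and the $A_{\infty}$-relations in the standard way, completing the proof.
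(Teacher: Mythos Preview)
Your treatment of the quadratic model $(K,J)$ is fine and matches the paper. For the split model, your factorwise projection argument for product Lagrangians $\mathcal{L}=L\times L'$ is a legitimate alternative route that the paper does not take; it works because with product $J_{M,N}$ the equation decouples and transversality can indeed be achieved within product almost complex structures when the boundary condition is a product.

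The gap is in the non-product cylindrical case. You invoke ``the maximum principle established in \cite{Gao1} on $M\times N$ directly,'' describing it as exploiting that $H_{M,N}$ is quadratic along diagonal directions and controlled in mixed regions. That is not the mechanism. There is no direct maximum principle for $(H_{M,N},J_{M,N})$ available here; the split Hamiltonian is not of contact type with respect to the natural cylindrical end $\Sigma\times[1,+\infty)$, so the standard convexity argument does not apply as stated. What the paper (following \cite{Gao1}) actually does is an \emph{action-restriction modification}: given the chords $\underline{x}_{0},\underline{x}_{1}$, choose $b$ so that their actions exceed $-b$, then modify $H_{M,N}$ to an admissible Hamiltonian $K_{b}$ and $J_{M,N}$ to a contact-type $J_{b}$, both agreeing with the split data inside a large compact set $\{r\le A\}$ and quadratic/contact-type on $\Sigma\times[B,+\infty)$. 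One then shows that $(H_{M,N},J_{M,N})$-strips connecting $\underline{x}_{0},\underline{x}_{1}$ are in bijection with $(K_{b},J_{b})$-strips, and applies the ordinary maximum principle to the latter. This is not ``local in the target'' in your sense, and extending it to higher $m^{k}$ is not pure bookkeeping but requires the action-restriction data the paper introduces later.

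Two further points you gloss over. First, transversality in the split model for non-product $\mathcal{L}$ generally requires domain-dependent perturbations of $J_{M,N}$ inside $\mathcal{J}(M\times N)$, not merely product perturbations; the paper flags this explicitly. Second, compactness has a second part you omit: for fixed inputs there are only finitely many outputs with nonempty moduli. The paper obtains this from explicit action computations for $H_{M,N}$-chords in each of the three regions $\partial M\times[1,\infty)\times\partial N\times[1,\infty)$, $\mathrm{int}(M)\times\partial N\times[1,\infty)$, and $\partial M\times[1,\infty)\times\mathrm{int}(N)$, showing the action tends to $-\infty$ as the chord goes to infinity, and then invoking the action-energy equality.
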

\begin{proof}[Sketch of proof]
	This is proved in \cite{Gao1}. We shall review some key arguments while referring to \cite{Gao1} for details. There are two key issues in proving well-definedness: transversality and compactness of the moduli spaces of pseudoholomorphic disks. \par
	Regarding transversality, perturbing the almost complex structure $J_{M, N}$ within the class of product almost complex structures $\mathcal{J}(M) \times \mathcal{J}(N)$ might not ensure enough genericity to make the moduli spaces regular. However, when defining the wrapped Fukaya category, we shall take domain-dependent perturbations of almost complex structures in a suitable way. Namely, if we allow perturbations of $J_{M, N}$ in a neighborhood in $\mathcal{J}(M \times N)$ instead of just product almost complex structures, then transversality can be achieved. On the other hand, if one of the Lagrangian submanifolds involved in $m^{k}$ is a product Lagrangian $L \times L'$, it is true that transversaliy of the moduli spaces involved can be achieved by perturbations within the class of product almost complex structures. The argument is similar to Wehrheim and Woodward's argument in quilted Floer cohomology (for compact monotone or exact Lagrangian submanifolds), combined with the transversality argument in wrapped Floer theory. \par
	Compactness is the substantial issue that we need to think carefully about. Consider for simplicity the case of a pair $(\mathcal{L}_{0}, \mathcal{L}_{1})$ for which we want to define wrapped Floer cohomology using the split Hamiltonian $H_{M, N}$ and product almost complex structure $J_{M, N}$. For a pair of $H_{M, N}$-chords $\underline{x}_{0}, \underline{x}_{1}$ from $\mathcal{L}_{0}$ to $\mathcal{L}_{1}$, the moduli space $\mathcal{M}(\underline{x}_{0}, \underline{x}_{1}) = \tilde{\mathcal{M}}(\underline{x}_{0}, \underline{x}_{1})/\mathbb{R}$ of inhomogeneous pseudoholomorphic strips from $x$ to $y$ has a natural Gromov bordification $\bar{\mathcal{M}}(\underline{x}_{0}, \underline{x}_{1})$ by adding broken strips. To ensure that $\bar{\mathcal{M}}(\underline{x}_{0}, \underline{x}_{1})$ is compact, the main ingredient in addition to Gromov compactness is the maximum principle, which prevents strips connecting $\underline{x}_{0}$ and $\underline{x}_{1}$ from escaping to infinity. By directly estimating the action of Hamiltonian chords, and using the action-energy equality to give an a priori estimate for the energy of inhomogeneous pseudoholomorphic disks. On the other hand, we need a $C^{0}$-bound, which can be achieved by comparing the split Hamiltonian $H_{M, N}$ with an admissible one. The argument is done when the action-restriction map is constructed and proved to be a cochain map. Let us recall that here. Given $\underline{x}_{0}$ and $\underline{x}_{1}$, we choose $b$ so that their action is greater than $-b$, and correspondingly a large compact subset of $M \times N$ containing all Hamiltonian chords whose action is greater than $-b$. Then we modify the Hamiltonian $H_{M, N}$ to another one $K_{b}$ which agrees with $H_{M, N}$ inside of this compact set, and is quadratic in the radial coordinate on $\Sigma \times [B, +\infty)$ for some $B$ large enough (depending on $b$). A corresponding almost complex structure is also constructed, which is of contact type in $\Sigma \times [B, +\infty)$. It is then shown that $(H_{M, N}, J_{M, N})$-pseudoholomorphic strips connecting $\underline{x}_{0}$ and $\underline{x}_{1}$ are in one-to-one correspondence with $(K_{b}, J_{b})$-pseudoholomorphic strips connecting $\underline{x}_{0}$ and $\underline{x}_{1}$. In particular, maximum principle can be applied to give $C^{0}$-bound for $(K_{b}, J_{b})$-pseudoholomorphic strips, and therefore also $(H_{M, N}, J_{M, N})$-pseudoholomorphic strips. Such analysis is done in details in \cite{Gao1}. \par
	The second part of the compactness result in wrapped Floer theory is that for fixed $\underline{x}_{1}$, the moduli space $\bar{\mathcal{M}}(\underline{x}_{0}, \underline{x}_{1})$ is empty for all but finitely many $\underline{x}_{0}$'s. This can be proved again using the action-energy equality. In the case of a single $M$, the statement follows from the fact that if there is an Hamiltonian chord $x$ lying on the level hypersurface $\partial M \times \{r\}$, then its action is roughly $-r^{2}$ (with a difference contributed from the primitives of the Lagrangian submanifolds, which are negligible for large $r$). \par
	Now in case of split Hamiltonian $H_{M, N}$ on the product $M \times N$, the Hamiltonian vector field $X_{H_{M, N}}$ no longer agrees with the Reeb vector field on the contact hypersurface we have chosen, because that hypersurface is a small deformation of the singular hypersurface $\partial (M \times N)$. As a consequence, if $\underline{x}=(x, x')$ is an $H_{M, N}$-chord, then $x$ is an $H_{M}$-chord and $x'$ is an $H_{N}$-chord. In the cylindrican end, such a chord is no longer exactly located on some smooth contact hypersurface $\Sigma \times \{r\}$. There are three places where such chords can occur. The first place is $\partial M \times [1, +\infty) \times \partial N \times [1, +\infty)$, where $x$ lies on some hypersurface $\partial M \times \{r_{1}\}$ and $x'$ on $\partial N \times \{r_{2}\}$. We can calculate its action to be $-r_{1}^{2} - r_{2}^{2} + c$, where $c$ is a uniformly bounded (i.e. independent of the radial coordinates $r_{1}, r_{2}$) constant coming from the primitives of $\mathcal{L}_{0}$ and $\mathcal{L}_{1}$. The second place is $int(M) \times \partial N \times [1, +\infty)$, where now $x$ is an $H_{M}$-chord in the interior of $M$ (there are finitely many such "interior" chords, all of which have small action), and $x'$ lies on some hypersurface $\partial N \times \{r_{2}\}$. The third place is $\partial M \times [1, +\infty) \times N$. This case is similar to the second one. \par
	 These action computations yield the desired compactness result: since the action of such chords must also go to negative infinity as the positions of them tend to infinity, there cannot be infinitely many inhomogeneous pseudoholomorphic disks with different outputs which tend to infinity, as the energy of any inhomogeneous pseudoholomorphic disk is always positive. \par
	 We have thus shown that the wrapped Floer cohomology for an admissible pair $(\mathcal{L}_{0}, \mathcal{L}_{1})$ is well-defined using the split Hamiltonian $H_{M, N}$ and product almost complex structure $J_{M, N}$ (but with generic domain-dependent perturbations). The same argument applies to the definition of higher structure maps $m^{k}$, by using action-energy equality for inhomogeneous pseudoholomorphic disks with more punctures. \par
\end{proof}

\subsection{Action-restriction data}
	As mentioned at the beginning of section \ref{section: product manifolds} main technical issue in studying Floer theory on the product manifold $M \times N$ and relating that to quilted Floer theory is with regard to the choice of Hamiltonian functions. Having found the cylindrical end $\Sigma \times [1, +\infty)$, one can immediately set up wrapped Floer theory using a Hamiltonian that depends only on the radial coordinate $r$ in the cylindrical end and has quadratic growth for $r$ large enough. On the other hand, what is more directly related to quilted Floer theory is the split Hamiltonian, i.e. the sum of the two Hamiltonians on both factors $M$ and $N$. However, the split Hamiltonian is not a priori admissible in the usual sense of wrapped Floer theory. Also, there is a similar issue with almost complex structures. Thus it is not immediately clear that the resulting two versions of wrapped Fukaya categories are equivalent. \par
	By the invariance nature of Floer cohomology, it is expected that these two versions are equivalent. In \cite{Gao1}, we showed that these two types of geometric data define isomorphic wrapped Floer cohomology, by constructing a cochain quasi-isomorphism between the two versions of wrapped Floer cochain complexes. Let us briefly summarize the idea here. The action functional defines a filtration on the wrapped Floer complex $CW^{*}(\mathcal{L}_{0}, \mathcal{L}_{1}; H_{M, N}, J_{M, N})$. Consider the truncated complex $CW^{*}_{(-b, a]}(\mathcal{L}_{0}, \mathcal{L}_{1}; H_{M, N}, J_{M, N})$. Starting from the split Hamiltonian, we may construct an admissible Hamiltonian $K_{b}$ which agrees with the split Hamiltonian inside some large compact set $C_{b}$ depending on $b$, and outside a bigger compact set $C'_{b}$ is quadratic in the radial coordinate $r$ on $\Sigma \times [1, +\infty)$. Note that the wrapped Floer complexes as graded $\mathbb{Z}$-modules do not depend on the choice of almost complex structure. Thus the generators of $CW^{*}_{(-b, a]}(\mathcal{L}_{0}, \mathcal{L}_{1}; H_{M, N}, J_{M, N})$ are automatically generators of $CW^{*}_{(-b, a]}(\mathcal{L}_{0}, \mathcal{L}_{1}; K_{b}, J_{b})$ for any admissible $J_{b}$. We therefore get a homomorphism of graded modules
\begin{equation} \label{action-restriction map}
\bar{R}^{1}_{b}: CW^{*}_{(-b, a]}(\mathcal{L}_{0}, \mathcal{L}_{1}; H_{M, N}, J_{M, N}) \to CW^{*}_{(-b, a]}(\mathcal{L}_{0}, \mathcal{L}_{1}; K_{b}, J_{b}).
\end{equation} \par

	We proved in \cite{Gao1} that this is an isomorphism of graded modules by showing that some extra chords of the Hamiltonian $K_{b}$ outside $C_{b}$ but inside $C'_{b}$ have sufficiently positive action, and consequently those chords do not lie in the truncated complex. By modifying $J_{M, N}$ to an admissible almost complex structure $J_{b}$, we proved that \eqref{action-restriction map} is a cochain map. \par
	To extend the above map to the whole wrapped Floer complex, we need to identify each truncated Floer complex $CW^{*}_{(-b, a]}(\mathcal{L}_{0}, \mathcal{L}_{1}; K_{b}, J_{b})$ with one defined with respect to a Hamiltonian and an almost complex structure independent of $b$. \par

\begin{lemma}
	There exists a Hamiltonian $K$ which is quadratic in the radial coordinate on $\Sigma \times [1, +\infty)$ away from a compact set, and an almost complex structure of contact type with respect to $\Sigma$, such that for any $b > 0$, there is a cochain homotopy equivalence
\begin{equation}
h_{b}: CW^{*}_{(-b, a]}(\mathcal{L}_{0}, \mathcal{L}_{1}; K_{b}, J_{b}) \to CW^{*}_{(-b, a]}(\mathcal{L}_{0}, \mathcal{L}_{1}; K, J).
\end{equation}
\end{lemma}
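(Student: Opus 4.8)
# Proof proposal for the Lemma

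The plan is to construct the Hamiltonian $K$ once and for all, and then for each $b > 0$ build an explicit continuation-type cochain map $h_b$ comparing the truncated complexes defined with respect to $(K_b, J_b)$ and $(K, J)$. First I would fix $K$ to be any Hamiltonian that is quadratic in the radial coordinate $r$ on $\Sigma \times [1, +\infty)$ outside a compact set, chosen so that $K$ agrees with the split Hamiltonian $H_{M,N}$ on as large a compact set as we wish — concretely, on the compact set $C_0$ containing all $H_{M,N}$-chords of action greater than $-1$, say, so that the family $\{K_b\}_{b > 0}$ interpolates between $K$ and the actual admissible Hamiltonians $K_b$ of the previous lemma in a controlled way. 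The key point is that $K_b$ and $K$ differ only on the region $\Sigma \times [B(b), +\infty)$ where both are radially quadratic but possibly with different "steepness"; since any two such Hamiltonians are connected by a monotone homotopy of radially quadratic Hamiltonians, a standard continuation argument gives a cochain map. The subtlety is that we must work with truncated complexes, so we need the continuation map to respect the action filtration up to a controlled shift, and then check that on the window $(-b, a]$ it is in fact an isomorphism.

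Here are the steps in order. \emph{Step 1:} Fix $K$ and $J$ as above. \emph{Step 2:} For fixed $b$, recall from the previous lemma's construction that $K_b = H_{M,N}$ on $C_b$ and $K_b$ is radially quadratic outside $C'_b$; by enlarging $B(b)$ if necessary, arrange that $K_b = K$ on a compact set large enough to contain all Hamiltonian chords of either $K$ or $K_b$ whose action lies in $(-b, a]$. \emph{Step 3:} Construct a monotone homotopy $\{K_b^s\}_{s \in \mathbb{R}}$ from $K_b$ to $K$ through Hamiltonians that are radially quadratic outside a (possibly $s$-dependent) compact set, together with a matching homotopy of contact-type almost complex structures from $J_b$ to $J$; define the continuation cochain map in the usual way by counting solutions of the $s$-dependent Floer equation. \emph{Step 4:} Use the action-energy identity together with the monotonicity of the homotopy to show that the continuation map shifts action by at most a uniformly bounded amount, and more importantly, that any continuation trajectory with asymptotics of action in $(-b, a]$ has image contained in the compact region where $K_b^s$ is independent of $s$ and equal to both $K$ and $K_b$ — so that such trajectories are in fact $s$-independent Floer strips, hence constant or rigid. \emph{Step 5:} Conclude that on the truncated window the continuation map is filtered, that its composition with the reverse continuation map is chain-homotopic to the identity (again by an energy argument confining the relevant parametrized moduli spaces to the compact region), and hence that $h_b$ is a cochain homotopy equivalence.

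The main obstacle I expect is \textbf{Step 4}: controlling the continuation trajectories well enough to confine them, in the relevant action window, to the region where the homotopy is stationary. Unlike in the construction of the action-restriction map itself — where one compares a \emph{single} Hamiltonian with another and can exploit that the extra chords have large positive action — here the homotopy parameter $s$ introduces genuinely $s$-dependent dynamics on the overlap region $C'_b \setminus C_b$, and the maximum principle must be applied carefully to the parametrized Floer equation. The resolution is to choose the homotopy $\{K_b^s\}$ so that it is monotone (nonincreasing in $s$ at infinity) and radially quadratic with $s$-dependent slope; then the no-escape lemma for such homotopies (as in \cite{Abouzaid1}, and adapted in \cite{Gao1} to the product setting) gives a $C^0$-bound, and combining this with the a priori energy bound from the action-energy equality forces the image into the desired compact set once $b$ is fixed. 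The remaining verifications — that $h_b$ is a cochain map, that it respects the truncation, and that it is a homotopy equivalence — are then routine continuation-map arguments, so I would state them briefly and refer to \cite{Abouzaid1} and \cite{Gao1} for the analytic details.
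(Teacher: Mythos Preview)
Your proposal has a genuine gap, stemming from a mistaken picture of how $K_b$ and a fixed admissible $K$ relate. You assert that ``$K_b$ and $K$ differ only on the region $\Sigma \times [B(b), +\infty)$ where both are radially quadratic.'' This is not correct. By construction, $K_b$ equals the split Hamiltonian $H_{M,N}$ on the compact set $C_b = \{r \le A(b)\}$, and $A(b)$ grows without bound as $b \to \infty$. A fixed admissible $K$, on the other hand, is radially quadratic in $r$ outside some \emph{fixed} compact set $C_0$, and hence agrees with $H_{M,N}$ only on (at most) $C_0$. On the annular region $C_b \setminus C_0$ you have $K_b = H_{M,N}$ (which is \emph{not} radially quadratic in the $\Sigma$-coordinate) while $K$ is already radially quadratic; these do not agree. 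This region contains plenty of Hamiltonian chords of both $K$ and $K_b$ with action in $(-b, a]$ once $b$ is large. Your Step~2 is therefore impossible to arrange for a single $b$-independent $K$, and the confinement claimed in Step~4 has no reason to hold: the homotopy $K_b^s$ is genuinely moving on a region of size growing with $b$ that contains the relevant chords, so continuation trajectories cannot be reduced to $s$-independent strips.

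The paper's argument avoids this by inserting a key step you are missing: a \emph{Liouville rescaling}. One applies the time-$(-\log B)$ Liouville flow to $K_b$; since $K_b$ is quadratic for $r \ge B$, the rescaled Hamiltonian $K'_b$ is quadratic already for $r$ outside a neighborhood of $\{r \le 1\}$. The Liouville flow is an exact symplectomorphism preserving the cylindrical Lagrangians, so this rescaling gives a canonical identification of the truncated wrapped Floer complexes. After rescaling, $K'_b$ and the fixed $K$ (chosen quadratic outside a neighborhood of $\{r \le 1\}$) differ only on a \emph{fixed} compact set independent of $b$, and a compactly-supported homotopy between them induces the desired cochain homotopy equivalence by the standard continuation argument. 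The rescaling is what converts the $b$-dependent discrepancy region into a uniformly compact one; without it, a direct continuation comparison between $K_b$ and $K$ does not yield the control you need.
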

\begin{proof}
	This is proved in \cite{Gao1}. Let us briefly recall the argument here. The Hamiltonian $K_{b}$ is quadratic in the radial coordinate $r$ on $\Sigma \times [1, +\infty)$ for $r \ge B$ where $B$ is a suitable number depending quadratically on $b$. We may use the time-$-\log(B)$ Liouville flow to rescale the Hamiltonian so that the rescaled Hamiltonian is quadratic outside a neighborhood of $r \le 1$. The resulting Hamiltonian $K'_{b}$ still depends on $b$, but we can choose a particular Hamiltonian $K$ which agrees with $K'_{b}$ outside a neighborhood of $r \le 1$, such that there is a compactly-supported homotopy between $K'_{b}$ and $K$. This homotopy induces a cochain homotopy equivalence between the corresponding wrapped Floer complexes.
\end{proof}

	Then we compose the above cochain homotopy equivalence with \eqref{action-restriction map} (which we still denote by the same symbol) to obtain a cochain map:
\begin{equation}\label{action-restriction map, with a Hamiltonian independent of filtration}
R^{1}_{b}: CW^{*}_{(-b, a]}(\mathcal{L}_{0}, \mathcal{L}_{1}; H_{M, N}, J_{M, N}) \to CW^{*}_{(-b, a]}(\mathcal{L}_{0}, \mathcal{L}_{1}; K, J),
\end{equation}
\begin{equation}
R^{1}_{b} = h_{b} \circ \bar{R}^{1}_{b},
\end{equation}
for admissible $K$ and $J$ independent of $b$. These maps are shown to commute up to homotopy with natural inclusions of subcomplexes under action filtration, so that we are able to take the limit (homotopy direct limit) to obtain the desired cochain quasi-isomorphism on the whole wrapped Floer complex:
\begin{equation}\label{action-restriction map on the whole wrapped Floer complex}
R^{1}: CW^{*}(\mathcal{L}_{0}, \mathcal{L}_{1}; H_{M, N}, J_{M, N}) \to CW^{*}(\mathcal{L}_{0}, \mathcal{L}_{1}; K, J).
\end{equation}
See the last section of \cite{Gao1} for more details for the construction of this map. \par

\begin{remark}
	The way we define the action-restriction map is not via certain parametrized moduli space, but this map can alternatively be interpreted as the continuation map induced by a monotone homotopy from the split Hamiltonian to the admissible Hamiltonian, on the truncated wrapped Floer cochain complexes.
\end{remark}

	In the construction of the map $R^{1}_{b}$ above, there are several parameters involved in the procedure of modifying the Hamiltonian (e.g. the size of the compact set, the precise behavior of the Hamiltonian $K_{b}$, etc.), such that we can obtain the desired estimate on the action of the extra chords. We call a choice of these parameters, or more essentially the collection of geometric data these parameters determine, an action-restriction datum. A precise definition is given below. \par

\begin{definition} \label{definition of action-restriction datum for strips}
	An action-restriction datum for the strip $Z = \mathbb{R} \times [0, 1]$ and the pair $(\mathcal{L}_{0}, \mathcal{L}_{1})$ of Lagrangian submanifolds of $M \times N$ consists of the following data:
\begin{enumerate}[label=(\roman*)]

\item a truncation $(-b, a]$ of the wrapped Floer cochain space $CW^{*}(\mathcal{L}_{0}, \mathcal{L}_{1}; H_{M, N})$ using the action filtration;

\item a large number $A$ such that all $H_{M, N}$-chords from $\mathcal{L}_{0}$ to $\mathcal{L}_{1}$ of action in $(-b, a]$ are contained in the compact set $\{ r \le A \}$, and which satisfies an additional property to specified in the next condition;

\item an admissible Hamiltonian function $K_{b}$, which is of split type in the compact set $\{ r \le A \}$, and is quadratic outside a small neighborhood of a larger compact set $\{ r \le cA \}$; the number $A$ should be large enough such that all the extra chords of $K_{b}$ compared to $H_{M, N}$ have sufficiently positive action;

\item an admissible almost complex structure $J_{b}$, which agrees with the product almost complex structure $J_{M, N}$ in the compact set $\{ r \le A \}$, and is of contact type outside a neighborhood of the compact set $\{ r \ge cA \}$;

\item a homotopy $H_{s}$ between $H_{M, N}$ and $K_{b}$, parametrized by $s \in \mathbb{R}$, which has support within the compact set $\{r \le cA\}$, and constantly equals $H_{M, N}$ for $s$ sufficiently negative and $K_{b}$ for $s$ sufficiently positive;

\item a deformation $J_{s}$ from $J_{M, N}$ to $J_{b}$, parametrized by $s \in \mathbb{R}$, which has support within the compact set $\{r \le cA\}$, and constantly equals $H_{M, N}$ for $s$ sufficiently negative and $J_{b}$ for $s$ sufficiently positive.

\end{enumerate}
\end{definition}

	The existence of such action-restriction data was proved in \cite{Gao1}, which was in turn used to prove: \par
	
\begin{proposition}
	The map \eqref{action-restriction map} is a cochain isomorphism. Therefore, the maps \eqref{action-restriction map on the whole wrapped Floer complex} and \eqref{action-restriction map, with a Hamiltonian independent of filtration} are cochain homotopy equivalence.
\end{proposition}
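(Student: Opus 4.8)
The plan is to deduce the statement from the two facts established in \cite{Gao1}: the existence of an action-restriction datum as in Definition \ref{definition of action-restriction datum for strips}, and the fact that the action functional genuinely defines a filtration on $CW^{*}(\mathcal{L}_{0}, \mathcal{L}_{1}; H_{M, N}, J_{M, N})$. Granting these, one first checks that $\bar{R}^{1}_{b}$ is bijective as a map of graded $\mathbb{Z}$-modules. Since the wrapped Floer complex as a graded module depends only on its generators (time-one Hamiltonian chords) and not on the almost complex structure, it suffices to compare the chords of $H_{M, N}$ and of $K_{b}$ lying in the truncation window $(-b, a]$. By construction $K_{b}$ agrees with $H_{M, N}$ on $\{r \le A\}$, which contains every $H_{M, N}$-chord of action greater than $-b$; the remaining ``extra'' chords of $K_{b}$ are created in the collar $\{A \le r \le cA\}$ and in the quadratic region, and the action estimate built into the datum guarantees that, for $A$ chosen large enough depending on $a$ and $b$, all of these have action strictly greater than $a$. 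Hence the two sets of generators of action in $(-b, a]$ coincide and $\bar{R}^{1}_{b}$ is the tautological identification.

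The next step is to verify that $\bar{R}^{1}_{b}$ intertwines the two differentials, the source counting $(H_{M, N}, J_{M, N})$-strips and the target counting $(K_{b}, J_{b})$-strips. This is a $C^{0}$-confinement argument: by the action-energy equality, any $(H_{M, N}, J_{M, N})$-strip connecting two chords of action in $(-b, a]$ has a priori bounded action, hence bounded energy; and because $K_{b}$, $J_{b}$ agree with $H_{M, N}$, $J_{M, N}$ on a compact set containing all such chords, while $J_{b}$ is of contact type and $K_{b}$ quadratic outside it, the maximum principle forces such a strip to remain inside $\{r \le A\}$, where it satisfies precisely the $(K_{b}, J_{b})$-equation. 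The reverse confinement is identical. Therefore the two relevant moduli spaces, together with their Gromov bordifications, are canonically identified, and $\bar{R}^{1}_{b}$ is a cochain isomorphism. This maximum-principle step, which depends on the prescribed behavior of $K_{b}$ and $J_{b}$ outside the compact set, is the main obstacle and is the technical heart of the construction in \cite{Gao1}.

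Finally, the two consequences follow formally. Since $R^{1}_{b} = h_{b} \circ \bar{R}^{1}_{b}$ is the composition of a cochain isomorphism with the cochain homotopy equivalence $h_{b}$ of the preceding Lemma, it is itself a cochain homotopy equivalence, which gives the claim for \eqref{action-restriction map, with a Hamiltonian independent of filtration}. Passing to the limit, one checks by the same confinement argument applied to continuation strips that the $R^{1}_{b}$ commute up to homotopy with the inclusions of truncated subcomplexes under the action filtration; the homotopy direct limit construction of subsection \ref{A-infinity homotopy direct limit} then yields the cochain quasi-isomorphism $R^{1}$ of \eqref{action-restriction map on the whole wrapped Floer complex}.
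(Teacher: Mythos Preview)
Your proof sketch is correct and follows essentially the same approach as the paper, which in fact does not give a proof here but simply cites \cite{Gao1}; the argument you reconstruct---generator bijection via the action estimate on the extra $K_{b}$-chords, followed by a $C^{0}$-confinement/maximum-principle argument to identify the two moduli spaces of strips---is precisely what the paper summarizes in the paragraphs preceding the proposition and in subsection \ref{section: well-definedness of wrapped Floer theory in the product}.
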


	To define a higher-order analogue of the action restriction map \eqref{action-restriction map, with a Hamiltonian independent of filtration} so that we can extend it to an $A_{\infty}$-functor, we need an analogue of action-restriction datum for disks with more punctures. \par

\begin{definition} \label{definition of action-restriction datum for punctured disks}
	An action-restriction datum for a disk $S$ with $(k+1)$-boundary punctures and a $(k+1)$-tuple of Lagrangian submanifolds $(\mathcal{L}_{0}, \cdots, \mathcal{L}_{k})$ consists of the following data:
\begin{enumerate}[label=(\roman*)]

\item a collection of positive integers $w^{i}, i=0, \cdots, k$, called weights;

\item a smooth function $\psi_{S}: \partial S \to [1, cA]$ parametrizing moving Lagrangian boundary conditions $\phi_{M \times N}^{\psi_{S}(z)} \mathcal{L}_{i}$, when $z$ lies between $z_{i}$ and $z_{i+1}$, which is equal to $w_{i}$ near $z_{i}$;

\item truncations $(-b_{i}, a_{i}]$ of the action filtration on $CW^{*}(\mathcal{L}_{i-1}, \mathcal{L}_{i}; H_{M, N})$, for $i = 1, \cdots, k$, and a truncation $(-b_{0}, a_{0}]$ of $CW^{*}(\mathcal{L}_{0}, \mathcal{L}_{k}; H_{M, N})$, subject to the following condition:
\begin{equation} \label{action relation}
\sum_{i=1}^{k} b_{i} \le b_{0}, \text{ and }
\sum_{i=1}^{k} a_{i} \le a_{0};
\end{equation}

\item a sub-closed one-form $\alpha_{S}$ on $S$, which vanishes along the boundary $\partial S$, and agrees with $w^{i}dt$ over the $i$-th strip-like end, such that the differential $d\alpha_{S}$ also vanishes in a small neighborhood of the boundary;

\item a large number $A_{i}$ depending on $b_{i}$, such that all $H_{M, N}$-chords from $\mathcal{L}_{i-1}$ to $\mathcal{L}_{i}$ ($i = 1, \cdots, k$) and those from $\mathcal{L}_{0}$ to $\mathcal{L}_{k}$ within the action ranges $(-b_{i}, a_{i}]$ and $(-b_{0}, a_{0}]$ are contained in the compact set $\{ r \le A_{i} \}$, where $r$ is the radial coordinate on the cylindrical end $\Sigma \times [1, +\infty)$;

\item a modified Hamiltonian $K_{b_{i}}$ which agrees with the split Hamiltonian in the compact set $\{r \le A_{i}\}$, and is quadratic in $r$ outside $\{r \le cA_{i}\}$. The choice of $A_{i}$ above should satisfy the additional condition that the extra $K_{b_{i}}$-chords in the region $\{ r \le cA_{i} \}$ from $\mathcal{L}_{i}$ to $\mathcal{L}_{i+1}$ (respectively from $\mathcal{L}_{0}$ to $\mathcal{L}_{k}$ all have sufficiently positive action, much bigger than $a_{i}$ (respectively $a_{0}$);

\item an $S$-parametrized family of admissible Hamiltonians $K_{S, \vec{b}}$, such that over the $i$-th strip-like end ($i=1, \cdots, k$), $K_{S, \vec{b}}$ agrees with the split Hamiltonian $H_{M, N}$ rescaled by weight $w^{i}$, and over the $0$-th strip-like end, $K_{S, \vec{b}}$ agrees with the modified Hamiltonian $K_{b_{0}}$, rescaled by weight $w^{0}$;

\item a modified almost complex structure $J_{b_{i}}$, which is product-type in $\{r \le A_{i}\}$ and is generic of contact type outside $\{r \le cA_{i}\}$;

\item an $S$-parametrized family of admissible almost complex structures $J_{S, \vec{b}}$, such that over the $i$-th strip-like end ($i=1, \cdots k$), it agrees with the product almost complex structure $J_{M, N}$, and over the $0$-th strip-like end, it agrees with the modified almost complex structure $J_{b_{0}}$.

\end{enumerate}
In particular, when restricting to each strip-like end, the Hamiltonian and the almost complex structure should be independent of the $s$-coordinate, at least for sufficiently large $|s|$. Here $s$ is the coordinate on $\mathbb{R}_{\pm}$.
\end{definition}

	Later, we shall slightly modify the definition of action-restriction data, as the abstract moduli spaces underlying inhomogeneous pseudoholomorphic curves used to define $A_{\infty}$-functors consist of not only punctured disks but also an additional non-negative number associated to each punctured disk (see the next subsection). All the ingredients of an action-restriction datum will be the same, though. \par

\begin{remark}
	Although we need only to modify the Hamiltonian near the $0$-th strip-like end, we include all the relevant information near other strip-like ends because it will be convenient to extend the action-restriction data to nodal disks and state the gluing process for action-restriction data.
\end{remark}

	With these definitions, one of the main results of \cite{Gao1} can be interpreted as having established the existence, i.e. possibility of making a choice, of action-restriction data for the strip and any pair of admissible Lagrangian submanifolds, as well as those for the triangle and any triple of admissible Lagrangian submanifolds. As is expected, the existence in fact holds for any polygon and any tuple of admissible Lagrangian submanifolds, whose construction follows the same pattern as that in \cite{Gao1}. \par

\subsection{Choosing action-restriction data for all curves}
	The underlying operad controlling $A_{\infty}$-functors is commonly known as multiplihedra, introduced by Stasheff. Before constructing the action-restriction functor, we need a model for multiplihedra whose elements are the underlying domains for various inhomogeneous pseudoholomorphic maps used to define the action-restriction functor which relates the two versions of wrapped Fukaya categories of the product manifold. There are many equivalent models for the multiplihedra, among which the version used in \cite{Sylvan} to construct continuation functors of wrapped Fukaya categories seems mostly adaptable to our setup, because we have always used a single Hamiltonian to define the wrapped Fukaya category, instead of a cofinal family of linear Hamiltonians. \par
	The multiplihedra are constructed as compactifications of moduli spaces $\mathcal{N}_{k+1}$ of punctured disks equipped with a weight. We set $\mathcal{N}_{1+1}$ to be a single-point set, which consists of the strip $Z$ equiped with a positive strip-like end at $+\infty$ and a negative strip-like end at $-\infty$. For $k \ge 2$, define $\mathcal{N}_{k+1}=\mathcal{M}_{k+1} \times \mathbb{R}_{+}$. \par
	We are going to describe a compactification $\bar{\mathcal{N}}_{k+1}$ of $\mathcal{N}_{k+1}$, which serves as a model for the multiplihedra. These are constructed inductively in $k$. Set $\bar{\mathcal{N}}_{2} = \mathcal{N}_{2}$. Suppose we have constructed $\bar{\mathcal{N}}_{l+1}$ for $l<k$, which has boundary and generalized corners being products of associahedra and multiplihedra of lower dimensions, and that for each element $(S, w) \in \bar{\mathcal{N}}_{l+1}$ we have chosen strip-like ends near the punctures in a consistent way. Then we want to construct $\bar{\mathcal{N}}_{k+1}$ as well as choose strip-like ends for every element therein, such that the following two conditions are satisfied:
\begin{enumerate}[label=(\roman*)]

\item The boundary and generalized corners of $\bar{\mathcal{N}}_{k+1}$ are products of $\mathcal{M}_{i+1}$'s and $\mathcal{N}_{j+1}$'s for $i, j < k$. 

\item If an element of $\bar{\mathcal{N}}_{k+1}$ lies in the boundary strata, then the strip-like ends we choose for it should agree with the ones chosen for the irreducible components of $\Sigma$ regarded as elements in $\mathcal{M}_{i+1}$'s and $\mathcal{N}_{j+1}$'s for $i, j < k$.

\end{enumerate}

	A more accurate description of the boundary strata is given as follows. Some boundary strata appear as $w$ goes to infinity, which have the form
\begin{equation}
\mathcal{M}_{l+1}^{1} \times \prod_{i=1}^{l} \mathcal{N}_{m_{i}+1},
\end{equation}
where $\sum_{i=1}^{l}m_{i} = d$. Here $\mathcal{M}_{l+1}^{1}$ means a copy of $\mathcal{M}_{l+1}$ arising at $w=\infty$. Other boundary strata appear in the compactification of $\mathcal{M}_{k+1}$, one copy for $w=0$ and one for $w=\infty$. These products arise as boundary of the compactification via the gluing maps. For instance, a boundary chart in the case $w \to \infty$ is of the form
\begin{equation}
U \times \prod_{i=1}^{l} U_{i} \times [0, a),
\end{equation}
where $U \subset \mathcal{M}_{l+1}^{1}$, and $U_{i} \subset \mathcal{N}_{m_{i}+1}$. The last factor $[0, a)$ is the gluing parameter, where $0$ corresponds to  degenerate curves, i.e. tuples of curves in $U \times \prod_{i=1}^{l} U_{i}$. Given $(S_{l+1}, (S_{m_{1}+1}, w_{1}), \cdots, (S_{m_{l}+1}, w_{l})) \in U \times \prod_{i=1}^{l} U_{i}$, as well as a nonzero gluing parameter $\rho \in (0, a)$, we would like to produce a pair $(S, w) \in \mathcal{N}_{k+1}$ by gluing the surfaces together. For every $i$, glue the negative strip-like end of $S_{m_{i}+1}$ to the $i$-th positive strip-like end $\epsilon_{i}$ of $S_{l+1}$ simultaneously, with length $l_{i} = e^{\frac{1}{\rho}} - w_{i} - w_{i}(S_{l+1})$, where $w_{i}(S_{l+1})$ is the width of the $i$-th positive strip-like end $\epsilon_{i}$ of $S_{l+1}$. This procedure is uniform for all elements in $U \times \prod_{i=1}^{l} U_{i}$, and provides gluing maps
\begin{equation} \label{gluing for multiplihedra: first kind}
\sharp_{1}: \mathcal{M}_{l+1}^{1} \times \prod_{i=1}^{l} \mathcal{N}_{m_{i}+1} \times [0, 1) \to \bar{\mathcal{N}}_{k+1}.
\end{equation}
The other kind of boundary stratum has the form 
\begin{equation}
\mathcal{N}_{m+2} \times \mathcal{M}_{k-m+1}, 0 \le m \le k-2
\end{equation}
which also comes with gluing maps
\begin{equation} \label{gluing for multiplihedra: second kind}
\sharp_{2}: \mathcal{N}_{m+2} \times \mathcal{M}_{k-m+1} \times [0, 1) \to \bar{\mathcal{N}}_{k+1}.
\end{equation} \par
	The third type of boundary strata are obtained by compactifying all the $\mathcal{M}_{k+1}$ components. These $\mathcal{M}_{k+1}$ components come into the picture either in the first two type codimension-one boundary strata, as $w \to 0$ or as $w \to +\infty$, or in the place where $w$ remains finite as the compactification of $\mathcal{M}_{k+1} \times \{w\} \subset \mathcal{S}_{k+1} = \mathcal{M}_{k+1} \times \mathbb{R}_{+}$. In the latter case, consider a boundary stratum $\sigma \subset \partial \bar{\mathcal{N}}_{k+1}$ whose elements are modeled over some rooted tree $T$ with labeled leaves. A boundary chart for $\sigma$ takes the form
\begin{equation}\label{boundary chart for the third type of boundary stratum}
U \times \prod_{\text{interior vertices } v} U_{v} \times \prod_{\text{interior edges } e} [0, a_{e}),
\end{equation}
where $U$ is a small open subset of $\mathcal{N}_{l+1}$ where $l$ is the valency of the root, while others $U_{v}$ are small open subsets of $\mathcal{M}_{m_{v}+1}$. When all the gluing parameters in $\prod_{\text{interior edges } e} [0, a_{e})$ are nonzero, the above chart \eqref{boundary chart for the third type of boundary stratum} is identified with a subset of $\mathcal{N}_{k+1}$ via the gluing construction in the same way as that for $\bar{\mathcal{M}}_{k+1}$, with the extra $\mathbb{R}_{+}$ factor in $\mathcal{N}_{m+1} \supset U$ mapping to the $\mathbb{R}_{+}$ factor in $\mathcal{N}_{k+1}$ by identity. \par
	Let us slightly modify the definition of an action-restriction datum so that it is more adapted to the construction of $A_{\infty}$-functors. An action-restriction datum is now chosen for elements in $\mathcal{N}_{k+1}$, with a potential extension to the compactification $\bar{\mathcal{N}}_{k+1}$. \par

\begin{definition}
	Let $(S, w) \in \mathcal{N}_{k+1}$, with given Lagrangian labels. An action-restriction datum for $(S, w)$ and the Lagrangian labels is simply an action-restriction datum for $S$ as in Definition \ref{definition of action-restriction datum for punctured disks}, which depends on $w \in \mathbb{R}_{+}$.
\end{definition}

	Although an action-restriction datum for $(S, w)$ is the same as one for $S$, the dependence on $w$ matters when we make a choice of an action-restriction datum for every element in $\mathcal{N}_{k+1}$. In other words, it matters when we consider families of action-restriction data. \par

\begin{definition}
	A universal choice of action-restriction data over $\mathcal{N}_{k+1}$ is a choice of an action-restriction datum for each $(S, w) \in \mathcal{N}_{k+1}$, which depends smoothly on $(S, w)$ (note that $\mathcal{N}_{k+1} = \mathcal{M}_{k+1} \times \mathbb{R}_{+}$ has a natural structure of smooth manifold).
\end{definition}

	In the next subsection, we shall discuss how the action-restriction data are extended to the compactification $\bar{\mathcal{N}}_{k+1}$. \par

\subsection{Making choices of action-restriction data consistently}
	According to the previous summary of the results in \cite{Gao1}, the construction of a cochain homotopy equivalence of the two versions of wrapped Floer complexes works well for any pair among a fixed finite collection of Lagrangian submanifolds, as that only requires a choice of an action-restriction datum for the strip and that particular pair of Lagrangian submanifolds in consideration. Extending this to an $A_{\infty}$-functor on the categorical level requires us to choose action-restriction data for many Lagrangian submanifolds. \par
	The following observation is useful in arranging various action-restriction data: it is the parameters $a$'s, $b$'s, $A$'s that are crucial in the definition of an action-restriction datum, and they essentially depend on the initially given split Hamiltonian, product almost complex structure, and Lagrangian submanifolds only. Moreover, we can choose the same parameters for all pairs such that the action-restriction maps are defined, which is possible because of the following three reasons:
\begin{enumerate}[label=(\roman*)]

\item  we only need to fix $a$'s not too small so that the truncated wrapped Floer complexes $CW^{*}_{(-b, a]}(\mathcal{L}_{i}, \mathcal{L}_{j}; H_{M, N}, J_{M, N})$ include all Hamiltonian chords from $\mathcal{L}_{i}$ to $\mathcal{L}_{j}$ that are contained in the interior part of $M \times N$;

\item we have to consider all possible $b$'s satisfying the condition \eqref{action relation}, thus these are not matters of choices;

\item we only have to choose the numbers $A_{i}$'s, which determines the size of the compact set inside which the split Hamiltonian $H_{M, N}$ agrees with the admissible one $K_{b}$, and that of the compact set outside which $K_{b}$ is quadratic, to be sufficiently large in order for the action-restriction map to be defined.

\end{enumerate} \par

	However, it is practically difficult to directly construct the action-restriction functor on the whole wrapped Fukaya category. The reason is as follows. To define higher order maps and verify the $A_{\infty}$-equations, we need to choose action-restriction data for all punctured disks and all Lagrangian submanifolds in a consistent way. But there are infinitely many Lagrangian submanifolds in the category, making it unlikely possible to arrange the choices of action-restriction data for all Lagrangian submanifolds simultaneously. Keeping track of all these infinitely many choices at one time might be a painful job. \par
	To overcome this difficulty, we take the following maneuver. First, construct the desired $A_{\infty}$-functors on a chain of full $A_{\infty}$-subcategories of the wrapped Fukaya category, each of which consists of finitely many objects. We want the union of these subcategories to be the whole wrapped Fukaya category, which is possible because we have assumed that the wrapped Fukaya category is made up of a countable collection of objects. Second, prove that these $A_{\infty}$-functors are compatible with each other under natural inclusions of these subcategories in the chain. Third, take the limit in an appropriate sense to obtain the desired $A_{\infty}$-quasi-equivalence. \par
	Let us start from the first step: considering the full $A_{\infty}$-subcategory $\mathcal{W}_{d}^{s}$ (respectively $\mathcal{W}_{d}$) of $\mathcal{W}^{s}(M \times N)$ (respectively $\mathcal{W}(M \times N)$) consisting of $d$ Lagrangian submanifolds $\mathcal{L}_{1}, \cdots, \mathcal{L}_{d}$ from our collection. We may construct an $A_{\infty}$-functor
\begin{equation}
R_{d}: \mathcal{W}_{d}^{s} \to \mathcal{W}_{d}
\end{equation}
which is a quasi-isomorphism. As mentioned before, to verify the $A_{\infty}$-equations, we need to arrange the action-restriction data in a consistent way. \par
	To explain the precise meaning of this consistency, we first describe the way how the action-restriction data can be glued together when we glue the underlying curves. As in the case of Floer data for punctured disks, we introduce the concept of conformal equivalence for action-restriction data. To state this, we introduce some notations. If $H$ is a Hamiltonian, denote by $H_{C}$ the time-$\log C$ rescaling of $H$,
\begin{equation*}
H_{C} = \frac{1}{C} H \circ \psi^{C},
\end{equation*}
where $\psi^{C}$ is the time-$\log C$ Liouville flow. Similarly, if $J$ is an almost complex structure, denote by $J_{C}$ the time-$\log C$ rescaling of $J$,
\begin{equation*}
J_{C} = J \circ \psi^{C}.
\end{equation*}

\begin{definition}
	Two action-restriction data are said to be conformally equivalent, if there exist constants $C, W > 0$ such that:
\begin{enumerate}[label=(\roman*)]

\item the weights differ by scalar multiplication by $W$, i.e. $w^{i} = W w'^{i}$;

\item the action filtrations agree, i.e. $a_{i} = a'_{i}, b_{i} = b'_{i}$;

\item the Hamiltonians agree up to rescaling,
\begin{equation}
K_{S, \vec{b}} = \frac{1}{W}(K'_{S, \vec{b}})_{C};
\end{equation}

\item the almost complex structures agree up to rescaling,
\begin{equation}
J_{S, \vec{b}} = (J'_{S, \vec{b}})_{C}.
\end{equation}

\end{enumerate}
\end{definition}
	
	Fix a finite collection $\mathcal{L}_{1}, \cdots, \mathcal{L}_{d}$ in consideration. For example, suppose that we are given $S \in \mathcal{M}_{l+1}^{1}$ and $(S_{i}, w_{i}) \in \mathcal{N}_{m_{i}+1}, i = 1, \cdots, l$, all equipped with appropriate Lagrangian labels so that they match on the boundary components of $S$ and $S_{i}$ that are to be glued together, in the way that the $i$-th strip-like end of $S$ is glued with the $0$-th strip-like end of $S_{i}$. Suppose that we have chosen an action-restriction datum for each $(S_{i}, w_{i})$, as well as a Floer datum for $S$ (which is involved in the definition of the $A_{\infty}$-structure on $\mathcal{W}(M \times N)$). \par

\begin{definition}
	The action-restriction data on $(S_{i}, w_{i})$ and the Floer datum on $S$ are said to be conformally consistent, if there exist positive constants 
\begin{equation*}
C, W, C_{1}, \cdots, C_{l}, W_{1}, \cdots, W_{l} > 0,
\end{equation*}
such that the following conditions hold:
\begin{enumerate}[label=(\roman*)]

\item Let $w^{i}$ be the weight on the $i$-th strip-like end of $S$, and $w_{i}^{0}$ the weight on the $0$-th strip-like end of $S_{i}$. Then
\begin{equation}
C w^{i} = C_{i} w_{i}^{0}.
\end{equation}

\item Let $K_{S, i}$ be the asymptotic Hamiltonian of the $S$-dependent family of Hamiltonians $K_{S}$ over the $i$-th strip-like end of $S$, and $K_{S_{i}, b_{i}^{0}}$ the asymptotic Hamiltonian of the $S_{i}$-dependent family of Hamiltonians $K_{S_{i}, \vec{b}_{i}}$ over the $0$-th strip-like end of $S_{i}$. Then
\begin{equation}
\frac{1}{W}(K_{S, i})_{C} = \frac{1}{W_{j}}(K_{S_{i}, b_{i}^{0}})_{C_{i}}.
\end{equation}

\item Let $J_{S, i}$ be the asymptotic almost complex structure of the $S$-dependent family of almost complex structures $J_{S}$ over the $i$-th strip-like end of $S$, and $J_{S_{i}, b_{i}^{0}}$ the asymptotic almost complex structure of the $S_{i}$-dependent family of almost complex structures $J_{S_{i}, \vec{b}_{i}}$ over the $0$-th strip-like end of $S_{i}$. Then
\begin{equation}
(J_{S, i})_{C} = (J_{S_{i}, b_{i}^{0}})_{C_{i}}.
\end{equation}

\end{enumerate} 

\end{definition}

	Given every gluing parameter $\rho \in (0, a)$, we may glue $S$ and $(S_{i}, w_{i})$ together to get a smooth punctured disk $(S_{\rho}, w_{\rho})$. The glued action-restriction datum for $(S_{\rho}, w_{\rho})$ is defined as follows. The weights are simply those $C_{i} w_{i}^{j}$'s, and $C w^{0}$, removing those weights $w^{i}, w_{i}^{0}$ for the strip-like ends that are glued together. As for the truncations, over the strip-like ends of $S_{\rho}$ that are obtained from $S_{i}$'s, we simply take the existing truncations $(-b_{i}^{j}, a_{i}^{j}]$. But we have not specified a truncation for the $0$-th strip-like end of $S_{\rho}$ which comes from $S$, as the Floer datum on $S$ does not contain such information. Nonetheless, the previous condition on Hamiltonians give natural truncation over that end. We may simply take $b_{0} = \sum_{i=1}^{l} b^{i}_{0}$, and take $a_{0}$ not less than $\sum_{i=1}a^{i}_{0}$ such that the truncated wrapped Floer complex $CW^{*}_{(-b_{0}, a]}$ is independent of $a$ when $a \ge a_{0}$. \par
	Because of the above conditions on families of Hamiltonians and almost complex structures over $S$ and $S_{i}$'s that they agree up to rescaling over the strip-like ends which are to be glued together, we may rescale these families $K_{S}, K_{S_{i}, \vec{b}_{i}}$ and $J_{S}, J_{S_{i}, \vec{b}_{i}}$, and take the union of the rescaled families on the glued surface $\Sigma_{\rho}$ to obtain the families $K_{\Sigma_{\rho}}, J_{\Sigma_{\rho}}$, which have all the desired properties. \par
	There are more complicated gluings, which happen in higher-codimensional strata. But the corresponding gluing process for action-restriction data is the same. \par
	Now let us formalize all the above ideas in the following definition. \par

\begin{definition}
	A universal and conformally consistent choice of action-restriction data is a choice of an action-restriction datum for every $k \ge 1$ and for every (representative of) element $\bar{\mathcal{N}}_{k+1}$ and every $(k+1)$-tuple of Lagrangian submanifolds $(\mathcal{L}_{0}, \cdots, \mathcal{L}_{k})$ of $M \times N$, which varies smoothly on $\mathcal{N}_{k+1}$, and satisfies the following conditions:
\begin{enumerate}[label=(\roman*)]

\item For an element $(S, w) \in \mathcal{N}_{k+1}$ that is sufficiently close to the boundary strata $\partial \bar{\mathcal{N}}_{k+1}$, then the choice of action-restriction datum for $(S, w)$ is conformally equivalent to the action-restriction datum induced by gluing of action-restriction data and Floer data;

\item The following chart
\begin{equation}\label{deleted neighborhood of the boundary chart for the third type of boundary stratum}
U \times \prod_{\text{interior vertices } v} U_{v} \times \prod_{\text{interior edges } e} (0, a_{e}),
\end{equation}
for a deleted neighborhood of a boundary stratum $\sigma \subset \partial \bar{\mathcal{N}}_{k+1}$ which has a chart \eqref{boundary chart for the third type of boundary stratum}, the restriction of the action-restriction data to the main component $(S, w) \in \mathcal{S}_{l+1}$ induces a family of action-restriction data for $(S, w)$ parametrized by
\begin{equation*}
U \times \prod_{e \text{ adjacent to the root}} (0, a_{e}) \times E.
\end{equation*}
We require this family extends smoothly to
\begin{equation*}
U \times \prod_{e \text{ adjacent to the root}} [0, a_{e}) \times E,
\end{equation*}
and that it agrees on $U \times \prod_{e \text{ adjacent to the root}} \{0\} \times E$ with the family of action-restriction data that was chosen for $\mathcal{N}_{l+1}$, up to a family of conformal rescalings.

\end{enumerate}

\end{definition}

\begin{remark}
	The "universal and conformally consistent" condition is stated in a slightly more complicated way than that for Floer data as in \cite{Abouzaid1}, because the compactification $\bar{\mathcal{N}}_{k+1}$ does not have a structure of smooth manifold with corners. In fact it is a more generalized smooth space. This is the reason why we state the consistency condition in (ii) in such a complicated way, which in a concise language means the action-restriction data vary smoothly in each smooth chart of $\bar{\mathcal{N}}_{k+1}$.
\end{remark}

\begin{lemma}
	Fix a finite collection of Lagrangian submanifolds $\mathcal{L}_{1}, \cdots, \mathcal{L}_{d}$ of $M \times N$. Then universal and conformally consistent choices of action-restriction data exist, with Lagrangian labels chosen from this collection.
\end{lemma}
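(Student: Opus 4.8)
The proof is an induction on the number $k+1$ of boundary punctures (equivalently, on the dimension of the moduli spaces $\bar{\mathcal{N}}_{k+1}$), following the standard pattern for constructing universal and consistent choices of auxiliary data on the compactified moduli spaces of curves. The base case $k=1$ is precisely the existence of an action-restriction datum for the strip $Z$ and any pair $(\mathcal{L}_i,\mathcal{L}_j)$ from the fixed collection; this was established in \cite{Gao1} and recalled above. For the inductive step, suppose universal and conformally consistent choices of action-restriction data have been made for all $\bar{\mathcal{N}}_{l+1}$ with $l<k$ and all tuples of Lagrangian labels drawn from $\mathcal{L}_1,\dots,\mathcal{L}_d$, and likewise that a universal and consistent choice of Floer data has been fixed for all the $\bar{\mathcal{M}}_{l+1}$ entering the definition of $\mathcal{W}(M\times N)$ (this is part of the setup of the ordinary wrapped Fukaya category). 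The boundary $\partial\bar{\mathcal{N}}_{k+1}$ is covered by the three types of strata described above: those of the form $\mathcal{M}_{l+1}^{1}\times\prod_i\mathcal{N}_{m_i+1}$ appearing as $w\to\infty$, those of the form $\mathcal{N}_{m+2}\times\mathcal{M}_{k-m+1}$, and the strata obtained by compactifying the $\mathcal{M}_{k+1}$-components at finite, zero, and infinite $w$. On each such stratum the inductive hypothesis together with the chosen Floer data determines an action-restriction datum by the gluing recipe for action-restriction data spelled out in the previous subsection (rescale the families $K_S$, $J_S$, $K_{S_i,\vec b_i}$, $J_{S_i,\vec b_i}$ so they match over the glued strip-like ends, take their union, and set $b_0=\sum_i b_0^i$, $a_0\ge\sum_i a_0^i$ large enough to stabilize the truncation).

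First I would check that these boundary prescriptions are mutually compatible on overlaps of boundary charts — i.e.\ on the generalized corners where two or more of the gluing parameters degenerate simultaneously. This is an associativity-type statement for the gluing of action-restriction data, and it follows because the gluing construction for Hamiltonians, almost complex structures, and truncation intervals is built out of conformal rescalings and unions, operations that commute up to the allowed conformal equivalence; the combinatorics is governed by the same tree-gluing formalism as for ordinary Floer data, so the higher-codimension compatibility reduces to iterating the codimension-one case. Next, I would extend the choice from a neighborhood of $\partial\bar{\mathcal{N}}_{k+1}$ into the interior. Here one uses that the space of action-restriction data for a fixed $(S,w)$ with fixed Lagrangian labels is nonempty and, crucially, contractible: the weights $w^i$ vary in the contractible space of positive reals subject to the (convex) inequalities \eqref{action relation}, the truncation data $(a_i,b_i)$ similarly, the large parameters $A_i$ can always be increased, and the families of Hamiltonians and almost complex structures satisfying the asymptotic and support conditions form convex (hence contractible) sets. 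A contractible-fibered choice that is already prescribed on the closed subset $\partial\bar{\mathcal{N}}_{k+1}$ extends over all of $\bar{\mathcal{N}}_{k+1}$, varying smoothly in each smooth chart; one then slightly perturbs so the choice near the boundary is genuinely conformally equivalent (not merely asymptotically) to the glued data, as demanded by condition (i) of the definition.

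The main obstacle is the point flagged in the remark: $\bar{\mathcal{N}}_{k+1}$ is not a smooth manifold with corners but a more general stratified smooth space (a ``generalized manifold with corners''), because the gluing maps near $w=\infty$ involve the length parameter $l_i=e^{1/\rho}-w_i-w_i(S_{l+1})$ rather than a linear gluing coordinate. Consequently the notion of a smooth family of action-restriction data must be interpreted chart-by-chart, and the extension argument must be carried out so that smoothness holds in each of the overlapping smooth charts \eqref{boundary chart for the third type of boundary stratum} and \eqref{deleted neighborhood of the boundary chart for the third type of boundary stratum} simultaneously, with the transition behavior across charts controlled by conformal rescalings. I would handle this exactly as in \cite{Abouzaid1} and \cite{Sylvan}: work with the explicit atlas of gluing charts, verify that the boundary-prescribed data are smooth and conformally consistent in each chart, invoke the partition-of-unity/contractibility argument chart-locally, and patch. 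Since nothing beyond the convexity of the relevant spaces of choices and the formal properties of conformal rescaling is needed, the induction closes, proving the lemma.
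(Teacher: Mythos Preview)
Your proposal is correct and follows essentially the same inductive pattern as the paper's proof: establish the base case for strips via \cite{Gao1}, prescribe data on $\partial\bar{\mathcal{N}}_{k+1}$ by gluing the inductively chosen action-restriction data with the already-fixed Floer data, then extend into the interior using contractibility of the space of choices. You have in fact been more explicit than the paper about the corner-compatibility check and about the chart-by-chart handling of the generalized smooth structure on $\bar{\mathcal{N}}_{k+1}$. One small slip: the weights $w^i$ are positive \emph{integers}, not reals, and the inequalities \eqref{action relation} constrain the truncation parameters $a_i,b_i$, not the weights; the contractibility input really comes from the convexity of the spaces of admissible Hamiltonians and almost complex structures, together with the fact that $A_i$ can always be enlarged.
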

\begin{proof}
	The proof is an inductive argument based upon the inductive structure of the construction of the multiplihedra $\bar{\mathcal{N}}_{k+1}$, as well as the fact that the space of action-restriction data for any single element $(S, w) \in \mathcal{N}_{k+1}$ is contractible. \par
	First, fix a choice of an action-restriction datum over the strip for every pair $(\mathcal{L}_{i}, \mathcal{L}_{j})$ of Lagrangian submanifolds in this collection, which is used to define the action-restriction map
\begin{equation}
R^{1}: CW^{*}(\mathcal{L}_{i}, \mathcal{L}_{j}; H_{M, N}, J_{M, N}) \to CW^{*}(\mathcal{L}_{i}, \mathcal{L}_{j}; K, J).
\end{equation}
Such a choice is possible by the results of \cite{Gao1}.
Then we consider the $3$-pointed disk $S_{3}$, which is the unique element in $\mathcal{M}_{3}$. For each $w \in (0, +\infty)$, we consider the pair $(S_{3}, w) \in \mathcal{N}_{3}$. Let $\mathcal{L}_{j_{i}}, i = 0, 1, 2$ be the Lagrangian labels for the boundary condition. We choose an action-restriction datum for $S_{3}$ and these Lagrangian labels, such that over the $0$-th strip-like end, it agrees with the Floer datum for $(\mathcal{L}_{j_{0}}, \mathcal{L}_{j_{2}})$ in $\mathcal{W}_{d}$, and over the $i$-th strip-like ends ($i=1, 2$), it agrees with the Floer datum for $(\mathcal{L}_{j_{i-1}}, \mathcal{L}_{j_{i}})$ in $\mathcal{W}^{s}_{d}$. The existence of such an action-restriction datum is also proved in \cite{Gao1}. Moreover, we can choose action-restriction data which vary smoothly with respect to $w$, such that as $w \to 0$, the action-restriction datum converges to a Floer datum for $\mathcal{W}^{s}_{d}$, and as $w \to \infty$, the action-restriction datum converges to a Floer datum for $\mathcal{W}_{d}$. \par
	To proceed, note that the gluing maps \eqref{gluing for multiplihedra: first kind} and \eqref{gluing for multiplihedra: second kind} are the key for us to run an inductive argument. The initial step is to obtain action-restriction data for all elements in $\bar{\mathcal{N}}_{4}$ and all Lagrangian labels in the collection. Note that the choices of action-restriction data for $S_{3}$ and all possible triples of Lagrangian labels determine choices of action-restriction data for elements in a neighborhood of the boundary stratum of $\bar{\mathcal{N}}_{4}$ for the quadruple of Lagrangian labels $\mathcal{L}_{j_{i}}, i = 0, 1, 2, 3$. We then extend the choices of action-restriction data over the whole $\bar{\mathcal{N}}_{4}$. This is possible because there are no obstructions: $\bar{\mathcal{N}}_{4}$ is contractible, and the spaces of admissible Hamiltonians and of almost complex structures are contractible. \par
	For the inductive step, suppose that we have made consistent choices of action-restriction data for elements in $\bar{\mathcal{N}}_{m_{i}+1}, \bar{\mathcal{N}}_{m+2}$ as well as Floer data for elements in $\bar{\mathcal{M}}_{l+1}, \mathcal{M}_{k-m+1}$, such that the Hamiltonians, almost complex structures, etc. agree over the strip-like ends that are to be glued together. We then use the gluing maps \eqref{gluing for multiplihedra: first kind} and \eqref{gluing for multiplihedra: second kind} to obtain action-restriction data for elements in a neighborhood of the boundary strata $\partial \bar{\mathcal{N}}_{k+1}$ in $\bar{\mathcal{M}}_{k+1}$, in the way we described before Definitions \ref{definition of action-restriction datum for strips} and \ref{definition of action-restriction datum for punctured disks}. Since $\bar{\mathcal{N}}_{k+1}$ is contractible and the spaces of admissible Hamiltonians and almost complex structures are contractible, we may extend the choices for all elements therein, without changing those for elements in the boundary $\partial \bar{\mathcal{N}}_{k+1}$. This completes the inductive step and therefore finishes the proof. \par
\end{proof}

\begin{remark}
	Unlike the case of compact Lagrangian submanifolds, in general $CW^{*}(\mathcal{L}_{0}, \mathcal{L}_{1})$ is different from $CW^{*}(\mathcal{L}_{1}, \mathcal{L}_{0})$ even on the level of cohomology, and in general there is no Poincare duality between these two complexes. Thus there is no need to worry about compatibility for the choices of action-restriction datum for $(\mathcal{L}_{0}, \mathcal{L}_{1})$ and that for $(\mathcal{L}_{1}, \mathcal{L}_{0})$ - these are simply two different, independent choices.
\end{remark}

\subsection{The action-restriction functor: partial definition}
	Suppose we have made a universal and conformally consistent choice of action-restriction data for all $k \ge 1$ and all $\bar{\mathcal{N}}_{k+1}$, with the Lagrangian labels for the boundary components of punctured disks chosen from a fixed finite collection of Lagrangian submanifolds $\mathcal{L}_{1}, \cdots, \mathcal{L}_{d}$ of $M \times N$. Then we shall construct an $A_{\infty}$-functor
\begin{equation} \label{action-restriction functor for finitely many Lagrangians}
R_{d}: \mathcal{W}^{s}_{d} \to \mathcal{W}_{d},
\end{equation}
with the following properties. On the level of objects, it acts as the identity. The first order map $R_{d}^{1}$ is the action-restriction map \eqref{action-restriction map, with a Hamiltonian independent of filtration}, for each pair of Lagrangian submanifolds from the collection $\mathcal{L}_{1}, \cdots, \mathcal{L}_{d}$. \par
	As regard for higher order structure maps, consider $(k+1)$-tuple of Lagrangian submanifolds $\mathcal{L}_{j_{0}}, \cdots, \mathcal{L}_{j_{k}}$, where $j_{i} \in \{1, \cdots, d\}$. Suppose we have chosen a universal and conformally consistent choice of action-restriction data, where the Lagrangian labels are limited to the cyclically-ordered tuple $(\mathcal{L}_{j_{0}}, \cdots, \mathcal{L}_{j_{k}})$. For any $(S, w) \in \mathcal{N}_{k+1}$, the action-restriction datum from our choice sets up a moduli problem. Varying $(S, w)$ in the moduli space $\mathcal{N}_{k+1}$ gives rise to a family over $\mathcal{N}_{k+1}$, which is a parametrized moduli space $\mathcal{N}_{k+1}(\tilde{x}_{0}; x_{1}, \cdots, x_{k})$, provided appropriate asymptotic convergence conditions are given:
\begin{equation*}
x_{i} \in CW^{*}_{(-b_{i}, a_{i}]}(\mathcal{L}_{j_{i-1}}, \mathcal{L}_{j_{i}}; H_{M, N}, J_{M, N}),
\end{equation*}
and
\begin{equation*}
\tilde{x}_{0} \in CW^{*}_{(-b_{0}, a_{0}]}(\mathcal{L}_{j_{0}}, \mathcal{L}_{j_{k}}; K_{\vec{b}}, J_{\vec{b}}).
\end{equation*}
The moduli space $\mathcal{N}_{k+1}(\tilde{x}_{0}; x_{1}, \cdots, x_{k})$ has a compactification $\bar{\mathcal{N}}_{k+1}(\tilde{x}_{0}; x_{1}, \cdots, x_{k})$. The additional elements are either (equivalence classes of) broken inhomogeneous pseudoholomorphic maps from domains being elements in $\bar{\mathcal{N}}_{k+1}$, or broken inhomogeneous pseudoholomorphic maps which break out inhomogeneous pseudoholomorphic strips. \par
	Combined with the results in the last section of \cite{Gao1}, we can use standard transversality argument to prove: \par

\begin{proposition}
	For generic choice of action-restriction data, the moduli space $\bar{\mathcal{N}}_{k+1}(\tilde{x}_{0}; x_{1}, \cdots, x_{k})$ satisfies the following properties:
\begin{enumerate}[label=(\roman*)]

\item If the virtual dimension is zero, $\bar{\mathcal{N}}_{k+1}(\tilde{x}_{0}; x_{1}, \cdots, x_{k})$ is a zero-dimensional compact smooth manifold, hence consists of finitely many points;

\item If the virtual dimension is one, $\bar{\mathcal{N}}_{k+1}(\tilde{x}_{0}; x_{1}, \cdots, x_{k})$ is a one-dimensional compact topological manifold, hence is a disjoint union of finitely many circles and intervals.

\end{enumerate}
\end{proposition}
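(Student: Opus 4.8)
The plan is to follow the standard scheme for transversality and compactness of moduli spaces of inhomogeneous pseudoholomorphic curves, adapted to the parametrized families over $\mathcal{N}_{k+1} = \mathcal{M}_{k+1} \times \mathbb{R}_{+}$ and to the constrained class of action-restriction data. First I would set up the universal moduli space: fix the asymptotic convergence conditions $\tilde{x}_{0}; x_{1}, \dots, x_{k}$ and the truncations $(-b_{i}, a_{i}]$, and consider pairs consisting of an element $(S, w) \in \mathcal{N}_{k+1}$, an action-restriction datum compatible with the universal and conformally consistent family already fixed near $\partial \bar{\mathcal{N}}_{k+1}$, and a map $u$ solving the associated Floer equation with the prescribed boundary and asymptotic conditions. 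The linearization of the relevant $\bar{\partial}$-operator, together with the variation in the admissible Hamiltonians $K_{S, \vec{b}}$ and almost complex structures $J_{S, \vec{b}}$ (which are unconstrained away from the strip-like ends and the nodal locus), is surjective by the usual argument: the perturbation can be varied freely on an open subset of the domain, and unique continuation for solutions of the Floer equation forces surjectivity. The Sard--Smale theorem then gives a comeager set of action-restriction data for which $\mathcal{N}_{k+1}(\tilde{x}_{0}; x_{1}, \dots, x_{k})$ is a smooth manifold of the expected dimension.

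Because the whole construction is inductive on the strata of $\bar{\mathcal{N}}_{k+1}$ and on $k$, I would carry out this transversality argument inductively: assume regular action-restriction data have been chosen for all the lower pieces $\mathcal{M}_{i+1}$ and $\mathcal{N}_{j+1}$ with $i, j < k$ appearing in $\partial \bar{\mathcal{N}}_{k+1}$, so that the boundary strata of the compactified moduli space are already cut out transversally, and then extend genericity over the interior of $\mathcal{N}_{k+1}$ and into the third type of boundary stratum where $w$ stays finite, using the contractibility of the spaces of admissible Hamiltonians and almost complex structures and the conformal-consistency gluing prescription, exactly as in the existence proof of universal and conformally consistent choices. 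Once transversality holds, virtual dimension equals actual dimension, which gives the manifold statements in (i) and (ii) away from the compactification locus.

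Compactness then follows from Gromov compactness once the two a priori bounds are in place. The energy bound comes from the action--energy equality, valid since all $\mathcal{L}_{i}$ are exact: for fixed asymptotics the energy of any solution is determined up to a bounded term by the actions of the asymptotic chords, which are fixed by the truncations. The $C^{0}$-bound is the one established in \cite{Gao1}: over the cylindrical end the action-restriction datum is of split type in $\{r \le A_{i}\}$ and admissible (quadratic, with $J$ of contact type) outside a neighborhood of $\{r \le cA_{i}\}$, where the maximum principle prevents escape to infinity, while $\{r \le A_{i}\}$ is compact; their union is a compact set $C \subset M \times N$ depending only on $k$, the Lagrangian labels, and the truncations, containing the image of every element of the moduli space. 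The only new boundary phenomena are disk and sphere bubbling, excluded by exactness; breaking off of inhomogeneous pseudoholomorphic strips at the punctures; and degeneration of $(S, w)$ into a stable broken configuration over $\partial \bar{\mathcal{N}}_{k+1}$. All limits therefore lie in fiber products of moduli spaces of the same type indexed by the strata of $\bar{\mathcal{N}}_{k+1}$, with the uniform energy bound ensuring only finitely many such types occur; this yields compactness of $\bar{\mathcal{N}}_{k+1}(\tilde{x}_{0}; x_{1}, \dots, x_{k})$. In virtual dimension zero it is then a finite set, and in virtual dimension one its codimension-one strata are exactly the zero-dimensional products in the boundary description, so a gluing theorem --- combining gluing of Floer strips with the gluing prescribed by the maps \eqref{gluing for multiplihedra: first kind} and \eqref{gluing for multiplihedra: second kind} --- identifies a neighborhood of each boundary point with a half-open interval, making the compactification a compact topological $1$-manifold with boundary, hence a finite disjoint union of circles and intervals.

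The main obstacle, as usual here, is arranging transversality \emph{while respecting the conformal-consistency constraints} on the action-restriction data near the strip-like ends, combined with the fact, already noted in the construction of $\bar{\mathcal{N}}_{k+1}$, that this compactification is not a smooth manifold with corners but a more generalized smooth space; the inductive extension of generic perturbations must be performed chart by chart and checked compatible with the gluing maps. A secondary point requiring care is the behavior as $w \to 0$ or $w \to \infty$, where one must verify that solutions converge to broken configurations of the expected combinatorial type (with $\mathcal{M}_{l+1}^{1}$ and $\mathcal{N}_{m_{i}+1}$ components, respectively with $\mathcal{N}_{m+2}$ and $\mathcal{M}_{k-m+1}$ components); this uses that near those limits the chosen action-restriction datum is conformally equivalent to one glued from a Floer datum and lower action-restriction data.
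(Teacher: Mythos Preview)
Your proposal is correct and matches the paper's approach: the paper does not give a detailed proof but states the proposition as following from ``standard transversality argument'' combined with the $C^{0}$-bound and action estimates established in \cite{Gao1}, and your sketch spells out precisely that standard scheme (universal moduli space, Sard--Smale, inductive transversality over strata of $\bar{\mathcal{N}}_{k+1}$, energy and $C^{0}$ bounds for compactness, gluing near the boundary).
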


	 By counting rigid solutions in the moduli space $\bar{\mathcal{N}}_{k+1}(\tilde{x}_{0}; x_{1}, \cdots, x_{k})$, we define the following multilinear maps of degree $1-k$
\begin{equation} \label{higher order maps of the action-restriction functor, on truncated Floer complexes}
\begin{split}
\bar{R}_{d, \vec{b}}^{k}: &CW^{*}_{(-b_{k}, a_{k}]}(\mathcal{L}_{j_{k-1}}, \mathcal{L}_{j_{k}}; H_{M, N}, J_{M, N}) \otimes \cdots \otimes CW^{*}_{(-b_{1}, a_{1}]}(\mathcal{L}_{j_{0}}, \mathcal{L}_{j_{1}}; H_{M, N}, J_{M, N})\\
&\to CW^{*}_{(-b_{0}, a_{0}]}(\mathcal{L}_{j_{0}}, \mathcal{L}_{j_{k}}; K_{b_{0}}, J_{b_{0}}).
\end{split}
\end{equation}
Similar to the first-order map, we may compose this with a canonical cochain homotopy equivalence
\begin{equation*}
h_{b_{0}}: CW^{*}(\mathcal{L}_{j_{0}}, \mathcal{L}_{j_{k}}; K_{b_{0}}, J_{b_{0}}) \to CW^{*}(\mathcal{L}_{j_{0}}, \mathcal{L}_{j_{k}}; K, J),
\end{equation*}
to obtain the following map
\begin{equation}\label{higher order maps of the action-restriction functor, on truncated Floer complexes, with independent Hamiltonian and almost complex structure}
\begin{split}
R_{d, \vec{b}}^{k}: &CW^{*}_{(-b_{k}, a_{k}]}(\mathcal{L}_{j_{k-1}}, \mathcal{L}_{j_{k}}; H_{M, N}, J_{M, N}) \otimes \cdots \otimes CW^{*}_{(-b_{1}, a_{1}]}(\mathcal{L}_{j_{0}}, \mathcal{L}_{j_{1}}; H_{M, N}, J_{M, N})\\
&\to CW^{*}_{(-b_{0}, a_{0}]}(\mathcal{L}_{j_{0}}, \mathcal{L}_{j_{k}}; K, J).
\end{split}
\end{equation}

\begin{remark}
	An alternative interpretation of the action-restriction functor is helpful. Simply put, the action-restriction functor is a continuation functor associated to a particular kind of monotone homotopy from $(H_{M, N}, J_{M, N})$ to $(K, J)$. The reason we perform the construction in a slightly non-standard way as above is that it is not obviously clear that this kind of continuation functor is a homotopy equivalence, as $(H_{M, N}, J_{M, N})$ and $(K, J)$ behave somewhat differently at infinity. \par
\end{remark}

	To extend the above maps to the whole wrapped Floer complexes so that we can obtain the desired $A_{\infty}$-functor \eqref{action-restriction functor for finitely many Lagrangians}, we are faced with two potential problems. First, we must ensure that these maps are compatible with natural inclusions of wrapped Floer complexes under action filtration. Second, the target wrapped Floer complex is defined with respect to a Hamiltonian $K_{\vec{b}}$ and a one-parameter family of almost complex structures $J_{\vec{b}}$ which depend on the action filtration windows chosen in the action-restriction data. The solutions to these two problems will have to depend on the precise behavior of the choices of families of Hamiltonians and almost complex structures involved in the action-restriction data. We shall discuss these matters in greater detail in the next subsection. \par

\subsection{Arranging geometric data in a compatible system}
	We shall now explain how to obtain the families of Hamiltonians and almost complex structures involved in the action-restriction data. In Definition \eqref{definition of action-restriction datum for punctured disks}, the condition \eqref{action relation} implies that we can define a new $k$-th order "multiplication" $m^{k}_{\vec{b}}$ on the truncated wrapped Floer complexes:
\begin{equation}
\begin{split}
m^{k}_{\vec{b}}&: CW^{*}_{(-b_{k}, a_{k}]}(\mathcal{L}_{j_{k-1}}, \mathcal{L}_{j_{k}}; K_{b_{k}}, J_{b_{k}}) \otimes \cdots \otimes CW^{*}_{(-b_{1}, a_{1}]}(\mathcal{L}_{j_{0}}, \mathcal{L}_{j_{1}}; K_{b_{1}}, J_{b_{1}})\\
& \to CW^{*}_{(-b_{0}, a_{0}]}(\mathcal{L}_{J_{0}}, \mathcal{L}_{j_{k}}; K_{b_{0}}, J_{b_{0}}),
\end{split}
\end{equation}
whose definition is an obvious analogue of the operation $m^{2}_{b, 2b}$ introduced in \cite{Gao1}. Roughly speaking, this counts rigid inhomogeneous pseudoholomorphic polygons with possibly different asymptotic data (Hamiltonians and almost complex structures) over the strip-like ends. \par
	In \cite{Gao1}, we proved that $m^{2}_{b, 2b}$ is strictly compatible with the first order action-restriction maps $R_{3}^{1}$ (here $3$ is for a triple of Lagrangians). But the new operations $m^{k}_{\vec{b}}$ are not the $A_{\infty}$-operations for the $A_{\infty}$-subcategory $\mathcal{W}_{d}$ of the wrapped Fukaya category $\mathcal{W}(M \times N)$. In general, the maps $\bar{R}_{d}^{k}$ in \eqref{higher order maps of the action-restriction functor, on truncated Floer complexes} satisfy analogue of $A_{\infty}$-equations with the $A_{\infty}$-structure maps replaced by these new ones $m^{k}_{\vec{b}}$, but not the original $A_{\infty}$-structure maps. This is one of the main reasons that we need to introduce the higher order maps $R_{d}^{k}$ to adjust the failure of $R_{d}^{1}$ from being an $A_{\infty}$-functor with respect to the honest $A_{\infty}$-structures. \par
	Now let us start constructing the families of Hamiltonians and almost complex structures. The family of Hamiltonian functions is chosen as follows. We start from the families of Hamiltonians that are used to define $A_{\infty}$-structures of the wrapped Fukaya categories $\mathcal{W}^{s}_{d}$ and $\mathcal{W}_{d}$. For the first one, recall from \cite{Gao1} that for each $(k+1)$-punctured disk, we may choosen the family of split Hamiltonians $H_{S}$ to be "essentially trivial", meaning that the whole family is the rescaling by a (globally extended) time-shifting function of the single split Hamiltonian $H_{M, N}$, i.e.,
\begin{equation}
H_{S, s} = \frac{H_{M, N} \circ \phi^{\rho_{S}(s)}}{\rho(s)^{2}},
\end{equation}
where $\rho_{S}: S \to [1, +\infty)$ is an extension of the time-shifting function to the whole surface $S$. For the second, instead of considering a single family of Hamiltonians, we consider for each $b_{0}$ the $S$-family of admissible Hamiltonians $H_{S, b_{0}}$, which can also be chosen to be of the form
\begin{equation}
H_{S, b_{0}, s} = \frac{K_{b_{0}} \circ \phi^{\rho_{S}(s)}}{\rho_{S}(s)^{2}}.
\end{equation}
To find the family of Hamiltonians that is required in an action-restriction datum, our strategy is to first find a homotopy between the two Hamiltonian functions $H_{M, N}$ and $K_{\vec{b}}$. Note that after suitable compactly-supported homotopy, $K_{b_{0}}$ agrees with $H_{M, N}$ inside a compact set, and differs from $H_{M, N}$ by a product term $c r_{1} r_{2}$ in the region $\{ r_{1} \ge 1, r_{2} \ge 1 \}$, where $c$ is a universal constant (roughly $c = 2$) and $r_{1}, r_{2}$ are radial coordinates on the cylindrical ends of $M$ and $N$ respectively. We take a homotopy 
\begin{equation*}
K_{\vec{b}, t}, t \in [0, 1]
\end{equation*}
from $H_{M, N}$ to $K_{b_{0}}$ such that in the region $\partial M \times [B, +\infty) \times \partial N \times [B, +\infty)$, it takes the form $r_{1}^{2} + r_{2}^{2} + c \chi(s) r_{1}r_{2}$, where $\chi: \mathbb{R} \to [0, 1]$ is a smooth increasing function which takes the value $0$ for sufficiently negative $s \ll 0$, and takes the value $1$ for sufficiently positive $s \gg 0$. In other regions as part of the cylindrical end $\Sigma \times [1, +\infty)$, we require that the homotopy is also an increasing homotopy. Then we apply the rescaling to this homotopy to obtain the desired family of Hamiltonians, using the following trick. \par
	To get a family of Hamiltonians parametrized by the $(k+1)$-punctured disk $S$ from the homotopy, we shall realize $S$ as a subset of $\mathbb{R}^{2}$ to make use of $\chi$ to construct the family. Embed $S$ into $\mathbb{R}^{2}$ as a domain with boundary, such that the $0$-th strip-like end is mapped to the following region
\begin{equation*}
[A, +\infty) \times [-\frac{1}{2}, \frac{1}{2}],
\end{equation*}
for some large constant $A > 0$, and the $i$-th strip-like end is mapped to the following region
\begin{equation*}
(-\infty, -A] \times [C - \frac{1}{2}, C + \frac{1}{2}]
\end{equation*}
where the $C_{i}$'s are some constants satisfying $C_{i} < C_{i+1} - 2$. Now the function $\chi: \mathbb{R} \to [0, 1]$ induces a function $\chi_{S}: S \to [0, 1]$ by trivial extension to $\mathbb{R}^{2} = \mathbb{R} \times \mathbb{R}$ and restriction to $S$. Then we put
\begin{equation}
K_{S, \vec{b}, s} = \frac{K_{\vec{b}, \chi_{S}(s)} \circ \phi^{\rho_{S}(s)}}{\rho_{S}(s)^{2}}.
\end{equation}
This is the desired family of Hamiltonians $K_{S, \vec{b}}$. \par
	The construction of the family of almost complex structures $J_{S, \vec{b}}$ follows the same pattern. We choose a small infinitesimal deformation $Y$ of the almost complex structure in the space of admissible almost complex structures compatible, and add it (via the exponential map) to the product almost complex structure, such that the perturbed almost complex structure is generic, for the purpose of achieving transversality. Multiply $Y$ by $\chi_{S}(s)$ and use the exponential map to get a deformation of almost complex structures from $J_{M, N}$ to $J_{b_{0}}$, so that we can apply similar trick to define the family $J_{S, \vec{b}}$. \par
	With these families of Hamiltonians and almost complex structures, we obtain consistent choices of action-restriction data for all $k \ge 1$ and for all $\bar{\mathcal{N}}_{k+1}$, with Lagrangian labels from the fixed collection $\mathcal{L}_{1}, \cdots, \mathcal{L}_{d}$. Now we can prove: \par

\begin{lemma}
	The multilinear maps \eqref{higher order maps of the action-restriction functor, on truncated Floer complexes} satisfy an analogue of $A_{\infty}$-functor equations, with respect to the $m^{k}$'s for $\mathcal{W}(M)$ and and the modified operations $m^{k}_{\vec{b}}$ for $\mathcal{W}(N)$:
\begin{equation}\label{A-infinity functor equation for the action-restriction functor on truncated Floer complexes}
\begin{split}
& m^{1}_{b_{0}} \circ \bar{R}^{k}_{\vec{b}}(x_{k}, \cdots, x_{1})\\
= &\sum_{2 \le l \le k} \sum_{\substack{s_{1} \ge 1, \cdots, s_{l} \ge 1\\s_{1} + \cdots + s_{l} = k}}
m^{l}_{\vec{b}^{l, new}}(\bar{R}^{s_{l}}_{\vec{b}^{s_{l}}}(x_{s_{1} + \cdots + s_{l}}, \cdots, x_{s_{1} + \cdots + s_{l-1} + 1}), \cdots, \bar{R}^{s_{1}}_{\vec{b}^{s_{1}}}(x_{s_{1}}, \cdots, x_{1}))\\
&+ \sum_{0 \le s \le k-1} \sum_{i} \bar{R}^{s+1}_{\vec{b}^{s+1}}(x_{k}, \cdots, x_{i+k-s+1}, m^{k-s}(x_{i+k-s}, \cdots, x_{i+1}), x_{i}, \cdots, x_{1}).
\end{split}
\end{equation}
Here the filtration numbers $b$'s are determined by the following rule. In the first term on the right hand side of \eqref{A-infinity functor equation for the action-restriction functor on truncated Floer complexes}, we have $\vec{b}^{s_{i}} = (b^{s_{i}}_{0}, \cdots, b^{s_{i}}_{s_{i}})$, such that $\vec{b}^{l, new} = (b_{0}, b^{s_{1}}_{0}, \cdots, b^{s_{l}}_{0})$. On the left hand side, we have $\vec{b} = (b_{0}, \cdots, b_{k})$. 
Moreover, we require that when deleting those $b^{s_{i}}_{0}$'s for all $i$ and combining the rest together, $(b_{0}, b^{s_{1}}_{1}, \cdots, b^{s_{1}}_{s_{1}}, \cdots, b^{s_{l}}_{1}, \cdots, b^{s_{l}}_{s_{l}})$ should agree with $(b_{0}, b_{1}, \cdots, b_{k})$ from the left hand side.
In the second term, we have $\vec{b}^{s+1} = (b_{0}, \cdots, b_{i}, b_{i+k-s+1}, \cdots, b_{k})$.
\end{lemma}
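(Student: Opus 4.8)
The plan is to run the standard argument that produces an $A_{\infty}$-functor from counts over the multiplihedra, keeping careful track of the action windows. Fix the cyclically ordered tuple $(\mathcal{L}_{j_{0}}, \cdots, \mathcal{L}_{j_{k}})$ and asymptotics $x_{i} \in CW^{*}_{(-b_{i}, a_{i}]}$ and $\tilde{x}_{0} \in CW^{*}_{(-b_{0}, a_{0}]}$ satisfying \eqref{action relation}, and consider the one-dimensional components of $\bar{\mathcal{N}}_{k+1}(\tilde{x}_{0}; x_{1}, \cdots, x_{k})$. By the Proposition of the previous subsection these are compact $1$-manifolds with boundary, so the signed count of their boundary points vanishes; the claim is that this count is exactly \eqref{A-infinity functor equation for the action-restriction functor on truncated Floer complexes} once the boundary strata are enumerated. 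Compactness here rests on the a priori energy bound coming from the action-energy equality (the Lagrangian submanifolds being exact) together with the maximum principle: near the output puncture the asymptotic Hamiltonian is the admissible $K_{b_{0}}$, so the usual wrapped Floer maximum principle applies, while near the input punctures the split Hamiltonian $H_{M, N}$ is controlled exactly as in \cite{Gao1}. The condition \eqref{action relation}, built into Definition \ref{definition of action-restriction datum for punctured disks}, guarantees that every asymptotic that can appear at the output of a rigid curve lands in the truncated complex $CW^{*}_{(-b_{0}, a_{0}]}$, so that the maps $\bar{R}^{k}_{\vec{b}}$ and the modified operations $m^{k}_{\vec{b}}$ are all defined on the relevant truncations and \eqref{A-infinity functor equation for the action-restriction functor on truncated Floer complexes} makes sense.

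The next step would be to identify the boundary strata using the description of $\partial \bar{\mathcal{N}}_{k+1}$ from the construction of the multiplihedra, and to match each with a term in the equation. First, Floer trajectories can break off at the output puncture; since these are $(K_{b_{0}}, J_{b_{0}})$-strips, they contribute $m^{1}_{b_{0}} \circ \bar{R}^{k}_{\vec{b}}(x_{k}, \cdots, x_{1})$, the left-hand side of \eqref{A-infinity functor equation for the action-restriction functor on truncated Floer complexes}. Second, the boundary strata of the first kind, appearing as $w \to \infty$ with the form $\mathcal{M}^{1}_{l+1} \times \prod_{i=1}^{l} \mathcal{N}_{m_{i}+1}$ and gluing map \eqref{gluing for multiplihedra: first kind}: on the $\mathcal{N}_{m_{i}+1}$ factors the chosen action-restriction data give the maps $\bar{R}^{s_{i}}_{\vec{b}^{s_{i}}}$, while on the $\mathcal{M}^{1}_{l+1}$ factor the asymptotic Hamiltonians are precisely the modified ones $K_{\vec{b}}$, so that this factor contributes the modified multiplication $m^{l}_{\vec{b}^{l, new}}$ rather than the honest $m^{l}$; together these produce the first sum on the right-hand side. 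Third, the remaining strata — those of the second kind $\mathcal{N}_{m+2} \times \mathcal{M}_{k-m+1}$ occurring as $w \to 0$ with gluing map \eqref{gluing for multiplihedra: second kind}, the strata from compactifying the $\mathcal{M}$-components at finite $w$, and the special case of Floer-strip breaking at an input puncture (which is the $m^{k-s}$ term with $k-s=1$) — all have their $\mathcal{M}$-factor equipped with the split Floer data of $\mathcal{W}^{s}_{d}$, hence contribute $\bar{R}^{s+1}_{\vec{b}^{s+1}}$ applied with an inner $m^{k-s}$ of the split model; this gives the second sum on the right-hand side.

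The remaining work, and the genuinely new point here, is the bookkeeping of the filtration windows. One must check that, under each of the gluings above, the windows $(-b, a]$ assigned by the consistency conditions to the glued surface are exactly the ones in the statement: for the first-kind strata this is the recipe $\vec{b}^{l, new} = (b_{0}, b^{s_{1}}_{0}, \cdots, b^{s_{l}}_{0})$ with the leftover entries reassembling $(b_{0}, b_{1}, \cdots, b_{k})$, and for the second-kind strata it is $\vec{b}^{s+1} = (b_{0}, \cdots, b_{i}, b_{i+k-s+1}, \cdots, b_{k})$; in each case the inequalities of the form $\sum b \le b_{0}$ must be seen to propagate both from the pieces to the glued curve and back, so that no term falls outside its truncation. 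This is where the universal and conformally consistent choice of action-restriction data enters: it forces the datum on a curve near the boundary to be conformally equivalent to the glued datum, so that the boundary moduli spaces are literally products of the lower-order moduli spaces with matching data and the counts factor as claimed. Finally one fixes orientations on all moduli spaces compatibly with the fiber-product orientations and verifies the signs in \eqref{A-infinity functor equation for the action-restriction functor on truncated Floer complexes}. I expect the main obstacle to be precisely this filtration bookkeeping together with the sign verification; the analytic input — transversality, compactness, and gluing for $\bar{\mathcal{N}}_{k+1}$ — is supplied by the Proposition of the previous subsection and by \cite{Gao1}.
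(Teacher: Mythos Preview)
Your proposal is correct and follows essentially the same approach as the paper's own proof: analyze the codimension-one boundary strata of the one-dimensional moduli spaces $\bar{\mathcal{N}}_{k+1}(\tilde{x}_{0}; x_{1}, \cdots, x_{k})$, identify the four types (strip breaking at the output, the $\mathcal{M}^{1}_{l+1} \times \prod \mathcal{N}_{s_{i}+1}$ strata, the $\mathcal{N}_{s+2} \times \mathcal{M}^{0}_{k-s+1}$ strata, and strip breaking at an input), and match each with a term in \eqref{A-infinity functor equation for the action-restriction functor on truncated Floer complexes}. Your discussion of the filtration bookkeeping and the role of the universal and conformally consistent choice is more explicit than the paper's terse treatment, which simply records the strata and asserts the equation; one small imprecision is that the $\mathcal{N}_{m+2} \times \mathcal{M}_{k-m+1}$ strata are not tied specifically to $w \to 0$ but arise from disk-bubbling at the input side at any $w$, though this does not affect your identification of contributions.
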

\begin{proof}
	The compactification $\bar{\mathcal{N}}_{k+1}(\tilde{x}_{0}; x_{1}, \cdots, x_{k})$ has codimension one boundary strata consisting of the following four kinds of products of moduli spaces:
\begin{enumerate}[label=(\roman*)] 

\item \begin{equation*}
\begin{split}
&\mathcal{M}_{l+1}^{1}(\tilde{x}_{0}; \tilde{x}_{1}, \cdots, \tilde{x}_{l})\\ \times \prod_{i=1}^{l} &\mathcal{N}_{s_{i}+1}(\tilde{x}_{i}; x_{s_{1} + \cdots + s_{i-1}+1}, \cdots, x_{s_{1} + \cdots + s_{i}}), l \ge 2;
\end{split}
\end{equation*}

\item \begin{equation*}
\mathcal{N}_{k+1}(\tilde{x}'_{0}; x_{1}, \cdots, x_{k}) \times \mathcal{M}_{2}^{1}(\tilde{x}_{0}; \tilde{x}'_{0});
\end{equation*}

\item \begin{equation*}
\begin{split}
&\mathcal{N}_{s+2}(\tilde{x}_{0}; x_{1}, \cdots, x_{i}, x', x_{i+k-s+1}, \cdots, x_{k})\\ \times &\mathcal{M}_{k-s+1}^{0}(x'; x_{i+1}, \cdots, x_{i+k-s}), s \le k - 2;
\end{split}
\end{equation*}

\item \begin{equation*}
\mathcal{M}_{2}^{0}(x'_{i}; x_{i}) \times \mathcal{N}_{k+1}(\tilde{x}_{0}; x_{1}, \cdots, x_{i-1}, x'_{i}, x_{i+1}, \cdots, x_{k}).
\end{equation*}

\end{enumerate}
	Here the moduli spaces $\mathcal{M}_{l+1}^{1}(\cdots)$ with superscript $1$ consist of inhomogeneous pseudoholomorphic disks defined with respect to the Floer data $(K_{\vec{b}}, J_{\vec{b}})$. \par
	The first two types appear when the domains degenerate, which occur in the compactification of $\mathcal{N}_{k+1}$. The last two types appear when a sequence of pseudoholomorphic maps breaks off pseudoholomorphic strips at one of the strip-like ends. Strata of higher codimensions correspond to further degenerations of the domains, and breaking off more pseudoholomorphic strips. Since for our purpose only codimension one strata need to be considered, we will not spell out the details for strata of higher codimensions. \par
	Considering various operations defined by the moduli spaces appearing in the boundary strata of $\bar{\mathcal{N}}_{k+1}(\tilde{x}_{0}; x_{1}, \cdots, x_{k})$, we get the desired $A_{\infty}$-equations \eqref{A-infinity functor equation for the action-restriction functor on truncated Floer complexes}. \par
	We remark that in the above formula \eqref{A-infinity functor equation for the action-restriction functor on truncated Floer complexes} there are have two kinds of terms, as we have combined the contributions from types (i), (ii) and the types (iii), (iv). \par
\end{proof}

	In order to extend the maps \eqref{higher order maps of the action-restriction functor, on truncated Floer complexes, with independent Hamiltonian and almost complex structure} over the whole wrapped Floer complexes, we need to check that these maps are compatible with each other (for different values of $\vec{b} = (b_{0}, \cdots, b_{k})$) with respect to the natural inclusions
\begin{equation*}
\kappa_{i}: CW^{*}_{(-b_{i}, a_{i}]}(\mathcal{L}_{j_{i-1}}, \mathcal{L}_{j_{i}}; H_{M, N}, J_{M, N}) \to CW^{*}_{(-b'_{i}, a'_{i}]}(\mathcal{L}_{j_{i-1}}, \mathcal{L}_{j_{i}}; H_{M, N}, J_{M, N}),
\end{equation*}
and
\begin{equation*}
\kappa'_{0}: CW^{*}_{(-b_{0}, a_{0}]}(\mathcal{L}_{j_{0}}, \mathcal{L}_{j_{k}}; K, J) \to CW^{*}_{(-b'_{0}, a'_{0}]}(\mathcal{L}_{j_{0}}, \mathcal{L}_{j_{k}}; K, J)
\end{equation*}
of the truncated wrapped Floer complexes, whenever $b'_{i} \ge b_{i}, a'_{i} \le a_{i}$. In fact, we may fix once-for-all the $a$'s at the beginning for all the truncated wrapped Floer complexes, as changing these numbers do not affect the wrapped Floer complexes as soon as they are chosen so that all the interior chords are included. Thus keeping track of $a$'s is unnecessary. \par
	Let us describe in more detail the compatibility conditions. Consider the two composed maps $R_{d, \vec{b}'}^{k} \circ (\kappa_{k} \otimes \cdots \otimes \kappa_{1})$ and $\kappa_{0} \circ R_{d, \vec{b}}^{k}$. If they strictly agreed, then the maps $R_{d, \vec{b}}^{k}$ would be the restriction of the single map \eqref{higher order maps of the action-restriction functor, on truncated Floer complexes, with independent Hamiltonian and almost complex structure} to the truncated wrapped Floer complexes. However, by the nature of our construction, we might have chosen different families of Hamiltonians and almost complex structures when constructing the maps $R_{d, \vec{b}}^{k}$ on truncated wrapped Floer complexes for different values of $\vec{b}$. Thus these two compositions in general differ from each other. The compatibility condition we require should therefore be phrased that the sequence of maps $R_{d, \vec{b}'}^{k} \circ (\kappa_{k} \otimes \cdots \otimes \kappa_{1})$ is homotopic to $\kappa_{0} \circ R_{d, \vec{b}}^{k}$, for $b'_{i} \ge b_{i}$. \par

\begin{proposition}\label{compatibility of action-restriction maps with inclusions}
	Consider the maps $R_{d, \vec{b}}^{k}$ defined in \eqref{higher order maps of the action-restriction functor, on truncated Floer complexes, with independent Hamiltonian and almost complex structure}. Then there are multilinear maps $T_{\vec{b}, \vec{b}'}^{k}$, forming a homotopy between the two sequences of maps 
\begin{equation*}
R_{d, \vec{b}'}^{k} \circ (\kappa_{k} \otimes \cdots \otimes \kappa_{1})
\end{equation*}
and
\begin{equation*}
\kappa_{0} \circ R_{d, \vec{b}}^{k},
\end{equation*}
in the sense of homotopy between $A_{\infty}$-functors. That is, these homotopies satisfy the following analogue of $A_{\infty}$-equations:
\begin{equation}
\begin{split}
& R_{d, \vec{b}'}^{k} \circ (\kappa_{k} \otimes \cdots \otimes \kappa_{1})(x_{k}, \cdots, x_{1}) - \kappa_{0} \circ R_{d, \vec{b}}^{k}(x_{k}, \cdots, x_{1}) \\
= & \sum_{r, i} \sum_{s_{1}, \cdots, s_{r}} (-1)^{*} 
m^{r}(\kappa_{0} \circ R_{d, \vec{b}}^{s_{r}}(x_{k}, \cdots, x_{k-s_{r}+1}), \cdots,\\
& \kappa_{0} \circ R_{d, \vec{b}}^{s_{i+1}}(x_{s_{1} + \cdots + s_{i+1}}, \cdots, x_{s_{1} + \cdots + s_{i} + 1}), 
T_{\vec{b}, \vec{b}'}^{s_{i}}(x_{s_{1} + \cdots + s_{i}}, \cdots, x_{s_{1} + \cdots + s_{i-1} + 1}),\\
& R_{d, \vec{b}'}^{s_{i}} \circ (\kappa_{s_{1} + \cdots + s_{i-1}} \otimes \cdots \otimes \kappa_{_{s_{1} + \cdots + s_{i-2} + 1}})(x_{s_{1} + \cdots + s_{i-1}}, \cdots, x_{s_{1} + \cdots + s_{i-2} + 1}),\\
& \cdots, R_{d, \vec{b}'}^{s_{1}} \circ (\kappa_{s_{1}} \otimes \cdots \otimes \kappa_{1})) \\
+ & \sum_{m, l} (-1)^{**} T_{\vec{b}, \vec{b}'}^{k-m+1}(x_{k}, \cdots, x_{m+l+1}, \mu^{m}(x_{m+l}, \cdots, x_{l+1}), x_{l}, \cdots, x_{1}).
\end{split}
\end{equation}
Here the symbols $\mu^{k}$ (temporarily) denote the $A_{\infty}$-structure maps in the wrapped Fukaya category defined with respect to $(H_{M, N}, J_{M, N})$, and $m^{k}$ denote those in the wrapped Fukaya category defined with respect to $(K, J)$. The signs are
\begin{equation*}
* = s_{1} + \cdots + s_{i-1} - \deg(x_{1}) - \cdots - \deg(x_{s_{1} + \cdots s_{i-1}}),
\end{equation*}
and
\begin{equation*}
** = \deg(x_{1}) + \cdots + \deg(x_{l}) - l - 1.
\end{equation*}
\end{proposition}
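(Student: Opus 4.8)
The plan is to construct the homotopies $T_{\vec b,\vec b'}^k$ by the same parametrized-moduli-space technique used to construct the maps $R_{d,\vec b}^k$ themselves, interpolating between the two choices of families of Hamiltonians and almost complex structures sitting inside the two action-restriction data indexed by $\vec b$ and $\vec b'$. Concretely, for each pair $\vec b \le \vec b'$ I would choose a one-parameter family of action-restriction data over $\mathcal N_{k+1}$ (equivalently, a homotopy of universal and conformally consistent choices), parametrized by $t\in[0,1]$, whose endpoints are the data used to define $R_{d,\vec b}^k$ and $R_{d,\vec b'}^k$ (precomposed with the inclusions $\kappa_i$, which amounts to remembering that the generators of the smaller truncated complex are generators of the larger one). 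This gives, for fixed asymptotic conditions, a parametrized moduli space $\mathcal N_{k+1}^{[0,1]}(\tilde x_0;x_1,\dots,x_k)$ over $\mathcal N_{k+1}\times[0,1]$, whose compactification $\bar{\mathcal N}_{k+1}^{[0,1]}$ carries the usual codimension-one strata: the $t=0$ and $t=1$ fibers, the domain-degeneration strata of $\mathcal N_{k+1}$ (with the extra $[0,1]$-factor), and the strip-breaking strata at each end. Counting rigid elements of $\bar{\mathcal N}_{k+1}^{[0,1]}$ defines $\bar T_{\vec b,\vec b'}^k$ on truncated complexes, and composing with the canonical cochain homotopy equivalences $h_{b_0}$ (exactly as for $R_{d,\vec b}^k$) yields $T_{\vec b,\vec b'}^k$.

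The key steps, in order, would be: first, set up the one-parameter family of action-restriction data, checking that it can be chosen universal and conformally consistent in the $(S,w)$-variable simultaneously with being smooth in $t$ — this is a direct extension of the existence lemma already proved in the excerpt, using contractibility of the space of action-restriction data and the inductive structure of the multiplihedra $\bar{\mathcal N}_{k+1}$. Second, prove compactness of the parametrized moduli spaces: this is where one invokes the action estimates from \cite{Gao1} and the maximum principle, noting that the truncation windows $(-b_i,a_i]$ and the relation \eqref{action relation} are preserved along the homotopy, so the extra $K_{b_i}$-chords continue to have large action and drop out of the truncated complexes, exactly as in the non-parametrized case. Third, prove transversality for generic choice of the homotopy of data, so that the zero- and one-dimensional strata of $\bar{\mathcal N}_{k+1}^{[0,1]}$ are manifolds of the expected dimension. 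Fourth, read off the $A_\infty$-homotopy equation from the boundary of the one-dimensional component: the $t=0$ and $t=1$ fibers contribute $\kappa_0\circ R_{d,\vec b}^k$ and $R_{d,\vec b'}^k\circ(\kappa_k\otimes\cdots\otimes\kappa_1)$ respectively, the domain-degeneration strata contribute the terms of the first sum (with the $m^r$ on the $(K,J)$-side and the $R_{d,\vec b}^{s_j}$, $R_{d,\vec b'}^{s_j}$, $T_{\vec b,\vec b'}^{s_i}$ factors as written), and the strip-breaking strata contribute the terms of the second sum involving the $\mu^m$ on the $(H_{M,N},J_{M,N})$-side. Fifth, verify the signs by the standard orientation bookkeeping for parametrized moduli spaces of inhomogeneous pseudoholomorphic polygons, which produces the claimed exponents $*$ and $**$.

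The main obstacle I expect is not the formal structure — which parallels the construction of $R_{d,\vec b}^k$ closely — but the interaction between the $t$-dependence of the data and the $w$-dependence required by the universal and conformally consistent condition on $\bar{\mathcal N}_{k+1}$. Because $\bar{\mathcal N}_{k+1}$ is not a smooth manifold with corners but a more general smooth space (as the excerpt emphasizes), one must be careful that the homotopy of action-restriction data extends smoothly over each chart near the boundary strata and agrees, up to the appropriate family of conformal rescalings, with the glued homotopy of data on the lower strata; this is the step where the conformal-consistency formalism really has to be pushed through in a one-parameter family. A secondary technical point is ensuring the cochain homotopy equivalences $h_{b_0}$ can be chosen coherently across all $\vec b$ and compatibly with the homotopy parameter, so that composing with $h_{b_0}$ does not destroy the $A_\infty$-homotopy relation; this follows from the same compactly-supported-homotopy argument as in the excerpt but must be organized as part of the inductive construction rather than as an afterthought. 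Once these coherence issues are handled, the remaining verifications are routine Gromov compactness, gluing, and sign computations.
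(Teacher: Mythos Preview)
Your proposal is correct and follows precisely the approach the paper has in mind: the paper's own proof is only a two-sentence sketch saying that this is a repetition of the argument in \cite{Gao1} carried out inductively on $k$, with the key input being contractibility of the spaces of admissible Hamiltonians and almost complex structures. Your outline is exactly the detailed fleshing-out of that sketch (and in fact the paper later carries out the identical parametrized-moduli-space construction explicitly, introducing $\mathcal{P}_{k+1}=\mathcal{N}_{k+1}\times[0,+\infty)$, when proving the analogous homotopy between $R_d$ and $R_{d'}$ restricted).
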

\begin{proof}[Sketch of proof]
	Checking this kind of compatibility is essentially repetition of the argument in the last section of \cite{Gao1}, now inductively on $k$. The key reasoning is that the spaces of admissible Hamiltonians and of almost complex structures are contractible. 
\end{proof}

	This compatibility condition stated in Proposition \ref{compatibility of action-restriction maps with inclusions} then implies that the homotopy direct limit of $R^{k}_{d, \vec{b}}$ exists as $b_{i} \to +\infty$ for all $i$. \par

\begin{corollary}
	There exist multilinear maps
\begin{equation}
\begin{split}
R^{k}_{d}: &CW^{*}(\mathcal{L}_{j_{k-1}}, \mathcal{L}_{j_{k}}; H_{M, N}, J_{M, N}) \otimes \cdots \otimes CW^{*}(\mathcal{L}_{j_{0}}, \mathcal{L}_{j_{1}}; H_{M, N}, J_{M, N})\\
&\to CW^{*}(\mathcal{L}_{j_{0}}, \mathcal{L}_{j_{k}}; K, J),
\end{split}
\end{equation}
such that when restricted to any truncated wrapped Floer complex, it is homotopic to the maps \eqref{higher order maps of the action-restriction functor, on truncated Floer complexes}.
\end{corollary}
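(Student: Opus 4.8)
The plan is to construct $R^{k}_{d}$ as a homotopy direct limit of the truncated maps $R^{k}_{d, \vec{b}}$ over the directed poset of filtration windows, using the formalism of Subsection~\ref{A-infinity homotopy direct limit} together with the compatibility homotopies $T^{k}_{\vec{b}, \vec{b}'}$ produced in Proposition~\ref{compatibility of action-restriction maps with inclusions}.

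First I would fix once and for all the positive cutoffs $a_{i}$ large enough that every truncated complex $CW^{*}_{(-b_{i}, a_{i}]}$ contains all interior chords, as already observed, so that only the negative cutoffs $b_{i}$ vary; the set of tuples $\vec{b} = (b_{0}, \dots, b_{k})$ subject to the admissibility constraint \eqref{action relation}, ordered by $\vec{b}' \geq \vec{b}$ iff $b'_{i} \geq b_{i}$ for all $i$, is then directed, and the colimit of the associated system of graded $\mathbb{Z}$-modules under the inclusions $\kappa_{i}$ (resp.\ $\kappa'_{0}$) is exactly the full wrapped Floer complex $CW^{*}(\mathcal{L}_{j_{i-1}}, \mathcal{L}_{j_{i}}; H_{M, N}, J_{M, N})$ (resp.\ $CW^{*}(\mathcal{L}_{j_{0}}, \mathcal{L}_{j_{k}}; K, J)$), since the action filtration is exhaustive. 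Next I would assemble, for each $k$ and each cyclically ordered tuple of Lagrangian labels, the directed diagram whose vertices are the truncated complexes, whose arrows are the inclusions $\kappa$, and which is decorated with the pre-$A_{\infty}$-functor data $\{R^{k}_{d, \vec{b}}\}$ and the homotopies $\{T^{k}_{\vec{b}, \vec{b}'}\}$; by Proposition~\ref{compatibility of action-restriction maps with inclusions} these satisfy the full hierarchy of $A_{\infty}$-functor homotopy equations. This is precisely the input of Lemma~\ref{auto-equivalences correcting homotopy commutativity to strict commutativity}: applying homological perturbation one corrects the inclusions on the target side by auto-equivalences homotopic to the identity so that the squares commute strictly, and then one takes the honest direct limit. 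The resulting colimit maps $R^{k}_{d}$ are multilinear of degree $1-k$, they assemble together with $R^{1}_{d}$ into a single $A_{\infty}$-functor, and by construction their restriction to any fixed truncated complex is homotopic, via the homotopy recorded in the limit construction and the identification $h_{b_{0}}$, to $R^{k}_{d, \vec{b}}$ and hence to $\bar{R}^{k}_{d, \vec{b}}$; this is the assertion of the corollary.

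The main obstacle I anticipate is the bookkeeping of the multi-index filtration: the source of $R^{k}_{d, \vec{b}}$ is a tensor product of $k$ truncated complexes with independent windows $b_{1}, \dots, b_{k}$, while $b_{0}$ is constrained by $\sum_{i \geq 1} b_{i} \leq b_{0}$, so one must verify that the subsystem cut out by \eqref{action relation} is cofinal in the product poset and that its colimit still recovers $CW^{*} \otimes \cdots \otimes CW^{*}$ --- this uses the freedom to enlarge $b_{0}$ and the fact that any fixed tensor of elements already lies in some admissible window. A secondary point is that commuting the homotopy direct limit past the compactly supported homotopy equivalences $h_{b_{0}}$ requires these to be chosen compatibly with the inclusions; this can be arranged because the homotopies relating the various quadratic completions are supported in the nested compact regions $\{r \leq cA_{i}\}$. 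No new choices beyond those of Proposition~\ref{compatibility of action-restriction maps with inclusions} are needed, since the higher homotopies $T^{k}_{\vec{b}, \vec{b}'}$ are already coherent across all $k$.
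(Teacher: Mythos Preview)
Your proposal is correct and follows essentially the same approach as the paper: strictify the homotopy-commutative system of truncated maps $R^{k}_{d,\vec{b}}$ by composing with auto-equivalences homotopic to the identity (the paper phrases this as ``self-homotopy equivalences on the truncated wrapped Floer complexes''), and then take the ordinary direct limit of the modified maps. The additional care you take over cofinality of the admissible windows and compatibility of the $h_{b_{0}}$ are reasonable technical points that the paper's proof leaves implicit.
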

\begin{proof}
	We can modify the maps $R_{d, \vec{b}}^{k}$ in \eqref{higher order maps of the action-restriction functor, on truncated Floer complexes} by composing them with self-homotopy equivalences on the truncated wrapped Floer complexes 
\begin{equation*}
CW^{*}_{(-b_{i}, a_{i}]}(\mathcal{L}_{j_{i-1}}, \mathcal{L}_{j_{i}}; H_{M, N}, J_{M, N})
\end{equation*}
and also
\begin{equation*}
CW^{*}_{(-b_{0}, a_{0}]}(\mathcal{L}_{j_{0}}, \mathcal{L}_{j_{k}}; K, J),
\end{equation*}
so that $R_{d, \vec{b}'}^{k} \circ (\kappa_{k} \otimes \cdots \otimes \kappa_{1})$ and $\kappa_{0} \circ R_{d, \vec{b}}^{k}$ strictly agree after such modification. Thus the direct limit of the modified maps exists. The homotopy direct limit 
\begin{equation}
R^{k}_{d} = \varinjlim_{b_{i} \to +\infty}R^{k}_{d, \vec{b}}
\end{equation}
is defined to be the direct limit of the modified maps. 
\end{proof}

	The $A_{\infty}$-functor equations for the sequence of multilinear maps $\{R_{d}^{k}\}_{k=1}^{\infty}$ follow from the universal and conformally consistent choice of action-restriction data. Recall that we have analogues of $A_{\infty}$-functor equations for the maps \eqref{higher order maps of the action-restriction functor, on truncated Floer complexes}. By taking homotopy direct limit as above, we obtain the desired $A_{\infty}$-equations for the maps $R_{d}^{k}$. \par
	 We have thus completed the construction of the $A_{\infty}$-quasi-isomorphism \eqref{action-restriction functor for finitely many Lagrangians}. \par

\subsection{Extension to the whole wrapped Fukaya categories}
	To finish the proof of Theorem \ref{two models of wrapped Fukaya categories of product manifolds are equivalent}, we need to be able to extend the functors \eqref{action-restriction functor for finitely many Lagrangians} to the whole wrapped Fukaya categories. The approach we take is to apply the homotopy direct limit procedure described in section \ref{A-infinity homotopy direct limit} to obtain a homotopy direct limit functor
\begin{equation} \label{the full action-restriction functor}
R: \mathcal{W}^{s}(\mathbb{L}) \to \tilde{\mathcal{W}}(\mathbb{L}),
\end{equation}
from the sequence of $A_{\infty}$-functors $R_{d}: \mathcal{W}_{d}^{s} \to \mathcal{W}_{d}$. Here $\tilde{\mathcal{W}}(\mathbb{L})$ is an $A_{\infty}$-category quasi-equivalent to the wrapped Fukaya category $\mathcal{W}(\mathbb{L})$, consisting of the same objects, i.e. those Lagrangians in the collection $\mathbb{L}$. Then it follows immediately that $R$ is a quasi-equivalence, since each $R_{d}$ is a quasi-isomorphism. \par
	It remains to check the two assumptions in section \ref{A-infinity homotopy direct limit}. Assumption \eqref{direct limit assumption on subcategories} automatically holds by the definition of these $A_{\infty}$-subcategories $\mathcal{W}^{s}_{d}$ of $\mathcal{W}^{s}(\mathbb{L})$ and $\mathcal{W}_{d}$ of $\mathcal{W}(\mathbb{L})$, because of the inductive nature of the choices of Floer data used in the definition of the wrapped Fukaya category. Therefore, to get the homotopy direct limit functor \eqref{the full action-restriction functor}, it suffices to check Assumption \ref{direct limit assumption on functors}. \par
	We restrict to the case of wrapped Fukaya categories: $\mathcal{A} = \mathcal{W}^{s}(\mathbb{L}), \mathcal{A}_{d} = \mathcal{W}^{s}_{d}, \mathcal{B} = \mathcal{W}(\mathbb{L}), \mathcal{B}_{d} = \mathcal{W}_{d}$, and the action-restriction functors $\mathcal{F}_{d} = R_{d}$ that we define on these subcategories. Note that Assumption \eqref{direct limit assumption on functors} is rather weak, in the sense that it does not ask for a specific homotopy between the two $A_{\infty}$-functors $j_{d, d'}^{-1} \circ R_{d'} \circ i_{d, d'}$ and $R_{d}$. Thus the freedom in the choice of such a homotopy makes the process quite flexible. \par
	Since there are at most $d'$ Lagrangian submanifolds $\mathcal{L}_{1}, \cdots, \mathcal{L}_{d'}$ to consider at each time, the $A_{\infty}$-functors $R_{d}$ and $R_{d'}$ are quite concrete: they are determined by our choices of action-restriction data. There are two families of such choices - one for the collection of Lagrangian labels $\mathcal{L}_{1}, \cdots, \mathcal{L}_{d}$, denoted by $D_{0}$, the other for $\mathcal{L}_{1}, \cdots, \mathcal{L}_{d'}$, denoted by $D_{1}$ (meaning choices for all elements in $\bar{\mathcal{N}}_{k+1}$ in a consistent way. We may also restrict $D_{1}$ to the collection $\mathcal{L}_{1}, \cdots, \mathcal{L}_{d}$ to obtain consistent choices of action-restriction data for $\mathcal{L}_{1}, \cdots, \mathcal{L}_{d}$, still denoted by $D_{1}$. The idea of proving that $R_{d}$ and $j_{d, d'}^{-1} \circ R_{d'} \circ i_{d, d'}$ are homotopic as $A_{\infty}$-functors from $\mathcal{W}^{s}_{d}$ to $\mathcal{W}_{d}$ is to choose a one-parameter family $D_{t}$ of action-restriction data interpolating these two, and then use the resulting parametrized moduli spaces to construct the desired homotopy between the two action-restriction functors determined by $D_{0}$ and $D_{1}$ respectively. The existence of such one-parameter family is because the spaces of such Hamiltonians/almost complex structures are contractible. \par
	To construct a homotopy between the functors associated to these two data $D_{0}, D_{1}$, we need a one-dimensional higher analogue of the multiplihedra, which we call the homotopehedra and denote by $\bar{\mathcal{P}}_{k+1}$. Define $\mathcal{P}_{k+1} = \mathcal{N}_{k+1} \times [0, +\infty)$. The compactification $\bar{\mathcal{P}}_{k+1}$ has boundary strata made of products of copies of $\mathcal{M}_{i+1}, \mathcal{N}_{j+1}$ as well as $\mathcal{P}_{l+1}$. \par
	Now we consider moduli space $\mathcal{P}_{k+1}(\tilde{x}_{0}; x_{1}, \cdots, x_{k})$ of elements $(S, w, t, u)$ where $(S, w, t) \in \mathcal{P}_{k+1}$, and $u: (S, w) \to M$ is a continuation disk satisfying the Cauchy-Riemann equation with respect to the Hamiltonian and almost complex structure from the action-restriction datum $D_{t}$, which converges to some $K$-chord $\tilde{x}_{0}$ over the $0$-th strip-like end, and to some $H_{M, N}$-chord $x_{i}$ over the $i$-th strip-like end. \par
	There is a natural bordification $\bar{\mathcal{P}}_{k+1}(\tilde{x}_{0}; x_{1}, \cdots, x_{k})$, whose codimension one boundary strata consist of a union of product moduli spaces of the following form:
\begin{equation}\label{boundary strata of moduli space defining homotopy of action-restriction functors}
\begin{split}
&\partial \bar{\mathcal{P}}_{k+1}(\tilde{x}_{0}; x_{1}, \cdots, x_{k})\\
&= \coprod_{m, n} \mathcal{P}_{k-m}(\tilde{x}_{0}; x_{1}, \cdots, x_{n}, x', x_{n+m+1}, \cdots, x_{k}) \times \mathcal{M}_{m+1}(x'; x_{n+1}, \cdots, x_{n+m})\\
&\cup \coprod_{\substack{r, s\\i_{1} + \cdots i_{r} = k}} \coprod_{\tilde{x}'_{1}, \cdots, \tilde{x}'_{r}} \mathcal{M}_{r+1}(\tilde{x}_{0}; \tilde{x}'_{1}, \cdots \tilde{x}'_{r}) \times (\mathcal{N}_{i_{1}+1}(\tilde{x}'_{1}; x_{1}, \cdots, x_{i_{1}}; D_{0})\\
&\times \cdots \times \mathcal{N}_{i_{s-1}+1}(\tilde{x}'_{s-1}; x_{i_{1} + \cdots + i_{s-2} + 1}, \cdots, x_{i_{1} + \cdots + i_{s-1}}; D_{0})\\
&\times \mathcal{P}_{i_{s}+1}(\tilde{x}'_{s}; x_{i_{1} + \cdots + i_{s-1} + 1}, \cdots, x_{i_{1} + \cdots + i_{s}}; D_{t})\\
&\times \mathcal{N}_{i_{s+1}+1}(\tilde{x}'_{s+1}; x_{i_{1} + \cdots + i_{s} + 1}, \cdots, x_{i_{1} + \cdots + i_{s+1}}; D_{1})\\
&\times \cdots \times \mathcal{N}_{i_{r}+1}(\tilde{x}'_{r}; x_{i_{1} + \cdots + i_{r-1} + 1}, \cdots, x_{k}; D_{1}))\\
&\cup \mathcal{N}_{k+1}(\tilde{x}_{0}; x_{1}, \cdots, x_{k}; D_{1})\\
&\cup \mathcal{N}_{k+1}(\tilde{x}_{0}; x_{1}, \cdots, x_{k}; D_{0}).
\end{split}
\end{equation}
Here the $x$'s without tilde denote $H_{M, N}$-chords, while the $\tilde{x}$'s denote $K$-chords. \par
	By counting rigid elements in the moduli spaces $\bar{\mathcal{P}}_{k+1}(\tilde{x}_{0}; x_{1}, \cdots, x_{k})$, we construct from this one-parameter family of action-restriction data $D_{t}$ a multilinear map
\begin{equation}
T^{k}: CW^{*}(\mathcal{L}_{0}, \mathcal{L}_{1}; H_{M, N}) \otimes \cdots \otimes CW^{*}(\mathcal{L}_{k-1}, \mathcal{L}_{k}; H_{M, N}) \to CW^{*}(\mathcal{L}_{0}, \mathcal{L}_{k}; K)
\end{equation}
of degree $-k$, whose $(\tilde{x}_{0}; x_{1}, \cdots, x_{k})$ component is the count of rigid elements in the moduli space $\bar{\mathcal{P}}_{k+1}(\tilde{x}_{0}; x_{1}, \cdots, x_{k})$. \par
	Setting $T^{0} = 0$, we claim that $T = \{T^{k}\}_{k=0}^{\infty}$ is a homotopy between the functors $R_{d}$ and $j_{d, d'}^{-1} \circ R_{d'} \circ i_{d, d'}$, where the former is defined by the action-restriction datum $D_{0}$, and the latter by $D_{1}$. Verifying the relation $m^{1}_{\mathcal{Q}}(T) = j_{d, d'}^{-1} \circ R_{d'} \circ i_{d, d'} - R_{d}$ amounts to looking at the boundary strata of the bordification $\bar{\mathcal{P}}_{k+1}(x'_{0}; x_{1}, \cdots, x_{k})$ as above \eqref{boundary strata of moduli space defining homotopy of action-restriction functors}. This relation is precisely the condition for $R_{d}$ and $j_{d, d'}^{-1} \circ R_{d'} \circ i_{d, d'}$ to be homotopic as $A_{\infty}$-functors. Thus we have verified Assumption \ref{direct limit assumption on functors}, and thus completed the proof of Theorem \ref{two models of wrapped Fukaya categories of product manifolds are equivalent}. \par

\begin{remark}
	Here is a technical remark regarding the general definition of the wrapped Fukaya category that we use here. In practice, one starts with an at most countable collection of closed exact or cylindrical Lagrangian submanifolds, and declare them to be the objects of the wrapped Fukaya category. It is only in this sense our perturbation framework makes sense inductively, and the extension of the action-restriction functor to the whole wrapped Fukaya category is valid via the procedure we described in section \ref{A-infinity homotopy direct limit}. \par
	In principle, one could allow all (not just a countable collection of) admissible Lagrangian submanifolds as objects of the wrapped Fukaya category, but should pay close attension to the transversality argument: one must carefully make choices of Floer data each time, and use Axiom of Choice uncountably many times to define the $A_{\infty}$-structure on the wrapped Fukaya category. However, it seems technically difficult to extend the action-restriction functor in that setup. We will not discuss this technical point since it is not needed for most applications. \par
\end{remark}

\section{$A_{\infty}$-functors associated to Lagrangian correspondences}
\label{A-infinity functors associated to Lagrangian correspondence}

\subsection{Extension of quilted wrapped Floer cohomology to Lagrangian immersions}\label{section: quilted wrapped Floer theory for Lagrangian immersions}
	This section provides chain-level refinements of the construction of cohomological functors in \cite{Gao1}. That is, we are going to prove that admissible Lagrangian correspondences give rise to functors between appropriate versions of wrapped Fukaya categories. For simplicity, we let the source of the functors be the wrapped Fukaya category of $M$ consisting of embedded exact cylindrical Lagrangian submanifolds. All $A_{\infty}$-functors are to be understood as cohomologically unital $A_{\infty}$-functors. \par
	As an introductory part of the main construction, we first give a naive attempt in extending quilted wrapped Floer cohomology to exact cylindrical Lagrangian immersions with transverse or clean self-intersections. For our purpose of constructing functors from Lagrangian correspondences, we shall only consider the case where $L \subset M$ and $\mathcal{L} \subset M^{-} \times N$ are properly embedded, while $L' \subset N$ is replaced by an exact cylindrical Lagrangian immersion $\iota: L' \to N$. \par
	One short-cut definition for the quilted wrapped Floer cochain space is given as follows. As the underlying $\mathbb{Z}$-module, the quilted wrapped Floer cochain space
\begin{equation*}
CW^{*}(L, \mathcal{L}, (L', \iota))
\end{equation*}
is defined as the wrapped Floer cochain space
\begin{equation*}
CW^{*}(\mathcal{L}, L \times (L', \iota)),
\end{equation*}
for the pair of exact cylindrical Lagrangian immersions in $M^{-} \times N$, where $L \times (L', \iota)$ is the obvious product Lagrangian immersion with clean self-intersections. This pair has clean intersections, whose wrapped Floer cochain space is defined in section \ref{section: wrapped Floer cochain space for a pair with clean intersections}. The quilted Floer "differential" $n^{0}$ is defined as the zeroth-order curved $A_{\infty}$-structure map on the above wrapped Floer cochain space $CW^{*}(\mathcal{L}, L \times (L', \iota))$. Here we put the quotation mark because $n^{0}$ might not square to zero in general. Alternatively, there is another straightforward definition, using moduli spaces of inhomogeneous pseudoholomorphic quilted strips, following the standard setup of quilted wrapped Floer theory. In fact, these inhomogeneous pseudoholomorphic quilted strips are in natural bijection to inhomogeneous pseudoholomorphic strips in the product manifold, and we can choose the same perturbations (by multisections) for both moduli spaces. Thus the second definition is equivalent to the first one. \par
	The second definition is more suitable for discussing the $A_{\infty}$-bimodule structure on the quilted wrapped Floer cochain space $CW^{*}(L, \mathcal{L}, (L', \iota))$. The details of the construction will be discussed in subsection \ref{section: module-valued functors}. \par

\subsection{The module-valued functors associated to Lagrangian correspondences}\label{section: module-valued functors}
	The guiding principle for constructing $A_{\infty}$-functors from Lagrangian correspondences is to use moduli spaces of inhomogeneous pseudoholomorphic quilted maps. For these $A_{\infty}$-functors to be defined over $\mathbb{Z}$, we must ensure that these moduli spaces carry coherent orientations. The reader is referred to the Appendix of \cite{Gao1} for the discussion on orientations on the relevant moduli spaces of inhomogeneous pseudoholomorphic quilts, where the discussion focused on one particular kind of moduli space but can be easily generalized to all the other ones which we actually use in this paper. For a more general discussion on orientations of moduli spaces of pseudoholomorphic quilts, we refer the reader to \cite{Wehrheim-Woodward5}, but remark that our approach is independent because some of the quilted surfaces we use are not included there. \par
	The starting point is to to relate the wrapped Fukaya category $\mathcal{W}(M^{-} \times N)$ of the product manifold, to the dg-category of $A_{\infty}$-bimodules over $(\mathcal{W}(M), \mathcal{W}(N))$. \par

\begin{proposition}\label{prop: construction of bimodule-valued functor}
	There is a canonical $A_{\infty}$-functor
\begin{equation} \label{bimodule-valued functor}
\Phi: \mathcal{W}(M^{-} \times N) \to (\mathcal{W}(M), \mathcal{W}(N))^{bimod},
\end{equation}
satisfying the following properties:
\begin{enumerate}[label=(\roman*)]

\item $\Phi$ is non-trivial for any non-trivial $\mathcal{W}(M^{-} \times N)$;

\item If either $M$ or $N$ is a point, $\Phi$ is the Yoneda functor for $N$ or $M$;

\item If $\mathcal{L} = L \times L'$ is a product Lagrangian correspondence, then the $A_{\infty}$-bimodule $\Phi(\mathcal{L})$ splits. That is, there is an isomorphism of $A_{\infty}$-bimodules
\begin{equation}
\Phi(\mathcal{L}) \cong \mathfrak{y}_{r}(L) \otimes \mathfrak{y}_{l}(L'),
\end{equation}
where $\mathfrak{y}_{r}$ and $\mathfrak{y}_{l}$ are the right and left Yoneda functors.

\end{enumerate}
\end{proposition}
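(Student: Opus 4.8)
The plan is to construct $\Phi$ by realizing it as the bimodule-valued functor $\mathcal{F}_{\mathcal{P}}$ associated to a single $A_{\infty}$-bimodule $\mathcal{P}$ over $(\mathcal{W}(M), \mathcal{W}(N))$, built out of quilted wrapped Floer theory, together with the algebraic apparatus of Section \ref{section: bimodules and functors}. Concretely, I would first fix the moduli-theoretic input: for a triple $(L, \mathcal{L}, L')$ with $L \in Ob\,\mathcal{W}(M)$, $L' \in Ob\,\mathcal{W}(N)$ and $\mathcal{L} \subset M^{-} \times N$ an admissible Lagrangian correspondence, define the quilted wrapped Floer cochain complex $CW^{*}(L, \mathcal{L}, L')$ as in subsection \ref{section: quilted wrapped Floer theory for Lagrangian immersions} (in the embedded case here), using moduli spaces of inhomogeneous pseudoholomorphic quilted strips with seam on $\mathcal{L}$; compactness and transversality are inherited from the identification of quilted strips in $M \times N$ with Floer strips in the product, for which the requisite maximum principle and action–energy estimates were established in Section \ref{section: product manifolds} and in \cite{Gao1}. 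Orientations on these moduli spaces, giving coefficients in $\mathbb{Z}$, are handled as in the Appendix of \cite{Gao1}. The assignment $\mathcal{P}(L, L') = CW^{*}(L, \mathcal{L}, L')$ then carries $A_{\infty}$-bimodule structure maps $n^{k,l}_{\mathcal{P}}$ defined by counting rigid elements of moduli spaces of quilted disks with $k$ marked points on the $L$-boundary and $l$ on the $L'$-boundary; the $A_{\infty}$-bimodule relations follow by the usual codimension-one boundary analysis. Letting $\mathcal{L}$ vary, the same moduli spaces (now with $\mathcal{L}$-seam labels allowed to interpolate) upgrade this to a functor $\mathcal{W}(M^{-}\times N) \to (\mathcal{W}(M),\mathcal{W}(N))^{bimod}$; this is exactly the quilted analogue of the $A_{\infty}$-structure maps of the product Fukaya category, and the $A_{\infty}$-functor equations are again a boundary-stratum count. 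This yields $\Phi$.

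For property (i), I would note that $\Phi$ is cohomologically faithful on the split-generated part: when $\mathcal{L}$ is the diagonal $\Delta_M \subset M^{-}\times M$ the bimodule $\Phi(\Delta_M)$ is the diagonal bimodule $\mathcal{W}(M)_{\Delta}$ (this is essentially a quilted version of the identification of $CW^*(L,\Delta,L')$ with $CW^*(L,L')$, which follows from a strip-shrinking / folding argument as in Wehrheim–Woodward), so $\Phi$ is non-zero whenever $\mathcal{W}(M^{-}\times N)$ is, and more precisely the composite with the Künneth-type statement discussed in subsection \ref{section: Kunneth formula} shows non-triviality; I would phrase (i) minimally as: the linear term $\Phi^1$ on $hom(\mathcal{L},\mathcal{L})$ sends the cohomological unit to the identity bimodule endomorphism, which is non-zero. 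For property (ii), when $N$ is a point, $M^{-}\times N = M^{-}$ and a Lagrangian correspondence $\mathcal{L}$ is just a Lagrangian submanifold $L$ of $M^{-}$; the quilted complex $CW^*(L', \mathcal{L}, \mathrm{pt})$ collapses to $CW^*(L', L)$ (one seam, one patch), so $\Phi(\mathcal{L})$ is the right Yoneda module $\mathfrak{y}_r(L)$, and the higher structure maps match by inspection of the degenerate quilted moduli spaces — symmetrically when $M$ is a point. For property (iii), when $\mathcal{L} = L\times L'$ is a product, any quilted strip with seam on $L\times L'$ decouples into a strip in $M^{-}$ with boundary components on (the first patch's Lagrangian and) $L$ and a strip in $N$ with boundary on $L'$ (and the second patch's Lagrangian), and the product almost complex structure / split Hamiltonian make this decoupling exact at the level of moduli spaces; hence $\mathcal{P}(L_0,L_1') \cong CW^*(L_1,L) \otimes CW^*(L',L_0')$ as complexes, compatibly with both module actions, giving the claimed isomorphism $\Phi(L\times L') \cong \mathfrak{y}_r(L)\otimes\mathfrak{y}_l(L')$ of $A_{\infty}$-bimodules.

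The main obstacle I anticipate is the decoupling argument in (iii) at the chain level rather than just on cohomology: one must show that the chosen (generic, domain-dependent) perturbations used to define $\mathcal{P}$ can be taken to respect the product structure, so that the isomorphism is strict (or at least a quasi-isomorphism realized by an explicit $A_{\infty}$-bimodule homomorphism). This is precisely the kind of transversality-within-the-product-class issue addressed in Section \ref{section: product manifolds} and in Wehrheim–Woodward's quilted setup; the resolution is to use domain-dependent perturbations that are of product type near the seam labeled by $L\times L'$ while being generic elsewhere, which suffices for the relevant low-dimensional moduli spaces since at least one boundary Lagrangian is a product. A secondary, more bookkeeping-heavy point is the coherent choice of perturbation data making $\Phi$ an honest $A_{\infty}$-functor over the whole category $\mathcal{W}(M^{-}\times N)$ — this requires the same inductive, universal-and-consistent machinery (over a countable collection of objects) used throughout the paper, together with the homotopy direct limit of subsection \ref{A-infinity homotopy direct limit} to pass to the full category; I would invoke these rather than redo them.
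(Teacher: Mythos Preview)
Your proposal is correct and follows essentially the same approach as the paper: construct $\Phi$ directly from moduli spaces of inhomogeneous pseudoholomorphic quilted maps, first defining the bimodule structure $n^{k|0|l}$ on $\Phi(\mathcal{L})$ for fixed $\mathcal{L}$ and then the higher functor terms $n^{k|d|l}$ by allowing punctures on the seam, with the $A_\infty$-functor equations coming from the codimension-one boundary analysis. One small point of phrasing: your opening sentence suggests applying the algebraic construction $\mathcal{F}_{\mathcal{P}}$ of subsection~\ref{section: bimodules and functors} to a \emph{single} bimodule, but that machinery turns a bimodule over $(\mathcal{A},\mathcal{B})$ into a functor $\mathcal{A}\to\mathcal{B}^{l\text{-}mod}$, not into a functor out of a third category; what you actually do (and what the paper does) is build $\Phi$ directly by varying $\mathcal{L}$, so the reference to $\mathcal{F}_{\mathcal{P}}$ is a slight red herring.
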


	The idea of proof is to develop a quilted version of wrapped Floer theory, extending that in \cite{Gao1}. The natural construction should yield an $A_{\infty}$-functor from the split model of the wrapped Fukaya category $\mathcal{W}^{s}(M^{-} \times N)$. However, based on the results of section \ref{section: product manifolds}, that is equivalent to the ordinary wrapped Fukaya category $\mathcal{W}(M^{-} \times N)$. Thus it does not matter which version of wrapped Fukaya category of the product manifold we use as far as algebraic structures are concerned, at least up to quasi-equivalence. \par

\begin{definition}
	A Lagrangian correspondence $\mathcal{L} \subset M^{-} \times N$ from $M$ to $N$ is said to be admissible, if it is admissible for wrapped Floer theory in the product manifold $M^{-} \times N$ in the sense of Definition \ref{definition of admissible Lagrangian submanifolds in the product}, i.e. is an object of the wrapped Fukaya category $\mathcal{W}(M^{-} \times N)$.
\end{definition}

	As a result, by evaluating the above $A_{\infty}$-functor at each given object $\mathcal{L}$ of $\mathcal{W}(M^{-} \times N)$, i.e. an admissible Lagrangian correspondence, we then get an $A_{\infty}$-bimodule over $(\mathcal{W}(M), \mathcal{W}(N))$. By purely algebraic consideration involving the Yoneda embedding, we have the following $A_{\infty}$-functor associated to $\mathcal{L}$: \par

\begin{corollary}
	For any admissible Lagrangian correspondence $\mathcal{L} \subset M^{-} \times N$, there is an associated $A_{\infty}$-functor:
\begin{equation} \label{functor to modules}
\Phi_{\mathcal{L}}: \mathcal{W}(M) \to \mathcal{W}(N)^{l-mod},
\end{equation}
to the dg-category of left $A_{\infty}$-modules over $\mathcal{W}(N)$. 
\end{corollary}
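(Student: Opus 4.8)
The plan is to obtain $\Phi_{\mathcal{L}}$ by composing the bimodule-valued functor $\Phi$ of Proposition \ref{prop: construction of bimodule-valued functor}, evaluated at $\mathcal{L}$, with the algebraic constructions of Section \ref{section: bimodules and functors}. First I would recall from the proposition that $\Phi(\mathcal{L})$ is an $A_{\infty}$-bimodule over $(\mathcal{W}(M), \mathcal{W}(N))$; concretely, its value on a pair $(L, L')$ is the quilted wrapped Floer cochain complex $CW^{*}(L, \mathcal{L}, L')$, with bimodule structure maps $n^{k,l}$ given by counts of the relevant inhomogeneous pseudoholomorphic quilted maps. Then I would invoke the construction preceding Lemma \ref{criterion for representability in terms of objects}: to any $A_{\infty}$-bimodule $\mathcal{P}$ over $(\mathcal{A}, \mathcal{B})$ one associates the module-valued $A_{\infty}$-functor $\mathcal{F}_{\mathcal{P}}: \mathcal{A} \to \mathcal{B}^{l-mod}$, sending $X \in Ob\mathcal{A}$ to the left-$\mathcal{B}$ module $\mathcal{P}(X, \cdot)$ with structure maps $n^{k}_{\mathcal{F}_{\mathcal{P}}(X)}(b_{k}, \cdots, b_{1}, p) = n^{k,0}_{\mathcal{P}}(b_{k}, \cdots, b_{1}, p)$, and with higher-order terms $(\mathcal{F}_{\mathcal{P}}^{l}(a_{l}, \cdots, a_{1}))^{k}(b_{k}, \cdots, b_{1}, p) = n^{k,l}_{\mathcal{P}}(b_{k}, \cdots, b_{1}, p, a_{l}, \cdots, a_{1})$. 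Setting $\mathcal{A} = \mathcal{W}(M)$, $\mathcal{B} = \mathcal{W}(N)$, and $\mathcal{P} = \Phi(\mathcal{L})$, I would define
\begin{equation}
\Phi_{\mathcal{L}} = \mathcal{F}_{\Phi(\mathcal{L})}: \mathcal{W}(M) \to \mathcal{W}(N)^{l-mod}.
\end{equation}

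Next I would verify that this is a well-defined $A_{\infty}$-functor to the dg-category of left $A_{\infty}$-modules. The $A_{\infty}$-functor equations for $\{\mathcal{F}_{\mathcal{P}}^{l}\}$ follow directly from the $A_{\infty}$-bimodule equations for $\{n^{k,l}_{\mathcal{P}}\}$ — this is exactly the content of the unnumbered Lemma in Section \ref{section: bimodules and functors} asserting that $\mathcal{F}_{\mathcal{P}}$ is an $A_{\infty}$-functor — so nothing new needs to be proved here beyond citing it. To land in the unital subcategory $\mathcal{W}(N)^{l-mod}$ rather than just $\mathcal{W}(N)^{nu-l-mod}$, I would observe that $\mathcal{W}(N)$ is cohomologically unital, that the bimodule $\Phi(\mathcal{L})$ arising from Floer theory is c-unital on both sides (this follows from the standard continuation-map argument producing cohomological units in wrapped Floer theory, as referenced in the setup of $\mathcal{W}(M)$), and hence each module $\Phi(\mathcal{L})(X, \cdot)$ is c-unital; likewise the linear terms $\mathcal{F}_{\Phi(\mathcal{L})}^{1}$ respect units cohomologically because $\Phi$ is a (cohomologically unital) $A_{\infty}$-functor. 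Thus $\Phi_{\mathcal{L}}$ is a cohomologically unital module-valued functor, as required by the convention stated at the start of Section \ref{A-infinity functors associated to Lagrangian correspondence}.

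I do not expect a serious obstacle in this corollary: it is a formal consequence of Proposition \ref{prop: construction of bimodule-valued functor} together with the purely algebraic bimodule-to-module-valued-functor machinery of Section \ref{section: bimodules and functors}. The only point requiring a word of care — and the closest thing to a "hard part" — is the cohomological unitality bookkeeping, namely checking that the Floer-theoretic bimodule $\Phi(\mathcal{L})$ is c-unital so that the image genuinely lies in $\mathcal{W}(N)^{l-mod}$ (and not merely in $\mathcal{W}(N)^{nu-l-mod}$); this is handled exactly as the unitality of $\mathcal{W}(N)$ itself, via the existence of cohomological units for quilted wrapped Floer cohomology. Finally, for emphasis I would note that all the substantive geometric and analytic input — transversality, compactness, orientations over $\mathbb{Z}$ of the moduli spaces of inhomogeneous pseudoholomorphic quilts — has already been absorbed into the construction of $\Phi$ in Proposition \ref{prop: construction of bimodule-valued functor} and the orientation discussion referenced from \cite{Gao1}, so this corollary simply packages that output through the Yoneda-type formalism.
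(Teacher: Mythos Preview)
Your proposal is correct and matches the paper's approach: the paper does not give an explicit proof of this corollary but simply remarks (immediately before stating it) that one evaluates $\Phi$ at $\mathcal{L}$ to obtain an $A_{\infty}$-bimodule and then applies ``purely algebraic consideration involving the Yoneda embedding,'' which is precisely the $\mathcal{F}_{\mathcal{P}}$ construction of Section~\ref{section: bimodules and functors} that you invoke. Your additional remarks on cohomological unitality are a reasonable expansion of what the paper leaves implicit.
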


	In particular, we remark that such an $A_{\infty}$-functor is defined for any admissible Lagrangian correspondence $\mathcal{L} \subset M^{-} \times N$, without any properness assumption. However, as we shall see in the next subsection, a suitable properness assumption is needed in order to prove that this module-valued functor is representable, thus can be improved to a filtered $A_{\infty}$-functor to the immersed wrapped Fukaya category $\mathcal{W}_{im}(N)$. \par
	Now let us discuss in detail the construction of the bimodule-valued functor \eqref{bimodule-valued functor}. On the level of objects, the functor $\Phi$ should assign an $A_{\infty}$-bimodule $\Phi(\mathcal{L})$ over $(\mathcal{W}(M), \mathcal{W}(N))$ to an admissible Lagrangian correspondence $\mathcal{L} \subset M^{-} \times N$. The first order bimodule structure map of $\Phi(\mathcal{L})$ has already been constructed in \cite{Gao1}. We now give an extension of that, defining the $A_{\infty}$-bimodule structure maps of all orders in a uniform treatment. \par
	Consider the quilted surface $\underline{S}^{k, l}$ consisting of two patches $S^{k}_{0}, S^{l}_{1}$, where $S^{k}_{0}$ is a disk with $(k+2)$ boundary punctures $z_{0}^{-}, z_{0}^{1}, \cdots, z_{0}^{k}, z_{0}^{+}$, and $S^{l}_{1}$ is a disk with $(l+2)$ boundary punctures $z_{1}^{-}, z_{1}^{1}, \cdots, z_{1}^{l}, z_{1}^{+}$. Let $I_{0}^{\pm}$ be the boundary component of $S^{k}_{0}$ between $z_{0}^{+}$ and $z_{0}^{-}$, and $I_{1}^{\pm}$ the boundary component of $S^{l}_{1}$ between $z_{1}^{+}$ and $z_{1}^{-}$. The quilted surface is obtained by seaming the two patches along these two boundary components. After seaming the two patches, the quilted surface $\underline{S}^{k, l}$ has $(k+l)$ positive strip-like ends $\epsilon_{0}^{1}, \cdots, \epsilon_{0}^{k}$ and $\epsilon_{1}^{1}, \cdots, \epsilon_{1}^{l}$ as well as two quilted ends (one positive and one negative), each of which consists of two strip-like ends. See the picture below. \par

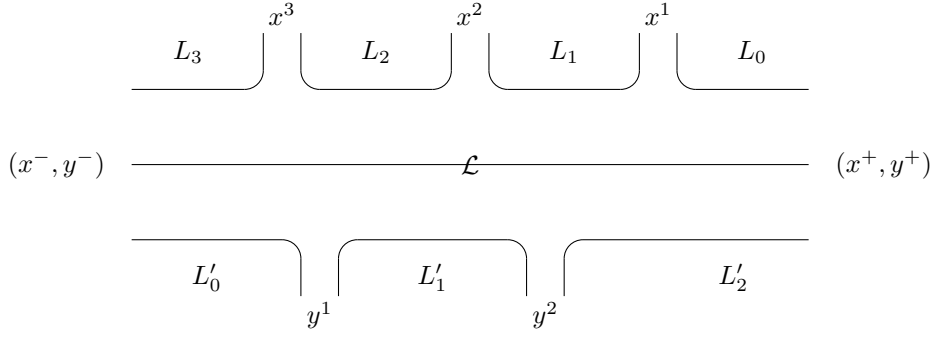
\begin{figure}
\begin{tikzpicture}
	\draw (-6, 1) -- (-4.5, 1);
	\draw (-4.5, 1) arc (270:360:0.25cm);
	\draw (-4.25, 1.25) -- (-4.25, 1.75);
	\draw (-3.5, 1) arc (270:180:0.25cm);
	\draw (-3.75, 1.25) -- (-3.75, 1.75);
	\draw (-3.5, 1) -- (-2, 1);
	\draw (-2, 1) arc (270:360:0.25cm);
	\draw (-1.75, 1.25) -- (-1.75, 1.75);
	\draw (-1, 1) arc (270:180:0.25cm);
	\draw (-1.25, 1.25) -- (-1.25, 1.75);
	\draw (-1, 1) -- (0.5, 1);
	\draw (0.5, 1) arc (270:360:0.25cm);
	\draw (0.75, 1.25) -- (0.75, 1.75);
	\draw (1.5, 1) arc (270:180:0.25cm);
	\draw (1.25, 1.25) -- (1.25, 1.75);
	\draw (1.5, 1) -- (3, 1);

	\draw (-6, 0) -- (3, 0);

	\draw (-6, -1) -- (-4, -1);
	\draw (-4, -1) arc (90:0:0.25cm);
	\draw (-3.75, -1.25) -- (-3.75, -1.75);
	\draw (-3, -1) arc (90:180:0.25cm);
	\draw (-3.25, -1.25) -- (-3.25, -1.75);
	\draw (-3, -1) -- (-1, -1);
	\draw (-1, -1) arc (90:0:0.25cm);
	\draw (-0.75, -1.25) -- (-0.75, -1.75);
	\draw (0, -1) arc (90:180:0.25cm);
	\draw (-0.25, -1.25) -- (-0.25, -1.75);
	\draw (0, -1) -- (3, -1);

	\draw (-5.25, 1.5) node {$L_{3}$};
	\draw (-2.75, 1.5) node {$L_{2}$};
	\draw (-0.25, 1.5) node {$L_{1}$};
	\draw (2.25, 1.5) node {$L_{0}$};
	\draw (-1.5, 0) node {$\mathcal{L}$};
	\draw (-5, -1.5) node {$L'_{0}$};
	\draw (-2, -1.5) node {$L'_{1}$};
	\draw (2, -1.5) node {$L'_{2}$};

	\draw (-4, 2) node {$x^{3}$};
	\draw (-1.5, 2) node {$x^{2}$};
	\draw (1, 2) node {$x^{1}$};
	
	\draw (-7, 0) node {$(x^{-}, y^{-})$};
	\draw (4, 0) node {$(x^{+}, y^{+})$};
	
	\draw (-3.5, -2) node {$y^{1}$};
	\draw (-0.5, -2) node {$y^{2}$};

\end{tikzpicture}
\caption{The quilted map defining the bimodule} \label{fig: the quilted map defining the bimodule}
\end{figure}

	Choosing a Floer datum for $\underline{S}^{k, l}$ allows us to define inhomogeneous pseudoholomorphic quilted maps from $\underline{S}^{k, l}$ to the pair $(M, N)$ with appropriate moving Lagrangian boundary conditions and asymptotic convergence conditions over the various ends. We shall choose Floer data for all (representatives of) such quilted surfaces in the moduli spaces, and extend the choices by automorphism-invariant Floer data on unstable components, i.e. quilted strips, of semistable quilted surfaces, such that they are compatible under gluing maps with the universal and consistent choices we made for disks. \par
	Let $\mathcal{R}^{k, l}((x^{-}, y^{-}); (x^{+}, y^{+}), \vec{x}, \vec{y})$ be the moduli space, namely the set of isomorphism classes of inhomogeneous pseudoholomorphic quilted maps $(\underline{S}^{k, l}, \underline{u})$ as pictured in Figure \ref{fig: the quilted map defining the bimodule}. There is a natural Gromov bordification
\begin{equation}\label{moduli space for defining the bimodule structure maps}
\bar{\mathcal{R}}^{k, l}((x^{-}, y^{-}); (x^{+}, y^{+}), \vec{x}, \vec{y}),
\end{equation}
which compactifies $\mathcal{R}^{k, l}((x^{-}, y^{-}); (x^{+}, y^{+}), \vec{x}, \vec{y})$. The codimension one boundary strata of $\bar{\mathcal{R}}^{k, l}((x^{-}, y^{-}); (x^{+}, y^{+}), \vec{x}, \vec{y})$ are covered by a union of products of moduli spaces of the following form:
\begin{equation} \label{boundary stratum of the moduli space defining the bimodule structure}
\begin{split}
\coprod_{\substack{1 \le i \le k\\ (\vec{x}', \vec{x}'') = \vec{x}}} \coprod_{x_{new}} 
&\bar{\mathcal{R}}^{k-i+1, l}((x^{-}, y^{-}); (x^{+}, y^{+}), \vec{x}', x_{new}, \vec{y}) \times \bar{\mathcal{M}}_{i+1}(x_{new}, \vec{x}'')\\
\cup \coprod_{\substack{1 \le j \le l\\ (\vec{y}', \vec{y}'') = \vec{y}}} \coprod_{y_{new}} 
&\bar{\mathcal{R}}^{k, l-j+1}((x^{-}, y^{-}); (x^{+}, y^{+}), \vec{x}, \vec{y}', y_{new}) \times \bar{\mathcal{M}}_{j+1}(y_{new}, \vec{y}'')\\
\cup \coprod_{\substack{k'+k''=k, l'+l''=l\\ (x^{+}_{1}, y^{+}_{1})}}
&\bar{\mathcal{R}}^{k', l'}((x^{-}, y^{-}); (x^{+}_{1}, y^{+}_{1}), \vec{x}', \vec{y}')\\
&\times \bar{\mathcal{R}}^{k'', l''}((x^{+}_{1}, y^{+}_{1}); (x^{+}, y^{+}), \vec{x}'', \vec{y}'')
\end{split}
\end{equation}
Here by the notation $\bar{\mathcal{R}}^{k-i+1, l}((x^{-}, y^{-}); (x^{+}, y^{+}), \vec{x}', x_{new}, \vec{y})$ we mean to insert the new Hamiltonian chord $x_{new}$ in every possible place that splits the tuple $\vec{x}$ of Hamiltonian chords to the two tuples $\vec{x}'$ and $\vec{x}''$, as long as the cyclic order is preserved. Similar remarks apply to the $y$'s. \par
	In \cite{Wehrheim-Woodward4}, it is demonstrated that this kind of moduli space is locally modeled on a Fredholm complex. Since there are no disk bubbles, we can use the standard transversality argument to prove that for generic universal and consistent choices of Floer data, the Gromov bordification $\bar{\mathcal{R}}^{k, l}((x^{-}, y^{-}); (x^{+}, y^{+}), \vec{x}, \vec{y})$ is a compact smooth manifold with corners of expected dimension 
\begin{equation}
	k - 2 + l - 2 + \deg((x^{-}, y^{-})) - \deg((x^{+}, y^{+})) - \sum \deg(x^{i}) - \sum \deg(y^{j}).
\end{equation}
And moreover, we can arrange the perturbations so that every stratum is regular. \par
	A finiteness result, which says that given inputs $(x^{+}, y^{+}), \vec{x}, \vec{y}$, the moduli spaces $\bar{\mathcal{R}}^{k, l}((x^{-}, y^{-}); (x^{+}, y^{+}), \vec{x}, \vec{y})$ are empty for all but finitely many outputs $(x^{-}, y^{-})$, can be deduced from the action-energy equality, which plays a crucial role in the well-definedness of various maps in wrapped Floer theory (\cite{Abouzaid-Seidel}, \cite{Abouzaid1}, also see \cite{Gao1} in which we used the quilted version in special cases $k \le 1, l \le 1$). This ensures that the count of rigid elements of all moduli spaces $\bar{\mathcal{R}}^{k, l}((x^{-}, y^{-}); (x^{+}, y^{+}), \vec{x}, \vec{y})$ for fixed inputs $(x^{+}, y^{+}), \vec{x}, \vec{y}$ is finite, which gives rise to a map
\begin{equation}
\begin{split}
n^{k|0|l}&: CW^{*}(L_{k-1}, L_{k}) \otimes \cdots \otimes CW^{*}(L_{0}, L_{1}) \otimes CW^{*}(L_{k}, \mathcal{L}, L'_{0})\\
&\otimes CW^{*}(L'_{l-1}, L'_{l}) \otimes \cdots \otimes CW^{*}(L'_{0}, L'_{1}) \to CW^{*}(L_{0}, \mathcal{L}, L'_{l}).
\end{split}
\end{equation}
By analyzing the boundary strata of one dimensional moduli spaces as described in \eqref{boundary stratum of the moduli space defining the bimodule structure}, we find that the operation $n^{k|0|l}$ satisfies the following equation
\begin{equation} \label{bimodule equation}
\begin{split}
&m^{1} \circ n^{k|0|l}([\vec{x}], [x^{+}, y^{+}], [\vec{y}])\\
&= \sum n^{k-i+1|0|l}([\vec{x}'], m^{i}([\vec{x}'']), [x^{+}, y^{+}], [\vec{y}])\\
&+ \sum n^{k|0|l-j+1}([\vec{x}], [x^{+}, y^{+}], [\vec{y}'], m^{j}([\vec{y}'']))\\
&+ \sum_{\substack{k'+k''=k, l'+l''=l\\ (\vec{x}', \vec{x}'') = \vec{x}}} n^{k'|0|l'}([\vec{x}'], n^{k''|0|l''}([\vec{x}''], [x^{+}, y^{+}], [\vec{y}'']), [\vec{y}']).
\end{split}
\end{equation}
This precisely means that the operations $n^{k|0|l}$ define an $A_{\infty}$-bimodule structure on $\Phi(\mathcal{L})$ over $(\mathcal{W}(M), \mathcal{W}(N))$. \par

	Next we want to study what the $A_{\infty}$-functor \eqref{bimodule-valued functor} does to morphisms, and how this is related with the $A_{\infty}$-structure maps of $\mathcal{W}(M)$ and $\mathcal{W}(N)$. A Floer cochain $[\gamma] \in CW^{*}(\mathcal{L}_{0}, \mathcal{L}_{1})$ should give rise to an $A_{\infty}$-bimodule homomorphism
\begin{equation} \label{bimodule homomorphism associated to Floer cochain}
\Phi_{\gamma}: \Phi_{\mathcal{L}_{0}} \to \Phi_{\mathcal{L}_{1}}.
\end{equation}
Moreover, this should be functorial in the wrapped Fukaya category of the product manifold $M^{-} \times N$ as stated in Proposition \ref{prop: construction  of bimodule-valued functor}.
More specifically, it means that there are multilinear maps
\begin{equation}\label{bimodule homomorphism: first order}
\begin{split}
n^{k|1|l}&: CW^{*}(\mathcal{L}_{0}, \mathcal{L}_{1}) \to \hom(CW^{*}(L_{k-1}, L_{k}) \otimes \cdots \otimes CW^{*}(L_{0}, L_{1})\\
&\otimes CW^{*}(L_{0}, \mathcal{L}_{0}, L'_{l}) \otimes CW^{*}(L'_{l-1}, L'_{l})\\
&\otimes \cdots \otimes CW^{*}(L'_{0}, L'_{1}), CW^{*}(L_{k}, \mathcal{L}_{1}, L'_{0})),
\end{split}
\end{equation} 
such that when evaluated on $[\gamma] \in CW^{*}(\mathcal{L}_{0}, \mathcal{L}_{1})$, the resulting maps form the desired $A_{\infty}$-bimodule homomorphism \eqref{bimodule homomorphism associated to Floer cochain}. \par
	To define the maps \eqref{bimodule homomorphism: first order}, we shall study moduli spaces of inhomogeneous pseudoholomorphic maps from another kind of quilted surface $\underline{S}^{1, k, l}$, which we describe as follows. It has two patches $S^{1, k}_{0}, S^{1, l}_{1}$, where $S^{1, k}_{0}$ is a disk with $(k+3)$ boundary punctures $z_{0}^{+}, z_{0}^{-}, z_{0}^{p}, z_{0}^{1}, \cdots, z_{0}^{k}$, and $S^{1, l}_{1}$ is a disk with $(l+3)$ boundary punctures $z_{1}^{+}, z_{1}^{-}, z_{1}^{p}, z_{1}^{1}, \cdots, z_{1}^{k}$. We denote by $I_{0, +}$ the boundary component of $S^{1, k}_{0}$ between $z_{0}^{+}$ and $z_{0}^{p}$, and by $I_{0, -}$ that between $z_{0}^{p}$ and $z_{0}^{-}$. We use similar notations for $S^{1, l}$. The quilted surface is obtained by seaming the two patches along the two pairs of boundary components $(I_{0, +}, I_{1, +})$ and $(I_{0, -}, I_{1, -})$. \par
	Choose a Floer datum for $\underline{S}^{1, k, l}$ so that we can write down the inhomogeneous Cauchy-Riemann equation for quilted maps $\underline{u}: \underline{S}^{1, k, l} \to (M, N)$:
\begin{equation}
\begin{cases}
(du_{0} - \alpha_{S^{1,k}_{0}} \otimes X_{H_{S^{1,k}_{0}}})^{0, 1} = 0\\
(du_{1} - \alpha_{S^{1,l}_{1}} \otimes X_{H_{S^{1,l}_{1}}})^{0, 1} = 0\\
u_{0}(z) \in \phi_{M}^{\rho_{S^{1, k}_{0}}(z)}L_{i}, \text{ if $z \in \partial S^{1, k}_{0}$ lies between $z_{0}^{i}$ and $z_{0}^{i+1}$}\\
u_{0}(z) \in \phi_{M}^{\rho_{S^{1, k}_{0}}(z)}L_{0}, \text{ if $z \in \partial S^{1, k}_{0}$ lies between $z_{0}^{+}$ and $z_{0}^{1}$ }\\
u_{0}(z) \in \phi_{M}^{\rho_{S^{1, k}_{0}}(z)}L_{k}, \text{ if $z \in \partial S^{1, l}_{0}$ lies between $z_{0}^{k}$ and $z_{0}^{-}$ }\\
u_{1}(z) \in \phi_{N}^{\rho_{S^{1, l}_{1}}(z)}L'_{j}, \text{ if $z \in  \partial S^{1, l}_{1}$ lies between $z_{1}^{j}$ and $z_{1}^{j+1}$}\\
u_{1}(z) \in \phi_{N}^{\rho_{S^{1, l}_{1}}(z)}L'_{0}, \text{ if $z \in \partial S^{1, l}_{1}$ lies between $z_{1}^{+}$ and $z_{1}^{1}$ }\\
u_{1}(z) \in \phi_{N}^{\rho_{S^{1, l}_{1}}(z)}L'_{l}, \text{ if $z \in \partial S^{1, l}_{1}$ lies between $z_{1}^{l}$ and $z_{1}^{-}$ }\\
(u_{0}(z), u_{1}(z)) \in (\phi_{M}^{\rho_{S^{1, k}_{0}}(z)} \times \phi_{N}^{\rho_{S^{1, l}_{1}}(z)})\mathcal{L}_{1}, \text{ if $z \in \partial S^{1, k}_{0}$ lies between $z_{0}^{-}$ and $z_{0}^{p}$}\\
(u_{0}(z), u_{1}(z)) \in (\phi_{M}^{\rho_{S^{1, k}_{0}}(z)} \times \phi_{N}^{\rho_{S^{1, l}_{1}}(z)})\mathcal{L}_{0}, \text{ if $z \in \partial S^{1, k}_{0}$ lies between $z_{0}^{p}$ and $z_{0}^{+}$}\\
\lim\limits_{s \to -\infty} (u_{0} \circ \epsilon_{0}^{-}(s, \cdot), u_{1} \circ \epsilon_{1}^{-}(s, \cdot)) = (x^{-}(\cdot), y^{-}(\cdot))\\
\lim\limits_{s \to +\infty} (u_{0} \circ \epsilon_{0}^{+}(s, \cdot), u_{1} \circ \epsilon_{1}^{+}(s, \cdot)) = (x^{+}(\cdot), y^{+}(\cdot))\\
\lim\limits_{s \to +\infty} (u_{0} \circ \epsilon_{0}^{p}(s, \cdot), u_{1} \circ \epsilon_{1}^{p}(s, \cdot)) = \gamma(\cdot)\\
\lim\limits_{s \to +\infty} u_{0} \circ \epsilon_{0}^{i}(s, \cdot) = x^{i}(\cdot), i = 1, \cdots, k\\
\lim\limits_{s \to +\infty} u_{1} \circ \epsilon_{1}^{j}(s, \cdot) = y^{j}(\cdot), j = 1, \cdots, l
\end{cases}
\end{equation} 
Here $[x^{i}] \in CW^{*}(L_{i-1}, L_{i}), [y^{i}] \in CW^{*}(L'_{j-1}, L'_{j})$ are Hamiltonian chords in $M$ and $N$ respectively, $[\gamma] \in CW^{*}(\mathcal{L}_{0}, \mathcal{L}_{1})$ is a Hamiltonian chord in $M^{-} \times N$ with respect to the split Hamiltonian, and $[(x^{-}, y^{-})] \in CW^{*}(L_{k}, \mathcal{L}_{1}, L'_{0})$, $[(x^{+}, y^{-})] \in CW^{*}(L_{0}, \mathcal{L}_{0}, L'_{l})$ are generalized chords for the corresponding Lagrangian boundary and seaming conditions. We omit suitable rescalings of the asymptotic Hamiltonian chords by the Liouville flow for the purpose of simplifying notation, but shall keep in mind that these can be chosen and have been chosen in a consistent way. The Lagrangian boundary conditions are ordered as follows: on the boundary of the first patch, $L_{k}, \cdots, L_{0}$ are in order from the negative quilted puncture to the positive quilted puncture; on the boundary of the second patch, $L'_{0}, \cdots, L'_{l}$ are in order from the negative quilted puncture to the positive quilted puncture; on the seam, $\mathcal{L}_{d}, \cdots, \mathcal{L}_{0}$ are in order from the negative quilted puncture to the positive quilted puncture. \par
	Let $\mathcal{R}^{1, k, l}((x^{-}, y^{-}); \vec{x}, \gamma, (x^{+}, y^{+}), \vec{y})$ be the moduli space of solutions $(\underline{S}^{1, k, l}, \underline{u})$ to the above equation. Here by $\underline{S}^{1, k, l}$ in the triple we mean a complex structure on $\underline{S}^{1, k, l}$ up to isomorphism. The Gromov bordification $\bar{\mathcal{R}}^{1, k, l}((x^{-}, y^{-}); \vec{x}, \gamma, (x^{+}, y^{+}), \vec{y})$ is in fact a compactification, with its codimension one stratum covered by the following union of fiber products of moduli spaces
\begin{equation} \label{boundary stratum of the moduli space defining the bimodule homomorphism}
\begin{split}
&\coprod_{\substack{1 \le i \le k\\ (\vec{x}', \vec{x}'') = \vec{x}}} \coprod_{x_{new}} \bar{\mathcal{R}}^{1, k-i+1, l}((x^{-}, y^{-}); \vec{x}', x_{new}, \gamma, (x^{+}, y^{+}), \vec{y}) \times \bar{\mathcal{M}}_{i+1}(x_{new}, \vec{x}'')\\
&\cup \coprod_{\substack{1 \le j \le l\\ (\vec{y}', \vec{y}'') = \vec{y}}} \coprod_{y_{new}} \bar{\mathcal{R}}^{1, l, l-j+1}((x^{-}, y^{-}); \vec{x}, \gamma, (x^{+}, y^{+}), \vec{y}') \times \bar{\mathcal{M}}_{j+1}(y_{new}, \vec{y}'')\\
&\cup \coprod_{\gamma_{1}} \bar{\mathcal{R}}^{1, k, l}((x^{-}, y^{-}); \vec{x}, \gamma_{1}, (x^{+}, y^{+}), \vec{y}) \times \bar{\mathcal{M}}(\gamma_{1}, \gamma)\\
&\cup \coprod_{\substack{k'+k''=k, l'+l''=l\\ (\vec{x}', \vec{x}'') = \vec{x}, (\vec{y}', \vec{y}'') = \vec{y}}} \coprod_{(x^{+}_{1}, y^{+}_{1})} \bar{\mathcal{R}}^{1, k', l'}((x^{-}, y^{-}); \vec{x}', \gamma, (x^{+}_{1}, y^{+}_{1}), \vec{y}')\\
&\times \bar{\mathcal{R}}^{0, k'', l''}((x^{+}_{1}, y^{+}_{1}); \vec{x}'', (x^{+}, y^{+}), \vec{y}'')\\
&\cup \coprod_{\substack{k'+k''=k, l'+l''=l\\ (\vec{x}', \vec{x}'') = \vec{x}, (\vec{y}', \vec{y}'') = \vec{y}}} \coprod_{(x^{+}_{1}, y^{+}_{1})} \bar{\mathcal{R}}^{0, k', l'}((x^{-}, y^{-}); \vec{x}', (x^{+}_{1}, y^{+}_{1}), \vec{y}')\\
&\times \bar{\mathcal{R}}^{1, k'', l''}((x^{+}_{1}, y^{+}_{1}); \vec{x}'', \gamma, (x^{+}, y^{+}), \vec{y}'').
\end{split}
\end{equation}\par

	In this situation, the underlying quilted surface is not obtained by gluing patches tangentially, thus the limit of a sequence of inhomogeneous pseudoholomorphic quilted maps does not create a figure-eight bubble (\cite{Wehrheim-Woodward3}). Therefore, the usual Sard-Smale theorem can be used to prove transversality. The upshot is that for generic choices of Floer data compatible with the choices made for puncture disks involved in the definition of wrapped Fukaya categories, these moduli spaces $\bar{\mathcal{R}}^{1, k, l}((x^{-}, y^{-}); \vec{x}, \gamma, (x^{+}, y^{+}), \vec{y})$ are compact smooth manifolds with corners of expected dimension 
\begin{equation}
	k - 1 + l - 1 + \deg((x^{-}, y^{-})) - \deg((x^{+}, y^{+})) - \deg(\alpha) - \sum \deg(x^{i}) - \sum \deg(y^{j}),
\end{equation}
and moreover each stratum is regular. Counting rigid elements in the zero dimensional moduli space $\bar{\mathcal{R}}^{1, k, l}((x^{-}, y^{-}); \vec{x}, \gamma, (x^{+}, y^{+}), \vec{y})$ gives rise to the desired map \eqref{bimodule homomorphism: first order}. \par
	We then extend the construction to higher orders. For this purpose, we consider the quilted surface $\underline{S}^{d, k, l}$ which consists of two patches $S^{d, k}_{0}, S^{d, l}_{1}$, where $S^{d, k}_{0}$ is a disk with $(k+d+2)$ boundary punctures $z_{0}^{+}, z_{0}^{-}, z_{0}^{p_{1}}, \cdots, z_{0}^{p_{d}}, z_{0}^{1}, \cdots, z_{0}^{k}$, and $S^{d, l}_{1}$ is a disk with $(l+d+2)$ boundary punctures $z_{1}^{+}, z_{1}^{-}, z_{1}^{p_{1}}, \cdots, z_{1}^{p_{d}}, z_{1}^{1}, \cdots, z_{1}^{k}$. After seaming these two patches together, the strip-like end near $z_{0}^{p_{i}}$ and the one near $z_{1}^{p_{i}}$ together form a quilted cylindrical end. \par
	Consider the moduli spaces $\mathcal{R}^{d, k, l}((x^{-}, y^{-}); \vec{x}, \gamma^{d}, \cdots, \gamma^{1}, (x^{+}, y^{+}), \vec{y})$ of inhomogeneous pseudoholomorphic quilted maps with appropriate boundary conditions and asymptotic convergence conditions. These are similar to that in Figure \ref{fig: the quilted map defining the bimodule}, but now there are also punctures on the seam which have appropriate asymptotic convergence conditions to generalized chords $\gamma$'s. \par
	By a standard argument using Gromov compactness theorem and the maximum principle, we may prove that the Gromov bordification 
\begin{equation} \label{moduli space for defining higher order bimodule structure maps}
\bar{\mathcal{R}}^{d, k, l}((x^{-}, y^{-}); \vec{x}, \gamma^{d}, \cdots, \gamma^{1}, (x^{+}, y^{+}), \vec{y})
\end{equation}
is compact. Thus it is possible to count rigid elements therein, which gives rise to multilinear maps
\begin{equation}\label{bimodule homomorphism: higher orders}
\begin{split}
n^{k|d|l}&: CW^{*}(\mathcal{L}_{d-1}, \mathcal{L}_{d}) \otimes \cdots \otimes CW^{*}(\mathcal{L}_{0}, \mathcal{L}_{1}) \to\\
&\hom(CW^{*}(L_{k-1}, L_{k}) \otimes \cdots \otimes CW^{*}(L_{0}, L_{1}) \otimes CW^{*}(L_{0}, \mathcal{L}_{0}, L'_{l})\\
&\otimes CW^{*}(L'_{l-1}, L'_{l}) \otimes \cdots CW^{*}(L'_{0}, L'_{1}), CW^{*}(L_{k}, \mathcal{L}_{d}, L'_{0}))
\end{split}
\end{equation} \par

\begin{lemma}
	The multilinear maps $\{n^{k|d|l}\}$ satisfy the $A_{\infty}$-functor equations for the $A_{\infty}$-functor \eqref{bimodule-valued functor}. More concretely, for varying $k, l$ and testing objects $L_{i}$ and $L'_{j}$, the multilinear maps $n^{k|d|l}$ define for each $d$-tuple of composable Floer cochains in $\mathcal{W}(M^{-} \times N)$ a pre-bimodule homomorphism
$\Phi_{\mathcal{L}_{0}} \to \Phi_{\mathcal{L}_{d}}$;
moreover, the assignments of pre-bimodule homomorphisms for $d$-tuples of composable Floer cochains satisfy the $A_{\infty}$-functor equations.
\end{lemma}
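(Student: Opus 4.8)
The plan is to verify the $A_\infty$-functor equations for $\Phi$ by a codimension-one boundary analysis of the one-dimensional moduli spaces $\bar{\mathcal{R}}^{d,k,l}((x^{-},y^{-}); \vec{x}, \gamma^{d},\dots,\gamma^{1}, (x^{+},y^{+}), \vec{y})$ from \eqref{moduli space for defining higher order bimodule structure maps}. The $A_\infty$-functor equations for a functor into the dg-category $(\mathcal{W}(M),\mathcal{W}(N))^{bimod}$ amount to two things packaged together: first, for each $d$-tuple of composable Floer cochains $(\gamma^{d},\dots,\gamma^{1})$ in $\mathcal{W}(M^{-}\times N)$, the collection $\{n^{k|d|l}(\gamma^{d},\dots,\gamma^{1})\}_{k,l}$ must be a pre-bimodule homomorphism closing up to the $A_\infty$-bimodule equation; second, the map $(\gamma^{d},\dots,\gamma^{1})\mapsto n^{\bullet|d|\bullet}(\gamma^{d},\dots,\gamma^{1})$ must itself satisfy the $A_\infty$-functor equations in $func(\mathcal{W}(M^{-}\times N),(\mathcal{W}(M),\mathcal{W}(N))^{bimod})$, where the target has trivial higher products (it is a dg-category). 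So the equation to check reads, schematically,
\begin{equation*}
\sum_{i} (-1)^{?} n^{\bullet|d|\bullet}(\gamma^{d},\dots,m^{i}_{\mathcal{W}(M^{-}\times N)}(\dots),\dots,\gamma^{1}) + \sum_{e} \mu^{2}_{bimod}\big(n^{\bullet|d-e|\bullet}(\dots),n^{\bullet|e|\bullet}(\dots)\big) + (\text{diff of } n^{\bullet|d|\bullet}) = 0,
\end{equation*}
where the last term is the bimodule-differential applied to $n^{\bullet|d|\bullet}$, i.e. the bits coming from the $m^{k}$ of $\mathcal{W}(M)$, the $m^{l}$ of $\mathcal{W}(N)$, and the bimodule structure maps $n^{k|0|l}$ of the $\Phi(\mathcal{L}_i)$.

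First I would set up the analysis: fix all asymptotic and testing data so that the relevant moduli space $\bar{\mathcal{R}}^{d,k,l}$ has virtual dimension one, and apply the transversality statement already established (generic universal and consistent Floer data make every stratum regular, no disk or figure-eight bubbling since seams are not glued tangentially, and the action-energy equality gives the needed finiteness). Then the signed count of the boundary $\partial\bar{\mathcal{R}}^{d,k,l}$ is zero, and I would match the five types of boundary strata listed in \eqref{boundary stratum of the moduli space defining the bimodule homomorphism} (and its evident $d$-fold generalization) with the terms of the $A_\infty$-functor equation: bubbling of a disk $\bar{\mathcal{M}}_{i+1}$ off the $L$-boundary or $\bar{\mathcal{M}}_{j+1}$ off the $L'$-boundary gives the $m^{i}_{\mathcal{W}(M)}$ and $m^{j}_{\mathcal{W}(N)}$ terms in the bimodule-differential; bubbling of a disk $\bar{\mathcal{M}}(\gamma_{a+1}\dots\gamma_{a+i},\gamma_{new})$ off the seam gives the $m^{i}_{\mathcal{W}(M^{-}\times N)}(\gamma^{\dots})$ terms composed into a lower $n$; splitting of the quilted surface along the positive quilted end into $\bar{\mathcal{R}}^{\dots}\times\bar{\mathcal{R}}^{\dots}$ where one factor carries all $d$ seam punctures and the other carries none gives $\mu^{2}_{bimod}(n^{\bullet|0|\bullet},n^{\bullet|d|\bullet})$ and $\mu^{2}_{bimod}(n^{\bullet|d|\bullet},n^{\bullet|0|\bullet})$, i.e. the $A_\infty$-bimodule equation for the pre-bimodule homomorphism; and splitting where the $d$ seam punctures distribute as $e$ and $d-e$ between the two factors gives the composition term $\mu^{2}_{bimod}(n^{\bullet|d-e|\bullet},n^{\bullet|e|\bullet})$, which is precisely the quadratic term of the $A_\infty$-functor equation. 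The three properties (i)--(iii) claimed in Proposition \ref{prop: construction of bimodule-valued functor} I would then read off: non-triviality from the identity-like behavior on the diagonal/unit; the one-sided cases from degeneration of one patch to a point (the quilted surface becomes an honest disk, and $n^{k|d|l}$ becomes the Yoneda multilinear map); and the splitting for $\mathcal{L}=L\times L'$ from the fact that the quilted maps then decouple into a pair of disk maps, one with boundary in $\{L,L_i\}$ and one in $\{L',L'_j\}$, realizing $\mathfrak{y}_r(L)\otimes\mathfrak{y}_l(L')$ on the nose.

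The main obstacle I expect is bookkeeping rather than geometry: getting the signs right and, more seriously, verifying that the gluing/consistency conventions for Floer data on the family $\underline{S}^{d,k,l}$ are compatible with those chosen for $\underline{S}^{d,k,l}$ with smaller parameters and with the disk moduli $\bar{\mathcal{M}}_{i+1}$ in all three directions ($M$-boundary, $N$-boundary, $M^{-}\times N$-seam) simultaneously, so that the boundary identification of Kuranishi/regular data is literally an equality of counts and not merely a cobordism. This is where the "universal and consistent choice of Floer data" machinery recalled in Section \ref{wrapped Fukaya category} must be invoked carefully: one builds the data inductively on the number of marked points and seam punctures, using contractibility of the space of choices, exactly as in \cite{Seidel}, \cite{Abouzaid1}, \cite{Wehrheim-Woodward4}. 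I would state the consistency as a lemma and give the inductive construction, then observe that with such data in hand the boundary-counting argument above is purely formal. The orientation compatibility — that each of the five boundary strata contributes with the sign predicted by the $A_\infty$-functor equation — I would handle by citing the orientation conventions of the Appendix of \cite{Gao1} (and \cite{Wehrheim-Woodward5}), checking only the signs not already covered there.
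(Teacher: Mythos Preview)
Your proposal is correct and follows essentially the same approach as the paper: the proof there also proceeds by describing the codimension-one boundary strata of the one-dimensional moduli spaces $\bar{\mathcal{R}}^{d,k,l}$ (as in \eqref{boundary stratum of the moduli space defining higher order bimodule homomorphism}) and matching each type---bubbling off the $L$-boundary, off the $L'$-boundary, off the seam, and splitting along the quilted end with $d_1+d_2=d$---with the corresponding term of the $A_\infty$-functor equation. Your additional remarks on verifying properties (i)--(iii) of Proposition~\ref{prop: construction of bimodule-valued functor} and on the inductive construction of consistent Floer data go beyond what this particular lemma asserts, but they are accurate elaborations of points the paper treats elsewhere or leaves implicit.
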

\begin{proof}
	To verify that the multilinear maps $n^{k|d|l}$ satisfy the desired $A_{\infty}$-equations, we look at the codimension one boundary strata of the moduli space \eqref{moduli space for defining higher order bimodule structure maps}. It is covered by a union of the following products of moduli spaces:
\begin{equation} \label{boundary stratum of the moduli space defining higher order bimodule homomorphism}
\begin{split}
&\partial \bar{\mathcal{R}}^{d, k, l}((x^{-}, y^{-}); \vec{x}, \gamma^{d}, \cdots, \gamma^{1}, (x^{+}, y^{+}), \vec{y})\\
\cong &\coprod_{\substack{1 \le i \le k\\ (\vec{x}', \vec{x}''', \vec{x}'') = \vec{x}}} \coprod_{x_{new}}
\bar{\mathcal{M}}_{i+1}(x_{new}, \vec{x}''')\\
&\times \bar{\mathcal{R}}^{d, k-i+1, l}((x^{-}, y^{-}); \vec{x}', x_{new}, \vec{x}'' \gamma^{d}, \cdots, \gamma^{1}, (x^{+}, y^{+}), \vec{y})\\
&\cup \coprod_{\substack{1 \le j \le l\\ (\vec{y}', \vec{y}''', \vec{y}'') = \vec{y}}} \coprod_{y_{new}}
\bar{\mathcal{M}}_{j+1}(y_{new}, \vec{y}''')\\
& \times \bar{\mathcal{R}}^{d, k, l-j+1}((x^{-}, y^{-}); \vec{x}, \gamma^{d}, \cdots, \gamma^{1}, (x^{+}, y^{+}), \vec{y}', y_{new}, \vec{y}'')\\
&\cup \coprod_{0 \le d_{1} \le d} \coprod_{\substack{k'+k''=k, l'+l''=l\\ (\vec{x}', \vec{x}'') = \vec{x}, (\vec{y}', \vec{y}'') = \vec{y}}} 
\bar{\mathcal{R}}^{d_{2}, k'', l''}((x^{-}, y^{-}); \vec{x}'', \gamma^{d}, \cdots, \gamma^{d_{1}+1}, (x^{+}_{1}, y^{+}_{1}), \vec{y}'')\\
& \times \bar{\mathcal{R}}^{d_{1}, k', l'}((x^{+}_{1}, y^{+}_{1}); \vec{x}', \gamma^{d_{1}}, \cdots, \gamma^{1}, (x^{+}, y^{+}), \vec{y}')\\
&\cup \coprod_{\substack{d_{1}+d_{2}=d+1\\ 0 \le s \le d_{1}}} \coprod_{\gamma_{new}}
\bar{\mathcal{M}}_{d_{2}+1}(\gamma^{s+d_{2}}, \cdots, \gamma^{s+1}, \gamma_{new})\\
&\times \bar{\mathcal{R}}^{d_{1}, k, l}((x^{-}, y^{-}); \vec{x}, \gamma^{d}, \cdots, \gamma^{s+d_{2}+1}, \gamma_{new}, \gamma^{s}, \cdots, \gamma^{1}, (x^{+}, y^{+})).
\end{split}
\end{equation}

	The above description of the codimension-one boundary strata of 
\begin{equation*}
\bar{\mathcal{R}}^{d, k, l}((x^{-}, y^{-}); \vec{x}, \gamma^{d}, \cdots, \gamma^{1}, (x^{+}, y^{+}), \vec{y}),
\end{equation*}
similar to that in \eqref{boundary stratum of the moduli space defining higher order bimodule homomorphism}, implies the following series of identities which the operations $n^{k|d|l}$ satisfy:
\begin{equation}
\begin{split}
&m^{1} \circ n^{k|d|l}([\vec{x}], [\gamma^{d}], \cdots, [\gamma^{1}], [x^{+}, y^{+}], [\vec{y}])\\
&= \sum n^{k-i+1|d|l}([\vec{x}'], m^{i}([\vec{x}''']), [\vec{x}''], [\gamma^{d}], \cdots, [\gamma^{1}], [x^{+}, y^{+}], [\vec{y}])\\
&+ \sum n^{k|d|l-j+1}([\vec{x}], [\gamma^{d}], \cdots, [\gamma^{1}], [x^{+}, y^{+}], [\vec{y}'], m^{j}([\vec{y}''']), [\vec{y}''])\\
&+ \sum n^{k''|d_{2}|l''}([\vec{x}''], [\gamma^{d}], \cdots, [\gamma_{d_{1}+1}],\\
& n^{k'|d_{1}|l'}([\vec{x}'], [\gamma^{d_{1}}], \cdots, [\gamma^{1}], [x^{+}, y^{+}], [\vec{y}']), [\vec{y}''])\\
&+ \sum n^{k|d_{1}|l}([\vec{x}, [\gamma^{d}], \cdots, [\gamma^{s+d_{2}+1}], \\
&m^{d_{2}}([\gamma^{s+d_{2}}], \cdots, [\gamma^{s+1}]), [\gamma^{s}], \cdots, [\gamma^{1}], [x^{+}, y^{+}])
\end{split}
\end{equation}
Note in particular that the term $n^{k''|d_{2}|l''}(\cdots, n^{k'|d_{1}|l'}(\cdots), \cdots)$ accounts for the second order structure map $m^{2}$ in the dg-category $(\mathcal{W}(M), \mathcal{W}(N))^{bimod}$ of $A_{\infty}$-bimodules over $(\mathcal{W}(M), \mathcal{W}(N))$. Rewriting the above identity in a suitable way, combining the quilted Floer differentials and Floer differentials in both $\mathcal{W}(M)$ and $\mathcal{W}(N)$ into the first order structure map in $(\mathcal{W}(M), \mathcal{W}(N))^{bimod}$, we obtain the desired $A_{\infty}$-functor equations for the assignment $\mathcal{L} \mapsto \Phi_{\mathcal{L}}$, from $\mathcal{W}(M^{-} \times N)$ to the dg-category of $A_{\infty}$-bimodules over $(\mathcal{W}(M), \mathcal{W}(N))$. \par
\end{proof}

	We have thus completed the construction of the $A_{\infty}$-functor \eqref{bimodule-valued functor}. As mentioned before, by evaluation and the Yoneda embedding we obtain the module-valued functor \eqref{functor to modules} for each admissible Lagrangian correspondence $\mathcal{L} \subset M^{-} \times N$. \par

\begin{remark}
	Note that in our construction, a Lagrangian correspondence $\mathcal{L} \subset M^{-} \times N$ gives rise to a bimodule over $(\mathcal{W}(M), \mathcal{W}(N))$, rather than $(\mathcal{W}(M^{-}), \mathcal{W}(N))$. The sign is important and is due to the fact that the quilted inhomogeneous pseudoholomorphic maps are defined with respect to the almost complex structure with the correct sign, forcing the boundary conditions to be ordered in the desired way demanded by the structure of a bimodule over $(\mathcal{W}(M), \mathcal{W}(N))$.
\end{remark}

\subsection{The quilted Floer bimodule for Lagrangian immersions}
	Now we would like to extend the $A_{\infty}$-bimodule $\Phi(\mathcal{L})$ over $(\mathcal{W}(M), \mathcal{W}(N))$ to an $A_{\infty}$-bimodule over $(\mathcal{W}(M), \mathcal{W}_{im}(N))$. The construction can be viewed as a generalization of that in subsection \ref{section: quilted wrapped Floer theory for Lagrangian immersions}. \par

\begin{proposition}\label{extension of the quilted Floer bimodule to Lagrangian immersions}
	The $A_{\infty}$-bimodule $\Phi(\mathcal{L})$ over $(\mathcal{W}(M), \mathcal{W}(N))$ extends to an $A_{\infty}$-bimodule over $(\mathcal{W}(M), \mathcal{W}_{im}(N))$. That is, there is a $A_{\infty}$-bimodule over $(\mathcal{W}(M), \mathcal{W}_{im}(N))$, which composed with the pullback
\begin{equation*}
j^{*}: \mathcal{W}_{im}(N)^{l-mod} \to \mathcal{W}(N)^{l-mod}
\end{equation*}
agrees with $\Phi(\mathcal{L})$, up to homotopy equivalence of bimodules. Here $j: \mathcal{W}(N) \to \mathcal{W}_{im}(N)$ is the quasi-embedding \eqref{quasi-embedding of the ordinary wrapped Fukaya category into the immersed wrapped Fukaya category}.
\end{proposition}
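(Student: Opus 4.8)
The plan is to imitate the construction of the $A_\infty$-bimodule $\Phi(\mathcal{L})$ over $(\mathcal{W}(M),\mathcal{W}(N))$ from the previous subsection, but with the last patch of each quilted surface carrying boundary conditions in a Lagrangian \emph{immersion} $\iota\colon L'\to N$ rather than an embedded Lagrangian submanifold of $N$, and with the refinements from Section~\ref{the immersed wrapped Fukaya category} (pearly trees, Kuranishi structures, single-valued multisections, bounding cochains) grafted onto the $N$-side of the quilt. Concretely, for each pair $(L,(L',\iota))$ with $L\subset M$ embedded and $\iota\colon L'\to N$ an exact cylindrical Lagrangian immersion with transverse or clean self-intersections, one sets the underlying $\mathbb{Z}$-module of the bimodule to be the quilted wrapped Floer cochain space $CW^{*}(L,\mathcal{L},(L',\iota))$ as defined in subsection~\ref{section: quilted wrapped Floer theory for Lagrangian immersions}, i.e.\ $CW^{*}(\mathcal{L}, L\times(L',\iota))$ for the cleanly-intersecting pair in $M^{-}\times N$. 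The structure maps $n^{k|d|l}$ are then defined by signed counts of rigid elements in moduli spaces of inhomogeneous pseudoholomorphic quilted maps from the surfaces $\underline{S}^{d,k,l}$, now allowed to bubble off (i) Floer strips/polygons on the $M$-side and on the seam exactly as before, and (ii) pearly-tree disks on the $N$-side attached to the $L'$-boundary, which pass through switching components of $L'\times_\iota L'$; on that $N$-side one uses the Kuranishi structures and the coherent single-valued multisections furnished by Propositions~\ref{Kuranishi structure on the moduli space of pearly trees} and~\ref{compatible system of single-valued multisections}, together with the orientation local systems $\Theta^{-}_a$ in the clean case. Finally one $(b_L,b_{L'})$-deforms using the bounding cochain $b$ for $(L,\iota)$'s counterpart (here only $b_{L'}$ is needed, $L$ being embedded), obtaining a genuine — rather than curved — $A_\infty$-bimodule over $(\mathcal{W}(M),\mathcal{W}_{im}(N))$.

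The steps, in order, would be: first, set up the enlarged moduli spaces of quilted maps with immersed boundary on the $N$-patch, describe their codimension-one boundary strata (an evident merger of \eqref{boundary stratum of the moduli space defining higher order bimodule homomorphism} with the pearly-tree fiber products \eqref{boundary strata of moduli space of pearly trees in the form of fiber products}), and prove compactness by the usual combination of Gromov compactness, the maximum principle on the cylindrical ends of $M$, $N$ and $M^{-}\times N$, and the exactness/action-energy estimates (the exactness of $\iota$ giving the a~priori energy bound that bounds the switching bubbles, cf.\ Lemma~\ref{finiteness of pseudoholomorphic disks passing through a self-intersection point}). Second, equip these moduli spaces with Kuranishi structures compatible with fiber products at the boundary — on the quilted and seam parts classical transversality suffices since there are no disk bubbles there, and on the $N$-side one pulls back the already-constructed Kuranishi data, gluing along the matching ends. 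Third, choose single-valued multisections compatible with the fiber-product multisections, using that the ambient manifolds are exact so isotropy is trivial; one must choose these \emph{consistently} with the multisections already fixed for $\mathcal{W}_{im}(N)$ and for $\Phi(\mathcal{L})$, which is possible by the usual induction on energy and number of inputs over a countable collection of objects. Fourth, read off the curved $A_\infty$-bimodule equations from the codimension-one strata, then $b_{L'}$-deform to kill the curvature, exactly as in Proposition~\ref{prop: the curved A-infinity bimodule for a pair}. Fifth, prove the comparison statement: restricting this bimodule along $j^{*}$ amounts to restricting the $N$-boundary conditions to embedded Lagrangians with zero bounding cochain, for which the pearly-tree moduli spaces collapse (the switching components are absent, the Morse-Bott data reduce to the ones used to define $\mathcal{W}(N)$), so one obtains a bimodule chain-homotopic to $\Phi(\mathcal{L})$; the homotopy comes from the standard continuation/interpolation argument between the two (consistent) choices of perturbation data, using that the space of such choices is contractible, and is packaged as a one-parameter family of quilted moduli spaces in the spirit of the homotopehedra $\bar{\mathcal{P}}_{k+1}$ of Section~\ref{section: product manifolds}.

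The main obstacle I expect is the simultaneous consistency of perturbation data across three different ``flavours'' of moduli space at once: the Kuranishi-theoretic pearly-tree data on the $N$-side, the classical Floer data on the $M$-side and on the seam, and the quilted data interpolating them — all of which must restrict correctly to the data already fixed for $\mathcal{W}(M)$, $\mathcal{W}_{im}(N)$, and the product $\mathcal{W}(M^{-}\times N)$, and must be compatible under \emph{all} the gluing maps appearing in the boundary stratification. In particular the seam, where an $M^{-}\times N$-boundary condition meets an $N$-only patch, is the delicate place: one needs the almost complex structures on the $N$-patch near the seam to be compatible with the product structure on $M^{-}\times N$ near the seam, in the same controlled way as in \cite{Gao1}, while still being generic enough (resp.\ Kuranishi-perturbed enough) to achieve transversality in the presence of switching bubbles. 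I would handle this by the same device used throughout Sections~\ref{the immersed wrapped Fukaya category}--\ref{section: product manifolds}: work with a fixed countable collection of objects, order all the relevant moduli spaces by (energy, number of inputs, number of seam insertions) in lexicographic order, and build everything by a single induction, invoking contractibility of the relevant spaces of auxiliary data at each stage. A secondary technical point is the bookkeeping of orientations — combining the coherent orientations for quilted moduli spaces (as in the Appendix of \cite{Gao1} and \cite{Wehrheim-Woodward5}) with the orientation local systems $\Theta^{-}_a$ on the switching components of $L'\times_\iota L'$ — but this is a routine extension of the sign computations already carried out in the embedded quilted case and in the immersed (non-quilted) case.
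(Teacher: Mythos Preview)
Your proposal is correct and follows essentially the same approach as the paper: replace the embedded $N$-boundary conditions by images of immersions, enrich the quilted moduli spaces with switching data and homotopy classes, compactify by adding broken quilted maps and pearly-tree bubbles on the $N$-side, build compatible Kuranishi structures and single-valued multisections inductively, and read off the (curved, then $b$-deformed) bimodule equations from the codimension-one strata. The paper's comparison with $\Phi(\mathcal{L})$ under $j^{*}$ is handled exactly as you suggest, either by treating the embedded case via the same virtual machinery or by the continuation argument analogous to Proposition~\ref{prop: the ordinary wrapped Fukaya category quasi-embeds into the immersed wrapped Fukaya category}; your identification of the seam-compatibility and orientation bookkeeping as the main technical points is accurate, and the paper resolves them in the manner you outline.
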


	The construction of this $A_{\infty}$-bimodule structure involves moduli spaces similar to $\mathcal{R}^{k, l}((x^{-}, y^{-}); (x^{+}, y^{+}), \vec{x}, \vec{y})$ as \eqref{moduli space for defining the bimodule structure maps}, where now some of the conditions for the inhomogeneous pseudoholomorphic quilted maps are slightly modified. First, the Lagrangian submanifolds $L'_{j}$ as boundary conditions are replaced by the images of Lagrangian immersions $\iota_{j}: L'_{j} \to N$. Second, we need to include some additional information: the switching labels $\alpha_{j}$ for the Lagrangian immersions $\iota_{j}: L'_{j} \to N$, and the relative homotopy class $\beta$ of the map. Third, the Hamiltonian chords $y_{j}$ from $L'_{j-1}$ to $L'_{j}$ are now replaced by appropriate generators for the wrapped Floer cochain space $CW^{*}((L'_{j-1}, \iota_{j-1}), (L'_{j}, \iota_{j}); H)$, which are either critical points of an auxiliary Morse function on the intersection components, or non-constant time-one Hamiltonian chords contained in the cylindrical end of $N$. Denoting these generators by the same letters $\vec{y} = (y_{1}, \cdots, y_{l})$, we write the corresponding moduli space by
\begin{equation*}
\mathcal{R}^{k, l}(\vec{\alpha}, \beta; (x^{-}, y^{-}); (x^{+}, y^{+}), \vec{x}, \vec{y}),
\end{equation*}
with the additional information included. \par
	There is a natural compactification
\begin{equation*}
\bar{\mathcal{R}}^{k, l}(\vec{\alpha}, \beta; (x^{-}, y^{-}); (x^{+}, y^{+}), \vec{x}, \vec{y})
\end{equation*}
which is obtained from $\mathcal{R}^{k, l}(\vec{\alpha}, \beta; (x^{-}, y^{-}); (x^{+}, y^{+}), \vec{x}, \vec{y})$ by adding stable broken inhomogeneous pseudoholomorphic disks in $M$ and stable broken pearly tree maps in $N$ with boundary on the image of one of the Lagrangian immersions $\iota_{j}: L'_{j} \to N$, as well as broken quilted maps. As an analogue to \eqref{boundary stratum of the moduli space defining the bimodule structure}, the codimension-one boundary strata can thus be described as follows: 
\begin{equation}\label{boundary strata of the moduli space of quilted maps defining the bimodule structure maps for Lagrangian immersions}
\begin{split}
&\coprod_{\substack{1 \le i \le k\\ (\vec{x}', \vec{x}'') = \vec{x}}} \coprod_{x_{new}}
\bar{\mathcal{R}}^{1, k-i+1, l}((x^{-}, y^{-}); \vec{x}', x_{new}, \gamma, (x^{+}, y^{+}), \vec{y}) \times \bar{\mathcal{M}}_{i+1}(x_{new}, \vec{x}'')\\
&\cup \coprod_{\substack{1 \le j \le l\\ (\vec{y}', \vec{y}'') = \vec{y}}} \coprod_{y_{new}} \bar{\mathcal{R}}^{1, l, l-j+1}((x^{-}, y^{-}); \vec{x}, \gamma, (x^{+}, y^{+}), \vec{y}') \times \bar{\mathcal{M}}_{j+1}(y_{new}, \vec{y}'')\\
&\cup \coprod_{\gamma_{1}} \bar{\mathcal{R}}^{1, k, l}((x^{-}, y^{-}); \vec{x}, \gamma_{1}, (x^{+}, y^{+}), \vec{y}) \times \bar{\mathcal{M}}(\gamma_{1}, \gamma)\\
&\cup \coprod_{\substack{k'+k''=k, l'+l''=l\\ (\vec{x}', \vec{x}'') = \vec{x}, (\vec{y}', \vec{y}'') = \vec{y}}} \coprod_{(x^{+}_{1}, y^{+}_{1})} \bar{\mathcal{R}}^{1, k', l'}((x^{-}, y^{-}); \vec{x}', \gamma, (x^{+}_{1}, y^{+}_{1}), \vec{y}')\\
&\times \bar{\mathcal{R}}^{0, k'', l''}((x^{+}_{1}, y^{+}_{1}); \vec{x}'', (x^{+}, y^{+}), \vec{y}'')\\
&\cup \coprod_{\substack{k'+k''=k, l'+l''=l\\ (\vec{x}', \vec{x}'') = \vec{x}, (\vec{y}', \vec{y}'') = \vec{y}}} \coprod_{(x^{+}_{1}, y^{+}_{1})} \bar{\mathcal{R}}^{0, k', l'}((x^{-}, y^{-}); \vec{x}', (x^{+}_{1}, y^{+}_{1}), \vec{y}')\\
&\times \bar{\mathcal{R}}^{1, k'', l''}((x^{+}_{1}, y^{+}_{1}); \vec{x}'', \gamma, (x^{+}, y^{+}), \vec{y}'').
\end{split}
\end{equation}
\par
	Following the same lines as in the construction of Kuranishi structures on the moduli space of inhomogeneous pseudoholomorphic disks that are used to constructe curved $A_{\infty}$-structures for the immersed wrapped Fukaya category introduced in section \ref{the immersed wrapped Fukaya category}, we can also construct Kuranishi structures on these moduli spaces. This compactification is obtained from $\mathcal{R}^{k, l}(\vec{\alpha}, \beta; (x^{-}, y^{-}); (x^{+}, y^{+}), \vec{x}, \vec{y})$ by adding broken inhomogeneous pseudoholomorphic punctured disks in both $M$ and $N$, as well as broken quilted maps. In particular, those broken inhomogeneous pseudoholomorphic disks in $N$ with boundary conditions given by the Lagrangian immersions $\iota_{j}: L'_{j} \to N$ form moduli spaces which carry Kuranishi structures as discussed before. Thus, by the inductive nature of the construction of Kuranishi structures, it remains to build Kuranishi charts on codimension zero strata of various moduli spaces $\mathcal{R}^{k', l'}(\vec{\alpha}', \beta'; (x^{-}, y^{-}); (x^{+}, y^{+}), \vec{x}', \vec{y}')$ for $k' \le k, l' \le l$. That is, we need to build Kuranishi charts over the locus in the moduli spaces whose elements have smooth domains. \par
	Suppose we are given an inhomogeneous pseudoholomorphic quilted map $\sigma = (\underline{S}, (u, v))$ in $\mathcal{R}^{k, l}(\vec{\alpha}, \beta; (x^{-}, y^{-}); (x^{+}, y^{+}), \vec{x}, \vec{y})$, where $\underline{S}$ is the moduli parameter of the underlying quilted surface, and $(u, v)$ is map to the manifold pair $(M, N)$. The linearized operator $D_{\sigma}\bar{\partial}$ of the inhomogeneous Cauchy-Riemann equations for $\sigma$ is Fredholm by non-degeneracy assumption. Thus the Fredholm complex, defined with respect to appropriate Sobolev norm $W^{1, p}$, has finite-dimensional reductions. We choose an obstruction space $E_{\sigma}$, which is a finite-dimensional subspace of $\Omega^{0, 1}(\underline{S}; u^{*}TM \times v^{*}TN)$ such that for each $V \in E_{\sigma}$, the support of $V$ is contained in a closed subset of the domain $\underline{S}$ away from the boundary components and the seam. Then, following the lines in section \ref{section: Kuranishi structure on moduli spaces of stable pearly tree maps}, we can build a Kuranishi chart $(U_{\sigma}, E_{\sigma}, s_{\sigma}, \psi_{\sigma}, \Gamma_{\sigma} = \{1\})$ at this point $\sigma$. \par
	In order to modify these charts for all elements in the moduli space so that they together define a Kuranishi structure, we need to make sure that these charts glue well with Kuranishi charts for inhomogeneous pseudoholomorphic disks in $M$ and $N$. To formulate this, consider the following moduli spaces
\begin{enumerate}[label=(\roman*)]

\item $\mathcal{R}^{k', l'}(\vec{\alpha}', \beta'; (x^{-}, y^{-}); (x^{+}, y^{+}), \vec{x}', \vec{y}')$,

\item $\mathcal{M}_{k''+1}(x_{new}, \vec{x}'')$,

\item $\mathcal{M}_{l''+1}(\alpha'', \beta''; y_{new}, \vec{y}'')$,

\end{enumerate}
where $\vec{x} = (x_{1}, \cdots, x_{k})$, and $\vec{x}' = (x_{1}, \cdots, x_{i}, x_{new}, x_{i+k''+1}, \cdots, x_{k})$, and $\vec{x}'' = (x_{i+1}, \cdots, x_{i+k''})$; similarly for the $y$'s. The union of the product moduli spaces
\begin{equation}
\coprod_{\substack{i\\x_{new}}}
\coprod_{\substack{j\\y_{new}}}
\mathcal{R}^{k', l'}(\vec{\alpha}', \beta'; (x^{-}, y^{-}); (x^{+}, y^{+}), \vec{x}', \vec{y}') \times \mathcal{M}_{k''+1}(x_{new}, \vec{x}'') \times \mathcal{M}_{l''+1}(y_{new}, \vec{y}'')
\end{equation}
is a boundary stratum of the compactification $\bar{\mathcal{R}}^{k, l}(\vec{\alpha}, \beta; (x^{-}, y^{-}); (x^{+}, y^{+}), \vec{x}, \vec{y})$, and under the gluing maps, it can be thickened to a neighborhood of the boundary in the compactification. The gluing happens near the ends for the quilted surface and respectively the punctured disks, over which the quilted map and respectively the inhomogeneous pseudoholomorphic map converge to $x_{new}$; similarly for the ends with convergence condition $y_{new}$. Since the gluing construction is local, the process is the same as gluing inhomogeneous pseudoholomorphic disks along strip-like ends. Since the various obstruction spaces are chosen such that the vectors have compact support away from the boundary of the disks and quilted surfaces, the obstruction spaces also glue well under the gluing map. Thus we may apply the process in section \ref{section: Kuranishi structure on moduli spaces of stable pearly tree maps} to modify the Kuranishi charts so that they form a Kuranishi structure on the moduli space $\bar{\mathcal{R}}^{k, l}(\vec{\alpha}, \beta; (x^{-}, y^{-}); (x^{+}, y^{+}), \vec{x}, \vec{y})$. \par
	Moreover, such construction extends to boundary strata of higher codimension, by an inductive argument. This implies that we can  construct fiber product Kuranishi structures on \eqref{boundary strata of the moduli space of quilted maps defining the bimodule structure maps for Lagrangian immersions}, which are are compatible with the Kuranishi structure on $\bar{\mathcal{R}}^{k, l}(\vec{\alpha}, \beta; (x^{-}, y^{-}); (x^{+}, y^{+}), \vec{x}, \vec{y})$. This proves: \par

\begin{proposition}
	There exists an oriented Kuranishi structure on every such moduli space $\bar{\mathcal{R}}^{k, l}(\vec{\alpha}, \beta; (x^{-}, y^{-}); (x^{+}, y^{+}), \vec{x}, \vec{y})$, which is compatible with the fiber product Kuranishi structures on \eqref{boundary strata of the moduli space of quilted maps defining the bimodule structure maps for Lagrangian immersions}. That is, the restriction of the Kuranishi structure on $\bar{\mathcal{R}}^{k, l}(\vec{\alpha}, \beta; (x^{-}, y^{-}); (x^{+}, y^{+}), \vec{x}, \vec{y})$ to \eqref{boundary strata of the moduli space of quilted maps defining the bimodule structure maps for Lagrangian immersions} agrees with the fiber product Kuranishi structure. 
\end{proposition}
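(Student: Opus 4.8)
The plan is to construct the Kuranishi structures on the moduli spaces $\bar{\mathcal{R}}^{k, l}(\vec{\alpha}, \beta; (x^{-}, y^{-}); (x^{+}, y^{+}), \vec{x}, \vec{y})$ by induction on the pair $(E, k+l)$ in lexicographic order, where $E$ is the (a priori bounded) energy of a quilted map in the moduli space, exactly mirroring the inductive scheme already used for the moduli spaces $\bar{\mathcal{M}}_{k+1}(\alpha, \beta; J, H; c_{0}, \cdots, c_{k})$ of stable pearly tree maps in Subsection~\ref{section: Kuranishi structure on moduli spaces of stable pearly tree maps}. The base of the induction consists of moduli spaces whose elements have smooth domains (no breaking, no bubbling); for these I would produce Kuranishi charts $(U_{\sigma}, E_{\sigma}, s_{\sigma}, \psi_{\sigma}, \Gamma_{\sigma} = \{1\})$ at each point $\sigma = (\underline{S}, (u, v))$ by the standard recipe: the linearized operator $D_{\sigma}\bar{\partial}$ of the inhomogeneous Cauchy--Riemann equations for the quilted map is Fredholm by the non-degeneracy of all asymptotic generators, so one chooses a finite-dimensional obstruction space $E_{\sigma} \subset \Omega^{0,1}(\underline{S}; u^{*}TM \times v^{*}TN)$ containing the cokernel, whose elements may (by unique continuation) be taken smooth and supported in a compact subset of $\underline{S}$ away from all boundary components and the seam. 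The isotropy group is trivial because the automorphism group of a semistable quilted domain is torsion-free (the quilted surfaces here are genus zero with connected seam, and the only unstable pieces are strips whose automorphism group is $\mathbb{R}$). This is the content already sketched in the paragraph preceding the proposition.

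The inductive step is the gluing construction. Given a point in a codimension-$m$ stratum of $\bar{\mathcal{R}}^{k, l}(\cdots)$, its underlying domain is a quilted surface (possibly broken) with inhomogeneous pseudoholomorphic disks in $M$ and stable pearly tree maps in $N$ attached at strip-like ends, together with broken quilted components; by the inductive hypothesis all the factors appearing in the fiber-product description \eqref{boundary strata of the moduli space of quilted maps defining the bimodule structure maps for Lagrangian immersions} already carry Kuranishi structures (the pearly-tree factors because they have strictly smaller energy or fewer leaves, as established in Subsection~\ref{section: Kuranishi structure on moduli spaces of stable pearly tree maps}; the lower quilted factors by the induction itself). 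I would then take the direct sum of the obstruction spaces transported under the pre-gluing maps $I_i$ — which is injective precisely because the obstruction vectors have compact support disjoint from the gluing regions — and solve the $\bar{\partial}$-equation modulo this direct sum by the implicit function theorem together with the exponential decay estimates of \cite{FOOO2}, \cite{FOOO3}, just as in the disk case. Since the quilted surfaces are not glued tangentially (the seams are genuine codimension-one seams, not figure-eight configurations, cf.\ \cite{Wehrheim-Woodward3}), no figure-eight bubbling occurs and the local Fredholm model of \cite{Wehrheim-Woodward4} applies; the gluing is therefore the standard combination of strip-gluing for Floer trajectories, disk-gluing, and gradient-flow-gluing, performed near the ends carrying the common asymptotes $x_{new}$, $y_{new}$, $\gamma_1$, or $(x^{+}_1, y^{+}_1)$. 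One then globalizes by the finite-covering / canonical-gauge-fixing procedure of \cite{Fukaya-Ono} (needed only to handle the unstable strip components, whose residual $\mathbb{R}$-symmetries are fixed by minimizing the Riemannian distance-squared functional), producing a compatible system of charts and coordinate changes. The compatibility with the fiber-product Kuranishi structures at \eqref{boundary strata of the moduli space of quilted maps defining the bimodule structure maps for Lagrangian immersions} is then automatic from the construction, since the Kuranishi map is in each case simply the restriction of $\bar{\partial}$.

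For orientations: since $M$ and $N$ are both exact and all isotropy groups are trivial, it suffices to orient the index bundles. I would glue the orientation operators $D^{-}_{c}$, $D^{+}_{c}$ attached to the generators — using the orientation local systems $\Theta^{-}_{a}$ on the clean-intersection components of $L'_j \times_{\iota_j} L'_j$ and of the pairs, together with the chosen spin structures and gradings on the $L'_j$ and the diagonal-type data on $\mathcal{L}$ — to the linearized operator $D_{\sigma}\bar{\partial}$, deform the resulting Cauchy--Riemann operator with totally real boundary conditions to a standard Dolbeault operator with canonically trivialized determinant, and read off a canonical isomorphism $\det TU_\sigma \otimes \det E_\sigma^{*} \cong o_{(x^-,y^-)} \otimes \Theta^{-}_{\tau(-)} \otimes (\text{inverses of the input orientation lines})$, in direct analogy with the orientation formula \eqref{orientation on the moduli space of pearly tree maps in the case of clean self-intersection}; the reader is referred to the Appendix of \cite{Gao1} and to \cite{Wehrheim-Woodward5} for the orientation conventions on quilted moduli spaces. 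The induced orientation at the boundary agrees with the fiber-product orientation by the standard sign bookkeeping.

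The main obstacle I anticipate is not the existence of the charts themselves — that is routine given the machinery of \cite{FOOO2} and the absence of figure-eight bubbles — but rather the bookkeeping needed to make the \emph{inductive} compatibility genuinely coherent: one must verify that the obstruction spaces chosen at the smooth locus of a given moduli space can be transported, via all the pre-gluing maps simultaneously, into the obstruction spaces used higher up in the induction, so that the coordinate changes on triple and higher overlaps are strictly compatible with the Kuranishi maps. In the quilted setting this requires keeping track of several families of gluing parameters at once (those at ordinary strip-like ends of the two patches, and those at the quilted ends where the seam participates), and checking that the "canonical gauge fixing" for the unstable strips is compatible across all these degenerations. This is precisely the step where \cite{FOOO2} devotes its Chapter~7, and I would organize the argument so that the quilted case reduces, locus by locus, to a finite product of the disk and strip cases already handled, invoking the compactness and Hausdorffness of $\bar{\mathcal{R}}^{k,l}(\cdots)$ (which follows from the maximum principle and Gromov compactness, there being no sphere bubbles by exactness) to guarantee that only finitely many strata need to be treated.
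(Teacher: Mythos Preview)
Your proposal is correct and follows essentially the same approach as the paper: the paper's argument (given in the paragraphs immediately preceding the proposition rather than as a formal proof) builds charts at smooth points via finite-dimensional reduction of the Fredholm operator with obstruction vectors supported away from the boundary and seam, then glues with the already-constructed Kuranishi structures on the disk and pearly-tree factors by the inductive scheme of Subsection~\ref{section: Kuranishi structure on moduli spaces of stable pearly tree maps}, exactly as you describe. If anything, your write-up is more explicit than the paper's, which largely defers to the earlier section and to \cite{FOOO2}; your identification of the coherence of coordinate changes across multiple gluing parameters as the delicate point is apt, and the paper handles it only by reference.
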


	Choosing single-valued multisections for these Kuranishi structures gives rise to virtual fundamental chains on these moduli spaces, using which the desired $A_{\infty}$-bimodule structure maps are defined. \par

\begin{corollary}
	The $A_{\infty}$-bimodule $\Phi(\mathcal{L})$ over $(\mathcal{W}(M), \mathcal{W}(N))$ extends to an $A_{\infty}$-bimodule over $(\mathcal{W}(M), \mathcal{W}_{im}(N))$. That is, there is an $A_{\infty}$-bimodule over $(\mathcal{W}(M), \mathcal{W}_{im}(N))$, which composed with the pullback
\begin{equation*}
j^{*}: \mathcal{W}_{im}(N)^{l-mod} \to \mathcal{W}(N)^{l-mod}
\end{equation*} agrees with $\Phi(\mathcal{L})$, up to homotopy of bimodules. Here $j: \mathcal{W}(N) \to \mathcal{W}_{im}(N)$ is the quasi-embedding \eqref{quasi-embedding of the ordinary wrapped Fukaya category into the immersed wrapped Fukaya category}.
\end{corollary}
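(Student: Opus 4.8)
# Proof proposal

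The plan is to assemble the pieces already constructed in this section into an $A_\infty$-bimodule over $(\mathcal{W}(M), \mathcal{W}_{im}(N))$, using the Kuranishi-theoretic moduli spaces $\bar{\mathcal{R}}^{k,l}(\vec{\alpha}, \beta; (x^{-},y^{-}); (x^{+},y^{+}), \vec{x}, \vec{y})$ and the multisections whose existence we just established. First I would fix, once and for all, a countable collection of exact cylindrical Lagrangian immersions $\iota_j : L'_j \to N$ together with the properly embedded cylindrical Lagrangians $L_i \subset M$ and the admissible correspondence $\mathcal{L} \subset M^- \times N$, so that the inductive perturbation scheme for the immersed wrapped Fukaya category $\mathcal{W}_{im}(N)$ is available. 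On these moduli spaces I would choose single-valued multisections $s_{k,l;\vec{\alpha},\beta;\cdots}$ which are transverse to zero and compatible with the fiber-product multisections on the codimension-one boundary strata \eqref{boundary strata of the moduli space of quilted maps defining the bimodule structure maps for Lagrangian immersions}; the existence of such a coherent system follows from the abstract theory of Kuranishi structures in exactly the same way as Proposition \ref{compatible system of single-valued multisections}, since every isotropy group is trivial (the ambient manifolds are exact, so there are no sphere bubbles, and the automorphism groups of the relevant domains are torsion-free). The finiteness statement needed to make the structure maps well-defined — that for fixed inputs only finitely many outputs and finitely many classes $\beta$ contribute — follows from the action-energy equality in the quilted setting, exactly as in the case $k\le 1, l\le 1$ treated in \cite{Gao1} and the immersed disk count in section \ref{the immersed wrapped Fukaya category}.

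Next I would define the structure maps $n^{k|0|l}$ of the bimodule by counting, with signs given by the orientation local systems $\Theta_a^-$ on the intersection components, the rigid elements of the zero-dimensional moduli spaces; after deformation by the bounding cochains $b_j$ for the $\iota_j : L'_j \to N$, these become honest (non-curved) bimodule structure maps over $(\mathcal{W}(M), \mathcal{W}_{im}(N))$. Analyzing the codimension-one boundary strata \eqref{boundary strata of the moduli space of quilted maps defining the bimodule structure maps for Lagrangian immersions} and using the compatibility of the chosen multisections with the fiber-product multisections gives precisely the $A_\infty$-bimodule equations for these $n^{k|0|l}$; this is the direct analogue of equation \eqref{bimodule equation}, now with the disk-bubbling contributions in $N$ accounted for by the pearly-tree moduli spaces $\bar{\mathcal{M}}_{l''+1}(\alpha'', \beta''; y_{new}, \vec{y}'')$ rather than ordinary strip-breaking. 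The higher maps $n^{k|d|l}$ associated to $d$-tuples of composable Floer cochains in $\mathcal{W}(M^-\times N)$ are constructed in the same manner from the quilted surfaces $\underline{S}^{d,k,l}$ with additional seam punctures, and the resulting assignment $\mathcal{L}\mapsto$ this bimodule becomes an $A_\infty$-functor $\mathcal{W}(M^-\times N)\to(\mathcal{W}(M),\mathcal{W}_{im}(N))^{bimod}$ by the boundary analysis of \eqref{boundary stratum of the moduli space defining higher order bimodule homomorphism}, now with Kuranishi perturbations.

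Finally I would verify the compatibility statement: composing the new bimodule with the pullback $j^* : \mathcal{W}_{im}(N)^{l-mod}\to\mathcal{W}(N)^{l-mod}$ along the quasi-embedding $j:\mathcal{W}(N)\to\mathcal{W}_{im}(N)$ of Proposition \ref{prop: the ordinary wrapped Fukaya category quasi-embeds into the immersed wrapped Fukaya category} recovers $\Phi(\mathcal{L})$ up to homotopy. When $j$ is the first (Morse–Bott) model of $\mathcal{W}(N)$ this is literally a restriction of the structure maps to the full subcategory of embedded Lagrangians with zero bounding cochain, where the pearly-tree moduli spaces reduce to the ordinary quilted-strip moduli spaces used to define $\Phi(\mathcal{L})$, so the bimodules agree on the nose. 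For the non-degenerate model of $\mathcal{W}(N)$ one instead builds a bimodule homotopy by the standard continuation/interpolation construction, using the moduli spaces $\mathcal{M}(p,x)$ and $\mathcal{M}_{C_a}(p_a,x)$ from the proof of Proposition \ref{prop: the ordinary wrapped Fukaya category quasi-embeds into the immersed wrapped Fukaya category} glued onto the $N$-patch of the quilt; the homotopy relation then follows from the codimension-one boundary of the corresponding one-parameter families.

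I expect the main obstacle to be the coherence of the Kuranishi perturbation data: one must choose multisections on the quilted moduli spaces $\bar{\mathcal{R}}^{k,l}(\vec{\alpha},\beta;\cdots)$ that are simultaneously compatible with (i) the previously fixed multisections on the immersed disk moduli spaces $\bar{\mathcal{M}}_{l+1}(\alpha,\beta;J,H;c_0,\cdots,c_l)$ defining $\mathcal{W}_{im}(N)$, (ii) the Floer data for the associahedra defining $\mathcal{W}(M)$ and $\mathcal{W}(M^-\times N)$, and (iii) each other across all the boundary strata in \eqref{boundary strata of the moduli space of quilted maps defining the bimodule structure maps for Lagrangian immersions} and \eqref{boundary stratum of the moduli space defining higher order bimodule homomorphism} as $(k,l,d)$ vary. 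This requires a careful simultaneous induction on $(E,k,l,d)$ in lexicographic order, at each stage extending the already-chosen perturbations from the boundary (where they are forced by the fiber-product condition) to the interior using the contractibility of the space of multisections — the same mechanism as in section \ref{section: Kuranishi structure on moduli spaces of stable pearly tree maps}, but now threading through three interacting operadic structures rather than one. The gauge-fixing subtlety for unstable quilted domains and the bookkeeping of the lifting indices $b=(i,j)$ for the covering $\iota_j$ over the cylindrical end add further technical weight, but no essential new difficulty beyond what is already handled in sections \ref{the immersed wrapped Fukaya category} and \ref{section: immersed wrapped Floer theory in the case of clean intersections}.
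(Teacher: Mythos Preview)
Your proposal is correct and follows essentially the same approach as the paper. The paper's own proof is extremely terse: it observes that the extension has already been constructed in the preceding discussion (the Kuranishi structures and multisections on the quilted moduli spaces $\bar{\mathcal{R}}^{k,l}(\vec\alpha,\beta;\cdots)$), and for the compatibility with $j^*$ it offers exactly the two alternatives you describe—either treat embedded Lagrangians as immersions and use virtual techniques throughout, or run the continuation/interpolation argument analogous to the proof of Proposition \ref{prop: the ordinary wrapped Fukaya category quasi-embeds into the immersed wrapped Fukaya category}—before leaving the details to the reader. Your write-up is considerably more explicit than the paper's, but the underlying argument is the same.
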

\begin{proof}
	Such extension is presented above. Thus the only thing that we need to prove is that the pullback by $j^{*}$ agrees with $\Phi(\mathcal{L})$ up to homotopy. One way to prove this is that we apply virtual techniques to construction virtual fundamental chains on the moduli space of inhomogeneous pseudoholomorphic quilted maps \eqref{moduli space for defining the bimodule structure maps}, regarding all Lagrangian submanifolds as Lagrangian immersions. If we stick with classical transversality methods for those moduli spaces, the other way is a straightforward analogue of the proof of Proposition \ref{prop: the ordinary wrapped Fukaya category quasi-embeds into the immersed wrapped Fukaya category}. As there is nothing essentially new, we leave the details to the interested reader. \par
\end{proof}

	By a parallel argument, we can also extend the the module-valued functor \eqref{functor to modules} to one with values in category of modules over the immersed wrapped Fukaya category: \par

\begin{proposition}
	There is a canonical extension of the $A_{\infty}$-functor \eqref{functor to modules} to an $A_{\infty}$-functor
\begin{equation} \label{functor to modules over the immersed category}
\Phi_{\mathcal{L}}: \mathcal{W}(M) \to \mathcal{W}_{im}(N)^{l-mod}
\end{equation}
to the $A_{\infty}$-category of left $A_{\infty}$-modules over $\mathcal{W}_{im}(N)$. That is, the composition by the pullback
\begin{equation*}
j^{*}: \mathcal{W}_{im}(N)^{l-mod} \to \mathcal{W}(N)^{l-mod}
\end{equation*}
agrees with \eqref{functor to modules}, up to homotopy.
\end{proposition}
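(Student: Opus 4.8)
The plan is to deduce the statement directly from the bimodule-level result proved in the two preceding propositions, namely that $\Phi(\mathcal{L})$ extends to an $A_\infty$-bimodule over $(\mathcal{W}(M), \mathcal{W}_{im}(N))$ whose pullback along $j^*$ recovers the bimodule over $(\mathcal{W}(M), \mathcal{W}(N))$ up to homotopy, together with the purely algebraic machinery of Section \ref{section: bimodules and functors}. First I would recall that an $A_\infty$-bimodule $\mathcal{P}$ over $(\mathcal{A}, \mathcal{B})$ gives rise, via the construction preceding Lemma \ref{criterion for representability in terms of objects}, to a module-valued functor $\mathcal{F}_{\mathcal{P}}: \mathcal{A} \to \mathcal{B}^{l-mod}$ whose value on an object $X$ is the left $\mathcal{B}$-module $\mathcal{P}(X, \cdot)$ and whose higher terms are read off from the bimodule structure maps $n^{k,l}_{\mathcal{P}}$ by the formula $(\mathcal{F}_{\mathcal{P}}^l(a_l, \cdots, a_1))^k(b_k, \cdots, b_1, p) = n^{k,l}_{\mathcal{P}}(b_k, \cdots, b_1, p, a_l, \cdots, a_1)$. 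Applying this with $\mathcal{A} = \mathcal{W}(M)$ and $\mathcal{B} = \mathcal{W}_{im}(N)$ to the extended bimodule from the previous corollary produces a canonical $A_\infty$-functor $\Phi_{\mathcal{L}}: \mathcal{W}(M) \to \mathcal{W}_{im}(N)^{l-mod}$. On objects it sends $L$ to the left $\mathcal{W}_{im}(N)$-module $(L', \iota) \mapsto CW^*(L, \mathcal{L}, (L', \iota))$, which on the subcategory $\mathcal{W}(N) \hookrightarrow \mathcal{W}_{im}(N)$ restricts to the quilted wrapped Floer complex $CW^*(L, \mathcal{L}, L')$, i.e. to $\Phi_{\mathcal{L}}(L)$ in the sense of \eqref{functor to modules}.

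Next I would verify the compatibility with the pullback $j^*: \mathcal{W}_{im}(N)^{l-mod} \to \mathcal{W}(N)^{l-mod}$. The point is functoriality of the bimodule-to-module-valued-functor construction: the assignment $\mathcal{P} \mapsto \mathcal{F}_{\mathcal{P}}$ is itself the object-level part of the canonical $A_\infty$-functor $\mathcal{F}: (\mathcal{A}, \mathcal{B})^{bimod} \to func(\mathcal{A}, \mathcal{B}^{l-mod})$ of the Proposition in Section \ref{section: bimodules and functors}, and restricting the second variable of a bimodule along the quasi-embedding $j: \mathcal{W}(N) \to \mathcal{W}_{im}(N)$ corresponds, on module-valued functors, exactly to post-composing with $j^*$. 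Hence $j^* \circ (\mathcal{F}_{\mathcal{P}_{im}}) = \mathcal{F}_{j^*\mathcal{P}_{im}}$ on the nose, and since $j^*\mathcal{P}_{im}$ is homotopy equivalent to $\Phi(\mathcal{L})$ as an $A_\infty$-bimodule over $(\mathcal{W}(M), \mathcal{W}(N))$ (by the previous corollary), applying the $A_\infty$-functor $\mathcal{F}$ to this homotopy equivalence shows $j^* \circ \Phi_{\mathcal{L}}$ is homotopic to $\mathcal{F}_{\Phi(\mathcal{L})}$, which is precisely \eqref{functor to modules}. This reduces the claim to checking that $\mathcal{F}$ carries homotopy equivalences of bimodules to homotopies of functors, which is a formal consequence of $\mathcal{F}$ being an $A_\infty$-functor (it maps homotopic morphisms to homotopic morphisms and isomorphisms in cohomology to isomorphisms in cohomology).

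The cohomological unitality of $\Phi_{\mathcal{L}}$ as an $A_\infty$-functor — required by the convention fixed at the start of Section \ref{A-infinity functors associated to Lagrangian correspondence} — should also be recorded; it follows because the bimodule $\Phi(\mathcal{L})$ and its immersed extension are c-unital bimodules (their structure maps $n^{k,l}$ are constructed from genuinely unital Floer-theoretic operations), so the associated module-valued functors land in the c-unital part. I expect the main obstacle to be not any single computation but the bookkeeping needed to make precise the identity "restriction of the second bimodule variable along $j$ equals post-composition with $j^*$ on module-valued functors," and to confirm that the extended bimodule of the previous corollary is genuinely well-defined at the chain level (i.e.\ that the choices of single-valued multisections on the moduli spaces $\bar{\mathcal{R}}^{k,l}(\vec{\alpha}, \beta; \cdots)$ can be made coherently with those already fixed for the pearly-tree moduli spaces defining $\mathcal{W}_{im}(N)$ and for the disk moduli spaces defining $\mathcal{W}(M)$). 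Both of these, however, are routine given the inductive fiber-product structure of the Kuranishi charts established in the preceding proposition, so the proof is essentially a transcription: I would phrase it as "apply $\mathcal{F}$ of Section \ref{section: bimodules and functors} to the extended bimodule, then use the previous corollary and the fact that $\mathcal{F}$ preserves homotopies," leaving the Floer-theoretic content entirely to the results already proved.
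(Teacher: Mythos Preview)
Your proposal is correct and follows essentially the same approach as the paper: the paper's proof is the single remark ``Of course, this also follows from the process of converting a bimodule to a module-valued functor,'' and you have simply spelled out that process in detail, applying the canonical functor $\mathcal{F}: (\mathcal{A}, \mathcal{B})^{bimod} \to func(\mathcal{A}, \mathcal{B}^{l-mod})$ of Section~\ref{section: bimodules and functors} to the extended bimodule from the preceding corollary and checking compatibility with $j^*$.
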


	Of course, this also follows from the process of converting a bimodule to a module-valued functor. \par

\subsection{Geometric composition of Lagrangian correspondences}\label{geometric composition admissible for wrapped Floer theory}
	In order to obtain a functor that takes value in the actual immersed wrapped Fukaya category $\mathcal{W}_{im}(N)$ instead of the category of modules over it, we need to prove that the $A_{\infty}$-functor \eqref{functor to modules over the immersed category} is representable, in the sense of \cite{Fukaya1}. It has been long noted that the geometric compositions of Lagrangian correspondences are natural candidates for the objects representing the modules defined above. A good reference for the definition and basic properties of geometric compositions is \cite{Wehrheim-Woodward4}, in the case where the Hamiltonian perturbation is not present. \par 
	There are two issues in proving representability of \eqref{functor to modules over the immersed category}. First, representability does not always hold if we only consider embedded Lagrangian submanifolds, as geometric compositions of Lagrangian correspondences are generally Lagrangian immersions. This is why we must go to the immersed wrapped Fukaya category and consider the $A_{\infty}$-functor \eqref{functor to modules over the immersed category} instead of \eqref{functor to modules}. Second, the geometric composition of Lagrangian correspondences might not have good geometric properties, thus not a priori admissible for wrapped Floer theory on the nose. For this, we must imposed further conditions so that they are admissible in the immersed wrapped Fukaya category. \par
	Let us first recall the definition of geometric composition of Lagrangian correspondences. Given an admissible Lagrangian submanifold $L \subset M$ and an admissible Lagrangian correspondence $\mathcal{L} \subset M^{-} \times N$, in a generic situation the fiber product over the graph $\Gamma(\psi_{H_{M}}) \subset M^{-} \times M$ over the Hamiltonian symplectomorphism $\phi_{H_{M}}$
\begin{equation*}
L' = L \times_{\Gamma(\phi_{H_{M}})} \mathcal{L}
\end{equation*}
is a smooth submanifold of $M \times M \times N$. The composition of the embedding $L' \subset M \times M \times N$ with the projection $M \times M \times N \to N$ is a Lagrangian immersion:
\begin{equation*}
\iota: L \times_{\Gamma(\phi_{H_{M}})} \mathcal{L} \to N.
\end{equation*}
We call this Lagrangian immersion the geometric composition of $L$ with $\mathcal{L}$, under the perturbation by the Hamiltonian flow of $H_{M}$. By abuse of name, we sometimes also call the image of $\iota$ the geometric composition for simplicity, and often denote it by $L \circ_{H_{M}} \mathcal{L}$. \par
	If the geometric composition happens to be (properly) embedded, it comes with a natural choice of a primitive, which makes it an exact Lagrangian submanifold of $N$. Because of the presence of the Hamiltonian perturbation, this primitive is slightly different from the naive sum, but instead takes the following form:
\begin{equation}
g = f + F \circ (\phi_{H_{M}} \times id_{N}) + i_{X_{H_{M}}}\lambda_{M}
\end{equation}
where $X_{H_{M}}$ is the Hamiltonian vector field of $H_{M}$, thought of as a Hamiltonian pulled back to $M^{-} \times N$. This formula is calculated in \cite{Gao1}, which follows directly from the formula for the change of the primitive for an exact Lagrangian submanifold under a Hamiltonian isotopy, Lemma \ref{changing primitive under Hamiltonian isotopy}. \par
	Depending on the geometry of $L$ and $\mathcal{L}$, the geometric composition might be or not be cylindrical, even if it is embedded. Therefore a proof of its admissibility in wrapped Floer theory is completely necessary. This is done in \cite{Gao1} in the case where the geometric composition is properly embedded, with the above choice of the primitive. Technically, there we only proved that the wrapped Floer differential converges (and is in fact finite), but the same argument can be utilized to prove that higher order structure maps are also finite. \par
	In general, the geometric composition $\iota: L \circ_{H_{M}} \mathcal{L} \to N$ is not an embedding, but we still expect it to have some favorable properties. This primitive $g$ still makes $\iota$ an "exact" Lagrangian immersion, in the sense that $\iota^{*}\lambda_{N} = dg$. \par
	There is also the notion of geometric composition in the usual sense. Instead of taking the fiber product over the graph $\Gamma(\phi_{H_{M}})$, we take the fiber product over the diagonal $\Delta_{M}$. The geometric composition $L \circ \mathcal{L}$ is the map
\begin{equation*}
\iota: L \times_{\Delta_{M}} \mathcal{L} \to N.
\end{equation*}
Generically this is a Lagrangian immersion, and is also exact in the generalized sense, where the primitive is
\begin{equation*}
h = f + F.
\end{equation*} \par

	In \cite{Gao1}, we proved well-definedness of wrapped Floer cohomology in the case where the geometric composition is properly embedded. The argument can be generalized to well-definedness of $A_{\infty}$-structure maps of all orders. However, in this paper we shall take a slightly different point of view, in order to make our construction more functorial and canonical. In \cite{Gao1}, we considered the geometric composition under the large perturbation $L \circ_{H_{M}} \mathcal{L}$, and proved an isomorphism of Floer cohomology groups. While that definition is natural as one can find a natural one-to-one correspondence between the generators, we find it better to work with the geometric composition in the usual sense, when the whole categorical structure is in concern. Of course, the left-module structures associated to the geometric composition under large perturbation and the geometric composition in the usual sense are homotopy equivalent, so the essential difference is minor. \par
	
	For the geometric composition of Lagrangian correspondences $L \circ \mathcal{L}$ to have well-defined wrapped Floer theory in general, we must have good control of the behavior of inhomogeneous pseudoholomorphic disks(modeled as stable pearly tree maps) bounded by the image of the geometric composition. For this purpose, we need to make sure that the geometry of $L \circ \mathcal{L}$ at infinity does not behave too badly. Thus it is natural to introduce the following assumption. \par

\begin{assumption}\label{assumption on the geometric composition}
	For the Lagrangian submanifold $L$ in consideration, the geometric composition $L \circ \mathcal{L}$ is a proper Lagrangian immersion with transverse or clean self-intersections, which is cylindrical in the generalized sense for a Lagrangian immersion.
\end{assumption}

	When defining the wrapped Fukaya category $\mathcal{W}(M)$, we have to specify a class of Lagrangian submanifolds as objects. Since Assumption \ref{assumption on the geometric composition} is generic, it is possible for us to choose a countable collection of Lagrangian submanifolds as objects of the wrapped Fukaya category $\mathcal{W}(M)$, such that for every $L$ in this collection, Assumption \ref{assumption on the geometric composition} is satisfied. Then it follows almost from the definition that: \par

\begin{proposition}\label{prop: geometric composition is admissible}
	Under Assumption \ref{assumption on the geometric composition}, there is a well-defined curved $A_{\infty}$-algebra for the geometric composition $\iota: L \times_{\Delta_{M}} \mathcal{L} \to N$, in the sense of immersed wrapped Floer theory discussed in sections \ref{the immersed wrapped Fukaya category} and \ref{section: immersed wrapped Floer theory in the case of clean intersections}, for every $L$ from the collection of objects of $\mathcal{W}(M)$.
\end{proposition}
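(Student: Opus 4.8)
The plan is to verify that the three defining data of immersed wrapped Floer theory -- a suitable Hamiltonian and almost complex structure, an action-energy (or exactness) estimate forcing a $C^0$-bound on all relevant pseudoholomorphic curves, and finitely many broken configurations of each energy -- are all available for the geometric composition $\iota\colon L\times_{\Delta_M}\mathcal{L}\to N$ once Assumption \ref{assumption on the geometric composition} is in force. First I would record the exactness of $\iota$: by the computation recalled before the statement, the function $h=f+F$ restricted to $L\times_{\Delta_M}\mathcal{L}$ satisfies $\iota^*\lambda_N=dh$, so $\iota$ is an exact Lagrangian immersion in the sense required in Section \ref{the immersed wrapped Fukaya category}. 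Next, under the cylindricality clause of Assumption \ref{assumption on the geometric composition}, the image $\iota(L\circ\mathcal{L})$ agrees with $l\times[1,+\infty)$ over the cylindrical end of $N$ for a Legendrian $l\subset\partial N$, and properness of the immersion forces the covering of this end to be finitely-sheeted; this is exactly the hypothesis needed for the cochain space of Definition \ref{definition of wrapped Floer cochain space for a cylindrical Lagrangian immersion with clean self-intersections} (or Definition \ref{definition of wrapped Floer cochain space for a single Lagrangian immersion} in the transverse case) to be a free $\mathbb{Z}$-module with finitely many generators in each filtration level. So the underlying wrapped Floer cochain space $CW^*(L\circ\mathcal{L},\iota;H)$ is well-defined.

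The core of the argument is then to reproduce, for this particular immersion, the package assembled in Sections \ref{the immersed wrapped Fukaya category} and \ref{section: immersed wrapped Floer theory in the case of clean intersections}. Concretely I would: (i) choose $H$ as in Section \ref{section: wrapped Floer cochain space for a cylindrical Lagrangian immersion}, vanishing in the interior part of $N$ and quadratic in the radial coordinate near $\partial N$, and an almost complex structure $J$ of contact type; (ii) invoke Lemma \ref{finiteness of pseudoholomorphic disks passing through a self-intersection point} and the three maximum-principle lemmas following it -- these apply verbatim because $\iota$ is exact and embedded with cylindrical image at infinity -- to conclude that all $J$-holomorphic disks bounded by $\iota(L\circ\mathcal{L})$ lie in the interior part, pass through a self-intersection, and come in finitely many homology classes of bounded energy; (iii) build the moduli spaces of stable (broken) pearly tree maps $\bar{\mathcal{M}}_{k+1}(\alpha,\beta;J,H;c_0,\dots,c_k)$ exactly as in Section \ref{section: moduli space of disks bounded by immersed Lagrangian submanifolds}, obtaining compactness and the fiber-product description of the boundary strata from Lemmas \ref{compactness and Hausdorffness} and \ref{boundary strata in the moduli space}; (iv) equip these with oriented Kuranishi structures via Proposition \ref{Kuranishi structure on the moduli space of pearly trees} (using the orientation local systems $\Theta_a^\pm$ of Section \ref{section: immersed wrapped Floer theory in the case of clean intersections} in the clean case), choose a coherent system of single-valued multisections using exactness of $N$ as in Proposition \ref{compatible system of single-valued multisections}, and finally read off the structure maps $m^k$ and the curved $A_\infty$-equations as in Section \ref{curved A-infinity algebra associated to exact cylindrical Lagrangian immersions}. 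Finiteness of each sum defining $m^k$ follows from the action-energy identity, which holds because $\iota$ is exact, exactly as in the proof sketched there.

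The step I expect to be the main obstacle is the $C^0$-bound (equivalently, the maximum principle) needed to ensure that the moduli spaces $\bar{\mathcal{M}}_{k+1}(\alpha,\beta;J,H;c_0,\dots,c_k)$ have compact closures with the stated boundary strata. The subtlety is that $L\circ\mathcal{L}$ is only assumed cylindrical ``in the generalized sense,'' meaning its image is invariant under the Liouville flow of $N$ outside a compact set but is a priori not of the standard product-cylinder form on the nose; moreover the primitive $h=f+F$ need not be locally constant near the Legendrian boundary. What rescues the situation is precisely the analysis in \cite{Gao1}: there it is shown (in the embedded case, but the argument is local near infinity and only uses that the image is Liouville-invariant at infinity together with the explicit form of the primitive) that the action of a Hamiltonian chord on a level hypersurface $\partial N\times\{r\}$ is dominated by $-r^2$ up to a contribution from the primitive that is negligible for large $r$, so both the maximum principle and the finiteness of outputs for fixed inputs go through. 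I would therefore adapt that estimate: bound the contribution of $h$ along $l\times[1,+\infty)$ using the estimate $\iota_{X}\lambda=2r^2$ (Lemma following Lemma \ref{changing primitive under Hamiltonian isotopy}) applied in the cylindrical end, and conclude the a priori $C^0$-bound on all curves in the relevant moduli spaces, after which Gromov compactness finishes the compactification. The remaining verifications -- gradings, spin structures, orientations -- are routine given Assumption \ref{graded and spin Lagrangian immersions} and the local-system machinery already set up.
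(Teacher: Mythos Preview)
Your overall strategy---verify that the geometric composition is exact and cylindrical, then invoke the machinery of Sections \ref{the immersed wrapped Fukaya category} and \ref{section: immersed wrapped Floer theory in the case of clean intersections} wholesale---is exactly the paper's approach. Indeed the paper treats this proposition as following ``almost from the definition'': once exactness (via $h=f+F$) and cylindricality are in place, the entire package (cochain space, pearly-tree moduli, Kuranishi structures, multisections, $A_\infty$-operations) applies verbatim.

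However, you manufacture an obstacle that is not there. You read ``cylindrical in the generalized sense for a Lagrangian immersion'' as meaning merely Liouville-invariant at infinity with possibly non-standard shape and a primitive that need not be locally constant near the boundary. That is a misreading. In the paper's terminology, the relevant definition (Section~\ref{the immersed wrapped Fukaya category} for transverse self-intersections, Section~\ref{section: immersed wrapped Floer theory in the case of clean intersections} for clean) requires that the \emph{image} over the cylindrical end be exactly $l\times[1,+\infty)$ for an embedded Legendrian $l\subset\partial N$, with the preimage a trivial finite covering. In particular, since $\lambda_N$ restricted to $l\times[1,+\infty)$ vanishes (because $l$ is Legendrian), the primitive $h$ is \emph{automatically} locally constant on the cylindrical end. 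So the $C^0$-bound and maximum-principle arguments from Sections~\ref{the immersed wrapped Fukaya category}--\ref{section: immersed wrapped Floer theory in the case of clean intersections} apply without any modification or appeal to the more delicate estimates of \cite{Gao1}. The paper makes this point explicitly: the estimates in \cite{Gao1} were needed for the large-perturbation composition $L\circ_{H_M}\mathcal{L}$, which is \emph{not} cylindrical; here, by contrast, Assumption~\ref{assumption on the geometric composition} places $L\circ\mathcal{L}$ squarely inside the class of immersions already handled, so the proof is immediate. Your proposed adaptation of the $\iota_X\lambda=2r^2$ estimate is therefore unnecessary---you can simply delete the ``main obstacle'' paragraph and state that the hypotheses of Sections~\ref{the immersed wrapped Fukaya category}--\ref{section: immersed wrapped Floer theory in the case of clean intersections} are met on the nose.
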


	Compared to the cohomological result in \cite{Gao1}, which works with the geometric composition under large perturbation, the assumption of Proposition \ref{prop: geometric composition is admissible} is in fact simpler as we assume the geometric composition $L \circ \mathcal{L}$ to be cylindrical. As discussed before, it is automatically exact, so the results of sections \ref{the immersed wrapped Fukaya category} and \ref{section: immersed wrapped Floer theory in the case of clean intersections}. \par

\subsection{Unobstructedness of the geometric composition}\label{section: unobstructedness of the geometric composition}
	We have shown that if $\mathcal{L} \to N$ is proper and if Assumption \ref{assumption on the geometric composition} is satisfied, there is a curved $A_{\infty}$-algebra structure on the wrapped Floer cochain space $CW^{*}(L \circ \mathcal{L}; H_{N})$ for the geometric composition $L \circ \mathcal{L}$.
To make it into an object of the immersed wrapped Fukaya category, we must also prove that this curved $A_{\infty}$-algebra is unobstructed.
The main result of this subsection says that the geometric composition $L \circ \mathcal{L}$ is unobstructed with a canonical and unique choice of bounding cochain $b$ satisfying a distinguished property. \par

\begin{theorem} \label{unobstructedness of geometric composition}
	Suppose $L$ is a properly embedded exact cylindrical Lagrangian submanifold of $M$, and $\mathcal{L} \subset M^{-} \times N$ is a properly embedded exact cylindrical Lagrangian correspondence between $M$ and $N$, such that the projection $\mathcal{L} \to N$ is proper. Let $L \circ \mathcal{L}$ be their geometric composition. 
Then $L \circ \mathcal{L}$ is unobstructed in the sense of wrapped Floer theory, with a canonical and unique choice of bounding cochain $b$ determined by $L$ and $\mathcal{L}$, with the property that $b$ gives rise to non-curved deformations for both the quilted Floer module $CW^{*}(L, \mathcal{L}, L \circ \mathcal{L})$ and the curved $A_{\infty}$-algebra $CW^{*}(L \circ \mathcal{L})$. \par
	In particular, $(L \circ \mathcal{L}, b)$ becomes an object in the immersed wrapped Fukaya category $\mathcal{W}_{im}(N)$.
\end{theorem}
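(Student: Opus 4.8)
The plan is to produce the bounding cochain using the cyclic element machinery of subsection \ref{section: cyclic element and bounding cochain}, applied to the curved $A_{\infty}$-algebra $C = CW^{*}(L \circ \mathcal{L}; H_{N})$ together with the quilted Floer module $D = CW^{*}(L, \mathcal{L}, L \circ \mathcal{L})$ over it. The first step is to equip $C$ and $D$ with discrete, strictly compatible filtrations that are bounded above, so that Lemma \ref{cyclic element and bounding cochain} applies. Since the ambient categories are genuine (non-filtered) $A_{\infty}$-categories over $\mathbb{Z}$, this filtration cannot be the Novikov-energy filtration; instead I would use the action filtration coming from the symplectic action functional on the wrapped Floer cochain spaces. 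The action-energy equality (used throughout for compactness, e.g. in the proof sketch of Lemma \ref{compactness and Hausdorffness} and in subsection \ref{section: well-definedness of wrapped Floer theory in the product}) guarantees that the $A_{\infty}$-operations $m^{k}$ and the module operations $n^{k}$ shift the action filtration in the prescribed way; boundedness above of the relevant part of the filtration follows from the fact that the finitely many interior generators and each Reeb-chord generator sit at a definite action level, with only finitely many below any given threshold. Strict compatibility ($\Lambda_C = \Lambda_D$) should be arranged by choosing the auxiliary Morse functions and Hamiltonian $H_N$ generically so that the action values on $C$ and on $D$ coincide as subsets of $\mathbb{R}$, after a harmless shift so that $0 \in \Lambda_D$.

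The second step is to exhibit the cyclic element $u \in D$. Geometrically, $u$ should be the distinguished generator of the quilted Floer complex $CW^{*}(L, \mathcal{L}, L \circ \mathcal{L})$ coming from the "diagonal" quilted strip — the constant or minimal quilted configuration that realizes the tautological identification between the geometric composition and the fiber product $L \times_{\Delta_M} \mathcal{L}$. Concretely, $CW^{*}(L,\mathcal{L},L\circ\mathcal{L}) = CW^{*}(\mathcal{L}, L\times(L\circ\mathcal{L}))$ as in subsection \ref{section: quilted wrapped Floer theory for Lagrangian immersions}, and the cleanly-intersecting pair $\mathcal{L}$ and $L\times(L\circ\mathcal{L})$ has a distinguished intersection component canonically diffeomorphic to $L\circ\mathcal{L}$ itself; the cyclic element $u$ is the Morse-theoretic generator of that component (with its orientation local system) at the minimum of the chosen Morse function. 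I must verify the two defining properties: (i) $x \mapsto n^1(x; u)$ is a filtration-preserving isomorphism $C \to D$ with filtration-preserving inverse — this amounts to identifying the module action by $u$ with the canonical chain-level identification of $CW^*(L\circ\mathcal{L})$ with $CW^*(L,\mathcal{L},L\circ\mathcal{L})$, which on underlying $\mathbb{Z}$-modules is a bijection of generators preserving action levels, plus a lower-order correction that does not decrease filtration; and (ii) $u \in F_D^0 D$ but $n^0(u)\notin F_D^0 D$, i.e. the curvature term strictly raises the action of $u$ — this is exactly the statement that the disk/pearly-tree bubbles feeding into $n^0$ carry strictly positive energy, which holds because $L\circ\mathcal{L}$ is exact (its primitive $h = f + F$ from subsection \ref{geometric composition admissible for wrapped Floer theory}) and hence every contributing holomorphic disk has strictly positive area.

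Granting these, Lemma \ref{cyclic element and bounding cochain} produces a unique nilpotent $b \in F_C^{\lambda_1}C$ with $\lambda_1 > 0$ solving the inhomogeneous Maurer-Cartan equation $\sum_k m^k(b,\dots,b)=0$ and satisfying $d^b(u)=0$. Nilpotence is automatic since $F_C$ is bounded above and $b$ lies in strictly positive filtration, so the deformed operations $m^{k;b}$ and $n^{k,l;b}$ are all defined by finite sums; in particular $(L\circ\mathcal{L}, b)$ is a legitimate object of $\mathcal{W}_{im}(N)$ by the construction in subsection on the category level, and $d^b(u)=0$ says precisely that the $b$-deformation of the quilted module $CW^*(L,\mathcal{L},L\circ\mathcal{L})$ is non-curved with $u$ a cycle. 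The final step is to record that this $b$ is the \emph{only} bounding cochain compatible with the representability statement of Theorem \ref{functor associated to a Lagrangian correspondence}(ii): any other choice $b'$ making $d^{b'}(u)=0$ must coincide with $b$ by the uniqueness clause of Lemma \ref{cyclic element and bounding cochain}. \textbf{The main obstacle} I anticipate is not the formal invocation of the cyclic-element lemma but the verification of property (i) — that $n^1(\cdot\,; u)$ is a filtration isomorphism with filtration-preserving inverse. This requires a careful analysis of the quilted moduli spaces $\bar{\mathcal{R}}^{0,0}$ defining $n^1$ to show that the "diagonal" contribution is an isomorphism on the associated graded, and that all other contributions strictly raise filtration; establishing the filtration-preserving property of the inverse, rather than just of the map, is the delicate point, and it is here that the genericity of $H_N$ and the Morse data, together with the exactness estimates from subsection \ref{geometric composition admissible for wrapped Floer theory} and \cite{Gao1}, must be used most carefully.
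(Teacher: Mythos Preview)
Your proposal is correct and follows essentially the same route as the paper: endow $C = CW^{*}(L\circ\mathcal{L})$ and $D = CW^{*}(L,\mathcal{L},L\circ\mathcal{L})$ with the action filtration (checked to be discrete, strictly compatible, and bounded above), identify the cyclic element as the generator corresponding to the homotopy unit of $CW^{*}(L\circ\mathcal{L})$ under the natural bijection of generators, and invoke Lemma~\ref{cyclic element and bounding cochain}. The paper dispatches your anticipated obstacle---that $n^{1}(\cdot\,;u)$ be a filtration-preserving isomorphism with filtration-preserving inverse---by exactly the mechanism you sketch: with generators ordered by action, the map is upper-triangular with identity diagonal (the diagonal entry being the constant quilted strip realizing the tautological bijection), so both the map and its inverse preserve the filtration.
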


	The idea to prove Theorem \ref{unobstructedness of geometric composition} is to use Lemma \ref{cyclic element and bounding cochain} to provide an algebraic argument for the existence and uniqueness of such a bounding cochain. For that purpose, we shall first equip the quilted wrapped Floer cochain space $CW^{*}(L, \mathcal{L}, L \circ \mathcal{L})$ with a curved $A_{\infty}$-module structure. \par

\begin{lemma}
	There is a natural curved left $A_{\infty}$-module structure on $CW^{*}(L, \mathcal{L}, L \circ \mathcal{L})$ over the curved $A_{\infty}$-algebra $(CW^{*}(L \circ \mathcal{L}), m^{k})$.
\end{lemma}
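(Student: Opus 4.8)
The plan is to construct the curved left $A_{\infty}$-module structure on $CW^{*}(L, \mathcal{L}, L \circ \mathcal{L})$ geometrically, by counting inhomogeneous pseudoholomorphic quilted maps whose domain is a quilted strip with one patch mapped to $M$ (with boundary on $L$) and the other patch mapped to $N$ (with boundary on the image of the immersion $\iota: L \circ \mathcal{L} \to N$), seamed along the correspondence $\mathcal{L}$, and equipped with extra boundary punctures on the $N$-patch boundary where generators of $CW^{*}(L \circ \mathcal{L})$ are inserted. This is precisely the quilted analogue of the moduli spaces $\bar{\mathcal{R}}^{k,l}(\vec{\alpha}, \beta; \cdots)$ constructed in the previous subsection, specialized to $k=0$ on the $M$-side and with the $N$-boundary Lagrangian immersions all taken to be $\iota$ itself. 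First I would fix the short-cut description: as a graded $\mathbb{Z}$-module, $CW^{*}(L, \mathcal{L}, L \circ \mathcal{L}) = CW^{*}(\mathcal{L}, L \times (L \circ \mathcal{L}, \iota))$, which has a curved $A_{\infty}$-bimodule structure over $(CW^{*}(\mathcal{L}), CW^{*}(L) , CW^{*}(L\circ\mathcal{L}, \iota))$ by the pair-of-immersions construction of subsection \ref{section: wrapped Floer cochain space for a pair with clean intersections}; restricting to insertions only from the right factor $CW^{*}(L \circ \mathcal{L}, \iota)$ and leaving $L$, $\mathcal{L}$ as fixed embedded data gives the desired curved left module.

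Concretely, I would proceed as follows. First, set up the moduli spaces $\bar{\mathcal{R}}^{0}_{l}(\vec{\alpha}, \beta; \gamma^{-}; \gamma^{+}, c_{1}, \cdots, c_{l})$ of stable broken quilted pearly-tree maps: the underlying domain is a quilted strip with $l$ ordered boundary punctures on the $N$-patch boundary carrying asymptotic conditions $c_{1}, \cdots, c_{l} \in CW^{*}(L \circ \mathcal{L}, \iota)$, with $\gamma^{+}, \gamma^{-}$ the incoming/outgoing quilted generators, together with pearly trees of disk bubbles attached along the $N$-boundary (bubbling off homogeneous disks through the self-intersections of $\iota$, exactly as in section \ref{the immersed wrapped Fukaya category}) and possible strip-breaking. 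Second, invoke the Gromov-type compactness argument (using properness of $\mathcal{L} \to N$, exactness, the maximum principle, and the action-energy equality for the finiteness statement) to conclude compactness of these moduli spaces, and the standard codimension-one boundary analysis, whose strata are fiber products of smaller moduli spaces of the same type, disks in $N$ with boundary on $\iota(L \circ \mathcal{L})$, and quilted strips — identical in form to \eqref{boundary strata of the moduli space of quilted maps defining the bimodule structure maps for Lagrangian immersions}. Third, equip these moduli spaces with compatible oriented Kuranishi structures and coherent single-valued multisections, using the exactness of the ambient manifolds so that all isotropy groups are trivial; this step is a direct adaptation of Propositions \ref{Kuranishi structure on the moduli space of pearly trees} and \ref{compatible system of single-valued multisections}. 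Fourth, define $n^{k}: CW^{*}(L\circ\mathcal{L})^{\otimes k} \otimes CW^{*}(L, \mathcal{L}, L\circ\mathcal{L}) \to CW^{*}(L, \mathcal{L}, L\circ\mathcal{L})$ by counting rigid elements, with $n^{0}$ coming from quilted strips with no extra puncture (but possibly with disk bubbles), and check that the codimension-one boundary relations translate into the curved left $A_{\infty}$-module equations over $(CW^{*}(L \circ \mathcal{L}), m^{k})$.

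The main obstacle I expect is twofold and essentially analytic/bookkeeping rather than conceptual. The first difficulty is the interaction of the figure-eight / strip-shrinking degeneration with disk bubbling: one must argue that, because we seam along the fixed embedded correspondence $\mathcal{L}$ and the patches are not glued tangentially (as in subsection \ref{section: module-valued functors}), no figure-eight bubble forms, so that the only new phenomenon compared to the embedded quilted case is disk bubbling on the $N$-side, which is already controlled by the pearly-tree compactification of section \ref{the immersed wrapped Fukaya category}. The second is ensuring that the Kuranishi/multisection choices here are made \emph{compatibly} with those already fixed for the curved $A_{\infty}$-algebra $CW^{*}(L \circ \mathcal{L})$ and for the bimodule $\Phi(\mathcal{L})$ extended to immersions (Proposition \ref{extension of the quilted Floer bimodule to Lagrangian immersions}); this is needed so that the module structure here genuinely lies over $(CW^{*}(L\circ\mathcal{L}), m^{k})$ with its \emph{chosen} structure maps, and so that in the subsequent application of Lemma \ref{cyclic element and bounding cochain} the cyclic element (the image of the identity/unit generator) behaves correctly with respect to the action filtration. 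I would handle compatibility by performing all the Kuranishi constructions simultaneously, inductively on energy and number of inputs, for the whole package of moduli spaces (algebra, bimodule, module) at once — which is the same inductive scheme already in force throughout sections \ref{the immersed wrapped Fukaya category}–\ref{A-infinity functors associated to Lagrangian correspondence} — so that the boundary-matching of multisections is automatic.
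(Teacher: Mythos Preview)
Your proposal is correct and follows essentially the same approach as the paper: the paper constructs precisely the moduli spaces you describe (denoted there $\bar{\mathcal{Q}}_{k}(\alpha, \beta; (x^{-}, y^{-}); y^{1}, \cdots, y^{k}, (x^{+}, y^{+}))$) of quilted strips with extra punctures on the $N$-patch boundary, analyzes their codimension-one boundary as a union of fiber products of the same type with moduli spaces of pearly tree maps for $L \circ \mathcal{L}$, equips them with compatible Kuranishi structures and single-valued multisections, and defines the curved module maps $n^{k}$ by the resulting virtual counts. Your additional remarks on figure-eight bubbling and on compatibility of perturbation data are implicit in the paper's treatment and do not constitute a different route.
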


	The construction involves moduli spaces of inhomogeneous pseudoholomorphic quilted maps of the following kind. \par
	Consider the quilted surface $\underline{S}^{mod}$ consisting of two patches, $S_{0}^{mod}, S_{1}^{mod}$, each of which is a disk with one negative puncture $z_{i}^{-}$ and one positive puncture $z_{i}^{+}$. Given generalized chords $(x^{0}, y^{0}), (x^{1}, y^{1})$ for $(L, \mathcal{L}, L \circ \mathcal{L})$, consider quilted maps $\underline{u}: \underline{S}^{mod} \to (M, N)$ satisfying the following conditions
\begin{equation}
\begin{cases}
\frac{\partial u_{0}}{\partial s} + J_{M} \circ (\frac{\partial u_{0}}{\partial t} - X_{H_{M}}(u)) = 0\\
\frac{\partial u_{0}}{\partial s} + J_{M} \circ (\frac{\partial u_{0}}{\partial t} - X_{H_{M}}(u)) = 0\\
u_{0}(s, 0) \in L\\
u_{1}(s, 1) \in L \circ \mathcal{L}\\
(u_{0}(s, 1), u_{1}(s, 0)) \in \mathcal{L}\\
\lim_{s \to -\infty}u_{0}(s, \cdot) = x^{0}(\cdot)\\
\lim_{s \to +\infty}u_{0}(s, \cdot) = x^{1}(\cdot)\\
\lim_{s \to -\infty}u_{1}(s, \cdot) = y^{0}(\cdot)\\
\lim_{s \to +\infty}u_{1}(s, \cdot) = y^{1}(\cdot)\\
\end{cases}
\end{equation}
We also need to specify some additional data such as the switching conditions, the lifting conditions and the homology classes that are introduced in section \ref{section: moduli space of disks bounded by immersed Lagrangian submanifolds}. Quotienting by translations, we obtain moduli spaces of solutions to the above equation with these additional requirements. \par
	To define a curved $A_{\infty}$-module structure, we also need to study inhomogeneous pseudoholomorphic quilted maps of the same kind, but with more punctures. Break the boundary of $S_{1}^{mod}$ that gets mapped to $L \circ \mathcal{L}$ into several pieces, namely replace $S_{1}^{mod}$ by $S_{1}^{mod, k}$, a disk with $(k+2)$-boundary punctures, $z_{1}^{-}, z_{1}^{1}, \cdots, z_{1}^{k}, z_{1}^{+}$. We denote this new quilted surface by $\underline{S}^{mod, k}$. \par
	Since the map $u_{1}$ on the second patch of the quilted surface has boundary condition being an immersed Lagrangian submanifold, it is necessary to include additional data $\alpha, \beta, l$, etc. that indicate switching conditions of the boundary lifting, homology classes of the maps and so on. These data are analogous to those discussed in sections \ref{the immersed wrapped Fukaya category} and \ref{section: immersed wrapped Floer theory in the case of clean intersections}, so we omit the details here. \par
	There is a natural compactification of the moduli space of these quilted maps, which we denote by
\begin{equation*}
\bar{\mathcal{Q}}_{k}(\alpha, \beta; (x^{-}, y^{-}); y^{1}, \cdots, y^{k}, (x^{+}, y^{+})).
\end{equation*}
A typical element in the compactified moduli space is a broken inhomogeneous pseudoholomorphic quilted map $\{(u_{i}, v_{i})\}$ with trees of inhomogeneous pseudoholomorphic disks in $N$ attached to the lower boundary components of the second patches $v_{i}$'s of the broken quilted map.
In general, we have the following description of the codimension-one boundary strata of the above moduli space:
\begin{equation}\label{boundary strata of the moduli space defining the left-module structure on the quilted Floer complex}
\begin{split}
&\partial \bar{\mathcal{Q}}_{k}(\alpha, \beta; (x^{-}, y^{-}); y^{1}, \cdots, y^{k}, (x^{+}, y^{+}))\\
\cong &\coprod_{0 \le i \le k} \coprod_{\substack{\alpha_{1} \cup \alpha_{2} = \alpha\\\beta_{1} \sharp \beta_{2} = \beta}} \coprod_{(x^{+}_{1}, y^{+}_{1})}
\bar{\mathcal{Q}}_{i}(\alpha_{1}, \beta_{1}; (x^{-}, y^{-}); y^{1}, \cdots, y^{i}, (x^{+}_{1}, y^{+}_{1}))\\
&\times \bar{\mathcal{Q}}_{k-i}(\alpha_{2}, \beta_{2}; (x^{+}_{1}, y^{+}_{1}); y^{i+1}, \cdots, y^{k}, (x^{+}, y^{+}))\\
\cup &\coprod_{\substack{k_{1} + k_{2} = k+1\\ 1 \le i \le k_{1} }} \coprod_{\substack{\alpha_{1} \cup \alpha_{2} = \alpha\\ \beta_{1} \sharp \beta_{2} = \beta}} \coprod_{y_{new}}
\bar{\mathcal{Q}}_{k_{1}}(\alpha_{1}, \beta_{1}; (x^{-}, y^{-}); y^{1}, \cdots, y^{k_{1}},\\
&y_{new}, y^{i+k_{2}+1}, \cdots, y^{k}, (x^{+}, y^{+}))\\
&\times \bar{\mathcal{M}}_{i+1}(\alpha_{2}, \beta_{2}; y_{new}, y^{k_{1}+1}, \cdots, y^{i+k_{2}}).
\end{split}
\end{equation} 
Since $L$ and $\mathcal{L}$ are exact, there are no pseudoholomorphic disks bubbling off $L$ or $\mathcal{L}$. Thus the above fiber products \eqref{boundary strata of the moduli space defining the left-module structure on the quilted Floer complex} cover all the boundary strata of the compactification. \par

	As usual, we can construct Kuranishi structures on these moduli spaces compatibly and use single-valued multisections to define virtual fundamental chains. \par

\begin{proposition}
	There exists an oriented Kuranishi structure on the moduli space \begin{equation*}
\bar{\mathcal{Q}}_{k}(\alpha, \beta; (x^{-}, y^{-}); y^{1}, \cdots, y^{k}, (x^{+}, y^{+})),
\end{equation*}
such that the induced Kuranishi structures on the boundary strata are isomorphic to the fiber product Kuranishi structures on \eqref{boundary strata of the moduli space defining the left-module structure on the quilted Floer complex}. \par
	In addition, we may choose single-valued multisections on these moduli spaces, such that the multisection on $\bar{\mathcal{Q}}_{k}(\alpha, \beta; (x^{-}, y^{-}); y^{1}, \cdots, y^{k}, (x^{+}, y^{+}))$ is compatible with the fiber product multisections at the boundary strata \eqref{boundary strata of the moduli space defining the left-module structure on the quilted Floer complex}.
\end{proposition}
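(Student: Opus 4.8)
The plan is to reduce this proposition to the general machinery that has already been established for moduli spaces of stable pearly tree maps and stable broken Floer trajectories. Since the moduli space $\bar{\mathcal{Q}}_{k}(\alpha, \beta; (x^{-}, y^{-}); y^{1}, \cdots, y^{k}, (x^{+}, y^{+}))$ consists of inhomogeneous pseudoholomorphic quilted maps whose second patch has boundary on an immersed Lagrangian submanifold $L \circ \mathcal{L}$, with trees of pearly tree maps attached along the lower boundary, the essential analytic input is already in place: the moduli spaces $\bar{\mathcal{M}}_{k+1}(\alpha, \beta; J, H; c_{0}, \cdots, c_{k})$ of stable pearly tree maps carry oriented Kuranishi structures by Proposition~\ref{Kuranishi structure on the moduli space of pearly trees}, compatible with fiber-product Kuranishi structures on their boundary strata. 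First I would set up the local Fredholm theory: for a quilted map $\sigma = (\underline{S}, (u, v))$ with smooth domain, the linearized operator $D_{\sigma}\bar{\partial}$ of the quilted inhomogeneous Cauchy--Riemann equations is Fredholm by the non-degeneracy of the generators, exactly as in the case of the bimodule-defining quilts $\bar{\mathcal{R}}^{k,l}(\cdots)$ and as in \cite{Wehrheim-Woodward4}. Choosing an obstruction space $E_{\sigma} \subset \Omega^{0,1}(\underline{S}; u^{*}TM \times v^{*}TN)$ whose elements have compact support in the interior of the patches, away from boundary components and the seam, produces a Kuranishi chart $(U_{\sigma}, E_{\sigma}, s_{\sigma}, \Gamma_{\sigma} = \{1\})$ at $\sigma$.

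Second, I would globalize: cover the compact moduli space by finitely many such charts and modify the obstruction spaces by the finite-covering trick of subsection~\ref{section: Kuranishi structure on moduli spaces of stable pearly tree maps}, using the canonical gauge-fixing of \cite{Fukaya-Ono} to handle quilted strips with nontrivial automorphism groups. The key compatibility to check is that near a codimension-one boundary stratum of the form \eqref{boundary strata of the moduli space defining the left-module structure on the quilted Floer complex}, the modified charts glue to the fiber-product Kuranishi structure on the relevant product of a lower $\bar{\mathcal{Q}}$ with either another $\bar{\mathcal{Q}}$ or a pearly-tree moduli space $\bar{\mathcal{M}}_{i+1}(\alpha_{2}, \beta_{2}; y_{new}, \cdots)$; since gluing happens only near the strip-like ends and the obstruction vectors have support away from those ends, the gluing of charts proceeds as in the pearly-tree case, and the argument is inductive on lexicographic order of $(E, k)$ where $E$ is the total energy. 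For orientations, I would glue the relevant orientation operators (including the operators $D_{c_{i}}^{\pm}$ twisting by the local systems $\Theta^{\pm}$ when generators are critical points on clean intersection components, as in section~\ref{section: immersed wrapped Floer theory in the case of clean intersections}) to $D_{\sigma}$ and deform to a standard Dolbeault operator, obtaining the canonical orientation; the seam contributes no extra orientation twist beyond what is already accounted for in the bimodule orientations, by the discussion in the Appendix of \cite{Gao1} and \cite{Wehrheim-Woodward5}. Finally, the existence of single-valued multisections transverse to zero and compatible with fiber-product multisections on \eqref{boundary strata of the moduli space defining the left-module structure on the quilted Floer complex} follows from the abstract theory of Kuranishi structures (as in Proposition~\ref{compatible system of single-valued multisections}), using that $M$ and $N$ are exact so all isotropy groups are trivial and the choices depend only on the fixed generators.

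The main obstacle I anticipate is the compatibility of Kuranishi structures at the boundary in the presence of both the seam (quilted structure) and the immersed boundary condition simultaneously: one must verify that the gluing parameter near a seam-adjacent puncture and the gluing parameter near a boundary marked point lifted through $\iota$ interact correctly, i.e. that there is no obstruction arising from the interplay of the seam degeneration with disk bubbling along the immersed boundary. This is handled by observing that the degenerations listed in \eqref{boundary strata of the moduli space defining the left-module structure on the quilted Floer complex} are precisely the two types (breaking of the quilted strip, and disk bubbling off the $L \circ \mathcal{L}$ boundary of the second patch), and crucially that $L$ and $\mathcal{L}$ themselves are exact and embedded, so no figure-eight bubble forms (the patches are seamed, not glued tangentially, cf. \cite{Wehrheim-Woodward3}) and no disk bubbles off $L$ or $\mathcal{L}$. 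Thus each boundary stratum is a genuine fiber product of a quilted moduli space with a pearly-tree moduli space, and the inductive gluing goes through exactly as in the non-quilted immersed case. I would state the orientation and fiber-product-compatibility assertions as consequences, leaving the routine gluing estimates to \cite{FOOO2}, \cite{FOOO3}, as is done elsewhere in the paper.
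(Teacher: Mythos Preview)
Your proposal is correct and follows precisely the approach the paper intends: the paper states this proposition without an explicit proof, prefacing it only with ``As usual, we can construct Kuranishi structures on these moduli spaces compatibly and use single-valued multisections to define virtual fundamental chains,'' which is a reference back to the constructions in subsection~\ref{section: Kuranishi structure on moduli spaces of stable pearly tree maps} and Propositions~\ref{Kuranishi structures on the moduli space of stable broken Floer trajectories}--\ref{existence of single-valued multisections on the moduli space of stable broken Floer trajectories}. You have simply spelled out what ``as usual'' means here, including the key point that no figure-eight bubbles or disk bubbles off $L$ or $\mathcal{L}$ occur (by exactness and the non-tangential seaming), so the boundary strata are exactly the fiber products listed and the inductive gluing goes through.
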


	The virtual fundamental chains on these moduli spaces associated to a coherent choice of multisections then define a curved $A_{\infty}$-module structure on the quilted wrapped Floer cochain space $CW^{*}(L, \mathcal{L}, L \circ \mathcal{L})$ over the curved $A_{\infty}$-algebra $CW^{*}(L \circ \mathcal{L})$. \par

	To apply the result in section \ref{section: cyclic element and bounding cochain}, we also need to find natural filtrations for the curved $A_{\infty}$-algebra $CW^{*}(L \circ \mathcal{L})$ and the curved $A_{\infty}$-module $CW^{*}(L, \mathcal{L}, L \circ \mathcal{L})$ over it. These filtrations are given by the symplectic action functional. \par

\begin{lemma}
	The action filtration on $CW^{*}(L \circ \mathcal{L})$ defines a discrete filtration for the curved $A_{\infty}$-algebra $(CW^{*}(L \circ \mathcal{L}), m^{k})$. \par
	The action filtration on $CW^{*}(L, \mathcal{L}, L \circ \mathcal{L})$ defines a discrete filtration for the curved $A_{\infty}$-module $(CW^{*}(L, \mathcal{L}, L \circ \mathcal{L}), n^{k})$ is compatible with the action filtration for $(CW^{*}(L \circ \mathcal{L}), m^{k})$. \par
	Moreover, these filtrations are bounded above. \par
\end{lemma}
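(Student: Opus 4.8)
The plan is to verify the three assertions---discreteness of the action filtration on $CW^{*}(L \circ \mathcal{L})$, discreteness and strict compatibility of the action filtration on $CW^{*}(L, \mathcal{L}, L \circ \mathcal{L})$, and boundedness above of both---directly from the action-energy inequality, which is the same mechanism that underlies the compactness results cited above (Lemma \ref{finiteness of pseudoholomorphic disks passing through a self-intersection point} and the sketch of Lemma \ref{compactness and Hausdorffness}). First I would recall the explicit form of the symplectic action functional for a generator of $CW^{*}(L \circ \mathcal{L}; H_{N})$, namely for a generator $c$ represented either by a critical point $p$ of the auxiliary Morse function on a component of the fiber product together with a capping half-disk $w$, or by a non-constant time-one $H_{N}$-chord $x$ with a capping half-disk, one sets
\[
\mathcal{A}(c) = -\int w^{*}\omega_{N} + \int_{0}^{1} H_{N}(x(t))\,dt + g(x(1)) - g(x(0)),
\]
where $g$ is the primitive for the geometric composition computed in subsection \ref{geometric composition admissible for wrapped Floer theory} (so the last two terms vanish for the Morse-type generators, and for a chord contained in a level hypersurface they reduce to a bounded contribution since $g$ differs from $f + F$ by terms that are controlled on level sets, by the computations after Lemma \ref{changing primitive under Hamiltonian isotopy}). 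The key point is that because $L \circ \mathcal{L}$ is exact (the primitive $g$ exists) and cylindrical in the generalized sense, the action of a generator contained in the cylindrical end at radius $r$ behaves like $-r^{2}$ up to a bounded error, exactly as in the single-manifold case reviewed in the proof sketch of the well-definedness proposition in section \ref{section: product manifolds}; there are only finitely many interior generators, all of uniformly bounded action.

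Next, for discreteness: I would argue that the set $\Lambda_{C}$ of action values of generators of $CW^{*}(L \circ \mathcal{L})$ is discrete in $\mathbb{R}$. This follows from two facts. The finitely many interior generators contribute finitely many values. The generators supported in the cylindrical end correspond to Reeb chords of the Legendrian $l \times \{1\}$ in $\partial N$ (with lifting data, of which there are finitely many choices by properness), and under a generic choice of contact form the set of actions (= periods, up to the bounded primitive correction) is discrete because the Reeb chord lengths form a discrete set. Multiplying out: for each Reeb chord there is a whole family of generators obtained from different homotopy classes of capping half-disks, but since $2c_{1}(N, L \circ \mathcal{L}) = 0$ and the symplectic form is exact, changing the capping half-disk changes the action by $\omega_{N}(\beta)$ for $\beta \in H_{2}(N, \iota(L \circ \mathcal{L}))$, and the image of $\omega_{N}$ on relative homology is a discrete subgroup of $\mathbb{R}$ (this is where exactness and the finiteness of relevant homology classes in Lemma \ref{finiteness of pseudoholomorphic disks passing through a self-intersection point} enter). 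Hence $\Lambda_{C}$ is discrete. That the $A_{\infty}$-operations $m^{k}$ increase (weakly) the filtration is the standard action-energy inequality: a rigid inhomogeneous pseudoholomorphic disk contributing to $m^{k}(c_{k}, \dots, c_{1}) = \dots + c_{0} + \dots$ has non-negative energy, and energy equals (up to sign conventions) $\mathcal{A}(c_{0}) - \sum_{i} \mathcal{A}(c_{i})$ plus the integral of $d\alpha_{S}$ terms which by the sub-closedness of the one-form contribute with the right sign. The identical argument applied to the moduli spaces $\bar{\mathcal{Q}}_{k}$ defining the curved $A_{\infty}$-module structure on $CW^{*}(L, \mathcal{L}, L \circ \mathcal{L})$ (whose energy is controlled by exactness of $L$, $\mathcal{L}$ and $L \circ \mathcal{L}$, so that no disk bubbles off the embedded factors) gives a discrete filtration $F_{D}$ satisfying the module compatibility inequality with $F_{C}$.

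For strict compatibility ($\Lambda_{C} = \Lambda_{D}$) I would exploit the geometric bijection between generators of the quilted complex $CW^{*}(L, \mathcal{L}, L \circ \mathcal{L})$ and generators of $CW^{*}(L \circ \mathcal{L})$: a generalized chord for the triple $(L, \mathcal{L}, L \circ \mathcal{L})$ corresponds, via the fiber product description and the isomorphism $CW^{*}(L, \mathcal{L}, L \circ \mathcal{L}) \cong CW^{*}(\mathcal{L}, L \times (L \circ \mathcal{L}))$ from subsection \ref{section: quilted wrapped Floer theory for Lagrangian immersions}, to a generator of $CW^{*}(L \circ \mathcal{L})$ in a manner compatible with the primitives (this is precisely the content of the primitive computation $g = f + F + \dots$ and its quilted analogue in \cite{Gao1}), so the two sets of action values coincide; I would also check that the cyclic element $u$ used later lies in $F_{D}^{0}$ and corresponds under $n^{1}(\cdot; u)$ to the $0$-filtration part of $CW^{*}(L \circ \mathcal{L})$, i.e. $0 \in \Lambda_{D}$, but that last point is really part of the application of Lemma \ref{cyclic element and bounding cochain} rather than of this lemma. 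Finally, boundedness above is immediate: since generators in the cylindrical end at radius $r$ have action $\approx -r^{2} \to -\infty$ and there are only finitely many interior generators, the action is bounded above on all generators, hence $\Lambda_{C}$ (and $\Lambda_{D}$) is bounded above, so $F_{C}^{\lambda}C = 0$ for $\lambda$ large.

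The main obstacle I anticipate is making the strict compatibility $\Lambda_{C} = \Lambda_{D}$ genuinely rigorous: the naive generator-to-generator correspondence is geometrically clear, but one must track the primitive corrections carefully through the quilted setup and the fiber product, and verify that the capping-half-disk (relative homotopy class) data on the two sides match up so that the action values---not just their discrete set of differences---agree, rather than merely being a discrete translate of one another. A secondary technical point is ensuring the one-form $\alpha_{S}$ and the Hamiltonian perturbation data are chosen (as in sections \ref{the immersed wrapped Fukaya category} and \ref{section: immersed wrapped Floer theory in the case of clean intersections}) so that the action-energy inequality has no sign ambiguity for the pearly-tree and quilted moduli spaces with their sub-closed one-forms; but this is routine given the setup already established, and the genuine content of the lemma is the discreteness-and-boundedness, which follows from exactness plus the cylindrical structure exactly as in the single-manifold theory.
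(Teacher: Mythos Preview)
Your proposal is essentially correct and follows the same route as the paper: the action-energy relation gives compatibility of the filtration with the $m^{k}$ and $n^{k}$, non-degeneracy of the Hamiltonian in the cylindrical end gives a discrete action spectrum for the non-constant chords, there are only finitely many interior generators, and boundedness above follows because the non-constant chords all have negative action. The paper's proof says exactly this, in four sentences.

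Two places where you go beyond what is needed. First, the capping-half-disk discussion in your discreteness argument is an unnecessary detour: recall from section~\ref{section: wrapped Floer cochain space for a cylindrical Lagrangian immersion} that for non-constant chords in the cylindrical end the immersion is an embedding and exact in the usual sense, so there is a unique homotopy class of capping half-disk and the action is single-valued; the paper simply invokes non-degeneracy of $H$ to conclude discreteness of that spectrum, and handles the finitely many interior critical points by noting that one can adjust the primitive and the auxiliary Morse functions so that their actions are all distinct. Your argument via $\omega_{N}(\beta)$ and discreteness of its image in $\mathbb{R}$ is not wrong, but it is fighting a battle that exactness has already won. Second, the lemma as stated only asserts \emph{compatibility} of the module filtration, not strict compatibility $\Lambda_{C} = \Lambda_{D}$; your verification of the latter via the generator-to-generator bijection $CW^{*}(L, \mathcal{L}, L \circ \mathcal{L}) \cong CW^{*}(L \circ \mathcal{L})$ is correct and is indeed needed downstream for the application of Lemma~\ref{cyclic element and bounding cochain}, but the paper defers that identification to the paragraph immediately following this lemma rather than including it in the proof. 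Your anticipated ``main obstacle'' about tracking primitives through the quilted setup is therefore a real point, but it belongs to the next step, not this one.
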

\begin{proof}
	The proof of the fact that action filtrations are compatible with the curved $A_{\infty}$-algebra structure and the curved $A_{\infty}$-module structure follows immediately from the action-energy relation. \par
	To prove that the action filtration on  $CW^{*}(L \circ \mathcal{L})$ is discrete, we recall that the generators of a wrapped Floer cochain space consist of two kinds: first, critical points of auxiliary Morse functions on components of the self fiber product of the preimage of the immersion; second, non-constant Hamiltonian chords in the cylindrical ends together with lifting indices. There are finitely many critical points, and we can arrange the primitive and choose the auxiliary Morse functions carefully so that their actions are different. On the other hand, ecause the Hamiltonian is non-degenerate in the cylindrical end, these non-constant Hamiltonian chords are non-degenerate and have a discrete action spectrum. A similar argument applies to show that the action filtration on $CW^{*}(L, \mathcal{L}, L \circ \mathcal{L})$ is discrete. \par
	Compatibility follows from the action-energy relation applied to inhomogeneous pseudoholomorphic quilted maps in the moduli spaces $\bar{\mathcal{Q}}_{k}(\alpha, \beta; (x^{-}, y^{-}); y^{1}, \cdots, y^{k}, (x^{+}, y^{+}))$, which are used to define this curved $A_{\infty}$-module structure. \par
	The fact that these filtrations are bounded above follows from the definition of the wrapped Floer cochain space: there are only finitely many free generators which have positive action, as those infinitely many generators, the non-constant Hamiltonian chords in the cylindrical end, all have negative action. \par
\end{proof}

	To finish the proof of Theorem \ref{unobstructedness of geometric composition}, we need to find a cyclic element for the curved $A_{\infty}$-module $CW^{*}(L, \mathcal{L}, L \circ \mathcal{L})$ over the curved $A_{\infty}$-algebra $CW^{*}(L \circ \mathcal{L})$. Recall from sections \ref{section: wrapped Floer cochain space for a cylindrical Lagrangian immersion} and \ref{section: wrapped Floer cochain space in case of clean self-intersections} that in the wrapped Floer cochain space $CW^{*}(L \circ \mathcal{L})$, we have a distinguished generator - the minimum of chosen Morse function $f$ on the diagonal component $\Delta_{L \times_{\Delta_{M}} \mathcal{L}}$ of the self fiber product $(L \times_{\Delta_{M}} \mathcal{L}) \times_{\iota} (L \times_{\Delta_{M}} \mathcal{L})$.
This corresponds to the fundamental chain of the manifold $L \times_{\Delta_{M}} \mathcal{L}$ in the singular chain model. This generator is the homotopy unit for the curved $A_{\infty}$-algebra $(CW^{*}(L \circ \mathcal{L}), m^{k})$. \par
	As graded $\mathbb{Z}$-modules, we have that $CW^{*}(L, \mathcal{L}, L \circ \mathcal{L}) \cong CW^{*}(L \circ \mathcal{L})$, given by a natural bijective correspondence between the sets of generators. This follows directly from the definition of the geometric composition. Under this correspondence, we get a distinguished element $e_{L \circ \mathcal{L}} \in CW^{*}(L, \mathcal{L}, L \circ \mathcal{L})$, corresponding to the homotopy unit of $CW^{*}(L \circ \mathcal{L})$. \par

\begin{lemma}
	The element $e_{L \circ \mathcal{L}} \in CW^{*}(L, \mathcal{L}, L \circ \mathcal{L})$ defined above is a cyclic element.
\end{lemma}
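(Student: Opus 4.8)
The plan is to verify the two defining conditions of a cyclic element (clauses (i) and (ii) of the definition in Subsection~\ref{section: cyclic element and bounding cochain}) for the distinguished element $e_{L \circ \mathcal{L}} \in CW^{*}(L, \mathcal{L}, L \circ \mathcal{L})$. The key structural fact we will exploit is the natural bijective correspondence between the generators of $CW^{*}(L \circ \mathcal{L})$ and those of $CW^{*}(L, \mathcal{L}, L \circ \mathcal{L})$, which comes directly from the definition of the geometric composition and is compatible with the action filtration (by the previous lemma the action filtrations are discrete, strictly compatible, and bounded above, with a common set $\Lambda_{C} = \Lambda_{D}$). So the module $D = CW^{*}(L, \mathcal{L}, L \circ \mathcal{L})$ and the algebra $C = CW^{*}(L \circ \mathcal{L})$ are isomorphic as filtered $\mathbb{Z}$-modules, and $e_{L \circ \mathcal{L}}$ corresponds to the homotopy unit $e \in CW^{*}(L \circ \mathcal{L})$, which is the minimum of the chosen Morse function $f$ on the diagonal component $\Delta_{L \times_{\Delta_M} \mathcal{L}}$; in particular $e_{L\circ\mathcal{L}}$ lies in $F_D^0 D$ (it sits in filtration level $0$ by our normalization $0 \in \Lambda_D$).

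First I would prove condition (i): that the map $C \to D$, $x \mapsto n^{1}(x; e_{L\circ\mathcal{L}})$, is a filtration-preserving isomorphism with filtration-preserving inverse. The idea is to analyze the moduli spaces $\bar{\mathcal{Q}}_{k}(\alpha, \beta; \cdots)$ with $k=1$ defining the module structure map $n^{1}$ and argue that, modulo strictly higher action, the leading-order term of $n^{1}(\,\cdot\,; e_{L\circ\mathcal{L}})$ is exactly the identification of generators. Concretely: for a generator $x$ of $C$, the quilted map contributing to the leading term of $n^{1}(x; e_{L\circ\mathcal{L}})$ is a constant-type (energy-zero in the appropriate sense) configuration whose output is the generator of $D$ corresponding to $x$; all other contributions raise the action strictly because of the action-energy equality and exactness of $L$, $\mathcal{L}$. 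Since the action filtration is discrete and bounded above, a map whose associated-graded map (with respect to the filtration) is the tautological bijection is automatically a filtration-preserving isomorphism with filtration-preserving inverse --- one solves for the inverse by a finite descending induction on $\Lambda_C$, exactly as in the proof of Lemma~\ref{cyclic element and bounding cochain}. This uses crucially that $e_{L \circ \mathcal{L}}$ corresponds to the homotopy unit, so that the $k=1$ moduli space with input $e_{L\circ\mathcal{L}}$ has the expected "degenerate disk plus gradient half-trajectory" structure giving the identity at leading order.

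Next I would prove condition (ii): $e_{L\circ\mathcal{L}} \in F_D^0 D$ but $n^{0}(e_{L\circ\mathcal{L}}) \notin F_D^0 D$. The first half is immediate from the generator correspondence as above. For the second half, the point is that $n^{0}$ is the zeroth-order curved module structure map, defined by moduli spaces $\bar{\mathcal{Q}}_{0}$ of inhomogeneous pseudoholomorphic quilted maps with no extra punctures; such maps bounded partly by the immersed Lagrangian $L \circ \mathcal{L}$ necessarily have positive energy (they pass through a self-intersection of $\iota$, by the analogue of Lemma~\ref{finiteness of pseudoholomorphic disks passing through a self-intersection point} for quilted maps), so by the action-energy equality every nonzero term of $n^{0}(e_{L\circ\mathcal{L}})$ lies in $F_D^{\lambda}D$ for some $\lambda > 0$. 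It remains to check that $n^0(e_{L\circ\mathcal{L}})$ is genuinely nonzero; here I would argue that under the generator correspondence $n^0$ on $D$ matches (at leading order) the curvature $m^0$ of the curved $A_\infty$-algebra $CW^*(L \circ \mathcal{L})$, and if $m^0$ vanished the composition would already be unobstructed with $b=0$, in which case the statement of Theorem~\ref{unobstructedness of geometric composition} holds trivially with $b = 0$. So without loss of generality $m^0 \neq 0$, hence $n^0(e_{L\circ\mathcal{L}}) \neq 0$, and it lies strictly above filtration $0$.

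The main obstacle I anticipate is the leading-order analysis in condition (i): precisely identifying the "lowest action" contribution to $n^{1}(\,\cdot\,; e_{L\circ\mathcal{L}})$ with the tautological generator correspondence. This requires understanding that plugging the homotopy-unit generator $e_{L\circ\mathcal{L}}$ (the Morse minimum of $f$ on the diagonal component of the self fiber product) into the second input of the module operation forces the quilted domain to degenerate in a controlled way --- essentially a unit-insertion / stabilization argument for quilted moduli spaces --- so that the only rigid energy-zero solution is the one carrying the identification of generators, while every other solution strictly increases the action. One must be careful that the relevant moduli spaces are the Morse--Bott "pearly" type spaces $\bar{\mathcal{Q}}_k$ from the immersed setup, so the unit-insertion statement is the immersed-Floer analogue of the usual fact that the fundamental-chain insertion acts as the identity at leading order; granting this, conditions (i) and (ii) follow and $e_{L\circ\mathcal{L}}$ is a cyclic element, completing the proof.
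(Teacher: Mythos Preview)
Your approach is essentially the same as the paper's: both verify conditions (i) and (ii) of the definition of cyclic element by exploiting the natural filtration-preserving bijection between generators of $C = CW^{*}(L\circ\mathcal{L})$ and $D = CW^{*}(L,\mathcal{L},L\circ\mathcal{L})$, and both argue that $n^{1}(\,\cdot\,;e_{L\circ\mathcal{L}})$ is upper-triangular with the tautological identification on the diagonal.

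There is, however, one inaccuracy in your argument for condition (ii). You justify the positive-energy claim for quilted maps contributing to $n^{0}(e_{L\circ\mathcal{L}})$ by saying they ``pass through a self-intersection of $\iota$, by the analogue of Lemma~\ref{finiteness of pseudoholomorphic disks passing through a self-intersection point}''. That lemma concerns homogeneous $J$-holomorphic disks contributing to the algebra curvature $m^{0}$ on $CW^{*}(L\circ\mathcal{L})$; it does not apply to the module operation $n^{0}$, which is the quilted Floer differential defined by inhomogeneous pseudoholomorphic quilted strips $\bar{\mathcal{Q}}_{0}$ and need not pass through any self-intersection. The paper's reasoning is different and more direct: a \emph{constant} quilted strip with input $e_{L\circ\mathcal{L}}$ would correspond, under the generator bijection, to a constant inhomogeneous strip in $N$ with input the homotopy unit of $CW^{*}(L\circ\mathcal{L})$, i.e.\ the Morse minimum on the diagonal component $\Delta_{L\times_{\Delta_M}\mathcal{L}}$ --- and no such strip exists (one cannot flow down from a minimum). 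Hence every contributing quilted map is nonconstant, so has strictly positive energy, and by the action-energy relation $n^{0}(e_{L\circ\mathcal{L}})$ lies strictly above filtration $0$. Your detour about showing $n^{0}(e_{L\circ\mathcal{L}})\neq 0$ via $m^{0}\neq 0$ is unnecessary: the paper (and the application in Lemma~\ref{cyclic element and bounding cochain}) only needs $n^{0}(e_{L\circ\mathcal{L}})\in F_{D}^{\lambda_{1}}D$ for some $\lambda_{1}>0$, which holds vacuously if $n^{0}(e_{L\circ\mathcal{L}})=0$; and as you correctly note, if $m^{0}=0$ then $b=0$ already works and the whole discussion is moot.
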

\begin{proof}
	Recall that a cylic element has to satisfy two conditions. First, the map
\begin{equation*}
CW^{*}(L \circ \mathcal{L}) \to CW^{*}(L, \mathcal{L}, L \circ \mathcal{L})
\end{equation*}
defined by
\begin{equation*}
x \mapsto n^{1}(x; e_{L \circ \mathcal{L}})
\end{equation*}
is an isomorphism of $\mathbb{Z}$-modules.
Second, $e_{L \circ \mathcal{L}}$ lies in $F^{0}$, and applying $n^{0}$ to $e_{L \circ \mathcal{L}}$ should strictly increase the action filtration. \par
	The first condition follows from the fact that $e_{L \circ \mathcal{L}}$ corresponds to the homotopy unit of $CW^{*}(L \circ \mathcal{L})$. Multiplication with the homotopy unit yields a self map on $CW^{*}(L \circ \mathcal{L})$, which can be written as an upper-triangular matrix with respect to a basis for $CW^{*}(L \circ \mathcal{L})$ ordered in increasing action. Moreover, the diagonal entries of this upper-triangular matrix are all equal to the identity. Now we consider the basis for $CW^{*}(L, \mathcal{L}, L \circ \mathcal{L})$ which corresponds to the chosen basis for $CW^{*}(L \circ \mathcal{L})$ under the natural one-to-one correspondence between generators: each generalized chord for $(L, \mathcal{L}, L \circ \mathcal{L})$ corresponds to a unique Hamiltonian chord from $L \circ \mathcal{L}$ to itself (the same applies to critical points). This basis is also ordered in increasing action, so that the map $x \mapsto n^{1}(x; e_{L \circ \mathcal{L}})$ can be written as an upper-triangular matrix whose diagonal entries are all equal to the "identity", where this "identity" means the natural one-to-one correspondence between generators. \par
	
	Now let us check the second condition. First, the element $e_{L \circ \mathcal{L}}$ itself is a free generator of the quilted wrapped Floer cochain space $CW^{*}(L, \mathcal{L}, L \circ \mathcal{L})$, as it corresponds to the homotopy unit of $CW^{*}(L \circ \mathcal{L})$ under the natural one-to-one correspondence between generators. Moreover, we can choose the primitive carefully such that $e_{L \circ \mathcal{L}}$ has zero action. To prove that $n^{0}$ applied $e_{L \circ \mathcal{L}}$ strictly increases the action, it suffices to prove that there are no constant inhomogeneous pseudoholomorphic quilted strips with input being $e_{L \circ \mathcal{L}}$. Such a constant quilted strip, if existed, would correspond to a constant inhomogeneous pseudoholomorphic strip with boundary on the image of $L \circ \mathcal{L}$ with input being the homotopy unit of $CW^{*}(L \circ \mathcal{L})$. But there are no such constant strips, because the homotopy unit is the minimum of the chosen Morse function on the diagonal component $\Delta_{L \times_{\Delta_{M}} \mathcal{L}}$ of the self fiber product. As a consequence, $n^{0}(e_{L \circ \mathcal{L}})$ can be written as a linear combination of some generators of $CW^{*}(L, \mathcal{L}, L \circ \mathcal{L})$, none of which is any critical point on the diagonal component of the self fiber product, or any non-constant Hamiltonian chord in the cylindrical end. Therefore $n^{0}(e_{L \circ \mathcal{L}})$ can be written as a linear combination of generators, which correspond to critical points on the switching components of the self fiber product of the Lagrangian immersion $L \circ \mathcal{L}$ under the natural one-to-one correspondence between the generators of $CW^{*}(L \circ \mathcal{L})$ and those of $CW^{*}(L, \mathcal{L}, L \circ \mathcal{L})$.
In particular, applying $n^{0}$ to $e_{L \circ \mathcal{L}}$ must strictly increase the action, because it is defined by counting non-constant inhomogeneous pseudoholomorphic quilted maps, which have strictly positive energy. \par

\end{proof}

	By a purely algebraic argument using Lemma \ref{cyclic element and bounding cochain}, the existence of a cyclic element implies the unobstructedness of the geometric composition, with a unique bounding cochain $b$ satisfying the following property: \par

\begin{corollary}
	There exists a unique (nilpotent) bounding chain $b \in CW^{*}(L \circ \mathcal{L})$ such that $b \in F^{\epsilon}$ for some $\epsilon > 0$, and
\begin{equation*}
d^{b}(e_{L \circ \mathcal{L}}) = 0,
\end{equation*}
where $d^{b}(\cdot) = \sum_{k=0}^{\infty} n^{k}(b, \cdots, b; \cdot)$.
\end{corollary}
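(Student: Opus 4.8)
The plan is to deduce this corollary as a direct instance of Lemma \ref{cyclic element and bounding cochain}, applied to the curved $A_{\infty}$-algebra $C = CW^{*}(L \circ \mathcal{L})$ and the curved left $A_{\infty}$-module $D = CW^{*}(L, \mathcal{L}, L \circ \mathcal{L})$ over it, once all the hypotheses of that lemma have been verified in the geometric situation at hand. First I would invoke the curved $A_{\infty}$-module structure on $CW^{*}(L, \mathcal{L}, L \circ \mathcal{L})$ over $CW^{*}(L \circ \mathcal{L})$ constructed via the moduli spaces $\bar{\mathcal{Q}}_{k}(\alpha, \beta; (x^{-}, y^{-}); y^{1}, \cdots, y^{k}, (x^{+}, y^{+}))$. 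Then I would recall from the preceding lemmas that the symplectic action functional furnishes filtrations $F_{C}$ on $C$ and $F_{D}$ on $D$ which are discrete, strictly compatible, and bounded above, so that the standing hypotheses of Lemma \ref{cyclic element and bounding cochain} on the filtrations are met; in particular any element of strictly positive filtration in $C$ is automatically nilpotent.

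Next I would feed in the preceding lemma asserting that the distinguished element $e_{L \circ \mathcal{L}} \in CW^{*}(L, \mathcal{L}, L \circ \mathcal{L})$ — the image of the homotopy unit of $CW^{*}(L \circ \mathcal{L})$ under the natural bijection of generators induced by the geometric composition — is a cyclic element. This is exactly where the two defining properties of a cyclic element are used: the map $x \mapsto n^{1}(x; e_{L \circ \mathcal{L}})$ being a filtration-preserving isomorphism of $\mathbb{Z}$-modules with filtration-preserving inverse (the upper-triangular-with-identity-diagonal argument), and $e_{L \circ \mathcal{L}} \in F_{D}^{0}D$ with $n^{0}(e_{L \circ \mathcal{L}}) \notin F_{D}^{0}D$ (no constant quilted strip has input the homotopy unit, and every contributing strip has strictly positive energy, hence strictly raises the action). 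One should also note, as part of the setup, that the primitive for $L \circ \mathcal{L}$ can be arranged so that $e_{L \circ \mathcal{L}}$ has zero action, which is what places it in $F^{0}$ and makes $0 \in \Lambda_{D}$ as required.

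Having verified all hypotheses, Lemma \ref{cyclic element and bounding cochain} produces a unique nilpotent bounding cochain $b \in CW^{*}(L \circ \mathcal{L})$ with $b \in F_{C}^{\lambda_{1}}C = F^{\epsilon}$ for some $\epsilon = \lambda_{1} > 0$ and $d^{b}(e_{L \circ \mathcal{L}}) = 0$, where $d^{b}(\cdot) = \sum_{k=0}^{\infty} n^{k}(b, \cdots, b; \cdot)$. This is precisely the statement of the corollary. The remaining content — that this same $b$ simultaneously deforms the curved $A_{\infty}$-algebra $CW^{*}(L \circ \mathcal{L})$ to an honest $A_{\infty}$-algebra and the quilted module $CW^{*}(L, \mathcal{L}, L \circ \mathcal{L})$ to an honest (non-curved) module, which is the extra assertion in Theorem \ref{unobstructedness of geometric composition} — follows because $b$ solves the inhomogeneous Maurer–Cartan equation $\sum_{k} m^{k}(b, \cdots, b) = 0$ (this is part of the conclusion of Lemma \ref{cyclic element and bounding cochain}), so the $b$-deformed operations are genuine $A_{\infty}$-operations, and the $b$-deformed module differential $d^{b}$ squares to zero by the curved module equations together with the Maurer–Cartan equation.

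The main obstacle is not in this corollary itself — given the machinery it is essentially a citation of Lemma \ref{cyclic element and bounding cochain} — but in the two inputs it rests on: constructing the curved $A_{\infty}$-module structure on the quilted complex via Kuranishi structures and compatible multisections on the $\bar{\mathcal{Q}}_{k}$ moduli spaces with the correct boundary behavior \eqref{boundary strata of the moduli space defining the left-module structure on the quilted Floer complex}, and proving that $e_{L \circ \mathcal{L}}$ genuinely satisfies both cyclicity conditions. The subtle point in the latter is the strict inequality $n^{0}(e_{L \circ \mathcal{L}}) \notin F_{D}^{0}D$: one must rule out any zero-energy (constant) contribution, which uses both the exactness of $L$ and $\mathcal{L}$ (no disk bubbling off those) and the fact that the homotopy unit is realized by the minimum of the Morse function on the diagonal component, so that no rigid constant quilted strip emanates from it.
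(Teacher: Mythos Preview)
Your proposal is correct and matches the paper's approach exactly: the corollary is stated immediately after the lemma establishing that $e_{L \circ \mathcal{L}}$ is a cyclic element, with the paper simply remarking that ``by a purely algebraic argument using Lemma \ref{cyclic element and bounding cochain}, the existence of a cyclic element implies the unobstructedness of the geometric composition.'' Your write-up in fact spells out more of the verification of hypotheses (filtrations discrete, strictly compatible, bounded above; cyclicity of $e_{L\circ\mathcal{L}}$) than the paper does, but these are precisely the preceding lemmas, so this is just making the citation explicit.
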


	Thus the proof of Theorem \ref{unobstructedness of geometric composition} is complete. \par

\begin{remark}
	Note that $CW^{*}(L \circ \mathcal{L})$ with respect to the undeformed structure maps is generally not a curved $A_{\infty}$-module over itself, because of the non-vanishing of the curvature term $m^{0}$. However, the $b$-deformed structure maps $m^{k; b}$ make $CW^{*}(L \circ \mathcal{L}, b)$ an $A_{\infty}$-module over itself, as the $b$-deformed $A_{\infty}$-algebra is non-curved. \par
	Nonetheless, the curved $A_{\infty}$-module structure on the quilted wrapped Floer cochain space $CW^{*}(L, \mathcal{L}, L \circ \mathcal{L})$ is essentially different from that on $CW^{*}(L \circ \mathcal{L}, b)$ as a $A_{\infty}$-module over itself, although the underlying $\mathbb{Z}$-modules are isomorphic. The quilted Floer-theoretic setup is essential for this curved $A_{\infty}$-module structure to exist. \par
\end{remark}

	Concerning the wrapped Fukaya category, we have the following vanishing result of the bounding cochain $b$ for the geometric composition, in the case where it is in fact a proper exact Lagrangian embedding. \par

\begin{proposition}\label{vanishing of the bounding cochain}
	If the geometric composition $\iota: L \circ \mathcal{L} \to N$ is a proper exact cylindrical Lagrangian embedding, whose primitive (coming from the primitive for $L$ and that for $\mathcal{L}$) extends to a function on $N$ which is locally constant in the cylindrical end of $N$, then the bounding cochain $b$ from Theorem \ref{unobstructedness of geometric composition} vanishes.
\end{proposition}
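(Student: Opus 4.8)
The plan is to show that when $\iota : L\circ\mathcal{L}\to N$ is actually a proper exact cylindrical embedding with primitive extending to a function on $N$ locally constant in the cylindrical end, the curved $A_\infty$-algebra $CW^*(L\circ\mathcal{L})$ is in fact \emph{non-curved}, i.e. $m^0=0$, and hence the unique nilpotent bounding cochain $b$ from Theorem~\ref{unobstructedness of geometric composition} must be the trivial one, $b=0$. The key observation is that $m^0(1)$ is defined by counting inhomogeneous pseudoholomorphic pearly tree maps with one output puncture, and by Lemma~\ref{finiteness of pseudoholomorphic disks passing through a self-intersection point} (and the discussion preceding it) any such disk of genus zero with nontrivial energy must pass through a self-intersection point of the immersion. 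When the geometric composition is embedded, the self fiber product $(L\circ\mathcal{L})\times_\iota(L\circ\mathcal{L})$ consists only of the diagonal component $\Delta_{L\circ\mathcal{L}}$; there are no switching components at all, so there are no self-intersection points through which a disk contributing to $m^0$ could pass. Consequently the only possibly rigid pearly tree maps contributing to $m^0$ are constant, but a constant map contributes nothing to $m^0(1)$ in positive degree (equivalently, the output of $m^0$ would have to be a degree-two generator, and the constant configurations are excluded by the dimension/energy count). Hence $m^0=0$.

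First I would record the structural consequence of embeddedness: the decomposition \eqref{decomposition of the self fiber product of Lagrangian immersion with transverse self-intersections} (and its clean analogue) reduces to $L\circ\mathcal{L}\times_\iota L\circ\mathcal{L}=\Delta_{L\circ\mathcal{L}}$, so that $S(L\circ\mathcal{L},\iota)=\varnothing$ and $CW^*(L\circ\mathcal{L})$ is the ordinary Morse-Bott wrapped Floer cochain space. Second, I would invoke the first statement of Lemma~\ref{finiteness of pseudoholomorphic disks passing through a self-intersection point}: every $J$-holomorphic disk with boundary on $\iota(L\circ\mathcal{L})$ is contained in the interior part of $N$ and must pass through a self-intersection point; since there are none, there are no non-constant such disks, and by the exactness hypothesis (the primitive is globally defined, extending locally constant to the cylindrical end) the relevant Floer's disks with one output puncture are likewise forced to be constant. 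Third, I would conclude that the defining sum \eqref{A-infinity operations} for $m^0$ is empty, so $m^0=0$; the curved $A_\infty$-algebra is already non-curved. Fourth, and finally, I would appeal to uniqueness: Theorem~\ref{unobstructedness of geometric composition} produces a \emph{unique} nilpotent bounding cochain $b\in F^\epsilon_C CW^*(L\circ\mathcal{L})$ with $\epsilon>0$ satisfying $d^b(e_{L\circ\mathcal{L}})=0$; since $m^0=0$, the element $b=0$ manifestly satisfies the inhomogeneous Maurer-Cartan equation \eqref{MC equation for curved A-infinity algebra} and $d^0(e_{L\circ\mathcal{L}})=n^{1;0}(\,\cdot\,)(e_{L\circ\mathcal{L}})=0$ reduces to $n^0(e_{L\circ\mathcal{L}})=0$, which holds because, in the absence of switching components, $n^0(e_{L\circ\mathcal{L}})$ as in the proof of the cyclic-element lemma is a combination of critical points on switching components and hence vanishes. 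By uniqueness, the $b$ from Theorem~\ref{unobstructedness of geometric composition} equals $0$.

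The main obstacle I anticipate is not the vanishing of $m^0$ itself — that is essentially a bookkeeping consequence of embeddedness and Lemma~\ref{finiteness of pseudoholomorphic disks passing through a self-intersection point} — but rather making the uniqueness argument airtight at the level of the quilted module $CW^*(L,\mathcal{L},L\circ\mathcal{L})$. One must check that with $b=0$ the zeroth-order quilted operation $n^0$ on $CW^*(L,\mathcal{L},L\circ\mathcal{L})$ genuinely squares to zero (so that $b=0$ gives non-curved deformations of both the module and the algebra, as the statement of Theorem~\ref{unobstructedness of geometric composition} demands), and that the distinguished property $d^0(e_{L\circ\mathcal{L}})=0$ singling out $b$ is satisfied. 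Here one uses that, for an embedded exact geometric composition, the quilted Floer differential $n^0$ agrees (up to the canonical chain-level isomorphism of the underlying $\mathbb{Z}$-modules, cf.\ the winding-Lagrangian discussion and the identification $CW^*(L,\mathcal{L},L\circ\mathcal{L})\cong CW^*(L\circ\mathcal{L})$) with the ordinary wrapped Floer differential on $CW^*(L\circ\mathcal{L})$, which does square to zero precisely because $m^0=0$; the only subtlety is verifying that no figure-eight or disk bubbling can obstruct this, which is ruled out by exactness of $L$, $\mathcal{L}$ and the embedded $L\circ\mathcal{L}$. Once these compatibilities are in place, uniqueness from Lemma~\ref{cyclic element and bounding cochain} forces $b=0$.
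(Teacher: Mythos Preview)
Your overall strategy is sound and more direct than the paper's: embeddedness immediately gives $m^0=0$ (no switching components, hence no one-punctured disks by Lemma~\ref{finiteness of pseudoholomorphic disks passing through a self-intersection point}), so $b=0$ solves the Maurer--Cartan equation, and it remains only to check the distinguishing condition $n^0(e_{L\circ\mathcal{L}})=0$ on the quilted module. You propose to extract this from the proof of the cyclic-element lemma, where it is asserted that $n^0(e)$ is a combination of generators corresponding to switching components.

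The paper, by contrast, does \emph{not} simply cite that observation. Its proof proceeds through three auxiliary lemmas: it reformulates $n^0(e)=0$ as the non-existence of certain figure-eight bubbles via a strip-shrinking argument (Lemma~\ref{equivalent conditions for closedness of the cyclic element}), uses the locally-constant-primitive hypothesis to show that non-constant Hamiltonian chords in the cylindrical end have negative action (hence cannot appear as the asymptote of such a bubble), and then rules out critical-point asymptotes by restricting to the low-action subcomplexes $CW^*_0$ and invoking the Lekili--Lipyanskiy result on unwrapped quilted Floer theory. Your route avoids the strip-shrinking and Lekili--Lipyanskiy comparison entirely.

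The one point where you should be more careful: the assertion in the cyclic-element lemma that $n^0(e)$ avoids non-constant Hamiltonian chords is itself an action argument that already requires those chords to have the wrong action relative to $e$, and this is exactly where the locally-constant-primitive hypothesis enters. You invoke the hypothesis only in passing; you should make explicit that it is what guarantees the action of every non-constant chord is negative (this is the content of the third lemma in the paper's proof), so that $n^0(e)$---which must land in a different filtration level than $e$---can hit neither diagonal critical points (action zero) nor chords (action negative). With that made precise, your argument is complete. Your final remark that figure-eight bubbling is ``ruled out by exactness of $L$, $\mathcal{L}$ and the embedded $L\circ\mathcal{L}$'' is too quick as stated---exactness alone does not obviously exclude figure-eight bubbles in the wrapped quilted setting, and this is precisely where the paper invests effort---but since your main line of argument works at the level of the already-constructed module and its action filtration, you do not actually need to exclude them.
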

\begin{proof}[Sketch of proof]
	Recall that the bounding cochain $b$ is the unique solution to the equation
\begin{equation}
n^{0; b}(e_{L \circ \mathcal{L}}) = \sum_{k} n^{k}(b, \cdots, b; e_{L \circ \mathcal{L}}) = 0,
\end{equation}
with the property that $b \in F^{>0}$.
This implies that for this choice of bounding cochain $b$, the map
\begin{equation}
gc^{1}: CW^{*}(L, \mathcal{L}, (L \circ \mathcal{L}, b)) \to CW^{*}(L \circ \mathcal{L}, b)
\end{equation}
is a cochain map with respect to the deformed differentials on both sides. 
Recall that the deformed differential on the quilted wrapped Floer cochain space is
\begin{equation*}
n^{0; b}(x) = \sum_{k \ge 0} n^{k}(\underbrace{b, \cdots, b}_{k \text{ times}}; x),
\end{equation*}
while the deformed differential on the wrapped Floer cochain space $CW^{*}(L \circ \mathcal{L}, b)$ is
\begin{equation*}
m^{1; b}(x) = \sum_{k_{0}, k_{1} \ge 0} m^{k}(\underbrace{b, \cdots, b}_{k_{0} \text{ times}}, x, \underbrace{b, \cdots, b}_{k_{1} \text{ times}}).
\end{equation*}
Because of the assumption that $\iota$ is a proper embedding, $0$ is a bounding cochain for $L \circ \mathcal{L}$.
We want to prove that, if we choose $0$ as the bounding cochain for $L \circ \mathcal{L}$, the cyclic element is closed under the undeformed differential on $CW^{*}(L, \mathcal{L}, L \circ \mathcal{L})$. The proof will be separated in the following three lemmas. \par

\end{proof}

	The first lemma is a general statement about the bounding cochain $b$, where we do not assume the geometric composition $L \circ \mathcal{L}$ is an embedding. \par

\begin{lemma}
	The bounding cochain $b$ from Theorem \ref{unobstructedness of geometric composition} is supported only at the critical points, or non-constant Hamiltonian chords in the cylindrical end which have positive action. \par
\end{lemma}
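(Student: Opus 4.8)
The plan is to analyze the defining equation for the bounding cochain $b$ together with the action filtration and show that the only components of $b$ in a positive filtration level (which is all of $b$, by $b\in F^{\lambda_1}C$ with $\lambda_1>0$) are supported at critical points of the auxiliary Morse functions on the \emph{switching} components of the self fiber product, or at non-constant Hamiltonian chords in the cylindrical end. Recall from the construction in Lemma \ref{cyclic element and bounding cochain} that $b$ is obtained by solving, filtration level by filtration level, the system of equations coming from $d^b(e_{L\circ\mathcal{L}})=0$, and that at each level the leading term is $b_i u_0 n^1(x_i; y_0)$ with $u_0=\pm 1$ invertible, so $b_i$ is determined by the lower-order terms $n^k(x_{i_1},\dots,x_{i_k};y_j)$ with strictly smaller filtration. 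Thus the support of $b$ is controlled by the support of $n^0(e_{L\circ\mathcal{L}})$, and more generally by which generators $n^k(b,\dots,b;e_{L\circ\mathcal{L}})$ can hit.

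The key geometric input, already established in the proof that $e_{L\circ\mathcal{L}}$ is a cyclic element, is that $n^0(e_{L\circ\mathcal{L}})$ is a $\mathbb{Z}$-linear combination of generators \emph{none of which is a critical point on the diagonal component} $\Delta_{L\times_{\Delta_M}\mathcal{L}}$ or a non-constant chord of the wrong type: it is supported only on critical points of the switching components, because $e_{L\circ\mathcal{L}}$ is the minimum of the Morse function on the diagonal and any contributing quilted strip is non-constant, hence has strictly positive energy and strictly increased action. First I would make this precise and then run the same inductive argument up the filtration: suppose $b$, truncated below filtration level $\lambda_i$, is already known to be supported only on critical points of switching components and on non-constant cylindrical chords. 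I would then examine the equation at level $\lambda_i$. The new term $b_i u_0 n^1(x_i; y_0)$ is forced to cancel the contributions $\sum b_{i_1}\cdots b_{i_k} u_j n^k(x_{i_1},\dots,x_{i_k}; y_j)$ with all $i_j<i$; since by the inductive hypothesis every $x_{i_j}$ is a critical point on a switching component or a non-constant cylindrical chord, and since the $A_\infty$-module structure maps $n^k$ are defined by counting non-constant inhomogeneous pseudoholomorphic quilted maps (with strictly positive energy whenever the input $y_0=e_{L\circ\mathcal{L}}$ is involved non-trivially, because $e_{L\circ\mathcal{L}}$ is the minimum on the diagonal), the output landing in the $x_i$-coefficient can itself only be a critical point on a switching component or a non-constant cylindrical chord. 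Hence $b_i$ is non-zero only when $x_i$ is of this type, completing the induction.

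The main obstacle I anticipate is making airtight the claim that \emph{every} quilted map contributing to $n^k(b,\dots,b;e_{L\circ\mathcal{L}})$ with a switching-component output, or more delicately with a diagonal-component output, is non-constant with strictly positive energy, so that it strictly raises the action and therefore cannot feed back a diagonal generator into $b$ at the same or lower filtration level. The subtle point is that in principle a broken configuration could have some constant sub-pieces; one must use the action-energy equality (together with the fact, from the previous lemma, that the filtrations on both $CW^*(L\circ\mathcal{L})$ and $CW^*(L,\mathcal{L},L\circ\mathcal{L})$ are discrete, bounded above, and strictly compatible) to rule out a net-zero-action contribution that lands on a diagonal critical point or a spurious generator. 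A secondary technical point is that the homotopy unit $e_{L\circ\mathcal{L}}$ is only a \emph{homotopy} unit, not a strict one, so I cannot literally assert $n^k(\cdots,e_{L\circ\mathcal{L}})=0$ for $k\geq 2$; instead I rely only on the action-increasing property of the relevant moduli spaces, which is exactly what the cyclic-element argument already extracted. Once these energy/action bookkeeping facts are assembled, the statement follows formally from the inductive solution procedure for $b$ in Lemma \ref{cyclic element and bounding cochain}.
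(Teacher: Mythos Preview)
Your proposal massively overcomplicates what is, in the paper, a one-line proof. You yourself note parenthetically that ``all of $b$'' lies in a positive filtration level because $b\in F^{\lambda_1}C$ with $\lambda_1>0$; that observation \emph{is} the entire proof. The filtration is the action filtration, so $b\in F^{>0}$ says precisely that every generator appearing in $b$ has strictly positive action. Since the generators of $CW^*(L\circ\mathcal{L})$ are by definition either critical points (with capping data) or non-constant Hamiltonian chords in the cylindrical end, the statement of the lemma follows immediately.

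You appear to have misread the lemma as asserting something stronger, namely that $b$ is supported only on critical points of the \emph{switching} components (excluding the diagonal). The lemma as stated does not say this; it says only ``critical points,'' with the positive-action constraint. Your inductive argument through the defining equation $d^b(e_{L\circ\mathcal{L}})=0$, while plausibly leading toward such a refined conclusion, is neither required nor used at this point in the paper. The distinction between diagonal and switching critical points, and the non-existence of certain chords, is handled separately in the subsequent lemmas of the same subsection. So your elaborate induction, with its attendant worries about constant sub-pieces in broken configurations and the homotopy-unit subtlety, is addressing a problem the lemma does not pose.
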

\begin{proof}
	The statement follows immediately from the condition that $b \in F^{>0}$. \par
\end{proof}

	The second lemma lists some equivalent conditions for $0$ to be the desired bounding cochain under the assumption that the geometric composition is an embedding. \par

\begin{lemma}\label{equivalent conditions for closedness of the cyclic element}
	Suppose that the geometric composition $\iota: L \circ \mathcal{L} \to N$ is a proper exact cylindrical Lagrangian embedding. Then the following three conditions are equivalent:
\begin{enumerate}[label=(\roman*)]

\item The cyclic element $e_{L \circ \mathcal{L}}$ is closed under the undeformed differential on $CW^{*}(L, \mathcal{L}, L \circ \mathcal{L})$.

\item There does not exist an inhomogeneous pseudoholomorphic quilted strip with boundary condition $(L, \mathcal{L}, L \circ \mathcal{L})$, which converges to $e_{L \circ \mathcal{L}}$ over the positive quilted end (as input).

\item There is no figure eight bubble which asymptotically converges to a generalized chord for $(L, \mathcal{L}, \mathcal{L}, L)$ over the negative quilted end.

\end{enumerate}
\end{lemma}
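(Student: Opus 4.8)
The plan is to prove the equivalence of the three conditions in Lemma \ref{equivalent conditions for closedness of the cyclic element} by a cycle of implications $(i) \Leftrightarrow (ii)$ and $(ii) \Leftrightarrow (iii)$, exploiting the embeddedness hypothesis to rule out disk bubbles.

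\medskip

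\textbf{Step 1: $(i) \Leftrightarrow (ii)$.} First I would unwind what ``closed under the undeformed differential'' means. The undeformed differential on $CW^{*}(L, \mathcal{L}, L \circ \mathcal{L})$ is the map $n^{0}$, i.e.\ $n^{k}(b,\dots,b;\cdot)$ with $b = 0$ inserted, so it counts rigid elements of the moduli spaces $\bar{\mathcal{Q}}_{0}(\alpha, \beta; (x^{-}, y^{-}); e_{L \circ \mathcal{L}})$ — inhomogeneous pseudoholomorphic quilted strips with no disk insertions, whose boundary on the $L \circ \mathcal{L}$-patch lifts along $\iota$. Since $\iota: L \circ \mathcal{L} \to N$ is an \emph{embedding}, by Lemma \ref{finiteness of pseudoholomorphic disks passing through a self-intersection point} (applied to an embedded exact Lagrangian, there are no self-intersection points at all) there are no homogeneous pseudoholomorphic disk bubbles with boundary on $\iota(L \circ \mathcal{L})$; likewise $L$ and $\mathcal{L}$ are exact embedded so they bound no disks either. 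Hence the only contributions to $n^{0}(e_{L \circ \mathcal{L}})$ come from honest (unbroken, bubble-free) quilted strips. Therefore $n^{0}(e_{L \circ \mathcal{L}}) = 0$ as a chain if and only if the count of every such rigid strip vanishes; and since the primitive can be arranged (using the exactness of $L$, $\mathcal{L}$, and the assumption that the primitive of $L\circ\mathcal L$ extends to a function on $N$ locally constant at infinity) so that $e_{L \circ \mathcal{L}}$ has zero action while all quilted strips have strictly positive energy, the only way the count can fail to be zero is if such a strip of positive energy but virtual dimension zero exists. Strictly speaking I would argue: if no such strip exists the count is trivially zero; conversely, if $(ii)$ fails, the energy/action bookkeeping together with the grading constraint produces a rigid contribution which (after choosing the primitive so that no cancellation among distinct outputs of the same action can occur) is nonzero. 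This gives $(i) \Leftrightarrow (ii)$.

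\medskip

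\textbf{Step 2: $(ii) \Leftrightarrow (iii)$ via a folding/unfolding correspondence.} This is where I expect the real content. The idea is the standard Wehrheim–Woodward ``strip-shrinking in reverse'' / folding argument: an inhomogeneous pseudoholomorphic quilted strip with boundary condition $(L, \mathcal{L}, L \circ \mathcal{L})$ converging to the cyclic element $e_{L \circ \mathcal{L}}$ over the positive quilted end, when one collapses the seam-adjacent patch bordering $L \circ \mathcal{L}$ (which is by definition the geometric composition $L \times_{\Delta_M} \mathcal{L}$), degenerates in the limit to a \emph{figure-eight bubble} with boundary/seam conditions $(L, \mathcal{L}, \mathcal{L}, L)$ — i.e.\ the quilted configuration in which the two copies of $\mathcal{L}$ are glued along their $N$-factors through a shrinking middle strip while $L$ closes up the two $M$-ends. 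The asymptotic limit of such a figure-eight over its remaining (negative) quilted end is precisely a generalized chord for $(L, \mathcal{L}, \mathcal{L}, L)$. The correspondence is: existence of the rigid strip in $(ii)$ is equivalent, under this folding, to existence of the rigid figure-eight in $(iii)$. I would set this up by exhibiting a bijection of moduli spaces: given a strip as in $(ii)$ of zero virtual dimension, shrinking the $L\circ\mathcal L$-patch (width $\to 0$) while keeping the seam data produces an element of the figure-eight moduli space of the same virtual dimension; conversely, given a figure-eight as in $(iii)$, unfolding (inserting back the thin patch, whose image lies in the graph defining the geometric composition) recovers the strip. Both directions use that $L\circ\mathcal L$ is by construction the fiber product $L\times_{\Delta_M}\mathcal L$, so the thin patch carries no extra moduli in the limit, and that no disk bubbling can occur (embeddedness of $L$, $\mathcal{L}$, and of $\iota$) so the compactified moduli spaces on both sides have boundary strata matching exactly under folding.

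\medskip

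\textbf{The main obstacle.} The hard part will be making the folding correspondence in Step 2 rigorous while controlling the figure-eight bubble. Figure-eight bubbling is exactly the analytically delicate phenomenon flagged in \cite{Wehrheim-Woodward3}, and the subtlety is that a priori one does not know that the folded limit is \emph{non-constant} / of positive energy, nor that the gluing is a diffeomorphism of moduli spaces rather than merely a surjection or injection. I would handle this by working with the ``reversed'' point of view used throughout this paper — quilted maps are in natural bijection with maps to product manifolds with Lagrangian-immersion boundary conditions, and one can choose the \emph{same} perturbation data (multisections) on both sides — so that the folding/unfolding is literally the identity on an appropriate product-manifold moduli space of inhomogeneous pseudoholomorphic strips with boundary on $(L \times \mathcal{L})$-type immersed Lagrangians, with the figure-eight appearing as a boundary stratum of that product-manifold compactification. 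In this formulation the analytic difficulty of genuine figure-eight Gromov compactness is bypassed: the statement becomes a purely combinatorial matching of strata, which is what the earlier sections (the immersed wrapped Fukaya category, the quilted bimodule construction with its explicit boundary stratum formulas \eqref{boundary strata of the moduli space defining the left-module structure on the quilted Floer complex}) were set up to deliver. I would still need to check carefully that the energy of the limiting figure-eight is strictly positive — this follows from the action computation, since the seam condition $(L, \mathcal{L}, \mathcal{L}, L)$ cannot be satisfied by a constant map unless it sits over an intersection of $\mathcal{L}$ with itself composed with $L$, which by the geometric-composition setup corresponds exactly to the diagonal and hence to $e_{L\circ\mathcal L}$ having nonzero action, contradicting our normalization — so a constant figure-eight is excluded, and the equivalence $(ii) \Leftrightarrow (iii)$ closes.
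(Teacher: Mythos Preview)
Your Step 2 contains a genuine gap. You propose a \emph{bijection} between rigid quilted strips (with input $e_{L\circ\mathcal{L}}$) and figure-eight bubbles, via ``folding/unfolding''. This is not the correct relationship. Strip-shrinking applied to a quilted strip with boundary data $(L, \mathcal{L}, L\circ\mathcal{L})$ --- shrinking the $M$-patch, not the $L\circ\mathcal{L}$-patch as you suggest --- produces in the limit \emph{either} an ordinary inhomogeneous pseudoholomorphic strip in $N$ with both boundaries on $L\circ\mathcal{L}$ (the generic outcome, since $L$ composed with $\mathcal{L}$ is $L\circ\mathcal{L}$), \emph{or} such a strip with figure-eight bubbles attached (when energy concentrates). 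There is no one-to-one correspondence; the figure-eight is an exceptional degeneration, not the canonical image of a fold. The paper's argument hinges on the fact that the ordinary-strip limit is impossible here: such a strip would have input the homotopy unit of $CW^*(L\circ\mathcal{L})$, i.e.\ the minimum of the Morse function on the diagonal component, and no nontrivial strip can have the minimum as input. This is the key geometric ingredient you are missing --- it is what forces the dichotomy to collapse onto the figure-eight alternative and gives $(iii)\Rightarrow(ii)$.

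Your Step 1 also has a soft spot in the direction $(i)\Rightarrow(ii)$: you try to rule out cancellation among rigid strips by action arguments, but this is delicate (distinct outputs could in principle share an action value, and you have not justified why the ``arrange the primitive'' step eliminates all such coincidences). The paper sidesteps this entirely: it never proves $(i)\Rightarrow(ii)$ directly. Instead it runs the cycle $(ii)\Rightarrow(i)$ (definitional), $(iii)\Rightarrow(ii)$ (strip-shrinking plus the unit argument above), and closes with a contrapositive argument again using strip-shrinking. You should reorganize around the strip-shrinking dichotomy and the unit-as-Morse-minimum observation rather than the folding bijection.
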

\begin{proof}
	This lemma is proved in \cite{Gao1}. Let us briefly recall it here. \par
	(ii) implies (i) by the definition of the quilted Floer module structure map on $CW^{*}(L, \mathcal{L}, L \circ \mathcal{L})$. \par
	(iii) implies (ii) by a strip-shrinking argument. Suppose that there is no figure eight bubble as in (iii). If there is an inhomogeneous pseudoholomorphic quilted strip converging to $e_{L \circ \mathcal{L}}$ over the positive quilted end, then by shrinking it we get an inhomogeneous pseudoholomorphic strip in $N$ with boundary on $L \circ \mathcal{L}$, which asymptotically converges to the unit of $CW^{*}(L \circ \mathcal{L})$ over the positive end. This is certainly not possible. \par
	Thus it remains to prove that (i) implies (iii). Suppose on the contrary that $e_{L \circ \mathcal{L}}$ is not closed under the undeformed quilted Floer differential. Then there exists a non-trivial inhomogeneous pseudoholomorphic strip which asymptotically converges to $e_{L \circ \mathcal{L}}$.
By shrinking it we get two possible kinds of configurations: first, an inhomogeneous pseudoholomorphic strip in $N$ with boundary on $L \circ \mathcal{L}$, which asymptotically converges to the unit of $CW^{*}(L \circ \mathcal{L})$;
second, a figure eight bubble asymptotic to a generalized chord for $(L, \mathcal{L}, \mathcal{L}, L)$. The first kind does not exist as previously argued, and the second kind does not exist by the assumption of (iii). \par
\end{proof}

\begin{remark}
	Note that we have used a strip-shrinking argument, but not for the purpose of establishing an isomorphism between the moduli space of inhomogeneous pseudoholomorphic quilted strips and the moduli space of inhomogeneous pseudoholomophic strips. Rather, we use it to prove the non-existence of certain inhomogeneous pseudoholomorphic quilted strips by the given condition on the primitive for the geometric composition $L \circ \mathcal{L}$. 
That is, the moduli space is empty because of a geometric constraint; as a consequence, it is isomorphic to the empty set as Kuranishi spaces.
\end{remark}

	Thus, to prove that the cyclic element $e_{L \circ \mathcal{L}}$ is closed under the undeformed quilted wrapped Floer differential, it suffices to prove condition (iii). That uses the third lemma stated as below, which adds the assumption that the primitive is locally constant. \par

\begin{lemma}
	Suppose that $L \circ \mathcal{L}$ is a proper exact cylindrical Lagrangian embedding and that the primitive for $L \circ \mathcal{L}$ extends to a locally constant function in the cylindrical end of $N$. Then any Hamiltonian chord from $L \circ \mathcal{L}$ which lies outside the compact domain $N_{0}$ has negative action.
\end{lemma}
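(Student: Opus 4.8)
The plan is to estimate the symplectic action of a Hamiltonian chord $x$ from $L \circ \mathcal{L}$ to itself that is contained entirely in the cylindrical end $\partial N \times [1,+\infty)$, using the action-energy formalism together with the specific form of the primitive $g$ for $L \circ \mathcal{L}$ computed earlier. Recall that the action of a time-one $H_N$-chord $x$ with respect to the primitive $g$ of the exact Lagrangian immersion $\iota : L \circ \mathcal{L} \to N$ is
\begin{equation*}
\mathcal{A}_{H_N, L\circ\mathcal{L}}(x) = \int_0^1 \bigl(-x^*\lambda_N + H_N(x(t))\bigr)\,dt + g(x(1)) - g(x(0)).
\end{equation*}
First I would recall that over $\partial N \times [1,+\infty)$ the Hamiltonian $H_N$ is quadratic, of the form $r^2$, so that a non-constant time-one chord contained in a level hypersurface $\partial N \times \{r\}$ satisfies $\int_0^1 -x^*\lambda_N\,dt = -2r^2$ and $\int_0^1 H_N(x(t))\,dt = r^2$, giving a leading contribution of roughly $-r^2$ to the action, which tends to $-\infty$ as $r \to +\infty$. (More precisely one uses that $x$ lies on a level set because $\iota$ is an embedding there and the Reeb flow preserves the contact hypersurfaces, so $x$ is a reparametrized Reeb chord at some fixed radius.)

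The remaining two terms, $g(x(1)) - g(x(0))$, are where the hypothesis that the primitive $g$ is locally constant in the cylindrical end of $N$ enters. The two endpoints $x(0)$ and $x(1)$ both lie on $\iota(L \circ \mathcal{L})$ and, since $x$ is contained in the cylindrical end, both lie in the region where $g$ (extended to $N$ or at least to a neighborhood of $\iota(L \circ \mathcal{L})$) is locally constant. If $x(0)$ and $x(1)$ lie in the same connected component of $\iota(L \circ \mathcal{L}) \cap (\partial N \times [1,+\infty))$ — which is the relevant case since a Reeb chord stays within one component of the Legendrian boundary — then $g(x(1)) = g(x(0))$ and this boundary contribution vanishes identically. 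In general one only needs that this difference is \emph{uniformly bounded}, independent of the radial coordinate $r$, which follows because $g$ takes only finitely many values on $\iota(L\circ\mathcal{L})$ near infinity (the primitive is locally constant on the finitely many cylindrical sheets of the proper immersion, which here is an embedding). Hence
\begin{equation*}
\mathcal{A}_{H_N, L\circ\mathcal{L}}(x) = -r^2 + O(1),
\end{equation*}
which is strictly negative for all $r$ large, and in fact for all $r \ge 1$ once one notes that $r^2 \ge 1$ dominates the bounded correction after possibly rescaling $H_N$ or shrinking the Liouville collar; this is the same elementary estimate used for a single cylindrical Lagrangian submanifold in wrapped Floer theory, and I would invoke that comparison directly.

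\textbf{Main obstacle.} The only subtlety — and the step I expect to need the most care — is controlling the boundary term $g(x(1)) - g(x(0))$ uniformly, i.e.\ verifying that the locally-constant-primitive hypothesis really does make this contribution either zero or at worst bounded independently of $r$. One must check that the primitive $g = f + F\circ(\mathrm{id}) + \iota_{X_{H_M}}\lambda_M$ constructed for the geometric composition, when restricted to the cylindrical end of $N$, is genuinely locally constant there (not merely that $L \circ \mathcal{L}$ is cylindrical), and that the chord $x$, being a Reeb chord, has both endpoints in a single sheet of the embedded cylindrical part so the two values of $g$ actually coincide. Once this is in hand, the action estimate is routine: the dominant $-r^2$ term forces negative action for every Hamiltonian chord lying outside the compact domain $N_0$, completing the proof.
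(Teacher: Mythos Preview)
Your proposal is correct and follows essentially the same approach as the paper. The paper's proof is more direct on one point: the hypothesis is that the primitive extends to a locally constant function on the cylindrical end of $N$ itself (not merely on $L\circ\mathcal{L}$), and since a Hamiltonian chord at level $r$ lies entirely in one connected component of $\partial N \times [1,+\infty)$, the boundary contribution $g(x(1))-g(x(0))$ vanishes \emph{exactly}, giving $\mathcal{A}(x) = -r^{2} < 0$ for every such chord without any $O(1)$ correction or need to rescale. Your hedging to ``uniformly bounded'' is unnecessary here and introduces the artificial worry about small $r$ that you then have to talk your way out of.
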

\begin{proof}
	 Since the primitive is locally constant in the cylindrical end, the action of any Hamiltonian chord $y$ from $L \circ \mathcal{L}$ to itself contained in the cylindrical end is
\begin{equation*}
\mathcal{A}(y) = - \int y^{*} \lambda_{N} + H_{N}(y(t))dt = -r^{2} < 0,
\end{equation*}
if $y$ lies on the level set $\partial N \times \{r\}$.
\end{proof}

	Suppose condition (iii) of Lemma \eqref{equivalent conditions for closedness of the cyclic element} does not hold under the assumption of Proposition \ref{vanishing of the bounding cochain}. Note that there is a uniform positive lower bound for the energy of any figure eight bubble. Thus if there were such a figure eight bubble, it would converge to some generalized chord for $(L, \mathcal{L}, \mathcal{L}, L)$ of positive action $\ge \epsilon > 0$, for some uniform constant $\epsilon > 0$ that is independent of individual pseudoholomorphic quilted maps but depends only on the background geometry - Liouville structures, Lagrangian submanifolds, Hamiltonians and almost complex structures.
Then this generalized chord corresponds to some generator $y$ of $CW^{*}(L \circ \mathcal{L})$ which has positive action.
By the lemma above, this generator cannot be any non-constant Hamiltonian chord contained in the cylindrical end. \par
	Thus, it suffices to prove that this generator $y$ cannot be a critical point either. The strategy is to consider Lagrangian Floer theory without wrapping. 
To carry out this idea, we consider the sub-complex $CW^{*}_{0}(L \circ \mathcal{L})$ of $CW^{*}(L \circ \mathcal{L})$, generated by only critical points. Up to $A_{\infty}$-homotopy equivalence, this is just the Morse complex of $H'|_{L \circ \mathcal{L}}$ with its Morse $A_{\infty}$-structure defined by counting gradient flow trees.
There is a similar subspace $CW^{*}_{0}(L, \mathcal{L}, L \circ \mathcal{L})$ of the quilted wrapped Floer cochain space, generated by generalized chords of low action (in absolute value), i.e. those generalized chords which correspond to Hamiltonian chords that are not contained in the cylindrical end of the product $M^{-} \times N$.
Since $L \circ \mathcal{L}$ is assumed to be a proper exact cylindrical Lagrangian embedding, $CW^{*}_{0}(L, \mathcal{L}, L \circ \mathcal{L})$ is indeed a cochain complex equipped with the quilted Floer differential, which can be alternatively defined with respect to a pair of Hamiltonians on $(M, N)$ that are $C^{2}$-small in a compact set and linear at infinity of small slope less than the minimal length of a Reeb chord. 
Then the map \eqref{geometric composition map} restricted to $CW^{*}_{0}(L, \mathcal{L}, L \circ \mathcal{L})$ has image  contained in the sub-complex $CW^{*}_{0}(L \circ \mathcal{L})$. Since these complexes can be identified with Floer complexes without Hamiltonian perturbations (or with small Hamiltonian perturbations if transversality is demanded) up to chain homotopy equivalences, this map \eqref{geometric composition map on low action part of Floer complexes} in fact becomes a cochain map
\begin{equation}\label{geometric composition map on low action part of Floer complexes}
gc: CW^{*}_{0}(L, \mathcal{L}, L \circ \mathcal{L}) \to CW^{*}_{0}(L \circ \mathcal{L}).
\end{equation}
after restriction without correction by any bounding cochain, by the argument of \cite{Lekili-Lipyanskiy}. Moreover, it induces an isomorphism on cohomology groups.
Note that the cyclic element $e_{L \circ \mathcal{L}}$ in fact lies in the subspace $CW^{*}_{0}(L, \mathcal{L}, L \circ \mathcal{L})$. In particular, it follows that $e_{L \circ \mathcal{L}}$ is closed under the undeformed quilted Floer differential on $CW^{*}_{0}(L, \mathcal{L}, L \circ \mathcal{L})$.
Thus we can argue by an analogue of Lemma \eqref{equivalent conditions for closedness of the cyclic element} in the setup of quilted Floer theory without wrapping, and conclude that the generator $y$ in question cannot be any nonzero element in $CW^{*}_{0}(L \circ \mathcal{L})$. 
Therefore, $y = 0$ and there cannot be a figure eight bubble, which contradicts our assumption.
The proof of Proposition \ref{vanishing of the bounding cochain} is now complete. \par

\subsection{Representability}
	The previously constructed functor $\Phi_{\mathcal{L}}$ \eqref{functor to modules over the immersed category} is not good enough for understanding the functoriality properties of wrapped Fukaya categories, as modules over a non-proper $A_{\infty}$-category can be very complicated. Thus we must find a more geometric replacement. In the case of compact monotone Lagrangian submanifolds in compact monotone symplectic manifolds, there are results from \cite{Wehrheim-Woodward4}, \cite{Lekili-Lipyanskiy} on the level of cohomology, which establish an isomorphism between the quilted Floer cohomology group and the Floer cohomology group of the geometric composition:
\begin{equation*}
HF^{*}(L, \mathcal{L}, L') \cong HF^{*}(L \circ \mathcal{L}, L').
\end{equation*}
Now we would like to generalize this statement on the categorical level, aiming to prove that the Yoneda module associated to the geometric composition is homotopy equivalent to the module $\Phi_{\mathcal{L}}(L)$ defined in terms of quilted wrapped Floer theory, and moreover that such homotopy equivalences are functorial in the wrapped Fukaya category of $M$. Such a result can be improved to the statement that the module-valued functor $\Phi_{\mathcal{L}}$ \eqref{functor to modules over the immersed category} is representable. \par
	In the previous subsection \ref{section: unobstructedness of the geometric composition}, we have shown that if the natural map $\mathcal{L} \to N$ is proper and if Assumption \ref{assumption on the geometric composition} holds, the geometric composition $L \circ \mathcal{L}$ is always unobstructed, and that there is a canonical choice of a bounding cochain $b$ for it, determined by $L$ and $\mathcal{L}$. Then, via the (left) Yoneda embedding
\begin{equation*}
\mathfrak{y}_{l}: \mathcal{W}_{im}(N) \to \mathcal{W}_{im}(N)^{l-mod},
\end{equation*}
the distinguished object $(L \circ \mathcal{L}, b)$ defines a left $A_{\infty}$-module over $\mathcal{W}_{im}(N)$. The main result of this subsection claims that this $A_{\infty}$-module is homotopy to the module $\Phi_{\mathcal{L}}(L)$, which therefore yields the representability of the functor $\Phi_{\mathcal{L}}$ \eqref{functor to modules over the immersed category}. \par

\begin{theorem} \label{representability of Lagrangian correspondence functor}
	Suppose that $\mathcal{L} \subset M^{-} \times N$ is an admissible Lagrangian correspondence such that the map $\mathcal{L} \to N$ is proper, and Assumption \ref{assumption on the geometric composition} is satisfied.
Then the $A_{\infty}$-functor $\Phi_{\mathcal{L}}$ \eqref{functor to modules over the immersed category} is representable. That is, there exists a canonical $A_{\infty}$-functor
\begin{equation}\label{the new module-valued functor}
\Psi_{\mathcal{L}}: \mathcal{W}(M) \to \mathcal{W}_{im}(N)^{rep-l-mod}
\end{equation}
such that $i \circ \Psi_{\mathcal{L}}$ is  homotopic to $\Phi_{\mathcal{L}}$ as $A_{\infty}$-functors, where
\begin{equation*}
i: \mathcal{W}_{im}(N)^{rep-l-mod} \to \mathcal{W}_{im}(N)^{l-mod}
\end{equation*}
is the obvious inclusion of the sub-category of representable modules to the category of all modules.
\end{theorem}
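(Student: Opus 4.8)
The plan is to establish representability by verifying the object-level criterion from Lemma \ref{criterion for representability in terms of objects}: since $\Phi_{\mathcal{L}}$ is a module-valued functor $\mathcal{W}(M) \to \mathcal{W}_{im}(N)^{l-mod}$, it suffices to show that for every object $L \in Ob\,\mathcal{W}(M)$ the left $\mathcal{W}_{im}(N)$-module $\Phi_{\mathcal{L}}(L)$ is quasi-isomorphic to the left Yoneda module $\mathcal{Y}^l$ of the pair $(L \circ \mathcal{L}, b) \in Ob\,\mathcal{W}_{im}(N)$, where $b$ is the canonical bounding cochain produced by Theorem \ref{unobstructedness of geometric composition}. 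Once this is done for each object, the construction in the proof of Lemma \ref{criterion for representability in terms of objects} (choosing homotopy inverses $K_X$ of the quasi-isomorphisms $T_X$, together with a homotopy inverse $\lambda_{\mathcal{B}}$ of the Yoneda embedding restricted to $\mathcal{W}_{im}(N)^{rep-l-mod}$) upgrades this to the desired $A_\infty$-functor $\Psi_{\mathcal{L}}$ landing in $\mathcal{W}_{im}(N)^{rep-l-mod}$ together with the homotopy between $i \circ \Psi_{\mathcal{L}}$ and $\Phi_{\mathcal{L}}$.

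The heart of the argument is therefore to produce, for each $L$, a quasi-isomorphism of left $\mathcal{W}_{im}(N)$-modules
\begin{equation*}
T_L : \Phi_{\mathcal{L}}(L) \to \mathfrak{y}_l(L \circ \mathcal{L}, b).
\end{equation*}
The module $\Phi_{\mathcal{L}}(L)$ assigns to a testing object $(L', \iota', b')$ the quilted wrapped Floer complex $CW^*(L, \mathcal{L}, (L', \iota'))$ with its $(b,b')$-deformed module structure, while $\mathfrak{y}_l(L \circ \mathcal{L}, b)$ assigns $CW^*((L \circ \mathcal{L}, b), (L', \iota', b'))$. First I would construct the component maps $T_L^d$ by counting rigid inhomogeneous pseudoholomorphic quilted maps from a quilted surface interpolating between the "quilted strip with $\mathcal{L}$-seam" picture and the "strip in $N$ with $L \circ \mathcal{L}$-boundary" picture — concretely, a quilted surface with one patch in $M$ carrying boundary $L$, one patch in $N$ carrying boundary on the image of $\iota': L' \to N$, and a seam labelled $\mathcal{L}$ that terminates partway across, after which the $M$-patch degenerates and the remaining $N$-patch picks up boundary on the geometric composition. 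Insertions of the bounding cochains $b$ and $b'$ are placed on the appropriate boundary arcs, and the extra boundary punctures on the $N$-side accommodate the module inputs from $CW^*((L'_{i-1},\iota'_{i-1}), (L'_i,\iota'_i))$. The requisite Kuranishi structures on these moduli spaces and compatible systems of single-valued multisections are produced exactly as in the construction of $\Phi(\mathcal{L})$ over $(\mathcal{W}(M), \mathcal{W}_{im}(N))$ in Proposition \ref{extension of the quilted Floer bimodule to Lagrangian immersions} and the pearly-tree constructions of section \ref{the immersed wrapped Fukaya category}; the codimension-one boundary analysis of the interpolating moduli space then yields the $A_\infty$-pre-module homomorphism equations for $T_L = \{T_L^d\}$, with the defining property of $b$ from Theorem \ref{unobstructedness of geometric composition} ensuring that the curvature-type contributions cancel so that $T_L$ is a genuine (non-curved) module homomorphism.

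To see that $T_L^1$ is a quasi-isomorphism I would run the standard action/energy filtration argument: both sides carry discrete action filtrations bounded above (as in the lemma preceding the proof of Theorem \ref{unobstructedness of geometric composition}), $T_L^1$ is filtration non-decreasing, and on the associated graded it reduces to the "strip-shrinking" comparison between quilted Floer theory and Floer theory of the geometric composition. On the low-action part this is precisely the cochain-level isomorphism $gc$ of \eqref{geometric composition map on low action part of Floer complexes}, proved via the methods of \cite{Lekili-Lipyanskiy} and \cite{Wehrheim-Woodward4}, adapted here to the immersed setting with the canonical bounding cochain; on the non-constant-chord part the two complexes are literally identified by the natural bijection of generators used throughout section \ref{section: unobstructedness of the geometric composition}. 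A spectral sequence comparison then gives that $T_L^1$ induces an isomorphism on cohomology. Finally, the canonicity of $\Psi_{\mathcal{L}}$ follows because $b$ is canonical and the Yoneda homotopy inverse is unique up to homotopy, and the assertion $\Psi_{\mathcal{L}}(L) = (L \circ \mathcal{L}, b)$ on objects is built into the construction.

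The main obstacle I anticipate is the strip-shrinking / figure-eight bubbling analysis underlying the quasi-isomorphism of $T_L^1$: in the wrapped (non-compact, Hamiltonian-perturbed, immersed) setting one cannot simply invoke the compact monotone results of \cite{Wehrheim-Woodward4}, and one must either control figure-eight bubbles directly using the exactness and cylindricity hypotheses (Assumption \ref{assumption on the geometric composition}, properness of $\mathcal{L}\to N$) together with the action estimates, or — as is done for the unobstructedness argument — confine the delicate comparison to the low-action sub-complexes where Hamiltonian perturbations can be taken $C^2$-small and the Lekili–Lipyanskiy argument applies, and then propagate through the filtration. Making the interpolating moduli spaces and their Kuranishi structures compatible with those already chosen for $\Phi_{\mathcal{L}}$, $\mathcal{W}_{im}(N)$, and the pearly-tree bimodule is routine but bookkeeping-heavy; the genuinely substantive point is the bubbling control, and it is where the hypotheses of the theorem are really used.
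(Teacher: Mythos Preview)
Your overall strategy matches the paper's: invoke Lemma \ref{criterion for representability in terms of objects}, and for each $L$ build an explicit $A_\infty$-module homomorphism $T_L = gc: \Phi_{\mathcal{L}}(L) \to \mathfrak{y}_l((L\circ\mathcal{L},b))$ via quilted maps whose domain interpolates between the quilted-strip and the ordinary-strip pictures. But two points of execution differ, and one of them is the main idea.

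First, your description of the interpolating surface omits what happens at the point where the seam ``terminates.'' In the paper this is a genuine quilted strip-like end, and the asymptotic condition there is fixed to be the cyclic element $e = e_{L\circ\mathcal{L}} \in CW^*(L,\mathcal{L},L\circ\mathcal{L})$ from section \ref{section: unobstructedness of the geometric composition}. This is not a detail: the defining property $d^b(e)=0$ is exactly what kills the extra boundary term in the codimension-one analysis and makes $gc$ a closed pre-module homomorphism. Without specifying this asymptotic, your $T_L$ is not yet a well-posed count.

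Second, and more substantively, your proposed proof that $T_L^1$ is a quasi-isomorphism via strip-shrinking, figure-eight bubble control, and a spectral sequence is much harder than what the paper does, and your stated ``main obstacle'' is in fact bypassed entirely. With the cyclic element as asymptote, there is a natural bijection between generators of $CW^*(L,\mathcal{L},(L',\iota',b'))$ and those of $CW^*((L\circ\mathcal{L},b),(L',\iota',b'))$, and any nonconstant quilted map contributing to $gc^1$ strictly increases action. Hence with respect to the action-ordered bases $gc^1$ is upper-triangular with the identity on the diagonal, so it is a chain \emph{isomorphism} outright --- no strip-shrinking, no figure-eight analysis. The paper then upgrades this to a homotopy equivalence by writing down the inverse $op$ via the same moduli spaces with inputs and outputs swapped, and the functoriality in $\mathcal{W}(M)$ comes from a further family of quilted maps (those of Figure \ref{fig: the quilted map defining the homotopy between two module-valued functors}) again carrying the cyclic element at one quilted end. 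So your route would work in principle, but the paper's use of the cyclic element both defines the map and trivializes the comparison you flagged as the hard step.
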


	The proof of this theorem will occupy the rest of this subsection. An immediate consequence of this theorem is that we get a functor to the immersed wrapped Fukaya category $\mathcal{W}_{im}(N)$: \par

\begin{corollary}
	There is an $A_{\infty}$-functor
\begin{equation}\label{functor to the immersed category}
\Theta_{\mathcal{L}}: \mathcal{W}(M) \to \mathcal{W}_{im}(N),
\end{equation}
which represents the module-valued functor \eqref{functor to modules over the immersed category}, in the sense that $\mathfrak{y}_{l} \circ \Theta_{\mathcal{L}}$ is homotopic to $\Phi_{\mathcal{L}}$.
\end{corollary}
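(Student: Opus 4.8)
The plan is to deduce Theorem~\ref{representability of Lagrangian correspondence functor} from the homological algebra established in Section~\ref{section: bimodules and functors}, specifically Lemma~\ref{criterion for representability in terms of objects}, once we verify that the module $\Phi_{\mathcal{L}}(L)$ is a representable left $\mathcal{W}_{im}(N)$-module for every object $L \in Ob\,\mathcal{W}(M)$. First I would fix such an $L$ and, using Assumption~\ref{assumption on the geometric composition} together with Proposition~\ref{prop: geometric composition is admissible}, form the geometric composition $L \circ \mathcal{L}$ as an exact cylindrical Lagrangian immersion with transverse or clean self-intersections; by Theorem~\ref{unobstructedness of geometric composition} it carries a canonical bounding cochain $b$, so $(L \circ \mathcal{L}, b)$ is an object of $\mathcal{W}_{im}(N)$ and hence defines a left Yoneda module $\mathcal{Y}^{l}_{(L \circ \mathcal{L}, b)} = \mathfrak{y}_{l}(L \circ \mathcal{L}, b)$. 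The heart of the argument is then to produce an $A_{\infty}$-module quasi-isomorphism
\begin{equation*}
T_{L}: \Phi_{\mathcal{L}}(L) \to \mathcal{Y}^{l}_{(L \circ \mathcal{L}, b)}.
\end{equation*}

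The map $T_{L}$ will be built geometrically: its $k$-th component counts rigid inhomogeneous pseudoholomorphic quilted maps from a quilted surface with one negative quilted end (asymptotic to a generalized chord of $(L, \mathcal{L}, L')$, i.e.\ a generator of $\Phi_{\mathcal{L}}(L)(L')$), $k$ positive strip-like ends on the $N$-patch boundary inserting morphisms of $\mathcal{W}_{im}(N)$, and one positive strip-like end asymptotic to a generator of $CW^{*}(L \circ \mathcal{L}, L')$, with insertions of the bounding cochain $b$ on the $L\circ\mathcal{L}$-boundary and the standard pearly-tree disk bubbles of Section~\ref{section: moduli space of disks bounded by immersed Lagrangian submanifolds}. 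This is exactly the strip-shrinking picture underlying Lemma~\ref{equivalent conditions for closedness of the cyclic element} and Proposition~\ref{vanishing of the bounding cochain}, extended categorically; the key analytic input is the absence of figure-eight bubbling along the negative quilted end (this is where the geometry of $L \circ \mathcal{L}$ and the canonical choice of $b$ enter — $b$ is precisely designed so that $e_{L \circ \mathcal{L}}$ is closed under the deformed differential), together with compactness from the action–energy equality. The compatibility of the moduli-space compactifications with fiber products, the construction of compatible Kuranishi structures and single-valued multisections (following Sections~\ref{section: Kuranishi structure on moduli spaces of stable pearly tree maps} and~\ref{section: unobstructedness of the geometric composition} verbatim), then force the collection $\{T_{L}^{k}\}$ to satisfy the $A_{\infty}$-pre-module-homomorphism equations. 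That $T_{L}$ is a quasi-isomorphism follows on cohomology from the isomorphism $HW^{*}(L, \mathcal{L}, L') \cong HW^{*}(L \circ \mathcal{L}, L')$ of \cite{Wehrheim-Woodward4}, \cite{Lekili-Lipyanskiy}, adapted to the wrapped and immersed setting with the bounding cochain $b$ in place — i.e.\ a filtered/spectral-sequence argument reducing to the unwrapped statement on the low-action part $CW^{*}_{0}$, exactly as in the proof of Proposition~\ref{vanishing of the bounding cochain}.

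Having shown $\Phi_{\mathcal{L}}(L)$ is representable for each $L$, I would invoke Lemma~\ref{criterion for representability in terms of objects}: the "if" direction there upgrades the pointwise statement to a genuine $A_{\infty}$-functor $\Psi_{\mathcal{L}}: \mathcal{W}(M) \to \mathcal{W}_{im}(N)^{rep\text{-}l\text{-}mod}$ with $i \circ \Psi_{\mathcal{L}}$ homotopic to $\Phi_{\mathcal{L}}$, using the chosen homotopy inverses $K_{X}$ of the $T_{X}$ and a homotopy inverse $\lambda_{\mathcal{W}_{im}(N)}$ of the Yoneda embedding restricted to representable modules. Composing $\Psi_{\mathcal{L}}$ with $\lambda_{\mathcal{W}_{im}(N)}$ (equivalently, identifying $\mathcal{W}_{im}(N)^{rep\text{-}l\text{-}mod}$ with $\mathcal{W}_{im}(N)$ via the Yoneda quasi-equivalence of the Corollary to the Yoneda Lemma) yields the desired $A_{\infty}$-functor $\Theta_{\mathcal{L}}: \mathcal{W}(M) \to \mathcal{W}_{im}(N)$ with $\mathfrak{y}_{l} \circ \Theta_{\mathcal{L}}$ homotopic to $\Phi_{\mathcal{L}}$; on objects it sends $L$ to $(L \circ \mathcal{L}, b)$ by construction, since that is the representing object we used. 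The main obstacle I anticipate is not the homological algebra — that is essentially formal given the earlier sections — but the geometric construction of $T_{L}$: establishing that no figure-eight bubble with positive-action asymptotics along the negative quilted end can form, so that the quilted moduli spaces have the expected boundary strata, and setting up their Kuranishi structures compatibly with both the immersed $A_{\infty}$-structure on $\mathcal{W}_{im}(N)$ and the curved $A_{\infty}$-module structure on $CW^{*}(L, \mathcal{L}, L \circ \mathcal{L})$ used to pin down $b$. This is a direct but substantial extension of the strip-shrinking arguments already carried out for $n^{0}$ and $n^{1}$; the essential new ingredient is keeping track of $b$-insertions coherently across all the glued pieces.
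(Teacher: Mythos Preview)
Your overall strategy is correct and matches the paper: establish that $\Phi_{\mathcal{L}}(L)$ is representable for each $L$ by the Yoneda module of $(L\circ\mathcal{L},b)$, appeal to Lemma~\ref{criterion for representability in terms of objects} to produce $\Psi_{\mathcal{L}}$, then compose with a homotopy inverse of the Yoneda embedding. The last step is literally the paper's proof of the Corollary.

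Where your proposal diverges is in the geometric construction of $T_{L}$, and here there is a genuine gap. Your quilted surface has one quilted end (carrying the $CW^{*}(L,\mathcal{L},L')$ input), $k$ strip-like ends on the $N$-patch, and one strip-like end with $L\circ\mathcal{L}$ boundary (the output). But you have not said how the seam condition $\mathcal{L}$ transitions to the boundary condition $L\circ\mathcal{L}$; topologically this forces either a tangential gluing of patches (true strip-shrinking) or a \emph{second} quilted end. The paper takes the latter route: its map $gc$ (Figure~\ref{fig: the quilted map defining the geometric composition map} and the moduli spaces \eqref{moduli space of quilted maps defining the first order term of the geometric composition map with bounding cochains inserted at the boundary}, \eqref{moduli space of quilted maps defining the higher order terms of the geometric composition map}) is built from a Y-shaped quilt with an additional positive quilted end asymptotic to the cyclic element $e_{L\circ\mathcal{L}}\in CW^{*}(L,\mathcal{L},L\circ\mathcal{L})$. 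This is precisely the Lekili--Lipyanskiy picture, and it \emph{avoids} figure-eight bubbles entirely because no patch is being shrunk. The role of the bounding cochain $b$ is then exactly that $d^{b}(e_{L\circ\mathcal{L}})=0$ makes $gc$ a closed pre-module morphism---not that it suppresses figure-eight bubbling in a strip-shrinking limit. Your phrasing ``this is exactly the strip-shrinking picture'' and ``absence of figure-eight bubbling along the negative quilted end'' conflates two different mechanisms; in the strip-shrinking picture the control of figure-eight bubbles in the wrapped, immersed setting is not established anywhere in the paper (Lemma~\ref{equivalent conditions for closedness of the cyclic element} only invokes strip-shrinking to argue \emph{non-existence} in a special embedded case).

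A second, smaller divergence: the paper does not prove that $gc$ is a quasi-isomorphism by reducing to the unwrapped/low-action statement and a spectral-sequence argument. Instead it observes that there is a natural bijection between generators of $CW^{*}(L,\mathcal{L},L')$ and of $CW^{*}(L\circ\mathcal{L},L')$, and that with respect to the action filtration $gc^{1}$ is upper-triangular with this bijection on the diagonal; hence $gc^{1}$ is a chain \emph{isomorphism}, and a homotopy inverse $op$ is built by reversing input and output. This is both simpler and stronger than what you propose.
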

\begin{proof}
	The Yoneda lemma says that the left Yoneda functor is a homotopy equivalence onto its image, i.e.,
\begin{equation*}
\mathfrak{y}_{l}: \mathcal{W}_{im}(N) \to \mathcal{W}_{im}(N)^{rep-l-mod}
\end{equation*}
is a homotopy equivalence.
Thus we may choose a homotopy inverse
\begin{equation*}
\lambda_{l}: \mathcal{W}_{im}(N)^{rep-l-mod} \to \mathcal{W}_{im}(N),
\end{equation*}
and compose the functor $\Psi_{\mathcal{L}}$ with $\lambda_{l}$ to obtain the desired functor $\Theta_{\mathcal{L}}$ \eqref{functor to the immersed category}. \par
\end{proof}

\begin{figure}
\centering
\begin{tikzpicture} 
	\draw (-4, 1) -- (-1.5, 1);
	\draw (-1.5, 1) arc (270:360:0.5cm);
	\draw (-1, 1.5) -- (-1, 2);
	\draw (1.5, 1) arc (270:180:0.5cm);
	\draw (1, 1.5) -- (1, 2);
	\draw (1.5, 1) -- (4, 1);
	
	\draw (0.75, 0) -- (4, 0);
	\draw (0.75, 0) arc (270:180:0.75cm);
	\draw (0, 0.75) -- (0, 2);
	
	\draw (-4, -1) -- (4, -1);

	\draw (-3, 1.25) node {$L \circ \mathcal{L}$};
	\draw (3, 1.25) node {$L$};
	\draw (2, 0) node {$\mathcal{L}$};
	\draw (0, -1.25) node {$L'_{0}$};

	\draw (0, 2.5) node {$e$};
	\draw (-4.5, 0) node {$x'$};
	\draw (4.5, 0) node {$(x, y)$};

\end{tikzpicture}
\caption{the quilted map defining the map $gc$}
 \label{fig: the quilted map defining the geometric composition map}
\end{figure}
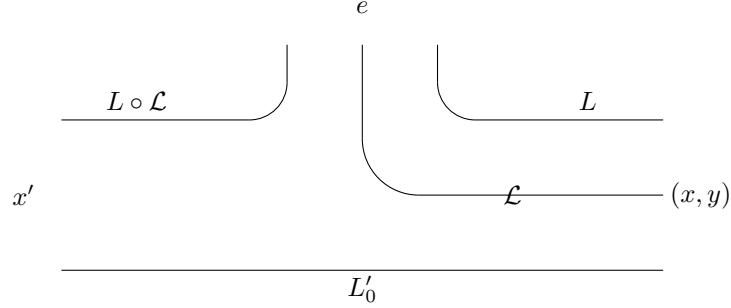

	Our proof of Theorem \ref{representability of Lagrangian correspondence functor} is a generalization of the proof of isomorphism of Floer cohomology groups under the geometric composition, as discussed in \cite{Gao1}. For any properly embedded exact cylindrical Lagrangian submanifold $L'_{0} \subset N$, there is a canonical isomorphism
\begin{equation} \label{quasi-isomorphism under geometric composition}
gc: CW^{*}(L, \mathcal{L}, L'_{0}) \to CW^{*}(L \circ \mathcal{L}, L'_{0})
\end{equation}
of $\mathbb{Z}$-modules, which become a chain quasi-isomorphism if we equip the latter cochain space $CW^{*}(L \circ \mathcal{L}, L'_{0})$ with the differential deformed by the bounding cochain $b$ for the geometric composition $L \circ \mathcal{L}$ provided by Theorem \ref{unobstructedness of geometric composition}. We call this map the geometric composition quasi-isomorphism. Recall that in \cite{Gao1} we already defined this map in a slightly different form:
\begin{equation*}
gc': CW^{*}(L, \mathcal{L}, L'_{0}) \to CW^{*}(L \circ_{H_{M}} \mathcal{L}, L'_{0})
\end{equation*}
in case the geometric composisition is properly embedded, and proved that it is a quasi-isomorphism. This map is related to \eqref{quasi-isomorphism under geometric composition} via the homotopy equivalence of left modules associated to $L \circ \mathcal{L}$ and $L \circ_{H_{M}} \mathcal{L}$ - these are modules over the curved $A_{\infty}$-category $\mathcal{W}_{ob, im}(M)$ whose objects are proper exact cylindrical Lagrangian immersions (possibly without bounding cochains). This curved $A_{\infty}$-category has been introduced in a somewhat implicit way when we defined the immersed wrapped Fukaya category, before the contributions of the bounding cochains to the structure maps are included. 
In fact, the same proof applies to the current setup: the map \eqref{quasi-isomorphism under geometric composition} is defined using moduli spaces
\begin{equation}\label{moduli space of quilted maps defining the first order term of the geometric composition map}
\mathcal{C}((x, y); x'; e)
\end{equation}
of appropriate inhomogeneous pseudoholomorphic quilted maps, as pictured in Figure \ref{fig: the quilted map defining the geometric composition map}. \par

	The asymptotic condition at the top quilted end (in Figure \ref{fig: the quilted map defining the geometric composition map}) is that the quilted map asymptotically converges to the generalized Hamiltonian chord $e$ for the triple $(L, \mathcal{L}, L \circ \mathcal{L})$ representing the cyclic element $e = e_{L \circ \mathcal{L}} \in CW^{*}(L, \mathcal{L}, L \circ \mathcal{L})$, which in turn corresponds to the homotopy unit of $CW^{*}(L \circ \mathcal{L})$ under the $\mathbb{Z}$-module isomorphism \eqref{quasi-isomorphism under geometric composition}. This cyclic element is discussed in the previous subsection, \ref{section: unobstructedness of the geometric composition}. \par
	In order to see the contributions from the bounding cochain $b$ for $L \circ \mathcal{L}$, we must modify these moduli spaces appropriately. Instead of looking at a single moduli space like \eqref{moduli space of quilted maps defining the first order term of the geometric composition map}, we consider a sequence of moduli spaces
\begin{equation}\label{moduli space of quilted maps defining the first order term of the geometric composition map with bounding cochains inserted at the boundary}
\mathcal{C}_{k}((x, y); \underbrace{b, \cdots, b}_{k \text{ times }}; x'; e),
\end{equation}
where we add $k$ punctures to the boundary component of a quilted map as in Figure \ref{fig: the quilted map defining the geometric composition map} which is mapped to $L \circ \mathcal{L}$, and impose the asymptotic convergence conditions at these punctures to be given by the bounding cochain $b$ for $L \circ \mathcal{L}$. By counting elements in these moduli spaces of virtual dimension zero, we get a map
\begin{equation}\label{quasi-isomorphism under geometric composition with deformation given by the bounding cochain}
gc: CW^{*}(L, \mathcal{L}, L'_{0}) \to CW^{*}((L \circ \mathcal{L}, b), L'_{0}).
\end{equation}

	The "count" requires careful treatment. As the geometric composition is in general no longer an embedding, we cannot use domain-dependent perturbations of Hamiltonians and almost complex structures to achieve transversality of the moduli spaces. The count is instead given by virtual fundamental chains associated to a coherent choice of single-valued multisections for Kuranishi structures on the moduli spaces. Such constructions have been discussed several times and should be routine by now, so we leave the details to the interested reader. \par

\begin{lemma}
	The map \eqref{quasi-isomorphism under geometric composition with deformation given by the bounding cochain} is a cochain map, where the differential on $CW^{*}((L \circ \mathcal{L}, b), L'_{0})$ is given by the $b$-deformed structure map. That is,
\begin{equation*}
gc \circ n^{0} = m^{1; b} \circ gc,
\end{equation*}
where $n^{0}$ denotes the quilted Floer differential on $CW^{*}(L, \mathcal{L}, L'_{0})$, and $m^{1; b}$ is the $b$-deformed Floer differential on $CW^{*}(L \circ \mathcal{L}, L'_{0})$.
\end{lemma}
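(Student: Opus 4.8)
The plan is to deduce the chain-map identity $gc \circ n^{0} = m^{1;b} \circ gc$ from the standard analysis of the codimension-one boundary strata of the one-dimensional moduli spaces $\bar{\mathcal{C}}_{k}((x, y); b, \cdots, b; x'; e)$ of inhomogeneous pseudoholomorphic quilted maps depicted in Figure~\ref{fig: the quilted map defining the geometric composition map}, with $k$ copies of the bounding cochain $b$ inserted on the boundary component mapped to $L \circ \mathcal{L}$. First I would fix the Kuranishi structures on these moduli spaces together with a coherent choice of single-valued multisections, compatible at the boundary with the fiber-product data on the strata of $\bar{\mathcal{C}}_{k}$; this is routine given the constructions already carried out for $\bar{\mathcal{Q}}_{k}$ in subsection~\ref{section: unobstructedness of the geometric composition} and for the bimodule moduli spaces in subsection~\ref{section: module-valued functors}, so I would only indicate the needed modifications.

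Next I would enumerate the codimension-one boundary strata of a one-dimensional $\bar{\mathcal{C}}_{k}((x, y); b, \cdots, b; x'; e)$. There are several types: (a) breaking off an inhomogeneous pseudoholomorphic strip in $N$ with boundary on $L'_{0}$ at the output end, which contributes $m^{1} \circ gc$-type terms (with possible extra $b$-insertions, hence after summing over $k$ contributing $m^{1;b} \circ gc$); (b) breaking off a broken pearly tree map in $N$ with boundary on $L \circ \mathcal{L}$ carrying some of the $b$'s, which assembles with the remaining quilted piece into the higher terms of the $b$-deformation and thus again feeds into $m^{1;b} \circ gc$; (c) breaking off a quilted Floer trajectory for the triple $(L, \mathcal{L}, L'_{0})$ at the input $x'$-end, contributing $gc \circ n^{0}$; and (d) breaking at the top end, where the quilted map degenerates into a piece converging to $e$ glued to a $b$-decorated pearly configuration on $L \circ \mathcal{L}$ — but since $b$ is precisely the bounding cochain from Theorem~\ref{unobstructedness of geometric composition}, characterized by $d^{b}(e_{L \circ \mathcal{L}}) = 0$, the total contribution of this type cancels. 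Summing the signed counts of all boundary points of all these one-manifolds over $k$, and invoking $\partial^2 = 0$ for the virtual fundamental chains, yields $gc \circ n^{0} - m^{1;b} \circ gc = 0$, which is the claim. (Finiteness of each sum follows from the action-energy inequality exactly as for the other wrapped moduli spaces, since $\mathcal{L} \to N$ is proper and Assumption~\ref{assumption on the geometric composition} holds.)

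The main obstacle I anticipate is bookkeeping the contribution of the top-end degeneration (type (d)) and showing it vanishes. The subtlety is that, unlike in the unperturbed or embedded case, $e_{L \circ \mathcal{L}}$ is not a priori a cocycle for the undeformed quilted differential $n^{0}$ on $CW^{*}(L, \mathcal{L}, L \circ \mathcal{L})$; what holds is the deformed statement $d^{b}(e_{L \circ \mathcal{L}}) = 0$, i.e. $\sum_{k} n^{k}(b, \cdots, b; e_{L \circ \mathcal{L}}) = 0$ for the curved $A_{\infty}$-module $CW^{*}(L, \mathcal{L}, L \circ \mathcal{L})$ over $CW^{*}(L \circ \mathcal{L})$. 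Therefore the correct $k$-parametrized family of moduli spaces $\bar{\mathcal{C}}_{k}$ must be set up so that, under gluing, the top-end strata reorganize exactly into this combination of $n^{k}(b, \cdots, b; e_{L \circ \mathcal{L}})$, making the cancellation automatic. I would make this precise by describing the relevant boundary stratum of $\bar{\mathcal{C}}_{k}$ as a fiber product of a moduli space of quilted maps converging to a generator of $CW^{*}(L, \mathcal{L}, L \circ \mathcal{L})$ with a moduli space of $b$-decorated pearly tree maps realizing $n^{j}(b, \cdots, b; -)$, then summing over all ways of distributing the $b$'s; the defining equation of $b$ kills the net contribution. Once this is in place the remaining terms (a)--(c) match the two sides of the asserted identity term-by-term after the sign conventions from the Appendix of \cite{Gao1} are imported, and the lemma follows.
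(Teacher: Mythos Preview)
Your proposal is correct and follows exactly the paper's approach: analyze the codimension-one boundary of the one-dimensional moduli spaces $\bar{\mathcal{C}}_{k}((x,y); b,\ldots,b; x'; e)$, sum over $k$, and use $d^{b}(e_{L\circ\mathcal{L}})=0$ to kill the contribution from the top quilted end; the paper's proof records precisely the resulting identity $gc\circ n^{0}=m^{1;b}\circ gc + d^{b}(e)$ and then invokes $d^{b}(e)=0$.

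One small bookkeeping correction to your stratum (b): the pearly-tree bubbles lying entirely on the $L\circ\mathcal{L}$ boundary and carrying a consecutive cluster of $b$'s do \emph{not} feed into $m^{1;b}\circ gc$. The output of such a bubble is $m^{k''}(b,\ldots,b)$, and after summing over $k''$ these contributions vanish by the Maurer--Cartan equation $\sum_{k''} m^{k''}(b,\ldots,b)=0$. All of $m^{1;b}\circ gc$ already arises from your type (a), because the Floer trajectory that breaks off at the output end between $L\circ\mathcal{L}$ and $L'_{0}$ can itself carry any number of $b$-insertions on its $L\circ\mathcal{L}$ side, producing every term $n^{j,0}(b,\ldots,b;\,gc(\cdot))$. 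With that adjustment your four strata contribute exactly $m^{1;b}\circ gc$, $0$, $gc\circ n^{0}$, and $0$, and the lemma follows.
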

\begin{proof}
	By looking at the codimension-one boundary strata of the moduli spaces $\mathcal{C}_{k}((x, y); \underbrace{b, \cdots, b}_{k \text{ times }}; x'; e)$ \eqref{moduli space of quilted maps defining the first order term of the geometric composition map with bounding cochains inserted at the boundary}, we find that the map $gc$ as in \eqref{quasi-isomorphism under geometric composition with deformation given by the bounding cochain} satisfies the following equation:
\begin{equation}
gc \circ n^{0}((x, y)) = m^{1; b} \circ gc((x, y)) + d^{b}(e).
\end{equation}
Because the cyclic element $e$ satisfies the condition that $d^{b}(e) = 0$, the last term vanishes, so the map \eqref{quasi-isomorphism under geometric composition with deformation given by the bounding cochain} is a cochain map. \par
	With the capping half-disks taken into account, there is a well-defined single-valued action of the generators, so that we can use an action-filtration argument to prove that \eqref{quasi-isomorphism under geometric composition with deformation given by the bounding cochain} is a cochain isomorphism, as follows. If we truncate the Floer complex using the action filtration, then this map can be written as an upper-triangular matrix with all diagonal entries equal to the "identity", as counting non-trivial inhomogeneous pseudoholomorphic quilted maps as above necessarily increases the action. Here by "identity", we mean the natural one-to-one correspondence between the set of generators for $CW^{*}(L, \mathcal{L}, L'_{0})$ and that for $CW^{*}(L \circ \mathcal{L}, L'_{0})$. \par
\end{proof}

	In the general case where $L'_{0}$ is an exact cylindrical Lagrangian immersion, the proof of the map \eqref{quasi-isomorphism under geometric composition with deformation given by the bounding cochain} being a cochain homotopy equivalence is in fact quite similar. The only difference is that the homotopy unit may no longer be closed under the previously-mentioned quilted Floer differential, because there are pseudoholomorphic disks with one marked point in $N$ with boundary on the image of $L'_{0}$. However, $L'_{0}$ itself comes with a bounding cochain $b'_{0}$ for it to be an object of the immersed wrapped Fukaya category, and as long as we use the bounding cochain $b'_{0}$ on $L'_{0}$ to cancel the contribution of those disks, the homotopy unit of $L'_{0}$ becomes closed under the deformed quilted Floer differential. In this case, the map \eqref{quasi-isomorphism under geometric composition with deformation given by the bounding cochain} takes the following form
\begin{equation}\label{quasi-isomorphism under geometric composition for immersed Lagrangians}
gc: CW^{*}(L, \mathcal{L}, (L'_{0}, b'_{0})) \to CW^{*}((L \circ \mathcal{L}, b), (L'_{0}, b'_{0})).
\end{equation} \par
	Next, we shall construct a $A_{\infty}$-pre-module homomorphism extending the map \eqref{quasi-isomorphism under geometric composition for immersed Lagrangians} as its first order term. We set $gc^{0} = 0$. For $d \ge 2$, we define multilinear maps $gc^{d}$, for all possible $(d-1)$-tuple of testing objects $(L'_{0}, b'_{0}), \cdots, (L'_{d-1}, b'_{d-1})$ of $\mathcal{W}_{im}(N)$, as follows:
\begin{equation}
\begin{split}
gc^{d}: & CW^{*}(L, \mathcal{L}, (L'_{d-1}, b'_{d-1})) \otimes CW^{*}((L'_{d-2}, b'_{d-2}), (L'_{d-1}, b'_{d-1}))\\
&\otimes \cdots \otimes CW^{*}((L'_{0}, b'_{0}), (L'_{1}, b'_{1}))
\to CW^{*}((L \circ \mathcal{L}, b), (L'_{0}, b'_{0}))
\end{split}
\end{equation}
defined by appropriate "count" of elements in the moduli spaces 
\begin{equation}\label{moduli space of quilted maps defining the higher order terms of the geometric composition map}
\bar{\mathcal{C}}_{k, d, l_{0}, \cdots, l_{d-1}}((x, y); \underbrace{b'_{0}, \cdots, b'_{0}}_{l_{0} \text{ times }}, x'_{1}, \underbrace{b'_{1}, \cdots, b'_{1}}_{l_{1} \text{ times }} \cdots, x'_{d-1}, \underbrace{b'_{d-1}, \cdots, b'_{d-1}}_{l_{d-1} \text{ times }}; \underbrace{b, \cdots, b}_{k \text{ times }}; x'; e)
\end{equation}
of quilted maps, defined in a way similar to \eqref{moduli space of quilted maps defining the first order term of the geometric composition map with bounding cochains inserted at the boundary}, but with multiple punctures and Lagrangian labels on the boundary components of the second patch of the quilted surface. The corresponding asymptotic convergence conditions are given by generators $x'_{i} \in CW^{*}((L'_{i-1}, b'_{i-1}), (L'_{i}, b'_{i}))$, and also those given by bounding cochains $b'_{i} \in CW^{*}((L'_{i}, b'_{i})), i = 0, \cdots, d-1$. \par

\begin{lemma}
	The maps $\{gc^{d}\}$ form an $A_{\infty}$-pre-module homomorphism
\begin{equation}\label{geometric composition map}
gc: \Phi_{\mathcal{L}}(L) \to \mathfrak{y}_{l}((L \circ \mathcal{L}, b))
\end{equation}
from the left $A_{\infty}$-module $\Phi_{\mathcal{L}}(L)$ over $\mathcal{W}_{im}(N)$, to the left Yoneda module $\mathfrak{y}_{l}((L \circ \mathcal{L}, b))$ over $\mathcal{W}_{im}(N)$. Moreover, this $A_{\infty}$-pre-module homomorphism is in fact an $A_{\infty}$-module homomorphism.
\end{lemma}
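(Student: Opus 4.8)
The plan is to verify the $A_\infty$-pre-module homomorphism equations by a standard boundary-degeneration analysis of the one-dimensional moduli spaces $\bar{\mathcal{C}}_{k, d, l_0, \cdots, l_{d-1}}$, and then upgrade the pre-module homomorphism to a genuine (closed) module homomorphism by appealing to the defining property of the bounding cochain $b$. First I would describe the codimension-one boundary strata of the compactified moduli spaces \eqref{moduli space of quilted maps defining the higher order terms of the geometric composition map}. These strata fall into several families: (a) breaking off an inhomogeneous pseudoholomorphic strip with boundary on $(L'_{i-1}, L'_i)$ at one of the $d-1$ strip-like ends carrying the inputs $x'_i$, which produces a composition with the $A_\infty$-structure maps $m^k_{\mathcal{W}_{im}(N)}$ (deformed by the $b'_i$) on the testing objects; (b) breaking off a Yoneda-type disk with boundary on $L \circ \mathcal{L}$, contributing the module structure maps $n^k_{\mathcal{Y}^l}$ of the Yoneda module $\mathfrak{y}_l((L \circ \mathcal{L}, b))$ coming from $m^{d+1}_{\mathcal{W}_{im}(N)}$; (c) breaking off a quilted strip carrying the module structure of $\Phi_{\mathcal{L}}(L)$ at the bottom quilted end, i.e. a term $gc^{d'}(\cdots, n^{k'|0|l'}(\cdots), \cdots)$; (d) bubbling of pseudoholomorphic pearly-tree disks with boundary on the images of $\iota_i : L'_i \to N$, absorbed by the insertions of the bounding cochains $b'_i$ exactly as in the construction of $\mathcal{W}_{im}(N)$; and (e) a sphere-of-influence term where the limit degenerates by sending the quilted-end parameter so that the cyclic element $e$ gets hit by the curved differential $d^b$ on $CW^*(L, \mathcal{L}, L \circ \mathcal{L})$.

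Next I would read off the algebraic identity these strata encode. Summing over all rigid configurations in the boundary of the one-dimensional moduli spaces, the contributions from families (a)--(d) assemble precisely into the two sides of the $A_\infty$-pre-module homomorphism equation between the source module $\Phi_{\mathcal{L}}(L)$ (with its quilted $A_\infty$-module structure $n^{k|0|l}$, deformed by the $b'_i$) and the left Yoneda module $\mathfrak{y}_l((L \circ \mathcal{L}, b))$ (whose structure comes from the $A_\infty$-operations of $\mathcal{W}_{im}(N)$). The only potentially obstructing term is family (e): the total count of such boundary configurations equals an insertion of $d^b(e_{L \circ \mathcal{L}})$, exactly as in the first-order identity $gc \circ n^0 = m^{1;b} \circ gc + d^b(e)$ from the preceding lemma. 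But $b$ was characterized precisely by Theorem \ref{unobstructedness of geometric composition} (via Lemma \ref{cyclic element and bounding cochain} applied to the cyclic element $e_{L \circ \mathcal{L}}$) so that $d^b(e_{L \circ \mathcal{L}}) = 0$. Hence all such terms vanish, the remaining identity is exactly the pre-module homomorphism equation, and $gc = \{gc^d\}$ is a well-defined $A_\infty$-pre-module homomorphism \eqref{geometric composition map}. Since $gc^0 = 0$ and the defining equations hold on the nose (not merely up to homotopy), $gc$ is automatically a closed, i.e. genuine, $A_\infty$-module homomorphism, which is the last assertion.

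The main obstacle I anticipate is not the algebra but the geometry underlying family (e): one must be sure that the boundary degeneration of the quilted moduli spaces at the top quilted end is correctly identified with the curved differential $d^b$ acting on the cyclic element, rather than with some other configuration (for instance a strip-shrinking degeneration producing a figure-eight bubble, as in the discussion of Proposition \ref{vanishing of the bounding cochain}). This requires the consistency of the chosen Kuranishi structures and single-valued multisections on $\bar{\mathcal{C}}_{k, d, l_0, \cdots, l_{d-1}}$ with the fiber-product structures on the boundary, so that the virtual counts behave functorially; these are constructed exactly as in section \ref{section: Kuranishi structure on moduli spaces of stable pearly tree maps} and the quilted analogues established earlier in this section, and I would invoke those constructions rather than redo them. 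A secondary technical point is the finiteness of all the sums defining $gc^d$, which follows as usual from the action-energy equality together with the properness of $\mathcal{L} \to N$ and Assumption \ref{assumption on the geometric composition}; since $b$ and each $b'_i$ is nilpotent and the relevant filtrations are bounded above, only finitely many insertions of bounding cochains contribute, so each $gc^d$ is a well-defined multilinear map.
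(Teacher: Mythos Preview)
Your proposal is correct and follows essentially the same approach as the paper: analyze the codimension-one boundary strata of the compactified moduli spaces $\bar{\mathcal{C}}_{k,d,l_0,\ldots,l_{d-1}}$, identify the terms coming from the module structures on $\Phi_{\mathcal{L}}(L)$ and $\mathfrak{y}_l((L\circ\mathcal{L},b))$, and observe that the only potentially obstructing term involves $d^b(e_{L\circ\mathcal{L}})$, which vanishes by the defining property of $b$. One minor terminological slip: there is no ``pre-module homomorphism equation''---being a pre-module homomorphism imposes no relation, and the equation you derive from the boundary analysis (once the $d^b(e)$ term is killed) is precisely the closedness condition, so your final sentence about $gc^0=0$ is redundant and slightly misleading; the argument is already complete at the point where you conclude the obstruction vanishes.
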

\begin{proof}
	The verification of the $A_{\infty}$-equations for pre-module homomorphisms can be done by looking at the boundary of the above-mentioned moduli spaces
\begin{equation*}
\bar{\mathcal{C}}_{k, d, l_{0}, \cdots, l_{d-1}}((x, y); \underbrace{b'_{0}, \cdots, b'_{0}}_{l_{0} \text{ times }}, x'_{1}, \underbrace{b'_{1}, \cdots, b'_{1}}_{l_{1} \text{ times }} \cdots, x'_{d-1}, \underbrace{b'_{d-1}, \cdots, b'_{d-1}}_{l_{d-1} \text{ times }}; \underbrace{b, \cdots, b}_{k \text{ times }}; x'; e).
\end{equation*}
For this, simply recall that the $A_{\infty}$-module structure on $CW^{*}(L, \mathcal{L}, \cdot)$ over $\mathcal{W}_{im}(N)$ is defined via suitable moduli spaces of inhomogeneous pseudoholomorphic quilted maps, while that on $CW^{*}((L \circ \mathcal{L}, b), \cdot)$ is defined via suitable moduli spaces of stable broken Floer trajectories in $N$, with decorations by the bounding cochain $b$. These are compatible with the compactification 
\begin{equation*}
\bar{\mathcal{C}}_{k, d, l_{0}, \cdots, l_{d-1}}((x, y); \underbrace{b'_{0}, \cdots, b'_{0}}_{l_{0} \text{ times }}, x'_{1}, \underbrace{b'_{1}, \cdots, b'_{1}}_{l_{1} \text{ times }} \cdots, x'_{d-1}, \underbrace{b'_{d-1}, \cdots, b'_{d-1}}_{l_{d-1} \text{ times }}; \underbrace{b, \cdots, b}_{k \text{ times }}; x'; e),
\end{equation*}
meaning that the two kinds of moduli spaces arise in the boundary strata of this moduli space. This fact can be used to show that the maps defined above satisfy the $A_{\infty}$-equations for pre-module homomorphisms. \par
	Now if we deform the structure maps on $CW^{*}(L \circ \mathcal{L}, \cdot)$ by the bounding cochain $b$, the cyclic element $e_{L \circ \mathcal{L}} \in CW^{*}(L, \mathcal{L}, (L \circ \mathcal{L}, b))$ becomes closed under the $b$-deformed Floer differential, which is the module differential for the left Yoneda module $\mathfrak{y}_{l}((L \circ \mathcal{L}, b))$. Thus the above $A_{\infty}$-pre-module homomorphism is a cocycle in the functor category, i.e. an $A_{\infty}$-module homomorphism. \par
\end{proof}

\begin{corollary}
	The $A_{\infty}$-module homomorphism $gc$ \eqref{geometric composition map} is a quasi-isomorphism of $A_{\infty}$-modules.
\end{corollary}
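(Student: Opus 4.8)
The plan is to show that the $A_\infty$-module homomorphism $gc$ in \eqref{geometric composition map} is a quasi-isomorphism by reducing the statement to the fact, already established in the preceding lemma, that its linear term
\begin{equation*}
gc^{1} = gc: CW^{*}(L, \mathcal{L}, (L'_{0}, b'_{0})) \to CW^{*}((L \circ \mathcal{L}, b), (L'_{0}, b'_{0}))
\end{equation*}
is a cochain map, together with the action-filtration argument sketched there. Recall the general criterion: a morphism of left $A_\infty$-modules over a c-unital $A_\infty$-category is a quasi-isomorphism if and only if its first-order component induces an isomorphism on cohomology of the underlying complexes, for every testing object. So the entire problem collapses to showing, for each properly embedded exact cylindrical $L'_{0}$ (and more generally each exact cylindrical Lagrangian immersion $(L'_{0}, b'_{0})$), that $gc^{1}$ is a quasi-isomorphism of chain complexes. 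First I would record this reduction explicitly, citing the preceding lemma for the fact that $gc = \{gc^{d}\}_{d\ge 0}$ is indeed a module homomorphism (a cocycle in the functor category), so that ``quasi-isomorphism of $A_\infty$-modules'' is equivalent to ``$gc^{1}$ is a quasi-isomorphism for all testing objects.''

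Next I would prove that $gc^{1}$ is in fact a cochain \emph{isomorphism}. This is where the action-filtration argument enters. Every generator of $CW^{*}(L, \mathcal{L}, (L'_{0}, b'_{0}))$ carries a well-defined symplectic action once capping half-disks are fixed, and likewise for $CW^{*}((L \circ \mathcal{L}, b), (L'_{0}, b'_{0}))$; moreover, by the definition of the geometric composition there is a canonical bijection between the two sets of generators (this is the $\mathbb{Z}$-module isomorphism underlying \eqref{quasi-isomorphism under geometric composition with deformation given by the bounding cochain} and \eqref{quasi-isomorphism under geometric composition for immersed Lagrangians}). With respect to this ordered-by-action basis, $gc^{1}$ is represented by an upper-triangular matrix whose diagonal entries are all the identity, because any \emph{nonconstant} inhomogeneous pseudoholomorphic quilted map counted in the moduli spaces
\begin{equation*}
\bar{\mathcal{C}}_{k}\bigl((x, y); \underbrace{b, \cdots, b}_{k}; x'; e\bigr)
\end{equation*}
has strictly positive energy, hence strictly increases the action by the action-energy equality; the constant solutions, which can only occur for the trivial quilted strip, contribute exactly the diagonal identity entries. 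An upper-triangular matrix with invertible diagonal over $\mathbb{Z}$ is invertible, so $gc^{1}$ is a bijection on the level of $\mathbb{Z}$-modules; being also a cochain map, it is a cochain isomorphism, in particular a quasi-isomorphism. Here I would be careful to note that in the immersed case $(L'_{0}, b'_{0})$ one must first deform all differentials by the respective bounding cochains $b$ and $b'_{0}$ — this is precisely what makes $n^{0;\,b'_{0}}$ and $m^{1;\,b,\,b'_{0}}$ square to zero and makes the cyclic element $e$ closed, by Theorem \ref{unobstructedness of geometric composition} — so that the triangularity statement is made with respect to the deformed complexes. Only finitely many generators have positive action (the rest being nonconstant Hamiltonian chords in the cylindrical end, which have negative action), so the filtration is bounded above and the triangularity/invertibility conclusion is legitimate.

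Finally I would assemble the two ingredients: $gc$ is a morphism of left $A_\infty$-modules over $\mathcal{W}_{im}(N)$ (preceding lemma), and its linear term $gc^{1}$ is a quasi-isomorphism (indeed an isomorphism) of complexes for every testing object of $\mathcal{W}_{im}(N)$ (step two), whence $gc$ is a quasi-isomorphism of $A_\infty$-modules. I expect the main obstacle to be not the homological algebra but the bookkeeping around the bounding cochains in the immersed-target case: one must verify that the action-energy estimate still forces strict action increase for the quilted maps with $k$ insertions of $b$ and $l_{i}$ insertions of $b'_{i}$, using that $b \in F^{>0}$ and $b'_{i} \in F^{>0}$, so that the triangularity of $gc^{1}$ survives the deformation; this is the place where one genuinely uses the ``finite type'' property of the bounding cochains furnished by Lemma \ref{cyclic element and bounding cochain}. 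The transversality/virtual-chain input needed to define the counts has already been arranged in the construction of the moduli spaces \eqref{moduli space of quilted maps defining the higher order terms of the geometric composition map}, so no new analytic work is required beyond what has been set up.
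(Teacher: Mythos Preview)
Your proposal is correct and follows essentially the same approach as the paper: the paper's proof is the single sentence ``This follows from the fact that the first-order map $gc^{1}$ is an isomorphism of cochain complexes,'' relying on the action-filtration/upper-triangular argument already given in the proof of the preceding lemma. Your write-up simply makes explicit the standard reduction (quasi-isomorphism of $A_\infty$-modules $\Leftrightarrow$ $gc^{1}$ is a quasi-isomorphism for every testing object) and spells out the bounding-cochain bookkeeping in the immersed case, which the paper leaves implicit.
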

\begin{proof}
	This follows from the fact that the first-order map $gc^{1}$ is an isomorphism of cochain complexes. \par
\end{proof}

	This module homomorphism $gc$ is more than just a quasi-isomorphism, but indeed a homotopy equivalence. This statement is rather important because we are working over the integers. The proof is in fact very simple, based on our construction of $gc$. \par

\begin{proposition}
	The $A_{\infty}$-module homomorphism
\begin{equation*}
gc: \Phi_{\mathcal{L}}(L) \to \mathfrak{y}_{l}((L \circ \mathcal{L}, b))
\end{equation*}
is a homotopy equivalence of $A_{\infty}$-modules.
\end{proposition}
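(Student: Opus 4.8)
The plan is to upgrade the quasi-isomorphism $gc \colon \Phi_{\mathcal{L}}(L) \to \mathfrak{y}_{l}((L \circ \mathcal{L}, b))$ to a genuine homotopy equivalence of $A_{\infty}$-modules over $\mathcal{W}_{im}(N)$, working over $\mathbb{Z}$. The key structural fact is the one already emphasized in the construction: the linear term $gc^{1}$, on each truncated wrapped Floer complex, is given by an upper-triangular matrix with all diagonal entries equal to the natural bijective correspondence between generators (coming from the definition of the geometric composition), because any nontrivial inhomogeneous pseudoholomorphic quilted map contributing to an off-diagonal entry strictly increases the action. First I would observe that this makes $gc^{1}$ a filtered isomorphism of cochain complexes: it is invertible as a map of graded $\mathbb{Z}$-modules, with inverse again upper-triangular for the action filtration (the inverse of a unipotent upper-triangular matrix is unipotent upper-triangular, and on each truncation the relevant matrices are finite, since there are only finitely many generators of action above any fixed bound — the filtrations here are bounded above, exactly as in Lemma \ref{cyclic element and bounding cochain}). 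Hence $gc^{1}$ is not merely a quasi-isomorphism but an honest isomorphism of the underlying differential $\mathbb{Z}$-modules.

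Second, I would invoke the standard homological algebra of $A_{\infty}$-modules: an $A_{\infty}$-module homomorphism whose linear term is an isomorphism of the underlying cochain complexes is automatically a homotopy equivalence, with a strict inverse constructed term by term. Concretely, one sets $(gc^{-1})^{1} = (gc^{1})^{-1}$ and then solves recursively for the higher terms $(gc^{-1})^{d}$ using the $A_{\infty}$-module-homomorphism equations; at each stage the equation one must solve is linear in the unknown $(gc^{-1})^{d}$ with leading coefficient $(gc^{1})^{-1}$ (composed appropriately), so it has a unique solution. The composites $gc \circ gc^{-1}$ and $gc^{-1} \circ gc$ then have identity linear term, and a second application of the same recursive bookkeeping produces explicit null-homotopies exhibiting them as homotopic to the identity module homomorphisms. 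Since everything in sight is defined over $\mathbb{Z}$ — the wrapped Floer cochain spaces are free $\mathbb{Z}$-modules and $(gc^{1})^{-1}$ exists integrally precisely because $gc^{1}$ is unipotent — no division is ever required, and the argument is valid over $\mathbb{Z}$ rather than merely over a field.

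The one point needing a little care, and the main (though mild) obstacle, is the interaction of this purely algebraic inversion with the action (or energy) filtration and the infinite-rank nature of the complexes: $CW^{*}((L \circ \mathcal{L}, b), (L'_{0}, b'_{0}))$ has infinitely many generators (the non-constant Hamiltonian chords in the cylindrical end), so one must check that the recursive formulas for $(gc^{-1})^{d}$ and for the homotopies converge in the appropriate sense. This is handled by the same mechanism as everywhere else in the paper: the structure maps, and hence $gc^{d}$, strictly increase the action filtration by a definite amount on their non-unital parts (the action-energy equality), and the filtration is bounded above; therefore for any fixed input the recursion terminates after finitely many steps, exactly as in the proof of Lemma \ref{cyclic element and bounding cochain}. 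Thus the formal inverse and the homotopies are well-defined $A_{\infty}$-module morphisms. I would conclude by noting that the same argument applies verbatim with $L'_{0}$ allowed to be an exact cylindrical Lagrangian immersion with its bounding cochain $b'_{0}$, using the form \eqref{geometric composition map} of the module homomorphism, which completes the proof that $gc$ is a homotopy equivalence and hence, by Lemma \ref{criterion for representability in terms of objects}, establishes the representability asserted in Theorem \ref{representability of Lagrangian correspondence functor}.
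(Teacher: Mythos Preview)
Your proof is correct but takes a genuinely different route from the paper. The paper constructs a homotopy inverse \emph{geometrically}: it introduces new moduli spaces of inhomogeneous pseudoholomorphic quilted maps of the same shape as those in \eqref{moduli space of quilted maps defining the higher order terms of the geometric composition map} but with the roles of input and output interchanged (so $x'$ becomes the input and $(x,y)$ the output), and counts rigid elements therein to build an $A_{\infty}$-module homomorphism $op:\mathfrak{y}_{l}((L\circ\mathcal{L},b))\to\Phi_{\mathcal{L}}(L)$; a standard Floer-theoretic gluing argument then shows $gc^{1}\circ op^{1}$ and $op^{1}\circ gc^{1}$ are chain homotopic to the identity.

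Your approach is purely algebraic: you exploit the fact, already established in the paper, that $gc^{1}$ is not merely a quasi-isomorphism but a genuine isomorphism of the underlying cochain complexes (upper-triangular with unit diagonal for the action filtration), and then invoke the standard result that an $A_{\infty}$-module homomorphism with invertible linear term admits a strict inverse constructed recursively. This is more elementary and avoids building any new moduli spaces. The paper's approach, by contrast, yields a geometrically meaningful inverse $op$, which may be useful if one later wants an explicit Floer-theoretic realization of the inverse direction. One minor comment: your discussion of ``convergence'' of the recursive formulas is a bit overcautious---at each stage the formula for $(gc^{-1})^{d}$ is a finite expression in lower-order terms and $(gc^{1})^{-1}$, so no limiting process is needed; the action-filtration boundedness is really only relevant for the invertibility of $gc^{1}$ itself, which you already addressed.
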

\begin{proof}
	A homotopy inverse can be constructed using moduli spaces similar to \eqref{moduli space of quilted maps defining the higher order terms of the geometric composition map}, but we interchange the inputs and the outputs. That is, we regard $x'$ as the input and $(x, y)$ as the output, and construct a sequence of multilinear maps of the form
\begin{equation}
\begin{split}
op^{d}: & CW^{*}((L \circ \mathcal{L}, b), (L'_{d-1}, b'_{d-1})) \otimes CW^{*}((L'_{d-2}, b'_{d-2}), (L'_{d-1}, b'_{d-1})) \otimes \cdots \\
&\otimes CW^{*}((L'_{0}, b'_{0}), (L'_{1}, b'_{1})) \to CW^{*}(L, \mathcal{L}, (L'_{0}, b'_{0})).
\end{split}
\end{equation}
These form an $A_{\infty}$-module homomorphism
\begin{equation*}
op: \mathfrak{y}_{l}((L \circ \mathcal{L}, b)) \to \Phi_{\mathcal{L}}(L).
\end{equation*}
Standard gluing argument in Floer theory implies that $gc^{1} \circ op^{1}$ and $op^{1} \circ gc^{1}$ are both chain homotopic to the identity, which implies that $op$ is the a homotopy inverse of $gc$. \par
\end{proof}

	Thus, we set
\begin{equation}
\Psi_{\mathcal{L}}(L) = \mathfrak{y}_{l}((L \circ \mathcal{L}, b)),
\end{equation}
the left Yoneda module of $(L \circ \mathcal{L}, b) \in Ob \mathcal{W}_{im}(N)$, for every object $L \in Ob \mathcal{W}(M)$.
	The next step is to prove that such an $A_{\infty}$-module homotopy equivalence of $A_{\infty}$-modules is functorial in $\mathcal{W}(M)$. For this purpose, we shall define multilinear maps
\begin{equation}
T^{d}: CW^{*}(L_{d-1}, L_{d}) \otimes \cdots \otimes CW^{*}(L_{0}, L_{1}) \to \hom_{\mathcal{W}_{im}(N)^{l-mod}}(\Phi_{\mathcal{L}}(L_{0}), \Psi_{\mathcal{L}}(L_{d}))[-d]
\end{equation}
of degree $-d$, which satisfy the equations for $A_{\infty}$-pre-natural transformations.
In more concrete terms, we shall define a multilinear map for all possible Floer cochains $x_{1} \in CW^{*}(L_{0}, L_{1}), \cdots, x_{d} \in CW^{*}(L_{d-1}, L_{d})$ as well as cylindrical Lagrangian immersions in $N$ equipped with bounding cochains $(L'_{0}, b'_{0}), \cdots, (L'_{k-1}, b'_{k-1})$:
\begin{equation}
\begin{split}
&(T^{d}(x_{d}, \cdots, x_{1}))^{k}: CW^{*}((L_{0} \circ \mathcal{L}, b_{0}), (L'_{k-1}, b'_{k-1}))\\
& \otimes CW^{*}((L'_{k-2}, b'_{k-1}), (L'_{k-1}, b'_{k-1}))
\otimes \cdots \otimes CW^{*}((L'_{0}, b'_{0}), (L'_{1}, b'_{1}))\\
&\to CW^{*}((L_{d} \circ \mathcal{L}, b_{d}), (L'_{0}, b'_{0})),
\end{split}
\end{equation}
which is linear with respect to each $x_{i}$, and satisfies the following equation:
\begin{equation} \label{A-infinity equation for the new module-valued functor}
\begin{split}
&m^{1}_{\mathcal{W}_{im}(N)^{l-mod}}(\Psi_{\mathcal{L}}^{d}(x_{d}, \cdots, x_{1}))\\
+ &\sum_{s} m^{2}_{\mathcal{W}_{im}(N)^{l-mod}}(\Psi_{\mathcal{L}}^{d-s}(x_{d}, \cdots, x_{s+1}), \Psi_{\mathcal{L}}^{s}(x_{s}, \cdots, x_{1}))\\
= &\sum_{n, k} (-1)^{*} \Psi_{\mathcal{L}}^{d-k+1}(x_{d}, \cdots, x_{n+k+1}, m^{k}_{\mathcal{W}(M)}(x_{n+k}, \cdots, x_{n+1}), x_{n}, \cdots, x_{1}),
\end{split}
\end{equation}
where (only here) $* = |x_{1}| + \cdots + |x_{n}| - n$. \par
	We consider the following quilted surfaces $\underline{S}^{nf}$ consisting of two patches $S^{nf}_{0}, S^{nf}_{1}$ where $S^{nf}_{0}$ is a disk with $(d+1)$ positive boundary punctures $z_{0}^{+, 1}, z_{0}^{1}, \cdots, z_{0}^{d}$, and one negative puncture $z_{0}^{-, 2}$, and $S^{nf}_{0}$ is a disk with $(k+1)$ positive boundary punctures $z_{1}^{+, 1}, z_{1}^{1}, \cdots, z_{1}^{k-1}$, and one negative boundary puncture $z_{1}^{-}, z_{1}^{-,2}$. We denote by $I_{0}^{+}$ the boundary component of $S^{nf}_{0}$ between $z_{0}^{+, 1}$ and $z_{0}^{+, 2}$, and by $I_{1}^{\pm}$ the boundary component of $S^{nf}_{1}$ between $z_{1}^{-}$ and $z_{1}^{+}$. $\underline{S}^{nf}$ is obtained by seaming together the two patches along the pair $(I_{0}^{+}, I_{1}^{\pm})$ of boundary components. We need to consider semi-stable nodal quilted surfaces arising as domains of limits of stable maps from such quilted surfaces, and we denote them by the same symbol. \par
	Suppose strip-like ends and quilted ends for all semistable nodal quilted surfaces $\underline{S}^{nf}$ have been chosen. Make consistent choices of Floer data for all such $\underline{S}^{nf}$, requiring the choices to automorphism-invariant Floer data for semistable nodal quilted surfaces that are domains of stable maps to $M$ with Lagrangian boundary conditions that are to be specified below. Let $e_{d} = e_{L_{d} \circ \mathcal{L}}$ be the generator of $CW^{*}(L_{d}, \mathcal{L}, (L_{d} \circ \mathcal{L}, b_{d}))$ corresponding to the fundamental chain of $L_{d} \circ \mathcal{L}$. Consider the moduli space 
\begin{equation}
\mathcal{T}^{nf}_{d, k-1}(\alpha, \beta; y^{-}; x^{1}, \cdots, x^{d}; y^{1}, \cdots, y^{k-1}; (x^{+}, y^{+}); e_{d})
\end{equation}
of triples $(\underline{S}^{nf}, \underline{u}, l_{1})$, satisfying the following conditions:
\begin{enumerate}[label=(\roman*)]

\item $\underline{u}: \underline{S}^{nf} \to (M, N)$ is a quilted map with marked points or punctures $\underline{\vec{z}}$ satisfying the following equations:
\begin{equation}
\begin{cases}
(du_{0} - \alpha_{S^{nf}_{0}} \otimes X_{H_{S^{nf}_{0}}})^{0, 1} = 0\\
(du_{1} - \alpha_{S^{nf}_{1}} \otimes X_{H_{S^{nf}_{1}}})^{0, 1} = 0\\
u_{0}(z) \in \psi_{M}^{\rho_{S^{nf}_{0}}(z)} L_{0}, \text{ if $z$ lies between $z_{0}^{+,1}$ and $z_{0}^{1}$ }\\
u_{0}(z) \in \psi_{M}^{\rho_{S^{nf}_{0}}(z)} L_{i}, \text{ if $z$ lies between $z_{0}^{i}$ and $z_{0}^{i+1}$ }\\
u_{0}(z) \in \psi_{M}^{\rho_{S^{nf}_{0}}(z)} L_{d}, \text{ if $z$ lies between $z_{0}^{d}$ and $z_{0}^{+, 2}$ }\\
u_{1}(z) \in \psi_{N}^{\rho_{S^{nf}_{1}}(z)} \iota_{0}(L'_{0}), \text{ if $z$ lies between $z_{1}^{+, 1}$ and $z_{1}^{1}$ } \\
u_{1}(z) \in \psi_{N}^{\rho_{S^{nf}_{1}}(z)} \iota_{j}(L'_{j}), \text{ if $z$ lies between $z_{1}^{j}$ and $z_{1}^{j+1}$ }\\
u_{1}(z) \in \psi_{N}^{\rho_{S^{nf}_{1}}(z)} \iota_{k-1}(L'_{k-1}), \text{ if $z$ lies between $z_{1}^{k-1}$ and $z_{1}^{-}$ }\\
u_{1}(z) \in \psi_{N}^{\rho_{S^{nf}_{1}}(z)} L_{d} \circ_{H_{M}} \mathcal{L}, \text{ if $z$ lies between $z_{1}^{-}$ and $z_{1}^{+, 2}$ }\\
(u_{0}(z), u_{1}(z)) \in (\psi_{M}^{\rho_{S^{nf}_{0}}(z)} \times \psi_{N}^{\rho_{S^{nf}_{1}}(z)}) \mathcal{L}, \text{ if $z$ lies on the seam }\\
\lim\limits_{s \to -\infty} u_{1} \circ \epsilon_{1}^{-}(s, \cdot) = \psi_{N}^{w_{1}^{-}} y^{-}(\cdot)\\
\lim\limits_{s \to +\infty} (u_{0} \circ \epsilon_{0}^{+,1}(s, \cdot), u_{1} \circ \epsilon_{1}^{+, 1}(s, \cdot)) = \psi_{M \times N}^{w^{+}}(x^{+}(\cdot), y^{+}(\cdot))\\
\lim\limits_{s \to +\infty} u_{0} \circ \epsilon_{0}^{i}(s, \cdot) = \psi_{M}^{w_{0}^{i}}x^{i}(\cdot)\\
\lim\limits_{s \to +\infty} (u_{0} \circ \epsilon_{0}^{-, 2}(s, \cdot), u_{1} \circ \epsilon_{1}^{-, 2}(s, \cdot)) = \psi_{M \times N}^{w^{e}} e_{d}(\cdot)\\
\lim\limits_{s \to +\infty} u_{1} \circ \epsilon_{1}^{j}(s, \cdot) = \psi_{N}^{w_{1}^{j}}y^{j}(\cdot)
\end{cases}
\end{equation}

\item $l_{1}: \partial S_{1}^{nf} \setminus \{z_{1}^{i}: i \in I\} \to (L'_{0} \times_{\iota_{0}} L'_{0}) \cup \cdots (L'_{k-1} \times_{\iota_{k-1}} L'_{k-1}) \cup ( (L_{d} \circ \mathcal{L}) \times_{\iota} (L_{d} \circ \mathcal{L}))$ is a smooth map.

\item $\phi_{N}^{\rho_{S^{nf}_{1}(l_{1}(z))}} \iota_{i} = u_{1} \circ l_{1}(z)$, when $z \in \partial S^{nf}_{1}$ lies between $z_{1}^{i}$ and $z_{1}^{i+1}$, for every $1 \le i \le k-2$. If $z$ lies between $z_{1}^{+, 1}$ and $z_{1}$, the corresponding Lagrangian immersion should be replaced by $\iota_{0}: L'_{0} \to N$. If $z$ lies between $z_{1}^{+, 2}$, the corresponding Lagrangian immersion should be replaced by $\iota_{k-1}: L'_{k-1} \to N$. If $z$ lies between $z_{1}^{+, 2}$ and $z_{1}^{+, 1}$, the corresponding Lagrangian immersion should be replaced by $\iota: L_{d} \circ \mathcal{L} \to N$.

\item the relative homology class of $\underline{u}$ is $\beta$.

\item the triple $(\underline{S}^{nf}, \underline{u}, l_{1})$ is stable, meaning that it has finite automorphism group.

\end{enumerate}

	The above conditions are analogous to those in the case of a single Lagrangian immersion for which we defined the moduli space of inhomogeneous pseudoholomorphic disks. Recall the relevant notations in section \ref{section: moduli space of disks bounded by immersed Lagrangian submanifolds}. \par
	There is a natural compactification $\bar{\mathcal{T}}^{nf}_{d, k-1}(y^{-}; x^{1}, \cdots, x^{d}; y^{1}, \cdots, y^{k-1}; (x^{+}, y^{+}))$ of this moduli space $\mathcal{T}^{nf}_{d, k-1}(y^{-}; x^{1}, \cdots, x^{d}; y^{1}, \cdots, y^{k-1}; (x^{+}, y^{+}))
$, which consists of broken quilted maps of the same type. In particular, the codimension-one boundary strata consist of union of fiber products
\begin{equation}
\begin{split}
&\partial \bar{\mathcal{T}}^{nf}_{d, k-1}(y^{-}; x^{1}, \cdots, x^{d}; y^{1}, \cdots, y^{k-1}; (x^{+}, y^{+}))\\
& \cong \coprod \bar{\mathcal{T}}^{nf}_{d_{1}, k_{1}-1}(y^{-}; x^{1}, \cdots, x^{i}, x^{new}, x^{i+k_{1}+1}, \cdots, x^{d}; \\
& y^{1}, \cdots, y^{j}, y^{new}, y^{j+k_{2}+1}, \cdots, y^{k-1}; (x^{+}, y^{+}))\\
& \times \bar{\mathcal{M}}_{d_{2}+1}(x^{new}, x^{i+1}, \cdots, x^{i+d_{2}}) \times \bar{\mathcal{M}}_{k_{2}+1}(y^{new}, y^{j+1}, \cdots, y^{j+k_{2}})\\
& \cup \coprod \bar{\mathcal{T}}_{d_{1}, k_{1}-1}((x^{-}, y^{-}); x^{1}, \cdots, x^{d_{1}}; y^{1}, \cdots, y^{k_{1}-1}; (x^{+}_{1}, y^{+}_{1}))\\
& \times \bar{\mathcal{T}}_{d_{2}, k_{2}-1}((x^{+}_{1}, y^{+}_{1}); x^{d_{1}+1}, \cdots, x^{d}; y^{k_{1}}, \cdots, y^{k-1}; (x^{+}, y^{+}))
\end{split}
\end{equation}
where the compactified moduli space $\bar{\mathcal{T}}^{nf}_{d', k'-1}(\cdots)$ for $d' < d, k' < k$ are built inductively in this way.
The stability condition ensures that the moduli space $\bar{\mathcal{T}}^{nf}_{d, k-1}(y^{-}; x^{1}, \cdots, x^{d}; y^{1}, \cdots, y^{k-1}; (x^{+}, y^{+}))
$ is compact and Hausdorff. \par
	To include the contribution from the bounding cochains $b'_{j}$ for $\iota_{j}: L'_{j} \to N$, we modify elements in the moduli space $\bar{\mathcal{T}}^{nf}_{d, k-1}(y^{-}; x^{1}, \cdots, x^{d}; y^{1}, \cdots, y^{k-1}; (x^{+}, y^{+}))$ by adding more punctures on each boundary component of $S^{nf}_{1}$ that is mapped to the image of one of the Lagrangian immersions $\iota_{j}: L'_{j} \to N$ and $L_{d} \circ \mathcal{L}$, and imposing the asymptotic convergence conditions at these additional puctures given by the bounding cochains $b'_{j}$. The resulting moduli space is denoted by
\begin{equation}\label{moduli space defining homotopy between the module-valued functors}
\bar{\mathcal{T}}^{nf}_{d, k-1,s_{0}, \cdots, s_{k-1}, s}(y^{-}; x^{1}, \cdots, x^{d}; y^{1}, \cdots, y^{k-1}; b'_{0}, \cdots, b'_{0}; \cdots; b'_{k-1}, \cdots, b'_{k-1}; (x^{+}, y^{+})).
\end{equation}
The picture of such a quilted map is shown in Figure \ref{fig: the quilted map defining the homotopy between two module-valued functors}, where we have omitted the bounding cochains, but shall remember that they are also included as suitable asymptotic convergence conditions on the boundary components of the second patch, with the prescribed number of punctures added. \par

\begin{figure}
\begin{tikzpicture}
	\draw (-5.5, 1) -- (-4.5, 1);
	\draw (-4.5, 1) arc (270:360:0.25cm);
	\draw (-4.25, 1.25) -- (-4.25, 1.75);
	\draw (-3, 1) arc (270:180:0.25cm);
	\draw (-3.25, 1.25) -- (-3.25, 1.75);
	\draw (-3, 1) -- (-1.75, 1);
	\draw (-1.75, 1) arc (270:360:0.25cm);
	\draw (-1.5, 1.25) -- (-1.5, 1.75);
	\draw (-0.75, 1) arc (270:180:0.25cm);
	\draw (-1, 1.25) -- (-1, 1.75);
	\draw (-0.75, 1) -- (0.75, 1);
	\draw (0.75, 1) arc (270:360:0.25cm);
	\draw (1, 1.25) -- (1, 1.75);
	\draw (1.75, 1) arc (270:180:0.25cm);
	\draw (1.5, 1.25) -- (1.5, 1.75);
	\draw (1.75, 1) -- (3, 1);
	\draw (3, 1) arc (270:360:0.25cm);
	\draw (3.25, 1.25) -- (3.25, 1.75);
	\draw (4, 1) arc (270:180:0.25cm);
	\draw (3.75, 1.25) -- (3.75, 1.75);
	\draw (4, 1) -- (5.5, 1);
	
	\draw (-3.25, 0) -- (5.5, 0);
	\draw (-3.25, 0) arc (270:180:0.5cm);
	\draw (-3.75, 0.5) -- (-3.75, 1.75);
	
	\draw (-5.5, -1) -- (-3.5, -1);
	\draw (-3.5, -1) arc (90:0:0.25cm);
	\draw (-3.25, -1.25) -- (-3.25, -1.75);
	\draw (-2.5, -1) arc (90:180:0.25cm);
	\draw (-2.75, -1.25) -- (-2.75, -1.75);
	\draw (-2.5, -1) -- (-0.5, -1);
	\draw (-0.5, -1) arc (90:0:0.25cm);
	\draw (-0.25, -1.25) -- (-0.25, -1.75);
	\draw (0.5, -1) arc (90:180:0.25cm);
	\draw (0.25, -1.25) -- (0.25, -1.75);
	\draw (0.5, -1) -- (2.5, -1);
	\draw (2.5, -1) arc (90:0:0.25cm);
	\draw (2.75, -1.25) -- (2.75, -1.75);
	\draw (3.5, -1) arc (90:180:0.25cm);
	\draw (3.25, -1.25) -- (3.25, -1.75);
	\draw (3.5, -1) -- (5.5, -1);

	\draw (-5.5, 1.5) node {$L_{3} \circ \mathcal{L}$};
	\draw (-2.5, 1.5) node {$L_{3}$};
	\draw (0, 1.5) node {$L_{2}$};
	\draw (2.5, 1.5) node {$L_{1}$};
	\draw (4.75, 1.5) node {$L_{0}$};
	
	\draw (0, 0) node {$\mathcal{L}$};
	
	\draw (-4.25, -1.5) node {$L'_{0}$};
	\draw (-1.5, -1.5) node {$L'_{1}$};
	\draw (1.5, -1.5) node {$L'_{2}$};
	\draw (4.25, -1.5) node {$L'_{3}$};

	\draw (-6, 0) node {$y^{-}$};
	\draw (-3.75, 2) node {$(x^{3, e}, y^{3, e})$};
	\draw (-1.25, 2) node {$x^{3}$};
	\draw (1.25, 2) node {$x^{2}$};
	\draw (3.5, 2) node {$x^{1}$};

	\draw (6, 0) node {$(x^{+}, y^{+})$};
	
	\draw (-3, -2) node {$y^{1}$};
	\draw (0, -2) node {$y^{2}$};
	\draw (3, -2) node {$y^{3}$};

\end{tikzpicture}
\caption{the quilted map defining the homotopy between two module-valued functors} \label{fig: the quilted map defining the homotopy between two module-valued functors}
\end{figure}
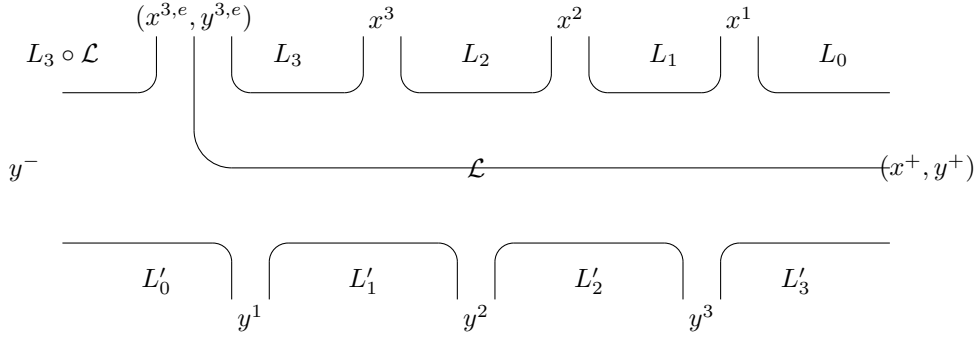

	Following the argument in section \ref{section: Kuranishi structure on moduli spaces of stable pearly tree maps}, we can construct Kuranishi structures on these moduli spaces, which are compatible with the fiber product Kuranishi structures at the boundary. By making a coherent choice of multisections on these Kuranishi spaces, we obtain the virtual fundamental chains, which give rise to the desired homotopy equivalence between the $A_{\infty}$-modules $i \circ \Psi_{\mathcal{L}}$ and $\Phi_{\mathcal{L}}$. \par 

	It is also possible to extend $\Phi_{\mathcal{L}}, \Psi_{\mathcal{L}}$ as well as $\Theta_{\mathcal{L}}$ to the immersed wrapped Fukaya category $\mathcal{W}_{im}(M)$, which we omit as it is not needed for our current purposes. \par

\subsection{Categorification of the functors}
	So far we have discussed the representability for the $A_{\infty}$-functor \eqref{functor to modules} associated to a single Lagrangian correspondence $\mathcal{L} \subset M^{-} \times N$. It is straightforward to generalize this functorially in the wrapped Fukaya category of the product manifold $M^{-} \times N$. That is, we ask whether \eqref{bimodule-valued functor} is representable. The answer is affirmative, stated in Theorem \ref{functoriality for Lagrangian correspondences}. \par

\begin{proof}[Proof of Theorem \ref{functoriality for Lagrangian correspondences}]
	The $A_{\infty}$-functor \eqref{bimodule-valued functor} defines in a natural way an $A_{\infty}$-functor
\begin{equation}
\Phi: \mathcal{W}(M^{-} \times N) \to func(\mathcal{W}(M), \mathcal{W}(N)^{l-mod}).
\end{equation}
This statement is proved in a purely algebraic way in section \ref{section: bimodules and functors}. Summarizing the argument, we compose the $A_{\infty}$-functor \eqref{bimodule-valued functor}  with the algebraically-defined $A_{\infty}$-functor 
\begin{equation}
(\mathcal{W}(M), \mathcal{W}(N))^{bimod} \to func(\mathcal{W}(M), \mathcal{W}(N)^{l-mod})
\end{equation}
to obtain the desired functor. \par
	In the previous two subsections, we have proved that if the projection $\mathcal{L} \to N$ is proper and if Assumption \ref{assumption on the geometric composition} holds for every $L \in Ob \mathcal{W}(M)$, then the filtered module-valued functor $\Phi_{\mathcal{L}}$ is representable. Therefore, $\Phi$ is representable over the full subcategory $\mathcal{A}(M^{-} \times N) \subset \mathcal{W}(M^{-} \times N)$, in the sense of Definition \ref{definition of representability over a subcategory}. Thus we may rewrite the above $A_{\infty}$-functor as
\begin{equation}
\Psi: \mathcal{A}(M^{-} \times N) \to func(\mathcal{W}(M), \mathcal{W}_{im}(N)^{rep-l-mod}).
\end{equation}
Composing this with a homotopy inverse
\begin{equation*}
\lambda_{l}: \mathcal{W}_{im}(N)^{rep-l-mod} \to \mathcal{W}_{im}(N)
\end{equation*}
of the left Yoneda functor
\begin{equation*}
\mathfrak{y}_{l}: \mathcal{W}_{im}(N) \to \mathcal{W}_{im}(N)^{rep-l-mod},
\end{equation*}
we obtain the desired $A_{\infty}$-functor $\Theta$ \eqref{A-infinity functor from product to functor category}. Technically speaking, we shall require that Assumption \ref{assumption on the geometric composition} hold for every Lagrangian correspondence $\mathcal{L}$ in the sub-category $\mathcal{A}(M^{-} \times N)$, which again is a generic condition on the class of objects of the wrapped Fukaya category of the product manifold $M^{-} \times N$. \par
\end{proof}

\subsection{A geometric realization of the cochain map for the correspondence functor} \label{section: a different geometric realization of the correspondence functor}
	In practice, it is helpful to have a more direct and geometric construction of the functor \eqref{functor to the immersed category}, without referring to the implicit construction with the help of the Yoneda lemma. At this time there are still some technical issues in fully realizing this, but it is possible to construct a cochain map, which is homotopic to the first order map of \eqref{functor to the immersed category}. This construction is useful in some applications, for example when studying the relation to the Viterbo restriction functor. \par
	Fix an admissible Lagrangian correspondence $\mathcal{L} \subset M^{-} \times N$ such that the projection $\mathcal{L} \to N$ is proper. Suppose $L \subset M$ is an admissible Lagrangian submanifold, which can be made as an object of $\mathcal{W}(M)$. Recall that the geometric composition $\iota: L \circ \mathcal{L} \to N$ comes with a canonical and unique bounding cochain $b$. 
Given a pair $(L_{0}, L_{1})$, we define a map
\begin{equation} \label{a different realization of the correspondence functor}
\Pi_{\mathcal{L}}: CW^{*}(L_{0}, L_{1}; H_{M}) \to CW^{*}((L_{0} \circ \mathcal{L}, b_{0}), (L_{1} \circ \mathcal{L}, b_{1}); H_{U})
\end{equation}
in the following way. Consider the moduli spaces
\begin{equation*}
\mathcal{U}_{l_{0}, l_{1}}(\alpha, \beta; x; y; y_{0, 1}, \cdots, y_{0, l_{0}}; y_{1, 1}, \cdots, y_{1, l_{1}}; e_{0}, e_{1})
\end{equation*}
of quilted inhomogeneous pseudoholomorphic maps $(\underline{S}, (u, v))$ in $(M, N)$, with the following properties:
\begin{enumerate}[label=(\roman*)]

\item The quilted surface $\underline{S} = (S_{0}, S_{1}$ has two patches. $S_{0}$ is a disk with $3$ punctures $z_{0}^{1}, z_{0}^{1, -}, z_{0}^{2, -}$, where $z_{0}^{1, -}, z_{0}^{2, -}$ are special punctures. $S_{1}^{k}$ is a disk with $3 + l_{0} + l_{1}$ punctures $z_{1}^{1, -}, z_{1}^{0, 1}, \cdots, z_{1}^{0, l_{0}}, z_{1}^{0}, z_{1}^{1, 1}, \cdots, z_{1}^{1, l_{1}}, z_{1}^{2, -}$, which are ordered in a counterclockwise order on the boundary, where $z_{1}^{1, -}, z_{1}^{2, -}$ are special punctures. 
The quilted surface is obtained by seaming the two patches along the boundary component $I_{0}^{-}$ of $S_{0}$ between $z_{0}^{1, -}, z_{0}^{2, -}$ and the boundary component $I_{1}^{-}$ of $S_{1}$ between $z_{1}^{1, -}, z_{1}^{2, -}$. Here, we regard the punctures $z_{0}^{1}$ and $z_{1}^{0}$ as being fixed, while $z_{1}^{0, 1}, \cdots, z_{1}^{0, l_{0}}$ and $z_{1}^{1, 1}, \cdots, z_{1}^{1, l_{1}}$ are allowed to move.

\item $u: S_{0} \to M$ is inhomogeneous pseudoholomorphic with respect to $(H_{S_{0}}, J_{S_{0}})$, for a family of Hamiltonians $H_{S_{0}}$ on $M$ parametrized by $S_{0}$, which agrees with $H_{M}$ near $z_{0}^{1}$, and a family of almost complex structures $J_{S_{0}}$ parametrized by $S_{0}$, which agrees with $J_{M}$ near $z_{0}^{1}$.

\item $v: S_{1} \to N$ is inhomogeneous pseudoholomorphic with respect to $(H_{S_{1}}, J_{S_{1}})$, for a family of Hamiltonians $H_{S_{1}}$, which agrees with $H_{U}$ near each of the punctures $z_{1}^{0, 1}, \cdots, z_{1}^{0, l_{0}}, z_{1}^{0}, z_{1}^{1, 1}, \cdots, z_{1}^{1, l_{1}}$, and a family of almost complex structures $J_{S_{1}}$, which agrees with $J_{U}$ each of the punctures $$z_{1}^{0, 1}, \cdots, z_{1}^{0, l_{0}}, z_{1}^{0}, z_{1}^{1, 1}, \cdots, z_{1}^{1, l_{1}}.$$

\item $u$ maps the boundary component of $S_{0}$ between $z_{0}^{1, -}$ and $z_{0}^{1}$ to $L_{0}$, the boundary component between $z_{0}^{j}$ and $z_{0}^{j+1}$ to $L_{j}$ (for $j = 1, \cdots, k-1$), and the boundary component between $z_{0}^{k}$ and $z_{0}^{2, -}$ to $L_{k}$.

\item $v$ maps the boundary component of $S_{1}$ between $z_{1}^{1, -}$ and $z_{1}^{0, 1}$, the boundary component between $z_{1}^{0, j}$ and $z_{1}^{0, j+1}$ (for $j = 1, \cdots, l_{0}-1$) as well as the boundary component between $z_{1}^{0, l_{0}}$ and $z_{1}^{0}$ to the image of the geometric composition $L_{0} \circ_{H_{M}} \mathcal{L}$. $v$ maps the boundary component between $z_{1}^{0}$ and $z_{1}^{1, 1}$, the boundary component between $z_{1}^{1, j}$ and $z_{1}^{1, j+1}$ (for $j = 1, \cdots, l_{1}-1$) as well as the boundary component between $z_{1}^{1, l_{1}}$ and $z_{1}^{2, -}$ to the image of the geometric composition $L_{1} \circ_{H_{M}} \mathcal{L}$.

\item On the seam, the matching condition for $(u, v)$ is given by the Lagrangian correspondence $\mathcal{L}$.

\item $u$ asymptotically converges to some time-one $H_{M}$-chord $x$ at $z_{0}^{1}$.

\item $v$ asymptotically converges to some generator $y$ for $CW^{*}(L_{0} \circ \mathcal{L}, L_{1} \circ \mathcal{L})$ at $z_{1}^{0}$. In the case where $y$ is a time-one $H_{N}$-chord from the image of $L_{0} \circ \mathcal{L}$ to that of $L_{1} \circ \mathcal{L}$, this condition is the same as those for $u$. In the case where $y$ is a critical point (this happens only when $L_{0} = L_{1}$), the domain $S_{1}$ and the map $v$ have to be slightly modified, to be described later on.

\item $v$ asymptotically converges to some generator $y_{0, j}$ of $CW^{*}(L_{0} \circ \mathcal{L}; H_{N})$ at $z_{1}^{0, j}$ for $j = 1, \cdots, l_{0}$, and to some generator $y_{1, j}$ of $CW^{*}(L_{1} \circ \mathcal{L}; H_{N})$ at $z_{1}^{1, j}$ for $j = 1, \cdots, l_{1}$.

\item Over the first quilted end, the quilted map $(u, v)$ asymptotically converges to the cyclic element $e_{0}$ for $(L_{0}, \mathcal{L}, L_{0} \circ \mathcal{L})$. Over the second quilted end, the quilted map $(u, v)$ asymptotically converges to the cyclic element $e_{1}$ for $(L_{1}, \mathcal{L}, L_{1} \circ \mathcal{L})$.

\end{enumerate}
Now let us describe the necessary modification when $L_{0} = L_{1}$ and $y$ is a critical point of the chosen Morse function on the self fiber product of $L_{0} \circ \mathcal{L}$. In this case, we require that the family of Hamiltonians $H_{S_{1}}$ is chosen so that it vanishes near the strip-like end of $z_{1}^{0}$, so that the map $v$ converges to a point on the image of $L_{1} \circ \mathcal{L}$. The domain $S_{1}$ should also be further modified, by attaching an infinite half ray $(-\infty, 0]$ to it at the negative puncture (which now becomes a marked point as the Hamiltonian vanishes near there). Then we require that the lift of the map on the infinite half ray, which is a gradient flow, converges to $y$ at $-\infty$. To make the statement concise and unified, in both cases we shall briefly say that the map $v$ asymptotically converges to $y$. \par
	There is a natural stable map compactification of this moduli space, denoted by
\begin{equation}\label{moduli space of quilted maps for another realization of the correspondence functor}
\bar{\mathcal{U}}_{l_{0}, l_{1}}(\alpha, \beta; x; y; y_{0, 1}, \cdots, y_{0, l_{0}}; y_{1, 1}, \cdots, y_{1, l_{1}}; e_{0}, e_{1}),
\end{equation}
which is constructed in an inductive nature. This compactification is obtained by adding all possible broken inhomogeneous pseudoholomorphic quilted maps. These broken quilted maps arise when energy escapes over the strip-like ends near the punctures (this phenomenon is often called strip breaking), or when the domains degenerate. There are several cases:
\begin{enumerate}[label=(\roman*)]

\item Inhomogeneous pseudoholomorphic disks bubbling off the boundary of the image of $L_{0} \circ \mathcal{L}$. The resulting broken quilted map has a main component which is similar to such a quilted map $(\underline{S}, (u, v))$, with possibly less punctures $l'_{0} \le l_{0}$, and some other components consisting of trees of inhomogeneous pseudoholomorphic disks with boundary on the image of $L_{0} \circ \mathcal{L}$.

\item Inhomogeneous pseudoholomorphic disks bubbling off the boundary of the image of $L_{1} \circ \mathcal{L}$. The resulting broken quilted map has a main component which is similar to such a quilted map $(\underline{S}, (u, v))$, with possibly less punctures $l'_{1} \le l_{1}$, and some other components consisting of trees of inhomogeneous pseudoholomorphic disks with boundary on the image of $L_{1} \circ \mathcal{L}$.

\item Inhomogeneous pseudoholomorphic strips breaking out at the strip-like end near $z_{0}^{1}$.

\item Inhomogeneous pseudoholomorphic strips breaking out at the strip-like end near $z_{1}^{0}$.

\item Inhomogeneous pseudoholomorphic quilted strips breaking out at the quilted ends.

\end{enumerate}
Thus, there is an isomorphism of the codimension-one boundary strata of the compactified moduli space:
\begin{equation}\label{boundary strata of the moduli space of quilted maps defining a different realization of the cochain map}
\begin{split}
& \partial \bar{\mathcal{U}}_{l_{0}, l_{1}}(\alpha, \beta; x; y; y_{0, 1}, \cdots, y_{0, l_{0}}; y_{1, 1}, \cdots, y_{1, l_{1}}; e_{0}, e_{1})\\
\cong & \coprod_{\substack{x_{1} \\ \deg(x_{1}) = \deg(x) + 1}}
\bar{\mathcal{M}}(x_{1}, x)\\
& \times \bar{\mathcal{U}}_{l_{0}, l_{1}}(\alpha, \beta; x_{1}; y; y_{0, 1}, \cdots, y_{0, l_{0}}; y_{1, 1}, \cdots, y_{1, l_{1}}; e_{0}, e_{1})\\
& \cup \coprod_{\substack{\alpha' \sharp \alpha'' = \alpha \\ \beta'' \sharp \beta'' = \beta}} \coprod_{\substack{y_{1} \\ \deg(y_{1}) = \deg(y) - 1}}
\bar{\mathcal{M}}(\alpha'', \beta''; y, y_{1})\\
& \times \bar{\mathcal{U}}_{l_{0}, l_{1}}(\alpha', \beta'; x; y_{1}; y_{0, 1}, \cdots, y_{0, l_{0}}; y_{1, 1}, \cdots, y_{1, l_{1}}; e_{0}, e_{1})\\
& \cup \coprod_{\substack{1 \le l'_{0} \le l_{0} \\ l'_{0} + l''_{0} = l_{0} + 1}} \coprod_{0 \le i_{0} \le l'_{0}} \coprod_{\substack{\alpha' \sharp \alpha'' = \alpha \\ \beta'' \sharp \beta'' = \beta}} \coprod_{\substack{y_{0, new}\\ \deg(y_{0, new}) = \deg(y_{0, i_{0}+1}) + \cdots + \deg(y_{0, i_{0} + l''_{0}}) + 2 - l''_{0}}}\\
& \bar{\mathcal{U}}_{l'_{0}, l_{1}}(\alpha', \beta'; x; y; y_{0, 1}, \cdots, y_{0, i_{0}}, y_{0, new}, y_{0, i_{0} + l''_{0} + 1}, \cdots, y_{0, l_{0}}; y_{1, 1}, \cdots, y_{1, l_{1}}; e_{0}, e_{1})\\
& \times \bar{\mathcal{M}}_{l''_{0}+1}(\alpha'', \beta''; y_{0, new}; y_{0, i_{0} + 1}, \cdots, y_{0, i_{0} + l''_{0}})\\
& \cup \coprod_{\substack{1 \le l'_{1} \le l_{1} \\ l'_{1}+l''_{1} = l_{1} + 1}} \coprod_{1 \le i_{1} \le l'_{1}} \coprod_{\substack{\alpha' \sharp \alpha'' = \alpha \\ \beta'' \sharp \beta'' = \beta}} \coprod_{\substack{y_{1, new}\\ \deg(y_{1, new}) = \deg(y_{1, i_{1}+1}) + \cdots + \deg(y_{1, i_{1}+l''_{1}}) + 2 - l''_{1}}}\\
& \bar{\mathcal{U}}_{l_{0}, l'_{1}}(\alpha', \beta'; x; y; y_{0, 1}, \cdots, y_{0, l_{0}}; y_{1, 1}, \cdots, y_{1, i_{1}}, y_{1, new}, y_{1, i_{1} + l''_{1} + 1}, \cdots, y_{1, l_{1}})\\
& \times \bar{\mathcal{M}}_{l''_{1}+1}(\alpha'', \beta''; y_{1, new}; y_{1, i_{1}+1}, \cdots, y_{1, i_{1}+l''_{1}})\\
& \cup \coprod_{\substack{0 \le l'_{0} \le l_{0} \\ l'_{0} + l''_{0} = l_{0}}} \coprod_{\substack{\alpha' \sharp \alpha'' = \alpha \\ \beta'' \sharp \beta'' = \beta}} \coprod_{\substack{(x_{0, new}, y_{0, new})\\ \deg((x_{0, new}, y_{0, new})) = \deg(e_{0}) + 1}}\\
& \bar{\mathcal{N}}_{l''_{0}}(\alpha'', \beta''; y_{0, 1}, \cdots, y_{0, l''_{0}}; (x_{0, new}, y_{0, new}), e_{0})\\
& \times \bar{\mathcal{U}}_{l'_{0}, l_{1}}(\alpha', \beta'; x; y; y_{0, l''_{0}+1}, \cdots, y_{0, l_{0}}; y_{1, 1}, \cdots, y_{1, l_{1}}; (x_{0, new}, y_{0, new}), e_{1})\\
& \cup \coprod_{\substack{0 \le l'_{1} \le l_{1} \\ l'_{1}+l''_{1} = l_{1}}} \coprod_{\substack{\alpha' \sharp \alpha'' = \alpha \\ \beta'' \sharp \beta'' = \beta}} \coprod_{\substack{(x_{1, new}, y_{1, new})\\ \deg((x_{1, new}, y_{1, new})) = \deg(e_{1}) + 1}}\\
& \bar{\mathcal{U}}_{l_{0}, l'_{1}}(\alpha', \beta'; x; y; y_{0, 1}, \cdots, y_{0, l_{0}}; y_{1, 1}, \cdots, y_{1, l'_{1}}; e_{0}, (x_{1, new}, y_{1, new}))\\
& \times \bar{\mathcal{N}}_{l''_{1}}(\alpha'', \beta''; y_{1, l'_{1}+1}, \cdots, y_{1, l_{1}}; (x_{1, new}, y_{1, new}), e_{1}).
\end{split}
\end{equation}
Some notations need to be explained. Here 
\begin{equation*}
\bar{\mathcal{U}}_{l'_{0}, l_{1}}(\alpha', \beta'; x; y; y_{0, l''_{0}+1}, \cdots, y_{0, l_{0}}; y_{1, 1}, \cdots, y_{1, l_{1}}; (x_{0, new}, y_{0, new}), e_{1})
\end{equation*}
is the moduli space of quilted maps of the same kind, except that the asymptotic convergence condition at the quilted end $(z_{0}^{1, -}, z_{1}^{1, -})$ is replaced by a new generalized chord $(x_{0, new}, y_{0, new})$ for $(L_{0}, \mathcal{L}, L_{0} \circ_{H_{M}} \mathcal{L})$; similarly for 
\begin{equation*}
\bar{\mathcal{U}}_{l_{0}, l'_{1}}(\alpha', \beta'; x; y; y_{0, 1}, \cdots, y_{0, l_{0}}; y_{1, 1}, \cdots, y_{1, l'_{1}}; e_{0}, (x_{1, new}, y_{1, new})).
\end{equation*}
And 
\begin{equation*}
\bar{\mathcal{N}}_{l''_{0}}(\alpha'', \beta''; y_{0, 1}, \cdots, y_{0, l''_{0}}; (x_{0, new}, y_{0, new}), e_{0})
\end{equation*}
is the moduli space of broken decorated inhomogeneous pseudoholomorphic quilted strips connecting $(x_{0, new}, y_{0, new})$ and $e_{0}$, with punctures on the boundary of the first patch; similarly for 
\begin{equation*}
\bar{\mathcal{N}}_{l''_{1}}(\alpha'', \beta''; y_{1, l'_{1}+1}, \cdots, y_{1, l_{1}}; (x_{1, new}, y_{1, new}), e_{1}).
\end{equation*} \par
	When the virtual dimension is zero, the virtual fundamental chains of the moduli spaces $\bar{\mathcal{U}}_{l_{0}, l_{1}}(\alpha, \beta; x; y; y_{0, 1}, \cdots, y_{0, l_{0}}; y_{1, 1}, \cdots, y_{1, l_{1}}; e_{0}, e_{1})$ give rise to multilinear maps
\begin{equation}
\begin{split}
a_{l_{0}, l_{1}}: & CW^{*}(L_{0} \circ \mathcal{L}; H_{N})^{\otimes l_{0}} \otimes CW^{*}(L_{0}, L_{1}; H_{M}) \otimes CW^{*}(L_{1} \circ \mathcal{L}; H_{N})^{\otimes l_{1}}\\
& \to CW^{*}(L_{0} \circ \mathcal{L}, L_{1} \circ \mathcal{L}; H_{N})).
\end{split}
\end{equation}
By specializing $y_{0, j} = b_{0}$ and $y_{1, j} = b_{1}$, we define
\begin{equation}
\Pi_{\mathcal{L}}(x) = \sum_{l_{0}, l_{1} = 0}^{\infty} a_{l_{0}, l_{1}}(\underbrace{b_{0}, \cdots, b_{0}}_{l_{0} \text{ times }}; x; \underbrace{b_{1}, \cdots, b_{1}}_{l_{1} \text{ times }}).
\end{equation} 
This is the definition of the map $\Pi_{\mathcal{L}}$. \par
	The main observation is that the map $\Pi_{\mathcal{L}}$ is homotopic to the cochain map $\Theta_{\mathcal{L}}^{1}$. \par

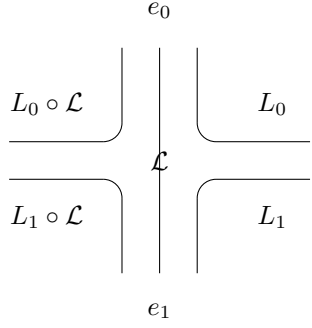
\begin{figure}
\begin{tikzpicture}
	\draw (-2, 0.25) -- (-0.75, 0.25);
	\draw (-0.75, 0.25) arc (270:360:0.25cm);
	\draw (-0.5, 0.5) -- (-0.5, 1.5);
	
	\draw (-2, -0.25) -- (-0.75, -0.25);
	\draw (-0.75, -0.25) arc (90:0:0.25cm);
	\draw (-0.5, -0.5) -- (-0.5, -1.5);
	
	\draw (0, 1.5) -- (0, -1.5);
	
	\draw (0.5, 1.5) -- (0.5, 0.5);
	\draw (0.5, 0.5) arc (180:270:0.25cm);
	\draw (0.75, 0.25) -- (2, 0.25);
	
	\draw (0.5, -1.5) -- (0.5, -0.5);
	\draw (0.5, -0.5) arc (180:90:0.25cm);
	\draw (0.75, -0.25) -- (2, -0.25);

	\draw (-1.5, 0.75) node {$L_{0} \circ \mathcal{L}$};
	\draw (-1.5, -0.75) node {$L_{1} \circ \mathcal{L}$};
	
	\draw (0, 0) node {$\mathcal{L}$};
	
	\draw (1.5, 0.75) node {$L_{0}$};
	\draw (1.5, -0.75) node {$L_{1}$};

	\draw (0, 2) node {$e_{0}$};
	\draw (0, -2) node {$e_{1}$};

\end{tikzpicture}
\centering
\caption{the quilted map defining the cochain map}\label{fig: the quilted map defining the cochain map}
\end{figure}

\begin{figure}
\begin{tikzpicture}
	
	\draw (-2.75, 3) -- (-2.75, 2);
	\draw (-2.75, 2) arc (360:270:0.25cm);
	\draw (-4, 1.75) -- (-3, 1.75);
	
	\draw (-4, 1.25) -- (-3, 1.25);
	\draw (-3, 1.25) arc (90:0:0.25cm);
	\draw (-2.75, 1) -- (-2.75, 0.5);
	\draw (-2.75, 0.5) arc (360:270:0.25cm);
	\draw (-3, 0.25) -- (-4, 0.25);
	
	\draw (-4, -0.25) -- (-3, -0.25);
	\draw (-3, -0.25) arc (90:0:0.25cm);
	\draw (-2.75, -0.5) -- (-2.75, -1);
	\draw (-2.75, -1) arc (360:270:0.25cm);
	\draw (-3, -1.25) -- (-4, -1.25);
	
	\draw (-4, -1.75) -- (-3, -1.75);
	\draw (-3, -1.75) arc (90:0:0.25cm);
	\draw (-2.75, -2) -- (-2.75, -3);
	
	\draw (-2.25, 3) -- (-2.25, -3);
	
	\draw (-1.75, 3) -- (-1.75, 0.5);
	\draw (-1.75, 0.5) arc (180:270:0.25cm);
	\draw (-1.5, 0.25) -- (0, 0.25);
	
	\draw (-1.75, -3) -- (-1.75, -0.5);
	\draw (-1.75, -0.5) arc (180:90:0.25cm);
	\draw (-1.5, -0.25) -- (0, -0.25);

	\draw (-3.5, 2.5) node {$L'(0)$};
	\draw (-3.5, 0.75) node {$L'(1)$};
	\draw (-3.5, -0.75) node {$L'(2)$};
	\draw (-3.5, -2.5) node {$L'(3)$};
	
	\draw (-2.25, 0) node {$\mathcal{L}$};
	
	\draw (-1, 1) node {$L_{0}$};
	\draw (-1, -1) node {$L_{1}$};
	
\end{tikzpicture}
\centering
\caption{the quilted map defining the linear term of the module-valued functor} \label{fig: the quilted map defining the linear term of the module-valued functor}
\end{figure}
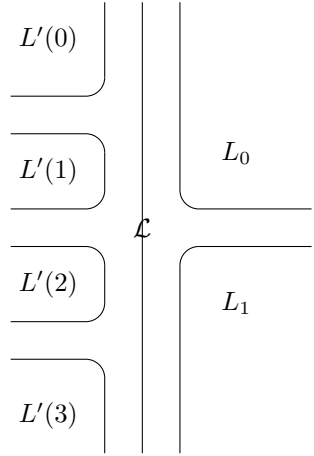

\begin{figure}
\begin{tikzpicture}

	\draw (-2.75, 3) -- (-2.75, 2);
	\draw (-2.75, 2) arc (360:270:0.25cm);
	\draw (-4, 1.75) -- (-3, 1.75);
	
	\draw (-4, 1.25) -- (-3, 1.25);
	\draw (-3, 1.25) arc (90:0:0.25cm);
	\draw (-2.75, 1) -- (-2.75, 0.5);
	\draw (-2.75, 0.5) arc (360:270:0.25cm);
	\draw (-3, 0.25) -- (-4, 0.25);
	
	\draw (-4, -0.25) -- (-3, -0.25);
	\draw (-3, -0.25) arc (90:0:0.25cm);
	\draw (-2.75, -0.5) -- (-2.75, -1);
	\draw (-2.75, -1) arc (360:270:0.25cm);
	\draw (-3, -1.25) -- (-4, -1.25);
	
	\draw (-4, -1.75) -- (-3, -1.75);
	\draw (-3, -1.75) arc (90:0:0.25cm);
	\draw (-2.75, -2) -- (-2.75, -3);
	
	\draw (-2.25, 3) -- (-2.25, 0.75);
	\draw (-2.25, 0.75) arc (180:270:0.25cm);
	\draw (-2, 0.5) -- (0, 0.5);
	
	\draw (-2.25, -3) -- (-2.25, -0.75);
	\draw (-2.25, -0.75) arc (180:90:0.25cm);
	\draw (-2, -0.5) -- (0, -0.5);
	
	\draw (-3.5, 2.5) node {$L'(0)$};
	\draw (-3.5, 0.75) node {$L'(1)$};
	\draw (-3.5, -0.75) node {$L'(2)$};
	\draw (-3.5, -2.5) node {$L'(3)$};
	
	\draw (-1, 0.75) node {$L_{0} \circ \mathcal{L}$};
	\draw (-1, -0.75) node {$L_{1} \circ \mathcal{L}$};
	
\end{tikzpicture}
\centering
\caption{inhomogeneous pseudoholomorphic disk defining the module structure on the Yoneda module} \label{fig: inhomogeneous pseudoholomorphic disk defining the module homomorphism of the Yoneda modules}
\end{figure}
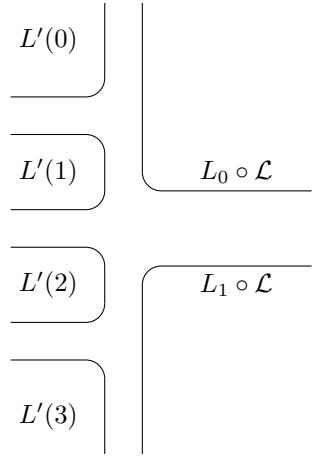

\begin{figure}
\begin{tikzpicture}
	
	\draw (-2.75, 3) -- (-2.75, 2);
	\draw (-2.75, 2) arc (360:270:0.25cm);
	\draw (-4, 1.75) -- (-3, 1.75);
	
	\draw (-4, 1.25) -- (-3, 1.25);
	\draw (-3, 1.25) arc (90:0:0.25cm);
	\draw (-2.75, 1) -- (-2.75, 0.5);
	\draw (-2.75, 0.5) arc (360:270:0.25cm);
	\draw (-3, 0.25) -- (-4, 0.25);
	
	\draw (-4, -0.25) -- (-3, -0.25);
	\draw (-3, -0.25) arc (90:0:0.25cm);
	\draw (-2.75, -0.5) -- (-2.75, -1);
	\draw (-2.75, -1) arc (360:270:0.25cm);
	\draw (-3, -1.25) -- (-4, -1.25);
	
	\draw (-4, -1.75) -- (-3, -1.75);
	\draw (-3, -1.75) arc (90:0:0.25cm);
	\draw (-2.75, -2) -- (-2.75, -3);
	
	\draw (-2.25, 3) -- (-2.25, 1);
	\draw (-2.25, 1) arc (180:270:0.25cm);
	\draw (-2, 0.75) -- (0, 0.75);
	\draw (0, 0.75) arc (270:360:0.25cm);
	\draw (0.25, 1) -- (0.25, 3);
	
	\draw (-2.25, -3) -- (-2.25, -1);
	\draw (-2.25, -1) arc (180:90:0.25cm);
	\draw (-2, -0.75) -- (0, -0.75);
	\draw (0, -0.75) arc (90:0:0.25cm);
	\draw (0.25, -1) -- (0.25, -3);
	
	\draw (0.75, 3) -- (0.75, -3);
	
	\draw (1.25, 3) -- (1.25, 0.5);
	\draw (1.25, 0.5) arc (180:270:0.25cm);
	\draw (1.5, 0.25) -- (2.5, 0.25);
	
	\draw (1.25, -3) -- (1.25, -0.5);
	\draw (1.25, -0.5) arc (180:90:0.25cm);
	\draw (1.5, -0.25) -- (2.5, -0.25);
	
	\draw (-3.5, 2.5) node {$L'(0)$};
	\draw (-3.5, 0.75) node {$L'(1)$};
	\draw (-3.5, -0.75) node {$L'(2)$};
	\draw (-3.5, -2.5) node {$L'(3)$};
	
	\draw (-1, 1.25) node {$L_{0} \circ \mathcal{L}$};
	\draw (-1, -1.25) node {$L_{1} \circ \mathcal{L}$};

	\draw (0.75, 0) node {$\mathcal{L}$};
	
	\draw (2, 1) node {$L_{0}$};
	\draw (2, -1) node {$L_{1}$};

	\draw (0.75, 3.5) node {$e_{0}$};
	\draw (0.75, -3.5) node {$e_{1}$};
	
\end{tikzpicture}
\centering
\caption{the quilted map for the composition of the cochain map with the Yoneda functor} \label{fig: the quilted map for the composition of the cochain map with the Yoneda functor}
\end{figure}
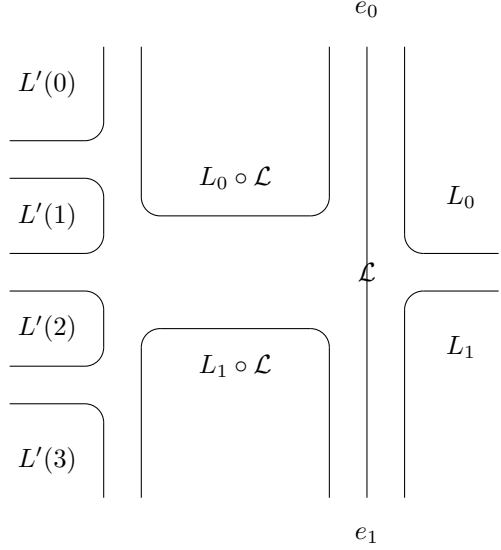

\begin{proposition}
	The map $\Pi_{\mathcal{L}}$ is a cochain map with respect to the usual Floer differential on $CW^{*}(L_{0}, L_{1}; H_{M})$ and the $(b_{0}, b_{1})$-deformed differential on $CW^{*}(L_{0} \circ \mathcal{L}, L_{1} \circ \mathcal{L}; H_{N})$. \par	
	Moreover, this map is chain homotopic to the first order map $\Theta_{\mathcal{L}}^{1}$ of the $A_{\infty}$-functor $\Theta_{\mathcal{L}}$.
\end{proposition}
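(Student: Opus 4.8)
The plan is to prove the two assertions separately. For the cochain-map property, I would analyze the codimension-one boundary strata of the one-dimensional components of the moduli spaces $\bar{\mathcal{U}}_{l_0, l_1}(\alpha, \beta; x; y; y_{0,1}, \dots, y_{0,l_0}; y_{1,1}, \dots, y_{1,l_1}; e_0, e_1)$ after specializing the auxiliary inputs to the bounding cochains, $y_{0,j} = b_0$ and $y_{1,j} = b_1$, and summing over $l_0, l_1$. Inspecting the list \eqref{boundary strata of the moduli space of quilted maps defining a different realization of the cochain map}, the first family of fiber products contributes $\Pi_{\mathcal{L}} \circ m^1_{CW^*(L_0, L_1; H_M)}$; the second family, together with the disk-bubble families along the images of $L_0 \circ \mathcal{L}$ and $L_1 \circ \mathcal{L}$, reassembles via the Maurer--Cartan equations for $b_0$ and $b_1$ into the $(b_0, b_1)$-deformed differential $n^{0,0; b_0, b_1}$ post-composed with $\Pi_{\mathcal{L}}$; and the last two families, coming from breaking of inhomogeneous pseudoholomorphic quilted strips at the two quilted ends, contribute the deformed module differentials $d^{b_0}(e_0)$ and $d^{b_1}(e_1)$ applied at those ends. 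By the defining property of the cyclic elements from Theorem \ref{unobstructedness of geometric composition} (and the corollary that $d^{b}(e_{L \circ \mathcal{L}}) = 0$), these last contributions vanish identically. Counting rigid boundary points then yields $\Pi_{\mathcal{L}} \circ m^1 = \pm\, m^{1; b_0, b_1} \circ \Pi_{\mathcal{L}}$, i.e. $\Pi_{\mathcal{L}}$ is a cochain map; since the Lagrangian immersions force us to work with Kuranishi structures, this relation is read off from compatible systems of single-valued multisections exactly as in section \ref{section: Kuranishi structure on moduli spaces of stable pearly tree maps}.

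For the comparison with $\Theta_{\mathcal{L}}^1$, recall that $\Theta_{\mathcal{L}}$ is the composition of the representable module-valued functor $\Psi_{\mathcal{L}}$, with $\Psi_{\mathcal{L}}(L) = \mathfrak{y}_{l}((L \circ \mathcal{L}, b))$, with a homotopy inverse $\lambda_l$ of the Yoneda embedding, and that by the construction in subsection \ref{section: bimodules and functors} the linear term $\Psi_{\mathcal{L}}^1(x)$ is, up to homotopy, the composition in the dg-category of modules of the geometric composition quasi-isomorphism $gc$, its homotopy inverse $op$, and the linear term $\Phi_{\mathcal{L}}^1(x)$ of the bimodule-valued functor. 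By the Yoneda lemma it then suffices to show that the induced module endomorphisms $\mathfrak{y}_l^1(\Pi_{\mathcal{L}}(x))$ and $\mathfrak{y}_l^1(\Theta_{\mathcal{L}}^1(x))$ of $\mathfrak{y}_l(L_\bullet \circ \mathcal{L}, b_\bullet)$ agree up to homotopy, or equivalently, using that $op$ is homotopy inverse to $gc$, that $op_{L_1} \circ \mathfrak{y}_l^1(\Pi_{\mathcal{L}}(x)) \circ gc_{L_0}$ is homotopic to $\Phi_{\mathcal{L}}^1(x)$ as a module homomorphism $\Phi_{\mathcal{L}}(L_0) \to \Phi_{\mathcal{L}}(L_1)$.

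To see this geometrically, I would realize each of $gc$, $op$, the Yoneda product $\mathfrak{y}_l^1$, and $\Phi_{\mathcal{L}}^1$ through their defining quilted moduli (Figures \ref{fig: the quilted map defining the cochain map}, \ref{fig: the quilted map defining the linear term of the module-valued functor}, \ref{fig: inhomogeneous pseudoholomorphic disk defining the module homomorphism of the Yoneda modules}, \ref{fig: the quilted map for the composition of the cochain map with the Yoneda functor}), and introduce a one-parameter family of quilted surfaces with a neck-length parameter $R \in [0, +\infty]$ interpolating between the glued configuration of Figure \ref{fig: the quilted map for the composition of the cochain map with the Yoneda functor} and the degenerate configuration representing $op_{L_1} \circ \Phi_{\mathcal{L}}^1(x) \circ gc_{L_0}$. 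At one end of the parameter, after gluing all the pieces and using that $op$ applied to the unit of $\mathfrak{y}_l(L_0 \circ \mathcal{L}, b_0)$ equals the cyclic element $e_{L_0 \circ \mathcal{L}}$, this configuration collapses precisely to the quilted map of Figure \ref{fig: the quilted map defining the cochain map} computing $\Pi_{\mathcal{L}}(x)$ probed into the Yoneda module; at the other end it is the abstract composition above. Counting rigid elements in the resulting one-dimensional parameterized moduli (with a coherent choice of multisections on its Kuranishi structures) produces multilinear maps whose failure to be a module homomorphism is exactly $m^{1; b} \circ T + T \circ m^1$, so that $\Pi_{\mathcal{L}} - \Theta_{\mathcal{L}}^1 = m^{1; b} T + T m^1$ and the two are chain homotopic, as claimed.

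The hard part will be the last step: faithfully translating the homological-algebra definition of $\Theta_{\mathcal{L}}$ — which passes through the abstract homotopy inverses $\lambda_l$ and $op$ — into a genuine geometric quilted-moduli picture, and then verifying that the neck-stretching degenerations of the interpolating family reproduce precisely the two sides without leftover error terms. One must track the bounding-cochain insertions $b_0, b_1$ on $L_0 \circ \mathcal{L}$, $L_1 \circ \mathcal{L}$ (as well as the $b'_j$ on immersed test objects, should one wish to probe with those) consistently through every gluing, and, since classical transversality is unavailable, arrange the Kuranishi perturbations compatibly on the parameterized moduli so that the boundary identification is exact. As the text itself flags, a fully rigorous realization of this geometric construction is expected to require further work; the argument here should be carried out at the level of boundary-strata analysis matching the rest of the paper, with the Kuranishi and multisection details indicated rather than spelled out.
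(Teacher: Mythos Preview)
Your approach matches the paper's: boundary-strata analysis of the $\bar{\mathcal{U}}$-moduli for the cochain property, then Yoneda plus a geometric gluing/degeneration argument on quilts for the homotopy to $\Theta_{\mathcal{L}}^1$. One correction to your first paragraph: the disk-bubble families (third and fourth in \eqref{boundary strata of the moduli space of quilted maps defining a different realization of the cochain map}) do not ``reassemble'' with the second family into the deformed differential via Maurer--Cartan. Rather, after inserting $b_i$ and summing they contribute $\sum_{k} m^{k}(b_i,\ldots,b_i)=0$ outright; the full $m^{1;b_0,b_1}\circ\Pi_{\mathcal{L}}$ comes from strip-breaking at $z_1^0$, where the broken Floer trajectory in $N$ carries some of the $b$-marked punctures (the paper's notation $\bar{\mathcal{M}}(\alpha'',\beta'';y,y_1)$ suppresses these). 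This is a bookkeeping slip, not a structural one.

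For the second assertion the paper proceeds a bit more directly than you do: rather than building an explicit one-parameter interpolating family and extracting a homotopy $T$, it observes that the glued quilts computing $\mathfrak{y}_l\circ\Pi_{\mathcal{L}}$ (Figure~\ref{fig: the quilted map for the composition of the cochain map with the Yoneda functor}) and those obtained by gluing two $gc$-type quilts onto the $\Phi_{\mathcal{L}}^1$-quilt are already of identical combinatorial type. Hence $\mathfrak{y}_l\circ\Pi_{\mathcal{L}}$ is chain homotopic to $\Phi_{\mathcal{L}}^1$ (appropriately conjugated by $gc$), and since $\Theta_{\mathcal{L}}$ represents $\Phi_{\mathcal{L}}$, the conclusion follows. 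Your neck-stretching family is a correct way to make the implicit ``standard gluing'' step explicit; the paper just skips writing it down.
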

\begin{proof}
	The first statement follows from the facts that the $b_{0}$-deformed and $b_{1}$-deformed $A_{\infty}$-structures have vanishing zeroth order terms, as well as the fact that the cyclic elements $e_{0}, e_{1}$ are closed under the $b_{0}$-deformed and respectively the $b_{1}$-deformed quilted Floer differentials. The underlying geometric idea is as described below. In order to get the correct "count" of elements, we shall impose the restricted asymptotic convergence conditions $y_{0, j} = b_{0}$ at $z_{1}^{0, j}$ and $y_{1, j} = b_{1}$ at $z_{1}^{1, j}$. The meaning of this is as follows. If $b_{0}$ (resp. $b_{1}$) is a formal linear combination of generators of $CW^{*}(L_{0} \circ \mathcal{L})$ (resp. $CW^{*}(L_{1} \circ \mathcal{L})$), then we consider these moduli spaces whose asymptotic convergence conditions are given by all such generators, and take the corresponding formal linear combination of the virtual fundamental chains with the same coefficients as those for $b_{0}$ and $b_{1}$. The numerical effect is that when inserting the bounding cochains $b_{0}$ and $b_{1}$ for the corresponding asymptotic convergence conditions, the total contributions from any kind of the following moduli spaces $\bar{\mathcal{M}}_{l''_{0}+1}(\alpha'', \beta''; y_{0, new}; y_{0, i_{0} + 1}, \cdots, y_{0, i_{0} + l''_{0}})$, or $\bar{\mathcal{M}}_{l''_{1}+1}(\alpha'', \beta''; y_{1, new}; y_{1, i_{1}+1}, \cdots, y_{1, i_{1}+l''_{1}})$, or $\bar{\mathcal{N}}_{l''_{0}}(\alpha'', \beta''; y_{0, 1}, \cdots, y_{0, l''_{0}}; (x_{0, new}, y_{0, new}), e_{0})$, or $\bar{\mathcal{N}}_{l''_{1}}(\alpha'', \beta''; y_{1, l'_{1}+1}, \cdots, y_{1, l_{1}}; (x_{1, new}, y_{1, new}), e_{1})$ are all zero. In other words, no quilted strip breaking can occur numerically. Now, by looking at \eqref{boundary strata of the moduli space of quilted maps defining a different realization of the cochain map}, we note that the only non-trivial contributions are from strip breaking at the strip-like ends near $z_{0}^{1}$ or $z_{1}^{0}$, and therefore conclude that $\Pi_{\mathcal{L}}$ is a cochain map. \par
	The proof of the second statement is based upon the proof of representability of the module-valued functor $\Phi_{\mathcal{L}}$. We shall compose this map $\Pi_{\mathcal{L}}$ with the Yoneda functor and compare the resulting map to the linear term of module-valued functor $\Phi_{\mathcal{L}}$. This will be done by analyzing how the relevant inhomogeneous pseudoholomorphic quilted maps can be related. Recall that the module-valued functor $\Phi_{\mathcal{L}}$ defines for each object $L$ of $\mathcal{W}(M)$ an $A_{\infty}$-module $\Phi_{\mathcal{L}}(L)$ over $\mathcal{W}_{im}(N)$, with the module structure maps defined by "counting" elements moduli spaces of quilted inhomogeneous pseudoholomorphic maps $(u^{0}, v^{0})$, whose first component $u^{0}$ in $M$ is a strip with one boundary component mapped to $L$, and second component $v^{0}$ is a disk with several punctures, and the boundary components are mapped to objects of $\mathcal{W}_{im}(N)$, which are testing objects for the $A_{\infty}$-module $\Phi_{\mathcal{L}}$. Floer cochains in $CW^{*}(L_{0}, L_{1}; H_{M})$ give rise to pre-module homomorphisms from $\Phi_{\mathcal{L}}(L_{0})$ to $\Phi_{\mathcal{L}}(L_{1})$, which are defined by using moduli spaces of quilted inhomogeneous pseudoholomophic maps $(u^{1}, v^{1})$, as shown in Figure \ref{fig: the quilted map defining the linear term of the module-valued functor}. These quilted maps are similar to the previous ones $(u^{0}, v^{0})$, but satisfying somewhat different conditions:
\begin{enumerate}[label=(\roman*)]

\item One boundary component of the domain of $u^{1}$ is mapped to $L_{0}$ and the other to $L_{1}$, with the prescribed Floer cochain in $CW^{*}(L_{0}, L_{1}; H_{M})$ as the asymptotic convergence condition at the puncture in between the two boundary components.

\item The boundary components of the domain of $v^{1}$ are mapped to testing objects of $\mathcal{W}_{im}(N)$. For instance, when defining the $d$-th order map
\begin{equation}
\begin{split}
(\Phi^{1}_{\mathcal{L}})^{d}: & CW^{*}(L_{0}, L_{1}; H_{M})\\
& \to \hom(CW^{*}(L_{0}, \mathcal{L}, (L'(0), b(0))) \otimes CW^{*}((L'(0), b(0)), (L'(1), b(1))) \otimes \cdots \\
& \otimes CW^{*}((L'(d-2), b(d-2)), (L'(d-1), b(d-1))),\\
& CW^{*}(L_{1}, \mathcal{L}, (L'(d-1), b(d-1))),
\end{split}
\end{equation}
the boundary conditions for $v^{1}$ are given by $(L'(0), b(0)), \cdots, (L'(d-1), b(d-1))$. With bounding cochains included, these are understood as follows. There are additional several punctures on these boundary components, whose asymptotic convergence conditions are given by the bounding cochains $b(j)$.
Here $(L'(j), b(j))$ are general testing objects, which do not have to be the geometric compositions of $L_{0}$ or $L_{1}$ with $\mathcal{L}$.

\item The asymptotic convergence conditions over the two quilted ends are the input and output for the modules $\Phi_{\mathcal{L}}(L_{0})$ and $\Phi_{\mathcal{L}}(L_{1})$.

\end{enumerate}	\par

	On the other hand, recall that the proof of the representability of the module-valued functor $\Phi_{\mathcal{L}}$ uses the moduli spaces $\bar{\mathcal{C}}(x'; (x, y); e)$ \eqref{moduli space of quilted maps defining the first order term of the geometric composition map} and $\bar{\mathcal{C}}_{d}(x'; x'_{1}, \cdots, x'_{d-1}; (x, y); e)$ \eqref{moduli space of quilted maps defining the higher order terms of the geometric composition map}, whose count yields the homotopy equivalence \eqref{geometric composition map}. Such a quilted map in $\bar{\mathcal{C}}_{d}(x'; x'_{1}, \cdots, x'_{d-1}; (x, y); e)$ converges to the cyclic element $e$ of the corresponding triple $(L, \mathcal{L}, L \circ_{H_{M}} \mathcal{L})$ over one quilted end. Algebraically, the virtual fundamental chains of these moduli spaces give rise to a homotopy equivalence from the module $\Phi_{\mathcal{L}}(L)$ to the Yoneda module $\mathfrak{y}_{l}((L \circ \mathcal{L}, b))$ for every $L$.
Now glue the other quilted end of two copies of such quilted maps $(u_{0}, v_{0})$ and $(u_{1}, v_{1})$ in the moduli spaces $\bar{\mathcal{C}}(x'; (x, y); e)$ (one for the triple $(L_{0}, \mathcal{L}, L_{0} \circ \mathcal{L})$ and the other for $(L_{1}, \mathcal{L}, L_{1} \circ_{H_{M}} \mathcal{L})$) to one quilted inhomogeneous pseudoholomorphic map $(u^{1}, v^{1})$, along the quilted ends over which $(u_{i}, v_{i})$ have a common asymptotic convergence condition with $(u^{1}, v^{1})$. The new quilted map $(\tilde{u}, \tilde{v})$ has two quilted ends, over which the asymptotic convergence conditions are given by the cyclic elements $e_{i}$ for the triple $(L_{i}, \mathcal{L}, L_{i} \circ \mathcal{L})$. Also, there are two new punctures on the domain of $\tilde{v}$, and correspondingly two new boundary components which are mapped to the image of the geometric composition $L_{0} \circ \mathcal{L}$ and $L_{1} \circ \mathcal{L}$ respectively. Over the strip-like ends of these two new punctures, the map $\tilde{v}$ asymptotically converges to some generator of $CW^{*}((L_{0} \circ \mathcal{L}, b_{0}), (L'(0), b(0)))$ and one of $CW^{*}((L_{1} \circ \mathcal{L}, b_{1}), (L'(d-1), b(d-1)))$., respectively. \par
	Now let us look at the composition of $\Pi_{\mathcal{L}}$ with the linear term of Yoneda functor
\begin{equation}
\begin{split}
\mathfrak{y}_{l} \circ \Pi_{\mathcal{L}}: & CW^{*}(L_{0}, L_{1}; H_{M})\\
& \to CW^{*}((L_{0} \circ \mathcal{L}, b_{0}), (L_{1} \circ \mathcal{L}, b_{1}); H_{N})\\
& \to \hom(\mathfrak{y}_{l}((L_{0} \circ \mathcal{L}, b_{0})), \mathfrak{y}_{l}((L_{1} \circ \mathcal{L}, b_{1}))),
\end{split}
\end{equation}
which concretely consists of multilinear maps
\begin{equation}
\begin{split}
\mathfrak{y}_{l} \circ \Pi_{\mathcal{L}}: & CW^{*}(L_{0}, L_{1}; H_{M})\\
& \to CW^{*}((L_{0} \circ \mathcal{L}, b_{0}), (L_{1} \circ \mathcal{L}, b_{1}); H_{N})\\
& \to \hom(CW^{*}(L_{0}, \mathcal{L}, (L'(0), b(0))) \otimes CW^{*}((L'(0), b(0)), (L'(1), b(1))) \otimes \cdots\\
& \otimes CW^{*}((L'(d-2), b(d-2)), (L'(d-1), b(d-1))),\\
& CW^{*}(L_{1}, \mathcal{L}, (L'(d-1), b(d-1))),
\end{split}
\end{equation}
Such maps are defined using moduli spaces of inhomogeneous pseudoholomorphic quilted maps which are obtained from gluing those quilted maps $(u, v)$ in the moduli spaces
\begin{equation*}
\bar{\mathcal{U}}_{l_{0}, l_{1}}(\alpha, \beta; x; y; y_{0, 1}, \cdots, y_{0, l_{0}}; y_{1, 1}, \cdots, y_{1, l_{1}}; e_{0}, e_{1}),
\end{equation*}
with those inhomogeneous pseudoholomorphic maps which are used to define pre-module homomorphisms between the Yoneda modules. The inhomogeneous pseudoholomorphic maps of the latter kind are just ordinary $A_{\infty}$-disks used to define the $A_{\infty}$-structure on $\mathcal{W}_{im}(N)$, because the module structure maps and the pre-module homomoprhisms for Yoneda modules are precisely given by the original $A_{\infty}$-structure maps for the corresponding objects in $\mathcal{W}_{im}(N)$, as shown in Figure \ref{fig: inhomogeneous pseudoholomorphic disk defining the module homomorphism of the Yoneda modules}. After gluing, we get a quilted map as shown in Figure \ref{fig: the quilted map for the composition of the cochain map with the Yoneda functor}. \par
	It is then an immediate observation that such a quilted map is of the same type as the previously constructed one $(\tilde{u}, \tilde{v})$. Recall that the quilted maps $(u_{0}, v_{0})$ and $(u_{1}, v_{1})$ are shown in Figure \ref{fig: the quilted map defining the geometric composition map}.
Gluing the two quilted maps $(u_{0}, v_{0})$ and $(u_{1}, v_{1})$ to $(u^{1}, v^{1})$ would replace the two quilted ends of $(u^{1}, v^{1})$ by two new quilted ends, over which the new map $(\tilde{u}, \tilde{v})$ asymptotically converges to the cyclic elements $e_{0}$ and $e_{1}$ respectively, and moreover create two new punctures on the component $\tilde{v}$, over which $\tilde{v}$ asymptotically converges to generators of $CW^{*}((L_{0} \circ \mathcal{L}, b_{0}), (L'(0), b(0)))$ and $CW^{*}((L_{1} \circ \mathcal{L}, b_{1}), (L'(d-1), b(d-1)))$. Comparing this to Figure \ref{fig: the quilted map for the composition of the cochain map with the Yoneda functor}, we see that they are of the same type. \par
	Such an argument can be generalized to the compactified moduli spaces in a straightforward way, by repeating the same process for broken quilted maps. It is clear that this process respects the structure of the boundary strata as described in \eqref{boundary strata of the moduli space of quilted maps defining a different realization of the cochain map}. Translating the result algebraically, we conclude that the map $\mathfrak{y}_{l} \circ \Pi_{\mathcal{L}}$ is chain homotopic to $\Phi_{\mathcal{L}}^{1}$.
Since $\Theta_{\mathcal{L}}$ represents the module-valued functor $\Phi_{\mathcal{L}}$, we conclude that the map $\Pi_{\mathcal{L}}$ is chain homotopic to $\Theta_{\mathcal{L}}^{1}$. Therefore the proof of the second statement is complete. \par
\end{proof}

\subsection{A K\"{u}nneth formula for the wrapped Fukaya category} \label{section: Kunneth formula}
	As in classical Floer cohomology theory, it is natural to expect that the wrapped Fukaya category of the product manifold $\mathcal{W}(M \times N)$ can be expressed as a tensor product of $\mathcal{W}(M)$ and $\mathcal{W}(N)$. This is some kind of K\"{u}nneth formula in wrapped Floer theory. As an application of the construction of functors discussed before, in particular the bimodule-valued functors, we can phrase this in a precise way, where we take the statement in \cite{Ganatra-Pardon-Shende}. \par
	There are some issues regarding the statement of the K\"{u}nneth formula. First, $\mathcal{W}(M \times N)$ generally has more objects than product Lagrangian submanifolds $L \times L'$, so in general we cannot expect this to be equivalent to the tensor product. However, passing to the split-closure gives some hope of establishing this kind of equivalence, provided that $\mathcal{W}(M \times N)$ is split-generated by product Lagrangian submanifolds. \par
	Second, the definition of $A_{\infty}$-tensor product of $A_{\infty}$-categories is delicate. Rather than giving a definition by certain "universal property", we shall take a particular model of the $A_{\infty}$-tensor product $\mathcal{W}(M) \otimes \mathcal{W}(N)$ so that the construction of bimodule-valued functors associated to Lagrangian correspondences can be untilized in the proof. For the definition of the $A_{\infty}$-tensor product, we follow the construction of \cite{Saneblidze-Umble}, in which a diagonal for the Stasheff associahedra is constructed. In particular, in this model of $A_{\infty}$-tensor product $\mathcal{W}(M) \otimes \mathcal{W}(N)$, the objects are pairs $(L, L')$ of Lagrangian submanifolds of $M$ and $N$ respectively (or formal products $L \times L'$, the underlying morphism spaces are the usual tensor products of wrapped Floer complexes
\begin{equation}
\hom((L_{0}, L'_{0}), (L_{1}, L'_{1})) = CW^{*}(L_{0}, L_{1}) \otimes CW^{*}(L'_{0}, L'_{1}),
\end{equation}
and the first order structure map is the tensor product wrapped Floer differential:
\begin{equation}
m^{1} = m^{1}_{L_{0}, L_{1}} \otimes id + id \otimes m^{1}_{L'_{0}, L'_{1}}.
\end{equation}
However, we remark that all the different constructions are quasi-isomorphic (see \cite{Markl-Shnider}). \par

\begin{proposition} \label{Kunneth formula under non-degeneracy assumption}
	Assume both $M$ and $N$ are non-degenerate, i.e. their wrapped Fukaya categories satisfy Abouzaid's generation criterion for finite collections of Lagrangian submanifolds. Then there is a canonical quasi-equivalence:
\begin{equation} \label{inverse Kunneth functor}
\mathcal{W}(M \times N)^{perf} \to (\mathcal{W}(M) \otimes \mathcal{W}(N))^{perf}.
\end{equation}
\end{proposition}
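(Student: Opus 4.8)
The plan is to build the functor \eqref{inverse Kunneth functor} directly out of the bimodule-valued functor $\Phi$ and then to promote it to a quasi-equivalence using the non-degeneracy hypothesis. First I would observe that $M \times N = (M^{-})^{-} \times N$, so that every admissible Lagrangian submanifold of $M \times N$ is \emph{tautologically} an admissible Lagrangian correspondence from $M^{-}$ to $N$. Applying Proposition \ref{prop: construction of bimodule-valued functor} with $M$ replaced by $M^{-}$ yields a canonical $A_{\infty}$-functor $\Phi \colon \mathcal{W}(M \times N) \to (\mathcal{W}(M^{-}), \mathcal{W}(N))^{bimod}$. Using the standard identification $\mathcal{W}(M^{-}) \simeq \mathcal{W}(M)^{op}$ (which respects the symplectic Calabi--Yau condition, gradings and spin structures), the definition of $A_{\infty}$-bimodules from subsection \ref{section: bimodules and functors}, and the symmetric monoidal structure of the tensor-product model of subsection \ref{section: Kunneth formula}, a $(\mathcal{W}(M^{-}), \mathcal{W}(N))$-bimodule is, after the evident bookkeeping of variances, the same data as a left $A_{\infty}$-module over $\mathcal{W}(M) \otimes \mathcal{W}(N)$. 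Composing gives a canonical $A_{\infty}$-functor $\mathcal{K} \colon \mathcal{W}(M \times N) \to (\mathcal{W}(M) \otimes \mathcal{W}(N))^{l-mod}$, and property (iii) of Proposition \ref{prop: construction of bimodule-valued functor} identifies $\mathcal{K}(L \times L')$, up to homotopy, with the left Yoneda module $\mathfrak{y}_{l}((L, L'))$ of the object $(L, L') \in \mathcal{W}(M) \otimes \mathcal{W}(N)$.

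Second, I would record the morphism-level K\"{u}nneth statement in the split model. For product Lagrangians the $H_{M, N}$-chords split as pairs, and with the product almost complex structure $CW^{*}(L_{0} \times L'_{0}, L_{1} \times L'_{1}; H_{M, N})$ is \emph{literally} the chain complex $CW^{*}(L_{0}, L_{1}; H_{M}) \otimes CW^{*}(L'_{0}, L'_{1}; H_{N})$; since all these are free $\mathbb{Z}$-modules there is no Tor correction. A Wehrheim--Woodward type transversality argument of the kind already used in subsection \ref{section: well-definedness of wrapped Floer theory in the product} shows that the moduli spaces of product disks compute the tensor-product $A_{\infty}$-structure in the chosen model of $\mathcal{W}(M) \otimes \mathcal{W}(N)$, producing a cohomologically full and faithful $A_{\infty}$-functor $\kappa \colon \mathcal{W}(M) \otimes \mathcal{W}(N) \to \mathcal{W}^{s}(M \times N)$, $(L, L') \mapsto L \times L'$. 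Postcomposing with the action-restriction quasi-equivalence $R$ of Theorem \ref{two models of wrapped Fukaya categories of product manifolds are equivalent} yields a cohomologically full and faithful functor, still denoted $\kappa$, into $\mathcal{W}(M \times N)$. By construction and by property (iii), $\mathcal{K} \circ \kappa$ is homotopic to the Yoneda embedding of $\mathcal{W}(M) \otimes \mathcal{W}(N)$; in particular $\mathcal{K}$ carries every product Lagrangian to a perfect (indeed representable) module, and hence, being exact, carries every twisted complex of product Lagrangians to a perfect module.

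Third, I would invoke non-degeneracy. If $\mathcal{W}(M)$ is split-generated by a finite collection $\{L_{i}\}$ and $\mathcal{W}(N)$ by $\{L'_{j}\}$, then the finite collection $\{L_{i} \times L'_{j}\}$ split-generates $\mathcal{W}(M \times N)$. Granting this, $\kappa$ induces an equivalence of idempotent-complete pre-triangulated hulls $(\mathcal{W}(M) \otimes \mathcal{W}(N))^{perf} \xrightarrow{\sim} \mathcal{W}(M \times N)^{perf}$: it is fully faithful because $\kappa$ is fully faithful and its image generates, and essentially surjective by the split-generation just asserted. Since the target $(\mathcal{W}(M) \otimes \mathcal{W}(N))^{l-mod}$ is a dg-category (hence idempotent-complete and pre-triangulated), $\mathcal{K}$ extends over $\mathcal{W}(M \times N)^{perf}$ with image in $(\mathcal{W}(M) \otimes \mathcal{W}(N))^{perf}$; as it sends the generators $L_{i} \times L'_{j}$ to the Yoneda modules of the generators $(L_{i}, L'_{j})$ and agrees there with $\kappa^{-1}$ followed by Yoneda, it is a homotopy inverse to $\kappa$ on perfect categories. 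This is precisely the quasi-equivalence \eqref{inverse Kunneth functor}, and its canonicity is inherited from that of $\Phi$.

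The main obstacle is the split-generation claim in the third step: that products of split-generators of $\mathcal{W}(M)$ and $\mathcal{W}(N)$ split-generate $\mathcal{W}(M \times N)$. The natural route is Abouzaid's generation criterion applied to $M \times N$, which requires the open-closed map $HH_{*}\bigl(\mathcal{W}(\{L_{i} \times L'_{j}\})\bigr) \to SH^{*}(M \times N)$ to hit the unit; one would deduce this from a K\"{u}nneth isomorphism $SH^{*}(M \times N) \cong SH^{*}(M) \otimes SH^{*}(N)$ (again a tensor product of free $\mathbb{Z}$-modules, so no Tor correction) together with its compatibility with open-closed maps and Hochschild homology under tensor products. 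Making this compatibility precise over $\mathbb{Z}$ and in the split model of Floer data is the technical heart of the argument; the rest is formal homological algebra or a direct consequence of the constructions of the preceding sections.
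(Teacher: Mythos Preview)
Your proposal is correct and follows essentially the same route as the paper's outline: both use the bimodule-valued functor $\Phi$ to embed product Lagrangians into $(\mathcal{W}(M^{-}), \mathcal{W}(N))^{bimod}$, compare this with the Yoneda-type embedding of $\mathcal{W}(M) \otimes \mathcal{W}(N)$ into the same dg-category, verify the cohomological K\"{u}nneth isomorphism directly on product Lagrangians, and then invoke split-generation by products to pass to $(\cdot)^{perf}$. The paper simply asserts the split-generation of $\mathcal{W}(M \times N)$ by products $L_{i} \times L'_{j}$ as a consequence of non-degeneracy, whereas you correctly flag this as the substantive step and sketch the expected argument via Abouzaid's criterion, K\"{u}nneth for $SH^{*}$, and compatibility of open--closed maps under products.

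One minor point of caution: your identification of $(\mathcal{W}(M^{-}), \mathcal{W}(N))$-bimodules with left modules over $\mathcal{W}(M) \otimes \mathcal{W}(N)$ is delicate for $A_{\infty}$-categories (it depends on the chosen model for the tensor product and is only a quasi-equivalence, not a strict identification). The paper avoids this by working entirely inside the bimodule category and inverting the Yoneda-type functor \eqref{a Yoneda-type bimodule-valued functor} on its image, rather than passing to modules over the tensor product. Your construction of the forward functor $\kappa$ is essentially the paper's Proposition \ref{Kunneth formula in general}, so you have effectively merged the two propositions into a single argument; this is fine, but the paper's organization keeps the general fully faithful embedding separate from the non-degeneracy-dependent equivalence.
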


	The outline of the proof goes as follows. First, non-degeneracy implies that there are finite collections of Lagrangian submanifolds $L_{1}, \cdots, L_{k}$ of $M$ and $L'_{1}, \cdots, L'_{l}$ of $N$ which split-generate their wrapped Fukaya categories, and moreover, the products $L_{i} \times L'_{j}$ split-generate $\mathcal{W}(M \times N)$. Thus it will be enough to consider the full $A_{\infty}$-subcategory $\mathcal{P}$ of $\mathcal{W}(M \times N)$ whose objects are products $L_{i} \times L'_{j}$. 
Second, the framework of Lagrangian correspondence gives us a bimodule-valued functor
\begin{equation}\label{bimodule-valued functor on the subcategory of product Lagrangians}
\mathcal{P} \to (\mathcal{W}(M^{-}), \mathcal{W}(N))^{bimod},
\end{equation}
which is defined at the beginning of this section. 
Third, there is a canonical algebraically defined Yoneda-type $A_{\infty}$-functor
\begin{equation}\label{a Yoneda-type bimodule-valued functor}
\mathcal{W}(M) \otimes \mathcal{W}(N) \to (\mathcal{W}(M^{-}), \mathcal{W}(N))^{bimod},
\end{equation}
to the dg-category of $A_{\infty}$-bimodules over $(\mathcal{W}(M^{-}), \mathcal{W}(N))$. An appropriate version of Yoneda lemma says that this is cohomologically full and faithful, thus is a quasi-isomorphism onto the image. 
Fourth, note that the image of $\mathcal{P}$ under the $A_{\infty}$-functor \eqref{bimodule-valued functor on the subcategory of product Lagrangians} lands in the image of $\mathcal{W}(M) \otimes \mathcal{W}(N)$ under the $A_{\infty}$-functor \eqref{a Yoneda-type bimodule-valued functor} in the dg-category of bimodules. Inverting the functor \eqref{a Yoneda-type bimodule-valued functor} on the image gives us an $A_{\infty}$-functor
\begin{equation}
\mathcal{P} \to \mathcal{W}(M) \otimes \mathcal{W}(N).
\end{equation}
Finally, using a direct argument by analyzing the moduli space of inhomogeneous pseudoholomorphic quilted strips, we can easily prove the K\"{u}nneth formula on the level of cohomology.
This isomorphism on cohomology can be rephrased as the statement that the above $A_{\infty}$-functor is a quasi-isomorphism, which allows us to compare the images of $\mathcal{P}$ and $\mathcal{W}(M) \otimes \mathcal{W}(N)$ in the dg-category of bimodules $(\mathcal{W}(M^{-}), \mathcal{W}(N))^{bimod}$, and show that they are quasi-isomorphic.
Passing to the split-closure, we get a quasi-equivalence:
\begin{equation}
\mathcal{W}(M \times N)^{perf} \to (\mathcal{W}(M) \otimes \mathcal{W}(N))^{perf}.
\end{equation} \par
	While this is an outline, a complete detailed proof only requires careful writing out the formulas for the algebraic structures involving $A_{\infty}$-tensor products that we use here. The reader is referred to \cite{Saneblidze-Umble} and \cite{Markl-Shnider} for detailed account of $A_{\infty}$-tensor products and related algebraic results. \par

\begin{remark}
	In fact, a chain-level $A_{\infty}$-quasi-equivalence between (the split-closure of) the split wrapped Fukaya category $\mathcal{W}_{s}(M \times N)$ and (the split-closure of) $A_{\infty}$-tensor product of (suitable dg-replacements of) $\mathcal{W}(M)$ and $\mathcal{W}(N)$ was already established in \cite{Ganatra}. That is, Proposition \ref{Kunneth formula under non-degeneracy assumption} was proved there for the split wrapped Fukaya category $\mathcal{W}^{s}(M \times N)$ of the product manifold, with objects being products of exact cylindrical Lagrangian submanifolds of individual factors.
\end{remark}

	Without assuming non-degeneracy of $M$ or $N$, there is a general version of K\"{u}nneth formula for the wrapped Fukaya category. \par

\begin{proposition} \label{Kunneth formula in general}
	There is a natural cohomologically fully faithful $A_{\infty}$-bifunctor (or say $A_{\infty}$-bilinear functor)
\begin{equation} \label{Kunneth bifunctor}
\mathfrak{Kun}: \mathcal{W}(M) \times \mathcal{W}(N) \to \mathcal{W}(M \times N).
\end{equation}
This determines a unique cohomologically fully faithful $A_{\infty}$-functor
\begin{equation} \label{Kunneth functor}
\mathfrak{Kun}': \mathcal{W}(M) \otimes \mathcal{W}(N) \to \mathcal{W}(M \times N),
\end{equation}
up to homotopy.
\end{proposition}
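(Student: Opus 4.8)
The plan is to build the $A_{\infty}$-bifunctor \eqref{Kunneth bifunctor} directly from moduli spaces of inhomogeneous pseudoholomorphic quilted strips, in exactly the same spirit as the module-valued functors constructed earlier in this section, and then to obtain \eqref{Kunneth functor} by the universal property of the $A_{\infty}$-tensor product. First I would fix cofinal/quadratic Hamiltonians $H_M$, $H_N$ on $M$, $N$ and take the split Hamiltonian $H_{M,N}$ and product almost complex structure $J_{M,N}$ on $M\times N$; by Theorem \ref{two models of wrapped Fukaya categories of product manifolds are equivalent} it suffices to land in the split model $\mathcal{W}^{s}(M\times N)$, since that is canonically quasi-equivalent to $\mathcal{W}(M\times N)$. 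On objects, $\mathfrak{Kun}$ sends a pair $(L,L')$ to the product Lagrangian $L\times L'$, which is admissible in the sense of Definition \ref{definition of admissible Lagrangian submanifolds in the product}. For the multilinear terms, I would consider moduli spaces of pairs $(u,u')$ where $u\colon S\to M$ and $u'\colon S\to N$ solve the inhomogeneous Cauchy--Riemann equations for $(H_M,J_M)$ and $(H_N,J_N)$ over the \emph{same} punctured disk $S\in\mathcal{M}_{k+1}$, with boundary on the $L_i$ and $L'_i$ respectively and asymptotics on tensor-product generators $x_i\otimes y_i$; these are in natural bijection with product-almost-complex-structure solutions in $M\times N$, and one uses the transversality-via-domain-dependent-perturbations discussion recalled in subsection \ref{section: well-definedness of wrapped Floer theory in the product}. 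The signed count of rigid elements defines $\mathfrak{Kun}^{k}$, and the standard codimension-one boundary analysis (which splits into a boundary bubble in the $M$-factor, a boundary bubble in the $N$-factor, or a breaking at an end) yields the $A_{\infty}$-bifunctor equations, where the bilinear Leibniz structure on the source is precisely $m^{1}_{M}\otimes\mathrm{id}\pm\mathrm{id}\otimes m^{1}_{N}$ together with the higher products appropriately interleaved.

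The second main step is the passage from the bifunctor \eqref{Kunneth bifunctor} to the honest $A_{\infty}$-functor \eqref{Kunneth functor} on the tensor product. Here I would invoke the chosen model of $\mathcal{W}(M)\otimes\mathcal{W}(N)$ built from the Saneblidze--Umble diagonal on the associahedra, exactly as in subsection \ref{section: Kunneth formula}: an $A_{\infty}$-bifunctor out of $\mathcal{W}(M)\times\mathcal{W}(N)$ is the same datum as an $A_{\infty}$-functor out of $\mathcal{W}(M)\otimes\mathcal{W}(N)$ once one specifies how the cellular chains of $\bar{\mathcal{M}}_{k+1}$ pull back along the diagonal approximation, and the quilted moduli spaces above are designed so that their combinatorics match. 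Uniqueness up to homotopy follows because any two such factorizations differ by a choice interpolated through the contractible space of diagonal approximations (equivalently, the spaces of admissible Hamiltonians and almost complex structures are contractible, so a one-parameter family of the quilted Floer data produces a homotopy of $A_{\infty}$-functors by the usual parametrized-moduli-space argument).

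The third step is cohomological full faithfulness. On cohomology, $H(\mathrm{hom}_{\mathcal{W}(M)\otimes\mathcal{W}(N)}((L_0,L'_0),(L_1,L'_1)))=H(CW^{*}(L_0,L_1)\otimes CW^{*}(L'_0,L'_1))$, and since we work over a field $k$ the algebraic Künneth theorem identifies this with $HW^{*}(L_0,L_1)\otimes HW^{*}(L'_0,L'_1)$. On the target side, the first-order map $\mathfrak{Kun}^{1}$ is precisely the tensor product of chain complexes followed by the identification of split-model quilted generators with product generators, so it is an isomorphism of chain complexes before taking cohomology; this immediately gives that $\mathfrak{Kun}$ is cohomologically full and faithful. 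I expect the main obstacle to be the bookkeeping in the second step --- verifying that the quilted $A_{\infty}$-bifunctor relations match, term for term, the $A_{\infty}$-functor relations on the specific Saneblidze--Umble model of the tensor product, including sign verification; this is the place where one must be careful that the diagonal approximation on the associahedra is compatible with the gluing maps \eqref{tree gluing} used to make consistent choices of Floer data. Everything else --- transversality, compactness, the boundary stratification, the finiteness needed for the sums to be well-defined (via the action--energy inequality, as in subsection \ref{section: well-definedness of wrapped Floer theory in the product}) --- is a routine adaptation of arguments already carried out in this paper and in \cite{Gao1}.
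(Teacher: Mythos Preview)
Your Step~1 has a genuine gap. By taking pairs $(u,u')$ over the \emph{same} domain $S\in\mathcal{M}_{k+1}$ you are, as you yourself note, simply parametrizing inhomogeneous pseudoholomorphic disks $S\to M\times N$ for the split data. The count of rigid such disks is nothing other than the structure map $m^{k}$ on the full subcategory of product Lagrangians in $\mathcal{W}^{s}(M\times N)$; it is not a bifunctor component $\mathfrak{Kun}^{k,l}$ with \emph{independent} numbers of inputs from $\mathcal{W}(M)$ and $\mathcal{W}(N)$. In particular your boundary analysis is incorrect: when $S$ degenerates, both $u$ and $u'$ break simultaneously over the same nodal domain, so one never sees an ``$M$-bubble'' without a simultaneous ``$N$-bubble''. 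The relations you obtain are just the $A_\infty$-relations for $m^{k}_{M\times N}$, and these do not by themselves exhibit an $A_\infty$-functor out of the Saneblidze--Umble tensor product, since the source structure $m^{k}_{\mathrm{SU}}$ differs from $m^{k}_{M\times N}$ for $k\ge 2$ (only $m^{1}$ agrees, as $m^{1}_{M}\otimes\mathrm{id}+\mathrm{id}\otimes m^{1}_{N}$).

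The paper's construction is genuinely different. The underlying quilted surface has two patches with \emph{independent} numbers of boundary punctures, $k$ on the $M$-patch and $l$ on the $N$-patch, seamed along one pair of boundary arcs. The output $\alpha\in CW^{*}(L_{0}\times L'_{0},L_{k}\times L'_{l})$ is read off at a puncture on the seam, and the two quilted ends carry \emph{fixed} asymptotics given by the cyclic elements $e_{L_{0}\times L'_{0}}$ and $e_{L_{k}\times L'_{l}}$ corresponding to the homotopy units (this is the same device used in the geometric composition map $gc$). The resulting counts define honest bilinear maps $\mathfrak{Kun}^{k,l}$, and now the codimension-one boundary really does split into disk bubbling on the $M$-patch, disk bubbling on the $N$-patch, and breaking at the seam puncture, which are exactly the terms in the $A_\infty$-bifunctor equations. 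Your Step~3, reducing full faithfulness to the first-order isomorphism $(x,x')\mapsto x\times x'$, matches the paper; but Step~2 cannot proceed until Step~1 actually produces a bifunctor, and the missing geometric ingredient is precisely this quilt with decoupled patches and unit asymptotics at the quilted ends.
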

\begin{proof}
	Define $\mathcal{W}(M \times N)$ using the split model $\mathcal{W}^{s}(M \times N)$. Then the $A_{\infty}$-bifunctor \eqref{Kunneth bifunctor} can be defined in a straightforward way. On the level of objects, the bifunctor simply takes a pair $(L, L')$ to their direct product,
\begin{equation*}
(L, L') \mapsto L \times L'.
\end{equation*}
On the level of morphism spaces, the bifunctor
\begin{equation*}
CW^{*}(L_{0}, L_{1}) \times CW^{*}(L'_{0}, L'_{1}) \to CW^{*}(L_{0} \times L'_{0}, L_{1} \times L'_{1})
\end{equation*}
is defined as follows. For any basic Floer cochains $x \in CW^{*}(L_{0}, L_{1})$ and $x' \in CW^{*}(L'_{0}, L'_{1})$ (here basic means that the Floer cochain is represented by a single Hamiltonian chord), the product $x \times x'$ is naturally a Floer cochain in $CW^{*}(L_{0} \times L'_{0}, L_{1} \times L'_{1})$ since the latter is defined with respect to the split Hamiltonian.
Higher-order terms are of the form
\begin{equation} \label{higher order terms of the Kunneth bifunctor}
\begin{split}
\mathfrak{Kun}^{k, l}: &(CW^{*}(L_{k-1}, L_{k}) \otimes \cdots \otimes CW^{*}(L_{0}, L_{1}))\\
& \times (CW^{*}(L'_{l-1}, L'_{l}) \otimes \cdots \otimes CW^{*}(L'_{0}, L'_{1})) \to CW^{*}(L_{0} \times L'_{0}, L_{k} \times L'_{l}),
\end{split}
\end{equation}
which are defined as follows. Let $x_{i} \in CW^{*}(L_{i-1}, L_{i})$ and $x'_{j} \in CW^{*}(L'_{j-1}, L'_{j})$ be basic Floer cochains. The image of $(x_{k} \otimes \cdots \otimes x_{1}) \times (x'_{l} \otimes \cdots \otimes x'_{1})$ under the map \eqref{higher order terms of the Kunneth bifunctor} is defined by counting inhomogeneous pseudoholomorphic (generalized) quilted maps of the following kind:

\begin{figure}
\begin{tikzpicture}
	\draw (-5, 0.5) -- (-3.75, 0.5);
	\draw (-3.75, 0.5) arc (270:360:0.25cm);
	\draw (-3.5, 0.75) -- (-3.5, 1.25);
	\draw (-3, 1.25) -- (-3, 0.75);
	\draw (-3, 0.75) arc (180:270:0.25cm);
	\draw (-2.75, 0.5) -- (-1.5, 0.5);
	\draw (-1.5, 0.5) arc (270:360:0.25cm);
	\draw (-1.25, 0.75) -- (-1.25, 1.25);
	\draw (-0.75, 1.25) -- (-0.75, 0.75);
	\draw (-0.75, 0.75) arc (180:270:0.25cm);
	\draw (-0.5, 0.5) -- (0.75, 0.5);
	\draw (0.75, 0.5) arc (270:360:0.25cm);
	\draw (1, 0.75) -- (1, 1.25);
	\draw (1.5, 1.25) -- (1.5, 0.75);
	\draw (1.5, 0.75) arc (180:270:0.25cm);
	\draw (1.75, 0.5) -- (3, 0.5);
	\draw (3, 0.5) arc (270:360:0.25cm);
	\draw (3.25, 0.75) -- (3.25, 1.25);
	\draw (3.75, 1.25) -- (3.75, 0.75);
	\draw (3.75, 0.75) arc (180:270:0.25cm);
	\draw (4, 0.5) -- (5.25, 0.5);

	\draw (-5, -0.5) -- (-3.75, -0.5);
	\draw (-3.75, -0.5) arc (90:0:0.25cm);
	\draw (-3.5, -0.75) -- (-3.5, -1.25);
	\draw (-3, -1.25) -- (-3, -0.75);
	\draw (-3, -0.75) arc (180:90:0.25cm);
	\draw (-2.75, -0.5) -- (-1.5, -0.5);
	\draw (-1.5, -0.5) arc (90:0:0.25cm);
	\draw (-1.25, -0.75) -- (-1.25, -1.25);
	\draw (-0.75, -1.25) -- (-0.75, -0.75);
	\draw (-0.75, -0.75) arc (180:90:0.25cm);
	\draw (-0.5, -0.5) -- (0.75, -0.5);
	\draw (0.75, -0.5) arc (90:0:0.25cm);
	\draw (1, -0.75) -- (1, -1.25);
	\draw (1.5, -1.25) -- (1.5, -0.75);
	\draw (1.5, -0.75) arc (180:90:0.25cm);
	\draw (1.75, -0.5) -- (3, -0.5);
	\draw (3, -0.5) arc (90:0:0.25cm);
	\draw (3.25, -0.75) -- (3.25, -1.25);
	\draw (3.75, -1.25) -- (3.75, -0.75);
	\draw (3.75, -0.75) arc (180:90:0.25cm);
	\draw (4, -0.5) -- (5.25, -0.5);

	\draw (-5, 0) -- (-0.25, 0);
	\draw (0.25, 0) -- (5.5, 0);

	\draw (-4.5, 0.75) node {$L_{4}$};
	\draw (-2.25, 0.75) node {$L_{3}$};
	\draw (0, 0.75) node {$L_{2}$};
	\draw (2.25, 0.75) node {$L_{1}$};
	\draw (4.5, 0.75) node {$L_{0}$};
	\draw (-4.5, -0.75) node {$L'_{4}$};
	\draw (-2.25, -0.75) node {$L'_{3}$};
	\draw (0, -0.75) node {$L'_{2}$};
	\draw (2.25, -0.75) node {$L'_{1}$};
	\draw (4.5, -0.75) node {$L'_{0}$};

	\draw (-3.25, 1.5) node {$x_{4}$};
	\draw (-1, 1.5) node {$x_{3}$};
	\draw (1.25, 1.5) node {$x_{2}$};
	\draw (3.5, 1.5) node {$x_{1}$};
	
	\draw (-3.25, -1.5) node {$x'_{4}$};
	\draw (-1, -1.5) node {$x'_{3}$};
	\draw (1.25, -1.5) node {$x'_{2}$};
	\draw (3.5, -1.5) node {$x'_{1}$}; 
	
	\draw (-5.5, 0) node {$e_{L_{4} \times L'_{4}}$};
	\draw (6, 0) node {$e_{L_{0} \times L'_{0}}$};
	
	\draw (0, 0) node {$\alpha$};
	
\end{tikzpicture}
\centering
\caption{the quilted map defining the K\"{u}nneth bifunctor}
\end{figure}
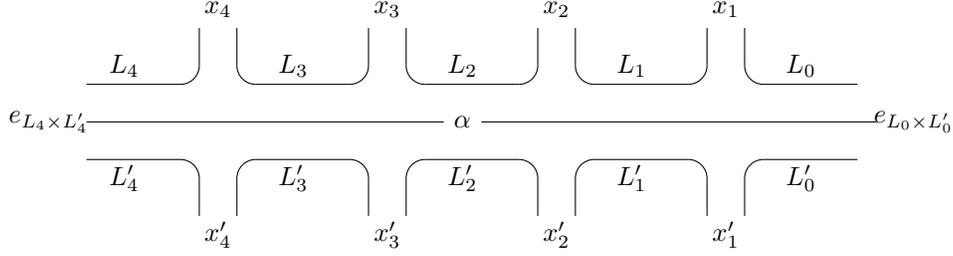

Here the output is $\alpha \in CW^{*}(L_{0} \times L'_{0}, L_{k} \times L'_{l})$, and the asymptotic conditions near the quilted ends are given by the elements
\begin{equation*}
e_{L_{0} \times L'_{0}} \in CW^{*}(L_{0}, L_{0} \times L'_{0}, L'_{0}),
\end{equation*}
and
\begin{equation*}
e_{L_{k} \times L'_{l}} \in CW^{*}(L_{k}, L_{k} \times L'_{l}, L'_{l})
\end{equation*}
which correspond to the homotopy unit
\begin{equation*}
1_{L_{0} \times L'_{0}} \in CW^{*}(L_{0} \times L'_{0}),
\end{equation*}
and respectively
\begin{equation*}
1_{L_{k} \times L'_{l}} \in CW^{*}(L_{k} \times L'_{l}).
\end{equation*}
This construction includes $k = l = 1$ as a special case, as such a quilted map is necessarily constant when $k = l = 1$. \par
	It remains to prove that $\mathfrak{Kun}$ \eqref{Kunneth bifunctor} as defined above is cohomologically fully faithful. This follows immediately from the definition of $\mathfrak{Kun}$, as the first order map is the "identity": sending $(x, x')$ to the product $x \times x'$, which induces an isomorphism on wrapped Floer cohomology groups, proven in \cite{Gao1} as a special case of the results in section \ref{section: product manifolds}. \par
	The statement regarding $\mathfrak{Kun}'$ \eqref{Kunneth functor} follows from the universal property of the $A_{\infty}$-tensor product. \par
\end{proof}

	Now let us go back to the situation where both $M$ and $N$ are non-degenerate and compare the statement of Proposition \ref{Kunneth formula under non-degeneracy assumption} and that of Proposition \ref{Kunneth formula in general}. Non-degeneracy implies that the Kunneth $A_{\infty}$-functor \eqref{Kunneth functor} is invertible after passing to the idempotent completion, and the inverse is given by the quasi-equivalence \eqref{inverse Kunneth functor}. \par


\section{Liouville sub-domains and restriction functors}\label{section: sub-domains and the restriction functors}

\subsection{Sub-domains and restrictions of exact Lagrangian submanifolds}
	Let $M_{0}$ be a Liouville domain, and $M$ its completion. A Liouville sub-domain $U_{0}$ is a codimension zero exact symplectic submanifold of $M_{0}$ with smooth boundary, such that the Liouville vector field on $M_{0}$ points outward the boundary $\partial U_{0}$ transversely. $U_{0}$ is itself a Liouville domain with the induced Liouville structure. We may attach to $U_{0}$ an infinite half-cylinder $\partial U_{0} \times [1, +\infty)$ to the boundary to get a complete Liouville manifold $U$. In this paper, we shall only consider Liouville sub-domains, and sometimes call them sub-domains for short. \par
	Denote by $M^{-}$ the Liouville manifold with opposite symplectic form $-\omega$, with the Liouville form also being the opposite $-\lambda_{M}$, similarly for $U^{-}$. There is a natural Lagrangian correspondence between $M$ and $U$, defined as follows. Because $U_{0}$ is a Liouville sub-domain, the graph of the embedding $U_{0} \subset M_{0}$ is naturally an exact Lagrangian submanifold (with corners) of $U_{0}^{-} \times M_{0}$ with respect to the product exact one-form $-\lambda_{U_{0}} \times \lambda_{M_{0}}$. The natural completion of that in $U^{-} \times M$, is an exact cylindrical Lagrangian submanifold of $U^{-} \times M$, with respect to the contact hypersurface $\Sigma$. We denote this Lagrangian correspondence by $\Gamma$. Alternatively, consider the natural embedding
\begin{equation}
i: U \to M
\end{equation}
induced by the inclusion $U_{0} \subset M_{0}$, whose is definition is given by the following formula:
\begin{equation} \label{the natural embedding of the completion}
i(x)=
\begin{cases}
x, & \text{ if } x \in U_{0},\\
\psi_{M}^{r}(i(y)), & \text{ if } x = (y, r) \in \partial U \times [1, +\infty).
\end{cases}
\end{equation}
Then $\Gamma$ is the graph of $i$. \par
	
	 Equipped with the opposite primitive, it can also be regarded as an exact Lagrangian submanifold of $M^{-} \times U$. To make $\Gamma$ into an admissible Lagrangian correspondence in Floer-theoretic sense, one needs a spin structure on it. A natural spin structure exists because we assume both $M$ and $U$ have vanishing first Chern classes. We shall give this Lagrangian correspondence a special name. \par

\begin{definition}
	The Lagrangian submanifold $\Gamma \subset M^{-} \times U$ equipped with the natural spin structure is called the graph correspondence.
\end{definition}

	Given an exact Lagrangian submanifold $L_{0}$ of $M_{0}$, possibly with non-empty boundary $\partial L_{0}$, the intersection $L'_{0} = L_{0} \cap U_{0}$ is naturally an exact Lagrangian submanifold of $U_{0}$, possibly with non-empty boundary $\partial L'_{0}$, even if $L_{0}$ has empty boundary. \par
	If $L_{0}$ is either a closed exact Lagrangian submanifold, or an exact cylindrical Lagrangian submanifold, it can be naturally completed to an object $L$ of $\mathcal{W}(M)$. However, in general this is not true for $L'_{0}$, as the boundary $\partial L'_{0}$ might not behave nicely. In the next subsection we shall seek geometric assumptions such that $L'_{0}$ can be completed into an object of $\mathcal{W}(U)$, so that we may attempt to define a restriction functor. \par

\subsection{Restriction and the associated functor}
	First, note that the graph correspondence $\Gamma$ is admissible for wrapped Floer theory in the product $M^{-} \times U$. This implies that the module-valued functor $\Phi_{\Gamma}$ is well-defined. More importantly, note that the projection $\Gamma \to U$ is proper, thus the module-valued functor $\Phi_{\Gamma}$ is representable, and represented by the $A_{\infty}$-functor
\begin{equation}\label{the restriction functor associated to the graph}
\Theta_{\Gamma}: \mathcal{W}(M) \to \mathcal{W}_{im}(U).
\end{equation}
In this and next subsections, we shall study this functor in a more specific way, compare it with the Viterbo restriction functor defined in \cite{Abouzaid-Seidel}, and prove Theorem \ref{Viterbo functor as a correspondence functor}. While the construction of the Viterbo restriction functor will not be repeated in this paper, we shall describe elements in the moduli spaces used to define it, in a sketchy way. More detailed definition and generalization of the Viterbo restriction functor will be discussed in the upcoming work \cite{Gao2}. \par

\begin{remark}
	The graph correspondence $\Gamma$ can also be viewed as a Lagrangian correspondence from $U$ to $M$. If we ask whether it defines a functor from $\mathcal{W}(U)$ to $\mathcal{W}_{im}(M)$, the answer is not always yes. This is because the projection $\Gamma \to M$ is not always proper, contrary to $\Gamma \to U$. In that case, the module-valued functor is not necessarily representable in our sense.
\end{remark}

	Consider an object $L$ in $\mathcal{W}(M)$. Suppose it is the completion of an exact cylindrical Lagrangian submanifold $L_{0}$ of $M_{0}$ which intersects $U_{0}$ nicely in the following sense. \par

\begin{assumption}\label{strong exactness condition}
	$\partial L' = L_{0} \cap \partial U_{0}$ is a Legendrian submanifold of $\partial U_{0}$; and moreover, the primitive of $L_{0}$ is locally constant near both $\partial M_{0}$ and $\partial U_{0}$. 
\end{assumption}

	If $L_{0}$ satisfies Assumption \ref{strong exactness condition}, then $L'_{0} = L_{0} \cap U_{0}$ is an exact cylindrical Lagrangian submanifold of $U_{0}$ and can therefore be completed to an exact cylindrical Lagrangian submanifold $L' \subset U$. Because of the geometric nature of the $A_{\infty}$-functor 
\begin{equation*}
\Theta_{\Gamma}: \mathcal{W}(M) \to \mathcal{W}_{im}(U),
\end{equation*}
we expect that it behaves like a restriction functor, and takes $L$ to $L'$ on the level of objects. \par
	However, the true story is a bit more complicated, and in fact the above expectation does not always hold true. In order for the geometric composition $L \circ_{H_{M}} \Gamma$ to be related to $L'$, we further need the following condition: \par
	 
\begin{assumption}\label{invariance assumption}
	$L$ is invariant under the Liouville flow in the intermediate region $M_{0} \setminus int(U_{0})$.
\end{assumption}

	This assumption means that, the Lagrangian $L$ is the Liouville completion of the restriction $L'_{0} = L \cap U_{0}$, in the sense that $L$ is the union of $L'_{0}$ and all trajectories of the Liouville flow on $M$ starting from points on $\partial L'$. \par 

\begin{example}
	Let $M_{0}$ and $U_{0}$ be (critical) Weinstein domains. If $M_{0} \setminus int(U_{0})$ is a Weinstein cobordism, and $L$ is the union of some cylinders over Legendrians and the stable or unstable manifolds of those critical points of a Lyapunov function which are contained in the cobordism $M_{0} \setminus int(U_{0})$, then $L$ satisfies Assumption \ref{invariance assumption}.
\end{example}

	Under Assumption \ref{invariance assumption}, it is easy to see that the geometric composition $L \circ \Gamma$ is precisely $L'$, the completion of the restriction $L'_{0}$ of $L$ to $U_{0}$. In fact, the geometric composition in the usual sense (without Hamiltonian perturbation) is the map
\begin{equation*}
L \circ \Gamma = \{(p_{1}, p_{2}, q) \in M \times M \times U: p_{1} \in L, p_{1} = p_{2}, i(q) = p_{2}\} \to U
\end{equation*}
which sends $(p_{1}, p_{2}, q)$ to $q \in U$. In this case this is an exact Lagrangian embedding, whose image is
\begin{equation*}
\{q \in U: \exists p \in M, \text{ such that } p \in L, i(q) = p\}.
\end{equation*}
By the definition of the map $i: U \to M$ as in \eqref{the natural embedding of the completion}, this is the set of all points $q \in U$ such that either $q \in L \cap U_{0}$, or $q = (y, r) \in \partial U \times [1, +\infty)$ such that $\psi_{M}^{r}(y) = p$. By Assumption \ref{invariance assumption}, this is exactly $L'$. \par

\begin{remark}
	In general, every exact cylindrical Lagrangian $L$ which intersects $\partial U$ to a Legendrian submanifold is invariant under the Liouville flow in a neighborhood of $\partial U$ and one of $\partial M$, after suitable Hamiltonian perturbation. 
\end{remark}

	On the other hand, under assumption Assumption \ref{strong exactness condition}, we can show that the bounding cochain in fact vanishes by Proposition \ref{vanishing of the bounding cochain}, so that the image of the functor $\Theta_{\Gamma}$ lands in the completed wrapped Fukaya category $\mathcal{W}(U)$ whose objects are embedded Lagrangian submanifolds. This is the first half of the statement of Theorem \ref{Viterbo functor as a correspondence functor}. \par
	The full sub-category of $\mathcal{W}(M)$ consisting of Lagrangian submanifolds which satisfy Assumption \ref{strong exactness condition} and Assumption \ref{invariance assumption} will be denoted by $\mathcal{B}_{0}(M)$. This is a full sub-category of $\mathcal{B}(M)$. The restriction of the functor $\Theta_{\Gamma}$ to this sub-category is
\begin{equation}
\Theta_{\Gamma}: \mathcal{B}_{0}(M) \to \mathcal{W}(U),
\end{equation}
such that the image of any $L \in Ob \mathcal{B}_{0}(M)$ is $L' \in Ob \mathcal{W}(U)$. This finishes the proof of the first half of Theorem \ref{Viterbo functor as a correspondence functor}. \par

\subsection{The Viterbo restriction functor revisited}\label{the Viterbo restriction functor}
	To prove the second half of Theorem \ref{Viterbo functor as a correspondence functor}, we shall briefly review the definition of the Viterbo restriction functor first, originally due to \cite{Abouzaid-Seidel}. As the setup of the wrapped Fukaya category is slightly different (but equivalent), we shall consider inhomogeneous pseudoholomorphic maps which behave somewhat differently than cascade maps. However, they should yield an equivalent definition of the Viterbo restriction functor, although we will not attempt to include a proof. \par
	The relevant inhomogeneous pseudoholomorphic maps that are used to define the Viterbo restriction functor will be called "climbing disks". Climbing disks are in fact very similar to cascades introduced in \cite{Abouzaid-Seidel}. Roughly speaking, these are inhomogeneous pseudoholomorphic disks in $M$, for which the Floer data consist of a family of Hamiltonians and a family of almost complex structures interpolating the Floer data on $M$ and those on $U$, and the boundary conditions are given by a family of Lagrangian submanifolds interpolating $L$ and $L'$. In order to visualize Floer theory of $U$ in $M$, we shall use a rescaling trick. More details are presented below. \par
	To define climbing disks, we introduce the following geometric data. Conceptually, we want to consider geometric objects which reflect the way how the Liouville manifolds $M$ and $U$ can be interpolated in a suitable sense. The geometric data on $M$ and those on $U$ should be related in a one-parameter family. While in \cite{Abouzaid-Seidel} they think of the size of the collar neighborhood of $\partial U$ inside $U_{0}$ as the parameter (shrinking the sub-domain by a conformal factor $\rho \in (0, 1]$ which is taken to be the parameter), we fix the size of the collar neighborhood (as in \cite{Viterbo1}), but rather change the height of the Hamiltonian and regard that as a parameter. \par
	Note that the Liouville structure on $U_{0}$ induced by that on $M_{0}$ provides a symplectic embedding of a collar neighborhood $\partial U \times [1-\epsilon, 1]$ to $U_{0}$ such that $\partial U \times \{1\}$ is mapped to the boundary $\partial U$. Suppose we have chosen the quadratic Hamiltonian $H_{M}$ on $M$ and $H_{U}$ on $U$, such that they agree on $U_{0} \subset M_{0}$. 
Define a family of piecewise smooth (and lower semi-continuous) Hamiltonians $K_{A}$ depending on a parameter $A \in [0, +\infty)$,
\begin{equation}
K_{A} =
\begin{cases}
0, & \text{ on } U_{0} \setminus (\partial U \times (1-\epsilon, 1),\\
A(r-1+\epsilon)^{2}, &\text{ if } p = (y, r) \in \partial U \times (1-\epsilon, 1),\\
A, & \text{ on } M_{0} \setminus U_{0},\\
A + h(r), & \text{ if } x = (y, r) \in \partial M \times [1, +\infty),
\end{cases}
\end{equation}
where $h(r)$ is the same radial function as that for $H_{M}$, i.e. $h(r) = r^{2}$ if $r$ is slightly bigger than $1$. Note that when $A = 0$, this is simply an admissible Hamiltonian on $M$ which vanishes in the interior part $M_{0}$ and is quadratic in the cylindrical end $\partial M \times [1, +\infty)$. Let $H_{A}$ be a $C^{2}$-small perturbation of $K_{A}$, which is smooth and non-degenerate, such that $H_{A}$ agrees with $K_{A}$ in the collar neighborhood $\partial U \times [1-\epsilon, 1]$ as well as in the cylindrical end $\partial M \times [1, +\infty)$. \par
	We also need a suitable class of almost complex structures in order to write down the inhomogeneous Cauchy-Riemann equations for climbing disks. Suppose we have chosen almost complex structures $J_{M} = J_{M}$ on $M$ and $J_{U} = J_{U}$ on $U$. What is needed is a family of almost complex structures interpolating these two families, defined below. \par

\begin{definition}\label{interpolating family of almost complex structures}
	An interpolating family of almost complex structures is a family $J_{A}$ of compatible almost complex structures on $M$ parametrized by $A \in [0, +\infty)$ such that the following properties are satisfied:
\begin{enumerate}[label=(\roman*)]
	
\item $J_{0} = J_{M}$;

\item For each $A$, $J_{A} = J_{M}$ outside $U_{0}$;

\item For each $A$, the restriction of $J_{A}$ to $M \setminus U_{0}$ is of contact type in the cylindrical end $\partial M \times [1, +\infty)$;

\item There exists $A_{0} > 0$ such that for $A > A_{0}$, the restriction of $J_{A}$ to $U_{0} \setminus (\partial U \times [1-\epsilon, 1])$ agrees with $(\psi_{U}^{A})_{*}J_{U}$, and the restriction of $J_{A}$ to a collar neighborhood of $\partial U$ in $M$ is of contact type on some neighborhood of $\partial U \times \{1-\epsilon\} \subset \partial U \times [1-\epsilon, 1]$.

\end{enumerate}

\end{definition}

	It is not difficult to show that such interpolating families exist, and in fact that they form a contractible space. \par

	Given an exact cylindrical Lagrangian submanifold $L \subset M$ satisfying Assumption \ref{strong exactness condition}, the completion $L'$ of its restriction $L'_{0} = L \cap U_{0}$ is exact cylindrical with a locally constant primitive near $\partial U$. Call that constant $c_{L}$. We define a slightly shrunk Lagrangian submanifold
\begin{equation}\label{shrinking the Lagrangian submanifold}
L^{1-\epsilon} = 
\begin{cases}
\psi_{U}^{1-\epsilon} L'_{0}, &\text{ on } U_{0} \setminus (\partial U \times [1-\epsilon, 1]),\\
\partial L' \times [1-\epsilon, 1], &\text{ on } \partial U \times [1-\epsilon, 1],\\
L
\end{cases}
\end{equation}
This is well-defined, because $L$ is invariant under the Liouville flow in the collar neighborhood $\partial U \times [1-\epsilon, 1]$. \par
	In Floer theory, when defining inhomogeneous pseudoholomorphic maps, we also need to perturb Hamiltonians and almost complex structures. Our strategy is to perturb almost complex structures, in a way such that they depend on domains of the inhomogeneous pseudoholomorphic maps. If the domain is a strip, then an interpolating family should also depend on an additional parameter $t$. That is, an interpolating family for the strip is a family of almost complex structures $J_{A, t}$, such that for every fixed $t$, $J_{A, t}$ is an interpolating family in the sense of Definition \ref{interpolating family of almost complex structures}. \par
	Given $H_{A}$ and $J_{A, t}$ defined as above, we now introduce a Floer datum on the strip. Such a Floer datum, roughly speaking, is a homotopy between $(H_{M}, J_{M, t})$ and $(H_{A}, J_{A, t})$, parametrized by $s \in \mathbb{R}$. \par

\begin{definition}
	A Floer datum on the strip $Z = \mathbb{R} \times [0, 1]$ consists of the following data:
\begin{enumerate}[label=(\roman*)]

\item A family of Hamiltonians $H_{s}$ depending on $s \in \mathbb{R}$, thought of as a family on the strip $Z$ independent of $t$, such that for $s \gg 0$, $H_{s}$ agrees with $H_{M}$, and for $s \ll 0$, $H_{s}$ agrees with $H_{A}$.

\item A family of almost complex structures $J_{(s, t)}$ parametrized by $(s, t) \in Z$, such that for $s \gg 0$, $J_{(s, t)}$ agrees with $J_{M} = J_{M, t}$, and for $s \ll 0$, $J_{(s, t)}$ agrees with $J_{A, t}$.

\end{enumerate}
\end{definition}

	To define a climbing strip, we also need appropriate Lagrangian boundary conditions. \par

\begin{definition}\label{moving Lagrangian labeling}
	A moving Lagrangian labeling for the strip $Z = \mathbb{R} \times [0, 1]$ is a pair $(L_{0, s}, L_{1, s})$ where each $L_{j, s}$ is an exact Lagrangian isotopy (through exact cylindrical Lagrangian submanifolds) parametrized by $s \in \mathbb{R}$, such that for $s \gg 0$, $L_{j, s} = L_{j}$, and for $s \ll 0$, $L_{j, s} = L_{j}^{1-\epsilon}$ as defined in \eqref{shrinking the Lagrangian submanifold}.
\end{definition}

	Note that there is a natural choice of moving Lagrangian labeling as follows. Let $\lambda: \mathbb{R} \to [1-\epsilon, 1]$ be a smooth non-decreasing function which is $1-\epsilon$ for $s \ll 0$ and is $1$ for $s \gg 0$. Then we can define $L_{j, s} = L_{j}^{\lambda(s)}$ in a similar way to that of \eqref{shrinking the Lagrangian submanifold} (replacing $1-\epsilon$ by $\lambda(s)$). Without loss of generality, we may assume that $\lambda(s) = 1$ for $s \ge 0$. \par

\begin{definition}
	A climbing strip is a smooth map
\begin{equation*}
w: \mathbb{R} \times [0, 1] \to M,
\end{equation*}
with the following properties:
\begin{enumerate}[label=(\roman*)]

\item $w$ satisfies the inhomogeneous Cauchy-Riemann equation:
\begin{equation}\label{inhomogeneous Cauchy-Riemann equation for climbing strips}
\partial_{s} w + J_{(s, t)}(\partial_{t} w - X_{H_{s}}(w)) = 0.
\end{equation}

\item The boundary conditions for $w$ are:
\begin{equation}
w(s, 0) \in L_{0, s}, w(s, 1) \in L_{1, s}.
\end{equation}

\item The asymptotic convergence conditions of $w$ are:
\begin{equation*}
\lim\limits_{s \to -\infty} w(s, \cdot) = x_{A}(\cdot),
\end{equation*}
for a time-one $H_{A}$-chord $x_{A}$ from $L_{0, A}$ to $L_{1, A}$, and
\begin{equation*}
\lim\limits_{s \to +\infty} w(s, \cdot) = x(\cdot),
\end{equation*}
for a time-one $H_{M}$-chord $x$ from $L_{0}$ to $L_{1}$. We require that the $H_{A}$-chord $x_{A}$ satisfy a further condition that
\begin{equation}\label{action constraint for the inner chord}
- A\epsilon^{2} \le \mathcal{A}(x_{A}) \le \delta,
\end{equation}
for a universal small constant $\delta$, whose meaning is to be explained later.

\end{enumerate}

\end{definition}

	There are two conditions listed above which need further explanation. First, the asymptotic convergence conditions should be, as usual, Hamiltonian chords for the Hamiltonian $H_{A, s}$, as $s \to \pm \infty$. When $s \to +\infty$, $H_{A, s} = H_{M}$, and the boundary conditions $(L_{0, s}, L_{1, s})$ agree with $(L_{0}, L_{1})$ when $s \gg 0$, so the asymptotic convergence condition as $s \to +\infty$ is given by a time-one $H_{M}$-chord $x$ from $L_{0}$ to $L_{1}$. When $s \to -\infty$, $H_{A, s} = H_{A}$, and the boundary conditions $(L_{0, s}, L_{1, s})$ agree with $(L_{0}^{1-\epsilon}, L_{1}^{1-\epsilon})$ when $s \ll 0$, so the asymptotic convergence condition as $s \to +\infty$ is given by a time-one $H_{A}$-chord $x_{A}$ from $L_{0}^{1-\epsilon}$ to $L_{1}^{1-\epsilon}$. Now we shall see how such a time-one $H_{A}$-chord is related to a time-one $H_{U}$-chord $x'$ from $L'_{0}$ to $L'_{1}$. Note that inside $U_{0} \setminus (\partial U \times [1-\epsilon, 1])$, the Hamiltonian $K_{A}$ vanishes, so that $H_{A}$ is $C^{2}$-small, which can be taken to agree with $\frac{1}{1-\epsilon} H_{U} \circ \psi_{U}^{1-\epsilon}$ (as $H_{U}$ is also small, the rescaled Hamiltonian is close to $H_{U}$); in the collar neighborhood $\partial U \times [1-\epsilon, 1]$, the Hamiltonian $H_{A}$ is the rescaling of a quadratic Hamiltonian on $U$ by a factor $A$, i.e. $\frac{1}{A} H_{U} \circ \psi_{U}^{A}$; and outside $U_{0}$, $H_{A}$ agrees with $H_{M} + A \epsilon^{2}$. Regarding the Lagrangian boundary conditions, inside $U_{0} \setminus (\partial U \times [1-\epsilon, 1])$ the Lagrangian submanifold $L_{j}^{1-\epsilon}$ agrees with $\psi_{U}^{1-\epsilon}L'_{j, 0}$; and outside $U_{0}$, $L_{j}^{1-\epsilon}$ agrees with $L_{j}$. Thus, time-one $H_{A}$-chords from $L_{0}^{1-\epsilon}$ to $L_{1}^{1-\epsilon}$ inside $U_{0} \setminus (\partial U \times [1-\epsilon, 1])$ naturally correspond to time-one $H_{U}$-chords from $L'_{0}$ to $L'_{1}$ inside $U_{0}$, and time-one Hamiltonian chords of $H_{A}$ which are contained in the collar neighborhood $\partial U \times [1-\epsilon, 1]$ are in one-to-one correspondence with time-one Hamiltonian chords for $H_{U}$ from $L'_{0}$ to $L'_{1}$ restricted to a finite part of the cylindrical end, $\partial U \times [1, A\epsilon + 1]$, on which $H_{U}$  is quadratic with leading coefficient $1$. \par
	The second point that needs explanation is the action constraint \eqref{action constraint for the inner chord}. The reason that we impose the constraint on the action of $x'$ is that for the Hamiltonian $K_{A}$, the Hamiltonian chords are either constants in $U_{0} \setminus (\partial U \times [1-\epsilon, 1])$ or $M_{0} \setminus U_{0})$, and non-constant chords in the collar neighborhood $\partial U \times [1-\epsilon, 1]$ or in the cylindrical end $\partial M \times [1, +\infty)$. For the constant chords, the action is uniformly bounded by a small constant, denoted by $\delta'$, which depends only on the primitives of the Lagrangian submanifolds and can be chosen very small. Thus if we make a small perturbation of $K_{A}$ to $H_{A}$, the Hamiltonian chords which are contained inside $U_{0} \setminus (\partial U \times [1-\epsilon, 1])$ also have action bounded by a small constant $\delta$.
For the non-constant chords, we observe that the restriction of the Hamiltonian vector field $X_{K_{A}}$ to some level $\partial U \times \{r\} \subset \partial U \times [1-\epsilon, 1]$ is $2A(r-1+\epsilon)Y_{\partial U}$, where $Y_{\partial U}$ is the Reeb vector field on the contact boundary $\partial U$ of $U_{0}$. Thus a Hamiltonian chord on level $r \in [1-\epsilon, 1]$ has action $-A(r-1+\epsilon)^{2}$ (there is no extra contribution as the primitive is locally constant there), which is at least $-A\epsilon^{2}$. Thus the action constraint \eqref{action constraint for the inner chord} for the Hamiltonian chord $x_{A}$ for $H_{A}$ is reasonable. By imposing this condition, we mean that for each $A$, we only consider climbing strips $w$ whose asymptotic Hamiltonian chord at $-\infty$ satisfies such constraint on its action. \par
	For each given $A$, the moduli space of such climbing strips is denoted by $\mathcal{P}^{A}(x_{A}, x)$, i.e.
\begin{equation}
\begin{split}
\mathcal{P}^{A}(x_{A}, x) = \{w: 
& w \text{ is a climbing strip with respect to the Floer datum } (H_{A, s}, J_{A, (s, t)}),\\
& \text{ with Lagrangian boundary conditions given by } (L_{0, s}, L_{1, s}),\\
& \text{ with asymptotic convergence conditions } x_{A} \text{ at } -\infty \text{ and } x \text{ at } +\infty\},
\end{split}
\end{equation}
Note that the equation \eqref{inhomogeneous Cauchy-Riemann equation for climbing strips} is not translation invariant, and we are not varying the parameter $A$ in this case, so the virtual dimension of the moduli space $\mathcal{P}^{A}(x_{A}, x)$ is
\begin{equation*}
v-\dim \mathcal{P}^{A}(x_{A}, x) = \deg(x_{A}) - \deg(x).
\end{equation*} \par
	If the virtual dimension is zero, the moduli space $\mathcal{P}^{A}(x_{A}, x)$ is a compact smooth manifold of dimension zero, for generic choices of Floer data. Thus, by counting rigid elements in this moduli space, we may define a map
\begin{equation}
\tilde{r}^{1}_{A}: CW^{*}(L_{0}, L_{1}; H_{M}) \to CW^{*}_{-}(L_{0}^{1-\epsilon}, L_{1}^{1-\epsilon}; H_{A}),
\end{equation}
by
\begin{equation}
\tilde{r}^{1}_{A}(x) = \sum_{x_{A}: \deg(x_{A}) = \deg(x)} \sum_{w \in \mathcal{P}^{A}(x_{A}, x)} o_{w},
\end{equation}
where $o_{w}: o_{x} \to o_{x_{A}}$ is the canonical isomorphism of orientation lines induced by $w$. Here $CW^{*}_{-}(L_{0}^{1-\epsilon}, L_{1}^{1-\epsilon}; H_{A})$ is a sub-complex of the wrapped Floer complex $CW^{*}_{-}(L_{0}^{1-\epsilon}, L_{1}^{1-\epsilon}; H_{A})$, generated by time-one $H_{A}$-chords from $L_{0}^{1-\epsilon}$ to $L_{1}^{1-\epsilon}$ which are contained in $U_{0}$. Because of the natural correspondence between time-one $H_{A}$-chords from $L_{0}^{1-\epsilon}$ to $L_{1}^{1-\epsilon}$ which are contained in $U_{0}$ and time-one $H_{U}$-chords from $L'_{0}$ to $L'_{1}$ inside a finite part of $U$, this space $CW^{*}_{-}(L_{0}^{1-\epsilon}, L_{1}^{1-\epsilon}; H_{A})$ is related to the wrapped Floer complex $CW^{*}(L'_{0}, L'_{1}; H_{U})$ by the following map
\begin{equation}
\tau_{A}: CW^{*}_{-}(L_{0}^{1-\epsilon}, L_{1}^{1-\epsilon}; H_{A}) \to CW^{*}_{(-A\epsilon^{2}, \delta)}(L'_{0}, L'_{1}; H_{U}),
\end{equation}
for some fixed small $\delta > 0$ such that all time-one $H_{U}$-chords have action less than $\delta$ (this can be achieved by choosing the Hamiltonian and the primitives appropriately). This map is a chain-level isomorphism, and in fact sends any time-one $H_{A}$-chord $x_{A}$ to its corresponding time-one $H_{U}$-chord $x'$.
By composing the map $\tilde{r}^{1}_{A}$ with $\tau_{A}$, we get a map
\begin{equation}
r^{1}_{A} = \tau_{A} \circ \tilde{r}^{1}_{A}: CW^{*}(L_{0}, L_{1}; H_{M}) \to CW^{*}_{(-A\epsilon^{2}, \delta)}(L'_{0}, L'_{1}; H_{U}).
\end{equation} \par

	To prove that $r^{1}_{A}$ is a cochain map, we need to study the compactification of the moduli space $\mathcal{P}^{A}(x_{A}, x)$. Because of the asymptotic behavior of the elements $w$, it is natural to introduce a compactification
\begin{equation*}
\bar{\mathcal{P}}^{A}(x_{A}, x)
\end{equation*}
of this moduli space, by adding broken climbing strips. In the codimension one boundary strata, broken climbing strips are of the following two types:
\begin{enumerate}[label=(\roman*)]

\item A pair $(w, u)$, where $w$ is a climbing strip, and $u$ is an inhomogeneous pseudoholomorphic strip in $M$. $w$ and $u$ have a common asymptotic convergence condition $x_{0}$ at the positive end of $w$ and the negative end of $u$. This occurs because as $s \to +\infty$, the family $H_{A, s}$ agrees with $H_{M}$ (and similarly for the almost complex structures and Lagrangian boundary conditions), so when the energy of a sequence of climbing strips escapes from $+\infty$, a $(H_{M}, J_{M})$-pseudoholomorphic strip breaks out.

\item A pair $(u', w)$, where $u'$ is pseudoholomorphic with respect to the Floer datum $(H_{A}, J_{A})$, and $w$ is a climbing strip. $u'$ and $w$ have a common asymptotic convergence condition $x_{A, 0}$ at the positive end of $u'$ and the negative end of $w$. This occurs because as $s \to -\infty$, the family $H_{A, s}$ agrees with $H_{A}$ (and similarly for the almost complex structures and Lagrangian boundary conditions), so when the energy of a sequence of climbing strips escapes from $+\infty$, a $(H_{A}, J_{A})$-pseudoholomorphic strip breaks out. Moreover, for such an inhomogeneous pseudoholomorphic strip, if the input $x_{A, 0}$ is in the sub-complex $CW^{*}_{-}(L_{0}^{1-\epsilon}, L_{1}^{1-\epsilon}; H_{A})$, the output must also be in this sub-complex.

\end{enumerate}
This implies that we have an isomorphism of the codimension one boundary strata:
\begin{equation}\label{boundary strata of moduli space of climbing strips}
\partial \bar{\mathcal{P}}^{A}(x_{A}, x) \cong \coprod_{\substack{x_{0}\\ \deg(x_{0}) = \deg(x)}} \mathcal{P}^{A}(x_{A}, x_{0}) \times \mathcal{M}(x_{0}, x) \cup \coprod_{\substack{x_{A, 0}\\ \deg(x_{A, 0}) = \deg(x_{A})}} \mathcal{M}(x_{A}, x_{A, 0}) \times \mathcal{P}^{A}(x_{A, 0}, x).
\end{equation} 
Here $\mathcal{M}(x_{A}, x_{A, 0})$ is the moduli space of $(H_{A}, J_{A})$-pseudoholomorphic strips with asymptotic convergence conditions $x_{A}, x_{A, 0}$, which can be identified with the moduli space $\mathcal{M}(x', x'_{0})$ of $(H_{U}, J_{U})$-pseudoholomorphic strips in $U$, by a similar rescaling argument. \par
	In order for the above isomorphism to hold, we must assume from now on that we have make conformally consistent choice of Floer data for all kinds of strips. As the Lagrangian submanifolds involved are all exact cylindrical, standard transversality methods allow us to prove that, in the case of virtual dimension being one, the compactified moduli space $\bar{\mathcal{P}}^{A}(x_{A}, x)$ of virtual dimension one is a compact topological manifold of dimension one, if the Floer data are chosen generically. Thus, by a standard gluing argument, based on on the structure of the codimension-one boundary strata of $\bar{\mathcal{P}}^{A}(x_{A}, x)$ as shown in \eqref{boundary strata of moduli space of climbing strips}, we can show: \par

\begin{lemma}
	The map
\begin{equation*}
r^{1}_{A}: CW^{*}(L_{0}, L_{1}; H_{M}) \to CW^{*}_{(-A\epsilon^{2}, \delta)}(L'_{0}, L'_{1}; H_{U})
\end{equation*}
is a cochain map.
\end{lemma}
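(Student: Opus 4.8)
The plan is to read off the cochain-map identity from the codimension-one boundary of the one-dimensional climbing-strip moduli spaces. First I would fix $x_A$ and $x$ with $\deg(x_A)-\deg(x)=1$ and a generic conformally consistent choice of Floer data, so that $\bar{\mathcal{P}}^A(x_A,x)$ is a compact $1$-manifold with boundary. Since the signed count of boundary points of a compact oriented $1$-manifold vanishes, summing these counts over all such pairs $(x_A,x)$ and using the description \eqref{boundary strata of moduli space of climbing strips} yields an identity between the two operations coming from the two types of strata. The stratum $\mathcal{P}^A(x_A,x_0)\times\mathcal{M}(x_0,x)$ contributes the composition $\tilde{r}^1_A\circ m^1$ of $\tilde{r}^1_A$ with the wrapped Floer differential of $CW^*(L_0,L_1;H_M)$, while the stratum $\mathcal{M}(x_A,x_{A,0})\times\mathcal{P}^A(x_{A,0},x)$ contributes the composition $m^1\circ\tilde{r}^1_A$ of $\tilde{r}^1_A$ with the $(H_A,J_A)$-Floer differential. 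Here one invokes the fact, already recorded in the discussion preceding the lemma, that an $(H_A,J_A)$-Floer trajectory issuing from a chord contained in $U_0$ stays, on the chain level, inside the sub-complex $CW^*_-(L_0^{1-\epsilon},L_1^{1-\epsilon};H_A)$, so that $m^1\circ\tilde{r}^1_A$ is well defined on that sub-complex; hence $\tilde{r}^1_A$ intertwines the differential of $CW^*(L_0,L_1;H_M)$ with the restriction of the $H_A$-Floer differential to $CW^*_-(L_0^{1-\epsilon},L_1^{1-\epsilon};H_A)$.

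It then remains to transport this identity through the rescaling map $\tau_A$. Since $\tau_A$ is a chain-level isomorphism sending each $H_A$-chord contained in $U_0$ to the corresponding $H_U$-chord in the truncated cylindrical end $\partial U\times[1,A\epsilon+1]$, and since this identification is induced by the Liouville flow $\psi_U^A$, under which the $(H_A,J_A)$-Floer equation on $U_0$ pulls back to the $(H_U,J_U)$-Floer equation (so that $\mathcal{M}(x_A,x_{A,0})\cong\mathcal{M}(x',x'_0)$), the map $\tau_A$ intertwines $m^1$ restricted to $CW^*_-(L_0^{1-\epsilon},L_1^{1-\epsilon};H_A)$ with the wrapped Floer differential on $CW^*_{(-A\epsilon^2,\delta)}(L'_0,L'_1;H_U)$. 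Combining this with the previous paragraph gives that $r^1_A=\tau_A\circ\tilde{r}^1_A$ is a cochain map, as claimed.

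The main work, and the point I expect to be the main obstacle, lies in the compactness input that feeds into the boundary description \eqref{boundary strata of moduli space of climbing strips}: one must rule out every degeneration of a sequence of climbing strips other than the two listed breakings. There are no disk or sphere bubbles because the ambient manifold, the Lagrangians, and all the interpolating data are exact; escape to infinity in $\partial M\times[1,+\infty)$ is prevented by the usual maximum principle, since $H_A$ is quadratic and the moving Lagrangian labeling is cylindrical there; and no energy can concentrate in the collar $\partial U\times[1-\epsilon,1]$ beyond what is accounted for by broken $(H_A,J_A)$-strips, which follows from the action constraint \eqref{action constraint for the inner chord} together with an integrated maximum principle near $\partial U$ with respect to the interpolating almost complex structures of Definition \ref{interpolating family of almost complex structures}. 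Finally one must check that every broken configuration arising on the boundary still satisfies the action constraint \eqref{action constraint for the inner chord}, so that the identity closes up within the truncated complex $CW^*_{(-A\epsilon^2,\delta)}(L'_0,L'_1;H_U)$; this is immediate from the action-energy equality, since neither the $H_M$-Floer differential nor the $H_A$-Floer differential increases action.
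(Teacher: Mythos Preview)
Your proposal is correct and follows essentially the same approach as the paper: read off the cochain identity from the codimension-one boundary description \eqref{boundary strata of moduli space of climbing strips}, then transport through $\tau_A$. The paper's proof is much terser---it simply invokes the ``standard gluing argument'' and singles out the one non-routine point, namely that $(H_A,J_A)$-strips connecting chords in $U_0$ cannot escape $U_0$ because $\partial U\times\{1\}$ is pseudo-convex for the chosen almost complex structure; your final paragraph covers this (via the integrated maximum principle near $\partial U$), together with the other compactness ingredients that the paper leaves implicit.
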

\begin{proof}
	In addition to the standard gluing argument, one must ensure that no inhomogeneous pseudoholomorphic strip in $U_{0}$ connecting two Hamiltonian chords for $H_{A}$ inside $U_{0}$ escapes from $U_{0}$. This is because that the hypersurface $\partial U \times \{1\}$ is pseudo-convex with respect to the chosen almost complex structure, so we can apply the maximum principle. \par
\end{proof}

	However, the problem is that this depends on an extra parameter $A$. To remove this ambiguity, we introduce the following trick. Note that the action of any $H_{A}$-chord in the collar neighborhood $\partial \times [1-\epsilon, 1]$ satisfies:
\begin{equation*}
\mathcal{A}_{H_{A}}(x_{A}) = -A(r-1+\epsilon)^{2},
\end{equation*}
if $x_{A}$ lies on $\partial U \times \{r\}$. There is no contribution from the primitive $f$, because $f$ is locally constant there, so that $f_{A}(x_{A}(1)) = f_{A}(x_{A}(0))$. For the corresponding $H_{U}$-chord $x'$, the same estimate is satisfied, thus we see that $x'$ is a time-one $H_{U}$-chord contained in $\partial U \times [1, A\epsilon + 1]$. Thus, there exists $\delta > 0$ sufficiently small, such that the map $r^{1}_{A}$ in fact descends to a sub-complex
\begin{equation}
r^{1}_{A}: CW^{*}(L_{0}, L_{1}; H_{M}) \to CW^{*}_{(-A\epsilon^{2}, \delta)}(L'_{0}, L'_{1}; H_{U}).
\end{equation} \par
	To get rid of the dependence of $A$, the idea is to take the direct limit as $A \to +\infty$. In order for such direct limit to exist, we must ensure that these maps are compatible with the natural inclusions
\begin{equation*}
i_{A, A'}: CW^{*}_{(-A\epsilon^{2}, \delta)}(L'_{0}, L'_{1}; H_{U}) \to CW^{*}_{(-A'\epsilon^{2}, \delta)}(L'_{0}, L'_{1}; H_{U}), \text{ for } A < A'.
\end{equation*}

\begin{lemma}\label{linear restriction homomorphism is compatible with inclusion maps of sub-complexes}
	There is a way of choosing families of Hamiltonians when defining the various maps involved, such that for every $A < A'$, the following diagram commutes up to chain homotopy
\begin{equation}
\begin{tikzcd}
CW^{*}(L_{0}, L_{1}; H_{M}) \arrow[r, "\tilde{r}^{1}_{A}"] \arrow[rd, "\tilde{r}^{1}_{A'}"] & CW^{*}_{-}(L_{0}^{1-\epsilon}, L_{1}^{1-\epsilon}; H_{A}) \arrow[r, "\tau_{A}"] \arrow[d, "c_{A, A'}"] & CW^{*}_{(-A\epsilon^{2}, \delta)}(L'_{0}, L'_{1}; H_{U}) \arrow[d, "i_{A, A'}"]\\
	& CW^{*}_{-}(L_{0}^{1-\epsilon}, L_{1}^{1-\epsilon}; H_{A'}) \arrow[r, "\tau_{A'}"] & CW^{*}_{(-A'\epsilon^{2}, \delta)}(L'_{0}, L'_{1}; H_{U})
\end{tikzcd}
\end{equation}
Here
\begin{equation*}
c_{A, A'}: CW^{*}_{-}(L_{0}^{1-\epsilon}, L_{1}^{1-\epsilon}; H_{A}) \to CW^{*}_{-}(L_{0}^{1-\epsilon}, L_{1}^{1-\epsilon}; H_{A'})
\end{equation*}
is the continuation map induced by a monotone homotopy between $H_{A}$ and $H_{A'}$, and
\begin{equation*}
i_{A, A'}: CW^{*}_{(-A\epsilon^{2}, \delta)}(L'_{0}, L'_{1}; H_{U}) \to CW^{*}_{(-A'\epsilon^{2}, \delta)}(L'_{0}, L'_{1}; H_{U})
\end{equation*}
is the natural inclusion map.
\end{lemma}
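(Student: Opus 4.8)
The plan is to prove the lemma by the standard ``homotopy of homotopies'' argument, adapted to the climbing-strip setup. First I would dispose of the right-hand square: it commutes up to chain homotopy for essentially formal reasons. The rescaling isomorphisms $\tau_A, \tau_{A'}$ identify $H_A$-chords (resp. $H_{A'}$-chords) contained in $U_0$ with $H_U$-chords in the truncation windows $(-A\epsilon^2,\delta)$ (resp. $(-A'\epsilon^2,\delta)$), and under these identifications the continuation map $c_{A,A'}$ induced by a monotone homotopy from $H_A$ to $H_{A'}$ becomes, up to chain homotopy, the tautological inclusion $i_{A,A'}$. Concretely, one chooses the monotone homotopy between $H_A$ and $H_{A'}$ to be constant (equal to the common $C^2$-small perturbation) on $U_0\setminus(\partial U\times[1-\epsilon,1])$, and to interpolate the two quadratic profiles monotonically in the collar $\partial U\times[1-\epsilon,1]$ and outside $U_0$; the pseudo-convexity of $\partial U\times\{1\}$ confines the relevant continuation strips to $U_0$, and the usual fact that a small monotone continuation map is homotopic to the identity on a narrow action window gives the homotopy commutativity of the square. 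Thus the real content of the lemma is the left-hand triangle, namely that $c_{A,A'}\circ\tilde r^1_A$ is chain homotopic to $\tilde r^1_{A'}$ as cochain maps $CW^*(L_0,L_1;H_M)\to CW^*_-(L_0^{1-\epsilon},L_1^{1-\epsilon};H_{A'})$.

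For the triangle I would first observe that, by the standard gluing theorem, $c_{A,A'}\circ\tilde r^1_A$ is computed by a ``concatenated'' Floer datum $D_0$ on the strip: the climbing Floer datum interpolating $(H_M,J_{M,t})$ to $(H_A,J_{A,t})$ (with the fixed moving Lagrangian labeling $(L_{0,s},L_{1,s})$ running from $(L_0,L_1)$ to $(L_0^{1-\epsilon},L_1^{1-\epsilon})$), followed by a long neck carrying the monotone homotopy from $H_A$ to $H_{A'}$. On the other side, $\tilde r^1_{A'}$ is computed by a climbing Floer datum $D_1$ interpolating $(H_M,J_{M,t})$ directly to $(H_{A'},J_{A',t})$ with the same moving Lagrangian labeling. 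The point — and the reason the lemma only asserts that a suitable \emph{choice} works — is that one can arrange the $s$-dependent families so that both $D_0$ and $D_1$ are monotone homotopies from $H_M$ to $H_{A'}$ obeying the same a priori estimate $-A'\epsilon^2\le\mathcal A(x_{A'})\le\delta$ on the incoming chord, and so that they are joined by a one-parameter family $D_\sigma$, $\sigma\in[0,1]$, of such homotopies. Here one uses that the space of admissible (monotone, $U_0$-confining) choices of Hamiltonian and almost-complex-structure profiles is contractible, and that the Lagrangian boundary conditions are independent of $A$, so only the analytic data need to be deformed.

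Having fixed $\{D_\sigma\}$, the chain homotopy $T^1$ is defined by counting rigid elements of the parametrized moduli space $\bigcup_{\sigma\in[0,1]}\mathcal P^{D_\sigma}(x_{A'},x)$ of climbing strips with $\deg(x_{A'})=\deg(x)-1$, where $x_{A'}$ ranges over $H_{A'}$-chords in $U_0$ with action in $(-A'\epsilon^2,\delta)$. Compactness of this one-dimensional moduli space follows, exactly as in the proof that $r^1_A$ is a cochain map, from uniform monotonicity along the family together with the confinement of strips to $U_0$ by the maximum principle; in particular no chord with action outside the window can arise as a limit, so the sub-complex $CW^*_-$ is preserved throughout the family. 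Analyzing the codimension-one boundary strata of this moduli space yields four types of contributions: breaking off an $(H_M,J_M)$-strip at $+\infty$, giving $T^1\circ m^1$; breaking off an $(H_{A'},J_{A'})$-strip in $U_0$ at $-\infty$, giving $m^1\circ T^1$; the $\sigma=1$ end, giving $\tilde r^1_{A'}$; and the $\sigma=0$ end $\mathcal P^{D_0}$, which by one more application of the gluing theorem collapses the long neck and gives $c_{A,A'}\circ\tilde r^1_A$. Summing these gives $m^1\circ T^1+T^1\circ m^1=c_{A,A'}\circ\tilde r^1_A-\tilde r^1_{A'}$; composing with the isomorphism $\tau_{A'}$ and combining with the right square produces $i_{A,A'}\circ r^1_A\simeq r^1_{A'}$, which is the form needed to take the direct limit.

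The main obstacle is precisely the bookkeeping of \emph{uniform} a priori estimates along the family $\{D_\sigma\}$. One must choose the interpolating families of Hamiltonians and the collar rescalings so that every $D_\sigma$ is simultaneously monotone, confines climbing strips to $U_0$, and respects the action window $(-A'\epsilon^2,\delta)$; for instance, the collar profiles must be monotone in $\sigma$ as well as in $s$, and the $C^2$-small perturbations making the various $H_{A''}$ non-degenerate must be chosen coherently over $\sigma$. Once these choices are pinned down the remainder is a routine application of Gromov compactness, transversality for generic choices within the admissible class, and gluing; as with the definition of $r^1_A$ itself, all the delicacy lies in choosing the geometric data so that the maximum principle and the action estimates persist along the whole parametrized family, which is exactly what forces the qualifier ``there is a way of choosing'' in the statement.
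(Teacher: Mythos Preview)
Your proposal is correct in outline, but the emphasis is inverted relative to the paper's proof, and your treatment of the right-hand square is thinner than it ought to be.

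For the left triangle, the paper does not run a full homotopy-of-homotopies argument. Instead it simply \emph{chooses} the families so that the concatenation of $H_{A,s}$ with the monotone homotopy $H_{A,A',s}$ literally agrees (after reparametrizing the glued strip) with $H_{A',s}$; the homotopy commutativity then follows from a single gluing theorem. Your parametrized moduli space $\bigcup_\sigma \mathcal{P}^{D_\sigma}$ is a valid and standard alternative, but it is more machinery than necessary once one exploits the freedom in the phrase ``there is a way of choosing.''

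Conversely, the paper regards the right square as the substantive part, which you dismiss as ``essentially formal.'' Your appeal to ``the usual fact that a small monotone continuation map is homotopic to the identity on a narrow action window'' is the correct ingredient, but it only applies directly when $A'-A$ is small. The paper makes this precise: for $A'$ close to $A$ one writes down the explicit continuation strip $u_{A,A'}(s,t) = x_{(1-\lambda(s))A + \lambda(s)A'}(t)$ and checks that $c_{A,A'}(x_A) = x_{A'}$ on the nose (i.e., the matrix is diagonal in the natural bases matched by $\tau_A,\tau_{A'}$). For arbitrary $A<A'$ one covers $[A,A']$ by finitely many such small intervals and composes, showing that $c_{A,A'}$ is chain homotopic to a map of the form $(I,0)$, hence to the inclusion after applying $\tau$. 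This subdivision step is the content you are glossing over; it is not difficult, but it is not a formality either, since one must control which almost complex structures $J_{B'}$ can be used at each stage while maintaining regularity.

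In short: both halves of your argument are sound, but you have worked hard where the paper works easily and worked easily where the paper works hard.
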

\begin{proof}
	When defining the continuation map $c_{A, A'}$, we can choose the family of Hamiltonians $H_{A, A', s}$, such that the composition of $H_{A, s}$ and $H_{A, A', s}$ agrees with the family $H_{A', s}$, after identifying the glued strip with $\mathbb{R} \times [0, 1]$ by a suitable reparametrization. Thus, by a standard gluing argument, the left triangle commutes up to chain homotopy, which is unique up to higher homotopies. \par
	To prove the homotopy commutativity of the right square, we need to study the continuation map $c_{A, A'}$ in more details. For this, we first write down a specific choice of the homotopy $H_{A, A', s}$, such that in the collar neighborhood $\partial U \times [1-\epsilon, 1]$, the homotopy takes the form
\begin{equation}\label{specific choice of monotone homotopy of Hamiltonians}
H_{A, A', s}(y, r) = ((1-\lambda(s))A + \lambda(s)A')(r-1+\epsilon)^{2}, (y, r) \in \partial U \times [1-\epsilon, 1],
\end{equation}
where $\lambda: \mathbb{R} \to [0, 1]$ is a smooth non-increasing function, such that $\lambda(s) = 1$ for $s \ll 0$, and $\lambda(s) = 0$ for $s \gg 0$.
Let $x_{A}$ be a time-one $H_{A}$-chord, which corresponds to a time-one $H_{U}$-chord $x'$ under $\tau_{A}$. We shall first prove that, if $A'$ is sufficiently close to $A$, then the image of $x_{A}$ under $c_{A, A'}$ is the unique time-one $H_{A'}$-chord which corresponds to the same time-one $H_{U}$-chord $x'$ under $\tau_{A'}$.
There are two cases to consider:
\begin{enumerate}[label=(\roman*)]

\item $x_{A}$ is a small perturbation of a constant Hamiltonian chord for $K_{A}$. Such a Hamiltonian chord is contained in $U_{0} \setminus (\partial U \times [1-\epsilon, 1])$, where $K_{A}$ and $K_{A'}$ are both zero. Thus we can take their small perturbations $H_{A}$ and $H_{A'}$ such that they agree inside $U_{0} \setminus (\partial U \times [1-\epsilon, 1])$. Since for a constant homotopy of Hamiltonians the continuation map is necessarily the identity, we conclude that $c_{A, A'}(x_{A})$ must be the same Hamiltonian chord as $x_{A}$, now regarded as a Hamiltonian chord for $H_{A'}$.

\item $x_{A}$ lies in the collar neighborhood $\partial U \times [1-\epsilon, 1]$, and corresponds to a non-constant time-one $H_{U}$-chord $x'$ in the cylindrical end. It is a general property shared by continuation maps that the count of rigid continuation strips does not jump except for a discrete set of $A$'s. Thus for $A'$ sufficiently close to $A$, we have that $c_{A, A'}(x_{A}) = x_{A'}$, as $c_{A, A} = id$ is the identity map (on the chain level), as it is the continuation map with respect to the constant homotopy of Hamiltonians.
In particular, for our specific choice of homotopy of Hamiltonians \eqref{specific choice of monotone homotopy of Hamiltonians}, there is an explicit formula for such a continuation strip:
\begin{equation}\label{unique continuation strip}
u_{A, A'}(s, t) = x_{(1-\lambda(s))A + \lambda(s)A'}(t).
\end{equation}
In other words, if $A'$ is sufficiently close to $A$, then the unique continuation strip which asymptotically converges to $x_{A}$ at $+\infty$ is the trace of all Hamiltonian chords of $H_{A''}$ for varying $A'' \in [A, A']$, starting from the given time-one $H_{A}$-chord $x_{A}$.

\end{enumerate}
This proves that if $A'$ is sufficiently close to $A$, the right square strictly commutes on the chain level. \par

	For general $A' > A$, commutativity might not hold on strictly on the chain level, as there might be extra off-diagonal terms appearing in the continuation map. To prove homotopy commutativity we use the following trick. For each $B \in [A, A']$, there is a small $\delta_{B} > 0$ such that for $B' \in (B - \delta_{B}, B + \delta_{B})$, we have that there is a choice of almost complex structure $J_{B'}$ in a small neighborhood of $J_{B}$ homotopic to $J_{B}$, for which the relevant moduli spaces of inhomogeneous pseudoholomorphic strips are regular, and the chain-level continuation map
\begin{equation*}
c_{B', B}: CW^{*}_{-}(L_{0}^{1-\epsilon}, L_{1}^{1-\epsilon}; H_{B'}, J_{B'}) \to CW^{*}_{-}(L_{0}^{1-\epsilon}, L_{1}^{1-\epsilon}; H_{B}, J_{B}),
\end{equation*}
if $B' \le B$, or
\begin{equation*}
c_{B, B'}: CW^{*}_{-}(L_{0}^{1-\epsilon}, L_{1}^{1-\epsilon}; H_{B}, J_{B}) \to CW^{*}_{-}(L_{0}^{1-\epsilon}, L_{1}^{1-\epsilon}; H_{B'}, J_{B'})
\end{equation*}
if $B \le B'$, are diagonal matrices with respect to the natural basis. The intervals $(B - \delta_{B}, B + \delta_{B})$ form an open cover of $[A, A']$, from which we choose a finite sub-cover, $(B_{0} - \delta_{B_{0}}, B_{0} + \delta_{B_{0}}), \cdots, (B_{N} - \delta_{B_{N}}, B_{N} + \delta_{B_{N}})$, where $A < B_{0} < \cdots < B_{N} < A'$. For each $B_{j}$ and $B_{j+1}$, $(B_{j} - \delta_{B_{j}}, B_{j} + \delta_{B_{j}}) \cap (B_{j+1} - \delta_{B_{j+1}}, B_{j+1} + \delta_{B_{j+1}})$ is non-empty, so we can pick a number $B'_{j}$ in this intersection. Then by a standard homotopy argument, the continuation map $c_{B_{j}, B_{j+1}}$ is chain homotopic to the composition $c_{B'_{j}, B_{j+1}} \circ c_{B'_{j}, B_{j+1}}$. With respect to the natural basis (ordered according to action) for these Floer cochain complexes, both $c_{B_{j}, B'_{j}}$ and $c_{B'_{j}, B_{j+1}}$ can be written as matrices of the form $(I, 0)$, where $I$ is the identity matrix (the number of columns of $c_{B_{j}, B'_{j}}$ should be equal to the number of rows of $c_{B'_{j}, B_{j+1}}$). Now the continuation map $c_{A, A'}$ is chain homotopic to the composition $c_{B_{N-1}, B_{N}} \circ \cdots \circ c_{B_{0}, B_{1}}$, and is therefore chain homotopic to a map $\kappa_{A, A'}$ which can be written as a matrix of the form $(I, 0)$. Now it is easy to see that the map $\kappa_{A, A'}$, when replacing $c_{A, A'}$, makes the right square strictly commutes, because we have $\kappa_{A, A'}(x_{A}) = x_{A'}$ by definition, for the time-one $H_{A}$-chord $x_{A}$ and the time-one $H_{A'}$-chord $x_{A'}$ corresponding to the same time-one $H_{U}$-chord $x'$. Thus, for the continuation map $c_{A, A'}$ itself, the right square commutes up to chain homotopy. \par
\end{proof}

	As an immediate consequence of this lemma, we have the following: \par

\begin{corollary}
	The homotopy direct limit map
\begin{equation}\label{linear restriction homomorphism}
r^{1} = \lim\limits_{A \to +\infty} r^{1}_{A}: CW^{*}(L_{0}, L_{1}; H_{M}) \to CW^{*}(L'_{0}, L'_{1}; H_{U})
\end{equation}
is well-defined. It is a cochain map.
\end{corollary}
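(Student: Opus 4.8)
The plan is to realize $r^{1}$ as a homotopy direct limit of the cochain maps $r^{1}_{A}$, using the framework of subsection \ref{A-infinity homotopy direct limit}. First I would pin down the target of the limit: since $\delta$ is fixed while $-A\epsilon^{2}\to-\infty$ as $A\to+\infty$, and every time-one $H_{U}$-chord from $L'_{0}$ to $L'_{1}$ has action less than $\delta$, the truncated complexes $CW^{*}_{(-A\epsilon^{2},\delta)}(L'_{0},L'_{1};H_{U})$ exhaust $CW^{*}(L'_{0},L'_{1};H_{U})$ along the inclusions $i_{A,A'}$, so the latter is (strictly, hence also homotopy-theoretically) the colimit of this directed system. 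Then I would invoke the preceding Lemma \ref{linear restriction homomorphism is compatible with inclusion maps of sub-complexes}: each $r^{1}_{A}$ is a cochain map, and for $A<A'$ the square relating $r^{1}_{A}$, $r^{1}_{A'}$ and $i_{A,A'}$ commutes up to chain homotopy, i.e. $i_{A,A'}\circ r^{1}_{A}\simeq r^{1}_{A'}$. Thus $\{r^{1}_{A}\}$ is a morphism of directed systems up to coherent homotopy, with source the constant system on $CW^{*}(L_{0},L_{1};H_{M})$.

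The second step is to rigidify and pass to the limit. Fixing a cofinal sequence $A_{0}<A_{1}<\cdots\to\infty$, I would follow the rectification argument already used in the proof of Lemma \ref{linear restriction homomorphism is compatible with inclusion maps of sub-complexes} (replacing the continuation maps $c_{A,A'}$ on the intermediate complexes $CW^{*}_{-}(L_{0}^{1-\epsilon},L_{1}^{1-\epsilon};H_{A})$ by the strictly compatible maps $\kappa_{A,A'}$ constructed there), together with one further round of rectification making the maps $\tilde{r}^{1}_{A_{n}}$ — hence $r^{1}_{A_{n}}=\tau_{A_{n}}\circ\tilde{r}^{1}_{A_{n}}$ — strictly compatible with the inclusions, after which $r^{1}:=\varinjlim_{n}r^{1}_{A_{n}}$ is literally a filtered colimit. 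Equivalently and more invariantly, I would form the mapping telescopes of both systems as in subsection \ref{A-infinity homotopy direct limit}: the homotopy-coherent family $\{r^{1}_{A_{n}},h_{A_{n},A_{n+1}}\}$ assembles into an honest cochain map from the telescope of the constant source system to the telescope $\mathrm{Tel}$ of the target system; composing with a homotopy inverse of the quasi-isomorphism (source telescope) $\to CW^{*}(L_{0},L_{1};H_{M})$ on one side — available since we work over a field — and with the quasi-isomorphism $\mathrm{Tel}\to\varinjlim CW^{*}_{(-A_{n}\epsilon^{2},\delta)}(L'_{0},L'_{1};H_{U})=CW^{*}(L'_{0},L'_{1};H_{U})$ on the other, produces $r^{1}$. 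Either construction gives the same map up to chain homotopy, which is all that is needed since $r$ (and $\Theta_{\Gamma}$) is only defined up to homotopy anyway.

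The cochain map property is then automatic: in the rectified picture $r^{1}$ is a filtered colimit of cochain maps and $d$ commutes with filtered colimits; in the telescope picture the assembled maps are cochain maps by construction. The coherence of the homotopies $h_{A_{n},A_{n+1}}$ required to run either argument is supplied exactly as in the proof of the Lemma, by contractibility of the spaces of auxiliary Floer data and an induction on $n$.

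I expect the main obstacle to be the second step: ensuring the homotopy direct limit is honestly defined and, above all, non-degenerate. The squares commute only up to homotopy, so one must check that rectification (or the telescope together with the choice of quasi-inverses) does not collapse the limit — concretely, that for a fixed generator the coefficients of $r^{1}_{A_{n}}$ stabilize once $A_{n}\epsilon$ exceeds the level of the relevant $H_{U}$-chord, which is precisely what the specific geometric homotopies of Lemma \ref{linear restriction homomorphism is compatible with inclusion maps of sub-complexes}, built from the explicit continuation strips \eqref{unique continuation strip}, guarantee. One should also verify independence of the cofinal sequence and of all auxiliary choices up to homotopy, but this is routine given that $\Theta_{\Gamma}$ is itself only specified up to homotopy.
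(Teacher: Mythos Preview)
Your proposal is correct and follows the same strategy as the paper: identify the full complex $CW^{*}(L'_{0},L'_{1};H_{U})$ as the (homotopy) direct limit of the truncated complexes $CW^{*}_{(-A\epsilon^{2},\delta)}(L'_{0},L'_{1};H_{U})$, and then pass the cochain maps $r^{1}_{A}$ to the limit using the compatibility furnished by Lemma~\ref{linear restriction homomorphism is compatible with inclusion maps of sub-complexes}. The paper's own proof is a two-line sketch that simply records these two observations, whereas you have spelled out in detail how the limit is actually formed (rectification via the $\kappa_{A,A'}$, or equivalently via mapping telescopes). One small remark: your aside ``since we work over a field'' is both unnecessary and mildly off --- the paper works over $\mathbb{Z}$ here, but for the \emph{constant} source system the telescope deformation-retracts onto the source anyway, so no field hypothesis is needed.
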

\begin{proof}
	Simply note that the direct limit $\lim\limits_{A \to +\infty} CW^{*}_{(-A\epsilon^{2}, \delta)}(L'_{0}, L'_{1}; H_{U})$ is homotopy equivalent to the whole wrapped Floer cochain complex $CW^{*}(L_{0}, L_{1}; H_{U})$. \par
	The second item follows immediately from the fact that $r^{1}_{A}$ is a cochain map for every $A$. \par
\end{proof}

	The map \eqref{linear restriction homomorphism} is called the linear Viterbo restriction homomorphism. \par

\subsection{Non-linear terms of the Viterbo restriction functor}
	The higher order terms of the Viterbo restriction functor are defined by counting inhomogeneous pseudoholomorphic maps of similar kind, whose domains are disks with several punctures. These are defined in a slightly different manner, as we shall also include $A$ as a parameter. Let $k \ge 2$ be a positive integer and let $(Z^{k+1}, A) \in \mathcal{S}^{k+1} = \mathcal{R}^{k+1} \times (0, +\infty)$ be an element of (the smooth part of) the multiplihedra, where $Z^{k+1}$ is a disk with $k+1$ boundary punctures. The punctures are labeled by $z_{0}, \cdots, z_{k}$ in a counterclockwise order, where $z_{0}$ is negative, while other punctures are positive. These punctures should come with chosen strip-like ends:
\begin{equation*}
\epsilon_{0}: (-\infty, 0] \times [0, 1] \to Z^{k+1},
\end{equation*}
and
\begin{equation*}
\epsilon_{j}: [0, +\infty) \times [0, 1] \to Z^{k+1},
\end{equation*}
for $j = 1, \cdots, k$. The boundary component between $z_{0}$ and $z_{1}$ is denoted by $I_{0}^{-}$, the boundary component between $z_{0}$ and $z_{k}$ is denoted by $I_{k}^{-}$, and for $j = 1, \cdots, k-1$, the boundary component between $z_{j}$ and $z_{j+1}$ is denoted by $I_{j}^{+}$. \par
	The boundary conditions for $(Z^{k+1}, A)$ is specified by the following definition. \par

\begin{definition}
	A moving Lagrangian label for $(Z^{k+1}, A)$ is a collection of families of Lagrangian submanifolds, one for each boundary component of $Z^{k+1}$. The assignment is as follows:
\begin{enumerate}[label=(\roman*)]

\item Assigned to $I^{+}_{j}$, the family of Lagrangian submanifolds is the constant family $L_{j}$, for $j = 1, \cdots, k-1$;

\item Assigned to $I^{-}_{0}$, the family of Lagrangian submanifolds is the family $L_{0, A, s}$;

\item Assigned to $I^{-}_{k}$, the family of Lagrangian submanifolds is the family $L_{k, A, s}$.

\end{enumerate}
Here $L_{0, A, s}, L_{k, A, s}$ are the exact Lagrangian isotopies introduced in Definition \ref{moving Lagrangian label}.
\end{definition}

	To write down inhomogeneous Cauchy-Riemann equations adapted to our setup, we need to introduce an appropriate class of Floer data on these domains. \par

\begin{definition}
	A Floer datum on $(Z^{k+1}, A)$ consists of the following data:
\begin{enumerate}[label=(\roman*)]

\item A collection of weights $\lambda_{0}, \cdots, \lambda_{k}$.

\item A basic one-form $\beta_{Z^{k+1}, A}$ on $Z^{k+1}$, such that over the $j$-th strip-like ends it agrees with $\lambda_{j} dt$. Here by a basic one-form we mean a sub-closed one-form which vanishes along the boundary of $Z^{k+1}$, and whose differential vanishes in a neighborhood of the boundary of $Z^{k+1}$.

\item A family of Hamiltonians $H_{Z^{k+1}, A}$ depending on $(Z^{k+1}, A)$, such that near the $j$-th strip-like end, it agrees with $\lambda_{j}H_{A}$ up to some conformal rescaling factor $\rho_{j}$;

\item A family of almost complex structures $J_{Z^{k+1}, A}$ depending on $(Z^{k+1}, A)$, such that near the $j$-th strip-like end, it agrees with $J_{A}$ up to the same conformal rescaling factor $\rho_{j}$.

\item A shifting function $\rho_{(Z^{k+1}, A)}: \partial Z^{k+1} \to (0, +\infty)$ which takes the value $\rho_{j}$ on the boundary of the $j$-th strip-like end.

\end{enumerate}
\end{definition}

\begin{definition}
	Suppose we have chosen a moving Lagrangian labeling for $(Z^{k+1}, A)$, as well as a Floer datum on $(Z^{k+1}, A)$. A climbing disk (with $k+1$ punctures) is a triple $(Z^{k+1}, A, w^{k+1})$ where $(Z^{k+1}, A) \in \mathcal{S}^{k+1}$, and
\begin{equation*}
w^{k+1}: Z^{k+1} \to M
\end{equation*}
is a smooth map with the following properties:
\begin{enumerate}[label=(\roman*)]

\item $w^{k+1}$ satisfies the inhomogeneous Cauchy-Riemann equation:
\begin{equation}
(dw - \beta_{Z^{k+1}, A} \otimes X_{H_{Z^{k+1}, A}}(w)) + J_{Z^{k+1}, A} \circ (dw - \beta_{Z^{k+1}, A} \otimes X_{H_{Z^{k+1}}, A}(w)) \circ j_{Z^{k+1}, A} = 0.
\end{equation}

\item The boundary conditions for $w$ are given by the chosen moving Lagrangian labeling $(L_{0, A}, L_{1}, \cdots, L_{k-1}, L_{k, A})$.

\item The asymptotic convergence conditions of $w^{k+1}$ are:
\begin{equation*}
\lim\limits_{s \to -\infty} w^{k+1} \circ \epsilon_{0}(s, \cdot) = x_{0, A}(\cdot)
\end{equation*}
for a time-one $H_{A}$-chord $x_{0, A}$ from $L_{0, A}$ to $L_{k, A}$,
and
\begin{equation*}
\lim\limits_{s \to +\infty} w^{k+1} \circ \epsilon_{j}(s, \cdot) = x_{j}(\cdot)
\end{equation*}
for a time-one $H_{M}$-chord $x_{j}$ from $L_{j-1}$ to $L_{j}$, where $j = 1, \cdots, k$. Similar to the case of climbing strips, we require that the $H_{A}$-chord $x_{A, 0}$ satisfy the following condition
\begin{equation*}
-A\epsilon^{2} \le \mathcal{A}(x_{0, A}) \le \delta.
\end{equation*}

\end{enumerate}
\end{definition}

	The definition of a Floer datum easily generalizes to broken disks in the multiplihedra $\bar{\mathcal{S}}^{k+1}$. There is also a notion of universal and conformally consistent choices of Floer data, similar to that in the case for ordinary $A_{\infty}$-disks and also that in the case of the action-restriction functor. Now let us assume from now on that universal and conformally consistent choices of Floer data have been made for all elements in $\bar{\mathcal{S}}^{k+1}$ and for all $k$. \par
	Note that for each climbing disk $(Z^{k+1}, A, w^{k+1})$, the asymptotic convergence condition $x_{0, A}$ at the negative strip-like end depends on the moduli parameter $A$. In order to define a moduli space of such climbing disks, such that the Hamiltonian chord at the negative strip-like end can be fixed as the output, we must find a way of identifying these $x_{0, A}$ for different $A$. This issue can be resolved using the observation that time-one $H_{A}$-chords inside $U_{0}$ can be identified with certain time-one $H_{U}$-chords. Let $x'_{0}$ be the time-one $H_{U}$-chord corresponding to $x_{0, A}$, under the natural map
\begin{equation*}
\tau_{A}: CW^{*}_{-}(L_{0, A}, L_{k, A}; H_{A}) \to CW^{*}_{(-A\epsilon^{2}, \delta)}(L'_{0}, L'_{k}; H_{U}).
\end{equation*}
As discussed in the proof of Lemma \ref{linear restriction homomorphism is compatible with inclusion maps of sub-complexes}, the image of $x_{0, A}$ under the continuation map $c_{A, A'}$ is a time-one $H_{A'}$-chord $x_{0, A'}$ corresponding to the same $H_{U}$-chord $x'_{0}$ under the  isomorphism $\tau_{A'}$. Thus, we may write that asymptotic convergence condition as
\begin{equation}
\lim\limits_{s \to -\infty} w^{k+1}(s, \cdot) = \tau_{A}(x'_{0})(\cdot),
\end{equation}
for a fixed time-one $H_{U}$-chord $x'_{0}$ from $L'_{0}$ to $L'_{k}$. Then, the moduli space of such climbing disks, with the given asymptotic convergence conditions $x'_{0}$ at the negative strip-like end, and $x_{1}, \cdots, x_{k}$ at the positive strip-like ends, is denoted by $\mathcal{P}_{k+1}(x'_{0}; x_{1}, \cdots, x_{k})$. The virtual dimension of this moduli space is
\begin{equation*}
v-\dim \mathcal{P}_{k+1}(x'_{0}; x_{1}, \cdots, x_{k}) = \deg(x'_{0}) - \deg(x_{1}) - \cdots - \deg(x_{k}) + k - 1,
\end{equation*}
where $k-1$ is the dimension of the moduli space $\mathcal{S}^{k+1}$ of the underlying domains $(Z^{k+1}, A)$. \par
	There is a natural compactification of the moduli space of climbing disks with $k+1$ punctures, denoted by
\begin{equation*}
\bar{\mathcal{P}}_{k+1}(x'_{0}; x_{1}, \cdots, x_{k}).
\end{equation*}
The elements in this compactified moduli space are broken climbing disks, generalizing the broken climbing strips. There are four types of such broken configurations:
\begin{enumerate}[label=(\roman*)]

\item A pair $((Z^{k+1}, A, w^{k+1}), u)$, where $(Z^{k+1}, A, w^{k+1})$ is a climbing disk with $k+1$ punctures, and $u$ is an inhomogeneous pseudoholomorphic strip in $M$. $w^{k+1}$ and $u$ have a common asymptotic convergence condition $x_{new}$ at some positive end of $w^{k+1}$ and the negative end of $u$, for some time-one $H_{M}$-chord $x_{new}$. This occurs as the limit of a sequence of climbing disks where the energy escapes through some positive strip-like end.

\item A pair $(u', (Z^{k+1}, A, w^{k+1}))$, where $u'$ is an inhomogeneous pseudoholomorphic strip in $U$ with respect to the Floer datum $(H_{A}, J_{A})$, and $(Z^{k+1}, A, w^{k+1})$ is a climbing disk with $k+1$ punctures. $u'$ and $w^{k+1}$ have a common asymptotic convergence condition $x'_{new}$ at the positive end of $u'$ and the negative end of $w^{k+1}$ (up to a conformal rescaling), for some time-one $H_{U}$-chord $x'_{new}$. This occurs as the limit of a sequence of climbing disks as the energy escapes through the negative strip-like end.

\item A pair $((Z^{k-m+2}, A, w^{k-m+2}), u)$, where $(Z^{k-m+2}, A, w^{k-m+2})$ is a climbing disk with $k-m+2$ punctures (for some $m > 1$) and $u$ is an inhomogeneous pseudoholomorphic $(m+1)$-punctured disk in $M$. $w^{k-m+2}$ and $u$ have a common asymptotic convergence condition $x_{new}$ at some positive end of $w^{k-m+2}$, and the negative end of $u$, for some time-one $H_{M}$-chord $x_{new}$. This occurs as the limit of a sequence of climbing disks for which the parameter $A$ tends to $0$ or remains finite.

\item A tuple $(u', (Z^{s_{1}+1}, A_{1}, w^{s_{1}+1}), \cdots, (Z^{s_{l}+1}, A_{l}, w^{s_{l}+1}))$, where $(Z^{s_{j}+1}, A_{j}, w^{s_{j}+1})$ is a climbing disk with $s_{j}+1$ punctures, and $u'$ is an inhomogeneous pseudoholomorphic $(l+1)$-punctured disk with respect to the Floer datum $(H_{A}, J_{A})$ (for some $l > 1$). $u'$ and $w^{s_{j}+1}$ have a common asymptotic convergence condition $x'_{new, j}$ at the $j$-th positive end of $u'$ (up to conformal rescaling), and the negative end of $w^{s_{j}+1}$, for some time-one $H_{U}$-chord $x'_{new, j}$. This occurs as the limit of a sequence of climbing disks for which the parameter $A$ tends to $+\infty$.

\end{enumerate}
Note that the broken configurations of type (i) and type (iii) can be written in a uniform way, as (i) is the special case $m = 1$. And the broken configurations of type (ii) and type (iv) can be written in a uniform way, as (ii) is the special case $l = 1$.
In case (ii), by an $(H_{A}, J_{A})$-pseudoholomorphic disk $u'$ with $l+1$ punctures, we mean it satisfies the inhomogeneous Cauchy-Riemann equation with respect to a domain-dependent family of Hamiltonians which agrees with $H_{A}$ (up to rescaling) over the strip-like ends, and a domain-dependent family of almost complex structures which agrees with $J_{A}$ (up to rescaling) over the strip-like ends. Such an inhomogeneous pseudoholomorphic disk then corresponds to an $(H_{U}, J_{U})$-pseudoholomorphic disk under the conformal rescaling by Liouville flow.
In case (iv), such an $(H_{A}, J_{A})$-pseudoholomorphic strip $u'$ corresponds to an $(H_{U}, J_{U})$-pseudoholomorphic strip under conformal rescaling. 
Thus, we have an isomorphism of the codimension one boundary strata:
\begin{equation}\label{boundary strata of moduli space of climbing disks}
\begin{split}
&\partial \bar{\mathcal{P}}_{k+1}(x'_{0}; x_{1}, \cdots, x_{k})\\
\cong & \coprod_{i} \coprod_{\substack{x_{new}\\ \deg(x_{new}) = \deg(x_{i+1}) + \cdots + \deg(x_{i+m}) + 2 - k}} \mathcal{P}_{k-m+2}(x'_{0}; x_{1}, \cdots, x_{i}, x_{new}, x_{i+m+1}, \cdots, x_{k})\\
& \times \mathcal{M}_{m+1}(x_{new}, x_{i+1}, \cdots, x_{i+m})\\
\cup & \coprod_{\substack{s_{1}, \cdots, s_{l}\\s_{1} + \cdots + s_{l} = k}}
\coprod_{\substack{x'_{new, 1}, \cdots, x'_{new, l}\\ \deg(x'_{new, j}) = \deg(x_{s_{1}+\cdots+s_{j-1}+1}) + \cdots + \deg(x_{s_{1}+\cdots+s_{j}}) + 1 - s_{j}}}
\mathcal{M}_{l+1}(x'_{0}, x'_{new, 1}, \cdots, x'_{new, l})\\
& \times \mathcal{P}_{s_{1}+1}(x'_{new, 1}; x_{1}, \cdots, x_{s_{1}}) \times \cdots \times \mathcal{P}_{s_{l}+1}(x'_{new, l}; x_{s_{1}+\cdots+s_{l-1}+1}, \cdots, x_{k}).
\end{split}
\end{equation}
Note that the components $\mathcal{M}_{l+1}(x'_{0}, x'_{new, 1}, \cdots, x'_{new, l})$ are moduli spaces of $(H_{U}, J_{U})$-pseudoholomorphic disks in $U$, which enter this picture as the identification of the moduli spaces $\mathcal{M}_{l+1}(x_{0, A}; x_{new, 1, A}, \cdots, x_{new, l, A})$ of $(H_{A}, J_{A})$-pseudoholomorphic disks, provided that a rigid climbing disk exists for that $A$. As before, this isomorphism holds if the Floer data are chosen in a consistent way. \par
	If we choose Floer data generically, the compactified moduli space $\bar{\mathcal{P}}_{k+1}(x'_{0}; x_{1}, \cdots, x_{k})$, when the virtual dimension is zero or one, is a compact smooth/topological manifold of dimension zero/one. In particular, $\mathcal{P}_{k+1}(x'_{0}; x_{1}, \cdots, x_{k})$ is compact if the dimension is zero, and therefore consists of finitely many points with natural orientations. Then by counting rigid elements in the moduli space $\mathcal{P}_{k+1}(x'_{0}; x_{1}, \cdots, x_{k})$, we can define multilinear maps
\begin{equation}\label{higher order terms of the Viterbo restriction homomorphism}
r^{k}: CW^{*}(L_{k-1}, L_{k}; H_{M}) \otimes \cdots \otimes CW^{*}(L_{0}, L_{1}; H_{M}) \to CW^{*}(L'_{0}, L'_{k}; H_{U}).
\end{equation}
That is, for each rigid climbing disk $(Z^{k+1}, A, w^{k+1})$, we get a canonical isomorphism of orientation lines
\begin{equation*}
o_{(Z^{k+1}, A, w^{k+1})}: o_{x_{1}} \otimes \cdots \otimes o_{x_{k}} \to o_{x_{0, A}}.
\end{equation*}
We then compose it with the map $\tau_{A}$ to get an isomorphism
\begin{equation*}
o_{x_{1}} \otimes \cdots \otimes o_{x_{k}} \to o_{x'_{0}},
\end{equation*}
and sum over all such $(Z^{k+1}, A, w^{k+1})$ and all $x'_{0}$ to get the desired map $r^{k}$. \par
	Now consider the situation where the virtual dimension of $\bar{\mathcal{P}}(x', x)$ is one. Then the structure of the codimension one boundary strata of the moduli spaces $\bar{\mathcal{P}}(x', x)$ \eqref{boundary strata of moduli space of climbing strips} and the moduli spaces $\bar{\mathcal{P}}_{k+1}(x'_{0}; x_{1}, \cdots, x_{k})$ \eqref{boundary strata of moduli space of climbing disks} immediately implies: \par

\begin{lemma}
	The multilinear maps
\begin{equation*}
r^{k}: CW^{*}(L_{k-1}, L_{k}; H_{M}) \otimes \cdots \otimes CW^{*}(L_{0}, L_{1}; H_{M}) \to CW^{*}(L'_{0}, L'_{k}; H_{U})
\end{equation*}
satisfy the equations for $A_{\infty}$-functors.
\end{lemma}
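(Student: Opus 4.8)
The plan is to run the standard argument identifying the $A_{\infty}$-functor equations for $r=\{r^{k}\}_{k\ge 1}$ with the count of boundary points of one-dimensional moduli spaces of broken climbing disks. Concretely, for fixed objects $L_{0},\dots,L_{k}$ and a choice of generators $x_{j}\in CW^{*}(L_{j-1},L_{j};H_{M})$ together with an output generator $x'_{0}\in CW^{*}(L'_{0},L'_{k};H_{U})$ subject to the dimension constraint $\deg(x'_{0})-\sum_{j}\deg(x_{j})+k-1=1$, the compactified moduli space $\bar{\mathcal{P}}_{k+1}(x'_{0};x_{1},\dots,x_{k})$ is a compact one-dimensional manifold with boundary. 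This uses the classical transversality available because all the $L_{j}$ and $L'_{j}$ are embedded exact cylindrical, together with the compactness discussion preceding the lemma. Its signed boundary count therefore vanishes, and translating this vanishing into algebra via the canonical orientation isomorphisms $o_{(Z^{k+1},A,w^{k+1})}$ and the identification maps $\tau_{A}$ will produce exactly the $A_{\infty}$-functor equations for the maps \eqref{higher order terms of the Viterbo restriction homomorphism}.

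First I would match up the two families of terms appearing in the $A_{\infty}$-functor equation against the boundary strata listed in \eqref{boundary strata of moduli space of climbing disks}. The strata of types (i) and (iii), combined, are products $\mathcal{P}_{k-m+2}(x'_{0};\dots,x_{new},\dots)\times\mathcal{M}_{m+1}(x_{new},\dots)$; these contribute the terms in which a source operation $m^{m}_{\mathcal{W}(M)}$ is inserted among the inputs of $r^{k-m+1}$. The strata of types (ii) and (iv), combined, are products $\mathcal{M}_{l+1}(x'_{0},x'_{new,1},\dots,x'_{new,l})\times\mathcal{P}_{s_{1}+1}(x'_{new,1};\dots)\times\cdots\times\mathcal{P}_{s_{l}+1}(x'_{new,l};\dots)$, where the first factor is a moduli space of $(H_{U},J_{U})$-holomorphic disks in $U$ obtained from $(H_{A},J_{A})$-disks by the conformal rescaling by the Liouville flow, using the conformally consistent Floer data; these contribute the terms $m^{l}_{\mathcal{W}(U)}(r^{s_{l}}(\dots),\dots,r^{s_{1}}(\dots))$. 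The special cases $m=1$ and $l=1$ recover the differential terms $r^{k}(\dots,m^{1}_{\mathcal{W}(M)}(x_{i}),\dots)$ and $m^{1}_{\mathcal{W}(U)}\circ r^{k}$, which is where the one-dimensional climbing-strip moduli spaces of \eqref{boundary strata of moduli space of climbing strips} and the already-established fact that $r^{1}$ is a cochain map on the direct-limit complex \eqref{linear restriction homomorphism} enter; I would also invoke Lemma \ref{linear restriction homomorphism is compatible with inclusion maps of sub-complexes} to see that the formulation of the equations is independent of the action window used to define $r^{1}$, so that $r^{1}$ and the $r^{k}$ with $k\ge 2$ fit into a single system of equations even though $A$ is treated as a limit parameter in the first case and as a moduli parameter in the second.

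The hard part will be the behavior at the $A\to+\infty$ end of the moduli $\mathcal{S}^{k+1}=\mathcal{R}^{k+1}\times(0,+\infty)$, i.e. justifying that the list of codimension-one degenerations in \eqref{boundary strata of moduli space of climbing disks} is exhaustive and correctly identified. One must rule out any loss of compactness as $A\to+\infty$ other than energy concentrating inside $U_{0}$: this is handled by the maximum principle with respect to the pseudoconvex hypersurface $\partial U\times\{1\}$, so that the escaping pieces are genuine $(H_{A},J_{A})$-holomorphic disks confined to $U_{0}$ and hence, after rescaling, $(H_{U},J_{U})$-disks contributing the $m^{l}_{\mathcal{W}(U)}$-terms. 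This must be combined with the action constraint \eqref{action constraint for the inner chord} and its stability under breaking, which is what makes the identification $\tau_{A}$ with honest $H_{U}$-chords globally coherent along the entire boundary of the moduli space.

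A secondary point requiring care is the sign and orientation bookkeeping: one has to check that composing the canonical orientation isomorphism $o_{(Z^{k+1},A,w^{k+1})}$ with $\tau_{A}$ respects the conventions that make $\mathcal{W}(U)$ an $A_{\infty}$-category over $\mathbb{Z}$, and that the gluing isomorphisms at each boundary stratum carry the product orientation to the boundary orientation with the expected signs. This follows the pattern of the orientation discussion already used elsewhere in the paper, but must be verified for the climbing-disk moduli, in particular for the rescaling identification of $(H_{A},J_{A})$-disks with $(H_{U},J_{U})$-disks. Once these points are in place, the vanishing of the signed boundary count of $\bar{\mathcal{P}}_{k+1}(x'_{0};x_{1},\dots,x_{k})$ over all admissible $x'_{0}$ yields the $A_{\infty}$-functor equations directly, completing the proof.
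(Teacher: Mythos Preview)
Your proposal is correct and follows essentially the same approach as the paper. The paper's own proof is extremely terse, consisting of a single sentence noting that the non-trivial part is taking the direct limit $\lim_{A\to+\infty}$ for the various Floer cochain spaces and maps, and that this can be handled as in Lemma \ref{linear restriction homomorphism is compatible with inclusion maps of sub-complexes}; you have spelled out the boundary-strata analysis from \eqref{boundary strata of moduli space of climbing disks} and the role of that lemma in detail, which is precisely what the paper leaves implicit.
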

\begin{proof}
	The non-trivial part involves taking the direct limit $\lim\limits_{A \to +\infty}$ for various Floer cochain spaces and various maps, but that can be dealt with in a similar way as Lemma
\end{proof}

	Varying the objects $L_{j}$'s in the full sub-category $\mathcal{B}(M)$, we get an $A_{\infty}$-functor
\begin{equation}
r: \mathcal{B}(M) \to \mathcal{W}(U).
\end{equation} \par
	To summarize, the spirit of our construction is as follows. Implicit, we have defined a version of homotopy direct limit
\begin{equation*}
\lim\limits_{A \to +\infty} \mathcal{W}_{-}(M; H_{A})
\end{equation*}
where $\mathcal{W}_{-}(M; H_{A})$ is an $A_{\infty}$-category whose morphism spaces are the truncated wrapped Floer complexes $CW^{*}_{-}(L_{0}^{1-\epsilon}, L_{1}^{1-\epsilon}; H_{A})$, and whose $A_{\infty}$-structure maps are defined by counting inhomogeneous pseudoholomorphic disks with respect to Floer data $(H_{A}, J_{A})$. Equivalently, this category is quasi-isomorphic to a truncated version of the wrapped Fukaya category of $U$. There are natural functors
\begin{equation*}
c_{A}: \mathcal{W}(U) \to \mathcal{W}_{-}(M; H_{A}),
\end{equation*}
which are induced by the continuation functors defined in terms of Floer theory in $U$, given a homotopy of Hamiltonians from $H_{U}$, which is quadratic with leading coefficient $1$, to a Hamiltonian which is quadratic with leading coefficient $A$. These functors $c_{A}$ are compatible with natural continuation functors
\begin{equation*}
c_{A, A'}: \mathcal{W}_{-}(M; H_{A}) \to \mathcal{W}_{-}(M; H_{A'}),
\end{equation*}
so we can take the direct limit
\begin{equation*}
\lim\limits_{A \to +\infty} c_{A}: \mathcal{W}(U) \to \lim\limits_{A \to +\infty} \mathcal{W}_{-}(M; H_{A}).
\end{equation*}
By an action filtration argument, this functor is a quasi-isomorphism. Thus we are able to find a homotopy inverse
\begin{equation*}
k: \lim\limits_{A \to +\infty} \mathcal{W}_{-}(M; H_{A}) \to \mathcal{W}(U).
\end{equation*}
From this point of view, the main part of our construction is to use the moduli spaces $\mathcal{P}_{k+1}(x'_{0}; x_{1}, \cdots, x_{k})$ to define an $A_{\infty}$-functor
\begin{equation*}
\lim\limits_{A \to +\infty} r_{A}: \mathcal{B}(M) \to \lim\limits_{A \to +\infty} \mathcal{W}_{-}(M; H_{A}).
\end{equation*}
By composing this with the functor $k$, we get
\begin{equation*}
r = k \circ \lim\limits_{A \to +\infty} r_{A}: \mathcal{B}(M) \to \mathcal{W}(U).
\end{equation*} \par

\begin{definition}
	The $A_{\infty}$-functor 
\begin{equation*}
r: \mathcal{B}(M) \to \mathcal{W}(U)
\end{equation*}
is called the Viterbo restriction functor.
\end{definition}

\subsection{Comparison between the linear terms}\label{section: comparing the two linear restriction functors}
	In this subsection, we shall prove the second half of Theorem \ref{Viterbo functor as a correspondence functor}. Formulated in a more precise way, what we need to prove is that the linear term of the functor $\Theta_{\Gamma}$
\begin{equation*}
\Theta_{\Gamma}^{1}: CW^{*}(L_{0}, L_{1}; H_{M}) \to CW^{*}(L'_{0}, L'_{1}; H_{U})
\end{equation*}
is chain homotopic to the linear Viterbo restriction map
\begin{equation*}
r^{1}: CW^{*}(L_{0}, L_{1}; H_{M}) \to CW^{*}(L'_{0}, L'_{1}; H_{U}).
\end{equation*} \par
	We shall from now on restrict the Viterbo restriction functor to the full sub-category $\mathcal{B}_{0}(M)$, and obtain an $A_{\infty}$-functor
\begin{equation*}
r: \mathcal{B}_{0}(M) \to \mathcal{W}(U).
\end{equation*}
That is to say, we shall consider only Lagrangian submanifolds $L$ in the full sub-category $\mathcal{B}_{0}(M)$, i.e. they satisfy Assumption \ref{strong exactness condition} and Assumption \ref{invariance assumption}. \par
	When comparing $\Theta_{\Gamma}^{1}$ to the linear term $r^{1}$ of the Viterbo restriction functor, we shall instead work with the cochain map $\Pi_\Gamma$ as defined in \eqref{a different realization of the correspondence functor}. 
The reason why we study this map is that its construction is straightforward and geometric, without explicitly referring to representability. \par
	Recall that the map $\Pi_{\Gamma}$ is defined using the moduli spaces
$\bar{\mathcal{U}}(x; x'; e_{0}, e_{1})$ of inhomogeneous pseudoholomorphic quilted maps, which are variants of the moduli spaces
\begin{equation*}
\bar{\mathcal{U}}_{l_{0}, l_{1}}(\alpha, \beta; x; y; y_{0, 1}, \cdots, y_{0, l_{0}}; y_{1, 1}, \cdots, y_{1, l_{1}}; e_{0}, e_{1}),
\end{equation*}
where now there are no those $l_{0} + l_{1}$ punctures on the boundary components of the second patch, as the Lagrangian submanifolds are embedded and the bounding cochains $b_{0}$ and $b_{1}$ vanish. On the other hand, the linear Viterbo restriction map $r^{1}$ is the homotopy direct limit of maps $r^{1}_{A}$, constructed by appropriate count of elements in the moduli spaces of climbing strips $\bar{\mathcal{P}}^{A}(x_{A}, x)$.
The idea of proving that these two maps coincide up to chain homotopy is to compare these two kinds of relevant moduli spaces, so as to establish a natural bijective correspondence between them. For this purpose, we shall investigate the geometric conditions for inhomogeneous pseudoholomorphic quilted maps in the moduli spaces $\bar{\mathcal{U}}(x; x'; e_{0}, e_{1})$, and prove that any inhomogeneous psuedoholomorphic quilted map can be converted to a climbing strip in a canonical way. \par

	Let us first recall the picture of these inhomogeneous pseudoholomorphic quilted maps in $\bar{\mathcal{U}}(x; x'; e_{0}, e_{1})$, or rather, the smooth locus $\mathcal{U}(x; x'; e_{0}, e_{1})$. Let $(u, v)$ be an inhomogeneous pseudoholomorphic quilted map in $\mathcal{U}(x; x'; e_{0}, e_{1})$. Then it satisfies the following conditions:
\begin{enumerate}[label=(\roman*)]

\item The quilted surface has two patches $\underline{S} = (S_{0}, S_{1})$. $S_{0}$ is a punctured disk with a positive puncture $z_{0}^{1}$ and two special punctures $z_{0}^{-, 1}, z_{0}^{-, 2}$, with chosen strip-like ends near the punctures. $S_{1}$ is a punctured disk with a negative puncture $z_{1}^{0}$ and two special punctures $z_{1}^{-, 1}, z_{1}^{-, 2}$. The quilted surface is obtained by seaming the two patches along the boundary component $I_{0}^{-}$ of $S_{0}$ between $z_{0}^{-, 1}, z_{0}^{-, 2}$ and the boundary component $I_{1}^{-}$ of $S_{1}$ between $z_{1}^{-, 1}, z_{1}^{-, 2}$. The strip-like ends near special punctures form quilted strip-like ends for $(z_{0}^{-, 1}, z_{1}^{-, 1}$ and $(z_{0}^{-, 2}, z_{1}^{-, 2})$.

\item $u: S_{0} \to M$ is pseudoholomorphic with respect to the Floer datum $(H_{S_{0}}, J_{S_{0}})$, which maps the two (non-seam) boundary components to $L_{0}$ and $L_{1}$ repsectively, which asymptotically converges to a time-one $H_{M}$-chord $x$ from $L_{0}$ to $L_{1}$ at the puncture $z_{0}^{1}$.

\item $v: S_{1} \to U$ is pseudoholomorphic with respect to the Floer datum $(H_{S_{1}}, J_{S_{1}})$, which maps the two (non-seam) boundary components to $L'$, which asymptotically converges to an $H_{U}$-chord $x'$ from $L'_{0}$ to $L'_{1}$ at $z_{1}^{0}$.

\item Over the seam $(I_{0}^{-}, I_{1}^{-})$, the matching condition for the pair of maps $(u, v)$ is given by the Lagrangian correspondence $\Gamma$, i.e., $(u(z), v(z)) \in \Gamma$ for $z$ on the seam.

\item At the two quilted strip-like ends, $(u, v)$ asymptotically converges to the unique generalized chord representing the cyclic $e_{j} \in CW^{*}(L_{j}, \Gamma, L'_{j})$. In fact, $e_{j}$ comes from the fundamental chain of $L'_{j}$, under the natural identification between the generalized intersections of $(L_{j}, \Gamma, L'_{j})$ and the self-intersections of $L'_{j}$.

\end{enumerate}
Given these conditions, we find that the isomorphism class of the underlying quilted surface $\underline{S}$ is unique, i.e. the moduli space of such quilted surfaces is a singleton. Thus we often denote such a quilted map simply by $(u, v)$. \par

	The main task of this subsection is to prove the following: \par

\begin{proposition}
	Suppose the virtual dimension of $\mathcal{U}(x; x'; e_{0}, e_{1})$ is zero or one. Let $A > 0$ be any positive number such that
\begin{equation}
\mathcal{A}(x') \ge -A\epsilon^{2}.
\end{equation}	
Among generic choices of Floer data which make $\mathcal{U}(x; x'; e_{0}, e_{1})$ regular, we can make a specific choice of Floer data (without losing genericity), for which the moduli space $\mathcal{U}(x; x'; e_{0}, e_{1})$ is isomorphic to a moduli space $\tilde{\mathcal{P}}^{A}(x_{A}, x)$ of inhomogeneous pseudoholomorphic maps, which is orientedly cobordant to the moduli space of climbing strips $\mathcal{P}^{A}(x_{A}, x)$.
\end{proposition}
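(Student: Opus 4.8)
The plan is to realize the comparison through a strip-shrinking / strip-stretching degeneration, following the philosophy of Wehrheim--Woodward and Lekili--Lipyanskiy, but adapted to the wrapped (non-compact) setting with the graph correspondence $\Gamma$. First I would fix a positive number $A$ with $\mathcal{A}(x') \ge -A\epsilon^{2}$ as in the statement, and observe that, because $L$ lies in $\mathcal{B}_{0}(M)$, the geometry near $\partial U$ and near $\partial M$ is Liouville-invariant: this is precisely what makes the seam condition given by $\Gamma$ rigid enough to control. The key structural input is that the graph correspondence $\Gamma \subset M^{-}\times U$ is the graph of the embedding $i: U \to M$ of \eqref{the natural embedding of the completion}, so a quilted pair $(u,v)$ with seam on $\Gamma$ is the same datum as a single map into $M$ together with the requirement that, along the seam, the $v$-patch records the restriction to the image $i(U) \subset M$. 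Shrinking the width of the $v$-patch (the strip-shrinking parameter of Wehrheim--Woodward), the quilted map in $\mathcal{U}(x; x'; e_0, e_1)$ degenerates to a map $w: \mathbb{R}\times[0,1]\to M$ whose boundary conditions interpolate between $(L_0, L_1)$ near $+\infty$ and between $i(L'_0), i(L'_1)$ (equivalently the shrunk Lagrangians $L_0^{1-\epsilon}, L_1^{1-\epsilon}$) near $-\infty$, and whose asymptotic chord at $-\infty$ is precisely $\tau_A(x')$ — that is, a climbing strip.

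The concrete steps I would carry out are: (1) choose the Floer datum on the quilted surface $\underline{S}$ so that the family of Hamiltonians on the $S_0$-patch is (a rescaling of) $H_M$ near $z_0^1$ and transitions, across the seam, into a Hamiltonian on the $S_1$-patch that agrees with $H_A$ rescaled into $U_0$; this uses that the space of such choices is contractible, so genericity for transversality of $\mathcal{U}(x;x';e_0,e_1)$ is not lost. Here the presence of the two cyclic elements $e_0, e_1$ as asymptotics at the quilted ends is essential: since $e_j$ is the fundamental chain of $L'_j$ and, by the discussion in Section~\ref{section: unobstructedness of the geometric composition}, is closed under the (undeformed, because the bounding cochains vanish by Proposition~\ref{vanishing of the bounding cochain}) quilted Floer differential, no quilted-strip breaking can occur at those ends — exactly as in the proof that $\Pi_{\mathcal{L}}$ is a cochain map. (2) Perform the strip-shrinking, producing from $(u,v)$ a genuine map $w$ into $M$; identify the resulting boundary and asymptotic conditions, using Lemma~\ref{changing primitive under Hamiltonian isotopy} and the locally-constant primitive hypothesis to match the primitives, and using the natural identification $\tau_A$ between time-one $H_A$-chords inside $U_0$ and truncated $H_U$-chords. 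Call the moduli space of these degenerate maps $\tilde{\mathcal{P}}^A(x_A, x)$. (3) Construct an explicit oriented cobordism from $\tilde{\mathcal{P}}^A(x_A, x)$ to the moduli space $\mathcal{P}^A(x_A, x)$ of honest climbing strips by interpolating the Floer datum of the degenerate problem to the one defining climbing strips (both are governed by contractible spaces of auxiliary data, and the boundary and asymptotic conditions are literally the same), so no new breaking appears along the $1$-parameter family. (4) Finally, compare orientations: both families carry orientations induced by gluing the same orientation operators $D_{c}^{\pm}$, so the cobordism is oriented and the signed counts agree.

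The main obstacle I expect is the compactness/no-escape control during the strip-shrinking, rather than the algebra. One must prove a uniform $C^0$-bound: (a) that energy does not escape to infinity in $M$ — this uses the maximum principle with the action-energy equality, exactly as in the well-definedness proofs of Section~\ref{section: product manifolds} and of the Viterbo functor, where $\partial U \times\{1\}$ and $\partial U \times\{1-\epsilon\}$ are pseudoconvex; (b) that, in the shrinking limit, the only degeneration is the clean one (the $v$-patch collapsing onto the seam) and in particular no \emph{figure-eight bubble} forms at the $I_0^-/I_1^-$ seam. Point (b) is where the $\mathcal{B}_0(M)$ hypotheses do the real work: since $L$ is Liouville-invariant across $M_0\setminus \mathrm{int}(U_0)$ and the primitive of $L'$ is locally constant near $\partial U$, any figure-eight bubble would have to converge to a generalized chord for $(L,\Gamma,\Gamma,L)$ of strictly positive action bounded below by a uniform $\epsilon_0>0$, and — by the same sub-complex argument as in Lemma~\ref{equivalent conditions for closedness of the cyclic element}, passing to Floer theory without wrapping and using \cite{Lekili-Lipyanskiy} — such a chord must vanish; hence no figure-eight bubble exists and the shrinking limit is the clean degeneration. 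The remaining verification — that the gluing/degeneration is a diffeomorphism onto a neighborhood of the relevant boundary stratum, with matching Kuranishi/orientation data — is routine given the machinery of Sections~\ref{section: Kuranishi structure on moduli spaces of stable pearly tree maps} and the earlier parts of this section, so I would only sketch it. Concluding, $\Pi_{\Gamma}$ counts exactly the elements of $\mathcal{U}(x;x';e_0,e_1)\cong\tilde{\mathcal{P}}^A(x_A,x)$, which is cobordant to $\mathcal{P}^A(x_A,x)$; taking the homotopy direct limit $A\to+\infty$ and invoking the already-established chain homotopy $\Pi_\Gamma\simeq\Theta_\Gamma^1$ gives $\Theta_\Gamma^1\simeq r^1$, completing the second half of Theorem~\ref{Viterbo functor as a correspondence functor}.
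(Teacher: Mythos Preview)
Your strategy via strip-shrinking is not the route the paper takes, and it carries an avoidable analytical burden. The paper exploits a much more elementary observation: because $\Gamma$ is the \emph{graph} of the embedding $i:U\to M$, the seam condition $(u(z),v(z))\in\Gamma$ literally reads $u(z)=i(v(z))$ along the seam, so one can \emph{directly fold} the two patches into a single map to $M$ --- no shrinking, no limit. Concretely, the paper sends each patch conformally to a half-strip $Z_\pm$, observes that the cyclic-element asymptotics $e_j$ at the quilted ends become removable singularities (the Hamiltonians are chosen to vanish there and $e_j$ corresponds to the fundamental chain of $L'_j$), glues along $\{0\}\times[0,1]$, and sets $w_0 = u$ on $Z_+$ and $w_0 = i\circ\psi_U^{\rho_A(s)}\circ v$ on $Z_-$. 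This $w_0$ fails to be inhomogeneous-pseudoholomorphic only by an explicit error term involving $\rho_A'(s)$, which can be made small; a short implicit-function-theorem lemma then produces the unique nearby genuine solution $\tilde w$, giving a \emph{bijection} $\mathcal{U}(x;x';e_0,e_1)\cong\tilde{\mathcal{P}}^A(x_A,x)$. The space $\tilde{\mathcal{P}}^A(x_A,x)$ differs from $\mathcal{P}^A(x_A,x)$ only in the moving Lagrangian boundary conditions ($\psi_M^{\rho_A(s)}L_j$ versus $L_j^{\lambda(s)}$), and an explicit two-parameter exact Lagrangian isotopy between these families yields the oriented cobordism.

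Your proposal, by contrast, runs a genuine degeneration of the $v$-patch and must then exclude figure-eight bubbles and prove uniform $C^0$-bounds throughout the limit. Two issues arise. First, the $v$-patch $S_1$ here is a three-punctured disk, not a strip, so ``shrinking its width'' in the Wehrheim--Woodward sense is not directly defined; you would need to recast the degeneration before any of the bubble analysis applies. Second, even granting a suitable reformulation, the figure-eight and compactness arguments you sketch are exactly what the graph property lets one bypass: the paper never takes a limit, so no bubbling analysis is needed at this step. Your route could plausibly be completed with further work, but it is both harder and weaker than the paper's --- you would obtain only a cobordism where the paper gets an honest bijection for the first step, with the cobordism reserved for the (much softer) comparison of boundary conditions in the second step.
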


\begin{remark}
	For technical reasons, our construction does not immediately give rise to a climbing strip from a given inhomogeneous pseudoholomorphic quilted map, but rather yields a map satisfying the inhomogeneous Cauchy-Riemann equation with respect to the same Floer datum, but different boundary conditions. The moduli space of such inhomogeneous pseudoholomorphic maps are cobordant to the moduli space of climbing strips in a natural way by means of exact Lagrangian isotopies of the boundary conditions, which will be explained in the proof.
\end{remark}

	In order to turn such a quilted map $(\underline{S}, (u, v))$ into an inhomogeneous pseudoholomorphic map in $M$, we shall consider the Morse-Bott setup of both wrapped Floer cochain spaces $CW^{*}(L_{j}; H_{M})$ and $CW^{*}(L'_{j}; H_{U})$, for $j = 0, 1$. This process can be done without repeating the details by regarding $L'$ as a Lagrangian immersion. Recall that the asymptotic conditions over the quilted ends are specified by the cyclic element $e_{j} \in CW^{*}(L_{j}, \Gamma, L'_{j})$, which in turn corresponds to the homotopy unit of the $A_{\infty}$-algebra $CW^{*}(L'_{j}; H_{U})$, under the map \eqref{quasi-isomorphism under geometric composition}. This homotopy unit is represented by the minimum of a Morse function on $L'_{j}$, whose image in $U$ is inside the interior part $U_{0}$ where the Hamiltonian vanishes. Thus we may in fact demand that the families of Hamiltonians $(H_{S_{0}}, H_{S_{1}})$ be zero near the quilted ends: this condition is consistent with the Morse-Bott setup of the wrapped Floer theory for $L_{j}$ and $L'_{j}$. \par
	A naive trial is to simply "glue" the two components $u$ and $v$ together. Since $\Gamma$ is the completion of the graph of the natural inclusion $U_{0} \to M_{0}$, the condition that a point $(p, q) \in M \times U$ lies on $\Gamma$ is equivalent to the condition that $p$ is the image of $q$ under the natural map $i: U \to M$. Thus the matching condition on $\Gamma$ in fact allows us to "fold" the quilted map to obtain a map $\bar{w}$ in $M$, after composing the second component $v$ with the embedding $i: U \to M$. That is, glue the two patches of the quilted surface together and define a map $\tilde{w}$ which is $u$ on $S_{0}$ and which is $i \circ v$ on $S_{1}$. From the conditions for $(\underline{S}, (u, v))$ one can immediately see that $\tilde{w}$ satisfies the following properties: 
\begin{enumerate}[label=(\roman*)]

\item The domain of the map $\bar{w}$ is a $4$-punctured disk. One is a negative puncture $\xi^{0}$, which corresponds to the negative puncture of $S_{1}$. One is a positive puncture $\xi^{1}$, which corresponds to the positive puncture of $S_{0}$. The other two are special punctures $\xi^{1, -}, \xi^{2, -}$, which come from gluing the quilted punctures.

\item $\bar{w}$ is smooth, as the matching condition on $\Gamma$ means that the maps $u$ and $i \circ v$ agree on the seam along which the two patches are glued together.

\item $\bar{w}$ has removable singularities at the special punctures $\xi^{1, -}, \xi^{2, -}$. This is because the families of Hamiltonians $(H_{S_{0}}, H_{S_{1}})$ are chosen such that they vanish over the two quilted ends, and the quilted map $(u, v)$ asymptotically converges to $e'_{0}$ and $e'_{k}$, which in the Morse-Bott chain model correspond to the fundamental chains of $L'_{0}$ and $L'_{k}$.
 
\end{enumerate}\par

	We have thus obtained a map $\bar{w}$ to $M$ from the inhomogeneous pseudoholomorphic quilted map $(u, v)$. However, the domain of the map $\bar{w}$ is of an unfamiliar form, and it does not satisfy the desired inhomogeneous Cauchy-Riemann equation for a climbing strip. It is therefore necessary to perform suitable modification on the map $\bar{w}$. By condition (iii), we may take conformal transformations mapping $S_{0}$ to the positive half-strip $Z_{+} = [0, +\infty) \times [0, 1]$, sending the positive puncture to $+\infty$, and the two quilted punctures to the corner points $(0, 0), (0, 1)$, and mapping $S_{1}$ to the negative half-strip $Z_{-} = (-\infty, 0] \times [0, 1]$, sending the negative puncture to $-\infty$, and the two quilted punctures to the corner points $(0, 0), (0, 1)$. Glue these two half-strips together along the common boundary $\{0\} \times [0, 1]$, which correspond to the seam of the quilted surface. The result of gluing is a strip with two marked points $z_{1, +} = (0, 0), z_{2, +} = (0, 1)$. This strip is the domain for our new map. In addition, condition (iii) also implies that the incidence conditions for these two marked points are given by the fundamental chains of $L'_{0}$ and $L'_{1}$, thus these incidence conditions are in fact free conditions. \par

	Now we shall construct Floer datum on this new domain from the original Floer datum for the quilted surface $\underline{S}$. The guiding principle is to use the Liouville structure to rescale the original Floer datum to obtain the new one. 
Without loss of generality, we may assume that $A > 1$, as when taking the direct limit $\lim\limits_{A \to +\infty}$ we only have to consider sufficiently large $A$.
Fix a choice of a smooth increasing function
\begin{equation*}
\rho_{A}: (-\infty, 0] \to [1, A]
\end{equation*}
such that for $s \ll 0$, $\rho_{A}(s) = A$ and $\rho_{A}(s) = 1$ for $s$ close to $0$. We can also extend this function to the whole of $\mathbb{R}$ by setting $\rho_{A}(s) = 1$ for all $s \ge 0$. \par
	First let us discuss how to obtain a family of Hamiltonians from the given families of Hamiltonians $(H_{S_{0}}, H_{S_{1}})$ for the quilted surface. We define a family of Hamiltonians $H_{A, s}$ on $M$ parametrized by $Z$, which depends only on the coordinate $s \in \mathbb{R}$, by setting
\begin{equation}\label{family of Hamiltonians from those on the quilted surface}
H_{A, s} =
\begin{cases}
\frac{1}{\rho_{A}(s)} H_{Z_{-}, s} \circ \psi_{U}^{\rho_{A}(s)} \circ i^{-1}, &\text{ on } Z_{-}, \\
H_{Z_{+}, s}, &\text{ on } Z_{+},
\end{cases}
\end{equation}
where $H_{Z_{-}}$ is the family $H_{S_{1}}$ composed with the conformal transformation from $Z_{-}$ to $S_{1}$, and $H_{Z_{+}}$ is the family $H_{S_{0}}$ composed with the conformal transformation from $Z_{+}$ to $S_{0}$.
Here $\epsilon$ is the small constant that we have fixed for which the collar neighborhood $\partial U \times [1-\epsilon, 1]$ embeds into $U_{0}$. And $\frac{1}{\rho_{A}(s)} H_{Z_{-},s} \circ \psi_{U}^{\rho_{A}(s)} i^{-1}$ means a Hamiltonian on $M$ which takes value $\frac{1}{\rho_{A}(s)} H_{Z_{-}, s} \circ \psi_{U}^{\rho_{A}(s)}(v(s, t))$ at the point $i \circ v(s, t) \in M$, and is constant (equal to $A\epsilon^{2}$) elsewhere. In fact, we do not have to specify what values the Hamiltonian take for points outside $i(U)$, as the image of the point on the domain of the inhomogeneous pseudoholomorphic map at which the domain-dependent family of Hamiltonians is evaluated is inside $i(U)$.
A priori, the resulting family of Hamiltonians also depends on the $t$-coordinate, but we may choose $(H_{S_{0}}, H_{S_{1}})$ appropriately such that the resulting families $H_{Z_{+}}$ and $H_{Z_{-}}$ is independent of $t$ (though this is indeed irrelevant for our purpose). This is possible, because of the following reasons: over the positive strip-like end $S_{0}$, $H_{S_{0}}$ agrees with $H_{M}$, and over the negative strip-like end of $S_{1}$, $H_{S_{1}}$ agrees with $H_{U}$, while over the quilted ends, $(H_{S_{0}}, H_{S_{1}})$ vanish - all of these are independent of $t$. \par
	For the almost complex structure, given the families of almost complex structures $(J_{S_{0}}, J_{S_{1}})$ for the quilted surface, we define a family of almost complex structures $J_{A, (s, t)}$ on $M$ parametrized by $(s, t) \in Z$, by setting
\begin{equation}\label{family of almost complex structures from those on the quilted surface}
J_{A, (s, t)} = 
\begin{cases}
(\psi_{U}^{\rho_{A}(s)})_{*} J_{Z_{-}, (s, t)} \circ i^{-1}, &\text{ on } Z_{-},\\
J_{Z_{+}, (s, t)}, &\text{ on } Z_{+}.
\end{cases}
\end{equation} \par

	We must explain why \eqref{family of Hamiltonians from those on the quilted surface} and \eqref{family of almost complex structures from those on the quilted surface} are well-defined. There are several cases to discuss, depending on the positions of the Hamiltonian chords $x'$ and $x$ to which the quilted map $(\underline{S}, (u, v))$ converges:
\begin{enumerate}[label=(\roman*)]

\item $x'$ is a small perturbation of a constant Hamiltonian chord in $U_{0}$, and $x$ is a small perturbation of a constant Hamiltonian chord in $M_{0}$. In this case, by a maximum principle argument, it is necessary that, for the quilted map $(u, v)$, the image of $u$ is contained in $M_{0}$ and the image of $v$ is contained in $U_{0}$. Thus, the matching condition on $\Gamma$ simply implies that $u(0, t) = v(0, t)$. If furthermore $x$ is contained in $U_{0}$, then in fact $x' = x$ and the quilted map is constant. This is essentially the only case that needs some discussion. In this case, we may assume our choice of $H_{M}$ agrees with the rescaling of $H_{U}$ inside $U_{0}$, so that the above formula \eqref{family of Hamiltonians from those on the quilted surface} is consistent. Now if $x$ is in $M_{0} \setminus U_{0}$, then such a quilted map $(u, v)$ is non-trivial. In this case we may choose $(H_{S_{0}}, H_{S_{1}})$ such that $H_{Z_{-}, 0} = 0$ and $H_{Z_{+}, 0} = 0$. Thus it is automatic that the two formulas in \eqref{family of Hamiltonians from those on the quilted surface} agree when $s = 0$.

\item $x'$ is a small perturbation of a constant Hamiltonian chord in $U_{0}$, while $x$ is a non-constant $H_{M}$-chord in the cylindrical end $\partial M \times [1, +\infty)$. This is similar to the previous case, since such a quilted map is necessarily non-constant, so that we have the freedom to choose the families $(H_{S_{0}}, H_{S_{1}})$ so that they are zero along the seam.

\item $x'$ is a non-constant $H_{U}$-chord in the cylindrical end $\partial U \times [1, +\infty)$, while $x$ is a small perturbation of a constant Hamiltonian chord in $M_{0}$. This $x'$ corresponds to a time-one chord $x_{A}$ for the rescaled Hamiltonian. Then there is in fact no such quilted map or climbing strip, as the action of $x'$ (or $x_{A}$) is very negative, and the action of $x$ is positive, which is of course greater than the action of $x'$ (or $x_{A}$). Thus it is not necessary to consider this case.

\item $x'$ is a non-constant $H_{U}$-chord in the cylindrical end $\partial U \times [1, +\infty)$, and $x$ is a non-constant $H_{M}$-chord in the cylindrical end $\partial M \times [1, +\infty)$. Now $x'$ corresponds to a unique time-one $H_{A}$-chord $x_{A}$, and the quilted map is non-constant. The discussion is similar to those in cases (i) and (ii).

\end{enumerate}
Thus we have shown that \eqref{family of Hamiltonians from those on the quilted surface} is well-defined. For the family of almost complex structures, a parallel argument implies that \eqref{family of almost complex structures from those on the quilted surface} is well-defined. \par
	To construct a climbing strip from the given quilted map $(u, v)$, and $A > 0$, we first define a map
\begin{equation}
w_{0}(s, t)=
\begin{cases}
i \circ \psi_{U}^{\rho_{A}(s)} \circ v(s, t), &\text{ if } s < 0,\\
u(s, t), &\text{ if } s \ge 0.
\end{cases}
\end{equation}
This is well-defined and smooth, since when $s = 0$, we have $\rho_{A}(0) = 1$ so that the first formula reads $i \circ v(0, t)$, which is equal to $u(0, t)$ by the matching condition on $\Gamma$. By definition, the boundary conditions for the map $w_{0}$ are as follows:
\begin{equation}
w_{0}(s, j) \in
\begin{cases}
i(\psi_{U}^{\rho_{A}(s)} L'_{j}), & \text{ if } s < 0,\\
L_{j}, & \text{ if } s \ge 0,
\end{cases}
\text{where } j = 0, 1.
\end{equation}
Note that when $s \le 0$ is close to zero, we have $\rho_{A}(s) = 1$ so that $i(\psi_{U}^{\rho_{A}(s)} L'_{j}) = i(L'_{j}) \subset L_{j}$, because of Assumption \ref{invariance assumption}. Let us rewrite the boundary conditions as
\begin{equation*}
w_{0}(s, j) \in L_{j, A, s},
\end{equation*}
for
\begin{equation}\label{Lagrangian boundary conditions obtained from pasting a quilted map}
L_{j, A, s} :=
\begin{cases}
i(\psi_{U}^{\rho_{A}(s)} L'_{j}), & \text{ if } s < 0,\\
L_{j}, & \text{ if } s \ge 0.
\end{cases}
\end{equation}
When $s \ge 0$, the boundary condition can also be written as $L_{j, A, s} = \psi_{M}^{\rho_{A}(s)} L_{j}$, as we have extended $\rho_{A}$ such that $\rho_{A}(s) = 1$ for all $s \ge 0$.
Since $i$ is the map induced by the Liouville strucutre, it maps $L'_{j}$ into $L_{j}$, and $i(\psi_{U}^{\rho_{A}(s)} L'_{j})$ is mapped into a unique exact cylindrical Lagrangian submanifold of $M$ whose cylindrical end is contained in that of $L_{j}$. In fact, the completion of $i(\psi_{U}^{\rho_{A}(s)} L'_{j})$ is $\psi_{M}^{A} L_{j}$, because $L_{j}$ is assumed to be invariant under the Liouville flow in the Liouville cobordism $M_{0} \setminus int(U_{0})$.
Thus $L_{j, A, s}$ can be regarded as a family of Lagrangian submanifolds of $M$ parametrized by $s \in \mathbb{R}$, if we understand $i(\psi_{U}^{\rho_{A}(s)} L'_{j})$ as its completion. Moreover, this family is an exact Lagrangian isotopy.
Such boundary conditions are slightly different from those for a climbing strip. The way to fix this is to use Lagrangian isotopy to move one kind of boundary conditions to the other - such an argument will be presented later. \par
	Note that $w_{0}$ satisfies the inhomogeneous Cauchy-Riemann equation on the positive half-strip $Z_{+}$, with respect to the Floer datum $(H_{A, s}, J_{A, (s, t)})$ defined as above, as on that region the Floer datum is simply given by the original one $(H_{S_{0}}, J_{S_{0}})$ for the quilted surface. On the negative half-strip $Z_{-}$, $w_{0}$ might not be pseudoholomorphic, because an extra term appears when differentiating $i \circ \psi_{U}^{\rho_{A}(s)} \circ v(s, t)$ with respect to $s$, which is related to the differential of $\psi_{U}^{\rho_{A}(s)}$. By a straightforward calculation, we have
\begin{equation}
\partial_{s}(i \circ \psi_{U}^{\rho_{A}(s)} \circ v(s, t)) = di_{\psi_{U}^{\rho_{A}(s)} \circ v(s, t)} \circ (d\psi_{U}^{\rho_{A}(s)})_{v(s, t)} \rho'_{A}(s) (\partial_{s} v(s, t)),
\end{equation}
where $\rho'_{A}(s)$ is the derivative of the function $\rho_{A}$ with respect to $s$. On the other hand, in the inhomogeneous term contributed by the Hamiltonian vector field, there is not such a factor, as we have for $s < 0$,
\begin{equation}
X_{H_{A, s}} = \rho_{A}(s) di \circ d\psi_{U}^{\rho_{A}(s)} X_{H_{Z_{-}}},
\end{equation}
as calculating the Hamiltonian vector field from the Hamiltonian does not involve differentiation with respect to the variable $s$. In order to obtain a map which satisfies the inhomogeneous Cauchy-Riemann equation, we must therefore perturb the map $w_{0}$. In fact, we may choose the function $\rho_{A}$ suitably such that its derivative is small, say
\begin{equation*}
|\rho'_{A}(s)| < C \cdot inj(M, g),
\end{equation*}
where $inj(M, g)$ is the injectivity radius of a family of metrics $g = g(s, t)$ on $M$ for which the Lagrangian submanifolds $L_{0, A, s}, L_{1, A, s}$ are totally geodesic, and $C$ is an appropriate constant to be determined in the proof of the following perturbation lemma. \par

\begin{lemma}\label{perturbing the glued map to a climbing strip}
	For any given $A > 0$ sufficiently large, and for each $w_{0}$ defined as above, there is a unique map $\tilde{w}: Z \to M$ closest to $w_{0}$, which satisfies the inhomogeneous Cauchy-Riemann equation:
\begin{equation}
\partial_{s} w + J_{A, (s, t)}(\partial_{t} w - X_{H_{A, s}}(w)) = 0,
\end{equation}
and the same boundary conditions $L_{0, A, s}, L_{1, A, s}$ as those for $w_{0}$.
\end{lemma}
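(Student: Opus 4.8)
The plan is to prove this by a quantitative implicit function theorem, treating $w_{0}$ as an approximate solution to the inhomogeneous Cauchy--Riemann equation on $Z$ and correcting it by a small perturbation. First I would fix $p>2$ and a small exponential weight $\delta>0$, and work in the Banach manifold of $W^{1,p}_{\delta}$-maps $w\colon Z\to M$ with the totally real boundary conditions $w(\cdot,j)\in L_{j,A,\cdot}$ and the prescribed asymptotics $x_{A}$ at $-\infty$ and $x$ at $+\infty$; the associated Banach bundle $\mathcal{E}\to\mathcal{B}$ has fiber the $W^{0,p}_{\delta}$-sections of $\Lambda^{0,1}(w^{*}TM)$, and $\bar{\partial}_{H_{A},J_{A}}$ is a smooth section of it. I would phrase everything using a family of metrics $g=g(s,t)$ for which $L_{0,A,s},L_{1,A,s}$ are totally geodesic, so that geodesic exponential maps preserve the boundary conditions; this is where the hypothesis $|\rho'_{A}(s)|<C\cdot inj(M,g)$ will be used, to keep the correction inside a ball on which the quadratic estimate holds.

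The first real step is to estimate the error $\mathcal{E}(w_{0}):=\bar{\partial}_{H_{A},J_{A}}w_{0}$. On $Z_{+}$ it vanishes identically, since there $w_{0}=u$ and the rescaled Floer datum $(H_{A,s},J_{A,(s,t)})$ restricts to the original datum on $S_{0}$; on $Z_{-}$, differentiating $i\circ\psi_{U}^{\rho_{A}(s)}\circ v$ produces exactly one extra term, proportional to $\rho'_{A}(s)$, relative to the $(H_{A},J_{A})$-equation (the Hamiltonian term carries no such factor), so $\mathcal{E}(w_{0})$ is supported on $\{s<0:\rho'_{A}(s)\neq 0\}$ with pointwise bound $\le c\,|\rho'_{A}|_{\infty}\,|\partial_{s}v|$. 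Since the quilted map $(u,v)$ has finite energy and $v$ converges exponentially to a nondegenerate chord at $-\infty$, the quantity $|\partial_{s}v|$ is uniformly bounded with exponential decay, so $\|\mathcal{E}(w_{0})\|_{W^{0,p}_{\delta}}\le C\,|\rho'_{A}|_{\infty}$. Because $\rho_{A}$ interpolates between $A$ and $1$ over the half-line $(-\infty,0]$, I am free to spread the interpolation over an arbitrarily long interval and thereby make $|\rho'_{A}|_{\infty}$, hence $\|\mathcal{E}(w_{0})\|$, as small as needed.

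Next I would study the linearized operator $D_{w_{0}}\bar{\partial}$. Its asymptotic operators are the linearizations at $x$ for $(H_{M},J_{M})$ and at $x_{A}$ for $(H_{A},J_{A})$, both nondegenerate (the latter via the Liouville-rescaling identification with $x'$ for $(H_{U},J_{U})$), so $D_{w_{0}}\bar{\partial}$ is Fredholm of the expected index. For surjectivity together with a bounded right inverse, the key observation is that the fold-and-rescale construction identifies $D_{w_{0}}\bar{\partial}$ with a perturbation, of operator norm $O(|\rho'_{A}|_{\infty})$, of the linearized quilted operator $D_{(u,v)}\bar{\partial}$: the linearized matching condition $(d\xi_{0},d\xi_{1})\in T\Gamma$ becomes, after folding, the vacuous requirement that the glued section be $W^{1,p}$ across the seam, while the quilted ends become interior points of $Z$. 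Since the moduli space $\mathcal{U}(x;x';e_{0},e_{1})$ is regular for our generic choice of Floer data, $D_{(u,v)}\bar{\partial}$ is surjective with a right inverse of norm $\le C_{0}$, and surjectivity with a comparable bound is an open condition, so for $|\rho'_{A}|_{\infty}$ small enough $D_{w_{0}}\bar{\partial}$ has a right inverse $Q_{w_{0}}$ with $\|Q_{w_{0}}\|\le 2C_{0}$. I expect this step to be the main technical obstacle: one must carry the right-inverse bound across the fold-and-rescale identification, and in particular control it as $A\to+\infty$ (which is needed for the later homotopy-direct-limit argument), which forces the rescaled Floer data and the cutoff $\rho_{A}$ to be chosen compatibly so that the identification of $D_{w_{0}}\bar{\partial}$ with $D_{(u,v)}\bar{\partial}$ is uniform in $A$ outside the small, $A$-dependent support of $\rho'_{A}$.

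Finally, with $\|\mathcal{E}(w_{0})\|$ small and $\|Q_{w_{0}}\|\le 2C_{0}$, I would invoke the standard quadratic estimate for the nonlinearity $\mathcal{E}(\exp_{w_{0}}\xi)-\mathcal{E}(w_{0})-D_{w_{0}}\bar{\partial}\,\xi$, valid on a ball of radius comparable to $inj(M,g)$, and run the Picard contraction on the map $\xi\mapsto -Q_{w_{0}}\bigl(\mathcal{E}(\exp_{w_{0}}\xi)-D_{w_{0}}\bar{\partial}\,\xi\bigr)$. This produces a unique $\xi$ with $\|\xi\|\le 4C_{0}\|\mathcal{E}(w_{0})\|$ for which $\tilde{w}:=\exp_{w_{0}}(\xi)$ solves $\partial_{s}w+J_{A,(s,t)}(\partial_{t}w-X_{H_{A,s}}(w))=0$ with the boundary conditions $L_{0,A,s},L_{1,A,s}$, and elliptic regularity upgrades $\tilde{w}$ to a smooth map. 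The uniqueness of the Picard fixed point in the small ball is precisely the asserted uniqueness of the map $\tilde{w}$ closest to $w_{0}$, which completes the argument.
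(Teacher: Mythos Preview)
Your approach is essentially the same as the paper's---a quantitative implicit function theorem / Newton--Picard argument with the exponential map taken in a family of metrics making the moving Lagrangians totally geodesic---but your version is in fact more complete. The paper literally sets $V=Q_{w_0}(-\bar{\partial}_A w_0)$ and declares $\exp_{w_0}V$ to be the solution, which is only a first Newton step; your full Picard iteration with the quadratic estimate is what actually closes the argument. There is also a genuine difference in how surjectivity of $D_{w_0}\bar{\partial}$ is obtained: the paper appeals to regularity of the moduli space $\mathcal{P}^A(x',x)$ of climbing strips and the open condition ``$w_0$ is close to an actual solution, so $D_{w_0}$ is surjective,'' which is slightly circular (the existence of that nearby solution is what is being proved) and refers to a moduli space with \emph{different} boundary conditions; your route, identifying $D_{w_0}\bar{\partial}$ with a small perturbation of the linearized quilted operator $D_{(u,v)}\bar{\partial}$ via fold-and-rescale and invoking regularity of $\mathcal{U}(x;x';e_0,e_1)$, is logically cleaner since $w_0$ is built from $(u,v)$. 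Your concern about uniformity of the right-inverse bound in $A$ is well-placed and is not addressed in the paper.
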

\begin{proof}
	For the point $w_{0} \in \mathcal{B}$, there is the exponential map
\begin{equation*}
Exp_{w_{0}}: O \subset T_{w_{0}}\mathcal{B} \to \mathcal{B},
\end{equation*}
defined with respect to a family of metrics $g = g(s, t)$ on $M$ parametrized by $(s, t) \in \mathbb{R} \times [0, 1]$, for which $L_{0, A, s}$ and $L_{1, A, s}$ are totally geodesic.
Here $\mathcal{B}$ is the Banach manifold (with respect to some Sobolev norm $W^{m, p}$) of maps satisfying the same conditions for a climbing strip, except the inhomogeneous Cauchy-Riemann equation. We require that $O$ is an open neighborhood of zero such that the exponential map $Exp_{w_{0}}$ is an isomorphism onto the image: the size of $O$ is at least the injectivity radius of the family of metrics $g$:
\begin{equation*}
inj(M, g) = \min_{(s, t) \in \mathbb{R} \times [0, 1]} inj(M, g(s, t)).
\end{equation*}
In fact, $g = g(s, t)$ is determined by the symplectic form and the chosen family of almost complex structures $J_{A, (s, t)}$. As a result, when $s \ll 0$ or $s \gg 0$, $g(s, t)$ agrees with a family independent of $s$. Therefore, the minimum over $(s, t) \in \mathbb{R} \times [0, 1]$ is indeed taken over a compact set, hence well-defined.
The tangent space $T_{w_{0}}\mathcal{B}$ is
\begin{equation*}
T_{w_{0}}\mathcal{B} = \{V \in W^{m, p}(Z; w_{0}^{*}TM; w_{0}^{*}TL_{0, A, s}, w_{0}^{*}TL_{1, A, s} \}.
\end{equation*}
Here the boundary conditions mean that $V(s, 0) \in T_{w_{0}(s, 0)}L_{0, A, s}, V(s, 1) \in T_{w_{1}(s, 1)}L_{1, A, s}$. \par
	For any $V \in O$, consider the inhomogeneous Cauchy-Riemann equation
\begin{equation}
\partial_{s} Exp_{w_{0}}(V) + J_{A, (s, t)}(\partial_{t} Exp_{w_{0}}(V) - X_{H_{A, s}}(Exp_{w_{0}}(V))) = 0.
\end{equation}
We wish to find a solution $V \in O$ to this equation. We denote the inhomogeneous Cauchy-Riemann operator with respect to the Floer datum $(H_{A, s}, J_{A, (s, t)})$ by
\begin{equation*}
\bar{\partial}_{A}(\cdot) = \partial_{s}(\cdot) + J_{A, (s, t)}(\partial_{t}(\cdot) - X_{H_{A, s}}(\cdot)).
\end{equation*}
Although $w_{0}$ does not satisfy the equation $\bar{\partial}_{A} w_{0} = 0$, we have chosen a connection on the bundle $w_{0}^{*}TM$ relative to $(w_{0}^{*}TL_{0, A, s}, w_{0}^{*}TL_{1, A, s})$ when defining the exponential map $Exp$, so that the linearized operator at $w_{0}$
\begin{equation}
D_{w_{0}} \bar{\partial}_{A}: T_{w_{0}}\mathcal{B} \to \mathcal{E}_{w_{0}}
\end{equation}
is well-defined, where $\mathcal{E}_{w_{0}}$ is the space of $(0, 1)$-forms with values in $w_{0}^{*}TM$, namely $\mathcal{E}_{w_{0}} = W^{k, p}(Z; w_{0}^{*}TM \otimes \Lambda^{0,1}_{Z})$.
The regularity assumption for the moduli space $\mathcal{P}^{A}(x', x)$ implies that the linearized operator $D_{w}\bar{\partial}_{A}$ is surjective for any $w$ which satisfies the equation $\bar{\partial}_{A} w = 0$. However, this is an open condition - for those maps close to an actual solution, the linearized operator is also surjective. In particular, as $w_{0}$ is close to an actual solution, $D_{w_{0}}\bar{\partial}_{A}$ is surjective. It follows that there is a bounded right inverse $Q_{w_{0}, A}$ of $D_{w_{0}} \bar{\partial}_{A}$. Now if we set
\begin{equation*}
C = \frac{1}{||Q_{w_{0}, A}||},
\end{equation*}
then the vector
\begin{equation}
V = Q_{w_{0}}(-\bar{\partial}_{A} w_{0})
\end{equation}
is contained in the neighborhood $O$. This is the desired solution.
The meaning of this formula is as follows. As $w_{0}$ does not satisfy the inhomogeneous Cauchy-Riemann equation, $\bar{\partial}_{A} w_{0}$ is non-zero vector in $\mathcal{E}_{w_{0}}$. Note that the exponential map $Exp$ provides a linear structure on a neighborhood of $w_{0}$ in $\mathcal{B}$ by identifying that with $O \subset T_{w_{0}}\mathcal{B}$. Thus, we may "add" the inverse image of the negative of this non-zero vector $\bar{\partial}_{A} w_{0}$ to the original map $w_{0}$ to kill the deviation from being zero, so that the resulting map satisfies the inhomogeneous Cauchy-Riemann equation. \par
	On the other hand, it is not hard to see such a solution $V$ is unique. \par
\end{proof}

	Let us now denote by $\tilde{\mathcal{P}}^{A}(x_{A}, x)$ the moduli space of maps $\tilde{w}$ as above satisfying the inhomogeneous Cauchy-Riemann equation with respect to the Floer datum $(H_{A, s}, J_{A, (s, t)}$ as well as the Lagrangian boundary conditions $(L_{0, A, s}, L_{1, A, s})$. This moduli space behaves very similarly to the moduli space of climbing strips $\mathcal{P}^{A}(x_{A}, x)$,  and has a natural compactification $\bar{\tilde{\mathcal{P}}}^{A}(x_{A}, x)$, by adding broken maps whose new components are inhomogeneous pseudoholomorphic strips in $M$ with respect to the Floer datum $(H_{M}, J_{M})$ and Lagrangian boundary conditions $(L_{0}, L_{1})$, or those with respect to the Floer datum $(H_{A}, J_{A, t})$ and Lagrangian boundary conditions given by the completions of $(i(\psi_{U}^{A}L'_{0}), i(\psi_{U}^{A}L'_{1})$, whose asymptotic Hamiltonian chords are contained inside $U_{0}$. The latter kind of inhomogeneous pseudoholomorphic strips are also in one-to-one correspondence with inhomogeneous pseudoholomorphic strips in $U$ with respect to the Floer datum $(H_{U}, J_{U})$ and Lagrangian boundary conditions $(L'_{0}, L'_{1})$. \par

	Thus, the above lemma provides a natural bijection between the moduli spaces
\begin{equation*}
\mathcal{U}(x; x'; e_{0}, e_{1}) \cong \tilde{\mathcal{P}}^{A}(x_{A}, x),
\end{equation*}
when the virtual dimension is zero, for generic choices of Floer data.
Then it remains to extend this bijection to the level of compactified moduli spaces. \par

\begin{lemma}\label{isomorphism of moduli spaces of strips}
	Consider the situation where the virtual dimension of the moduli space $\bar{\mathcal{U}}(x, x'; e_{0}, e_{1})$ is zero or one. Suppose we have chosen Floer data generically such that both the compactified moduli space $\bar{\mathcal{U}}(x, x'; e_{0}, e_{1})$ and the compactified moduli space $\bar{\tilde{\mathcal{P}}}^{A}(x_{A}, x)$ are regular. This means that these are compact smooth/topological manifolds of dimension zero/one. Then, among the generic choices of Floer data, there is a specific kind of choices for which there is a natural bijection from $\bar{\mathcal{U}}(x, x'; e_{0}, e_{1})$ to $\bar{\tilde{\mathcal{P}}}^{A}(x_{A}, x)$. \par
	Moreover, when the virtual dimension is one, this bijection is an isomorphism of moduli spaces, in the sense that it comes with a natural virtual isomorphism of Fredholm complexes, and commutes with the gluing maps.
\end{lemma}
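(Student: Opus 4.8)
The plan is to upgrade the pointwise bijection furnished by Lemma \ref{perturbing the glued map to a climbing strip} to an isomorphism of the compactified moduli spaces, handled stratum by stratum. First I would pin down the ``specific kind'' of Floer data: one chooses the families $(H_{S_{0}}, H_{S_{1}})$ and $(J_{S_{0}}, J_{S_{1}})$ on the quilted surfaces so that, after folding along the graph correspondence $\Gamma$, applying the conformal rescaling $\psi_{U}^{\rho_{A}(s)}$ of \eqref{family of Hamiltonians from those on the quilted surface} and \eqref{family of almost complex structures from those on the quilted surface}, and performing the implicit-function-theorem correction of Lemma \ref{perturbing the glued map to a climbing strip}, one obtains exactly the Floer data used to define $\bar{\tilde{\mathcal{P}}}^{A}(x_{A}, x)$. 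Since the space of such choices is still contractible and cut out by conditions supported away from the regions where transversality is delicate, this costs no genericity, so both compactified moduli spaces may be assumed regular simultaneously. With this in place, the assignment $(u,v) \mapsto \tilde{w}$ is defined on the smooth locus $\mathcal{U}(x; x'; e_{0}, e_{1})$, and I would extend it over the boundary by applying the same fold--rescale--perturb procedure componentwise to broken quilted maps.

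The key point is that the codimension-one boundary strata correspond under this assignment. A broken quilted map in $\partial \bar{\mathcal{U}}(x, x'; e_{0}, e_{1})$ is, by \eqref{boundary strata of the moduli space of quilted maps defining a different realization of the cochain map}, of one of three types: an $(H_{M}, J_{M})$-strip in $M$ breaking off at the puncture $z_{0}^{1}$, an $(H_{U}, J_{U})$-strip in $U$ breaking off at $z_{1}^{0}$, or a quilted strip breaking off at a quilted end. In the present situation ($\mathcal{L} = \Gamma$, so $L \circ \Gamma = L'$ is an embedding and the bounding cochains vanish) there is no disk bubbling off $L'_{j}$, and the third type is empty: there is no nonconstant inhomogeneous pseudoholomorphic quilted strip converging to the cyclic element $e_{j}$ over a quilted end, by condition (ii) of Lemma \ref{equivalent conditions for closedness of the cyclic element} and the discussion in \S\ref{section: unobstructedness of the geometric composition}. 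On the other side, $\partial \bar{\tilde{\mathcal{P}}}^{A}(x_{A}, x)$ consists of an $(H_{M}, J_{M})$-strip breaking off at $+\infty$ and an $(H_{A}, J_{A})$-strip breaking off at $-\infty$, the latter identified with an $(H_{U}, J_{U})$-strip in $U$ by the usual Liouville rescaling. Thus the first boundary type matches the $+\infty$ breaking verbatim (on $Z_{+}$ the fold--rescale procedure is the identity) and the second matches the $-\infty$ breaking via the same rescaling used in the interior. I would check continuity of the bijection and of its inverse by a Gromov-compactness argument: a sequence of quilted maps converges to a broken limit if and only if its image sequence converges to the corresponding broken limit, since folding along $\Gamma$, rescaling by $\psi_{U}^{\rho_{A}}$, and the small correction are all uniformly controlled operations.

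For the statement about Fredholm complexes, I would argue as follows. Folding along the graph correspondence identifies the linearized quilted Cauchy--Riemann operator at $(u,v)$ with a Cauchy--Riemann operator on the folded strip; the conformal rescaling by $\psi_{U}^{\rho_{A}(s)}$ is a linear isomorphism intertwining this with $D_{w_{0}}\bar{\partial}_{A}$; and the correction in Lemma \ref{perturbing the glued map to a climbing strip}, being a right-inverse perturbation with uniformly bounded inverse, identifies $D_{w_{0}}\bar{\partial}_{A}$ with $D_{\tilde{w}}\bar{\partial}_{A}$ up to canonical isomorphism. Composing these gives the claimed virtual isomorphism of Fredholm complexes, which is orientation-preserving because $\Gamma$ carries its natural spin structure and the Liouville flow preserves orientations. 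Compatibility with the gluing maps then follows because gluing is a local operation in a neighbourhood of the breaking end, where the two gluing constructions are built from the same pregluing and the same exponential-decay estimates once the Floer data are matched as above; the commuting square is verified by running the two gluings and comparing, which reduces to matching gluing parameters.

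The main obstacle I expect is precisely this last compatibility in families degenerating to the boundary: one must ensure that the right inverse $Q_{w_{0}, A}$ appearing in Lemma \ref{perturbing the glued map to a climbing strip} has norm bounded uniformly as $(u,v)$ approaches a broken configuration, so that the correction remains $C^{0}$-small and the fold--rescale--perturb map stays a homeomorphism up to the boundary, and that the rescaling profile $\rho_{A}$ can be chosen compatibly with the neck-stretching in both gluing pictures. This is a uniform elliptic estimate of the type carried out in \cite{FOOO2}, \cite{FOOO3}, but it requires care here because the folded domains are noncompact in both directions and the Floer data are only asymptotically translation-invariant near the ends.
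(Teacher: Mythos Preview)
Your proposal is correct and follows essentially the same route as the paper: extend the bijection $(u,v)\mapsto\tilde w$ over the boundary by matching broken configurations type by type, using the rescaling identification $\tilde{\mathcal{M}}(x_A,x_{1,A})\cong\mathcal{M}(x',x'_1)$ on the $U$-side and the identity on the $M$-side.

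One point worth noting. You correctly flag the main obstacle as controlling the right inverse $Q_{w_0,A}$ uniformly near the boundary so that the correction stays small in families. The paper sidesteps this entirely with a cleaner observation: the fold--rescale--perturb modification is supported near the seam and the quilted ends (which become removable singularities), whereas all gluing and strip-breaking happens at the ordinary strip-like ends $z_0^1$ and $z_1^0$, where the folded map already agrees with $u$ or $i\circ v$ on the nose and the Floer data match without any perturbation. So the gluing constructions on both sides are literally the same local operation, and no uniform estimate on $Q_{w_0,A}$ is needed for compatibility with the boundary. Your argument is not wrong, but it is working harder than necessary; the paper's one-line justification ``gluing happens near the usual strip-like ends, not the quilted ends'' is the cleaner reason. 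Your explicit dismissal of quilted-end breaking via Lemma~\ref{equivalent conditions for closedness of the cyclic element} is a useful addition that the paper leaves implicit.
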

\begin{proof}
	The assignment $(u, v) \mapsto w$ gives the natural bijection between elements of the uncompactified moduli spaces:
\begin{equation*}
\mathcal{U}(x, x'; e_{0}, e_{1}) \cong \tilde{\mathcal{P}}^{A}(x_{A}, x).
\end{equation*}
whenever the virtual dimension is zero or one.
If the virtual dimension is zero, these moduli spaces are compact, because we have chosen Floer data generically so that all such moduli spaces as well as moduli spaces of inhomogeneous pseudoholomorphic strips are regular. Thus there is nothing more to prove. \par
	Now consider the case where the virtual dimension is one. Note that the two compactifications are both obtained by adding the same kinds of inhomogeneous pseudoholomorphic strips in $M$ or inhomogeneous pseudoholomorphic strips in $U$. That is to say, there are isomorphisms:
\begin{equation*}
\begin{split}
& \partial \bar{\mathcal{U}}(x, x'; e_{0}, e_{1})\\
\cong & \coprod \mathcal{M}(x', x'_{1}) \times \mathcal{U}(x, x'_{1}; e_{0}, e_{1})\\
& \cup \coprod \mathcal{U}(x_{1}, x'; e_{0}, e_{1}) \times \mathcal{M}(x_{1}, x).
\end{split}
\end{equation*}
and
\begin{equation*}
\begin{split}
& \partial \bar{\tilde{\mathcal{P}}}^{A}(x_{A}, x)\\
\cong & \coprod \tilde{\mathcal{M}}(x_{A}, x_{1, A}) \times \tilde{\mathcal{P}}^{A}(x_{1, A}, x)\\
& \cup \coprod \tilde{\mathcal{P}}^{A}(x_{A}, x_{1}) \times \mathcal{M}(x_{1}, x).
\end{split}
\end{equation*}
Here $\tilde{\mathcal{M}}(x_{A}, x_{1, A})$ is the moduli space of inhomogeneous pseudoholomorphic strips $f_{A}$ with respect to the Floer datum $(H_{A}, J_{A, t})$ and Lagrangian boundary conditions given by the completions of $(i(\psi_{U}^{A} L'_{0}), i(\psi_{U}^{A} L'_{1})$, whose asymptotic Hamiltonian chords are contained inside $U_{0}$. This is naturally isomorphic to $\mathcal{M}(x', x'_{1})$, the moduli space of inhomogeneous pseudoholomorphic strips $f'$ in $U$ with respect to the Floer datum $(H_{U}, J_{U})$ and Lagrangian boundary conditions $(L'_{0}, L'_{1})$, when the time-one $H_{U}$-chords $x'$ and $x'_{1}$ correspond to the time-one $H_{A}$-chords $x_{A}$ and $x_{1, A}$ respectively.
Thus, for any kind of broken quilted map $(f', (u, v))$ or $((u, v), f)$, there is a unique broken map $(f_{A}, \tilde{w})$ or $(\tilde{w}, f)$ associated to it, where $f_{A}$ and $f'$ correspond to each other under the above-mentioned isomorphism between $\tilde{\mathcal{M}}(x_{A}, x_{1, A})$ and $\mathcal{M}(x', x'_{1})$. In this way, the bijection extends over the compactifications. \par
	This bijective correspondence naturally commutes with the gluing maps, because gluing happens near the usual strip-like ends, not the quilted ends. \par

\end{proof}

	As a corollary, this implies that when choosing Floer data generically in such special class, the counts of elements in these moduli spaces are equal. The algebraic consequence of this can be stated as follows. We have the wrapped Floer cochain space $CW^{*}(\psi_{M}^{A} L_{0}, \psi_{M}^{A} L_{1}; H_{A})$, on which the differential is defined by counting rigid elements in the moduli spaces $\tilde{\mathcal{M}}(x_{A}, x_{1, A})$ of inhomogeneous pseudoholomorphic strips. And we also have a sub-complex $CW^{*}_{-}(\psi_{M}^{A} L_{0}, \psi_{M}^{A} L_{1}; H_{A})$, generated by "interior" Hamiltonian chords. In fact, for this sub-complex, one can equivalently write it as $CW^{*}_{-}(i(\psi_{U}^{A} L'_{0}), i(\psi_{U}^{A} L'_{1}); H_{A})$, because any inhomogeneous pseudoholomorphic strip with asymptotic convergence conditions given by those Hamiltonian chords will be contained in the image of $i$.
In a similar way to the definition of the map
\begin{equation*}
\tilde{r}^{1}_{A}: CW^{*}(L_{0}, L_{1}; H_{M}) \to CW^{*}_{-}(L_{0}^{1-\epsilon}, L_{1}^{1-\epsilon}; H_{A}),
\end{equation*}
we may define a map
\begin{equation}
t^{1}_{A}: CW^{*}(L_{0}, L_{1}; H_{M}) \to CW^{*}_{-}(i(\psi_{U}^{A} L'_{0}), i(\psi_{U}^{A} L'_{1}); H_{A})
\end{equation}
by counting rigid elements in the moduli space $\bar{\tilde{\mathcal{P}}}^{A}(x_{A}, x)$. 
Then Lemma \ref{isomorphism of moduli spaces of strips} implies that $\Pi_{\Gamma} = t^{1}_{A}$, when the former map is restricted to the sub-complex generated by those generators whose images under $\Pi_{\Gamma}$ fall within the action filtration window $(-A\epsilon^{2}, \delta)$. \par
	The remaining task is to compare the map $t^{1}_{A}$ with $r^{1}_{A}$. For that purpose, the underlying geometric idea is to relate the moduli space $\tilde{\mathcal{P}}^{A}(x_{A}, x)$ to the moduli space $\mathcal{P}^{A}(x_{A}, x)$ of climbing strips. As mentioned before, the difference between a map $\tilde{w}$ obtained from a quilted map $(u, v)$ and an actual climbing strip $w$ is that they have different boundary conditions. However, as we shall see, their moduli spaces are naturally cobordant to each other. \par

\begin{lemma}\label{cobordism of moduli spaces}
	The moduli space $\tilde{\mathcal{P}}^{A}(x_{A}, x)$ is orientedly cobordant to the moduli space $\mathcal{P}^{A}(x_{A}, x)$ of climbing strips. Moreover, the same holds for compactified moduli spaces $\bar{\tilde{\mathcal{P}}}^{A}(x_{A}, x)$ and $\bar{\mathcal{P}}^{A}(x_{A}, x)$, namely they are also cobordant.
\end{lemma}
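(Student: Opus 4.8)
The plan is to realize the cobordism by a parametrized moduli space in which the Lagrangian boundary conditions are deformed through an exact Lagrangian isotopy while the Floer datum $(H_{A,s},J_{A,(s,t)})$ and the asymptotic conditions $x_A$ at $-\infty$ and $x$ at $+\infty$ are held fixed. First I would construct, for $j=0,1$, a two-parameter family $L_{j,A,s}^{\tau}$, $(s,\tau)\in\mathbb{R}\times[0,1]$, of exact cylindrical Lagrangian submanifolds of $M$ with: at $\tau=0$ the family $L_{j,A,s}$ of \eqref{Lagrangian boundary conditions obtained from pasting a quilted map}; at $\tau=1$ the climbing-strip moving label $L_{j,s}$ of Definition \ref{moving Lagrangian labeling}; for every $\tau$ and $s\gg 0$ the value $L_j$; and for every $\tau$ and $s\ll 0$ a Lagrangian $L_j^{A,\tau}$ which is itself an exact Lagrangian isotopy, from the completion of $i(\psi_U^{A}L'_j)$ at $\tau=0$ to $L_j^{1-\epsilon}$ at $\tau=1$. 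By Assumption \ref{invariance assumption} and \eqref{the natural embedding of the completion}, both endpoints of this last isotopy agree with $L_j$ outside $U_0$ and restrict, after Liouville rescaling, to the same Lagrangian $L'_j\subset U$; hence the isotopy can be taken supported in a neighborhood of $U_0$, concentrated in the collar $\partial U\times[1-\epsilon,1]$, and arranged so that for every $\tau$ the interior time-one $H_A$-chords from $L_0^{A,\tau}$ to $L_1^{A,\tau}$ of action $\ge -A\epsilon^2$ are canonically identified, via the maps $\tau_A$ of the previous subsection, with the time-one $H_U$-chords from $L'_0$ to $L'_1$. In particular the output chord $x_A$ unambiguously labels a single connected family. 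The Floer datum $(H_{A,s},J_{A,(s,t)})$ is kept fixed, perturbed generically only as needed for regularity.

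Next I would form the parametrized moduli space $\mathcal{P}^{A,[0,1]}(x_A,x)=\bigcup_{\tau\in[0,1]}\{\tau\}\times\mathcal{P}^{A,\tau}(x_A,x)$ of pairs $(\tau,w)$ where $w:Z\to M$ solves $\partial_s w+J_{A,(s,t)}(\partial_t w-X_{H_{A,s}}(w))=0$ with boundary conditions $L_{0,A,s}^{\tau},L_{1,A,s}^{\tau}$ and the given asymptotics. For a generic interpolating family this is a smooth manifold with boundary of dimension $\deg(x_A)-\deg(x)+1$, whose boundary at $\tau=0$ is $\tilde{\mathcal{P}}^A(x_A,x)$ and at $\tau=1$ is $\mathcal{P}^A(x_A,x)$. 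Compactness follows, uniformly in $\tau$, from the two usual ingredients: a $C^0$-bound from the maximum principle — valid because $H_A$ is admissible (quadratic) on the cylindrical end of $M$ where all $L_{j,A,s}^{\tau}$ are cylindrical, and because the $L_{j,A,s}^{\tau}$ are Liouville-invariant near $\partial U$, so that $\partial U\times\{1\}$, being pseudo-convex for $J_A$, prevents escape from $U_0$ — and an energy bound from the action-energy identity, using that the exact Lagrangian isotopy changes the action functional only by a uniformly bounded amount, so the energy of each element is controlled by $\mathcal{A}(x)-\mathcal{A}(x_A)$ (here the constraint $-A\epsilon^2\le\mathcal{A}(x_A)\le\delta$ enters). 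Thus $\mathcal{P}^{A,[0,1]}(x_A,x)$ admits a Gromov compactification $\bar{\mathcal{P}}^{A,[0,1]}(x_A,x)$, a compact manifold with corners whose codimension-one boundary strata are $\tilde{\mathcal{P}}^A(x_A,x)$ at $\tau=0$, $\mathcal{P}^A(x_A,x)$ at $\tau=1$, and the broken strata $\mathcal{M}(x_A,x_{A,1})\times\mathcal{P}^{A,[0,1]}(x_{A,1},x)$ and $\mathcal{P}^{A,[0,1]}(x_A,x_1)\times\mathcal{M}(x_1,x)$, where the first-factor moduli spaces are moduli of inhomogeneous pseudoholomorphic strips in $U$ at the $-\infty$ end (via the rescaling correspondence) and in $M$ at the $+\infty$ end. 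This exhibits $\bar{\tilde{\mathcal{P}}}^A(x_A,x)$ and $\bar{\mathcal{P}}^A(x_A,x)$ as cobordant, and $\mathcal{P}^{A,[0,1]}(x_A,x)$ itself does the same for the uncompactified moduli spaces. For orientations, the parametrized problem is Fredholm and carries a canonical orientation of its determinant line from the chosen gradings and spin structures on $L_j,L'_j$ — the same data orienting $\tilde{\mathcal{P}}^A$ and $\mathcal{P}^A$ — and the boundary orientations induced at $\tau=0,1$ agree with those of $\tilde{\mathcal{P}}^A(x_A,x)$ and $\mathcal{P}^A(x_A,x)$ up to the standard global sign, so the cobordism is oriented.

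I expect the main obstacle to be the first step: arranging the interpolating Lagrangian isotopy so that no birth–death of $H_A$-chords occurs within the action window $[-A\epsilon^2,\delta]$ for any intermediate $\tau$ — equivalently, so that the bijection between interior $H_A$-chords and $H_U$-chords persists for all $\tau$, which is what makes $(x_A,x)$ label one connected family of moduli spaces — and so that the $C^0$- and energy bounds are uniform in $\tau$ (and mild enough in $A$ for the eventual comparison $r^1=\lim_{A\to+\infty}r^1_A$). Because both endpoints of the isotopy restrict, after Liouville rescaling, to the fixed Lagrangian $L'_j\subset U$, one can run the whole isotopy through Lagrangians agreeing with the rescaled $L'_j$ on $U_0\setminus(\partial U\times[1-\epsilon,1])$ and with $L_j$ outside $U_0$, concentrating all the motion in the collar; there the chords are explicitly understood — they sit on level sets $\partial U\times\{r\}$ with action $-A(r-1+\epsilon)^2$ and the constant interior chords have small bounded action — so a careful but routine check confirms that no wall-crossing occurs and the required bounds hold. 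Granting this, the comparison with $r^1_A$ via $t^1_A$ that was set up in the previous lemmas then identifies $\Theta_\Gamma^1$ with $r^1$ up to chain homotopy, completing the proof of the second half of Theorem \ref{Viterbo functor as a correspondence functor}.
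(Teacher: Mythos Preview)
Your proposal is correct and takes essentially the same approach as the paper: both construct the cobordism by a parametrized moduli space over $[0,1]$ obtained from a two-parameter family of exact cylindrical Lagrangian boundary conditions interpolating between $L_{j,A,s}$ and $L_{j,s}$, keeping the Floer datum $(H_{A,s},J_{A,(s,t)})$ fixed. The paper writes down the interpolating family explicitly as $L_{j,A,s,\sigma}=\psi_M^{\rho_A((1-2\sigma)s)}L_j$ for $\sigma\in[0,\tfrac12]$ and $L_j^{\lambda_{1-2\sigma}(s)}$ for $\sigma\in[\tfrac12,1]$, which in particular makes your ``main obstacle'' (persistence of the chord bijection across the isotopy) immediate, since at $s=-\infty$ every member of the family is a Liouville-rescaled or collar-shrunk copy of $L_j$ with the same interior $H_A$-chords; your more abstract description and your discussion of compactness and orientations are otherwise in line with the paper's argument.
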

\begin{proof}
	For each $s \in \mathbb{R}$, the exact cylindrical Lagrangian submanifold $L_{j, A, s}$ as defined in \eqref{Lagrangian boundary conditions obtained from pasting a quilted map} is exact Lagrangian isotopic to $L_{j, s} = L_{j}^{\lambda(s)}$.
Let $L_{j, A, s, \sigma}$ be such an exact Lagrangian isotopy, parametrized by $\sigma \in [0, 1]$. It is possible to find such isotopies such that the two-dimensional family $L_{j, A, s, \sigma}$ parametrized by $(s, \sigma) \in \mathbb{R} \times [0, 1]$ is smooth. Furthermore, $L_{j, A, s, \sigma}$ is constant (i.e. independent of both $s$ and $\sigma$) for $s \gg 0$, where $L_{j, s} = L_{j}$. The reason is as follows. Since $L_{j}$ is assumed to be invariant under the Liouville flow in the Liouville cobordism $M_{0} \setminus int(U_{0})$, the completion of $i(\psi_{U}^{A} L'_{j})$ is exact Lagrangian isotopic to either $L_{j}$ or $L_{j}^{1-\epsilon}$. Recall that the completion of $i(\psi_{U}^{A} L'_{j})$ is precisely $\psi_{M}^{A} L_{j}$. This is isotopic to $L_{j}$ via the exact Lagrangian isotopy $\psi_{M}^{\rho_{A}(s)} L_{j}$, where $\rho_{A}: \mathbb{R} \to [1, A]$ is the previously used function which is $A$ for $s \ll 0$ and $1$ for $s \ge 0$. On the other hand, $L_{j}^{1-\epsilon}$ is isotopic to $L_{j}$ via the exact Lagrangian isotopy $L_{j}^{\lambda(s)}$. Thus we may compose these two isotopies to obtain an isotopy from $\psi_{M}^{A} L_{j}$ to $L_{j}^{1-\epsilon}$. For each $s$, we reparametrize these isotopies by $\sigma \in [0, 1]$ to obtain an isotopy from $L_{j, A, s}$ to $L_{j, s} = L_{j}^{\lambda(s)}$, namely,
\begin{equation}\label{two-dimensional family of exact cylindrical Lagrangian submanifolds}
L_{j, A, s, \sigma} =
\begin{cases}
\psi_{M}^{\rho_{A}((1 - 2\sigma)s)} L_{j}, & \text{ if } \sigma \in [0, \frac{1}{2}],\\
L_{j}^{\lambda_{1 - 2\sigma}(s)}, & \text{ if } \sigma \in [\frac{1}{2}, 1],
\end{cases}
\end{equation}
where $\lambda_{\sigma}$ is a non-decreasing homotopy between the function $\lambda: \mathbb{R} \to [1-\epsilon, 1]$ and the constant function $1$.
This two-dimensional family \eqref{two-dimensional family of exact cylindrical Lagrangian submanifolds} then satisfies all desired properties. \par
	We can then define a parametrized moduli space 
\begin{equation*}
\mathcal{P}_{+}^{A}(x_{A}, x)
\end{equation*}	
of pairs $(\sigma, w_{\sigma})$, where $\sigma \in [0, 1]$ and $w_{\sigma}: Z \to M$ is an inhomogeneos pseudoholomorphic map with respect to the Floer datum $(H_{A, s}, J_{A, (s, t)})$ and the Lagrangian boundary conditions $(L_{0, A, s, \sigma}, L_{1, A, s, \sigma})$. This moduli space provides the desired cobordism between the moduli spaces $\tilde{\mathcal{P}}^{A}(x_{A}, x)$ and $\mathcal{P}^{A}(x_{A}, x)$. \par
	To obtain a cobordism between the compactified moduli spaces, we just need to compactify the moduli space $\mathcal{P}_{+}^{A}(x_{A}, x)$ in an appropriate way. Such a compactification can be obtained in a similar way to those for $\tilde{\mathcal{P}}^{A}(x_{A}, x)$ and $\mathcal{P}^{A}(x_{A}, x)$, as described below. For each fixed $\sigma$, we have a moduli space $\mathcal{P}_{\sigma}^{A}(x_{A}, x)$ of maps $w_{\sigma}$, such that when $\sigma = 0$, $\mathcal{P}_{0}^{A}(x_{A}, x) = \tilde{\mathcal{P}}^{A}(x_{A}, x)$, and when $\sigma = 1$, $\mathcal{P}_{1}^{A}(x_{A}, x) = \mathcal{P}^{A}(x_{A}, x)$. Each such moduli space $\mathcal{P}_{\sigma}^{A}(x_{A}, x)$ is compactified in the same way to $\bar{\mathcal{P}}^{A}(x_{A}, x)$ and $\bar{\tilde{\mathcal{P}}}^{A}(x_{A}, x)$. Thus we may define the compactification $\bar{\mathcal{P}}_{+}^{A}(x_{A}, x)$ to the the union of these:
\begin{equation}
\bar{\mathcal{P}}_{+}^{A}(x_{A}, x) = \cup_{\sigma \in [0, 1]} \bar{\mathcal{P}}_{\sigma}^{A}(x_{A}, x).
\end{equation}
This compactified moduli space then provides the desired cobordism between $\bar{\tilde{\mathcal{P}}}^{A}(x_{A}, x)$ and 
$\bar{\mathcal{P}}^{A}(x_{A}, x)$. \par
\end{proof}

\begin{corollary}
	Under the assumption of Lemma \ref{isomorphism of moduli spaces of strips}, the cochain maps $\Pi_{\Gamma}$ and $r^{1}$ are chain homotopic.
\end{corollary}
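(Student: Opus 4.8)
The plan is to assemble the corollary from the three lemmas just established, all of which concern moduli spaces of virtual dimension zero or one. Recall that $\Pi_{\Gamma}$ is defined by counting rigid elements in the moduli spaces $\mathcal{U}(x; x'; e_{0}, e_{1})$ of inhomogeneous pseudoholomorphic quilted maps (with the bounding cochains vanishing by Proposition \ref{vanishing of the bounding cochain}, since $L_{j} \in Ob\,\mathcal{B}_{0}(M)$ forces $L_{j} \circ \Gamma = L'_{j}$ to be a proper exact cylindrical embedding with locally constant primitive), whereas $r^{1}$ is the homotopy direct limit $\lim_{A \to +\infty} r^{1}_{A}$, with each $r^{1}_{A} = \tau_{A} \circ \tilde{r}^{1}_{A}$ defined by counting rigid climbing strips in $\mathcal{P}^{A}(x_{A},x)$ and then transporting along the rescaling isomorphism $\tau_{A}$. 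Both targets are canonically identified on the chain level with the relevant truncations of $CW^{*}(L'_{0},L'_{1};H_{U})$ via $\tau_{A}$, and both $\Pi_{\Gamma}$ and $r^{1}$ are already known to be cochain maps.

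First I would fix a pair $(L_{0},L_{1})$ in $\mathcal{B}_{0}(M)$ and a generic $A$ large enough that the truncation window $(-A\epsilon^{2},\delta)$ contains the images of the finitely many generators in play; for such $A$, Lemma \ref{isomorphism of moduli spaces of strips} produces, after the specific (still generic) choice of Floer data described there, a bijection $\bar{\mathcal{U}}(x,x';e_{0},e_{1}) \cong \bar{\tilde{\mathcal{P}}}^{A}(x_{A},x)$ which is orientation-compatible and commutes with gluing. The algebraic consequence, already recorded, is $\Pi_{\Gamma} = t^{1}_{A}$ on the sub-complex of generators whose $\Pi_{\Gamma}$-images land in $(-A\epsilon^{2},\delta)$, where $t^{1}_{A}$ counts elements of $\bar{\tilde{\mathcal{P}}}^{A}(x_{A},x)$. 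Next I would invoke Lemma \ref{cobordism of moduli spaces}: the oriented cobordism $\bar{\mathcal{P}}_{+}^{A}(x_{A},x)$ between $\bar{\tilde{\mathcal{P}}}^{A}(x_{A},x)$ and $\bar{\mathcal{P}}^{A}(x_{A},x)$, being built from the two-parameter family of exact Lagrangian isotopies $L_{j,A,s,\sigma}$ with the boundary behaviour prescribed above, yields in the standard way (counting the rigid points of the one-dimensional pieces and reading off the codimension-one boundary contributions) a chain homotopy $t^{1}_{A} \simeq \tilde{r}^{1}_{A}$ of maps into $CW^{*}_{-}(i(\psi_{U}^{A}L'_{0}),i(\psi_{U}^{A}L'_{1});H_{A})$. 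Composing with $\tau_{A}$ — which is a chain isomorphism — and using $\Pi_{\Gamma} = t^{1}_{A}$ and $r^{1}_{A} = \tau_{A} \circ \tilde{r}^{1}_{A}$, I get that $\Pi_{\Gamma}$ and $r^{1}_{A}$ agree up to chain homotopy after the appropriate truncation. Finally, passing to the homotopy direct limit $A \to +\infty$ — which is legitimate by the compatibility of $r^{1}_{A}$ with the inclusions $i_{A,A'}$ established in Lemma \ref{linear restriction homomorphism is compatible with inclusion maps of sub-complexes}, and by the fact that $\Pi_{\Gamma}$ as a map into the full complex is already the limit of its truncations — produces a chain homotopy $\Pi_{\Gamma} \simeq r^{1}$. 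Since the preceding proposition shows $\Pi_{\Gamma}$ is chain homotopic to $\Theta_{\Gamma}^{1} = \Theta_{\mathcal{L}}^{1}$ for $\mathcal{L} = \Gamma$, this also completes the second half of Theorem \ref{Viterbo functor as a correspondence functor}.

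The main obstacle I anticipate is bookkeeping the interaction between the two limiting/truncating operations. The identity $\Pi_{\Gamma} = t^{1}_{A}$ only holds after restricting to generators landing in the window $(-A\epsilon^{2},\delta)$, and the cobordism of Lemma \ref{cobordism of moduli spaces} is a priori stated for a fixed $A$; one must check that the chain homotopies produced at different levels $A < A'$ are themselves compatible with the continuation maps $c_{A,A'}$ (equivalently the inclusions $i_{A,A'}$) up to higher homotopy, so that the homotopy direct limit of the homotopies exists and is itself a chain homotopy. This is exactly the kind of argument carried out in the proof of Lemma \ref{linear restriction homomorphism is compatible with inclusion maps of sub-complexes} — choosing the families of Hamiltonians and almost complex structures so that the relevant continuation maps are, up to homotopy, the tautological inclusions $(I,0)$ with respect to the action-ordered bases — and I would reuse that mechanism verbatim, since every moduli space entering the cobordism $\bar{\mathcal{P}}_{+}^{A}(x_{A},x)$ has boundary strata involving only the same inhomogeneous strips in $M$ and in $U$ whose compatibility was already analysed. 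A secondary, purely technical point is ensuring that the single specific choice of Floer data demanded by Lemma \ref{isomorphism of moduli spaces of strips} can be made simultaneously for all the finitely many generators and for a cofinal sequence of $A$'s without sacrificing genericity; this follows from the contractibility of the relevant spaces of auxiliary data, exactly as in the consistency arguments used throughout the paper.
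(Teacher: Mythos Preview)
Your overall strategy matches the paper's: use Lemma~\ref{isomorphism of moduli spaces of strips} to identify $\Pi_{\Gamma}$ with $t^{1}_{A}$ on the appropriate truncation, use the cobordism of Lemma~\ref{cobordism of moduli spaces} to compare with the climbing-strip count, compose with $\tau_{A}$, and pass to the homotopy direct limit. The discussion of compatibility with $i_{A,A'}$ and the handling of genericity are also in line with the paper.

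There is, however, a genuine bookkeeping gap in the middle step. You claim the cobordism $\bar{\mathcal{P}}_{+}^{A}(x_{A},x)$ produces a chain homotopy $t^{1}_{A} \simeq \tilde{r}^{1}_{A}$ of maps into $CW^{*}_{-}(i(\psi_{U}^{A}L'_{0}),i(\psi_{U}^{A}L'_{1});H_{A})$, but these two maps do not have the same target: $t^{1}_{A}$ lands in $CW^{*}_{-}(i(\psi_{U}^{A}L'_{0}),i(\psi_{U}^{A}L'_{1});H_{A})$, while $\tilde{r}^{1}_{A}$ lands in $CW^{*}_{-}(L_{0}^{1-\epsilon},L_{1}^{1-\epsilon};H_{A})$. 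The two boundary conditions at $s\to -\infty$ in the cobordism are genuinely different Lagrangians, so the codimension-one boundary strata of $\bar{\mathcal{P}}_{+}^{A}$ coming from output strip-breaking live in different Floer complexes depending on $\sigma$. Consequently you cannot simply ``compose with $\tau_{A}$'' on both sides, since $\tau_{A}$ is only defined on the latter complex.

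The paper closes this gap by inserting an explicit chain homotopy equivalence
\[
k_{A}: CW^{*}_{-}(i(\psi_{U}^{A}L'_{0}),i(\psi_{U}^{A}L'_{1});H_{A}) \longrightarrow CW^{*}_{-}(L_{0}^{1-\epsilon},L_{1}^{1-\epsilon};H_{A}),
\]
defined by counting rigid strips with boundary on the isotopy $L_{j,A,-\infty,\sigma=\rho(s)}$ (i.e.\ the $s\to -\infty$ slice of the two-parameter family). The count of rigid elements in $\bar{\mathcal{P}}_{+}^{A}$ then gives a degree~$-1$ map $T_{A}$ into $CW^{*}_{-}(L_{0}^{1-\epsilon},L_{1}^{1-\epsilon};H_{A})$ realizing the homotopy $\tilde{r}^{1}_{A} \simeq k_{A}\circ t^{1}_{A}$, after which one composes with $\tau_{A}$ and takes the limit exactly as you describe. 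So your argument becomes correct once you insert $k_{A}$ and replace the assertion $t^{1}_{A}\simeq\tilde{r}^{1}_{A}$ by $k_{A}\circ t^{1}_{A}\simeq\tilde{r}^{1}_{A}$.
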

\begin{proof}
	Note that there is a chain homotopy equivalence
\begin{equation*}
k_{A}: CW^{*}_{-}(i(\psi_{U}^{A} L'_{0}), i(\psi_{U}^{A} L'_{1}); H_{A}) \to CW^{*}_{-}(L_{0}^{1-\epsilon}, L_{1}^{1-\epsilon}; H_{A})
\end{equation*}
defined by counting rigid elements in a moduli space of inhomogeneous pseudoholomorphic maps $u: Z \to M$ satisfying the equation
\begin{equation*}
\partial_{s} u + J_{A, t}(\partial_{t} u - X_{H_{A}}(u)) = 0,
\end{equation*}
and the Lagrangian boundary conditions
\begin{equation*}
u(s, j) \in L_{j, A, -\infty, \sigma = \rho(s)}, j = 0, 1,
\end{equation*}
where $L_{j, A, -\infty, \sigma = \rho(s)}$ is obtained from the family $L_{j, A, s, \sigma}$ by first specializing $s = -\infty$, and then substituting $\sigma$ by $\rho(s)$. $L_{j, A, -\infty, \sigma = \rho(s)}$ can be regarded as an exact Lagrangian isotopy parametrized by $s \in \mathbb{R}$ such that for $s \ll 0$, $L_{j, A, -\infty, \sigma = \rho(s)} = L_{j}^{1-\epsilon}$, and for $s \gg 0$, $L_{j, A, -\infty, \sigma = \rho(s)} = i(\psi_{U}^{A} L'_{j})$. \par
	We claim that $\tilde{r}^{1}_{A}$ and $k_{A} \circ t^{1}_{A}$ are chain homotopic. By counting rigid elements in the moduli space $\bar{\mathcal{P}}_{+}^{A}(x_{A}, x)$, we define a map
\begin{equation*}
T_{A}: CW^{*}(L_{0}, L_{1}; H_{M}) \to CW^{*-1}_{-}(L_{0}^{1-\epsilon}, L_{1}^{1-\epsilon}; H_{A})
\end{equation*}
of degree $-1$. By a standard gluing argument, this is a chain homotopy between $\tilde{r}^{1}_{A}$ and $k_{A} \circ t^{1}_{A}$. \par
	Composing $T_{A}$ with the map
\begin{equation*}
\tau_{A}: CW^{*}_{-}(L_{0}^{1-\epsilon}, L_{1}^{1-\epsilon}; H_{A}) \to CW^{*}_{(-A\epsilon^{2}, \delta)}(L'_{0}, L'_{1}; H_{U}),
\end{equation*}
we get a map
\begin{equation*}
S_{A}: CW^{*}(L_{0}, L_{1}; H_{M}) \to CW^{*-1}_{(-A\epsilon^{2}, \delta)}(L'_{0}, L'_{1}; H_{U}).
\end{equation*}
Following a similar homotopy commutativity argument as in Lemma \ref{linear restriction homomorphism is compatible with inclusion maps of sub-complexes}, we may take the direct limit of the directed system of maps $S_{A}$ to get
\begin{equation*}
S = \lim\limits_{A \to +\infty} S_{A}.
\end{equation*}
All the above maps have unique continuous extensions to the completed wrapped Floer cochain spaces, when the energy of inhomogeneous pseudoholomorphic maps is taken into account in the counting definition. \par
	Combining Lemma \ref{isomorphism of moduli spaces of strips} and Lemma \ref{cobordism of moduli spaces}, we conclude that $\Pi_{\Gamma}$ is chain homotopic to $r^{1}_{A}$ when the former map is restricted to given sub-complex generated by those generators whose images under $\Pi_{\Gamma}$ fall within the action filtration window $(-A\epsilon^{2}, \delta)$. Such a chain homotopic is given by the map $S_{A}$. Taking the direct limit over $A$, we conclude that $\Pi_{\Gamma}$ is chain homotopic to $r^{1}$, where the chain homotopy is given by the map $S$. \par
\end{proof}

	Since $\Pi_{\Gamma}$ is chain homotopic to $\Theta_{\Gamma}^{1}$, Theorem \ref{Viterbo functor as a correspondence functor} is therefore complete. \par

\begin{remark}
	A point worth noting is that the Viterbo restriction functor is better defined as a colimit of continuation functors with respect to linear Hamiltonians (or the cascade definition as in \cite{Abouzaid-Seidel}), if one wants to visualize the picture of the Hamiltonian dynamics more directly. We took the current approach simply because of the quadratic Hamiltonians are more convenient for the purpose of constructing functors from Lagrangian correspondences, so that these functors can be compared in the same setup.
\end{remark}

\subsection{Further questions}
	It is therefore natural to ask whether the functors $\Theta_{\Gamma}$ and $r$ as a whole are homotopic to each other, not just limited to their linear terms. While the expectation is yes, an efficient way of proving this is yet to be discovered. At least, there is a very naive case where such coincidence can be easily verified. For example, consider the case where $L_{0}$ is a closed exact Lagrangian submanifold that is contained in $U_{0}$. Then the action of $\Theta_{\mathcal{L}}$ is the identity. This can be easily proved using the maximum principle, which implies that any pseudoholomorphic disk in $M_{0}$ with boundary on $L$ and its Hamiltonian perturbations must be contained in $U_{0}$. In the same way, one sees that the Viterbo restriction functor is also the identity functor on such Lagrangian submanifolds, which implies that the functor $\Theta_{\Gamma}$ agrees with the Viterbo restriction functor on such objects as well. To solve this problem in general, the main difficulty is to find a workable geometric construction of the functor $\Theta_{\Gamma}$, as the definition of the cochain map $\Pi_{\Gamma}$ does not seem to have a straightforward generalization to an $A_{\infty}$-functor. Finding a suitable model of the moduli spaces of quilted surfaces based on which the functor $\Theta_{\Gamma}$ can be constructed directly is the key step of solving this problem. \par
	
	There are of courses many other Lagrangian submanifolds which do not satisfy the geometric conditions we have just discussed. First, there are non-compact exact cylindrical Lagrangian submanifolds of $M$ that does not have very nice restriction to $U_{0}$. A typical example is the cotangent fiber of an annulus restricted to the disjoint union of three cotangent fibers of a deformed sub-annulus, as illustrated in Example 4.2 of \cite{Abouzaid-Seidel}. Second, there are closed exact Lagrangian submanifolds of $M$ which are not entirely contained in $U_{0}$. In such cases, the usual construction of the Viterbo restriction functor does not yield an $A_{\infty}$-functor in general.
However, by analyzing the failure of it being an $A_{\infty}$-functor, we expect that there is an extension of the Viterbo restriction functor to such Lagrangia submanifolds.
Spectacularly, such an extension is related to deformation theory of the wrapped Fukaya category of $U$, and we phrase it as the following conjecture. \par

\begin{conjecture}\label{conjecture on extension of the Viterbo restriction functor}
	Suppose $U_{0} \subset M_{0}$ is a Liouville sub-domain. Let the wrapped Fukaya category of $M$ and that of $U$ consist of a suitable countable collection of Lagrangian submanifolds. Then there is a canonical deformation of the completed wrapped Fukaya category $\mathcal{W}(U)$ of $U$, denoted by $\mathcal{W}(U; B)$, where $B$ is a collection of bounding cochains for objects in $\mathcal{W}(U)$, such that there is a natural $A_{\infty}$-functor
\begin{equation*}
r_{B}: \mathcal{W}(M) \to \mathcal{W}(U; B),
\end{equation*}
which agrees with the Viterbo restriction functor on the full sub-category $\mathcal{B}(M)$.
\end{conjecture}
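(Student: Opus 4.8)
The plan is to construct $r_B$ out of the correspondence functor $\Theta_{\Gamma}$ associated with the graph correspondence, and to extract the deformation datum $B$ from the canonical bounding cochains produced by Theorem~\ref{unobstructedness of geometric composition}. Fix once and for all the countable collection of Lagrangian submanifolds defining $\mathcal{W}(M)$. After a Hamiltonian perturbation supported in a collar of $\partial U_{0}$ inside $U_{0}$ — which, as noted in the remark after Assumption~\ref{invariance assumption}, can always be arranged — each such $L$ meets $\partial U_{0}$ along a Legendrian and is invariant under the Liouville flow near $\partial U_{0}$ and near $\partial M_{0}$, so that by Proposition~\ref{prop: geometric composition is admissible} the geometric composition $\iota_{L}\colon L\circ\Gamma\to U$ is an exact cylindrical Lagrangian immersion with transverse or clean self-intersections. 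By Theorem~\ref{unobstructedness of geometric composition} it carries a canonical nilpotent bounding cochain $b_{L}\in CW^{*}(L\circ\Gamma)$, obtained from the cyclic element $e_{L\circ\Gamma}$ of the quilted module via Lemma~\ref{cyclic element and bounding cochain}, and by Theorems~\ref{representability of Lagrangian correspondence functor} and~\ref{functoriality for Lagrangian correspondences} the assignment $L\mapsto (L\circ\Gamma,b_{L})$ is the object-level part of an honest $A_{\infty}$-functor $\Theta_{\Gamma}\colon\mathcal{W}(M)\to\mathcal{W}_{im}(U)$.

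The next step is to replace the immersed image by embedded data, i.e.\ to pin down what $\mathcal{W}(U;B)$ should be. Here I would argue that each immersed geometric composition $L\circ\Gamma$ admits a canonical \emph{embedded resolution}: a properly embedded exact cylindrical Lagrangian $\widehat{L}\subset U$, a smoothing of the naive restriction $L\cap U_{0}$ completed along $\partial U$, together with an exact Lagrangian surgery cobordism identifying $L\circ\Gamma$ with $\widehat{L}$, under which the bounding cochain $b_{L}$ transports to a Maurer--Cartan element $\widehat{b}_{L}$ of the (non-curved, since $\widehat{L}$ is exact and embedded) $A_{\infty}$-algebra $CW^{*}(\widehat{L})$. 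Granting this, one sets $B=\{\widehat{b}_{L}\}$ and defines $\mathcal{W}(U;B)$ to be $\mathcal{W}(U)$ with each object $\widehat{L}$ deformed by $\widehat{b}_{L}$ (all remaining objects carrying the zero Maurer--Cartan element); this is by construction a deformation of $\mathcal{W}(U)$ in the sense of the conjecture. The surgery cobordisms then produce a quasi-equivalence between the full subcategory of $\mathcal{W}_{im}(U)$ spanned by the pairs $(L\circ\Gamma,b_{L})$ and $\mathcal{W}(U;B)$, through which $\Theta_{\Gamma}$ descends to the desired $A_{\infty}$-functor $r_{B}\colon\mathcal{W}(M)\to\mathcal{W}(U;B)$ sending $L$ to $(\widehat{L},\widehat{b}_{L})$.

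It then remains to check that $r_{B}$ extends the Viterbo restriction functor on $\mathcal{B}(M)$. On $\mathcal{B}(M)$ Assumption~\ref{strong exactness condition} holds, so by Proposition~\ref{vanishing of the bounding cochain} the bounding cochain vanishes, the geometric composition is already a proper exact embedding, the resolution is the identity, and hence $\widehat{b}_{L}=0$; thus $r_{B}|_{\mathcal{B}(M)}$ lands in the undeformed $\mathcal{W}(U)$. To see that it agrees with $r$ there I would promote the chain-level comparison of Section~\ref{section: a different geometric realization of the correspondence functor} and Theorem~\ref{Viterbo functor as a correspondence functor} to a homotopy of $A_{\infty}$-functors: for each $k$ one builds parametrized moduli spaces interpolating the climbing-disk domains defining $r^{k}$ with the folded quilted surfaces defining the $k$-th order term of $\Pi_{\Gamma}$, running the conformal-rescaling folding construction of that section at all orders, and the resulting higher maps assemble into the required homotopy on $\mathcal{B}_{0}(M)$; passage from $\mathcal{B}_{0}(M)$ to $\mathcal{B}(M)$ then uses that, after a further collar perturbation, every object of $\mathcal{B}(M)$ is isotopic through $\mathcal{B}(M)$ to one of $\mathcal{B}_{0}(M)$, together with invariance of both functors under such isotopies.

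The main obstacle is the embedded-resolution step and, above all, its \emph{coherence in $L$}: the assertion that $L\mapsto(\widehat{L},\widehat{b}_{L})$ underlies an $A_{\infty}$-functor, and not merely an assignment on objects, is a wrapped, integral-coefficient analogue of the compatibility of Lagrangian surgery with Fukaya-categorical structure maps, and it must be made to interact correctly with the non-compact winding that $L\circ\Gamma$ exhibits near $\partial U$; this is precisely the step where one shows that geometric composition with $\Gamma$ is Floer-theoretically equivalent to honest restriction. A closely related difficulty is the canonicity of $B$ — its independence of the collar perturbations and of the surgeries chosen, and the fact that distinct $L$'s with a common resolution must carry compatible Maurer--Cartan data so that $B$ is genuinely indexed by objects of $\mathcal{W}(U)$ — which I expect to require developing the curved immersed wrapped Fukaya category $\mathcal{W}_{ob, im}(U)$ hinted at in Section~\ref{the immersed wrapped Fukaya category} and its Maurer--Cartan deformation theory over $\mathbb{Z}$ in enough generality to turn these choices into a well-posedness theorem; this circle of ideas is the subject of \cite{Gao2}.
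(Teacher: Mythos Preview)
The statement you are attempting to prove is a \emph{conjecture} in the paper, not a theorem; the paper provides no proof and explicitly defers the question to the forthcoming work \cite{Gao2}. So there is no ``paper's own proof'' to compare against, and your proposal should be read as a research sketch rather than a reconstruction.

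That said, a few remarks on the sketch itself. Your overall strategy --- build $r_B$ out of $\Theta_{\Gamma}$ and let the deformation $B$ be the collection of canonical bounding cochains from Theorem~\ref{unobstructedness of geometric composition} --- is in the spirit of the paper, but note that the paper explicitly \emph{separates} the two questions: immediately after the conjecture it asks ``when the extended Viterbo restriction functor can be identified with the functor $\Theta_{\Gamma}$ and when not,'' which signals that the author does not expect $r_B$ to be defined as $\Theta_{\Gamma}$ in general. Your approach collapses this distinction by fiat. A second issue is your first step: you invoke the remark after Assumption~\ref{invariance assumption} to claim that \emph{every} object of $\mathcal{W}(M)$ can be perturbed to meet $\partial U_{0}$ along a Legendrian with the primitive locally constant there, but that remark only asserts Liouville-invariance near $\partial U$ \emph{given} that $L$ already meets $\partial U$ in a Legendrian; it says nothing about producing that condition, and the examples the paper cites just before the conjecture (cotangent fibers of the annulus restricting badly, closed exact Lagrangians not contained in $U_0$) are precisely cases where this fails in an essential way. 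Finally, as you yourself flag, the ``embedded resolution'' step and its functoriality in $L$ are the entire content of the conjecture; what you have written is an outline of what one would \emph{like} to be true, with the hard parts correctly identified but not addressed.
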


	The study of such an extension also brings up the question when the extended Viterbo restriction functor can be identified with the functor $\Theta_{\Gamma}$ and when not. That would require more thorough understanding of the bounding cochains in both pictures. Related topics will be discussed in the upcoming work \cite{Gao2}. \par

\bibliography{Lag_functors_wrapped}

\newcommand{\etalchar}[1]{$^{#1}$}
\begin{thebibliography}{FOOO09b}

\bibitem[Abo10]{Abouzaid1}
Mohammed Abouzaid.
\newblock A geometric criterion for generating the {F}ukaya category.
\newblock {\em Publ. Math. Inst. Hautes {\'E}tudes Sci.}, (112):191--240, 2010.

\bibitem[AJ10]{Akaho-Joyce}
Manabu Akaho and Dominic Joyce.
\newblock Immersed {L}agrangian {F}loer theory.
\newblock {\em J. Differential Geom.}, 86(3):381--500, 2010.

\bibitem[AS10]{Abouzaid-Seidel}
Mohammed Abouzaid and Paul Seidel.
\newblock An open string analogue of {V}iterbo functoriality.
\newblock {\em Geom. Topol.}, 14(2):627--718, 2010.

\bibitem[BEH{\etalchar{+}}03]{BEHWZ}
Fr\'{e}d\'{e}ric Bourgeois, Tobias Ekholm, Helmut Hofer, Kris Wysocki, and
  Eduard Zehnder.
\newblock Compactness results in symplectic field theory.
\newblock {\em Geom. Topol.}, 7:799--888, December 2003.

\bibitem[BO16]{Bourgeois-Oancea}
Fr\'{e}d\'{e}ric Bourgeois and Alexandru Oancea.
\newblock S1-equivariant symplectic homology and linearized contact homology.
\newblock {\em International Mathematics Research Notices}, page rnw029, 2016.

\bibitem[FO99]{Fukaya-Ono}
Kenji Fukaya and Kaoru Ono.
\newblock Arnold conjecture and {G}romov-{W}itten invariant.
\newblock {\em J. Topology}, 38(5):933--1048, 1999.

\bibitem[FOOO09a]{FOOO1}
Kenji Fukaya, Yong-Geun Oh, Hiroshi Ohta, and Kaoru Ono.
\newblock {\em Lagrangian intersection {F}loer theory: anomaly and obstruction.
  {P}art {I}}, volume~46 of {\em AMS/IP Studies in Advanced Mathematics}.
\newblock American Mathematical Society, Providence, RI; International Press,
  Somerville, MA, 2009.

\bibitem[FOOO09b]{FOOO2}
Kenji Fukaya, Yong-Geun Oh, Hiroshi Ohta, and Kaoru Ono.
\newblock {\em Lagrangian intersection {F}loer theory: anomaly and obstruction.
  {P}art {II}}, volume~46 of {\em AMS/IP Studies in Advanced Mathematics}.
\newblock American Mathematical Society, Providence, RI; International Press,
  Somerville, MA, 2009.

\bibitem[FOOO12]{FOOO3}
K.~{Fukaya}, Y.-G. {Oh}, H.~{Ohta}, and K.~{Ono}.
\newblock {Technical details on {K}uranishi structure and virtual fundamental
  chain}.
\newblock {\em arXiv:1209.4410}, pages 1--257, 2012.

\bibitem[FOOO13]{FOOO4}
Kenji Fukaya, Yong-Geun Oh, Hiroshi Ohta, and Kaoru Ono.
\newblock Lagrangian {F}loer theory over integers: spherically positive
  symplectic manifolds.
\newblock {\em Pure and Applied Mathematics Quarterly}, 9(2):189--289, 2013.

\bibitem[Fuk02]{Fukaya1}
Kenji Fukaya.
\newblock Floer homology and mirror symmetry. {II}.
\newblock In {\em Minimal surfaces, geometric analysis and symplectic geometry
  ({B}altimore, {MD}, 1999)}, volume~34 of {\em Adv. Stud. Pure Math.}, pages
  31--127. Math. Soc. Japan, Tokyo, 2002.

\bibitem[{Fuk}15]{Fukaya2}
Kenji {Fukaya}.
\newblock {$SO(3)$-Floer homology of 3-manifolds with boundary 1}.
\newblock {\em arXiv:1506.01435}, pages 1--80, 2015.

\bibitem[{Gan}13]{Ganatra}
Sheel {Ganatra}.
\newblock {\em {Symplectic cohomology and duality for the wrapped {F}ukaya
  category}}.
\newblock PhD thesis, MIT, 2013.

\bibitem[Gao]{Gao2}
Yuan Gao.
\newblock Wrapped {F}ukaya categories, deformation, and the generalized
  {V}iterbo functor.
\newblock in preparation.

\bibitem[{Gao}17]{Gao1}
Yuan {Gao}.
\newblock {Wrapped {F}loer cohomology and {L}agrangian correspondences}.
\newblock {\em ar{X}iv:1703.04032}, March 2017.

\bibitem[GPS17]{Ganatra-Pardon-Shende}
Sheel {Ganatra}, John {Pardon}, and Vivek {Shende}.
\newblock {Covariantly functorial {F}loer theory on {L}iouville sectors}.
\newblock {\em ar{X}iv:1706.03152}, June 2017.

\bibitem[LL13]{Lekili-Lipyanskiy}
Yank{\i} Lekili and Max Lipyanskiy.
\newblock Geometric composition in quilted {F}loer theory.
\newblock {\em Adv. Math.}, 236:1--23, 2013.

\bibitem[MS06]{Markl-Shnider}
Martin Markl and Steve Shnider.
\newblock Associahedra, cellular {W}-construction and products of
  ${A}_{\infty}$-algebras.
\newblock {\em Transactions of the American Mathematical Society},
  358(6):2353--2372, 2006.

\bibitem[Sei08]{Seidel}
Paul Seidel.
\newblock {\em Fukaya categories and {P}icard-{L}efschetz theory}.
\newblock Zurich Lectures in Advanced Mathematics. European Mathematical
  Society (EMS), Z{\"u}rich, 2008.

\bibitem[SU04]{Saneblidze-Umble}
Samson Saneblidze and Ronald Umble.
\newblock Diagonals on the permutahedra, multiplihedra and associahedra.
\newblock {\em Homology Homotopy Appl.}, 6(1):363--411, 2004.

\bibitem[{Syl}15]{Sylvan}
Zachary {Sylvan}.
\newblock {\em {On partially wrapped {F}ukaya categories}}.
\newblock PhD thesis, UC Berkeley, 2015.

\bibitem[Vit99]{Viterbo1}
Claude Viterbo.
\newblock Functors and computations in {F}loer homology with applications. {I}.
\newblock {\em Geom. Funct. Anal.}, 9(5):985--1033, 1999.

\bibitem[WW12]{Wehrheim-Woodward4}
Katrin Wehrheim and Chris~T. Woodward.
\newblock Floer cohomology and geometric composition of {L}agrangian
  correspondences.
\newblock {\em Adv. Math.}, 230(1):177--228, 2012.

\bibitem[WW15a]{Wehrheim-Woodward5}
Katrin Wehrheim and Chris~T. Woodward.
\newblock {Orientations for pseudoholomorphic quilts}.
\newblock {\em arXiv:1503.07803}, pages 1--56, 2015.

\bibitem[WW15b]{Wehrheim-Woodward3}
Katrin Wehrheim and Chris~T. Woodward.
\newblock {Pseudoholomorphic Quilts}.
\newblock {\em arXiv:0905.1369}, pages 1--34, 2015.

\end{thebibliography}
\bibliographystyle{alpha}

\end{document}